\newtheorem{theorem}{Theorem}[section]
\newtheorem{proposition}[theorem]{Proposition}
\newtheorem{lemma}[theorem]{Lemma}
\newtheorem{corollary}[theorem]{Corollary}
\newtheorem{conjecture}[theorem]{Conjecture}
\newtheorem{problem}[theorem]{Problem}
\theoremstyle{definition}
\newtheorem{definition}[theorem]{Definition}
\newtheorem{remark}[theorem]{Remark}
\newtheorem{qstn}[theorem]{Question}
\newtheorem{exmp}[theorem]{Example}
\tikzset{
math to/.tip={Glyph[glyph math command=rightarrow]},
loop/.tip={Glyph[glyph math command=looparrowleft, swap]},
loop'/.tip={Glyph[glyph math command=looparrowleft]},
 weird/.tip={Glyph[glyph math command=Rrightarrow, glyph length=1.5ex]},
  pi/.tip={Glyph[glyph math command=pi, glyph length=1.5ex, glyph axis=0pt]},
}
\DeclareMathOperator{\Di}{\mathcal D}
\DeclareMathOperator{\im}{Im}
\DeclareMathOperator{\rk}{rk}
\DeclareMathOperator{\Aut}{Aut}
\DeclareMathOperator{\End}{End}
\DeclareMathOperator{\Out}{Out}
\DeclareMathOperator{\supp}{supp}
\DeclareMathOperator{\bs}{BS}
\DeclareMathOperator{\stab}{Stab}
\DeclareMathOperator{\rr}{rr}
\DeclareMathOperator{\deck}{Deck}
\DeclareMathOperator{\cd}{cd}
\DeclareMathOperator{\cay}{Cay}
\DeclareMathOperator{\op}{op}
\DeclareMathOperator{\core}{Core}
\DeclareMathOperator{\Ore}{Ore}
\DeclareMathOperator{\ab}{ab}
\DeclareMathOperator{\Ima}{Im}
\DeclareMathOperator{\Tor}{Tor}
\DeclareMathOperator{\Ext}{Ext}
\DeclareMathOperator{\bg}{BG}
\DeclareMathOperator{\lcm}{lcm}
\DeclareMathOperator{\thick}{th}
\DeclareMathOperator{\vb}{vb}
\DeclareMathOperator{\cat}{CAT}
\DeclareMathOperator{\pr}{pr}
\newcommand{\R}{\mathbb{R}}
\newcommand{\Z}{\mathbb{Z}}
\newcommand{\N}{\mathbb{N}}
\newcommand{\C}{\mathbb{C}}
\newcommand{\Q}{\mathbb{Q}}
\newcommand{\bbS}{\mathbb{S}}
\DeclareMathOperator{\fp}{FP}
\def\immerses{\looparrowright}
\def\injects{\hookrightarrow}
\def\surjects{\twoheadrightarrow}
\newcommand{\isom}{\cong}
\newcommand{\normal}[1]{\langle\!\langle #1 \rangle\!\rangle}
\newcommand{\series}[1]{(\!( #1 )\!)}
\newsavebox{\@brx}
\newcommand{\llangle}[1][]{\savebox{\@brx}{\(\m@th{#1\langle}\)}%
  \mathopen{\copy\@brx\kern-0.5\wd\@brx\usebox{\@brx}}}
\newcommand{\rrangle}[1][]{\savebox{\@brx}{\(\m@th{#1\rangle}\)}%
  \mathclose{\copy\@brx\kern-0.5\wd\@brx\usebox{\@brx}}}
\DeclareMathOperator{\SL}{SL}
\DeclareMathOperator{\PSL}{PSL}
\DeclareMathOperator{\GL}{GL}
\DeclareMathOperator{\FP}{FP}
\DeclareMathOperator{\BS}{BS}
\DeclareMathOperator{\gr}{\mathfrak{gr}}
\newcommand{\pres}[3]{\textnormal{#1} \langle #2 \mid #3 \rangle}
\newcommand{\cP}{\mathcal{P}}
\newcommand{\ZG}{{\Z}G}
\numberwithin{section}{chapter}
\newcounter{marcocomments}
\tikzset{
pattern size/.store in=\mcSize, 
pattern size = 5pt,
pattern thickness/.store in=\mcThickness, 
pattern thickness = 0.3pt,
pattern radius/.store in=\mcRadius, 
pattern radius = 1pt}
\pgfpoint{\mcSize}{\mcSize}}
\pgfpathcircle\pgfpointorigin{\mcRadius}
\tikzset{every picture/.style={line width=0.75pt}} 
\begin{document}

\title{{\huge The theory of one-relator groups: \\ history and recent progress} }

\author{Marco Linton\footnotemark[1]}
\address{\footnotemark[1]Instituto de Ciencias Matemáticas, Fuencarral-El Pardo, 28049, Madrid, Spain}
\email{marco.linton@icmat.es}

\author{Carl-Fredrik Nyberg-Brodda\footnotemark[2] 

\vspace{1.5cm}
\begin{tikzpicture}[x=0.75pt,y=0.75pt,yscale=0.35,xscale=0.35]
\draw  [fill={rgb, 255:red, 128; green, 3; blue, 16 }  ,fill opacity=0.49 ][line width=3.5]  (8,329) .. controls (8,151.16) and (152.16,7) .. (330,7) .. controls (507.84,7) and (652,151.16) .. (652,329) .. controls (652,506.84) and (507.84,651) .. (330,651) .. controls (152.16,651) and (8,506.84) .. (8,329) -- cycle ;
\draw  [fill={rgb, 255:red, 128; green, 3; blue, 16 }  ,fill opacity=0.49 ][line width=3.5]  (8,329) .. controls (8,157.3) and (147.19,18.1) .. (318.9,18.1) .. controls (490.6,18.1) and (629.79,157.3) .. (629.79,329) .. controls (629.79,500.7) and (490.6,639.9) .. (318.9,639.9) .. controls (147.19,639.9) and (8,500.7) .. (8,329) -- cycle ;
\draw  [fill={rgb, 255:red, 128; green, 3; blue, 16 }  ,fill opacity=0.49 ][line width=3.5]  (8,329) .. controls (8,163.43) and (142.22,29.21) .. (307.79,29.21) .. controls (473.36,29.21) and (607.59,163.43) .. (607.59,329) .. controls (607.59,494.57) and (473.36,628.79) .. (307.79,628.79) .. controls (142.22,628.79) and (8,494.57) .. (8,329) -- cycle ;
\draw  [fill={rgb, 255:red, 128; green, 3; blue, 16 }  ,fill opacity=0.49 ][line width=3.5]  (8,329) .. controls (8,169.56) and (137.25,40.31) .. (296.69,40.31) .. controls (456.13,40.31) and (585.38,169.56) .. (585.38,329) .. controls (585.38,488.44) and (456.13,617.69) .. (296.69,617.69) .. controls (137.25,617.69) and (8,488.44) .. (8,329) -- cycle ;
\draw  [fill={rgb, 255:red, 128; green, 1; blue, 15 }  ,fill opacity=1 ][line width=3.5]  (34.82,334.98) .. controls (34.82,187.8) and (154.13,68.5) .. (301.3,68.5) .. controls (448.48,68.5) and (567.78,187.8) .. (567.78,334.98) .. controls (567.78,482.15) and (448.48,601.46) .. (301.3,601.46) .. controls (154.13,601.46) and (34.82,482.15) .. (34.82,334.98) -- cycle ;
\draw  [fill={rgb, 255:red, 0; green, 69; blue, 0 }  ,fill opacity=1 ][line width=3.5]  (160.22,334.98) .. controls (160.22,222.43) and (251.46,131.2) .. (364,131.2) .. controls (476.55,131.2) and (567.78,222.43) .. (567.78,334.98) .. controls (567.78,447.52) and (476.55,538.76) .. (364,538.76) .. controls (251.46,538.76) and (160.22,447.52) .. (160.22,334.98) -- cycle ;
\draw  [fill={rgb, 255:red, 128; green, 1; blue, 15 }  ,fill opacity=1 ][line width=3.5]  (160.22,334.98) .. controls (160.22,259.23) and (221.63,197.82) .. (297.38,197.82) .. controls (373.13,197.82) and (434.54,259.23) .. (434.54,334.98) .. controls (434.54,410.73) and (373.13,472.14) .. (297.38,472.14) .. controls (221.63,472.14) and (160.22,410.73) .. (160.22,334.98) -- cycle ;
\draw  [fill={rgb, 255:red, 0; green, 69; blue, 0 }  ,fill opacity=1 ][line width=3.5]  (254.28,334.98) .. controls (254.28,285.2) and (294.63,244.84) .. (344.41,244.84) .. controls (394.19,244.84) and (434.54,285.2) .. (434.54,334.98) .. controls (434.54,384.76) and (394.19,425.11) .. (344.41,425.11) .. controls (294.63,425.11) and (254.28,384.76) .. (254.28,334.98) -- cycle ;
\draw  [fill={rgb, 255:red, 128; green, 1; blue, 15 }  ,fill opacity=1 ][line width=3.5]  (105.5,278.58) .. controls (123.1,278.58) and (138.6,287.51) .. (147.74,301.08) .. controls (141.02,309.71) and (137.01,320.55) .. (137.01,332.33) .. controls (137.01,342.83) and (140.2,352.59) .. (145.65,360.69) .. controls (136.34,372.64) and (121.82,380.32) .. (105.5,380.32) .. controls (77.41,380.32) and (54.63,357.54) .. (54.63,329.45) .. controls (54.63,301.35) and (77.41,278.58) .. (105.5,278.58) -- cycle (147.74,301.08) .. controls (157.05,289.14) and (171.57,281.46) .. (187.88,281.46) .. controls (204.82,281.46) and (219.83,289.74) .. (229.08,302.47) .. controls (222.99,310.86) and (219.4,321.18) .. (219.4,332.33) .. controls (219.4,343.49) and (222.99,353.8) .. (229.08,362.19) .. controls (219.83,374.92) and (204.82,383.2) .. (187.88,383.2) .. controls (170.29,383.2) and (154.78,374.27) .. (145.65,360.69) .. controls (152.37,352.07) and (156.37,341.23) .. (156.37,329.45) .. controls (156.37,318.95) and (153.19,309.19) .. (147.74,301.08) -- cycle (229.08,302.47) .. controls (238.32,289.74) and (253.33,281.46) .. (270.27,281.46) .. controls (286.35,281.46) and (300.69,288.93) .. (310.01,300.58) .. controls (303.05,309.28) and (298.89,320.32) .. (298.89,332.33) .. controls (298.89,344.34) and (303.05,355.38) .. (310.01,364.08) .. controls (300.69,375.74) and (286.35,383.2) .. (270.27,383.2) .. controls (253.33,383.2) and (238.32,374.92) .. (229.08,362.19) .. controls (235.16,353.8) and (238.75,343.49) .. (238.75,332.33) .. controls (238.75,321.18) and (235.16,310.86) .. (229.08,302.47) -- cycle (310.01,300.58) .. controls (319.34,288.93) and (333.68,281.46) .. (349.76,281.46) .. controls (366.7,281.46) and (381.71,289.74) .. (390.95,302.47) .. controls (384.86,310.86) and (381.27,321.18) .. (381.27,332.33) .. controls (381.27,343.49) and (384.86,353.8) .. (390.95,362.19) .. controls (381.71,374.92) and (366.7,383.2) .. (349.76,383.2) .. controls (333.68,383.2) and (319.34,375.74) .. (310.01,364.08) .. controls (316.97,355.38) and (321.14,344.34) .. (321.14,332.33) .. controls (321.14,320.32) and (316.97,309.28) .. (310.01,300.58) -- cycle (390.95,302.47) .. controls (400.2,289.74) and (415.2,281.46) .. (432.14,281.46) .. controls (449.08,281.46) and (464.09,289.74) .. (473.33,302.47) .. controls (467.24,310.86) and (463.65,321.18) .. (463.65,332.33) .. controls (463.65,343.49) and (467.24,353.8) .. (473.33,362.19) .. controls (464.09,374.92) and (449.08,383.2) .. (432.14,383.2) .. controls (415.2,383.2) and (400.2,374.92) .. (390.95,362.19) .. controls (397.04,353.8) and (400.63,343.49) .. (400.63,332.33) .. controls (400.63,321.18) and (397.04,310.86) .. (390.95,302.47) -- cycle (483.01,332.33) .. controls (483.01,343.49) and (479.42,353.8) .. (473.33,362.19) .. controls (482.58,374.92) and (497.58,383.2) .. (514.52,383.2) .. controls (542.62,383.2) and (565.39,360.43) .. (565.39,332.33) .. controls (565.39,304.24) and (542.62,281.46) .. (514.52,281.46) .. controls (497.58,281.46) and (482.58,289.74) .. (473.33,302.47) .. controls (479.42,310.86) and (483.01,321.18) .. (483.01,332.33) -- cycle ;
\draw  [pattern=_qnfurnr9o,pattern size=9pt,pattern thickness=0.75pt,pattern radius=1.5pt, pattern color={rgb, 255:red, 0; green, 0; blue, 0}][line width=3.5]  (34.82,334.98) .. controls (34.82,187.8) and (154.13,68.5) .. (301.3,68.5) .. controls (448.48,68.5) and (567.78,187.8) .. (567.78,334.98) .. controls (567.78,482.15) and (448.48,601.46) .. (301.3,601.46) .. controls (154.13,601.46) and (34.82,482.15) .. (34.82,334.98) -- cycle ;
\end{tikzpicture}\vspace{4.0cm}\vfill}

\address{\footnotemark[2]School of Mathematics, Korea Institute for Advanced Study (KIAS), Seoul 02455, Korea}
\email{cfnb@kias.re.kr}

\maketitle
\tableofcontents

\clearpage

\chapter*{Preface}


\noindent The theory of one-relator groups is now almost a century old. The authors therefore feel that a comprehensive survey of this fascinating subject is in order, and this document is an attempt at precisely such a survey. This article is divided into two chapters, reflecting the two different phases in the story of one-relator groups. The two chapters can be read independently of one another and have minimal overlap. 

The first chapter, written by the second author, covers the history of one-relator groups. The initial development of the theory of one-relator groups was parallel to that of combinatorial group theory, and the chapter begins with the intertwined history. As the theory of one-relator groups eventually grew into a subject in its own right, the chapter turns to treating problems particular to this subject. The chapter covers the history until around the year 1987, a year which marked the advent of modern geometric group theory. 

The second chapter, written by the first author, surveys recent progress on the subject. In the last few decades, new ideas coming from geometry, topology and homological algebra have decidedly changed how one-relator groups are studied, resulting in several significant developments. This chapter surveys these new developments and the tools used to prove them. Some proofs are provided and several new open problems are posed, suggesting further directions for future research.

Reading the first chapter will show the reader how many of the deep original ideas and research directions of the subject continue to influence the subject; reading the second chapter will give the reader access to the rich and modern toolkit used to study one-relator groups today. It is the hope of the authors that the combination of these two points of view will convince the reader of the depth, relevance, and perennial beauty of the theory of one-relator groups.

\

\noindent \textbf{Acknowledgements.} The first author would like to thank Sam Kim and Carl-Fredrik Nyberg-Brodda for inviting him to write a survey article (\cref{chap:survey2}) for the KIAS Springer Series in Mathematics and for their warm hospitality during his visit to Korea. He would also like to thank Carl-Fredrik Nyberg-Brodda for enriching his knowledge of the history of one-relator groups and Sam Hughes for helpful conversations on profinite invariants. The second author would like to thank Marco Linton for providing him with many insights into the theory of one-relator groups, and John Stillwell for helpful comments. The cover image is based on a design by B.\ B.\ Newman for the cover of his kangaroo leather-bound PhD thesis.

The first author acknowledges supports from the grant 202450E223 (Impulso de líneas científicas estratégicas de ICMAT). The second author is supported by the Mid-Career Researcher Program (RS-2023-00278510) through the National Research Foundation funded by the government of Korea, and by the KIAS Individual Grant MG094701 at Korea Institute for Advanced Study.

\chapter{History}

\section{Introduction}

\noindent What is the \textit{simplest} non-trivial group? This is an ambiguous question, and in attempting to specify it one is led immediately to group-theoretic measures of complexity. What does it mean for a group to be complicated? Group theory is riddled with measures for answering such questions. From the point of view of finite group theory, one might base complexity either coarsely on the cardinality of the group, or more finely on the number of distinct prime divisors of the order of the group. Many early results in finite group theory -- such as Burnside's theorem on groups of order $p^a q^b$ for primes $p, q$ -- arise from such considerations. The challenges and associated measures for finite groups are quite particular to that field, however. Instead, one might choose a broader \textit{combinatorial} perspective by treating all groups as structures satisfying a minimal set of relations (associativity and existence of inverses), together with some additional set of defining relations. This perspective, of defining groups via presentations $\pres{}{A}{R}$ by generators and relations, is certainly not always the most beneficial, and e.g.\ many questions about presentations of finite groups remain far out of reach and of dubious significance. However, we can also gain much from this combinatorial perspective. 

One of many benefits is that we have been led to \textit{free groups},  being those groups with no additional defining relations whatsoever, as a starting point for a measure of complexity of a group. Here, finite groups and infinite groups are placed on an equal footing, for the methods of combinatorial group theory deal with finite groups in the same manner as infinite groups: namely, via their presentations. Since all groups are obtainable from free groups simply by adding some set of relations, one immediately finds a natural target for describing the complexity of a group: \textit{how complicated are its defining relations?} Adding relations of a very particular form naturally leads to subjects like small cancellation theory, whereas adding very few relations leads to the subject matter of the present survey: \textit{one-relator groups}. For what could be more natural to ask than what happens if we add only a single relation to a free group? 

The study of one-relator groups is now almost a century old, and has in recent years seen a resurgence of activity. A number of old conjectures have been spectacularly resolved, and techniques from many different areas have come to dominate the area and ensure its relevance for many years to come. A survey of these developments, covering approximately the years 1990 to today (and especially the last 15 years since 2010), constitutes the second chapter of this present document. Doubtlessly, this present period will be cemented as an important part of the history of one-relator groups. The goal of this present survey is to cover the older half of the history of one-relator groups, covering approximately the years 1930--1987. These years are neither arbitrarily chosen nor perfectly delimiting. Firstly, they are chosen with care: 1930 was the year of the publication of Magnus' proof of the \textit{Freiheitssatz}, which remains one of the core results of the theory. On the other hand, 1987 was the year Gromov published his famous monograph on \textit{hyperbolic} groups \cite{Gr87}, the influence of which fundamentally changed much of combinatorial group theory (in particular overseeing its shift of focus towards \textit{geometric} group theory). Secondly, however, we do not mean to say that no articles from before 1930 are cited, nor none from after 1987. For the former, the entire first section of this survey deals with the pre-history of one-relator groups, and that thin slice of combinatorial group theory includes articles from around 1880--1920. Furthermore, there are also more recent articles which ``belong'' to the older half of the history of one-relator groups. For example, a 2000 article by McCool \cite{McCool2000} considers a very classical problem on \textit{normal roots} (see \S\ref{Subsec:Applications-of-Frei}), and deals with it in a very classical manner; thus, it naturally falls into the scope of this present survey. Some other recent developments on classical problems are similarly also referenced in passing, albeit by necessity with essentially no details of the methods of proof.

There already exist surveys on the theory of one-relator groups, and while they all feature some history, they rarely do so in a comprehensive manner, instead focussing on the select strands of the history directly relevant to the topics presented in the survey (such as the recent survey by Baumslag, Fine \& Rosenberger \cite{BFR19}), or else written from the point of view of containing the development of one-relator group theory inside the much broader development of combinatorial group theory (such as in the spectacular book by Chandler \& Magnus \cite[Chapter~II.5]{Chandler1982}). The author of this survey has also included a brief history of one-relator group theory in a related article \cite{NybergBrodda2021}, but the history presented therein is done so in a manner much too brief and lacunary to serve as a standard reference. It is the hope of the author that this present survey will fill this gap in the literature. Throughout, in line with the maxim of Edwards \cite{Edwards1981} and Stillwell \cite{Stillwell2023} to \textit{read the masters}, we have attempted to make precise references to the original works alongside pointers to further literature. In particular, it is the sincere hope of the author to leave the reader who already knows about one-relator groups with some new tidbits about their history; and to leave the reader unfamiliar with this fascinating topic with a desire to learn more. 

\

\noindent We give a brief outline of the survey:
\begin{itemize}
\item In \S\ref{Sec:2-Dehn-freegroups-presentations} (covering approx.\ 1880--1930) we give an overview of the pre-history of one-relator group theory, beginning with the advent of group presentations and leading into the work of M.\ Dehn. We emphasize his three fundamental decision problems, his work on surface groups, and the work by his students on certain one-relator knot groups. We conclude with a section on the Nielsen--Schreier Theorem and amalgamated free products. 
\item In \S\ref{Sec:3-Magnus}, we present three fundamental articles by W.\ Magnus, from 1930, 1931, and 1932, respectively. This includes a proof of the central \textit{Freiheitssatz} via the Magnus hierarchy and several of its applications, including the Conjugacy Theorem and normal roots. We conclude with Magnus' proof of the decidability of the word problem in all one-relator groups. 
\item In \S\ref{Sec:4-Lull-phase}, we outline the ``Lull Phase'' of one-relator groups, being approximately the years 1932--1960 in which a paucity of results in the subject appeared. Nevertheless, we discuss Lyndon's Identity Theorem, an important result from this period, and the parallel study of residual properties of groups, central to later sections. Finally, we discuss some plausible reasons for why this phase experienced a dearth of results.  
\item In \S\ref{Sec:5-1960s-resurgence}, beginning in 1960, we cover a period of resurgence of interest in one-relator groups, in which many new results were proved, and many central figures -- most centrally G.\ Baumslag -- would enter the playing field. We cover the Torsion Theorem of Karrass, Magnus \& Solitar and how it was prompted by the Poincaré Conjecture. Next, we discuss hopfian groups, and the remarkable non-hopfian groups constructed by Baumslag \& Solitar. Finally, we discuss the emergence of two new centres for one-relator group theory outside Germany: one in New York, and one in the USSR. The latter center gave rise to remarkable work by D.\ I.\ Moldavanskii, and we discuss his interpretation of the Magnus hierarchy. Finally, we briefly discuss (finitely-generated free)-by-cyclic one-relator groups. 
\item In \S\ref{Sec:6-OR-with-torsion}, we discuss one-relator groups with torsion, and the realization of the 1960s that the presence of torsion is a significantly simplifying factor. In this line, we begin by discussing the B.\ B.\ Newman Spelling Theorem and its many remarkable consequences, all proved in the thesis of B.\ B.\ Newman, followed by some residual and virtual properties of one-relator groups. We conclude the section with a discussion of the work by S.\ J.\ Pride following in the footsteps of B.\ B.\ Newman, including his resolution of the isomorphism problem for two-generated one-relator groups with torsion. 
\end{itemize}

\noindent The two last sections are more nebulous in their chronology, caused by the large number of distinct branches of the subject, and cover roughly the years 1960--1990.

\begin{itemize}
\item In \S\ref{Sec:7-Subgroups-of-OR-groups}, we broadly discuss subgroups of one-relator groups, beginning with a discussion of Magnus subgroups and their malnormality. We turn to the remarkable theory of the center of one-relator groups, as completed by A.\ Pietrowski, and continue by discussing abelian and solvable subgroups of one-relator groups. We discuss coherence and subgroup separability, and some further results on quotients and residual properties of one-relator groups. Finally, we end with a discussion on the lower central series and factors of one-relator groups, including parafree groups.
\item Finally, in \S\ref{Sec:8-Decisionproblems-1960-1980} we go further in-depth on some decision problems for one-relator groups, including the conjugacy problem, the isomorphism problem (both of which remain open in general), and the complexity of the word problem. We end with a brief discussion on the obstacles to extending one-relator group theory to two-relator group theory.
\end{itemize}

\clearpage

\section{Dehn, free groups, and presentations (1880--1930)}\label{Sec:2-Dehn-freegroups-presentations}

\noindent In this section, we present some of the history of the central ideas that began the development of one-relator group theory. Specifically, we will describe the groundwork laid by M.\ Dehn and students in the 1910s and 1920s, particularly focussing on the three prototypical classes of examples of one-relator groups: surface groups, torus knot groups, and the figure-eight knot (Listing's knot) group. Finally, we will also discuss the Nielsen--Schreier Theorem, stating that all subgroups of free groups are free. The methods developed by Nielsen and Schreier as part of their investigations of free groups -- Nielsen transformations and amalgamated free products, respectively -- will echo throughout this survey. We will therefore end the section with a brief overview of the theory of amalgamated free products. This section is \textbf{not} intended to serve as a complete, or even partial, history of combinatorial group theory. For such treatments, we instead refer the reader to the monographs by Chandler \& Magnus \cite{Chandler1982} or Stillwell \cite{Stillwell1993}.

\subsection{Presentations and Dehn's fundamental problems}\label{Subsec:Dyck-and-presentations}


{
\setlength{\epigraphwidth}{0.85\textwidth}
\epigraph{\textit{Dehn's work and the influence of his ideas created a synthesis of topological and group theoretical problems which may be characterized as follows: }
\begin{enumerate}
\item \textit{The theory of non-abelian groups provides effective tools for the solution of topological problems.}
\item \textit{The group theoretical results needed for the purposes of topology have characteristics which did not appear in the previously existing group theoretical literature. The new results were of an algorithmic nature} [...]
\end{enumerate}\leavevmode}{---W. Magnus, 1978 \cite[p. 135]{Magnus1978}}
}

\noindent The theory of group presentations has an intricate, deep, and long history, so we shall content ourselves with beginning the story at definite year: 1882. Group presentations had appeared at sporadic points before this. One such point, pointed out to the author by J.\ Stillwell, is a presentation of the alternating group $A_5$ appearing in an 1856 article by W.\ R.\ Hamilton \cite{Hamilton1856} on \textit{icosian calculus}. Another such sporadic point is the 1880 publication by A.\ V.\ Vasiliev \cite{Vasiliev1880}, who in his master's dissertation (see \cite{Halsted1897}) gives the same presentation for $A_5$ as Hamilton. No systematic theory of presentations is developed in either of these two articles, however. The year 1882, on the other hand, saw the publication of W.\ Dyck's \textit{Habilitationsschrift} \cite{Dyck1882b, Dyck1882}\footnote{Dyck had been supervised for his doctorate by F.\ Klein, who had in turn been influenced by Cayley's work. Thus, Dyck's article \cite{Dyck1882}, which is otherwise written in German, curiously begins with an English quote from Cayley's 1878 article \cite[p. 51]{Cayley1878}: ``A group is defined by means of the laws of combination of its symbols''.}. Therein, Dyck places the theory of group presentations on a solid footing, and gives presentations for a number of groups. However, more importantly for our story, his investigations opened up the possibility of investigating groups purely defined by means of their presentations. For example, Dyck calls a group $\pres{}{A}{\varnothing}$ with no defining relations the ``most general'' (Ger.\ \textit{allgemeinste}, \cite[p. 5]{Dyck1882}) group. Today such a group is, of course, called a \textit{free group}\footnote{The name \textit{free group} only appeared later; it is attributed to E.\ Artin by Schreier \cite[p. 163]{Sc27}.}. Dyck also describes an \textit{algorithm} -- a very early usage of this word! -- for computing all subgroups of a given finite group using presentations, and applies it to the symmetric group $S_4$ as an example (\cite[\S4.1]{Dyck1882}). Furthermore, one result (\cite[p.\ 12--13]{Dyck1882} that is central to his arguments is the following: suppose $G$ is a group given by the presentation
\begin{equation}
G = \pres{}{A}{r_1 = 1, \dots, r_k = 1}.
\end{equation}
Then the set of all elements in the free group $F_A$ on $A$ which represent the identity element in $G$ forms a normal subgroup $R$ of $F_A$, and furthermore any word $w \in R$ can be written as
\begin{equation}\label{Eq:Dyck-Prehistory}
W = \prod_{i=1}^m T_i r_{t_i}^{\pm 1} T_i^{-1} \quad \textnormal{where } T_i \in F_A, \textnormal{ and } t_i = 1, 2, \dots, n.
\end{equation}
In modern terminology, this amounts to little more than that $G$ is equal to the quotient of $F_A$ by the \textit{normal closure} of the words $r_1, \dots, r_k$. While seemingly simple, this ``normal form'' result would lead to many later developments, appearing both in the Reidemeister--Schreier process and Magnus' proof of the \textit{Freiheitssatz} (\S\ref{Subsec:Freiheitssatz}). Dyck also made further use of this result, and others, and published a sequel to his first article the very next year \cite{Dyck1883}. While this heavily features presentations, the article is primarily concerned with finite permutation groups rather than infinite groups, creating a unified and abstract approach to the former subject.\footnote{This is also emphasized by the fact that Dyck cites, in a footnote, both Hamilton's and Vasiliev's articles mentioned above \cite[p.\ 82]{Dyck1883}.} This second article can thus be seen as one of the very early articles in the subject of computational finite group theory. The reader further interested in the role Dyck's work played in the development of the abstract notion of a group is referred to Wussing's discussion of the subject \cite[p.~240--243]{Wussing1984}.

The subject of presentations of groups would lay dormant for some time, until it was picked up by M.\ Dehn (1878--1952) in the early 1900s. In 1911 \cite{De11}, he posed the following three fundamental algorithmic problems:
\begin{enumerate}
\item \textit{The word problem}; i.e.\ the problem of deciding when a given word over the generators of a group is equal to $1$ in the group;
\item \textit{The conjugacy problem}; i.e.\ the problem of deciding if two given words are conjugate or not; and
\item \textit{The isomorphism problem}; i.e.\ the problem of deciding whether two presentations define isomorphic groups or not. 
\end{enumerate}
Today, we know that all these problems are, for general finitely presented groups, algorithmically \textit{undecidable} (see \S\ref{Subsec:Undecidability}). As pointed out by Stillwell \cite[\S5]{Stillwell2012} (see also \cite[Footnote~17]{Stillwell2014}) there were some remarkable early suspicions of this, even before the formalization of undecidability appeared in the mid-1930s. For example, already in 1908, Tietze \cite[\S14, p.~80]{Tietze1908} noted that ``the question of whether two groups are isomorphic is not solvable in general''. In 1932, Reidemeister \cite[\S2.10]{Reidemeister1932} notes that ``it is in general not possible to decide whether two groups presented by generators and relations are isomorphic to each other. One also cannot decide whether such a group is free group...'' \cite[pp. 40--41]{Stillwell2014}.\footnote{However, as Stillwell notes (\textit{loc.\ cit.}), Reidemeister may have had ``some intimation of the coming wave of unsolvability'', having organized the 1930 conference where G\"odel first announced his (closely related) incompleteness theorems.} These remarkable and (at the time) unsubstantiated claims would not be proved -- or indeed formulated! -- formally until many decades later in the 1950s (see \S\ref{Subsec:Undecidability}). 

Around 1914, however, the spectre of undecidability had not yet come to haunt combinatorial group theory, and Dehn had produced the solution to the word problem in several classes of one-relator groups. The most central examples are the \textit{surface groups} and the \textit{trefoil knot group} $\pres{}{a,b}{a^2b^3 = 1}$, the word problem for which he solves using the \textit{Gruppenbild} (or \textit{Cayley graph} \cite{Cayley1878b}). We shall expand on these topics in \S\ref{Subsec:Surface-and-knot-groups} below. For now, we suffice to mention a few pieces of foreshadowing for the subsequent developments of one-relator group theory. First, in 1911 Dehn \cite[\S3, p. 144]{De11} also considered the word problem in the one-relator group $\pres{}{a,b}{a^3 b^3 = 1}$, but only provided a sketch of a solution. Dehn would also assign several of his students to work on topics adjacent to one-relator group theory around this time, two of which were H.\ Gieseking (1887--1915) and Fritz Klein (about whom almost nothing is known). We will expand on Gieseking's work below in \S\ref{Subsec:Surface-and-knot-groups}. Certainly Gieseking, and probably also Klein, died in the First World War. For the remainder of the 1910s there was little progress on one-relator group theory. 

After the war, however, the subject would again begin to stir. In 1922, in an unpublished lecture given in Leipzig\footnote{According to Reidemeister \cite{Reidemeister1932}, the lecture was an address to the Naturforscherversammlung. He also confirms Magnus' remarks in \cite[p. 141]{Ma30}, cf.\ also \cite[p. xi]{Magnus1984}, that the notes from this lecture were, unfortunately, never published.}, Dehn made a remarkable conjecture on the structure of certain subgroups of one-relator groups. The conjecture reads as follows: \textit{given a one-relator group $G = \pres{}{A}{r=1}$ in which all letters appear in $r$, and such that $r$ is not obviously conjugate to some shorter word, then the subgroup of $G$ generated by any proper subset of $A$ is free}. In other words: if one cannot spell the word $r$, then one cannot get any non-trivial relation whatsoever. This statement would become known as the \textit{Freiheitssatz} (``Freeness theorem''). The German-speaking reader will note that it is not called the \textit{Freiheitsvermutung} (``Freeness conjecture''). This is because, as we shall see in \S\ref{Sec:3-Magnus}, Dehn's student Magnus would end up writing down the first (combinatorial) proof of the result in 1930. This proof would mark one of the first milestones in one-relator group theory. Dehn himself likely had a very solid idea of a (geometric) proof of the result already in 1922\footnote{C.\ L.\ Siegel, for example, said that Dehn ``did discover a proof [of the \textit{Freiheitssatz}], however, and would on occasion tell his friends how it went'' \cite[p. 223]{Siegel1978}.}, but never published it. This was perhaps auspicious, as the timing of Magnus' eventual combinatorial proof in 1930, coinciding with the development by Reidemeister of combinatorial group theory and Schreier on amalgamated free products, could not have been better.

For a broad introduction to Dehn's life, we refer the reader to Stillwell's article \cite{Stillwell1999} and \cite{Magnus1954}. For some of Dehn's later articles, including on the famous Dehn--Nielsen Theorem, we highly recommend the reader to consult Stillwell's excellent translation and commentary \cite{Dehn1987}. Two biographies have also recently appeared: one by Peifer \cite{Peifer2015} concerning Dehn's topological and group-theoretic work, and another by Rowe \cite{Rowe2023} concerning his work on the history of mathematics. Indispensible for the broader context of Dehn's topological work is also Stillwell's \cite{Stillwell2012} history of the development of $3$-manifolds. 
  
Dehn did not ever return to one-relator group theory after his initial work on the \textit{Freiheitssatz} mentioned above, being instead drawn to other problems. We recall some of the fascinating moments in Dehn's later life as presented by Stillwell \cite{Stillwell1999}. Dehn's life would once again be affected by war: in 1938, Dehn was arrested as part of the anti-Jewish programs organized by the Nazis. He was shortly thereafter released ``because there was no more room in Frankfurt to keep prisoners under lock and key''. Eventually, with the help of W.\ Magnus, E.\ Hellinger, and C.\ Haas, was able to escape to America. Deeming the Atlantic too dangerous, the 62-year old Dehn escaped via Stockholm, Moscow, Siberia, then Japan, arriving finally in America in 1941. He eventually settled in Black Mountain College in North Carolina, a small liberal arts college. During his years in America, he was occupied with mostly non-mathematical work, writing e.g.\ a charming series of articles summarizing the early history of mathematics \cite{Dehn1943b, Dehn1943, Dehn1944, Dehn1944b}. Dehn passed away in 1952, the same year that P.\ S.\ Novikov \cite{Novikov1952} announced the general unsolvability of the first of Dehn's fundamental problems, i.e.\ the word problem in finitely presented groups. Within a few years, undecidability would permeate combinatorial group theory, heralding a new era for the subject: we discuss this period, and the large impact it likely had on one-relator group theory, in more detail in \S\ref{Subsec:Undecidability}.

\begin{remark}
When writing about the word problem, it is important to remark that at the same time as Dehn, this problem was also considered by A.\ Thue (1863--1922) in Norway. He studied the properties of the word problem in \textit{monoids}, i.e.\ structures with an associative binary operation for which an identity element exists, and solved it in some particular cases. For example, he gives some examples of one-relation monoids in which the word problem is decidable, including the monoid with three generators $a, b, c$ and the single defining relation $ab bc ab = 1$, see \cite[\S VII]{Thue1914}. This and several others monoids considered by Thue turn out to be (one-relator) groups; in the specified example, it can be shown that it is isomorphic to the free group of rank $2$. It is unlikely that Thue was in any way influenced by Dehn, since, as pointed out by Magnus \cite[p. 141]{Magnus1978}, there was only a fleeting acquaintance between Dehn and Thue. However, in the same place, Magnus remarks that ``[...] the emphasis in Thue's papers is on infinite sequences with certain properties. It is at least not obvious how his results could be applied to any word problem.'' As the above example and others like it show, Magnus' remarks seem misguided, and are not an accurate representation of Thue's 1914 work on the word problem. Indeed, Magnus' remarks seem to be more applicable either to Thue's 1906 article containing the famous infinite Thue--Morse sequence \cite{Thue1906}, or perhaps his 1910 article discussing a precursor to term rewriting systems \cite{Thue1910} (see \cite{Steinby2000} for a modern reading of this article). In any case, what is true is that Thue's work on the word problem would not have any significant impact on the word problem for groups until the resurgence of methods of \textit{string rewriting systems}, or (semi-)\textit{Thue systems}, for groups in the 1970s and 1980s; see \S\ref{Subsec:More-on-WP}. 
\end{remark}

\subsection{Surface groups and knot groups}\label{Subsec:Surface-and-knot-groups}

Some of the most fundamental examples of one-relator groups were also the first to be studied. These groups are the fundamental groups $\pi_1(\Sigma_g)$ of $2$-surfaces, and much of the history of one-relator group theory can be summarized as attempts to generalize techniques and results that hold for $\pi_1(\Sigma_g)$. Given a compact orientable $2$-surface $\Sigma_g$ of genus $g \geq 1$, reading the boundary of its fundamental polygon shows that its fundamental group has a presentation given by
\begin{equation}\label{Eq:pres-of-sigma-g}
\pi_1(\Sigma_g) = \pres{}{a_1, b_1, \dots, a_g, b_g}{[a_1, b_1][a_2,b_2] \cdots [a_g, b_g] = 1}.
\end{equation}
Similarly, given a compact \textit{non-}orientable surface $\mathcal{K}_g$ of genus $g$, a presentation of its fundamental group is given by 
\begin{equation}\label{Eq:pres-of-K-g}
\pi_1(\mathcal{K}_g) = \pres{}{a_1, \dots, a_g}{a_1^2 a_2^2 \cdots a_g^2 = 1}.
\end{equation}
The particular case of $g=2$ is the famous \textit{Klein bottle}\footnote{The Klein ``bottle'' is likely a curious case of mistranslation. The original German is ``Kleinsche Fläche'', simply meaning ``Klein surface'', and refers to the familiar surface constructed by Klein in 1882 \cite{Klein1882}. However, these words were likely misread as ``Kleinsche Flasche'', i.e.\ ``Klein bottle'', and thus this strange piece of terminology made its way into the mathematical literature. The illustration of the surface as a self-intersecting bottle is likely a back-formation.} $\mathcal{K}_2$. In this section, we will present some of Dehn's work on these groups. It bears emphasizing that the surface groups $\pi_1(\Sigma_g)$ remain today among the flagship examples of one-relator groups. 

The use of hyperbolic geometry to study topological questions appeared already in works of Poincaré in 1904 \cite{Poincare1904}, but Dehn's remarkable contribution was to merge these two topics together with combinatorial group theory. Dehn made two separate studies of the groups $\pi_1(\Sigma_g)$. In 1911 \cite{De11}, he solves the word and conjugacy problem in all such groups (and their non-orientable counterparts) by constructing the universal covering space of $\Sigma_g$ and a hefty dose of hyperbolic geometry. He also gives a brief solution to the isomorphism problem for this class, by proving that $\pi_1(\Sigma_{g_1}) \cong \pi_1(\Sigma_{g_2})$ if and only if $g_1 = g_2$ via an appeal to the abelianization, appealing to Tietze's work \cite{Tietze1908}. While this latter feat seems modest today, it is important to remember that at this point we are still in the early days of combinatorial group theory, and such arguments still had to be performed with care. 

In 1912, the very next year, Dehn \cite{De12} returned to provide a simpler combinatorial argument for the decidability of the word and conjugacy problems in surface groups. His argument was based on the following key result:

\begin{theorem}[Dehn's Spelling Theorem, \cite{De12}]
Let $G = \pi_1(\Sigma_g)$ be as in \eqref{Eq:pres-of-sigma-g}. Let $w$ be a non-trivial reduced word, in the generators of $G$, which represents the identity element of $G$. Then there exists a subword $u$ of $w$ such that $u$ is a subword of some cyclic conjugate of the defining relator of $G$ or its inverse, and such that the length of $u$ is strictly more than half the length of the defining relator.\label{Thm:Dehn-spelling-theorem}
\end{theorem}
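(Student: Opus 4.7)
The plan is geometric, built on the universal cover $\widetilde{\Sigma_g}$. For $g \geq 2$ this is the hyperbolic plane $\mathbb{H}^2$, tessellated by regular $4g$-gons with $4g$ meeting at every vertex; the Cayley graph of $G$ with respect to $\{a_i, b_i\}_{i=1}^g$ is exactly the $1$-skeleton of this tessellation, and the tessellation itself is the Cayley $2$-complex of the presentation \eqref{Eq:pres-of-sigma-g}. A nontrivial reduced word $w$ with $w =_G 1$ defines a nontrivial closed edge loop $\gamma$ based at the identity vertex; since $\mathbb{H}^2$ is simply connected, $\gamma$ bounds a finite, simply connected subcomplex $D$ -- a van Kampen diagram -- whose $2$-cells are copies of the fundamental $4g$-gon (each with boundary label a cyclic conjugate of $r^{\pm 1}$, where $r = [a_1,b_1]\cdots[a_g,b_g]$) and whose boundary word is $w$.

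Next, I would take $D$ of minimal area, so that no two adjacent faces form a cancelling dipole, and then apply a discrete Gauss-Bonnet argument to this disk. Since $\chi(D) = 1$, one has $V - E + F = 1$, and summing $4g$ over all faces gives $4gF = 2E_{\mathrm{int}} + E_\partial$, where $E_{\mathrm{int}}, E_\partial$ count interior and boundary edges of $D$. The crucial inequality is that every interior vertex of a reduced $D$ has valence at least $4$ (in fact, essentially the full $4g$ inherited from the tessellation), since the relator has only very short pieces -- at most single letters. Combining these estimates with the bound of at most $2g$ boundary edges per face and running the standard curvature bookkeeping forces, for the sum to come out to $2\pi$, the existence of at least one face $\Pi$ of $D$ meeting $\partial D$ in a block of strictly more than $2g$ consecutive edges. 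The label along this block is then a subword $u$ of $w$ which is simultaneously a subword of the cyclic conjugate of $r^{\pm 1}$ labelling $\partial\Pi$, with $|u| > 2g = |r|/2$, proving the theorem.

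The main obstacle lies precisely in the curvature step, specifically in ruling out diagrams where every face merely ``skirts'' $\partial D$ with at most $2g$ boundary edges. This comes down to a sharp small cancellation estimate: one must verify that no two distinct cyclic conjugates of $r^{\pm 1} = (a_1 b_1 a_1^{-1} b_1^{-1} \cdots a_g b_g a_g^{-1} b_g^{-1})^{\pm 1}$ share a common subword of length $\geq 2$, so that any long shared interior arc in $D$ would force a dipole and contradict minimality of $D$. Dehn's own proof in \cite{De12} carried out this analysis via explicit hyperbolic angle defects in $\mathbb{H}^2$, whereas a modern write-up would package the same content in the $C'(1/6)$ small cancellation framework later formalized by Lyndon and Schupp. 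The degenerate case $g = 1$ (where $\pi_1(\Sigma_1) \cong \Z^2$ and the tessellation is Euclidean rather than hyperbolic) falls outside the strict hyperbolic framework but is handled by direct inspection, since every reduced identity word can be analysed through the abelianization.
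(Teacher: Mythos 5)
The paper itself does not prove Dehn's Spelling Theorem --- it is stated as part of the historical survey with a citation to Dehn's 1912 article --- so there is no in-paper proof to compare against. Your proposal via a minimal van Kampen diagram, the verification that pieces have length $1$, and a discrete Gauss--Bonnet/Greendlinger argument is the standard modern proof of this result and is correct in outline for $g \geq 2$. Two points deserve correction.

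First, the claim that every interior vertex of a reduced diagram $D$ has valence at least $4$ (``in fact, essentially the full $4g$ inherited from the tessellation'') is not justified, and is not actually what the argument uses. A van Kampen diagram carries a combinatorial map to the Cayley $2$-complex, but that map need not be locally injective at interior vertices, so the $\{4g,4g\}$ link structure of the tessellation is not automatically inherited by $D$; after removing valence-$2$ subdivision points, one only gets valence $\geq 3$ for free. Luckily the $C'(1/6)$ version of Greendlinger's lemma never needs a vertex valence bound beyond $3$: the curvature is concentrated on faces, and the contradiction comes from the fact that with pieces of length $1 < |r|/6$, any interior face has too many interior arcs, while a boundary face with at most $2g$ consecutive boundary edges has too many interior arcs as well. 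If you drop the vertex-valence claim and run the face-based curvature bookkeeping (or just cite Greendlinger's lemma, which your setup establishes is applicable), the proof is complete.

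Second, the assertion that the $g=1$ case ``is handled by direct inspection, since every reduced identity word can be analysed through the abelianization'' is false: the theorem fails for $g=1$. The word $w = a_1^2 b_1^2 a_1^{-2} b_1^{-2}$ is freely reduced and represents the identity in $\pi_1(\Sigma_1) \cong \Z^2$, yet it contains no subword of length $\geq 3$ occurring in a cyclic conjugate of $[a_1,b_1]^{\pm1}$, since every length-$3$ subword of those cyclic conjugates strictly alternates between $a_1$- and $b_1$-letters, whereas every length-$3$ subword of $w$ contains a repeated letter. This is exactly the failure of hyperbolicity for the torus, and it is why Dehn's algorithm requires $g \geq 2$. (The paper's own statement is a little loose on this point, since the cited presentation \eqref{Eq:pres-of-sigma-g} was introduced with $g \geq 1$; the intended hypothesis is $g \geq 2$, as the surrounding discussion of hyperbolic geometry makes clear.)
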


This ``spelling'' theorem, its name being due to it providing information on the spelling of words equal to $1$ in the underlying group, gives rise to an obvious algorithm for solving the word problem: given a non-empty word $w$, to decide if it is equal to $1$ in $\pi_1(\Sigma_g)$, scan it for occurrences of all subwords $u$ of all cyclic conjugates of the defining relator and its inverse, such that $u$ has length at least half that of the defining relator. If such a word cannot be found, then $w \neq 1$ in $G$. Otherwise, replace $u$ by the shorter word representing $u^{-1}$, and repeat. Then $w = 1$ in $G$ if and only if the empty word is reached. This algorithm, known as \textit{Dehn's algorithm}, can be interpreted as a form of partially confluent string rewriting system, in which words are rewritten to shorter words by replacing certain subwords by shorter words, and which is guaranteed to be confluent to a normal form (i.e.\ converge in a finite number of steps, see \S\ref{Subsec:More-on-WP}) when applied to words representing $1$, see \cite[p. 70--71]{Book1993}. In this way, Dehn's algorithm continues to influence theoretical computer science, by leading to the notion of string rewriting systems which are confluent only on certain equivalence classes, which has seen some study \cite{Otto1987, Otto1991}.

One particularly pleasant property of working with surface groups is that their subgroups are all themselves either surface groups or free. Specifically, we have the following result: 

\begin{theorem}\label{Thm:Surface-subgroup-theorem}
Let $G$ be a surface group of genus $g$. Then any subgroup of finite index is again a surface group (of genus $\geq g$) and any subgroup of infinite index is free.
\end{theorem}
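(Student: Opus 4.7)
The natural framework is covering space theory. Since $\pi_1(\Sigma_g)$ (and the non-orientable analogue $\pi_1(\mathcal{K}_g)$) acts freely and properly discontinuously on the universal cover $\widetilde{\Sigma}_g$, which is contractible (the Euclidean plane when $g=1$, the hyperbolic plane when $g\geq 2$, with the non-orientable cases handled by passing to the orientation double cover), each surface is a classifying space $K(G,1)$. Hence subgroups $H\leq G$ are in bijective correspondence with connected pointed covering spaces $p\colon\widehat{\Sigma}_H\to\Sigma_g$ via $H = p_*\pi_1(\widehat{\Sigma}_H)$, and the index $[G:H]$ equals the number of sheets of the cover. The plan is to analyse $\widehat{\Sigma}_H$ topologically in the two cases, and then read off algebraic information about $H$ from its fundamental group.

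For the finite index case, I would argue that $\widehat{\Sigma}_H$ is a compact connected surface without boundary, since a finite cover of a compact manifold is compact and the covering projection is a local homeomorphism. The classification of closed surfaces then identifies $\widehat{\Sigma}_H$ with some $\Sigma_{g'}$ or $\mathcal{K}_{g'}$, whose fundamental group has one of the standard one-relator presentations \eqref{Eq:pres-of-sigma-g} or \eqref{Eq:pres-of-K-g}; thus $H$ is itself a surface group. The genus bound then comes from multiplicativity of Euler characteristic under finite covers: if $[G:H]=n$, then $\chi(\widehat{\Sigma}_H) = n\,\chi(\Sigma_g)$, and since $\chi(\Sigma_g) = 2-2g \leq 0$ for $g\geq 1$, one obtains $2-2g' \leq 2-2g$, i.e.\ $g'\geq g$ (with the analogous inequality $\chi = 2-g$ in the non-orientable case, where one additionally has to remember that a cover of an orientable surface is orientable, while a cover of a non-orientable surface may or may not be orientable).

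For the infinite index case, $\widehat{\Sigma}_H$ is a connected \emph{non-compact} surface without boundary. The key topological fact I want to invoke is that every such surface is homotopy equivalent to a $1$-dimensional CW complex, i.e.\ a graph; consequently $H = \pi_1(\widehat{\Sigma}_H)$ is free, by the classical fact that fundamental groups of graphs are free (a precursor to the Nielsen--Schreier theorem). One way to justify the topological fact is to exhaust $\widehat{\Sigma}_H$ by compact subsurfaces with non-empty boundary: since any compact surface with non-empty boundary deformation retracts onto a spine graph, and since these retractions can be chosen compatibly under inclusion, one obtains a deformation retraction of $\widehat{\Sigma}_H$ onto an increasing union of finite graphs, hence onto a graph.

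The main obstacle is this last topological input: the assertion that a non-compact surface deformation retracts onto a graph. This is standard but not entirely trivial to set up rigorously; one can alternatively appeal to the classification of (possibly non-compact) surfaces, or to the existence of a proper Morse function with no index-$2$ critical points on an open surface, which gives the desired handle decomposition. Once this is in place, the two cases combine to give the full statement of \cref{Thm:Surface-subgroup-theorem}.
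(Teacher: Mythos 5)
The paper itself does not prove \cref{Thm:Surface-subgroup-theorem}: it attributes the result to Dehn (and likely Poincar\'e), remarks that ``today it is not difficult to prove topologically'', and cites Hoare, Karrass \& Solitar \cite{Hoare1971,Hoare1972} for the first \emph{published} algebraic proof via the Reidemeister--Schreier rewriting process. Your argument is precisely the standard topological proof the paper alludes to, and it is correct.

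For comparison: the Reidemeister--Schreier route computes a presentation of the subgroup directly from a set of coset representatives and then Tietze-transforms it into a standard surface or free presentation; it is purely combinatorial but notoriously heavy. Your covering-space proof is shorter, makes the genus estimate transparent via multiplicativity of Euler characteristic, and in fact recovers the sharper formula $g' = n(g-1)+1$ that the paper quotes from \cite{Hoare1971} rather than merely the inequality $g'\geq g$. The one step you rightly flag as the crux --- that a non-compact boundaryless surface is homotopy equivalent to a graph --- does require a little more than ``choose the spine retractions compatibly''. To make the exhaustion argument work one should (i) normalize the exhausting compact subsurfaces $K_n$ so that no complementary component of $K_n$ in the open surface is precompact, which guarantees each inclusion $K_n\hookrightarrow K_{n+1}$ is $\pi_1$-injective, and (ii) observe that the region $K_{n+1}\setminus\operatorname{int}(K_n)$ is a compact surface every component of which has boundary on $\partial K_{n+1}$, so it deformation retracts onto $\partial K_n$ together with a graph; hence $\pi_1(K_n)$ sits inside $\pi_1(K_{n+1})$ as a \emph{free factor}, and the direct limit is free. (A direct limit of free groups along arbitrary injections need not be free, so the free-factor observation matters.) As you note, one can instead appeal to the classification of open surfaces outright; a yet different way to close the gap, more in the spirit of the homological material in \S\ref{Subsec:LyndonIT}, is to observe that a surface group has cohomological dimension $2$, that any infinite-index subgroup drops to cohomological dimension $\leq 1$ (Strebel's theorem for Poincar\'e duality groups), and then to invoke the Stallings--Swan theorem that groups of cohomological dimension one are free.
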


The above theorem (Theorem~\ref{Thm:Surface-subgroup-theorem}) is difficult to attribute explicitly, particularly because today it is not difficult to prove topologically. Its first published algebraic proof, using the Reidemeister--Schreier method, only appeared in 1971 by Hoare, Karrass \& Solitar \cite{Hoare1971, Hoare1972}, cf.\ also \cite[p. 306]{Magnus1969} and \cite{Burns1983}. However, we emphasize that the theorem was certainly known to Dehn, and likely known also to Poincaré. One can get explicit bounds on the genera of the surface subgroups of finite index \cite[Theorem~1]{Hoare1971}: in the orientable case, any subgroup of index $n$ in $\pi_1(\Sigma_g)$ is isomorphic to $\pi_1(\Sigma_{n(g-1)+1})$. In the non-orientable case, any subgroup of index $n$ in $\pi_1(\mathcal{K}_g)$ is either isomorphic to $\pi_1(\mathcal{K}_{n(g-2)+2})$ or to $\pi_1(\Sigma_{\frac{n}{2}(g-2) + 1})$. In particular, since $\pi_1(\Sigma_g)$ surjects every finite cyclic group, it follows that $\pi_1(\Sigma_2)$ \textit{contains $\pi_1(\Sigma_g)$ as a subgroup of finite index $g-1$ for every $g \geq 2$.} In the direction of understanding finite index subgroups of finitely generated discrete groups of motions of the hyperbolic plane, we briefly mention that it was open for quite some time whether all such groups were virtually torsion-free, a problem known as \textit{Fenchel's conjecture}. The problem admitted partial progress by Bundgaard \& Nielsen \cite{Bundgaard1951} followed by a full positive solution by Fox \cite{Fox1952} (corrected in \cite{Chau1983}; in this direction see also \cite{Mennicke1967,Mennicke1968,Lehner1970}) only in the early 1950s. 

The next early fundamental examples of one-relator groups come from \textit{knot groups}, particularly the \textit{trefoil knot group} and the \textit{figure-eight knot group} (Listing's knot group). The history of knot theory and the associated knot groups is long, and can be traced back to the very beginning of topology.\footnote{Indeed, the very word \textit{topology} was introduced by Listing already in 1848 \cite{Listing1848}, in which he among many other subjects considers the \textit{figure-eight knot}, which we will encounter below. This knot is also known (primarily in German sources) as \textit{Listing's knot} for this reason.} Early knot theorists also leave their mark on later generations of combinatorial group theorists. To name just one example, W.\ Wirtinger, one of the first knot theorists, would have much influence on K.\ Reidemeister, who would in turn have a great deal of influence on his student O.\ Schreier. We will encounter the latter two frequently throughout this part of the survey (particularly in \S\ref{Subsec:Nielsen-Schreier}). Because the history of knot theory developed alongside the theory of one-relator groups, and did so in many intricate and independent  ways, the remainder of this section will by necessity be somewhat anachronistic: we will detail those developments of knot theory pertaining to one-relator group theory taking place between c.\ 1910--1930. At the end of it, however, in preparation for our discussion of the Nielsen--Schreier Theorem in the next section (\S\ref{Subsec:Nielsen-Schreier}), we will return in time to the early 1920s. The reader wishing to skip the remainder of this section should thus have no difficulty knowing what year they will land in.

As mentioned above, we will discuss only two knot groups: the trefoil knot group and the figure eight knot group. The former group would become important to the theory of \textit{cyclically pinched} one-relator groups; and the latter group would become one of the first concrete one-relator groups studied by Magnus. We will restrict our attention to the groups as abstract groups given by generators and relations, leaving the topological considerations aside (as did many of the authors whose work will be cited in this section). 

First, on the \textit{trefoil knot group} $\pres{}{a,b}{a^2 b^3 = 1}$, as mentioned earlier, Dehn \cite[Chapter~II, \S5]{Dehn1910} had solved the word problem in this group by constructing its Cayley graph. In 1914, he was able to use the construction of the automorphism group of the trefoil knot group to prove that the left-handed trefoil knot cannot be deformed to the right-handed trefoil knot \cite{Dehn1914}; this was the beginning of the study of automorphism groups of one-relator groups, which we will encounter throughout this survey. Pursuant to this, and not long after the war, O.\ Schreier \cite{Schreier1924} became interested in this problem following a seminar by Reidemeister in Vienna \cite[p. 167, Footnote 1]{Schreier1924}], and in 1924 considered the groups $G_{m,n} = \pres{}{a,b}{a^m b^n = 1}$ for $m, n > 1$. These groups cover all torus knot groups \cite{Seifert1950}: if $\mathcal{T}_{m,n}$ is the $(m,n)$-torus knot, then $\pi_1(\R^3 \setminus \mathcal{T}_{m,n})$ is isomorphic to $G_{m,n}$. In this 1924 article, among other things, Schreier proved that the pair $\{ m,n \}$ uniquely determines $G_{m,n}$ up to isomorphism. He also determines the centre $Z(G_{m,n})$ to be cyclic, generated by $a^m = b^n$, and when $m\neq n$, he shows that the automorphism group of $G_{m,n}$ is generated by the automorphisms
\[
\theta^{\pm 1}_{W} \colon a \mapsto Wa^{\pm 1}W^{-1}, \quad b \mapsto Wb^{\pm 1} W^{-1}
\]
where $W$ is an arbitrary word, and in particular that $\operatorname{Out}(G_{m,n}) \cong \Z / 2\Z$. If $m=n$, then $\operatorname{Out}(G_{m,n}) \cong (\Z / 2\Z)^2$. He then gives a presentation for the full automorphism group of $G_{m,n}$. For $m \neq n$, he shows that
\begin{equation}\label{Eq:aut-of-torus-knot-group}
\Aut(G_{m,n}) \cong \pres{}{x,y,z}{x^2 = y^m = z^n = (xy)^2 = (xz)^2 = 1} \cong D_{m} \ast_{C_2} D_n.
\end{equation}
In modern language, this is a virtually free group, being an amalgamated free product of two finite dihedral groups. For $m = n$, there is in addition to \eqref{Eq:aut-of-torus-knot-group} the generator $q$ and relations $q^2 = [q, x] = 1$ and $qyq^{-1} = z$. Although he would not have used precisely this language, Schreier's result of fully determining the automorphism group of torus knot groups -- and other similar one-relator groups -- still stands as a gem of early combinatorial group theory (we refer the interested reader to \cite{Karrass1984a, Karrass1984b} for generalizations).

Second, returning to the pre-war period and the \textit{figure-eight knot group}, as alluded to in \S\ref{Subsec:Dyck-and-presentations}, Dehn assigned one of his students, H.\ Gieseking \cite{Gieseking1912}, the task of studying this group and other related subjects. This is the one-relator group with the presentation
\begin{equation}\label{Eq:figure-eight-knot-group}
\pres{}{x,y}{x^{-1}yxy^{-1}xy = yx^{-1}yx}.
\end{equation}
Much like Schreier above, Gieseking would first provide a solution to the word and conjugacy problem in $\pres{}{a,b}{a^\alpha b^\beta = 1}$, see \cite[Chapter 2, \S2, p. 226]{Gieseking1912}, and then investigate the one-relator group with the generators $a, b$ and defining relation $a^2 b^2 a^{-1} b^{-1} = 1$ \cite[p.\ 158]{Gieseking1912}. This latter group has an index $2$ subgroup isomorphic to the figure eight knot, generated by $a^2, ba$, and Gieseking succeeds in solving the word problem for this group and the figure eight knot group. Gieseking's 1912 thesis \cite{Gieseking1912} is typical for a student of Dehn's: topological methods appear first, and there is everywhere an abundance of geometric arguments and intutition. His thesis is today more famous for producing the first example of a hyperbolic $3$-manifold with finite volume, the \textit{Gieseking manifold}. 

The Cayley graph of the figure eight knot group was likely constructed, as Magnus \cite{Magnus1978} suggests, by a ``Fritz Klein'', a student of Dehn's who also likely died in the First World War (see \cite[p. 201]{Dehn1987}). This provides yet another solution to the word problem. Some time later, Magnus \cite{Magnus1931} would prove in 1931 that a certain set of automorphisms given by Dehn generate the full outer automorphism group of the figure eight knot group \cite[Part III]{Dehn1914}. Specifically, Magnus proved, using his ``Magnus hierarchy'' for one-relator groups (see \S\ref{Subsec:Freiheitssatz}) that the outer automorphism group is generated by the two automorphisms $j_0, j_1$ given by:
\begin{align*}
j_0 &\colon u \mapsto u^{-1}, \quad v \mapsto v^{-1} \\
j_1 &\colon u \mapsto v^{-1}u^{-1} v u, \quad v \mapsto u^{-1}v^{-1}u^{-2}v^{-1} uv.
\end{align*}
Computing the outer automorphism group of one-relator groups remains a difficult problem in general; only in 1974, for example, did Grossman \cite{Grossman1974} prove that the outer automorphism group of $\pi_1(\Sigma_g)$ is residually finite; this result extends to all ``cyclically pinched'' one-relator groups by Allenby, Kim \& Tang \cite{Allenby2001}. The figure eight knot group, an archetypical one-relator group, continued to be studied by students of Magnus, e.g.\ by A.\ Whittemore \cite{Whittemore1973}, especially via linear methods. Today we know that \textit{all} knot groups have decidable word problem \cite{Waldhausen1968}, although this algorithm is incredibly difficult to apply in practice. In many cases, especially when the knot is alternating, more practical solutions are known, see e.g.\ \cite{Weinbaum1971, Dugopolski1982}. 

Having jumped in time from the early 1910s, to the 1930s and beyond in pursuit of one-relator knot groups, we now firmly return in chronological order to the 1920s. During this time, one of the emerging themes in combinatorial group theory was the theory of the properties of free groups, and particularly their subgroups.

\subsection{The Nielsen--Schreier Theorem, amalgamated free products}\label{Subsec:Nielsen-Schreier}

\

\epigraph{\textit{Their great usefulness remained long unrecognized, except by Wilhelm Magnus.}}{---B.\ H.\ Neumann, on amalgamated free products \cite[p. 517]{Neumann1974}}

\noindent Before studying one-relator groups, one must first study zero-relator groups: that is, free groups. Many theorems about one-relator groups -- particularly the \textit{Freiheitssatz} that we will encounter so frequently -- were directly inspired by intuition arising from free groups. One of the most fundamental questions about free groups concern their subgroups, and whether any relations can arise in subgroups of free groups. This question, of whether any subgroup of a free group is free, would appear naturally in the investigations of several researchers in the early 1920s. 

One of the first to treat this question was J.\ Nielsen (1890--1959), who in many ways can be considered ``half a student of Dehn'' \cite[p. 140]{Magnus1978}. Indeed, he completed his PhD thesis under the supervision of Dehn, but had started out as a PhD student of G.\ Landsberg, who passed away in 1912. We first present one of the notable results by Nielsen, proved using topological means, and which occurred also later in the theory of one-relator groups. This was a result on \textit{bases} of free groups, particularly in the rank $2$ case. First, some terminology: a \textit{basis} of a free group is any generating set of minimal cardinality. For example, $\{ ab^{-1}, ba^{-1}b \}$ is a basis of $F_2$, the free group on $\{ a, b \}$. An element $w \in F_n$ is said to be \textit{primitive} if it is part of some basis of $F_n$. This is equivalent to being in the same $\Aut(F_n)$-orbit as one of the generators $a_i$ of $F_n$. For example, $ab^{-1}$ is primitive in $F_2$ (as the above basis shows), but neither $a^2$ nor the commutator $[a, b] = aba^{-1}b^{-1}$ are primitive, since any square resp.\ any commutator is necessarily mapped to another such element by any automorphism. We remark that determining whether an element is primitive or not is quite non-trivial, but decidable, as we shall see in \S\ref{Subsec:More-freeness-lull}. Nielsen, as part of a broader investigation, gave a necessary and sufficient condition for a pair of elements in $F_2$ to be a basis, using commutators:

\begin{proposition}[Dehn/Nielsen\footnote{We remark the curious fact that Nielsen published this article while marking his affiliation as ``im Felde'' (``in the field'', i.e.\ while serving in the military) and location as being in Constantinople (today Istanbul). Nielsen was born in Schleswig in 1890, which today is part of Denmark but then was part of the German Empire. He served in the German navy during the First World War, as a military advisor to the Ottoman government in Constantinople. Proving results about bases of free groups was, presumably, not part of his official duties in this capacity.}, 1917 \cite{Ni17}]
Let $w_1, w_2 \in F_2 = F(a,b)$. Then $\{ w_1, w_2\}$ is a basis of $F_2$ if and only if we have $[w_1, w_2] = X [a, b]^{\pm 1} X^{-1}$. \label{Prop:Dehn-Nielsen-commutator}
\end{proposition}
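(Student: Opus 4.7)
For the forward direction, suppose $\{w_1,w_2\}$ is a basis of $F_2$, so the assignment $a\mapsto w_1$, $b\mapsto w_2$ extends to an automorphism $\phi\in\Aut(F_2)$ and $[w_1,w_2]=\phi([a,b])$. I would show that for every $\phi\in\Aut(F_2)$, $\phi([a,b])$ is of the form $X[a,b]^{\pm 1}X^{-1}$. The endomorphisms satisfying this property are closed under composition: if $\phi_i([a,b])=X_i[a,b]^{\varepsilon_i}X_i^{-1}$ for $i=1,2$, then a direct computation yields $(\phi_1\circ\phi_2)([a,b])=\phi_1(X_2)X_1[a,b]^{\varepsilon_1\varepsilon_2}X_1^{-1}\phi_1(X_2)^{-1}$, which is again of the required form. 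So it suffices to verify the property on a generating set of $\Aut(F_2)$; by Nielsen's theorem, the elementary transformations $\sigma_1\colon(a,b)\mapsto(b,a)$, $\sigma_2\colon(a,b)\mapsto(a^{-1},b)$, and $\sigma_3\colon(a,b)\mapsto(ab,b)$ generate, and short computations give $\sigma_1([a,b])=[a,b]^{-1}$, $\sigma_2([a,b])=a^{-1}[a,b]^{-1}a$, and $\sigma_3([a,b])=[a,b]$.

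For the converse, assume $[w_1,w_2]=X[a,b]^{\pm 1}X^{-1}$ and let $\phi\colon F_2\to F_2$ be the endomorphism $a\mapsto w_1$, $b\mapsto w_2$. I would first observe that $\bar\phi\in GL_2(\Z)$ by working in the abelian quotient $\gamma_2(F_2)/\gamma_3(F_2)\cong\Z$, which is generated by the class of $[a,b]$ and on which conjugation acts trivially. The general identity $\phi([a,b])\equiv\det(\bar\phi)\cdot[a,b]\pmod{\gamma_3(F_2)}$ combined with the hypothesis forces $\det(\bar\phi)=\pm 1$. Using the surjection $\Aut(F_2)\twoheadrightarrow GL_2(\Z)$ (from the forward direction's generators), I would precompose $\phi$ by an automorphism lifting $\bar\phi^{-1}$, reducing to the case $\bar\phi=\mathrm{id}$, i.e., $w_1=au_1$, $w_2=bu_2$ with $u_1,u_2\in F_2'$; tracking determinants through this composition pins the sign in the hypothesis to $+$.

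The heart of the proof is then to show $H:=\langle w_1,w_2\rangle$ equals $F_2$, for then $\phi$ is surjective and by Hopficity of $F_2$ (finitely generated and residually finite) it is an automorphism, making $\{w_1,w_2\}$ a basis. I would argue topologically: realize $F_2=\pi_1(\Sigma_{1,1})$, where $\Sigma_{1,1}$ is the once-punctured torus and $[a,b]$ represents the boundary loop. The subgroup $H$ is finitely generated of rank $2$ (rank $1$ is excluded since $[w_1,w_2]\neq 1$), so by Scott's compact core theorem the covering $\tilde\Sigma_H\to\Sigma_{1,1}$ deformation retracts onto a compact subsurface $K$ with $\pi_1(K)\cong H$ and hence $\chi(K)=1-\mathrm{rank}(H)=-1$. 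Since $[w_1,w_2]\in H$ is $F_2$-conjugate to the boundary loop of $\Sigma_{1,1}$, it represents a peripheral element of $K$. Among compact surfaces with $\chi=-1$ and free rank-two fundamental group (once-punctured torus, pair of pants, twice-punctured projective plane, once-punctured Klein bottle), only the once-punctured torus has a peripheral element lying in $F_2'$: in each of the other three cases every boundary loop has nonzero image in the abelianization, while $[w_1,w_2]\in F_2'$. Thus $K$ is a once-punctured torus, and the restricted covering $K\to\Sigma_{1,1}$ has degree $\chi(K)/\chi(\Sigma_{1,1})=1$, so it is a homeomorphism and $H=F_2$.

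The main obstacle is making the peripheral-lift step genuinely rigorous, i.e., ensuring that an $F_2$-conjugate of the boundary really gives a peripheral element of the compact core $K$, and not merely of the possibly noncompact cover. A purely algebraic alternative, closer to Nielsen's original 1917 approach, would exploit the Magnus module structure on $F_2'/F_2''$ (identifying it with $\Z[a^{\pm 1},b^{\pm 1}]$ with $[a,b]\leftrightarrow 1$), in which the hypothesis becomes a divisibility condition $a(1-b)f_{u_1}+b(a-1)f_{u_2}=a^ib^j-1$ whose solutions coincide modulo $F_2''$ with those coming from inner automorphisms, and then iterate the argument up the derived series using the residual nilpotence of $F_2$; this route is considerably more technical but avoids the topological machinery.
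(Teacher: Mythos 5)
The paper, being a historical survey, does not actually supply a proof of Proposition~\ref{Prop:Dehn-Nielsen-commutator}: it states the result, attributes it to Nielsen (1917) with the idea credited to Dehn, and notes (in \S\ref{Subsec:Applications-of-Frei}) that Magnus later obtained it as a consequence of the \textit{Freiheitssatz} via the normal-root classification of $[a,b]$, i.e.\ Proposition~\ref{Prop:Magnus-[a,b]-only-presentaton}; Mal'cev's independent proof is also cited. Your route is therefore genuinely different from the one the paper gestures at --- topological (surface covers and the compact core theorem) rather than combinatorial via normal roots of $[a,b]$ --- which is a perfectly legitimate choice, and the forward direction via Nielsen generators is correct as written.

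The converse, however, has two gaps, one of which you acknowledge and one which you do not. The first is the peripheral-lift step. This one is fillable: since $[w_1,w_2] = Xc^{\pm1}X^{-1}$ with $c=[a,b]$, lifting the loop $c$ at the endpoint of the lift of $X$ produces a closed loop covering $\partial\Sigma_{1,1}$ with degree~$1$ (because $XcX^{-1}\in H$, not merely a power), so the corresponding component $C\subset p^{-1}(\partial\Sigma_{1,1})=\partial\tilde\Sigma$ is a compact boundary circle of $\tilde\Sigma$ freely homotopic in $\tilde\Sigma$ to $[w_1,w_2]$. One may then enlarge any compact core to a compact core $K$ containing $C$ (joining a collar of $C$ to the old core by an arc and thickening; the resulting subsurface still has $\pi_1=H$), and since $C\subset\partial\tilde\Sigma$, it is automatically a boundary component of $K$. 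The second gap is the concluding sentence: the map $K\to\Sigma_{1,1}$ is \emph{not} a restricted covering map unless $K=\tilde\Sigma$, which is precisely what you are trying to prove, so the Euler-characteristic degree count as written is circular. The correct conclusion is cleaner: once $K$ is identified as a once-punctured torus (your $H_1$-of-the-boundary argument, restricted to the orientable cases which are the only ones that can occur in a cover of an orientable surface), $\partial K$ is a \emph{single} circle, namely $C$, which lies in $\partial\tilde\Sigma$; hence $K$ has empty frontier in $\tilde\Sigma$, is open and closed, and so $K=\tilde\Sigma$. Then $\tilde\Sigma$ is compact, $p$ is a finite cover of degree $\chi(\tilde\Sigma)/\chi(\Sigma_{1,1})=1$, and $H=F_2$. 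Finally, the preliminary reduction to $\bar\phi=\mathrm{id}$ is superfluous for the topological argument (though it would be needed for the Magnus-module alternative you sketch at the end); and the determinant computation, while correct, is also not used in the surface argument.
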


For example, $\{ abab^2, ab \}$ is a basis of the free group on $a, b$, and we have
\begin{align*}
[ab, abab^2] = ab \cdot abab^2 \cdot (ab)^{-1} \cdot (abab^2)^{-1} = (ab)^2 [a, b] (ab)^{-2}.
\end{align*}
Nielsen gave a proof of Proposition~\ref{Prop:Dehn-Nielsen-commutator}, but notes that the result was suggested to him by Dehn, who had a ``different proof''. The proof intended by Dehn would only appear in print in 1930, in an article by Magnus \cite{Ma30} using the theory of one-relator groups and the \textit{Freiheitssatz}; see \S\ref{Subsec:Applications-of-Frei}. We also note that Mal'cev \cite{Malcev1962} gave an independent proof of this result, seemingly unaware of either the articles of Magnus or Nielsen (cf.\ also \cite{Bezverkhnyaya1986}). Related to the above result, Nielsen proved in 1918 that there is an algorithm whether a given set of words $X \subset F_n$ of words is a basis or not \cite{Nielsen1918}. These early results indicated that some very basic problems about free groups require a substantial effort to prove, and that there is a rich combinatorial theory underlying them. Indeed, related to Proposition~\ref{Prop:Dehn-Nielsen-commutator}, we remark that the decision problem of deciding whether a given element in a free group is a commutator is quite non-trivial, and was only solved in 1962 by Wicks \cite{Wicks1962}. 

In 1921, and now a Danish citizen, Nielsen returned to the subject of free groups. There, in his first article written in Danish, he proved a remarkable theorem: \textit{every finitely generated subgroup of a free group is free.} His proof can be stated in modern terms as proving that given any finite set $X \subset F_n$ of elements of a free group, there exists a finite sequence of \textit{Nielsen transformations}, being free group automorphisms of a particularly simple form, which transform $X$ into a free basis of the subgroup it generates; thereby proving the theorem. Nielsen transformations are thus the non-commutative analogue of the familiar row reduction process for matrices. As part of his proof, Nielsen also proved that all bases of a free group have the same cardinality, and thus that the \textit{rank} of a free group is well-defined. His treatment of free groups is not group-theoretical, but rather considers them purely symbolically.\footnote{Somewhat surprisingly, it is only in \S4 of his article, and notably \textit{after} proving his theorem for free groups, that Nielsen begins discussing groups at all.} We also remark that Nielsen discusses some aspects of the word problem in the final part of his 1921 article \cite[p.~94]{Nielsen1921}. In particular, he notes that the word problem is a generalization of the membership problem in free groups, which he himself solves for finitely generated subgroups, but perspicaciously notes that for the word problem ``its general solution has until now offered insurmountable difficulties''. Throughout this final part, he also acknowledges the work of Dehn at almost every stage, showing the strong influence of the latter on the early stages of the subject. Nielsen \cite{Nielsen1955} later revisited and simplified his proof in 1955, and a good and classical account of Nielsen's method is given in \cite[Chapter~3]{Magnus1966}. 
 
Around this time, O.\ Schreier (1901--1929), a young Austrian mathematician, was beginning to work on knot theory, influenced by Reidemeister and particularly Wirtinger. He was also influenced by Artin (also in Hamburg at the time) to work on free groups, who had himself investigated the commutator subgroup of free groups \cite[p.~161]{Sc27}. Schreier was aware of Nielsen's result above, and was able to extend it to \textit{all} subgroups of free groups in 1927.\footnote{As first observed in 1930 by Levi \cite[p.~315]{Levi1930}, Schreier's proof uses the well-ordering principle. Even today, all known proofs use the axiom of choice, but it remains an open problem whether or not the Nielsen--Schreier Theorem is equivalent to the axiom of choice (in ZF). Howard \cite{Howard1985} initiated this line of investigation, which has seen some recent progress \cite{Kleppmann2015, Tachtsis2018}.} Thus, we have the following theorem today carrying both names: 

\begin{theorem}[The Nielsen--Schreier Theorem\footnote{In many ways, this theorem is one of the more remarkable properties of free groups. For example, subsemigroups of free semigroups need not be free. Indeed, there exist non-finitely presentable finitely generated subsemigroups of free semigroups.}] 
Every subgroup of a free group is itself free. \label{Thm:Nielsen-Schreier}
\end{theorem}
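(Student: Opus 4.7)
The plan is to follow Schreier's combinatorial approach, which has the virtue of producing an explicit free basis for any subgroup $H \leq F = F(X)$. First I would choose a right Schreier transversal $T$ for $H$ in $F$: a system of representatives for the right cosets $H\backslash F$ (with $1$ representing $H$ itself) that is closed under taking initial segments when written in reduced form over $X \cup X^{-1}$. Such a transversal is built inductively by length — at length $n$, for each new coset of length $n$ one picks a representative whose length-$(n-1)$ prefix is the already-chosen representative of the coset it lies in — invoking the axiom of choice in full generality. For each $(t,x) \in T \times X$, define the Schreier generator $\gamma_{t,x} := tx \cdot \overline{tx}^{-1}$, where $\overline{g}$ denotes the $T$-representative of $Hg$; visibly $\gamma_{t,x} \in H$.

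Generation of $H$ by the non-trivial $\gamma_{t,x}$ then follows from a telescoping identity: given $h = x_1 \cdots x_n \in H$ reduced over $X^{\pm 1}$, one expands
\[
h = \gamma_{1,x_1} \cdot \gamma_{\overline{x_1},\,x_2} \cdots \gamma_{\overline{x_1 \cdots x_{n-1}},\,x_n} \cdot \overline{h},
\]
with the transversal tail $\overline{h}$ collapsing to $1$ since $h \in H$; the sign conventions for $x_i \in X^{-1}$, where one uses $\gamma_{\overline{w x^{-1}},\,x}^{-1}$ in place of a generator, are routine bookkeeping.

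The main obstacle, and the genuinely subtle part of the theorem, is \emph{freeness}: no non-trivial reduced word in the $\gamma_{t,x}^{\pm 1}$ may represent $1 \in F$. The key observation is that each non-trivial $\gamma_{t,x}$, after free reduction in $F$, contains a distinguished central occurrence of the letter $x^{\pm 1}$ whose location is pinned down by the Schreier property of $T$: prefix-closure forces $\overline{tx}$ either to equal $tx$ itself (in which case $\gamma_{t,x} = 1$ and is discarded) or to differ from $tx$ only in a terminal segment, leaving a visible $x$ flanked by controlled transversal material. In any product $\gamma_{t_1,x_1}^{\epsilon_1} \cdots \gamma_{t_k,x_k}^{\epsilon_k}$ that is reduced in the Schreier alphabet (no adjacent pair being mutual inverses), one then verifies that free cancellation in $F$ between consecutive factors is confined to the matching transversal tails on either side of the central letters, so the central letters themselves are never consumed and the product survives as a non-trivial reduced word in $F$. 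For motivation and a conceptually cleaner cross-check I would also point to the topological viewpoint: realize $F$ as $\pi_1(\Gamma)$ with $\Gamma$ a wedge of $|X|$ circles, so that $H$ corresponds to a covering $\tilde\Gamma \to \Gamma$; then $\tilde\Gamma$ is again a graph, and the fundamental group of any graph is free, of rank equal to the number of edges outside any chosen spanning tree — with the Schreier transversal above being precisely a lift of such a tree.
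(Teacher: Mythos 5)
The paper is a survey: it states the Nielsen--Schreier theorem without proof, merely tracing the lineage of proofs (Nielsen via Nielsen transformations for finitely generated subgroups, Schreier via the \emph{Gruppenbild}/Cayley graph and amalgamated products, the modern covering-space and Bass--Serre routes). Your argument is the standard Schreier rewriting proof, which is exactly one of the historical routes the paper singles out, so there is no competing paper proof to contrast it against. On its own terms the argument is essentially sound: the Schreier transversal, the telescoping identity for generation, and the Schreier-reduced cancellation scheme for freeness are all the right ingredients. The one misstatement is the claim that prefix-closure of $T$ forces $\overline{tx}$ either to equal $tx$ or to ``differ from $tx$ only in a terminal segment.'' That is false in general: $\overline{tx}$ need not share any prefix with $tx$. (Take $F = F(a,b)$, $H = \ker(\phi)$ with $\phi\colon F\to S_3$, $a\mapsto (12)$, $b\mapsto (13)$, and Schreier transversal $T = \{1,a,b,ab,ba,aba\}$; then $\overline{abab} = ba$, which begins with $b$ while $abab$ begins with $a$.) What the Schreier property actually gives — and all your cancellation analysis needs — is the local constraint: if $\gamma_{t,x}\neq 1$, then $t$ cannot end in $x^{-1}$ (else $tx$ would be a proper prefix of $t$, hence in $T$, so $\overline{tx}=tx$), and $\overline{tx}$ cannot end in $x$ (else $\overline{tx}\,x^{-1}$ would be a prefix of $\overline{tx}$, hence in $T$, and would represent $Ht$, so equal $t$, again forcing $\overline{tx}=tx$). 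Those two facts pin the displayed $x$ as a surviving letter of the reduced form of $\gamma_{t,x}$, which is what protects the central letters in a Schreier-reduced product. Swapping your intermediate claim for that observation repairs the exposition without changing the shape of the argument, and the closing covering-space remark is the modern proof the paper itself points to.
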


In fact, Schreier was able to deduce more information in the case that the subgroup has finite index \cite[p.~161]{Sc27}. Let $F_n$ be a free group of rank $n$, and let $H \leq F_n$ be a subgroup of finite index $j$ in $F_n$. Then $H$ is free of rank $k$, where
\begin{equation}\label{Eq:Schreier-index-formula}
k = 1 + (n-1) \cdot j.
\end{equation}
This result is today called the \textit{Schreier index formula}. Furthermore, Schreier proved that every finitely generated \textit{normal} subgroup of a free group has finite index \cite[p.~162]{Sc27}. 

As mentioned above, Nielsen's proof for the finitely generated case of Theorem~\ref{Thm:Nielsen-Schreier} is entirely combinatorial. Schreier's proof, on the other hand, makes use of Dehn's \textit{Gruppenbild}, i.e.\ the Cayley graph. Schreier's proof is still inherently combinatorial, and in particular contains no pictures. Nevertheless, his article attracted a great deal of attention. For example, already in 1929 Rademacher \cite{Rademacher1929} would make use of the method used by Schreier, today called the \textit{Reidemeister--Schreier} method, to determine presentations of the congruence subgroups of the modular group $\PSL_2(\Z)$. As we shall see, Magnus would also make great use of it in his investigations on one-relator groups, see \S\ref{Subsec:Freiheitssatz}.

Let us linger on this last point, and Schreier's proof of the Nielsen--Schreier Theorem. In particular, it contains a key new construction: \textit{the amalgamated free product} of two groups. Let $G_1, G_2$ be two groups such that $H$ embeds into both of $G_1$ and $G_2$. Concretely, this means that there exists some $H \leq G_1$ and an injective homomorphism $\phi \colon H \to G_2$. Then the \textit{free product} of $G_1$ and $G_2$ \textit{amalgamated in the subgroup} $H$ is the group with the presentation 
\begin{equation}\label{Eq:FPWA-def}
G_1 \ast_H G_2 := \pres{}{G_1 \cup G_2}{h = \phi(h) \: \forall h \in H}.
\end{equation}
In Schreier's article, he begins by proving that the amalgamated free product has some basic properties, such as the fact that $G_1, G_2 \leq G_1 \ast_H G_2$, that the groups $G_1, G_2$ generate the amalgam, and that $G_1 \ast_H G_2$ is the ``most general'' group with these properties; in fact, he takes such properties as definitional, and in his language proves that \textit{amalgamated free products always exist}. The idea likely dates back to Schreier's own work on the torus knot groups 
\begin{equation}\label{Eq:Torus-knot-presentation-amalgam}
\pi_1(\R^3 \setminus \mathcal{T}_{m,n}) = \pres{}{a,b}{a^m b^n = 1}
\end{equation}
discussed already in \S\ref{Subsec:Surface-and-knot-groups}; these groups can clearly be written as an amalgamated free product of two infinite cyclic groups. Schreier \cite[p.~163]{Sc27} attributes the notion of free product to Artin, and that his aim was to generalize existence results for free products also to this more generalized setting. Schreier's article is not concerned with the word problem, but he mentions in a remark without proof that the word problem in amalgams of free groups is decidable \cite[p.~163]{Sc27}; this result can be generalized to give the following standard result:

\begin{theorem}[Schreier, 1927]
Let $G_1, G_2$ be two groups, and let $H$ be a common subgroup. If the word problems for $G_1$ and $G_2$ are both decidable, and if the membership problem for $H$ in $G_1$ and in $G_2$ are both decidable; then the word problem for $G_1 \ast_H G_2$ is decidable.
\label{Thm:Membership-gives-WP-in-amalgams}
\end{theorem}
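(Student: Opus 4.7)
The plan is to argue via the normal form theorem for amalgamated free products, which states that every element of $G := G_1 \ast_H G_2$ admits a unique expression $h \cdot g_1 g_2 \cdots g_n$ where $h \in H$, each $g_i$ lies in a fixed system of coset representatives for $H$ in either $G_1$ or $G_2$, and consecutive $g_i$ come from distinct factors (a so-called reduced sequence). In particular, such a product equals $1$ if and only if $n = 0$ and $h = 1$. The strategy is therefore to describe an algorithm which, on input a word $w$ in the generators of $G$, either certifies that $w = 1$ or exhibits a non-trivial reduced sequence representing $w$, from which non-triviality follows by uniqueness.

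First I would parse $w$ as a concatenation of alternating syllables $w = u_1 u_2 \cdots u_k$, each $u_j$ being a word over the generators of one of the factors, with consecutive syllables belonging to different factors. Using the decidable word problem in the corresponding $G_i$, I test whether any $u_j$ represents the identity in its factor; if so, I delete $u_j$ and concatenate its neighbours (which now lie in the same factor). Next, using the membership algorithm for $H \leq G_i$, I check whether any syllable $u_j$ represents an element of $H$; if so, I slide it across the amalgam by replacing $u_j$ with an equivalent word in the other factor's generators and then merging with a neighbouring syllable. Each such move strictly decreases the number of syllables, so the procedure terminates. At termination, if $k = 0$, or if $k = 1$ and the word problem in the appropriate $G_i$ declares $u_1 = 1$, then $w = 1$; otherwise the normal form theorem guarantees $w \neq 1$.

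The main obstacle is giving effective meaning to the sliding step. To replace a syllable representing some $h \in H \leq G_1$ with a word in the generators of $G_2$ representing the same $h$, one must not merely detect that $u_j \in H$, but actually recover $h$ as a word in the generators of $H$ (and hence transport it into $G_2$ via the fixed identification). Under the natural interpretation of the hypotheses, in which the membership algorithm is \emph{constructive} and exhibits the element of $H$ it detects (as was tacit in Schreier's own setup), this transport is effective, and together with the word problems in $G_1$ and $G_2$ the procedure above becomes a genuine algorithm. Correctness is then immediate from the uniqueness of reduced forms, and termination from the monovariant given by the syllable count.
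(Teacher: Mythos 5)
Your proof is correct and follows the same route the paper alludes to: reduction to the normal form theorem for amalgamated free products via a syllable-shortening procedure that alternates the factor word problems with the membership tests in $H$. The subtlety you flag in the final paragraph — that the sliding step needs not merely a yes/no answer to membership but an actual representative of the detected element in the other factor's generators — is a genuine one, but it does not require any extra hypothesis beyond an effective presentation of the amalgam: once the membership algorithm reports $u_j\in H$, the decidable word problem in $G_i$ lets you enumerate words over a recursive generating set for $H$ and test each for equality with $u_j$, a search guaranteed to terminate precisely because $u_j\in H$, and the fixed identification $\phi$ then transports the result into the other factor.
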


Here, the \textit{membership problem} for $H \leq G$ simply means the problem of deciding for arbitrary words $w \in G$ whether or not $w \in H$ (see \S\ref{Subsec:More-on-WP}). To the best of the author's knowledge, a proof of Theorem~\ref{Thm:Membership-gives-WP-in-amalgams}, which is essentially immediate from standard normal form result for amalgamated free products, first appeared in a 1931 article by Magnus \cite[\S4, p.~62]{Magnus1931}, his second article on one-relator groups, which we will revisit in greater detail in \S\ref{Subsec:Wordproblem-Magnus}. There, he used the above result to solve the word problem in some one-relator groups. After Magnus' spectacular application of amalgamated free products, they would come to occupy a central role in combinatorial group theory, as indicated by the quotation at the start of this section. A standard early reference is B.\ H.\ Neumann's essay \cite{Neumann1954}, and the modern reference is Serre's 1977 book \cite{Serre1977}, which places the amalgamated free product in the larger context of the fundamental group of a \textit{graph of groups}, of which one particular case is the amalgamated free product, and another is the \textit{HNN-extension}, which we shall encounter in the Magnus--Moldavanskii hierarchy of one-relator groups in \S\ref{Subsec:MM-hierarchy}. 

Several simpler proofs of the Nielsen--Schreier Theorem would appear in the literature shortly after Schreier's publication appeared. First, in 1930 Levi \cite{Levi1930} would give a short and compact proof of the theorem. Hurewicz \cite{Hurewicz1930} gave another proof in 1930, and Reidemeister \cite[\S3.9 and \S4.17]{Reidemeister1932} in his book gave another, which is very close to the modern proof (see below). Locher \cite{Locher1934} in 1934 additionally gave a conceptually very simple and graphical proof. Finally, very notably, in 1936 Baer \& Levi \cite{Baer1936} gave a topological proof of the result; indeed, they gave a proof of the more general result, today called Kurosh's Theorem \cite{Kurosh1934} after the man who proved it in 1934, that any subgroup of a free product of groups is again a free product (of a free group with conjugates of subgroups of the factors), which implies the Nielsen--Schreier Theorem. Today, the theorem is usually proved via actions on trees, as in Serre \cite{Serre1977}; Reidemeister's treatment of the theorem comes very close to precisely this modern proof already in 1932, as noted already by Stillwell \cite[Footnote 13, p.~104]{Stillwell2014}. 

Schreier would not live to see any of these developments. Like his fellow group theorists Abel and Galois before him, Schreier instead had his life cut short at the very beginning of his career\footnote{By contrast, one of Schreier's lecturers in Vienna was L.\ Vietoris (1891--2002), well-known e.g.\ for the Mayer--Vietoris sequence, who remains the oldest verified Austrian man to have ever lived.}: in 1929, just two years after publishing his proof of the Nielsen--Schreier Theorem, he died of sepsis, merely 28 years old.

\section{Magnus, the \textit{Freiheitssatz}, and the word problem (1930-1932)}\label{Sec:3-Magnus}

\noindent One name towers above all others in the history of one-relator groups: Wilhelm Magnus (1907--1990). Magnus was a PhD student of Dehn's, and had the unusual talent of being able to make significant contributions to a vast number of disparate areas of mathematics; the reader interested in these contributions is referred to the collected works \cite{Magnus1984}. Indeed, it is not possible to summarize these collected works here, nor even the full extent of Magnus' influence on one-relator groups; we hope that the many occurrences of Magnus' name throughout the remainder of the survey will serve as an indicator for this influence. In this section, we will focus on his early contributions to the subject, with three articles from 1930, 1931, and 1932, respectively, as landmarks to guide us \cite{Ma30, Magnus1931, Ma32}. 

\subsection{The \textit{Freiheitssatz}}\label{Subsec:Freiheitssatz}

{
\setlength{\epigraphwidth}{0.45\textwidth}
\epigraph{
\noindent \textit{Da sind Sie also blind gegangen!} \newline [\textit{So you did it blind!}]}
{---M.\ Dehn in 1930, upon hearing that Magnus' proof of the \textit{Freiheitssatz} was not geometric \cite[p. 139]{Magnus1978}}}

\noindent The starting point for the entirety of Magnus' work on one-relator groups comes from the \textit{Freiheitssatz}, already alluded to in \S\ref{Subsec:Dyck-and-presentations}. According to Magnus, Dehn proved this 1922 conjecture regarding the freeness of certain subgroups of one-relator groups before 1928, but ``felt his proof was not in a form suitable for publication'' \cite[p. 114]{Chandler1982}. He therefore assigned the task of its proof to his PhD student Magnus in July 1928, who proved it a couple of years later. The statement of the theorem is as follows: 

\begin{theorem}[Magnus' \textit{Freiheitssatz}, 1930 {\cite[\S1]{Ma30}}]
Let $G = \pres{}{A}{r=1}$ be a one-relator group such that the word $r$ is cyclically reduced, and such that $r$ involves every letter of $A$. Let $A_0 \subsetneq A$. Then the subgroup of $G$ generated by $A_0$ is free, and furthermore is free with $A_0$ as a basis.
\end{theorem}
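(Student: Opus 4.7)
My plan is to proceed by strong induction on the length $|r|$ of the cyclically reduced relator. The base case $|r| \leq 1$ is immediate: either $r$ is trivial and $G$ is free on $A$, or $r$ is a single letter, forcing $A$ to be a singleton by the hypothesis that $r$ involves every generator of $A$, so that $A_0 = \varnothing$. For the inductive step, the natural dichotomy is whether or not some generator occurs in $r$ with exponent sum zero.

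In the first case, suppose there exists $t \in A$ with $\sigma_t(r) = 0$. After arranging by symmetry that $t \notin A_0$, I would exploit the surjection $\pi \colon G \twoheadrightarrow \Z$ sending $t \mapsto 1$ and the other generators of $A$ to $0$; this is well defined precisely because $\sigma_t(r) = 0$. The kernel $N = \ker \pi$ contains $A_0$, so it suffices to show $A_0$ freely generates a free subgroup of $N$. By the Reidemeister--Schreier procedure, $N$ admits a presentation with generators $a_n := t^{-n} a t^n$ for $a \in A \setminus \{t\}$ and $n \in \Z$, whose defining relators are the $t$-conjugates of a rewriting $r^\ast$ of $r$ in these generators. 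Letting $m$ and $M$ be the minimum and maximum $t$-exponents encountered while reading $r$ left to right, the word $r^\ast$ involves only the finitely many generators $a_n$ with $m \leq n \leq M$, and its length in the new alphabet equals the number of non-$t$ letters of $r$, which is strictly less than $|r|$ because $t$ must occur in $r$. I would then realize $G$ as an HNN extension of the auxiliary one-relator group $H = \langle a_n : m \leq n \leq M,\, a \in A \setminus \{t\} \mid r^\ast \rangle$ with stable letter $t$ and associated subgroups the two shifted copies $\langle a_n : m \leq n \leq M - 1\rangle$ and $\langle a_n : m + 1 \leq n \leq M \rangle$. The inductive hypothesis applied to $H$ then gives that $\{a_0 : a \in A_0\}$ freely generates a free subgroup of $H$, and Britton's lemma transfers freeness from $H$ up to $G$.

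In the second case, every generator of $A$ has nonzero exponent sum in $r$. After dispensing with the trivial subcases $|A| = 1$ or $A_0 = \varnothing$, I would pick $a \in A \setminus A_0$ and $b \in A_0$ with nonzero exponent sums $\alpha$ and $\beta$ respectively and apply a unimodular Nielsen substitution in $F_{\{a,b\}}$ that produces new free generators $x, y$ of the same rank-two free subgroup, chosen so that $y$ has exponent sum zero in the relator rewritten over $A' = (A \setminus \{a,b\}) \cup \{x,y\}$. Because $a \notin A_0$, the set $A_0$ corresponds, in the new alphabet, to a proper subset of $A'$ avoiding $y$, reducing the problem to the first case.

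The main obstacle I anticipate is the bookkeeping across the two successive changes of presentation in the first case: one has to verify that the $0$-level copy of $A_0$ really is a proper subset of the finite generating alphabet of $H$ (so the inductive hypothesis applies), that the shifted subgroups of $H$ used to build the HNN extension are genuinely freely generated (this is itself an application of the inductive Freiheitssatz to $H$, and is what makes the HNN structure honest rather than formal), and finally that Britton's lemma lifts freeness, not merely non-triviality, of words in $A_0$ from $H$ to $G$. A secondary subtlety is ensuring that in the second case the subgroup $\langle A_0 \rangle_G$ really does coincide with the subgroup generated by the image of $A_0$ in the new alphabet, rather than only being contained in it.
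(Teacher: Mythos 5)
Your overall plan --- induction on relator length, splitting on whether some generator has zero exponent sum, Reidemeister--Schreier rewriting, and an HNN decomposition with Britton's lemma --- is the classical Magnus route (in the Moldavanski\u{\i}/McCool--Schupp formulation), but both halves of the inductive step have genuine gaps. In Case 1 you cannot ``arrange by symmetry that $t \notin A_0$'': take $A = \{a, t\}$, $A_0 = \{t\}$, $r = atat^{-1}$, so that $t$ is the \emph{only} exponent-sum-zero generator and it lies in $A_0$, with nothing to swap it for. When $t \in A_0$, the subgroup $\langle A_0 \rangle$ contains the stable letter; conjugation by $t$ shifts the level-$n$ generators $a_n$ outside any fixed window $[m, M]$, so $\langle A_0 \rangle$ is not an HNN extension of a Magnus subgroup of $H$, and Britton's lemma does not hand you freeness the way it does when $t\notin A_0$. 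This subcase has to be treated by regarding $\ker(G \twoheadrightarrow \Z)$ as the infinite amalgamated free product of the shifted copies $t^{-i}Ht^{i}$ over Magnus subgroups (Magnus's original picture): first show, via the inductive Freiheitssatz in each copy of $H$ and the normal form theorem for amalgamated products, that $\{a_n : a \in A_0 \setminus \{t\},\ n \in \Z\}$ freely generates a free subgroup of the kernel, and then that $\langle A_0 \rangle$ is the (automatically free) HNN extension of that free group by the shift.

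The Case 2 reduction also does not go through, and what you flag as a ``secondary subtlety'' is in fact where the argument dies. In the hardest instance $A_0 = A \setminus \{a\}$ you must take $b \in A_0$, and any unimodular substitution on $\{a,b\}$ producing a generator $y$ with $\sigma_y(r') = 0$ necessarily rewrites $b$ as a word in $x, y$ that genuinely involves $y$: if $b$ went to a single new letter, say $b \mapsto x$, then $\{x, \text{image of }a\}$ being a basis forces the image of $a$ to have $y$-exponent sum $\pm 1$, whence $\sigma_y(r') = \pm\sigma_a(r) \neq 0$. Consequently $\langle A_0 \rangle$ is not generated by any subset of the new alphabet $A'$, and the inductive hypothesis --- a statement about Magnus subgroups --- simply does not apply. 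Magnus's actual resolution is a different move: embed $G$ into a strictly larger one-relator group $G'$ (adjoin a root of $b$, then perform a Nielsen move) in which a new generator has zero exponent sum, apply the Freiheitssatz to $G'$ --- whose relator, after rewriting down one level, is genuinely shorter --- and transfer the conclusion back along the embedding $G \hookrightarrow G'$. Masters later showed one can dispense with the embedding and work purely with Nielsen moves, but then $|r|$ can temporarily increase, so a straight induction on relator length fails and one needs the more delicate complexity argument described in \S\ref{sec:hierarchy}.
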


In the first condition, $r$ being \textit{cyclically reduced} means that it is equal in $F_A$ to $ar' a^{-1}$ for some word $r' \in F_A$, i.e.\ that $r$ is not conjugate to a shorter word in $F_A$. This is harmless, for any relator of the form $ara^{-1}=1$ is, of course, equivalent to $r=1$. The second condition is also of little consequence, since one can simply pull out the unused letters of $A$ into a free factor, and apply the \textit{Freiheitssatz} to the one-relator group factor which now satisfies the second condition. The subgroups of a one-relator group generated by subsets of the generating set are called \textit{Magnus subgroups}. Thus, a restatement of the \textit{Freiheitssatz} is simply: if all letters of $A$ appear in $r$, then all Magnus subgroups of $\pres{}{A}{r=1}$ are free. Many more properties hold for Magnus subgroups, which we will expand on in \S\ref{Subsec:Magnus-subgroups}. One immediate consequence of the \textit{Freiheitssatz} is the following result on the ``spelling'' of a word, akin to Dehn's Spelling Theorem for surface groups $\pi_1(\Sigma_g)$ (i.e.\ Theorem~\ref{Thm:Dehn-spelling-theorem}). 

\begin{corollary}[Magnus, 1930]
Let $G = \pres{}{A}{r=1}$ be a one-relator group with $r$ cyclically reduced, and such that $r$ involves every letter of $A$. Let $w \in F_A$ be a non-trivial word such that $w =_G 1$. Then $w$ contains an occurrence of either $a$ or $a^{-1}$ for every letter $a \in A$.
\end{corollary}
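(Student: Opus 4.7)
The plan is to deduce the corollary directly from the \textit{Freiheitssatz} by contraposition: assuming $w$ omits both $a$ and $a^{-1}$ for some letter $a \in A$, I will exhibit $w$ as a non-trivial element of a free subgroup of $G$, contradicting $w =_G 1$.

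First I would fix $a \in A$ and suppose that the reduced word $w \in F_A$ contains no occurrence of $a$ or $a^{-1}$. Then, viewing $F_A$ as having basis $A$, the word $w$ lies in the free factor $F_{A_0}$ where $A_0 := A \setminus \{a\} \subsetneq A$. Since $r$ is cyclically reduced and involves every letter of $A$, the hypotheses of the \textit{Freiheitssatz} apply to the proper subset $A_0$, yielding that the subgroup $\langle A_0 \rangle \leq G$ is free with $A_0$ as a basis. In particular, the canonical homomorphism $F_{A_0} \to G$ that sends each generator to itself is injective.

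Next I would simply observe that, because $w$ is non-trivial in $F_A$ and lies in the free factor $F_{A_0}$, it is also non-trivial in $F_{A_0}$. Injectivity of $F_{A_0} \hookrightarrow G$ then gives $w \neq_G 1$, contradicting the hypothesis. Since $a$ was arbitrary, every letter of $A$ must occur (as $a$ or $a^{-1}$) in $w$, completing the proof.

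There is no genuine obstacle here: the entire argument is a one-line application of the \textit{Freiheitssatz} once the correct Magnus subgroup is identified. The only minor point worth noting is the mild cautionary check that one should work with the reduced form of $w$ (so that ``contains no occurrence of $a$ or $a^{-1}$'' really means $w \in F_{A_0}$), which is automatic since reduction in $F_A$ cannot introduce new letters.
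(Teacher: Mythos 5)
Your proof is correct and is precisely the ``immediate consequence'' of the \textit{Freiheitssatz} that the paper alludes to: if $w$ omits a letter $a$, it lies in the free Magnus subgroup $\langle A \setminus \{a\} \rangle$, and injectivity forces $w \neq_G 1$. This matches the paper's intended (unwritten) argument.
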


Thus, the \textit{Freiheitssatz} relates the ``spelling'' of a word with some of its algebraic properties: if one is missing some letter and its inverse and thus cannot spell the defining relator, then one cannot spell \textit{any} relator. The \textit{Freiheitssatz} is hence the second of several ``spelling theorems'' (a term due to B.\ B.\ Newman) that we will encounter throughout the history of one-relator groups (cf.\ \S\ref{Subsec:BBNewman}). We remark, however, that the \textit{Freiheitssatz} provides significantly less control over the word problem than Dehn's Spelling Theorem, and does not immediately yield a solution to the word problem (although see \S\ref{Subsec:Wordproblem-Magnus}).

The proof of the \textit{Freiheitssatz} can be sketched as follows. The overall idea is to start with a one-relator group $G_1 = \pres{}{A_1}{r_1=1}$, perform a rewriting process (via the Reidemeister--Schreier process) on $r_1$, and obtain a new one-relator group $G_2 = \pres{}{A_2}{r_2=1}$. This new one-relator group will have two key properties: (1) the word $r_2$ is shorter than $r_1$, i.e.\ $|r_2| < |r_1|$; and (2) if the \textit{Freiheitssatz} holds for $G_2$, then it holds for $G_1$. By induction on the relator length, we thus arrive eventually at a free group $G_n$ for some $n$, for which the statement of the \textit{Freiheitssatz} holds by the Nielsen--Schreier theorem, and we will be done by induction. 

The way the word $r_2$ is obtained from $r_1$ is by first assuming that some letter $a$ has exponent sum zero in $r_1$. Thus we obtain a surjection from $G_1$ onto $\Z$, and the kernel of this map -- consisting of all words in $G_1$ with exponent sum zero in $a$ -- turns out to be an amalgamated free product of one-relator groups $G_{2,i}$, $i \in Z$, with Magnus subgroups being the amalgamated subgroups. The relators of the one-relator groups are all obtained by rewriting $r_1$, which lies in the kernel of the map onto $\Z$ by assumption. Thus, since the kernel is generated by conjugates of the form $b_i := a^i b a^{-i}$ for $b \in A \setminus \{ a \}$ and $i \in \Z$, we can rewrite $a^i r_1a^{-i} $ into a (necessarily shorter) word $r_{2,i}$ over the $b_i$, and the defining relation of  $G_{2,i}$ will, by the Reidemeister--Schreier process, be precisely $r_{2,i} = 1$. Analyzing the structure of the amalgam of these groups now readily yields that any relation holding in a Magnus subgroup of $G_1$ would give a relation holding in a Magnus subgroup of $G_2$; and thus (2) follows. Finally, if no letter has exponent sum zero in $r_1$, then a simple trick allows one to embed $G_1$ into a very similar one-relator group for which the exponent sum of some generator is zero, and performing the same procedure as above can be shown to yield (1) in this case too, ensuring the induction carries through. This completes the sketch of the proof.

The breakdown procedure, of passing from $G_1$ to $G_2$, from $G_2$ to $G_3$, etc.\ gives rise to the \textit{Magnus hierarchy} for the given one-relator group. This hierarchy can also be realized by means of HNN-extensions, as noticed by Moldavanskii, which we expand on in \S\ref{Subsec:MM-hierarchy}. As an example of the procedure, suppose we have the group 
\begin{equation}\label{Eq:MM-example}
G_1 = \pres{}{a,b,c}{a^2 b c a b^{-1} c a b a^2 b^{-1} = 1}.
\end{equation}
Then the defining relator of $G_1$ has exponent sum zero in $b$, so we can rewrite this word as a product of $b$-conjugates of $a$ and $c$. Indeed, if we set $a_i = b^i a b^{-i}$ and $c_i = b^i a b^{-i}$, then to obtain the new relator one simply has to remove all $b$'s, and replace each instance of an $a$ or a $c$ by the same letter subscripted by $j$, where $j$ is the number of $b$-letters occurring in the relator before that instance. This results in the new one-relator group
\begin{equation}\label{Eq:MM-example-cont}
G_{2} = \pres{}{a_0, a_1, c_0, c_1}{a_0^2 c_1 a_1 c_0 a_0 a_1^2 = 1}
\end{equation}
Magnus' method amounts to showing that $G_1$ is an amalgamated free product of infinitely many copies of $G_{2,i}$, where $G_{2,i}$ is defined by the same presentation as \eqref{Eq:MM-example-cont} but with the indices on all generators shifted by $i \in \Z$, and amalgamated over Magnus subgroups of the $G_{2,i}$. Note that $G_2$ has a shorter defining relator than $G_1$, and in fact is a free group, since $c_1$ only occurs once in the defining relator. Hence the breakdown procedure ends with $G_2$. 

There are many things that can be said about Magnus' 1930 proof of the \textit{Freiheitssatz} in \cite{Ma30}. First, the sketch above uses the language of amalgamated free products, which is very slightly anachronistic: Magnus' 1930 proof does not use this language, and reproves many of the fundamental results of such products by hand. Indeed, reading this article, one sees just how primitive of an area combinatorial group theory was at the time: no reader familiar with group theory will find themselves outpaced by Magnus' writing, nor indeed will they find any prerequisite knowledge missing (the only potential difficulties for the modern reader arise from the dated notation). But it is only \textit{slightly} anachronistic: indeed, the article ends with a footnote that many of the lemmas, particularly \cite[Lemmas~1--4]{Ma30} are essentially reproving results by Schreier on amalgamated free products, and that he will ``expand on this in future work''. This expansion would be the subject of his 1931 and 1932 articles, where the \textit{Freiheitssatz} is used to great effect.

Before elaborating further on these articles, we mention a famous anecdote about Magnus' proof of the \textit{Freiheitssatz}. As mentioned, Dehn had assigned the task of its proof to Magnus, and Dehn was a particularly geometrically-minded mathematician; thus, when Magnus told his supervisor that he had completed an entirely combinatorial and algebraic proof of the \textit{Freiheitssatz}, Dehn exclaimed the quotation presented at the start of this section. One should not take this quotation to mean that Magnus was immune to geometric intuition and arguments. The three aforementioned articles by Magnus \cite{Ma30,Magnus1931,Ma32} are indeed mostly combinatorial, but there is a relevant footnote in the second article, from 1931. There, a lemma (\cite[Lemma~1]{Magnus1931}) is proved, which states that if in $G = \pres{}{a,b}{R(a,b) = 1}$ the word problem is decidable, and if moreover deciding membership problem in $\langle a \rangle$ is decidable, then the word problem is decidable in $\overline{G} = \pres{}{t,b}{R(t^n, b) = 1}$ for all $n \geq 1$. This lemma, now vacuously true by the general decidability of the word problem in all one-relator groups proved the next year (see \S\ref{Subsec:Wordproblem-Magnus}), is proved purely combinatorially, but is accompanied by a footnote: ``This can be proved in a more immediate and natural way by constructing Dehn's \textit{Gruppenbild} [= Cayley graph] associated to $\overline{G}$'' \cite[Footnote~20, p. 63]{Magnus1931}. One cannot help but wonder why the proof was then not carried out using the \textit{Gruppenbild}! It seems that Magnus placed high value in the algebraic techniques in spite of their sometimes less immediate proofs. 

We now turn to early applications of the \textit{Freiheitssatz}, including the 1931 and 1932 articles by Magnus himself. Beyond these, we remark that the theorem was certainly also noticed by his contemporaries at an early stage. For example, a brief sketch of the proof of the \textit{Freiheitssatz} also appeared in 1932, in Reidemeister's book \cite[\S3.13]{Reidemeister1932}, and using the full power of Schreier's amalgamated free products.\footnote{The 1932 Zentralblatt review of \cite{Reidemeister1932} (\texttt{Zbl:0004.36904}) by Nielsen also mentions the ``\textit{Freiheitssatz} of Dehn and Magnus'' as a natural application of Schreier's amalgamated free products.} This book (for an English translation by Stillwell, see \cite{Stillwell2014}) was the first book written on the subject of combinatorial group theory, and Magnus' early influence can be seen already here: he is one of only four mathematicians to be acknowledged in the introduction of the book \cite{Reidemeister1932}.

\subsection{Early applications of the \textit{Freiheitssatz}.}\label{Subsec:Applications-of-Frei}

\

{\setlength{\epigraphwidth}{0.45\textwidth}
\epigraph{\textit{Even though the \textit{Freiheitssatz} seems almost trivial, it is nevertheless an extraordinarily powerful tool.}}{---W.\ Magnus, 1930  \cite[p. 157]{Ma30}.}}

\noindent Thus begins the second part of Magnus \cite{Ma30}, the article whose purpose was to prove this theorem. The first application of this tool, already appearing in his 1930 article, concerns the (normal) \textit{root problem}, and a particular case of it leading to his \textit{conjugacy theorem}. Let us begin by stating that theorem.

\begin{theorem}[{Magnus' Conjugacy Theorem, \cite[\S6]{Ma30}}] Let $r_1, r_2 \in F_A$ be two words such that the normal closures of $r_1$ and $r_2$ in $F_A$ coincide. Then $r_1$ is conjugate to $r_2^{\pm 1}$. \label{Thm:Conjugacy-Theorem}
\end{theorem}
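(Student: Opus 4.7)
The plan is to induct along the Magnus hierarchy, using the \textit{Freiheitssatz} as the crucial algebraic input at each step. First, some harmless reductions: since conjugation in $F_A$ preserves normal closures, I may assume both $r_1, r_2$ are cyclically reduced; any letter of $A$ appearing in neither relator contributes only a free factor to $F_A/\normal{r_i}$ and may be discarded, so I take $A$ to be the union of the supports of $r_1$ and $r_2$. Passing to abelianizations, $\normal{r_1} = \normal{r_2}$ forces $\overline{r_1}$ and $\overline{r_2}$ to generate the same cyclic subgroup of $\Z^A$; after replacing $r_2$ by $r_2^{-1}$ if needed, I assume $\overline{r_1} = \overline{r_2}$, so each generator has matching exponent sum in both relators.

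Now induct on $|r_1| + |r_2|$. The base case $|A| = 1$ is immediate, since then $F_A = \Z$ and both relators generate the same cyclic subgroup of $\Z$, giving $r_2 = r_1^{\pm 1}$. For the inductive step, with $|A| \geq 2$, suppose first that some letter $a \in A$ has exponent sum zero in $r_1$ (and hence in $r_2$). I apply the Reidemeister--Schreier rewriting to the kernel $N$ of the ``exponent sum in $a$'' homomorphism $F_A \to \Z$: the free group $N$ has basis $\{b_i := a^i b a^{-i} : b \in A \setminus \{a\},\ i \in \Z\}$, and the rewrites $R_1, R_2 \in N$ of $r_1, r_2$ are strictly shorter than the originals. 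If no letter has exponent sum zero, Magnus' substitution trick --- either replacing a generator $b$ by $b a^{-k}$ for a suitable $k$, or embedding the given one-relator group into a larger one via a new variable $t$ --- reduces to the previous case without affecting the conjugacy question for $r_1$ and $r_2$.

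Intersecting the two equal normal closures in $F_A$ with $N$ translates the hypothesis into the statement that the $\Z$-orbits $\{R_1^{(i)} : i \in \Z\}$ and $\{R_2^{(i)} : i \in \Z\}$ of index-shifts generate the same normal subgroup of the free group $N$. To close the induction I want to conclude that, after a single index shift, one orbit representative is a cyclic permutation of the other up to inversion; the Reidemeister--Schreier dictionary then lifts this relation back to the desired conjugacy in $F_A$, completing the induction.

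The main obstacle is precisely this last step. The inductive hypothesis applies cleanly to single pairs of words, whereas the Reidemeister--Schreier passage produces whole $\Z$-orbits. The plan is to identify the minimal and maximal indices $j$ at which $b_j$ appears in each $R_i$ and align them by a single shift; an application of the \textit{Freiheitssatz} to the intermediate one-relator group $N / \normal{R_1^{(i)} : i \in \Z}$ should then rigidly pin down the appearance of the top-indexed generator in any cyclically reduced relator, forcing $R_2$ to agree with a cyclic conjugate of $R_1^{\pm 1}$. It is this extraction of structural rigidity from the \textit{Freiheitssatz}, rather than the mere length-reduction mechanism of the hierarchy, that I expect to require the bulk of the detailed combinatorial work.
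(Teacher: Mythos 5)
Your plan follows Magnus' original argument via the hierarchy, which is also the approach the paper indicates is the only one known for this theorem. However, the decisive step in your proposal is left genuinely open, and there are two specific issues in how you frame it. First, the quotient $N/\normal{R_1^{(i)} : i \in \Z}$ is not a one-relator group: $N$ is free of infinite rank and you are killing the entire $\Z$-orbit of $R_1$, not a single relator. The correct picture, which is the heart of the Magnus breakdown, is that this quotient decomposes as an infinite amalgamated free product of one-relator groups $G_i$ (with relators $R_1^{(i)}$) over Magnus subgroups; the Freiheitssatz applies to the factors $G_i$, and one must then invoke normal form theory for amalgams to reason about the total quotient. Second, your stated goal --- that the Freiheitssatz ``forces $R_2$ to agree with a cyclic conjugate of $R_1^{\pm 1}$'' --- is essentially the theorem itself one layer down, and the Freiheitssatz does not deliver it directly.

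What the Freiheitssatz together with amalgam normal forms actually yields, and what your sketch is missing, is a reduction to a smaller instance. Since $R_2$ is trivial in the amalgam, if it spanned a strictly shorter index interval than $R_1$ it would sit inside a proper Magnus subgroup of some $G_i$ and hence be nontrivial there, a contradiction; so $R_2$ spans an interval at least as long as that of $R_1$. Crucially --- and this is absent from your sketch --- one must also use the reverse inclusion $r_1 \in \normal{r_2}$ to obtain the reverse inequality, whence the intervals have equal length and, after a single shift, both $R_1$ and $R_2$ lie in the finitely generated free factor $K_0 = F\bigl(b_{c,j} : c \in A \setminus \{a\},\ 0 \leqslant j \leqslant L\bigr)$ of $N$. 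Finally, because the vertex groups of the amalgam embed (again by the Freiheitssatz), one has $\normal{r_i}_{F_A} \cap K_0 = \normal{R_i}_{K_0}$ for $i = 1,2$, so the hypothesis descends to $\normal{R_1}_{K_0} = \normal{R_2}_{K_0}$: an equality of single-element normal closures in a free group with strictly shorter words. It is this reduction, not a direct structural pinning-down of $R_2$ by the Freiheitssatz, that closes the induction.
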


In other words, if in $G_1 = \pres{}{A}{r_1 = 1}$ we have that $r_2 = 1$, and vice versa, then $r_1$ is conjugate to $r_2^{\pm 1}$. This is a solution to a particular case of the (normal) \textit{root problem}, which we first expand on. If $G = \pres{}{A}{r=1}$ is a one-relator group and $w \in F_A$ is a word such that $w=1$, then we say that $r$ is a \textit{normal root}\footnote{The original German is \textit{Wurzel}; we have added \textit{normal} to distinguish from other types of roots.} of $w$. For example, $a^2$ is a normal root of $a^6$, and the commutator $[a,b]$ is a normal root of every word in which the exponent sum of $a$ and $b$ are both zero. Fixing an underlying free group, the \textit{normal root problem} asks to determine all normal roots of a given word $w$. This problem is mentioned by Magnus, together with the word problem, as fundamental to one-relator groups. In general, the normal root problem remains open even today, although Magnus' solution to the word problem (\S\ref{Subsec:Wordproblem-Magnus}) shows that the roots of a given word is a recursive set. Nevertheless, one of the first applications of the \textit{Freiheitssatz} was to determine all normal roots of the commutator $[a,b]$.

\begin{proposition}[{Magnus, \cite[\S6.3]{Ma30}}]
The only (cyclically reduced) normal roots of $[a,b]$ are the primitive words together with $[a, b]^{\pm 1}$. \label{Prop:Magnus-[a,b]-only-presentaton}
\end{proposition}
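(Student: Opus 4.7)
The plan is to combine Magnus' Conjugacy Theorem (Theorem~\ref{Thm:Conjugacy-Theorem}) with the \textit{Freiheitssatz}, using the abelianization of $G = F_2/\normal{w}$ (where $F_2 = F(a,b)$) to constrain $w$ into one of the two advertised families. Since $[a,b] \in \normal{w}$ by hypothesis, the group $G$ is abelian, so it coincides with its own abelianization, namely $G = G^{\ab} = \Z^2/\langle(\alpha,\beta)\rangle$, where $(\alpha,\beta) \in \Z^2$ is the exponent-sum vector of $w$. I would split on whether or not this vector vanishes.

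If $(\alpha,\beta) = (0,0)$, then $w \in [F_2,F_2]$; but $F_2/\normal{[a,b]}$ is abelian and $2$-generated, hence equals $\Z^2$, so $[F_2,F_2] = \normal{[a,b]}$. This places $w$ in $\normal{[a,b]}$, which, combined with the standing hypothesis $[a,b] \in \normal{w}$, yields equality of the two normal closures; Theorem~\ref{Thm:Conjugacy-Theorem} then gives that $w$ is conjugate to $[a,b]^{\pm 1}$. For the remaining case $(\alpha,\beta) \neq (0,0)$, set $d = \gcd(|\alpha|,|\beta|) \geq 1$. Because the abelianization $\Aut(F_2) \twoheadrightarrow \GL_2(\Z)$ is surjective and $\GL_2(\Z)$ acts transitively on integer vectors of a fixed positive $\gcd$, there exists $\psi \in \Aut(F_2)$ for which $\psi(w)$ has exponent-sum vector $(d,0)$. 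Replacing $w$ by the cyclic reduction of $\psi(w)$ -- an operation that preserves both primitivity and the isomorphism type of $G$ -- I may assume $w$ itself is cyclically reduced with abelianization $(d,0)$, so that $G \cong \Z/d\Z \oplus \Z$ and the image of $a$ in $G$ has order exactly $d$.

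Now the \textit{Freiheitssatz} produces the crucial tension: if $w$ involved both $a$ and $b$, then $\langle a\rangle \leq G$ would be infinite cyclic, contradicting that $a$ has finite order $d$. Hence $w$ is a power of a single generator; the abelianization rules out $b$, forcing $w = a^{d}$ and hence $G \cong \Z/d\Z \ast \Z$. A free product of nontrivial groups is never abelian, so this gives $d = 1$; then $w = a$ is primitive, and unwinding $\psi$ shows that the original $w$ is primitive as well. The main obstacle is exactly this last step: a priori the abelianization $\Z/d\Z \oplus \Z$ for $d \geq 2$ looks like a plausible $2$-generator abelian one-relator quotient, and it is the \textit{Freiheitssatz}, applied to the cyclically reduced image of $w$ after the $\GL_2(\Z)$-change of basis, that supplies the sharp torsion-versus-infinite-order contradiction ruling this out and collapsing the whole analysis onto the clean dichotomy ``conjugate to $[a,b]^{\pm 1}$ or primitive''.
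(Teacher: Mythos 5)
Your proof is correct. The paper does not itself reproduce Magnus' argument for this proposition (it only records that the result was one of the first applications of the \textit{Freiheitssatz} and that proving it ``requires some work''), so there is no proof in the paper to compare against; but your route -- noting $[a,b]\in\normal{w}$ forces $G = F_2/\normal{w}$ to be abelian, hence equal to $\Z^2/\langle(\alpha,\beta)\rangle$, then handling $(\alpha,\beta)=(0,0)$ via $[F_2,F_2]=\normal{[a,b]}$ and the Conjugacy Theorem, and $(\alpha,\beta)\neq(0,0)$ via a $\GL_2(\Z)$-normalization followed by the \textit{Freiheitssatz} contradiction between $a$ having order $d$ in $\Z/d\Z\oplus\Z$ and $\langle a\rangle$ being infinite cyclic -- is exactly the kind of Freiheitssatz-driven argument the surrounding prose indicates Magnus used, and every step checks out, including the final reduction $G\isom(\Z/d\Z)*\Z$ abelian $\Rightarrow d=1$.
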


As a consequence, up to renaming generators the only one-relator presentation with a cyclically reduced defining relator of the group
\begin{equation}\label{Eq:ZxZ-presentation}
\Z \times \Z = \pres{}{a,b}{[a,b]=1}
\end{equation}
is precisely that one. This is a non-trivial result, and proving it requires some work. One natural implication of this is the 1917 result by Nielsen discussed in \S\ref{Subsec:Nielsen-Schreier} (see Proposition~\ref{Prop:Dehn-Nielsen-commutator}), and a footnote by Magnus \cite[Footnote~20]{Ma30} indicates that the proof he gives is the one Dehn originally intended, unlike Nielsen's proof. 

In the same 1930 article, Magnus continued to solved the normal root problem for other words. He proceeds, by fairly elementary means, to determine the normal roots of the words $a^2 b^p, a^2 b^{2p}$, and $a^p b^{p^k}$ for arbitrary $k$ and primes $p$. There is also a small section\footnote{The section is erroneously omitted from the table of contents of \cite{Ma30}.} at the end of the first part of Magnus' 1930 article \cite[\S7.5, p. 163]{Ma30}, called ``Simple, unsolved problems''. Here, Magnus mentions the difficulties involved in finding the normal roots of all one-relator groups by giving the example of the group with the single relator $ab^6 a^{-1}b^6$. The reader familiar with one-relator group theory will immediately recognise this group: it is the \textit{Baumslag--Solitar group} $\BS(6,6)$, whose siblings $\BS(m,n)$ we will encounter again on many occasions, and particularly in \S\ref{Subsec:Baumslag-Solitar}. 

The normal root problem has not attracted particularly much attention in the literature since (indeed it does not appear in any other articles by Magnus), and we can summarize it here. A.\ Steinberg (a PhD student of Magnus') revisited the normal root problem in 1971 and 1986 \cite{Steinberg1971, Ste86}. In the former article, he studied among other things normal roots of primitive elements, and in the latter he found the normal roots of $a^k b^l$ for all primes $k, l$. Steinberg directly uses and extends Magnus' method from \cite{Ma30}, and a side-by-side reading of the two articles shows a great deal of similarity in the methods indeed. Finally, in 2000 McCool \cite{McCool2000} solved Magnus' old problem of finding the normal roots of $ab^6 a^{-1}b^6$, and, more generally, the problem of finding the normal roots of $[a, b^k]$ for $k \in \Z$. He also found all the normal roots of $a^k b^l$ indirectly, by showing that this word has only finitely many normal roots for all $k, l \in \Z$, and that these can be produced algorithmically. A problem which remains open is whether there is some element in $F_n$ but not in $[F_n, F_n]$ (the commutator subgroup of $F_n$) which has infinitely many normal roots. By contrast, every element in $[F_2, F_2]$ clearly has infinitely many normal roots. 

Let us now return to the Conjugacy Theorem (i.e.\ Theorem~\ref{Thm:Conjugacy-Theorem}). An equivalent formulation of it is: if two words $r_1, r_2$ are normal roots of one another, then $r_1$ is conjugate to $r_2^{\pm 1}$. This is a very compelling result, and relates to the isomorphism problem for one-relator groups in a natural way. Indeed, it is very natural to ask whether two one-relator groups
\[
G_1 = \pres{}{A}{r_1=1}, \quad \textnormal{and} \quad G_2 = \pres{}{A}{r_2=1}
\]
can be isomorphic \textit{without} $r_1$ being a root of $r_2$ and vice versa (i.e.\ without the normal closures $\llangle r_1 \rrangle_{F_A}$ and $\llangle r_2 \rrangle_{F_A}$ coinciding). Magnus suspected that this would not be the case, i.e.\ that this is the only way that isomorphisms of one-relator groups arise. Note that an equivalent formulation of the Conjugacy Theorem is that if $G = \pres{}{A}{r=1}$, then any automorphism of $F_A$ which defines an automorphism of $G$ must also transform $r$ into a word conjugate to $r^{\pm 1}$. Thus Magnus' suspicion above is equivalent to the following conjecture:

\begin{conjecture}[{W.\ Magnus\footnote{Possibly the first place this conjecture appears explicitly, and attributed to Magnus, in the literature is in 1959, in an article by Rapaport \cite[p. 231]{Rapaport1959}, see also \cite[p. 401]{Magnus1966}.}}] Let $G_1 = \pres{}{A}{r_1=1}$ and $G_2 = \pres{}{A}{r_2=1}$. Then $G_1 \cong G_2$ if and only if $r_1$ lies in the $\Aut(F_A)$-orbit of $r_2$. \label{Conj:Magnus-conj}
\end{conjecture}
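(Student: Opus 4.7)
The forward direction is a formality: any $\phi \in \Aut(F_A)$ with $\phi(r_1) = r_2$ descends to an isomorphism $G_1 \to G_2$, since $\phi$ sends the normal closure of $r_1$ onto that of $r_2$. The substantive direction is the converse, and here the plan is to lift an abstract isomorphism to an automorphism of the ambient free group, and then invoke Magnus' Conjugacy Theorem (Theorem~\ref{Thm:Conjugacy-Theorem}).

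Let $\Phi \colon G_1 \to G_2$ be an isomorphism. For each $a \in A$ I would choose a preimage $\tilde\Phi(a) \in F_A$ of $\Phi(a) \in G_2$; this defines an endomorphism $\tilde\Phi \in \End(F_A)$. Similarly let $\tilde\Psi \in \End(F_A)$ be a lift of $\Phi^{-1}$. By construction, $\tilde\Psi\tilde\Phi(a) \equiv a \pmod{\llangle r_1 \rrangle_{F_A}}$ for every $a \in A$, and symmetrically for $\tilde\Phi\tilde\Psi$ modulo $\llangle r_2 \rrangle_{F_A}$. Suppose one has managed to upgrade $\tilde\Phi$ to an automorphism of $F_A$. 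Since $\Phi$ is an isomorphism, $\tilde\Phi$ then carries $\llangle r_1 \rrangle$ onto $\llangle r_2 \rrangle$, so $\llangle \tilde\Phi(r_1) \rrangle_{F_A} = \llangle r_2 \rrangle_{F_A}$. Magnus' Conjugacy Theorem now forces $\tilde\Phi(r_1)$ to be conjugate to $r_2^{\pm 1}$, and post-composing $\tilde\Phi$ with an inner automorphism (and, if needed, with an outer automorphism realising the flip $r_2 \leftrightarrow r_2^{-1}$) places $r_1$ in the $\Aut(F_A)$-orbit of $r_2$.

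The heart of the matter --- and what I expect to be the real obstacle --- is precisely the step of upgrading the endomorphism $\tilde\Phi$ to an automorphism. An endomorphism of a free group that induces the identity on a proper quotient need not itself be an automorphism, so this cannot come from formal nonsense. Equivalently, what one is after is the statement that the two generating sets $A$ and $\Phi(A)$ of $G_2$ are Nielsen-equivalent in $G_2$; if so, the corresponding sequence of Nielsen transformations pulls back tautologically to $\Aut(F_A)$ and gives the required automorphism.

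To attack this Nielsen-equivalence problem I would proceed inductively via the Magnus hierarchy developed in \S\ref{Subsec:Freiheitssatz}. After passing to cyclic covers of $G_1$ and $G_2$ corresponding to an exponent-sum-zero generator, each group decomposes as an amalgam of one-relator groups with strictly shorter defining relators, amalgamated along Magnus subgroups, which are free by the \emph{Freiheitssatz}. The goal would be to show that $\Phi$ can be arranged to respect these decompositions --- leveraging malnormality of Magnus subgroups, surveyed in \S\ref{Sec:7-Subgroups-of-OR-groups}, to rigidify the amalgam structure --- thereby reducing to an isomorphism of shorter one-relator groups. Iterating this procedure down to a free group, the Nielsen--Schreier Theorem supplies the automorphism at the base and one lifts back up the hierarchy. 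The delicate point is the compatibility step: a priori the Magnus hierarchies of $G_1$ and $G_2$ may be arranged very differently, and matching them up appears to require fine information about the lattice of cyclic covers of a one-relator group. This matching problem, one suspects, is exactly the rigidity phenomenon whose difficulty has kept the conjecture unresolved in general.
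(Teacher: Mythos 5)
You have set out to prove a statement that the paper records as a \emph{false} conjecture, not a theorem, so there is no proof to compare against. The conjecture was disproved by Zieschang in 1970 and independently by McCool \& Pietrowski in 1971, as the survey explains in \S\ref{Subsec:iso-problem}. The McCool--Pietrowski counterexamples are torus knot groups: for $|p|, |q| > 1$ with $p = st+1$, $t \neq 1$, one has $\pres{}{x,y}{x^p = y^q} \cong \pres{}{a,b}{b = (a^p b^{-s})^t}$, yet the two relators do not lie in the same $\Aut(F_2)$-orbit. A two-generator example due to Pride is $\pres{}{a,t}{t^{-1}a^2t = a^3} \cong \pres{}{a,t}{[t,a]^2 = a}$, where $t^{-1}a^2ta^{-3}$ and $[t,a]^2 a^{-1}$ are not automorphically equivalent.

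Your argument correctly isolates the pivot: upgrading the lifted endomorphism $\tilde\Phi \in \End(F_A)$ to an automorphism, equivalently showing that $A$ and $\Phi(A)$ are Nielsen-equivalent generating tuples of $G_2$. You treat this as a hard-but-plausible lemma to attack inductively via the hierarchy, but it is precisely the step that fails. One-relator groups can have several distinct $T$-systems of generating tuples --- Brunner produced one, $\BS(2,3)$, with infinitely many --- and an abstract isomorphism $G_1 \to G_2$ may carry the standard generating tuple of $G_1$ to a tuple of $G_2$ lying in a genuinely different $T$-system, in which case no $\phi \in \Aut(F_A)$ sends $r_1$ to a conjugate of $r_2^{\pm 1}$, however cleverly the hierarchies are aligned. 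The easy direction you dispose of first is indeed correct; the converse is false, so the hierarchy-matching programme is a search for something that does not exist. (The contrast with Pride's Theorem~\ref{Thm:Pride-isomorphism-theorem} is instructive: for two-generator one-relator groups with torsion the strategy you sketch essentially does work, exactly because such groups are shown to have finitely many, explicitly describable, $T$-systems.)
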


Were this conjecture correct, it would follow that the isomorphism problem for one-relator groups would be decidable, as Whitehead \cite{Wh36} proved in 1936 that automorphic equivalence in free groups is decidable (cf.\ also \cite{Rapaport1958,Higgins1974}). However, many decades later, the conjecture would be disproved; we expand on this in \S\ref{Subsec:iso-problem}. Note also that here we easily stumble upon the question of the structure of the automorphism group $\Aut(G)$ of a given one-relator group $G$. We have seen in \S\ref{Subsec:Surface-and-knot-groups} the importance and relative difficulty of computing such groups. Clearly, given e.g.\ $\pres{}{a,b}{a^n = 1}$ with $n>1$, there are automorphisms not induced by free group automorphisms; but even in the torsion-free case (i.e.\ when the relator is not a proper power, see \S\ref{Subsec:Torsion-theorem}) there exist one-relator groups with automorphisms not arising from automorphisms of the underlying free group \cite{Rapaport1959}. Indeed, the knot group of the figure-eight knot has such automorphisms (see \S\ref{Subsec:Surface-and-knot-groups} for Magnus' work on this group. 

We mention some brief notes on extensions of the Conjugacy Theorem (i.e.\ Theorem~\ref{Thm:Conjugacy-Theorem}). By an extension of this theorem we simply mean that the free group $F_A$ in the statement of Theorem~\ref{Thm:Conjugacy-Theorem} is replaced by some other group (e.g.\ a surface group). It was extended to all small cancellation $C'(1/6)$-groups by Greendlinger \cite{Greendlinger1961} (a PhD student of Magnus'), and was subsequently extended to some related classes by Kashintsev \cite{Kashintsev1969, Kashintsev1985} and Pa\l asi\'{n}ski \cite{Pawasinski1982}. We mention also some more modern developments, as their proofs use entirely classical methods; first, it is known that for all $g \geq 1$, the surface groups $\pi_1(\Sigma_g)$ and $\pi_1(\mathcal{K}_g)$ (given by \eqref{Eq:pres-of-sigma-g} and \eqref{Eq:pres-of-K-g}) have the Magnus property, proved in the orientable case by Bogopolski \cite{Bogopolski2005} (see also \cite{Bogopolski2004}), in the non-orientable case for $g \geq 4$ by Bogopolski \& Sviridov \cite{Bogopolski2008}, and the reticent case $\pi_1(\mathcal{K}_3)$ only completed recently by Feldkamp \cite{Feldkamp2019}. Finally, Edjvet \cite{Edjvet1989} extended the result to free products of locally indicable groups. 

Finally, we remark that Magnus conjectured \cite[p.\ 24]{Ma30} in 1930 that a similar theorem ought to hold for two-relator groups. Specifically, he conjectured that if two pairs of relators $\{ r_1, r_2 \}$ and $\{ r_1', r_2' \}$ give rise to the same normal closures in $F_A$, then the two sets are connected by a set of ``Nielsen moves'', i.e.\ the closure of maps of the form $r_i \mapsto r_i r_j^{\pm 1}$ (with $i \neq j$), inversions, and conjugates. These moves are thus the natural analogues of Nielsen transformations. The conjecture remains, to the best of the authors' knowledge, open. One particular case of is of course part of the well-known \textit{Andrews--Curtis Conjecture}, which states that any presentation of the trivial group which is balanced (i.e.\ when the number of generators is the same as the number of relators) is related to a trivial presentation by precisely a sequence of such moves above. This conjecture, which is also open, was posed only in 1965 \cite{Andrews1965}, more than three decades after Magnus' article. Thus Magnus' 1930 article \cite{Ma30} contains hints not only of Baumslag--Solitar groups but also of one of the central conjectures of combinatorial group theory; It is remarkable how much can be traced back to this single article.

As a final note on the Conjugacy Theorem, it is worth noting that although many classical theorems on one-relator groups today have geometric proofs, this is not (yet?) the case for the Conjugacy Theorem, as pointed out to me by H.\ Wilton. The method of proof today remains, while streamlined, essentially the same as Magnus' original proof.

\subsection{The word problem for one-relator groups}\label{Subsec:Wordproblem-Magnus}

Although Magnus does discuss the word problem in his 1930 article \cite{Ma30} at length, he does not give a solution to it in that article. The problem was clearly on his mind at this point, however, and in his 1931 article \cite[\S4]{Magnus1931} he gives a solution to the word problem in all one-relator groups of the form 
\begin{equation}\label{Eq:abab-groups}
\pres{}{a,b}{a^{\alpha_1} b^{\beta_1} a^{\alpha_2} b^{\beta_2} = 1}.
\end{equation}
This class of one-relator groups is already rather non-trivial, and contains e.g.\ the non-linear but residually finite Dru\c{t}u--Sapir group $\pres{}{a, b}{a^2 b^2 a^{-2} b^{-1} = 1}$ studied in \cite{Drutu2005}. The method Magnus used to solve the word problem in \eqref{Eq:abab-groups} is a primitive version of a method he would revisit the very next year, when he would prove the following remarkable theorem:

\begin{theorem}[Magnus, 1932 \cite{Ma32}]
Every one-relator group has decidable word problem. \label{Thm:Decidable-word-problem}
\end{theorem}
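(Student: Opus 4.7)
The plan is to prove the theorem by induction on the length of a cyclically reduced defining relator $r$, following the same Magnus breakdown procedure that powers the proof of the \textit{Freiheitssatz}. Since the breakdown passes from a one-relator group $G = \pres{}{A}{r = 1}$ through an amalgamated free product whose factors are one-relator groups with shorter relators, and since Theorem~\ref{Thm:Membership-gives-WP-in-amalgams} reduces the word problem in an amalgam to the word problems in the factors together with the \emph{membership problem} in the amalgamated subgroup, the natural induction hypothesis must be strengthened. I would therefore prove simultaneously, by induction on $|r|$, that (i) the word problem in $G$ is decidable and (ii) for every Magnus subgroup $M = \langle A_0 \rangle$ with $A_0 \subsetneq A$, the membership problem for $M$ in $G$ is decidable. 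The base case $|r| = 0$ reduces to the free group, where (i) is immediate and (ii) follows from the Nielsen--Schreier Theorem (Theorem~\ref{Thm:Nielsen-Schreier}) together with Nielsen's algorithm for deciding membership in finitely generated subgroups of free groups.

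For the inductive step, I would first reduce to the case in which some generator $a \in A$ has exponent sum zero in $r$: if this fails, a Magnus trick (introducing a stable letter $t$ and substituting $a \mapsto t^{\beta}$, $b \mapsto t^{-\alpha} b$ for appropriate nonzero exponent sums $\alpha, \beta$) embeds $G$ into a one-relator group in which the hypothesis does hold, and decidability of both (i) and (ii) pulls back along the embedding. Assuming then that $a$ has exponent sum zero in $r$, the homomorphism $G \twoheadrightarrow \Z$ sending $a \mapsto 1$ and every other generator to $0$ has kernel $N$ which, by Magnus' breakdown, is an amalgamated free product of copies $G_i$ ($i \in \Z$) of a single one-relator group $H$ whose defining relator is strictly shorter than $r$, amalgamated along Magnus subgroups of these copies. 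To test whether a word $w \in F_A$ equals $1$ in $G$, one first checks the exponent sum of $a$ in $w$; if it is nonzero then $w \neq 1$, and otherwise one rewrites $w$ via the Reidemeister--Schreier process into a word in the generators $a^i b a^{-i}$ of $N$ and applies the amalgam word-problem algorithm of Theorem~\ref{Thm:Membership-gives-WP-in-amalgams} iteratively. By the inductive hypothesis applied to $H$, both the factor word problems and the amalgamated-Magnus-subgroup membership problems are decidable, so the algorithm terminates.

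The main obstacle is keeping the enhanced induction hypothesis (ii) alive. At each step one must verify that the subgroups amalgamated in the Magnus breakdown really are Magnus subgroups of the factors $G_i \cong H$ so that the inductive decidability of (ii) applies, and one must also show that any Magnus subgroup of the outer group $G$ inherits a sufficiently transparent description in terms of the amalgam $N$ to reduce its membership problem inductively. A further subtle point is that the amalgam has infinitely many factors, so the algorithm must exploit the fact that any fixed word $w$ is rewritten into a word involving only finitely many generators $a^i b a^{-i}$ in a bounded range of indices $i$, so that only finitely many factors $G_i$ are relevant to its analysis. Once this bookkeeping is set up carefully the induction closes, establishing decidability of the word problem in every one-relator group.
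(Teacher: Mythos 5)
Your proposal is correct and follows essentially the same route as the paper's sketch in \S\ref{Subsec:Wordproblem-Magnus}: strengthen the induction to the generalized word problem (membership in Magnus subgroups), apply Theorem~\ref{Thm:Membership-gives-WP-in-amalgams} at each stage of the Magnus breakdown, handle the exponent-sum-zero reduction by embedding into an auxiliary one-relator group, and close the induction at the free-group base case via Nielsen's algorithm. The subtleties you flag (preserving the enhanced hypothesis through the breakdown, and exploiting that any fixed word involves only a bounded range of indexed generators $a^i b a^{-i}$) are exactly the bookkeeping the paper alludes to.
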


His method of proof uses the \textit{Freiheitssatz} and the Magnus hierarchy (as in the proof of the \textit{Freiheitssatz}, see \S\ref{Subsec:Freiheitssatz}), but keeps track of some more information. We now give a brief overview of how his proof works. 

The central problem of study is the \textit{generalized word problem}\footnote{This term has also been used for the \textit{subgroup membership problem}, a much more difficult problem which remains open for one-relator groups, see \S\ref{Subsec:More-on-WP}.} or the \textit{extended identity problem}\footnote{Another name for the problem is the \textit{Magnus problem}, revisited by Boone \cite{Boone1955} along the way of proving the word problem undecidable; see also Stillwell \cite[p. 45]{Stillwell1982}.}. This problem is the following: given a one-relator group $G = \pres{}{A}{r=1}$ and a subset $A_0 \subseteq A$, decide whether or not a given word $w \in F_A$ is an element of the subgroup $\langle A_0 \rangle \leq G$. That is, the generalized word problem asks for deciding membership in Magnus subgroups of one-relator groups. Of course, if a one-relator group has decidable generalized word problem, then it also has decidable word problem; for deciding membership in the Magnus subgroup generated by the empty set is precisely the word problem.

The basic observation for adopting the Magnus hierarchy, used in the proof of the \textit{Freiheitssatz}, to also solve the generalized word problem is Theorem~\ref{Thm:Membership-gives-WP-in-amalgams}, which we here restate for simplicity: let $G = G_1 \ast_H G_2$ be an amalgamated free product of two groups with $H \leq G_1, G_2$. Then the word problem in $G$ is decidable if: 
\begin{enumerate}
\item The word problem in $G_1$ and $G_2$, respectively, is decidable; and
\item The membership problem for $H$ in $G_1$ and in $G_2$ is decidable. 
\end{enumerate}
Magnus first proves this result in \cite[\S4]{Ma30}, giving the first published proof of the result (although the result was known to Schreier, as we have seen in \S\ref{Subsec:Nielsen-Schreier}), and then proceeds as follows. Since the Magnus hierarchy decomposes a one-relator group into an iterated amalgamated free product of one-relator groups (with shorter defining relation) with amalgamated subgroups being Magnus subgroups, this gives a basis for an inductive step: one only has to show that the property of having decidable \textit{generalized word problem} is also preserved in this setting when performing the Magnus breakdown. This is not difficult, since the generators of the initial one-relator group essentially arise from generators of the one-relator group at the next layer down in the Magnus hierarchy. The base case corresponds, essentially, to that of a free group, and membership in Magnus subgroups of free groups is trivially decidable.\footnote{In fact, membership in \textit{any} finitely generated subgroup of a free group is decidable by using Nielsen's method, see \S\ref{Subsec:Nielsen-Schreier}.} This solves the generalized word problem in all one-relator groups, and hence solves the word problem in all one-relator groups. 

We make a few remarks on this solution to the word problem. First, we note that, as we shall see in \S\ref{Subsec:MM-hierarchy}, the Magnus hierarchy and the associated solution to the word problem has a natural interpretation in terms of \textit{HNN-extensions}. Furthermore, the seeming simplicity of the solution to the word problem should \textit{not} be taken as a suggestion that the word problem in one-relator groups is necessarily easy. Indeed, the time complexity of the word problem in general for one-relator groups is unknown. Furthermore, the breakdown procedure depends, in an essential way, on which generator with exponent sum zero one chooses (i.e.\ the generator mapped to the generator of $\Z$). For example, if one considers the group
\begin{equation}\label{Eq:Gray-family}
G = \pres{}{a,t}{[a, a^t] = 1}
\end{equation}
then this has a defining relator with exponent sum zero in both $a$ and $t$. If we break down the group using $a$ resp.\ $t$, we obtain the group $G_a$ resp.\ $G_t$, defined by 
\[
G_a = \pres{}{t_0, t_1, t_2}{t_1 t_2^{-1} t_1 t_0^{-1} = 1} \quad \textnormal{resp.} \quad G_t = \pres{}{a_0, a_1}{a_0 a_1 a_0^{-1} a_1^{-1} = 1}.
\]
Note that $G_a$ is a free group, whereas $G_t \cong \Z^2$. In the latter case, we must therefore proceed with the Magnus breakdown once again before terminating in a free group. One can easily generalize the above example \eqref{Eq:Gray-family} with e.g.\ the relator $[a, a^{(a^t)}] = 1$, etc., in which choosing to break down using $a$ immediately yields a free group, but breaking down using $t$ can take an arbitrary finite number of steps to reach a free group. We also here mention in this connection a result of Weinbaum \cite{We72} who proved in 1972 that in a one-relator group $\pres{}{A}{r=1}$, with $r$ cyclically reduced, no proper non-empty subword of $r$ is equal to $1$. Thus even this result, which seems straightforward, is not immediate to prove using Magnus' solution to the word problem. We will discuss the time complexity, and related questions, of the word problem in one-relator groups in \S\ref{Subsec:More-on-WP}. Before this, however, we must first cross the barren landscape of inactivity that followed Magnus' results. 

\section{The Lull Phase (1932--1960)}\label{Sec:4-Lull-phase}

Following Magnus' 1932 solution to the word problem in all one-relator groups, progress on one-relator groups slowed down. Indeed, as for significant theorems related directly to one-relator groups themselves, in the next three decades or so only one would be proved: the Identity Theorem of Lyndon. We will discuss this result in \S\ref{Subsec:LyndonIT}. However, these three decades, which we will refer to as the \textit{lull phase}, saw a number of very significant developments in combinatorial group theory as a subject, many of which would eventually end up being directly relevant to one-relator group theory. In this section, we will give an overview of this phase of the development of one-relator group theory, ending it in \S\ref{Subsec:Undecidability} by discussing some reasons for what caused progress to slow down, and subsequently resume.

\subsection{Some results on free groups}\label{Subsec:More-freeness-lull}

We begin with some developments on the theory of free groups that occurred in the 1930s. As we shall see, they would have rather an important impact on the development of one-relator group theory in the 1960s, particularly the notion of \textit{residual nilpotence}. We begin with Magnus' main result during this time: the representation of free groups in (the group of units of) a certain ring. This can be fairly easily explained. Let $\xi_1, \dots, \xi_n$ be $n$ non-commuting variables, and let $\Z[[\xi_1, \dots, \xi_n]]$ be the ring of formal power series in the non-commuting variables. Let $a_i = 1 + \xi_i$. Then each $a_i$ is, in fact, invertible, with inverse given by 
\begin{equation}\label{Eq:Magnus-inverse}
a_i^{-1} = 1 - \xi_i + \xi_i^2 - \xi_i^3 + \cdots = \sum_{j=0}^\infty (-1)^j \xi_i^j.
\end{equation}
Thus the $a_i$ generate a group; and Magnus proved that the group they generate is a free group. This has a number of immediate consequences. 

\begin{theorem}[{Magnus, 1935 \cite{Magnus1935}}]
The group generated by $1 + \xi_i$ ($1 \leq i \leq n$) is free of rank $n$, freely generated by these generators. As a consequence, free groups are residually nilpotent. \label{Thm:Magnus-rep}
\end{theorem}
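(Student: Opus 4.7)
The plan is to construct the Magnus homomorphism $\psi : F_n \to R^\times$, where $R = \Z[[\xi_1, \dots, \xi_n]]$ is the ring of formal power series in the non-commuting indeterminates $\xi_i$, then prove injectivity of $\psi$ by a direct coefficient calculation, and finally deduce residual nilpotence of $F_n$ by comparing the lower central series filtration on $F_n$ with the filtration by powers of the augmentation ideal on $R$. The well-definedness of $\psi$ on the assignment $a_i \mapsto 1 + \xi_i$ is an immediate consequence of the universal property of the free group, together with the invertibility of $1 + \xi_i$ supplied by the geometric series formula \eqref{Eq:Magnus-inverse}.

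For injectivity, given a non-trivial reduced word $w = a_{i_1}^{\epsilon_1} \cdots a_{i_k}^{\epsilon_k}$, I would expand $\psi(w) = \prod_{j=1}^{k} (1 + \xi_{i_j})^{\epsilon_j}$ and focus on the coefficient of the non-commutative monomial $M = \xi_{i_1} \xi_{i_2} \cdots \xi_{i_k}$. Every contribution corresponds to a tuple $(d_1, \dots, d_k)$ of non-negative integers with $\sum_j d_j = k$ such that, read as a non-commutative word, $\xi_{i_1}^{d_1} \cdots \xi_{i_k}^{d_k}$ equals $M$; equivalently, factor $j$ covers a contiguous block of positions of $w$, all bearing the index $i_j$. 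The reducedness of $w$ (which synchronises index and sign within maximal runs), together with the non-decreasing nature of this coverage map, ensures no factor can straddle two distinct runs of equal index, so the count factors over the runs of $w$. A positive run of length $m$ contributes $1$ (the only admissible distribution is the all-ones tuple, since $(1+\xi)^{+1}$ has no term of degree $\geq 2$), whereas a negative run of length $m$ contributes $(-1)^m \binom{2m-1}{m-1}$ (every weak composition of $m$ into $m$ non-negative parts is admissible, each carrying sign $(-1)^m$). Since each per-run factor is a non-zero integer, the product is a non-zero integer, and hence $\psi(w) \neq 1$.

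For residual nilpotence, let $I \triangleleft R$ be the augmentation ideal $(\xi_1, \dots, \xi_n)$. An induction on $k$ using the ring-theoretic commutator identity
\[
[u, v] - 1 = \bigl((u - 1)(v - 1) - (v - 1)(u - 1)\bigr)\, u^{-1} v^{-1}
\]
shows that $\psi(\gamma_k(F_n)) \subseteq 1 + I^k$ for all $k \geq 1$, where $\gamma_k(F_n)$ denotes the $k$-th term of the lower central series of $F_n$. Since $\bigcap_k I^k = 0$ in $R$, we have $\bigcap_k (1 + I^k) = \{1\}$, and combining this with the injectivity of $\psi$ already obtained yields $\bigcap_k \gamma_k(F_n) = \{1\}$, i.e., $F_n$ is residually nilpotent.

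The main obstacle is the bookkeeping in the injectivity step. One must verify carefully that the non-decreasing coverage map cannot leap between two separate runs of $w$ that happen to share the same index value: it is precisely this combinatorial fact which ensures the per-run contributions are independent and cannot cancel in the total coefficient, and establishing it rigorously requires a non-trivial use of the interleaving of maximal runs forced by reducedness of $w$.
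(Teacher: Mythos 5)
Your proof is correct and follows the same classical strategy the paper relies on (Magnus's power-series representation, injectivity by coefficient extraction, residual nilpotence via comparison of $\gamma_k(F_n)$ with $1+I^k$); the residual-nilpotence deduction is exactly the paper's one-line sketch made precise. The one place you diverge from the standard argument is the injectivity step, which the paper does not spell out. You expand $w$ letter by letter and extract the coefficient of the degree-$|w|$ monomial $\xi_{i_1}\cdots\xi_{i_k}$, obtaining per-run contributions $1$ for a positive run and $(-1)^m\binom{2m-1}{m-1}$ for a negative run of length $m$. The classical route instead groups $w$ into syllables $a_{j_1}^{e_1}\cdots a_{j_p}^{e_p}$ with $j_r\neq j_{r+1}$ and $e_r\neq 0$, and inspects the degree-$p$ monomial $\xi_{j_1}\cdots\xi_{j_p}$: its coefficient is simply $e_1e_2\cdots e_p\neq 0$, and the ``no-straddling'' obstacle you correctly flag becomes trivial, because $\xi_{j_1}^{d_1}\cdots\xi_{j_p}^{d_p}=\xi_{j_1}\cdots\xi_{j_p}$ with $j_r\neq j_{r+1}$ at once forces every $d_r=1$. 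Your letter-by-letter version does go through---the needed fact is that a non-commutative monomial factors uniquely into maximal constant-index blocks, so an admissible coverage tuple must respect run boundaries---but the syllable decomposition achieves the same end with far less bookkeeping, which is why the textbook argument uses it.
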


Here, a group $G$ is said to be \textit{residually nilpotent} if the intersection of all terms in the lower central series of $G$ is trivial; alternatively, we will see a more general definition of residual properties in \S\ref{Subsec:Residualfiniteness}. The residual nilpotence of free groups is a direct consequence in Theorem~\ref{Thm:Magnus-rep} as follows: let $F$ be a free group. Then any element of the $k$th term of the lower central series of $F$, considered as an element of $\Z[[\xi_1, \dots, \xi_n]]$ in the above representation, must have the degree of a certain component be at least $k$ (see \cite[p.~313]{Magnus1966}). This implies, of course, that the terms of the lower central series intersect trivially. A standard reference for this representation, now often called the \textit{Magnus representation}, is \cite[Chapter~5]{Magnus1966}. See also \cite{Chen1954, Chandler1968} for some extensions to other finitely generated groups; indeed, the representation was in the first place motivated in part by a 1928 article by Shoda \cite{Shoda1928} on the automorphism group of abelian groups.

The Magnus representation would end up inspiring much research on representing free groups. While we cannot write the full history here (it shall instead appear elsewhere), one particularly famous case bears a brief mention. In the late 1930s, a group of three Soviet students working in St Petersburg, supervised by V.\ A.\ Tartakovskii, picked up the subject: these were Kh.\ A.\ Doniyakhi, D.\ I.\ Fuchs-Rabinovich, and I.\ N.\ Sanov. First, Fuchs-Rabinovich proved results about ``determinators'' of free groups, and uses these to recover Magnus' theorem above on the residual nilpotence of free groups; this appeared in a 1940 article (written in English!) \cite{FuchsRabinovich1940}\footnote{Lyndon \cite{Lyndon1953} later elaborated on this article and its connection with Magnus' work, and we primarily refer the interested reader to consult this latter source.}. Doniyakhi proved results about extending these results to free products of abelian groups \cite{Doniyakhi1940}. Fuchs-Rabinovich \cite{FuchsRabinovich1940b}, in another article, also proved results about representing  free groups in $\SL_2(\C)$, giving another proof of Magnus' result. At this time, Sanov \cite{Sanov1940} instead worked on a different problem, namely the Burnside problem for exponent $4$, which he solved affirmatively in 1940.

The three students would have rather different fates. When war broke out in 1941, Fuchs-Rabinovich was drafted into the USSR Baltic Fleet. He died of dysentery in 1942, during the Siege of Leningrad, at the age of 29. Doniyakhi was also drafted, and died in battle less than two months later in 1941, at the age of 24. Sanov survived the war.\footnote{Indeed, Sanov served as Head of Mathematics for one year at the University of Pyongyang (North Korea) in 1949.} After the war, Sanov would carry on the research his fellow students had initiated, and in 1947 published the famous article \cite{Sanov1947}. This begins by acknowledging the articles by Fuchs-Rabinovich and Doniyakhi, then presents his theorem: the (multiplicative) group generated by the two matrices
\begin{equation}
\begin{pmatrix}
1 & 2 \\ 0 & 1
\end{pmatrix} \quad \textnormal{and} \quad \begin{pmatrix}
1 & 0 \\ 2 & 1
\end{pmatrix}
\end{equation}
is a free group of rank $2$. This can be used to give yet another proof of the residual nilpotence of free groups, and would directly inspire many important results in combinatorial group theory, including the Ping-Pong Lemma \cite{Macbeath1963}.

Sanov and his friends were of course not the only researchers to be affected by the war; we have already seen in \S\ref{Subsec:Surface-and-knot-groups} how Dehn was affected. Magnus, who had been appointed as an assistant at Königsberg in 1939, was forced to resign from this position shortly thereafter because he refused to join the NSDAP (the Nazi Party). Instead, he found employment at \textit{Telefunken}, a radio company, during the war.\footnote{During this time, he would gain an interest in \textit{special functions}, which would eventually lead to him joining a project with Erd\'elyi, Oberhettinger \& Tricomi to publish their famous book \cite{Erdelyi1953} on transcendental functions. This book remains, by a wide margin, Magnus' most cited work.} This abrupt break in his usual research environment could also explain the lack of progress on one-relator group theory during this time, cf.\ \S\ref{Subsec:Undecidability}.

\

Let us now turn back to some further results on free groups pertaining to one-relator groups. The first concerns the study of \textit{primitive} elements in free groups, as discussed in \S\ref{Subsec:Nielsen-Schreier}. Whitehead, using topological methods, proved two remarkable theorems in this direction in a 1936 article \cite{Wh36}. The first states simply that there is an algorithm for deciding whether or not a given word is a primitive element or not. That is, one can decide whether a given element $w \in F_A$ can be part of a basis or not (cf.\ Nielsen's result Proposition~\ref{Prop:Dehn-Nielsen-commutator}, which gives an algorithm for deciding if a pair of words is a basis for $F_2$). Furthermore, Whitehead completely characterized when a one-relator group is free.

\begin{theorem}[{Whitehead, 1936 \cite[Theorem~4]{Wh36}}]
Let $G = \pres{}{A}{r=1}$ with $r$ non-trivial. Then $G$ is free if and only if $r$ is a primitive element of $F_A$. \label{Thm:Whitehead-theorem}
\end{theorem}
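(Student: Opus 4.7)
The implication from primitivity to freeness is almost immediate by Tietze transformations: if $r$ extends to a basis $\{r, w_2, \ldots, w_n\}$ of $F_A$, then rewriting the presentation in this new basis gives $G \cong \pres{}{r, w_2, \ldots, w_n}{r = 1}$, and deleting both the generator $r$ and the relation $r=1$ yields $F(w_2, \ldots, w_n) \cong F_{n-1}$, which is free.

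For the converse, suppose $G$ is free and set $n = |A|$. I would first pin down the rank of $G$ by abelianizing: $G^{\mathrm{ab}} \cong \Z^n/\langle \mathbf{e}(r)\rangle$, where $\mathbf{e}(r) \in \Z^n$ is the exponent-sum vector of $r$. If $\mathbf{e}(r) = 0$, then $G^{\mathrm{ab}} = \Z^n$ would force $G$ to be free of rank $n$, but the quotient map $F_n \twoheadrightarrow G$ would then be a self-surjection of $F_n$ with nontrivial kernel (containing $r \neq 1$), contradicting Hopficity of $F_n$. Hence $\mathbf{e}(r)$ must be primitive in $\Z^n$ and $G \cong F_{n-1}$. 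Now pick a basis $\{b_1, \ldots, b_{n-1}\}$ of $G$, and lift each $b_i$ to some $\tilde b_i \in F_A$. My target is to show that $\{r, \tilde b_1, \ldots, \tilde b_{n-1}\}$ generates $F_A$: once this is achieved, Hopficity of $F_n$ promotes any generating $n$-tuple to a basis, and so $r$ is primitive by definition.

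The main obstacle is precisely this generation claim. A naive argument yields only $\langle r, \tilde b_1, \ldots, \tilde b_{n-1}\rangle \cdot \normal{r} = F_A$, which is strictly weaker than $\langle r, \tilde b_1, \ldots, \tilde b_{n-1}\rangle = F_A$, since the normal closure $\normal{r}$ may well contain conjugates of $r$ not expressible in terms of $r$ and the $\tilde b_i$ alone. Whitehead's original route to overcoming this obstacle is topological: he associates to $r$ a graph $W_r$ on the vertex set $A \cup A^{-1}$, with edges recording the adjacencies of letters in cyclic conjugates of $r^{\pm 1}$, and introduces a finite list of automorphisms of $F_n$ (the \emph{Whitehead automorphisms}) with the property that every cyclic word of minimal length in its $\Aut(F_n)$-orbit is either fixed or strictly shortened by each such automorphism. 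A careful cut-vertex analysis of $W_r$ then shows that when $F_n/\normal{r}$ is free, the reduction procedure must terminate at a single generator, so $r$ lies in the $\Aut(F_n)$-orbit of a generator, which is exactly primitivity. An alternative, more algebraic route is: once one first checks that $r$ is not a proper power (which follows from $G$ being torsion-free), Lyndon's Identity Theorem (\S\ref{Subsec:LyndonIT}) gives enough control over the relation module to promote ``$G$ is free'' to the statement that the row of Fox derivatives $(\partial r/\partial a)_{a \in A}$ is unimodular over $\Z G$, which is the algebraic avatar of primitivity of $r$ in $F_A$.
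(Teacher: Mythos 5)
The paper itself gives no proof of this theorem: it simply states the result with attribution to Whitehead, noting that his 1936 methods were topological. So there is no in-paper argument to compare against, and I will assess your proposal on its own terms.

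Your forward direction (primitivity $\Rightarrow$ freeness by Tietze transformations) is complete and correct. The reduction to $G \cong F_{n-1}$ is also essentially right, though your abelianization argument explicitly rules out only $\mathbf{e}(r) = 0$ (via Hopficity). You also need to exclude $\mathbf{e}(r) = d\mathbf{v}$ with $d \geq 2$ and $\mathbf{v}$ primitive; this is easy ($G^{\mathrm{ab}}$ would then have $\Z/d\Z$-torsion, whereas a free group has torsion-free abelianization) but should be said.

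The real issue is that the converse is not actually proved: you correctly identify the generation claim as the crux, but neither of the two routes you sketch closes it. For the Whitehead-automorphism route, the cut-vertex machinery detects whether a given cyclic word of minimal length in its $\Aut(F_n)$-orbit is primitive, but your description never explains where the hypothesis ``$F_n/\normal{r}$ is free'' enters. To make this route work one must show that if the minimal-length representative $r'$ of $r$ has $|r'| \geq 2$, then $F_n/\normal{r'}$ is \emph{not} free, and that step (which is where the freeness hypothesis is actually consumed) is absent. For the Fox-derivative route, the chain ``$G$ free $\Rightarrow$ $r$ not a proper power $\Rightarrow$ relation module $\cong \Z G$ $\Rightarrow$ the row $(\partial r/\partial a_j)_j$ is unimodular over $\Z G$'' is fine, but the final claim that unimodularity of the Fox row \emph{over $\Z G$} ``is the algebraic avatar of primitivity of $r$ in $F_A$'' is not established and is genuinely non-trivial. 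Primitivity corresponds (via Birman's theorem) to invertibility of a completed Fox Jacobian over $\Z F_n$; what you have is unimodularity only after projecting to the quotient ring $\Z G = \Z F_n / \ker$. Lifting unimodularity from $\Z G$ to $\Z F_n$ — and then promoting a unimodular row to a completable one — is precisely the content one would need, and it is not a formality. As it stands, the converse direction is a plan with its hardest step left open, not a proof.
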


As we shall see in \S\ref{Sec:6-OR-with-torsion}, there is also a complete characterization of when a one-relator group is \textit{virtually} free (Theorem~\ref{Thm:Fischer-Karrass-Solitar}), but this would not be proved until 1972.\footnote{Indeed, the fact that a non-free virtually free one-relator group must have elements of finite order is not at all obvious, but follows from Stallings' theorem \cite{Stallings1970}.} Next, a result which can be seen as a generalization of Whitehead's result was proved by Magnus in 1939.

\begin{theorem}[Magnus, 1939 \cite{Ma39}]
Let $G$ be a group given by a presentation with $n+r$ generators and $r$ defining relations, with $n, r \geq 0$. Then:
\begin{enumerate}
\item If $G$ can be generated by $n$ elements, then $G$ is free of rank $n$. 
\item If $G^{\operatorname{ab}} \cong \Z^n$, then some set of $n$ of the generators of $G$ generate a free group. 
\item If $G$ has the same sequence of lower central factors as the free group on $n+r$ generators, then $G$ is free. 
\end{enumerate}\label{Thm:Magnus-extended-Frei}
\end{theorem}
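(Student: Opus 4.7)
My plan is to prove the three parts in the order (3), (2), (1), with Theorem~\ref{Thm:Magnus-rep} (residual nilpotence of free groups) as the key tool in both (3) and (1).

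\textbf{Part (3).} Let $F = F_{n+r}$ and let $\phi \colon F \twoheadrightarrow G$ be the canonical surjection from the given presentation. Each $\gamma_k(F)/\gamma_{k+1}(F)$ is finitely generated free abelian of known rank (Witt's formula), and the hypothesis forces $\gamma_k(G)/\gamma_{k+1}(G)$ to be isomorphic, as an abstract group, to it---hence also finitely generated free abelian of the same rank. The induced surjection $\phi_k$ between finitely generated free abelian groups of equal rank is automatically an isomorphism. Induction on $k$ via the five lemma applied to the short exact sequences $1 \to \gamma_k/\gamma_{k+1} \to K/\gamma_{k+1} \to K/\gamma_k \to 1$ (for $K \in \{F, G\}$) then gives $F/\gamma_k(F) \xrightarrow{\cong} G/\gamma_k(G)$ for every $k$, so $\ker(\phi) \subseteq \bigcap_k \gamma_k(F) = \{1\}$ by Theorem~\ref{Thm:Magnus-rep} and $\phi$ is an isomorphism.

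\textbf{Part (2).} I would induct on $r$, with $r = 0$ trivial. For $r \geq 1$, first apply Tietze transformations (Nielsen moves on the generators and a $\mathbb{Z}$-invertible change of the relator tuple) to bring the $r \times (n+r)$ exponent sum matrix into Smith normal form; since its cokernel is $\Z^n$, we may assume this form is $(0_{r \times n} \mid I_r)$, so each $R_i$ has exponent sum $\delta_{ij}$ in $a_{n+j}$ and $0$ in $a_1, \ldots, a_n$. Consider the auxiliary one-relator group $H = \langle a_1, \ldots, a_{n+r} \mid R_r \rangle$; since $R_r$ involves $a_{n+r}$, the \textit{Freiheitssatz} gives that the Magnus subgroup $M = \langle a_1, \ldots, a_{n+r-1} \rangle \leq H$ is free of rank $n+r-1$. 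The key step is then to use the Magnus hierarchy applied to $H$---exploiting that every $R_i$ with $i<r$ has exponent sum zero in $a_{n+r}$---to rewrite each such $R_i$ modulo the normal closure of $R_r$ as a word in $M$. The outcome is a new presentation of $G$ on $n+r-1$ generators and $r-1$ relations whose exponent sum matrix still has cokernel $\Z^n$, so the inductive hypothesis identifies $\langle a_1, \ldots, a_n \rangle \leq G$ as free of rank $n$. This rewriting is the main obstacle: although each such $R_i$ has exponent sum zero in $a_{n+r}$, it may contain $a_{n+r}$ nontrivially, so the reduction requires running the breakdown procedure of \S\ref{Subsec:Freiheitssatz} relative to the single relator $R_r$, i.e.\ a relative Magnus hierarchy.

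\textbf{Part (1).} The hypotheses force $G^{\ab} \cong \Z^n$: $G^{\ab}$ has rank at least $n$ from the deficiency-$n$ presentation, at most $n$ from being $n$-generated, and the rank-$n$ quotient of a free abelian group that is itself $n$-generated has no torsion. Part (2) then produces a free subgroup $F = \langle a_1, \ldots, a_n \rangle \leq G$ of rank $n$ whose inclusion induces an isomorphism on abelianizations, so $F \cdot [G,G] = G$ and iteratively $F \cdot \gamma_k(G) = G$ for every $k$, which yields surjections $F/\gamma_k(F) \twoheadrightarrow G/\gamma_k(G)$. Letting $\psi \colon F_n \twoheadrightarrow G$ be the surjection from any $n$-element generating set, the resulting two surjections from isomorphic copies of $F_n/\gamma_k(F_n)$ onto $G/\gamma_k(G)$, combined with Hopficity of the finitely generated nilpotent group $F_n/\gamma_k(F_n)$, force $G/\gamma_k(G) \cong F_n/\gamma_k(F_n)$ for every $k$. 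The five-lemma-and-residual-nilpotence argument of (3) then concludes $\ker(\psi) = \{1\}$, so $G \cong F_n$.
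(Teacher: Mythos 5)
The paper states this theorem without proof (it is cited to \cite{Ma39}), so your argument must stand on its own. Your part~(3) is correct and is essentially the standard argument: the induced maps on lower central factors are surjections between finitely generated free abelian groups of equal rank, hence isomorphisms, and residual nilpotence of $F_{n+r}$ finishes. Parts~(1) and~(2), however, each contain a genuine gap.

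In~(2), your ``relative Magnus hierarchy'' rewriting step fails. Having $a_{n+r}$-exponent sum zero only places $\bar R_i$ in $\ker(H\to\Z)$, which is the \emph{normal closure} of $M=\langle a_1,\dots,a_{n+r-1}\rangle$ in $H$, not $M$ itself. Concretely, take $n=1$, $r=2$, $R_2=[c,a]c$ (so $H\cong\BS(1,2)\ast\langle b\rangle$) and $R_1=b\,cac^{-1}\,b^{-1}a^{-1}b$, which has exponent sums $(0,1,0)$; using $cac^{-1}=c^{-1}a$ in $H$, we get $\bar R_1 = b\,(c^{-1}a)\,b^{-1}a^{-1}b$, and its free-product normal form contains the $\BS(1,2)$-syllable $c^{-1}a\notin\langle a\rangle$, so $\bar R_1\notin M$. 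The promised presentation of $G$ on $n+r-1$ generators and $r-1$ relators therefore does not materialise: a relator of $a_{n+r}$-exponent sum~$1$ in which $a_{n+r}$ occurs more than once provides no Tietze elimination of that generator, and you identify exactly this as the obstacle without offering a way through it.

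In~(1), the Hopficity step is a non sequitur: two surjections from isomorphic copies of $A=F_n/\gamma_k(F_n)$ onto $G/\gamma_k(G)$ do \emph{not} force $G/\gamma_k(G)\cong A$ (take $A=\Z$ and both surjections reduction mod~$2$). The clean route for~(1), which bypasses~(2) entirely and is essentially what Magnus did, is this: from $G^{\ab}\cong\Z^n$ (which you correctly derive) and the deficiency-$n$ presentation, the five-term relation sequence $0\to H_2(G)\to N/[F,N]\to H_1(F)\to H_1(G)\to 0$ forces $N/[F,N]\cong\Z^r$ to inject into $H_1(F)=\Z^{n+r}$, so $H_2(G;\Z)=0$; hence the surjection $\psi\colon F_n\twoheadrightarrow G$ from any $n$-element generating set induces an isomorphism on $H_1$ and a surjection on $H_2$, and therefore (Stallings--Stammbach, proved in Magnus's day directly via the power-series representation) isomorphisms on every lower central quotient, so that $\ker\psi\subseteq\bigcap_k\gamma_k(F_n)=1$ by Theorem~\ref{Thm:Magnus-rep}.
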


Thus Theorem~\ref{Thm:Magnus-extended-Frei}(2) can also be seen as an extension of the \textit{Freiheitssatz} to the case of more than one defining relation. We delay the definition of \textit{lower central factors} for when we revisit Theorem~\ref{Thm:Magnus-extended-Frei}(3) in the context of \textit{parafree} groups in \S\ref{Subsec:lower-central-series}.

Finally, we remark on a set of results that relate to one-relator groups only by association. In his 1931 article, his second on one-relator groups, referenced above, Magnus \cite{Magnus1931} also proved that the commutator subgroup of the modular group $G = \pres{}{a,b}{a^2 = b^3 = 1} \cong C_2 \ast C_3$ is a free group of rank $2$. His argument is essentially a simplified version of the machinery developed to deal with one-relator groups. Later, in 1948, Nielsen \cite{Nielsen1948} extended this result to arbitrary free products of finite cyclic groups, proving using a geometric argument that that the commutator subgroup of $C_{m_1} \ast C_{m_2} \ast \cdots \ast C_{m_r}$ is free of rank 
\[
1 + m_1 m_2 \cdots m_3 \cdot \left( -1 + \sum_{i=1}^r (1 - \frac{1}{m_i})\right).
\]
Many more results could be mentioned here, as we now are heading head-first into the dawn of combinatorial group theory. We have said nothing, for example, of the automorphism groups of free groups, a topic which interested Magnus, and many others, around this time. However, these subjects increasingly stray from one-relator groups, to which we now instead firmly return. 

\subsection{Lyndon's Identity Theorem}\label{Subsec:LyndonIT}

Recall Dyck's fundamental result (see \S\ref{Subsec:Dyck-and-presentations}) that in a group $G$ with underlying free group $F$ and given by defining relations $r_1, \dots, r_n$, an element $W$ is equal to one in $G$ if and only if it can be written of the form 
\begin{equation}\label{Eq:Dyck-Lyndon}
W = \prod_{i=1}^m T_i r_{t_i}^{\pm 1} T_i^{-1} \quad \textnormal{where } T_i \in F, \textnormal{ and } t_i = 1, 2, \dots, n.
\end{equation}
One might ask how \textit{unique} such an expression \eqref{Eq:Dyck-Lyndon}. For example, in the group $\pres{}{a,b}{[a,b]=1}$, isomorphic to $\Z^2$, we obviously have $[a^2, b^2] = 1$, since $\Z^2$ is commutative. This equality is witnessed by the fact that:
\begin{equation}\label{Eq:Two-expr-for-[a2,b2]}
\begin{cases}
\: [a^2, b^2] = \left( aRa^{-1} \right) \cdot R \cdot \left( ba R (ba)^{-1} \right) \cdot \left( bR b^{-1}\right) \\
\: [a^2, b^2] = \left( aRa^{-1} \right) \cdot \left( ab R (ab)^{-1} \right) \cdot R \cdot \left( b R b^{-1} \right)
\end{cases}
\end{equation}
where we have let $R = [a, b] = aba^{-1}b^{-1}$. How different are the two expressions in \eqref{Eq:Two-expr-for-[a2,b2]}? Up to reordering the terms, we notice that they are identical, except for the conjugates of $R$ by $ab$ resp.\ $ba$. However, $ab = ba$ in $\Z^2$, and so the two expressions in \eqref{Eq:Two-expr-for-[a2,b2]} are \textit{identical up to reordering the terms and equality in $G$}. This is an instance of Lyndon's Identity Theorem, which states that this behaviour always occurs in a torsion-free one-relator group; there are no ``relations between the relations''. 

Formally, the problem of determining relations between relations asks: what are the non-trivial relations holding between the relations $r_1, \dots, r_n$ of the form 
\begin{equation}\label{Eq:Non-trivial-identity}
\prod_{i=1}^m T_i r_{t_i}^{\pm 1} T_i^{-1} = 1 \quad \textnormal{where } T_i \in F, \textnormal{ and } t_i = 1, 2, \dots, n?
\end{equation}
There are some trivial relations always holding; for example, if $r_1, r_2$ are two different relations, then $(r_2 r_1 r_2^{-1}) \cdot (r_2) \cdot (r_1^{-1}) \cdot (r_2^{-1}) = 1$, which is a product of the form \eqref{Eq:Non-trivial-identity} but not a particularly interesting one. Such relations are called \textit{Peiffer identities}. According to Chandler \& Magnus \cite[p. 118]{Chandler1982}, the idea of studying identities between relations had been described by Dehn to Magnus in the early 1930s, but this was not pursued further at the time. Instead, this would not be done until the late 1940s, when it was investigated by Peiffer (a student of Reidemeister) in 1949 \cite{Peiffer1949} and, in the same year, by Reidemeister \cite{Reidemeister1949}. The latter proved that the problem of finding all non-trivial relations \eqref{Eq:Non-trivial-identity} reduces to understanding the case when all $T_i$ are taken modulo $r_1, \dots, r_n$, and disregarding the order of the factors in the product. If we set $G = F / R$, i.e.\ $R$ is the normal closure of $r_1, \dots, r_n$, then this amounts to stating that we must understand the group $R / [R, R]$. This abelian group is naturally a $\ZG$-module via the conjugation action of $G$, and is called the \textit{relation module} of $G$ (\textit{with respect to} $r_1, \dots, r_n$, see below). Furthermore, it can be shown that the action of $G$ on $R / [R,R]$ is faithful, i.e.\ only the identity element fixes all of $R / [R,R]$, see \cite{Auslander1955}.

In 1948, Reidemeister visited the Institute for Advanced Study for a year, which had a great influence on a young postdoc in Princeton at that time: R.\ Lyndon, who had been a PhD student of S.\ Mac Lane's \cite[p.\ 3]{Appel1984}. Reidemeister, who took an early interest in one-relator groups (see e.g.\ his 1932 proof of the \textit{Freiheitssatz} in \S\ref{Subsec:Freiheitssatz}), does not appear to have been the one to introduce Lyndon to one-relator groups. Instead, Lyndon recalls that after presenting his projective resolution (see below) in a seminar at the IAS, Reidemeister came up to him and explained a natural topological interpretation of his resolution \cite[p. 25]{Ratcliffe1984a}, cf.\ also \cite[p. 651]{Ly50}. Thus in the late 1940s, combinatorial group theory was certainly on Lyndon's mind, and in a brief 1949 article \cite{Lyndon1949} he proved a result for the cohomology of groups defined by generators and relations, before finally turning his attention to one-relator groups. He was able to prove the following theorem, by using the Magnus breakdown (\S\ref{Subsec:Freiheitssatz}): 

\begin{theorem}[Lyndon's Identity Theorem, 1950 \cite{Ly50}]
Let $G = \pres{}{A}{r=1}$, with $r = w^n$ where $w$ is not a proper power and $n \geq 1$. Then $\prod_i T_i r^{\varepsilon_i} T_i^{-1} = 1$ implies that the indices can be grouped into pairs $(i,j)$ such that $e_i = -e_j$ and $T_i \equiv T_j w^{c_i}$ in $G$ for some $1 \leq c_i \leq n$. \label{Thm:Lyndon-Identity}
\end{theorem}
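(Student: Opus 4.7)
The plan is to follow Magnus's hierarchical method, running an induction on the length $|r|$, exactly as in the proofs of the \textit{Freiheitssatz} (\S\ref{Subsec:Freiheitssatz}) and of decidability of the word problem (\S\ref{Subsec:Wordproblem-Magnus}). We may assume that every letter of $A$ appears in $r$, since any missing generators simply split off as free factors on which the statement becomes vacuous. The base case is when $|A| = 1$, so that $r = a^n$ and $G \cong \Z/n\Z$: here a direct computation with the normal form for the cyclic group $\langle w \rangle$ of order $n$ produces the pairing by grouping together summands whose conjugators differ by a power of $w = a$.

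For the inductive step, I would first treat the case where some generator $t \in A$ has exponent sum zero in $r$, and let $N$ be the kernel of the induced surjection $G \twoheadrightarrow \Z$ sending $t \mapsto 1$. The Reidemeister--Schreier analysis underlying the proof of the \textit{Freiheitssatz} presents $N$ as an amalgamated free product of one-relator groups $H_j = \langle A_j \mid r_j = 1 \rangle$, where each $r_j$ is the rewrite of $t^j r t^{-j}$ in the letters $a_i := t^i a t^{-i}$, is strictly shorter than $r$, and satisfies $r_j = w_j^n$ with $w_j$ the rewrite of $t^j w t^{-j}$ and not a proper power. Given an identity $\prod_k T_k r^{\varepsilon_k} T_k^{-1} = 1$, I would use Peiffer moves together with conjugation by powers of $t$ to assume each $T_k \in N$, then rewrite the identity as a product of conjugates of the various $r_j$ inside $N$. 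Applying the standard normal-form theorem for amalgamated free products distributes this identity across the amalgam into local identities in each $H_j$, to which the inductive hypothesis applies. The pairings produced inside each $H_j$, combined with the fact that $w$ commutes with $r$, then reassemble into the required pairing in $G$ with respect to $w$.

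For the second case, when no generator has exponent sum zero in $r$, I would invoke the classical Magnus embedding trick: introduce an auxiliary generator $s$ and substitute a generator $a$ of non-zero exponent sum by a suitable word in $s$ and the remaining generators, chosen so that $s$ has exponent sum zero in the new relator $\bar r = \bar w^n$. Then $G$ embeds into the resulting one-relator group $\bar G$, the first case applies to a lift of the original identity, and the conclusion descends to $G$. The hard part, and the reason Lyndon's Identity Theorem genuinely goes beyond the \textit{Freiheitssatz}, is the bookkeeping in the inductive step: one must verify both that the local pairings in the various $H_j$ are compatible at the amalgamating Magnus subgroups, and that they reassemble into a pairing valid modulo conjugation by powers of $w$ specifically, rather than by an arbitrary element of $G$. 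Ensuring the latter is where the constraint $1 \le c_i \le n$ (equivalently, the role of the cyclic subgroup $\langle w \rangle$ of order $n$) enters in an essential way, and it is the identification $w_j = t^j w t^{-j}$ together with its non-properness as a power inside $F(A_j)$ (a consequence of the \textit{Freiheitssatz}) that makes the induction close.
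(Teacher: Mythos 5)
Your sketch is faithful to Lyndon's original 1950 argument, which the paper attributes (without reproducing) to the Magnus breakdown, and you correctly locate the real difficulty of the induction in the reassembly of the local pairings across the amalgam. Two remarks are worth making. First, a small but instructive point of attribution: the fact that $w_j$ (the Reidemeister--Schreier rewrite of $t^j w t^{-j}$) is not a proper power in $F(A_j)$ does not actually need the \textit{Freiheitssatz}. The kernel $K$ of the exponent-sum map $F(A)\to\Z$ is free on the rewritten letters, so $F(A_j)$ is a free factor of $K$; since $t^j w t^{-j}$ is not a proper power in $F(A)$ (because $w$ is not), it is not a proper power in the subgroup $K$, and a fortiori not in the free factor $F(A_j)$. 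This is pure free-group algebra, in contrast to the genuinely one-relator input (the structure of $N$ as an amalgam of the $H_j$ over Magnus subgroups, which \emph{does} rely on the \textit{Freiheitssatz}). Second, and more substantively: the route you take produces precisely the pairing statement, but the literature the paper points to (Cohen--Lyndon \cite{CL63}, streamlined by Karrass--Solitar and Burns) runs the same Magnus hierarchy with a \emph{stronger} inductive hypothesis, namely that one can extract an explicit set $X\subset F_A$ of double-coset representatives so that $\{x^{-1}r^n x \mid x\in X\}$ is a \emph{free basis} for $\normal{r^n}$. This is the form the second part of the survey endorses as ``the simplest way to prove'' the theorem, because the freeness of the basis encodes the pairing and the ``modulo $\langle w\rangle$'' ambiguity in one stroke, and it avoids exactly the compatibility bookkeeping across the amalgam that you flag as the hard part. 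Strengthening your inductive hypothesis to a Cohen--Lyndon-style double-coset statement, rather than carrying the pairing statement through directly, is therefore the cleaner way to close the induction.
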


In other words, there are no non-trivial relations holding among the relation of a one-relator group. We now give a restatement of the theorem in more modern terminology, using the language of homological algebra. Let us make clear that at this point, group cohomology was a nascent area of research, and many of the general statements that are today a standard part of the subject were not yet formulated.\footnote{To highlight this, we remark that group cohomology had essentially only been computed for finite abelian groups at the time of Lyndon's 1947 PhD thesis \cite{Lyndon1947}. The subject of this thesis, later published as \cite{Lyndon1948}, was the cohomology of group extensions, leading to what is today known as the \textit{Lyndon--Serre-Hochschild spectral sequence}. For a lucid account of this early history of group cohomology, we refer the reader to \cite{MacLane1984}.} We begin with a restatement of Theorem~\ref{Thm:Lyndon-Identity} in the case of $n=1$, i.e.\ the torsion-free case (see \S\ref{Subsec:Torsion-theorem}):

\begin{corollary}[{Lyndon \cite[p.~663]{Ly50}}]
Let $G = \pres{}{A}{r=1}$ be a torsion-free one-relator group. Then the relation module of $G$ with respect to this presentation is a free $\ZG$-module. \label{Cor:Rel-mod}
\end{corollary}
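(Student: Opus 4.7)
The plan is to exhibit an explicit isomorphism of $\ZG$-modules between $\ZG$ and the relation module $M = R/[R,R]$, where $R = \llangle r \rrangle_{F_A}$ and $G = F_A/R$ acts on $M$ by conjugation. The key observation is that because $R$ is normally generated in $F_A$ by the single element $r$, its abelianization $M$ is generated as a $\ZG$-module by the single class $\bar r := r[R,R]$. Hence the $\ZG$-linear map $\phi \colon \ZG \to M$ determined by $1 \mapsto \bar r$ is well-defined and surjective, and freeness of rank $1$ reduces to showing $\phi$ is injective.

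For injectivity, suppose $\alpha = \sum_{i=1}^m \varepsilon_i g_i \in \ZG$ satisfies $\phi(\alpha) = 0$, and choose lifts $T_i \in F_A$ with $[T_i] = g_i$. Then by definition $\prod_i T_i r^{\varepsilon_i} T_i^{-1} \in [R,R]$. Since Theorem~\ref{Thm:Lyndon-Identity} only addresses products that equal $1$ in $F_A$, the first step is to replace this product by one that is genuinely trivial in $F_A$. This is precisely what the Peiffer--Reidemeister reduction (mentioned in \S\ref{Subsec:LyndonIT}) provides: any element of $[R,R]$ presented as a product of $F_A$-conjugates of $r^{\pm 1}$ can be reduced by Peiffer transformations to the empty product, and such transformations leave the associated element $\sum_i \varepsilon_i g_i \in \ZG$ unchanged. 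After this reduction we may assume $\prod_i T_i r^{\varepsilon_i} T_i^{-1} = 1$ holds in $F_A$.

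Now the torsion-free hypothesis is invoked: since $r$ is not a proper power, Theorem~\ref{Thm:Lyndon-Identity} applies with $n = 1$, and its conclusion groups the indices into pairs $(i,j)$ with $\varepsilon_i = -\varepsilon_j$ and $T_i \equiv T_j w^{c_i}$ in $G$. Because $n = 1$ forces $w = r$, and $r = 1$ in $G$, the correction factor $w^{c_i}$ is trivial in $G$ and therefore $g_i = g_j$. Every positive term of $\alpha$ is therefore cancelled by a matching negative term, so $\alpha = 0$, proving injectivity. The main obstacle is precisely this gap between equality in $F_A$ and equality modulo $[R,R]$; bridging it relies on the 1949 Peiffer--Reidemeister theorem that Peiffer identities generate all relations among consequences of the relators, a statement independent of the one-relator hierarchy but indispensable here. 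The genuinely one-relator input is Theorem~\ref{Thm:Lyndon-Identity} itself.
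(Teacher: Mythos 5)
Your overall strategy is the right one: the map $\phi\colon \ZG \to R/[R,R]$ sending $1 \mapsto \bar r$ is clearly surjective since $R$ is the normal closure of the single element $r$, so the whole content is injectivity, and that is where the Identity Theorem enters. The paper itself treats the corollary as an immediate restatement of Theorem~\ref{Thm:Lyndon-Identity}, so there is no independent proof in the text to compare against.

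However, the bridging step --- passing from $\prod_i T_i r^{\varepsilon_i} T_i^{-1} \in [R,R]$ to a product that equals $1$ in $F_A$ so that Theorem~\ref{Thm:Lyndon-Identity} applies --- is misstated in a way that does not admit cosmetic repair. You assert that a product of conjugates of $r^{\pm 1}$ whose value in $F_A$ lies in $[R,R]$ can be Peiffer-reduced to the empty product. That is false: Peiffer transformations (swapping adjacent conjugates with a compensating conjugation, and cancelling adjacent inverse pairs) preserve the value of the product in $F_A$, so they cannot carry a product with nontrivial $F_A$-value to the empty product, whose value is $1$; and elements of $[R,R]$ are generically nontrivial in $F_A$. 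Note too that if such a reduction existed you would already be finished --- the empty product has $\ZG$-element $0$ --- yet you go on to invoke Theorem~\ref{Thm:Lyndon-Identity}, which shows the logic has not quite settled.

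The correct bridge is elementary but different. Write $\prod_i T_i r^{\varepsilon_i} T_i^{-1} = c$ with $c \in [R,R]$, and express $c = \prod_k [u_k, v_k]$ with $u_k, v_k \in R$. Writing each $u_k$ and $v_k$ in turn as a product of conjugates of $r^{\pm 1}$, the commutator $[u_k, v_k] = u_k v_k u_k^{-1} v_k^{-1}$ acquires a formal-product expression in which every conjugate $X r^{\pm 1} X^{-1}$ appearing in $u_k$ or $v_k$ reappears with the opposite sign, so its $\ZG$-element is $0$. Concatenating these gives a formal product $Q$ with value $c$ in $F_A$ and $\ZG$-element $0$. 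Then $\left(\prod_i T_i r^{\varepsilon_i} T_i^{-1}\right)\cdot Q^{-1}$ is a formal product equal to $1$ in $F_A$ with $\ZG$-element $\alpha - 0 = \alpha$, and Theorem~\ref{Thm:Lyndon-Identity} with $n=1$ now applies. Your final step --- that $T_i \equiv T_j$ in $G$ because $w^{c_i} = r = 1$ in $G$, whence $\alpha = 0$ --- is correct as written.
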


The relation module and the second homology group of any group are, as is well-known (and as we shall see below), closely related; this was observed by Peiffer \cite{Peiffer1949b} and Miller \cite{Miller1952}. We can thus use the Identity Theorem to gain information about the cohomological dimension of a one-relator group:

\begin{corollary}[{Lyndon \cite[Corollary~11.2]{Ly50}}]
Let $G$ be a torsion-free one-relator group. Then $G$ has cohomological dimension at most $2$. 
\end{corollary}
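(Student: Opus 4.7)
The plan is to exhibit an explicit length-two free resolution of the trivial $\ZG$-module $\Z$, from which $\cd(G) \leq 2$ follows immediately by definition of cohomological dimension.

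First, I would set up the standard partial free resolution associated to the presentation $G = F_A/R$ where $R = \llangle r \rrangle_{F_A}$. Concretely, this is
\[
0 \to R/[R,R] \xrightarrow{\partial_2} \bigoplus_{a \in A} \ZG \cdot e_a \xrightarrow{\partial_1} \ZG \xrightarrow{\varepsilon} \Z \to 0,
\]
where $\varepsilon$ is the augmentation, $\partial_1(e_a) = a - 1$, and $\partial_2$ sends the class $[r]$ to the tuple of Fox derivatives $(\partial r/\partial a)_{a \in A}$, with $G$ acting on $R/[R,R]$ by conjugation via its lift to $F_A$. Exactness at $\ZG$ and at $\bigoplus \ZG \cdot e_a$ is a general fact holding for any presentation of any group, and follows from Dyck's normal-form result (\ref{Eq:Dyck-Lyndon}) together with the standard computation of the kernel of $\partial_1$ in terms of $R^{\ab}$ (this is the relation module discussed immediately above Corollary~\ref{Cor:Rel-mod}). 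Crucially, this partial resolution is constructed without any appeal to one-relator theory.

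Next, I would invoke Corollary~\ref{Cor:Rel-mod}: since $G$ is a torsion-free one-relator group, the defining relator $r$ is not a proper power, and so the relation module $R/[R,R]$ is a \emph{free} $\ZG$-module, of rank one generated by $[r]$. Substituting this into the partial resolution above gives
\[
0 \to \ZG \xrightarrow{\partial_2} \bigoplus_{a \in A} \ZG \xrightarrow{\partial_1} \ZG \xrightarrow{\varepsilon} \Z \to 0,
\]
a free resolution of $\Z$ over $\ZG$ of length two, which is the definition of $\cd(G) \leq 2$.

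The only nontrivial obstacle, namely freeness of the relation module, has already been discharged by Lyndon's Identity Theorem; what remains is the routine verification that the map $\partial_2$ above is injective with image equal to $\ker \partial_1$, which is standard. I would remark that there is an entirely equivalent topological formulation: the above resolution is nothing other than the cellular chain complex of the universal cover of the presentation $2$-complex $K$ of $G$, and the appeal to Lyndon's theorem corresponds precisely to the statement that $\pi_2(K) = 0$, i.e.\ that $K$ is aspherical. This is the viewpoint Reidemeister pointed out to Lyndon after his IAS seminar, and it makes transparent why the bound on $\cd(G)$ cannot be improved past $2$.
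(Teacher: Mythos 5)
Your proof is correct and follows the same route as the paper: start from the general exact sequence $0 \to R/[R,R] \to \bigoplus_a \ZG \to \ZG \to \Z \to 0$ (the paper cites \cite[Proposition~5.4]{Br82} for this in the torsion-free case), plug in Corollary~\ref{Cor:Rel-mod} to identify $R/[R,R]\cong \ZG$, and read off a length-two free resolution. Your closing remark about this being the cellular chain complex of the universal cover of the presentation $2$-complex and the equivalence with asphericity is also exactly the paper's own framing.
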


Let us now give a restatement of the Identity Theorem in terms of an explicit resolution. Let $G = \pres{}{x_1, \dots, x_m}{r^n=1}$ be a one-relator group with $r$ not a proper power and $n \geq 1$. By attaching a $1$-cell for each generator and a single $2$-cell for the defining relator, we can always find a sequence of free $G$-modules 
\[
F_2 \xrightarrow{\partial_2} F_1 \xrightarrow{\partial_1} \Z \to 0
\]
where $F_2$ is free of rank $1$, and $F_1$ is free of rank $m$. Lyndon's Identity Theorem then states that the kernel of $\partial_2$ is generated by the element $r-1$. If $n=1$, then of course $r-1=0$ in $\ZG$, so it follows that $\partial_2$ is injective, and hence any torsion-free one-relator group $G$ has cohomological dimension $2$, and one can explicitly compute the (co)homology groups. For example, for trivial coefficients it follows almost immediately from Corollary~\ref{Cor:Rel-mod} that $H_2(G, \Z) = \Z$ if and only if $r \in [F, F]$, and otherwise $H_2(G, \Z) = 0$. Hughes \cite{Hughes1966} used the Identity Theorem to compute the cohomology of $G = F / R$ with coefficients in the relation module $R / R '$. Then, when $r = w^n$ with $n \geq 1$ and $w$ not a proper power, he proves that $H^k(G, R / R')$ is trivial for odd $k$ and cyclic of order $n$ for even $k > 2$. Furthermore, he shows that $H^2(G, R / R')$ is an extension of an easily describable group $D(G)$ by a cyclic group of order $n$. For example, in the case of $G = \pi_1(\mathcal{K}_g)$ from \eqref{Eq:pres-of-K-g}, Hughes is able to show that $H^2(\pi_1(\mathcal{K}_g), R/R') \cong \Z$. Finally, we remark that in 1984 Ratcliffe \cite{Ratcliffe1984b} gave an explicit description of the cup product $\smile\colon H^1(G, M) \times H^1(G, M) \to H^1(G, M)$ for an arbitrary $\ZG$-module $M$. 

In 1963, Cohen \& Lyndon \cite{CL63} gave a new proof of the Identity Theorem, and strengthened it by giving an explicit free basis of the relation module. Concretely, if $G =\pres{}{A}{r^n = 1}$, with $r \in F_A$ not a proper power and $n \geq 1$, then Cohen \& Lyndon construct a set $X \subset F_A$ of words such that $\{ x^{-1} r^n x \mid x \in X \}$ is a free basis of the normal closure $\langle\langle r^n \rangle\rangle$ in $F_A$. Their results, proved via the usual Magnus hierarchy but with a stronger inductive hypothesis, also included, as a byproduct, a new proof of the \textit{Freiheitssatz}. Two other simpler proofs are due to Karrass \& Solitar \cite{KarrassSolitar1972} and \cite{Burns1974}, in 1972 and 1974 respectively. 

We end with a word of caution regarding the terminology ``relation module of a one-relator group''. Indeed, relation modules depend on the presentation chosen, although if $G \cong F / R \cong F / S$, then it is known that there is a free $\ZG$-module $X$ such that $R / R' \oplus X \cong S / S' \oplus X$, see \cite{Ly62}. By combining the Identity Theorem with the above statement, we find that the relation module of a one-relator group $G \cong F / S$ is always $\ZG$-projective. However, if $S$ is not the normal closure of a single element -- even though $G$ is isomorphic to a one-relator group! -- then the relation module $S / S'$ need not be $\ZG$-free. Indeed, Dunwoody \cite{Du72} showed that in the case of the trefoil knot group $G = \pres{}{a,b}{a^2 = b^3}$ (see \S\ref{Subsec:Surface-and-knot-groups}), there is a presentation $G \cong F / S$ for which the relation module $S / S'$ cannot be generated by a single element, and indeed cannot contain $\ZG$ as a direct summand. Thus $S / S'$ is not free, but it is projective.

\subsection{Residual properties of groups}\label{Subsec:Residualfiniteness}

The notion of residual properties goes back rather far, and while we will not attempt to survey it in full here, we will mention some generalities. Let $\cP$ be a property of groups (e.g.\ being finite, free, etc.). A group $G$ is termed \textit{residually} $\cP$ if for every $1 \neq g \in G$ there exists a group $H_g$ with property $\cP$ and a homomorphism $\varphi_g \colon G \to H_g$ such that $\varphi(g) \neq 1$. The terminology \textit{residually} $\mathcal{P}$ is originally due to P.\ Hall \cite[p. 349]{Hall1954} in 1954, but the concept can be traced further back. For example, in 1940 Mal'cev \cite{Malcev1940}, who called residually finite groups \textit{finitely approximable}, proved that every finitely generated linear group is residually finite. He also proved, in connection to the \textit{Hopf problem} (the connection of which to one-relator groups we shall explore thoroughly in \S\ref{Subsec:Hopf}) that no finitely generated linear group is hopfian; his argument can be easily extended to show that in fact every finitely generated residually finite group is hopfian. 

Free groups are the prototypical examples of residually finite groups. Even before Mal'cev's result on linear groups -- and free groups are linear, using e.g.\ the representation by Fuchs-Rabinovich \cite{FuchsRabinovich1940b}, see \S\ref{Subsec:More-freeness-lull} -- it was well-known that free groups are residually finite. For example, in 1933 Levi \cite[Theorem~1]{Levi1933b} gave a proof of the residual finiteness of free groups. Indeed, he proved the stronger statement that the intersection of any decreasing series of characteristic subgroups in a free group is trivial, which shows that, in particular, free groups are residually nilpotent (cf.\ Magnus' 1934 proof in \S\ref{Subsec:More-freeness-lull}). For other early proofs of the residual finiteness of free groups, see also \cite{Iwasawa1943, Neuwirth1961}. Free groups are also known to satisfy the stronger property of being residually $p$-groups for any prime $p$, as proved by Marshall Hall \cite{Hall1950} and Takahasi \cite{Takahasi1951}. Another natural class of residually finite groups are the finitely generated nilpotent groups, as proved by Hirsch in 1946 \cite{Hirsch1946}. Thus residually nilpotent groups are also residually finite, and we will encounter many examples of residually nilpotent groups throughout this survey. Residual nilpotence, in particular, has a strong connection with the Magnus representation of free groups (\S\ref{Subsec:More-freeness-lull}) and many results on residual nilpotence of particular quotients of free groups and related topics, e.g.\ a well-known result due to Gruenberg \cite{Gruenberg1962} or Neumann \cite{Neumann1962}, can often be proved by appealing to the Magnus representation, see \cite{Bachmuth1966}. 

One of the advantages of residual finiteness is that any finitely presented residually finite group has decidable word problem. The algorithm is simple: given such a group $G = \pres{}{A}{R}$, to decide whether a given word $w \in F_A$ is equal to $1$ in $G$ or not we run two algorithms in parallel: (1) enumerate all words equal to $1$ in $G$; and (2) enumerate all homomorphisms from $G$ onto finite groups. If $w=1$, then the first algorithm detects this; if $w \neq 1$, then by residual finiteness the second algorithm will detect this. Thus $G$ has decidable word problem.\footnote{This algorithm is often attributed to McKinsey, particularly to his 1943 article \cite[Theorem~3]{McKinsey1943}, but this is not accurate, for several reasons. First, McKinsey works in a considerably more general context of ``finite reducibility'' in algebras, and as McKinsey himself points out in a footnote, ``this principle has been known for a long time [...] It was employed by L.\ Löwenheim [...] [McKinsey] also made use of it in his article \cite{McKinsey1941}'' \cite[p. 69, footnote 6]{McKinsey1943}. The reference to Löwenheim refers to Hilbert \& Ackermann \cite[p. 95]{Hilbert1938}, which in turn refers to a 1915 article by Löwenheim \cite{Lowenheim1915}, wherein the principle is admittedly well-disguised. The first to write the principle in its modern form, purely speaking of groups, was Mostowski \cite{Mostowski1966} in 1966 and, independently, V.\ Dyson \cite{Dyson1964} (see also \cite{Dyson1974}). The argument was also known to G.\ Higman and A.\ Turing, but not published (as pointed out by Boone and Schupp in their MR review of \cite{Mostowski1966}).} Since all one-relator groups have decidable word problem (\S\ref{Subsec:Applications-of-Frei}, Theorem~\ref{Thm:Decidable-word-problem}), this raises the following very natural question: is \textit{every} one-relator group residually finite? The answer is a resounding \textit{no}, and we will encounter many such counterexamples in the form of \textit{Baumslag--Solitar groups} in \S\ref{Subsec:Baumslag-Solitar}, which are of central importance due to giving rise to non-hopfian groups. Finally, we mention that Meskin \cite{Meskin1974} proved in 1974 that there exist finitely generated residually finite groups with undecidable word problem. Hence, for the above algorithm, the assumption of finite presentability is crucial.

\subsection{Why did progress halt?}\label{Subsec:Undecidability}

As we shall see in the next section (\S\ref{Sec:5-1960s-resurgence}), the area of one-relator group theory would see a resurgence of interest after the lull phase ended just before 1960. As described by Chandler \& Magnus, Baumslag proposed the following idea to explain the lull phase: ``the \textit{Freiheitssatz} and the subsequent solution of the word problem for one-relator groups by Magnus left the impression that one-relator groups are something almost as easy to deal with as free groups and that this delayed a more thorough investigation of one-relator groups for quite a while.'' \cite[p. 119]{Chandler1982}. While somewhat compelling, we would like to add two further suggestions to this, which have not appeared elsewhere in the literature. 

First, given the large number of PhD students supervised by Magnus who would end up active in the area of one-relator group theory, it also bears noticing that Magnus only had two PhD students while in Berlin (F.\ Oberhettinger and F.\ Sh\"afke) with neither working in group theory. As soon as he arrived in New York in 1950, however, he started supervising a large number of doctoral students, with the first graduating in 1954; one of the first was Abe Shenitzer\footnote{Shenitzer and Magnus would develop a life-long friendship, summarized briefly in the touching tribute \cite{Shenitzer1995}. In particular, Abe Shenitzer (1921--2022) had been imprisoned in several concentration camps during the war. After the war, he arrived in New York, and studied under Magnus. Early on in their friendship, Magnus, visibly moved, said ``You've done more for me than any person can do for another. [...] You are a Jew who was in German concentration camps, and I am a German'', cf.\ also \cite{Stillwell2022}. Magnus choosing to return to his pre-war work on one-relator groups at this point might thus have been in large part stimulated by his interaction with Shenitzer.} \cite{Shenitzer1954}, who gave a necessary and sufficient condition for a one-relator group to split as a free product. This result can thus be seen as Magnus' return to considering one-relator groups, following the turbulence of the war. Magnus would end up supervising 66 PhD students at New York University between 1950 and 1974, with some ending up as the most important names in the theory of one-relator groups and combinatorial group theory: to mention a few early names, we have D.\ Solitar (1958), M.\ Greendlinger (1960), B.\ Chandler (1962), and J.\ Birman (1968). The same idea can be proposed for B.\ H.\ Neumann, given his early contact with Magnus while in Berlin. He moved to the University of Manchester in 1948, where he supervised both J.\ Britton (1954) and G.\ Baumslag (1958), both of whom would be of central importance to the subject (in this line, cf.\ also \cite[pp. 197--198]{Chandler1982}). 

The second reason, and in our view more important, is the general undecidability of the word problem. We first give a brief summary of these developments. The word problem, having originated with Dehn and Thue as outlined in \S\ref{Subsec:Dyck-and-presentations}, gave rise to the first undecidability result in ``real'' mathematics, following relatively artificial undecidable problems on Turing machines as given by Turing \cite{Turing1936}. This first ``real'' undecidability result was the 1947 proof by Post \cite{Post1947} and, independently Markov \cite{Markov1947a,Markov1947b}, that there is a finitely presented \textit{semi}group with undecidable word problem. This would in short order lead to the undecidability of the word problem for groups. Turing \cite{Turing1950} worked on the problem, in 1950 proving the undecidability of the word problem in \textit{cancellative} semigroups\footnote{We mention here another connection to B.\ H.\ Neumann: around 1950, when he and Turing were both at the University of Manchester, the two of them had announced a resolution to the word problem for groups. However, their resolutions were contradictory: Turing announced the \textit{undecidability} of the word problem in finitely presented groups, whereas Neumann announced the general \textit{decidability} of the same. Both later retracted their claims, but Turing's work led to the undecidability for cancellative semigroups in the work \cite{Turing1950} cited above \cite[p. 169]{Collins1985}.}, a class of semigroups in which either of $xy = xz$ or $yx = zx$ implies $y=z$. This article would be used\footnote{Turing's article contains a number of gaps, somewhat amended by Boone \cite{Boone1958}, and Novikov \& Adian \cite{Novikov1958} later produced their own proof of the same result to avoid this dependency.} by Novikov \cite{Novikov1952, Novikov1955} to produce a proof of the undecidability of the word problem in groups; this was done independently by Boone \cite{Boone1957,Boone1959} and Britton \cite{Britton1958, Britton1963} around the same time. Today, it is known that there exists a 12-relator group with undecidable word problem \cite{Borisov1969}, arising from an encoding of a 3-relation semigroup with undecidable word problem due to Matiyasevich \cite{Matiyasevich1967}. 

In connection with the undecidability of the word problem, in the mid-1950s a number of further undecidability results for groups would appear, including the famous \textit{Adian--Rabin Theorem}, stating that deciding ``most'' reasonable properties (e.g.\ finiteness, triviality, being abelian or free, etc.) about finitely presented groups is undecidable \cite{Adian1955, Rabin1958, NybergBroddaAR}. Thus, around 1960 the area of combinatorial group theory had undergone something of a shock. Indeed, one might reasonably speculate that a contemporary researcher reading Magnus' 1932 article \cite{Ma32} would imagine that perhaps the same method could be extended, at great combinatorial cost and with little gain, to \textit{all} finitely presented groups, and that therefore there was nothing particularly special about one-relator groups \textit{per se}. The undecidability results of the 1950s shattered this illusion. Now, research could be directed to classes of groups where the word problem was decidable in order to see what was special about them, because something had to be special if the word problem were to be decidable. One-relator groups certainly turned out to be special indeed.

\section{The resurgence of one-relator groups (1960s)}\label{Sec:5-1960s-resurgence}

\noindent Topology gave rise to combinatorial group theory. As we have seen in \S\ref{Sec:2-Dehn-freegroups-presentations}, the early articles on one-relator groups were all clearly motivated by topology, with surface groups and knot groups being particular flagship examples. As the machinery of one-relator groups and combinatorial group theory grew increasingly powerful, it also became increasingly useful to reapply these tools back to answer topological questions. In the late 1950s, an important connection between the Poincaré Conjecture and combinatorial group theory was made by C.\ Papakyriakopoulos. While this connection would not result in a proof of the Poincaré Conjecture, it did result, directly and indirectly, in the development of one-relator group theory. Other topological questions, related directly to the old \textit{hopfian} problem, would also play a key role during this time. In this section, we will outline how these topological considerations led to a resurgence of one-relator group theory, and some of the subsequent developments in the 1960s. We will also showcase Magnus' continued influence on the area by illustrating how one of his PhD students gave rise to a new school of combinatorial group theory in the USSR.

\subsection{The Poincaré Conjecture and torsion}\label{Subsec:Torsion-theorem}

\noindent Around this time, new stirrings of topological methods appeared in combinatorial group theory. An important stirrer was C.\ D.\ Papakyriakopoulos, who rose to prominence by giving a proof of \textit{Dehn's Lemma} in 1957 \cite{Papakyriakopoulos1957, Papakyriakopoulos1957b} (the original ``proof'' of this result, given by Dehn \cite{Dehn1910} in 1910, had a gap, as pointed out by Kneser \cite[p. 260]{Kneser1929}). Soon thereafter, Papakyriakopoulos \cite{Papakyriakopoulos1958} became interested in proving the Poincaré Conjecture\footnote{The Poincaré Conjecture, which other than its brief connection to one-relator groups expounded here lies wholly outside the scope of this survey, states: \textit{a closed $3$-manifold with trivial fundamental group is homeomorphic to a $3$-sphere}. The conjecture was proved by Perelman in the early 2000s; for more historical details, see Stillwell's translation and commentary in \cite[pp.\ 1--12]{Poincare2010}.}, and in 1963 \cite{Papakyriakopoulos1963, Papakyriakopoulos1963a} he considered the following construction (with a slight correction provided by Maskit \cite{Maskit1963}). Let $\pi_1(\Sigma_g)$ be the surface group of genus $g>1$, defined by \eqref{Eq:pres-of-sigma-g}, let $w$ be any word in the commutator subgroup of $\pi_1(\Sigma_g)$, and let $P_g(w)$ be the quotient of $\pi_1(\Sigma_g)$ by the relation $[a_1, b_1w] = 1$. Then Papakyriakopoulos was able to reduce the Poincaré Conjecture to the following group-theoretic conjectures: (a) $P_g(w)$ is torsion-free; and (b) the cover of $\Sigma_g$ corresponding to the kernel of $\pi_1(\Sigma_g) \to P_g(w)$ is planar. In connection to the first of these two conjectures, Papakyriakopoulos wrote a letter to Magnus, asking whether the then-known methods of combinatorial group theory could be applied to solve (a). This letter would become the basis of a 1960 article by Karrass, Magnus \& Solitar \cite{KMS60} (D.\ Solitar was a student of Magnus', and A.\ Karrass had been a student of Solitar's). The three authors did not solve Papakyriakopolous' question in full, but provided some partial results indicating it had a positive answer. The groups $P_g(w)$ are two-relator groups, and as part of their investigations of the elements of finite order in this class, the authors of \cite{KMS60} were able to completely characterize the elements of finite order in one-relator groups:

\begin{theorem}[{Karrass, Magnus \& Solitar, 1960 \cite[Theorem~1]{KMS60}}]
Let $G = \pres{}{A}{r= 1}$ be a one-relator group with $r$ cyclically reduced. Then $G$ has elements of finite order if and only if $r$ is a proper power in $F_A$, i.e.\ $r = w^n$ with $n > 1$ and $1 \neq  w \in F_A$. \label{Thm:torsion-theorem}
\end{theorem}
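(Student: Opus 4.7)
The plan is to prove the two implications by different means: the ``if'' direction is essentially a construction of explicit torsion, while the ``only if'' direction is the substantive claim and will be proved by induction along the Magnus hierarchy used in the proof of the \textit{Freiheitssatz}.

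\textbf{Easy direction.} Assume $r = w^n$ with $n \geq 2$ and $w \in F_A$ not a proper power. Then the image of $w$ in $G$ satisfies $w^n = 1$, so it has finite order dividing $n$. To upgrade this to genuine torsion one must rule out $w^k = 1$ in $G$ for $0 < k < n$. I would do this by descending the Magnus hierarchy: if at the top level $w^k$ were in the normal closure $\llangle w^n \rrangle_{F_A}$, then after rewriting under the zero-exponent reduction and passing to the vertex group of the associated amalgamated free product, $w$ would correspond to a non-trivial element whose $k$-th power is a consequence of a shorter proper-power relation $(w')^n$; iterating, we reach a free group in which the analogue of $w$ is a primitive element, yielding a contradiction since no non-trivial power of a primitive element is trivial.

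\textbf{Hard direction.} Assume $r$ is not a proper power; I would prove $G$ is torsion-free by induction on the length of $r$ along the Magnus hierarchy. In the base case $|r|=1$, so $r$ is a single letter and $G$ is free, hence torsion-free. For the inductive step, after possibly applying the Magnus trick that embeds $G$ into a closely related one-relator group, I may assume some generator $t \in A$ has exponent sum zero in $r$. Following the proof of the \textit{Freiheitssatz}, the kernel of the map $G \to \mathbb{Z}$ sending $t \mapsto 1$ and the remaining generators to $0$ is an amalgamated free product of shifted copies of a one-relator group $G' = \pres{}{A'}{r'=1}$ over Magnus subgroups, where $|r'|<|r|$ and the amalgamated subgroups are free by the \textit{Freiheitssatz}; equivalently, $G$ is an HNN extension of $G'$ with associated subgroups these Magnus subgroups. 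By the standard normal-form theorem for HNN extensions (and for amalgamated free products over subgroups), every element of finite order is conjugate into the vertex group $G'$. So torsion-freeness of $G$ reduces to torsion-freeness of $G'$, which follows by induction \emph{provided} $r'$ is still not a proper power.

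\textbf{The main obstacle.} The crux is the preservation lemma: if $r$ is not a proper power in $F_A$, then the rewritten relator $r'$ is not a proper power in $F_{A'}$. This is a purely combinatorial statement about the Magnus rewriting in the alphabet $\{a_i := t^i a t^{-i} : a \in A \setminus \{t\}, i \in \mathbb{Z}\}$: a decomposition $r' = (u')^k$ with $k \geq 2$ would have to be pulled back to an equality $r = u^k$ (for a suitable cyclic conjugate of $r$) in $F_A$, by inverting the rewriting. Establishing this rigorously requires handling cyclic conjugation of $r$, the Magnus embedding trick in the case no generator has zero exponent sum, and the potential ``staircase'' structure of the indices appearing in $r'$ carefully; this is where the combinatorial work is concentrated, and where I would expect to spend the bulk of the effort. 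Once this lemma is in hand, the induction closes and both directions combine to give the theorem.
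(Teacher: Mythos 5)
Your outline reproduces the classical Karrass--Magnus--Solitar argument: induct on relator length along the Magnus--Moldavanski\u{\i} hierarchy, using that every element of finite order in an HNN extension or amalgam is conjugate into a vertex group. That is precisely the route the survey indicates (``the classical Magnus hierarchy, by induction on the relator length''); the survey's second chapter also sketches a streamlined modern version via the topological hierarchy, which avoids the embedding step entirely, so the degree of the relator is carried verbatim down to the base group $\Z/n\Z$ and the preservation question disappears. Your easy direction is fine; a one-line alternative is Magnus' Conjugacy Theorem: if $w=1$ in $G$ then $\llangle w\rrangle=\llangle w^n\rrangle$, so $w$ is conjugate to $w^{\pm n}$, impossible in a free group for $n\ge 2$, $w\neq 1$.

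Where the sketch needs sharpening is the preservation lemma, and you lump together two steps of quite different character. Your technique --- pull $r'=(u')^k$ back to $r=u^k$ ``by inverting the rewriting'' --- handles only the Reidemeister--Schreier step, and there it is in fact a triviality: the rewriting is an isomorphism of $N=\ker(F_A\to\Z)$ onto the free group on the $b_i$, so the power decomposition pulls back on the nose, and passing to the finitely generated Magnus subgroup $F_{A'}$ costs nothing because a subset of a free basis spans a free factor, and free factors are root-closed (retract and use uniqueness of $n$-th roots in a free group). No staircase bookkeeping is required. The genuinely delicate point is the Magnus embedding trick, where ``inverting the rewriting'' is simply not available: the map $\phi\colon F_A\hookrightarrow F_{\overline A}$ is a proper subgroup inclusion, not a rewriting, and proper subgroups of free groups are in general not root-closed (e.g.\ $\langle a^2\rangle\leqslant F(a)$), so a priori $\overline r=\phi(r)$ could acquire a root outside $\phi(F_A)$.

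The fix is to exploit the amalgam that the Magnus trick secretly constructs: $F_{\overline A}\cong F_A *_{\langle b\rangle=\langle x^{\alpha}\rangle}\langle x\rangle$, with $\phi(F_A)$ a vertex group. Argue on the Bass--Serre tree. Since $r$ is cyclically reduced and mentions $a$, the element $\overline r$ is not conjugate inside $\phi(F_A)$ into the edge group $\langle x^{\alpha}\rangle$, so it fixes a unique vertex $v$. If $\overline r=z^m$ with $m\ge 2$, then $z$ must be elliptic (a hyperbolic $z$ makes $z^m$ hyperbolic) and $z^m$ fixes both $v$ and any vertex fixed by $z$; were these distinct, $z^m$ would fix an edge incident to $v$ and lie in a conjugate of $\langle x^{\alpha}\rangle$, a contradiction. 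Hence $z$ fixes $v$, so $z\in\phi(F_A)$ and $\phi^{-1}(z)$ exhibits $r$ as a proper power. With this one lemma in place, your induction closes exactly as you describe.
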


The proof, of course, goes via the classical Magnus hierarchy, by induction on the relator length (Gruenberg \cite{Gruenberg1970} gave a cohomological proof of Theorem~\ref{Thm:torsion-theorem} in 1970). The theorem states, in other words, that a one-relator group has torsion precisely when it obviously has torsion, and the authors of \cite{KMS60} further characterize the elements of finite order as the obvious ones, namely those which are conjugate to a power of the relator. The authors of \cite{KMS60} proceed to provide some further insight into one-relator groups with torsion as follows: 

\begin{theorem}[{Karrass, Magnus \& Solitar \cite[Theorem~3]{KMS60}}]
Consider the two one-relator groups $G_1 = \pres{}{A}{r_1^n = 1}$ and $G_2 = \pres{}{A}{r_2^n = 1}$ where $n > 1$. If $G_1 \cong G_2$, then also $\pres{}{A}{r_1 = 1} \cong \pres{}{A}{r_2=1}$. \label{Thm:If-Rn=Sn-thenR=S}
\end{theorem}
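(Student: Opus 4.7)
Writing $H_i := \pres{}{A}{r_i = 1} = G_i / N_i$, where $N_i$ denotes the normal closure of $r_i$ in $G_i$, the plan is to take an isomorphism $\phi \colon G_1 \to G_2$ and show that $\phi(N_1) = N_2$, so that $\phi$ descends to an isomorphism $H_1 \cong H_2$. As a harmless first reduction, I would replace each $r_i$ by its cyclic reduction, which leaves both $G_i$ and $H_i$ unchanged, so that Theorem~\ref{Thm:torsion-theorem} and its refinements from \cite{KMS60} apply directly.

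Write $r_i = s_i^{k_i}$ with $s_i$ not a proper power in $F_A$. The refinement of Theorem~\ref{Thm:torsion-theorem} in \cite{KMS60} asserts that in $G_i$ the maximal root $s_i$ has order exactly $k_i n$, and that every torsion element of $G_i$ is conjugate to a power of $s_i$. In particular $r_i = s_i^{k_i}$ has order exactly $n$, so $\phi(r_1)$ is a torsion element of $G_2$ of order $n$, and one can write $\phi(r_1) = g\, s_2^{j}\, g^{-1}$ for some $g \in G_2$ and some exponent $j$.

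The requirement that $s_2^j$ have order $n$ inside the cyclic group $\langle s_2 \rangle$ of order $k_2 n$ forces $\gcd(j, k_2 n) = k_2$, i.e.\ $j = k_2 j'$ with $\gcd(j', n) = 1$. Thus $\phi(r_1)$ is conjugate to $r_2^{j'}$, and since $r_2$ itself has order $n$ and $\gcd(j', n) = 1$, the element $r_2^{j'}$ generates the same cyclic subgroup as $r_2$. Therefore the normal closure of $\phi(r_1)$ in $G_2$ coincides with the normal closure of $r_2$ in $G_2$, namely $N_2$. Since $\phi$ is an isomorphism it carries the normal closure of $r_1$ to the normal closure of $\phi(r_1)$, so $\phi(N_1) = N_2$, and passing to the quotient yields $H_1 = G_1 / N_1 \cong G_2 / N_2 = H_2$.

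I expect the main obstacle to lie in justifying the required refinement of Theorem~\ref{Thm:torsion-theorem} — the precise order of $s_i$ in $G_i$ together with the explicit description of torsion elements as conjugates of powers of $s_i$. This sharpening is exactly what allows the normal closure of any torsion element of order $n$ to be identified unambiguously with $N_2$, and it is itself established in \cite{KMS60} by a careful induction along the Magnus hierarchy. Once granted, the remaining steps reduce to elementary arithmetic in finite cyclic groups and formal manipulation of normal closures under an isomorphism.
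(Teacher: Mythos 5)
Your argument is correct. The paper itself does not prove this theorem — it simply cites KMS [Theorem 3] — so there is no in-paper proof to contrast with, but the route you take (via the sharpened torsion theorem of Karrass--Magnus--Solitar: the maximal root $s_i$ of the relator has order exactly $k_i n$ in $G_i$ and every torsion element is conjugate to a power of $s_i$) is the natural one and surely close to the original. The key steps — that $\phi(r_1)$ is conjugate to some $s_2^j$ of order $n$, that this forces $j = k_2 j'$ with $\gcd(j',n)=1$ so $\langle r_2^{j'}\rangle = \langle r_2\rangle$, and hence $\normal{\phi(r_1)}_{G_2} = N_2$ — are all correctly carried out, and the final descent to the quotients $H_i = G_i/N_i$ is standard. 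The only ingredient requiring external justification is, as you note, the refined torsion theorem; this is indeed in \cite{KMS60} and is stated explicitly in the survey in the sentence following \cref{Thm:torsion-theorem}.
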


This shows that isomorphisms for one-relator groups with torsion is, in a sense, at least as ``rigid'' as isomorphisms of torsion-free one-relator groups. Note that the converse of Theorem~\ref{Thm:If-Rn=Sn-thenR=S} fails, but this would not be proved until more than a decade later, see \S\ref{Subsec:Torsion-isomorphism-problem}. Indeed, at this point in the story, one-relator groups with torsion were still viewed with the same lens as their torsion-free counterparts; there was even some indication that the presence of torsion was a complicating factor. When we return to the topic of one-relator groups with torsion in \S\ref{Sec:6-OR-with-torsion}, we shall see that this indication was very far from the truth. 

As mentioned above, Karrass, Magnus \& Solitar were not able to completely resolve Papakyriakopoulos' conjecture (a), but provided some partial progress \cite[Theorem~5]{KMS60}. Indeed, their work can be seen as a very early form of ``relative one-relator groups'', an area that would grow in the decades to follow \cite{Ho81, Howie1984, Howie1987}. In 1964, another PhD student of Magnus', E.\ Rapaport, would provide a full affirmative solution to conjecture (a). In a beautiful 1964 article, Rapaport \cite{Rapaport1964} adapted the Magnus hierarchy to the very particular two-relator setting of conjecture (a), as suggested to her by G.\ Baumslag (a name we shall encounter frequently from this point onwards), and was able to prove conjecture (a). For further work in this direction, see \cite{Papakyriakopoulos1963b}. Ultimately, the Poincaré Conjecture would not be proved by means of Papakyriakopoulos' reduction, as in 1981, Moran \cite{Moran1981} and independently McCool \cite{McCool1981} proved that conjecture (b) is false, using a further reduction due to Swarup \cite{Swarup1979}.

Nevertheless, topological methods were to remain in combinatorial group theory. Indeed, soon after the new proof of Dehn's Lemma, J.\ Stallings, motivated in part by the old work of Kneser \cite{Kneser1929} and encouraged by Papakyriakopoulos (see \cite[p. 1]{Stallings1965}), gave a new proof of Grushko's theorem on generators of free products using purely topological methods. His proof also dropped the finite generation hypothesis appearing in Grushko's proof, reproving the more general result due to Wagner \cite{Wagner1957}. Topological methods in combinatorial group theory would bloom in the coming decades. Stallings' 1983 article \cite{St83} introducing simple graph-theoretic methods to the study of free groups marked one turning point in the history of combinatorial group theory; and similarly the ``tower method'' of Papakyriakopoulos used to proved Dehn's Lemma would become crucial in the study of locally indicable groups and one-relator groups, as pioneered by Howie \cite{Howie1983} in 1983. These methods, however, pass outside the scope of this present history.

Instead, we now return to the state of one-relator group theory in 1960, where many problems from topology motivated further problems and progress. One problem of particular importance regarded \textit{hopfian} groups, and it is here that one-relator groups would begin to reveal the full extent of their rich, versatile, and often mysterious nature. We first give a brief background on hopfian groups, and then expand on how they led to a resurgence of one-relator group theory.

\subsection{Hopfian groups}\label{Subsec:Hopf}

A group is said to be \textit{hopfian} if it is not isomorphic to any proper quotient of itself. The terminology arises from the fact that H.\ Hopf \cite{Hopf1931} in 1931 studied the fundamental groups $\pi_1(\Sigma_g)$ of genus $g>1$, which we defined in \eqref{Eq:pres-of-sigma-g} and he denoted by $\mathfrak{S}_g$. In particular, he proved by a rather complicated topological argument that $\pi_1(\Sigma_g)$ is hopfian. In this context, one is naturally lead to ask: are there finitely generated non-hopfian groups? Hopf asked this question, although not explicitly in any of his own articles; already by 1934, however, Magnus \cite[p. 276]{Magnus1935} noted that he had been informed by B.\ H.\ Neumann of Hopf asking this question. Of course, there are many non-finitely generated examples of non-hopfian groups, e.g.\ $\prod_{i \in \Z} \Z$, but the assumption of finite generation makes the problem significantly more difficult, and it would take nearly 20 years to resolve (affirmatively).

It was known already at this point that finitely generated free groups were hopfian; indeed, it is not difficult to prove using Nielsen's 1921 results (see \S\ref{Subsec:Nielsen-Schreier}). Magnus \cite{Magnus1935} explicitly gives a proof of the hopficity of free groups. Furthermore, Mal'cev \cite{Malcev1940} proved in 1940 that any finitely generated linear group is hopfian. Indeed, he proved that any such group is residually finite, a sufficient condition for hopficity as mentioned in \S\ref{Subsec:Residualfiniteness}, and Fuchs-Rabinovich \cite{FuchsRabinovich1940} moreover proved that the any finite free product of finitely generated abelian groups is hopfian. One of the first to consider the hopfian problem explicitly was Baer \cite{Baer1944} in 1944, who extended Fuchs-Rabinovich's theorem to a more general setting. He also claimed to have given a construction of a finitely generated non-hopfian group \cite[p. 272]{Baer1944}; however, this was later withdrawn due to an error \cite[Appendix, p. 741--743]{Baer1949}. Thus, for some time, the problem of finding a finitely generated non-hopfian group remained open.\footnote{We remark here that being hopfian is \textit{not} a Markov property of groups in the sense of the Adian--Rabin Theorem \cite{Adian1955, Rabin1958}, as every finitely presented group can be embedded into a hopfian group \cite{Miller1971}. However, Collins \cite{Collins1969} proved in 1969 that it is nevertheless undecidable whether a given finitely presented group is hopfian. From these results, of course proved only long after the results about to be presented, it follows that there exists an abundance of finitely presented non-hopfian groups.}

The first example of a finitely generated non-hopfian group was finally given in 1950 by B.\ H.\ Neumann \cite{Neumann1950}. This group is not finitely presented. The next year, Higman \cite{Higman1951} constructed a two-relator non-hopfian group, given by the simple presentation
\begin{equation}\label{Eq:Higman-two-relator-nonhopf}
\pres{}{a,b,c}{a^{-1}ca = b^{-1}cb = c^2}.
\end{equation}
A further finitely generated non-hopfian group was constructed in 1961 by Hall \cite{Hall1961}. In 1962, Gilbert Baumslag constructed a new example of finitely presented non-hopfian groups \cite{Baumslag1962}: take two integral Heisenberg groups, and amalgamate them over a common free abelian subgroup of rank $2$. Up to that point, the above examples were essentially the only known non-hopfian groups. But it would not be long before Baumslag would contribute a remarkable number of results in this direction. 

Gilbert Baumslag (1933--2014) first heard of Wilhelm Magnus in 1956 while attending lectures at the University of Manchester given by his PhD supervisor B.\ H.\ Neumann \cite[p. 99]{Baumslag1994}. The subject of the lectures was \textit{amalgamated free products}, which we have encountered before. Neumann pointed out that although Nielsen introduced amalgamated free products, it was Magnus who recognised their value in his work on one-relator groups. These amalgamated free products (or \textit{Schreier products}, as Baumslag calls them in e.g.\ \cite{Ba63}), would become a central part of the toolbox he used to prove a remarkable set of theorems. 

Thus, at this point, in 1962, Baumslag would be part of two remarkable results relating one-relator groups and hopfian groups. The first was a positive result. He proved that the amalgamated free product of a free group with a free abelian group, with amalgamated subgroup maximal cyclic in the free group, is residually free \cite[Theorem~1]{Ba62} (cf.\ also \cite{Dyer1968}). This general result implies many interesting corollaries. In particular, it implies that the amalgamated free product $F \ast_{u=\overline{u}} \overline{F}$ of two isomorphic free groups, with isomorphism given by $x \mapsto \overline{x}$ and $u \in F$ being any word generating its own centralizer, i.e.\ not a proper power, is residually free. Thus, many one-relator groups are residually free; and hence, by Magnus' 1935 result (see \S\ref{Subsec:More-freeness-lull}) also residually nilpotent and residually finite. One example comes surface groups, which we saw in \S\ref{Subsec:Surface-and-knot-groups}. Baumslag \cite[p. 425]{Ba62} states that Magnus had asked in a seminar whether all surface groups $\pi_1(\Sigma_g)$ are residually nilpotent. This question can be answered by using Baumslag's aforementioned theorem, by taking a commutator $u = [a_1, a_2]$, which makes 
\[
\pi_1(\Sigma_2) \cong \pres{}{a_1, a_2, \overline{a}_1, \overline{a}_1}{[a_1, a_2] = [\overline{a}_1, \overline{a}_2]} \cong F_2 \ast_\Z F_2
\]
be residually free (and hence also all $\pi_1(\Sigma_g)$ for $g > 1$ by the discussion following Theorem~\ref{Thm:Surface-subgroup-theorem}). Baumslag's results show that $\pi_1(\Sigma_g)$ is residually nilpotent and hence also hopfian, thus answering Magnus' question positively, giving many new hopfian one-relator groups.\footnote{Baumslag mentions in a footnote \cite[p.\ 425]{Ba62} that Magnus' question was also answered, independently and by a very different method, by K.\ Frederick \cite{Frederick1961, Frederick1963}, a PhD student of Magnus'. Furthermore, G.\ Higman also communicated a different proof to Baumslag.} He also proves that all $2$-generator subgroups of certain one-relator groups are free (\cite[Theorem~2]{Ba62}), comparing this result to the \textit{Freiheitssatz}. Indeed, this would be one of the first results about subgroups of one-relator groups since the \textit{Freiheitssatz}, cf.\ \S\ref{Sec:7-Subgroups-of-OR-groups}. The second contribution from 1962 would be of a negative nature, and was joint between Baumslag and D.\ Solitar (a former PhD student of Magnus'): the construction of the famous \textit{Baumslag--Solitar groups}.

\subsection{Baumslag--Solitar groups}\label{Subsec:Baumslag-Solitar}

\

{\setlength{\epigraphwidth}{0.55\textwidth}
\epigraph{\textit{These facts destroy several conjectures long and strongly held by the reviewer and others, and some published and unpublished `theorems'.}}{---B.\ H.\ Neumann, MR review of \cite{Baumslag1962c}.}

\noindent The hopficity of one-relator groups was, for some time, an overlooked question. In fact, in connection with the construction of his \textit{two}-relator group \eqref{Eq:Higman-two-relator-nonhopf}, Higman \cite[p.~61]{Higman1951} states that it can be shown, using an argument due to B.\ H.\ Neumann and H.\ Neumann, that all one-relator groups are hopfian. This is, as we shall now see, very far from the truth. In 1962, Baumslag \& Solitar \cite{Baumslag1962c} would consider very simple examples of one-relator groups: 
\begin{equation}\label{Eq:BS(m,n)}
\BS(m,n) = \pres{}{x,y}{x^{-1}y^mx=y^n},
\end{equation}
Their main result, appearing in a bulletin article, would be to prove that many of the groups $\BS(m,n)$, now called \textit{Baumslag--Solitar groups}, are not hopfian (see below for a correction of their initially incorrect statement). One of the main examples of non-hopfian groups would be the following remarkable statement:

\begin{theorem}[{Baumslag \& Solitar, 1962 \cite{Baumslag1962c}}]
The group $\BS(2,3)$ is not hopfian. 
\end{theorem}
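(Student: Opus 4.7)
The plan is to construct an endomorphism of $\BS(2,3)$ that is surjective but not injective. I would set $\phi(x) = x$ and $\phi(y) = y^2$ and verify three things: $\phi$ is well-defined, $\phi$ is surjective, and $\phi$ has non-trivial kernel. Well-definedness requires only that $\phi$ preserve the single defining relation, and this is immediate: $\phi(x^{-1}y^2 x) = x^{-1}y^4 x = (x^{-1}y^2 x)^2 = y^6 = \phi(y^3)$, using the defining relation at the middle step.

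For surjectivity, the image contains $\phi(x) = x$ outright. For $y$, I would rearrange the defining relation as $y = y^3 y^{-2} = (x^{-1}y^2 x) y^{-2} = \phi(x^{-1}y x y^{-1})$, so $y$ also lies in the image and hence $\phi$ is surjective. For non-injectivity, the natural candidate is the commutator $w = [x^{-1}yx, y]$, for which $\phi(w) = [x^{-1}y^2 x, y^2] = [y^3, y^2] = 1$, so $w \in \ker\phi$.

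The substantive step, and the main obstacle, is to show that $w \neq 1$ in $\BS(2,3)$. For this I would exploit the structure of $\BS(2,3)$ as an HNN extension of $\langle y \rangle \cong \Z$ with stable letter $x$ and associated subgroups $\langle y^2 \rangle$ and $\langle y^3 \rangle$, identified via $y^2 \mapsto y^3$. By Britton's Lemma, a word in the normal form equals the identity only if it contains a ``pinch'': a subword of the form $x^{-1} y^{2k} x$ or $x y^{3k} x^{-1}$ for some $k \in \Z$. Writing out $w = x^{-1} y x y x^{-1} y^{-1} x y^{-1}$, the only subwords flanked by opposite signs of $x^{\pm 1}$ are $x^{-1} y x$, $x y x^{-1}$, and $x^{-1} y^{-1} x$; the exponents $1, 1, -1$ lie in neither $2\Z$ nor $3\Z$, so no pinch occurs and $w$ is non-trivial. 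A more elementary substitute, closer in spirit to what would have been available in 1962, is to note that $w = 1$ would force $x^{-1}yx$ to centralize $y$; since the centralizer of $y$ in $\BS(2,3)$ is $\langle y \rangle$, this would give $x^{-1}yx = y^k$ for some $k \in \Z$, and then squaring yields $y^{2k} = x^{-1}y^2 x = y^3$, so $2k = 3$, contradicting the fact that $y$ has infinite order. Either route places the genuine difficulty of the proof in the normal form theory for HNN extensions and amalgamated products.
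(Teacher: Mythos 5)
Your proposal is correct, and it is essentially the original Baumslag--Solitar argument from \cite{Baumslag1962c}. The paper itself does \emph{not} prove this theorem: it cites the original article and instead runs the implication in the opposite direction, using non-hopficity (via Mal'cev) to deduce that $\BS(2,3)$ is not residually finite, and then gives a direct proof of non-residual-finiteness. That sibling argument in the paper nevertheless turns on exactly the element $c = [x^{-1}yx, y]$ that you use, and its one non-trivial step is the same as yours, namely $c \neq 1$ in $\BS(2,3)$. Where you invoke Britton's Lemma for the HNN normal form, the paper instead invokes Magnus' breakdown: the subgroup $\langle x^{-1}yx, y\rangle$ of $\BS(2,3)$ is the torus knot group $\pres{}{y_0,y_1}{y_0^2 y_1^3 = 1}$ (here $y_0 = x^{-1}yx$, $y_1 = y$ after a choice of sign convention), which is non-abelian, so $[y_0,y_1]\neq 1$. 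These two routes are morally the same normal-form argument, packaged either as Britton's Lemma for HNN extensions or as the amalgamated-free-product structure of the exponent-sum-zero kernel; your Britton version is slightly more self-contained, while the paper's version connects more directly to the hierarchy machinery that runs through the rest of the survey. One small caveat about your ``more elementary substitute'': the fact that $C_{\BS(2,3)}(y) = \langle y\rangle$ is not actually more elementary than Britton's Lemma --- proving it requires the same normal-form or Bass--Serre input --- so it relocates rather than removes the difficulty, and the historical remark about 1962 is a bit off since Britton's published proof is from 1963 while Bass--Serre theory is from the 1970s.
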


We will not present the proof of this here, but recall that by Mal'cev's theorem (\S\ref{Subsec:Residualfiniteness}) all finitely generated residually finite groups are hopfian. Thus $\BS(2,3)$ is not residually finite. This, at least, is easy to observe directly. Indeed, let
\begin{equation}\label{Eq:BS(2,3)}
B := \BS(2,3) = \pres{}{x,y}{x^{-1}y^2x=y^3},
\end{equation}
and let $\varphi \colon B \to F$ be any homomorphism onto a finite group $F$. Let $\alpha = \varphi(x), \beta = \varphi(y)$. A simple inductive argument shows that $x^{-n} y^{2^n} x^n = y^{3^n}$ for all $n \geq 0$ in $B$, and hence $\alpha^{-n} \beta^{2^n} \alpha^n = \beta^{3^n}$ in $F$. In particular, if $n$ is the order of $\alpha$, then we deduce that the order of $\beta$ must divide $k = 3^n - 2^n$. Since $k$ is obviously coprime with both $2$ and $3$, by Bézout's identity there are integers $m_1, m_2 \in \Z$ such that $1 = m_1 k + 2m_2$, and hence $\beta = \beta^{2m_2}$ is a power of $\beta^2$. Thus, letting $\beta_1 = \varphi(a^{-1}ba)$, we have that $\beta_1$ is a power of $\alpha^{-1} \beta^2 \alpha = \beta_1^2$, and $\beta_1^2 = \beta^3$. Thus $\beta_1$ is a power of $\beta$, and so we must have $[\beta_1, \beta] = 1$. In other words, since $\varphi$ was arbitrary, the word $c = [x^{-1}yx, y]$ must vanish in all finite homomorphic images of $B$. But $c \neq 1$ in $B$, as Magnus' breakdown procedure shows that if $c=1$, then in the torus knot group $\pres{}{y_0, y_1}{y_0^2 y_1^3 = 1}$ we have $[y_0, y_1] = 1$, which is false by standard properties of amalgamated free products (or indeed since torus knots are not unknots). Thus $B$ is not residually finite. 

The above argument extends to show that if $\gcd(m,n) = 1$, and $|m|, |n| \geq 2$, then $\BS(m,n)$ is not residually finite. However, characterizing precisely when $\BS(m,n)$ is residually finite is somewhat non-trivial. Indeed, in \cite[p.~199]{Baumslag1962c}, it is claimed that if $m$ or $n$ divides the other, then $\BS(m,n)$ is residually finite. This is not correct, as one can show e.g.\ that $\BS(2,4)$ is \textit{not} residually finite. The correct characterization would only be given by Meskin in 1972, as follows: 

\begin{theorem}[{Meskin, 1972 \cite[Theorem~C]{Meskin1972}}]
A Baumslag-Solitar group $\BS(m,n)$ is residually finite if and only if $|m| = 1$ or $|n|=1$ or $|m|=|n|$.\label{Thm:BS-residual-finiteness-characterisation}
\end{theorem}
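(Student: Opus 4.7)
The proof splits into two directions. For the sufficient direction I exhibit residually finite structure in each of the three listed cases. For the necessary direction I produce, for each remaining pair $(m,n)$, a non-trivial element of $\BS(m,n)$ lying in the kernel of every homomorphism to a finite group.

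\textbf{Sufficient direction.} If $|m|=1$, the defining relation reads $x^{-1}yx = y^{\pm n}$, exhibiting $\BS(\pm 1, n)$ as the metabelian group $\Z[1/n] \rtimes \Z$ with faithful representation
\[
y \longmapsto \begin{pmatrix} 1 & 1 \\ 0 & 1 \end{pmatrix}, \qquad x \longmapsto \begin{pmatrix} \pm n & 0 \\ 0 & 1 \end{pmatrix}
\]
into $\GL_2(\Z[1/n])$; residual finiteness then follows from Mal'cev's theorem (\S\ref{Subsec:Residualfiniteness}). The case $|n|=1$ is entirely symmetric. If $|m|=|n|$, then $\langle y^m\rangle$ is an infinite cyclic normal subgroup of $\BS(m, \pm m)$ with virtually free quotient $\Z \ast \Z/m$. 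Passing to the index-$m$ kernel of the map $\BS(m, \pm m) \to \Z/m$ sending $y \mapsto 1$, $x \mapsto 0$, and using that $H^2(F_m, \Z) = 0$ since $F_m$ has cohomological dimension $1$, exhibits a finite-index subgroup isomorphic to $F_m \times \Z$, which is residually finite, whence so is $\BS(m, \pm m)$.

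\textbf{Necessary direction.} Assume $|m|, |n| \geq 2$ and $|m| \neq |n|$, set $d = \gcd(m,n)$, and write $m = d m'$, $n = d n'$ with $\gcd(m',n')=1$. The natural candidate witness is
\[
w \;:=\; [\, x^{-1}y^d x,\; y^d \,].
\]
That $\varphi(w) = 1$ for every homomorphism $\varphi \colon \BS(m,n) \to F$ into a finite group is the arithmetic core: setting $\alpha = \varphi(x)$, $\delta = \varphi(y^d)$, one verifies by induction on $s$ the identity $\alpha^{-s}\delta^{m'^s}\alpha^s = \delta^{n'^s}$, using at each stage that the intermediate exponent $n'^j m'^{s-j}$ remains divisible by $m'$ so the defining relation can be reapplied. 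Taking $s$ to be the order of $\alpha$, the order $k'$ of $\delta$ must divide $m'^s - n'^s$; reducing modulo $m'$ and invoking $\gcd(m',n')=1$ gives $\gcd(k',m')=1$, so $m'$ is invertible modulo $k'$ and the relation $(\alpha^{-1}\delta\alpha)^{m'} = \delta^{n'}$ forces $\alpha^{-1}\delta\alpha$ to be a power of $\delta$. Hence $\varphi(w) = [\delta, \alpha^{-1}\delta\alpha] = 1$. In the coprime case $d=1$ this exactly reproduces the text's argument for $\BS(2,3)$.

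\textbf{Main obstacle.} The serious difficulty is verifying $w \neq 1$ in $\BS(m,n)$. When neither of $|m|, |n|$ divides the other, Britton's Lemma applied to the HNN description $\BS(m,n) = \langle y\rangle \ast_{\langle y^m\rangle = \langle y^n\rangle}$ immediately yields $x^{-1}y^d x \notin \langle y\rangle$, so $w \neq 1$. But in the residual cases where $|m|$ divides $|n|$ or vice versa (e.g.\ $\BS(2,4)$, $\BS(3,9)$), we have $d = \min(|m|,|n|)$, so $y^d$ coincides up to sign with $y^m$ or $y^n$, and $x^{-1}y^dx$ then lies in $\langle y\rangle$, making $w = 1$ in $\BS(m,n)$. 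For these pairs a more elaborate witness must be constructed from several conjugates $x^{-i}yx^i$, with exponents chosen so as to detect the arithmetic discrepancy between $|m|$ and $|n|$ while remaining Britton-reduced; checking non-triviality against the HNN normal form, without destroying the arithmetic identity that forced vanishing in finite quotients, is the delicate combinatorial-arithmetic core of Meskin's argument, and is also what allows the theorem to go through in precisely the generality where Baumslag and Solitar's initial characterization failed.
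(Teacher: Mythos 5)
The paper does not supply a proof of Meskin's theorem; it attributes it to \cite[Theorem~C]{Meskin1972} and only works out the single example $\BS(2,3)$ in the surrounding text, remarking afterwards that the same argument extends to all coprime $m,n$ with $|m|,|n|\geq 2$. Your necessary-direction argument with the witness $w=[x^{-1}y^dx,y^d]$, $d=\gcd(m,n)$, is a correct generalisation of that sketch and, as you note, handles precisely the pairs where neither of $|m|,|n|$ divides the other: the cyclically reduced form of $w$ has no Britton pinches exactly because $|m|\nmid d$ and $|n|\nmid d$ in that regime. Your sufficiency direction is essentially fine, though two small slips are worth flagging: with the convention $x^{-1}y^mx=y^n$ the diagonal matrix representing $x$ should have entry $(\pm n)^{-1}$ rather than $\pm n$ (otherwise you get $x^{-1}yx = y^{1/(\pm n)}$); and the central-extension argument using $H^2(F_m,\Z)=0$ works as stated only for $\BS(m,m)$, since in $\BS(m,-m)$ the stable letter inverts $y^m$ and so the extension $1\to\langle y^m\rangle\to K\to F_m\to 1$ is not central.

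The gap you flag at the end is genuine and is in fact the substantive content of the theorem. When one of $|m|,|n|$ strictly divides the other (e.g.\ $\BS(2,4)$, $\BS(3,9)$), your witness $w$ degenerates because $y^d$ is then a power of $y^{m}$ or $y^{n}$; worse, the obvious replacement $[x^{-1}yx,y]$ also fails, since it maps to $[(12),(13)]\neq 1$ under the well-defined homomorphism $\BS(2,4)\to S_3$ given by $x\mapsto(123)$, $y\mapsto(12)$. This is precisely the regime where Baumslag and Solitar's 1962 characterisation went wrong -- the paper explicitly notes that their claim ``$m\mid n$ implies residual finiteness'' is refuted by $\BS(2,4)$ -- so closing it is not cosmetic: Meskin's treatment of the divides-but-unequal case uses a genuinely different construction, and without it the argument proves only a strict subcase of the ``only if'' direction.
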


Many other complications arise in the literature of Baumslag--Solitar groups, including for stating precisely when they are hopfian. Baumslag \& Solitar themselves state that $\BS(m,n)$ is hopfian if and only if either $m$ divides $n$ or $n$ divides $m$, or if $\Pi(m) = \Pi(n)$, where $\Pi(k)$ denotes the set of prime divisors of $k$. This is not correct, as e.g.\ $\BS(2,6)$ can be shown to not be hopfian. The correct characterization is as follows.

\begin{theorem}[{Collins \& Levin \cite{Collins1983}}]A Baumslag--Solitar group $\BS(m,n)$ is hopfian if and only if it is residually finite, or if $\Pi(m) = \Pi(n)$.\label{Thm:BS-hopfian-characterisation}
\end{theorem}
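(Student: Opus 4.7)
The plan is to split the biconditional and handle its two directions rather differently, with most of the work concentrated on the $\Pi(m) = \Pi(n) \Rightarrow \text{hopfian}$ implication. The easy half of the forward direction is immediate: if $\BS(m,n)$ is residually finite, then Mal'cev's theorem (cf.\ \S\ref{Subsec:Residualfiniteness}) forces it to be hopfian, since $\BS(m,n)$ is finitely generated.

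For the converse direction I would argue by contrapositive, constructing an explicit non-injective surjective endomorphism whenever $\BS(m,n)$ is neither residually finite nor satisfies $\Pi(m) = \Pi(n)$. By Meskin's characterization (Theorem~\ref{Thm:BS-residual-finiteness-characterisation}), the failure of residual finiteness forces $|m|, |n| \geq 2$ with $|m| \neq |n|$; combined with $\Pi(m) \neq \Pi(n)$, after possibly swapping $m$ and $n$ we may fix a prime $p$ with $p \mid m$, $p \nmid n$, and write $m = pk$. Define $\phi \colon \BS(m,n) \to \BS(m,n)$ by $\phi(x) = x$, $\phi(y) = y^p$. Then $\phi(x^{-1}y^m x) = (x^{-1}y^m x)^p = y^{np} = \phi(y^n)$, so $\phi$ is a homomorphism; its image contains $y^p$ and $x^{-1}y^m x = y^n$, hence contains $y^{\gcd(p,n)} = y$ since $p \nmid n$, so $\phi$ is surjective. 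For non-injectivity set $w = [x^{-1}y^k x, y]$: since $0 < k < m$, Britton's Lemma shows $x^{-1}y^k x \notin \langle y \rangle$, and one checks $C_{\BS(m,n)}(y) = \langle y \rangle$ when $|m| \neq |n|$, so $w \neq 1$; however
\[
\phi(w) \;=\; [x^{-1}y^{pk} x, y^p] \;=\; [x^{-1}y^m x, y^p] \;=\; [y^n, y^p] \;=\; 1.
\]
Hence $\phi$ has non-trivial kernel and $\BS(m,n)$ is non-hopfian.

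The hard part will be showing that $\Pi(m) = \Pi(n)$ implies hopficity, where $\BS(m,n)$ need not be residually finite (for instance $\BS(2,4)$). Since Mal'cev-type shortcuts are unavailable, the plan is to analyze surjective endomorphisms $\phi$ directly via the HNN structure on $\BS(m,n)$ with base $\langle y \rangle \cong \Z$: Britton's Lemma combined with the Bass--Serre tree action forces $\phi(y)$ to be conjugate to a power $y^e$ and $\phi(x)$ to induce a translation of controlled length on the tree, and one must leverage the hypothesis $\Pi(m) = \Pi(n)$ at each step to pin down $|e|=1$, whence $\phi$ is (up to an inner automorphism) a permutation of generators and therefore an automorphism. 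A parallel strategy I would pursue is to seek a residual property implying hopficity in this regime --- for example residually (finite $p$-group)-by-cyclic, for each $p \in \Pi(m)$ --- but whether such a property can be made uniform in $(m,n)$ is itself a delicate combinatorial question that seems unlikely to be resolved without the direct endomorphism analysis above.
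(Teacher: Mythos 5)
Note first that the paper does not itself supply a proof of this theorem; the surrounding text explicitly records that Collins \& Levin proved only the special case $\BS(m, ms)$ with $\Pi(ms) = \Pi(m)$ and $|m|, |s| \neq 1$, and that the first complete published proof of the general statement appeared only in 1992. Judged on its own terms, your proposal correctly handles the easy implication (residual finiteness $\Rightarrow$ hopfian, via Mal'cev) and the non-hopficity construction: after applying Meskin's characterization to obtain $|m|, |n| \geq 2$ and $|m| \neq |n|$, your choice of $\phi(x) = x$, $\phi(y) = y^p$ for a prime $p \mid m$, $p \nmid n$ is a clean witness, the surjectivity check via $y^n = x^{-1}(y^p)^{m/p}x$ together with $\gcd(p,n) = 1$ is right, and $[x^{-1}y^{m/p}x, y]$ is indeed a non-trivial element of the kernel. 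One minor imprecision: the identity $C_{\BS(m,n)}(y) = \langle y\rangle$ requires $|m|, |n| \geq 2$ rather than merely $|m| \neq |n|$ (in $\BS(1,n)$ the centralizer of $y$ is all of $\Z[1/n]$), but this is harmless since Meskin's theorem already guarantees $|m|, |n| \geq 2$ in your situation.

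The genuine gap is the implication you yourself flag as hard: $\Pi(m) = \Pi(n) \Rightarrow$ hopfian when the group is not residually finite (e.g.\ $\BS(2,4)$ or $\BS(12,18)$). You offer only an outline --- analyze surjective endomorphisms via Britton's lemma and the Bass--Serre tree, force $\phi(y)$ conjugate to a power $y^e$, pin down $|e| = 1$ --- and you do not carry out the decisive step of showing exactly how the arithmetic hypothesis $\Pi(m) = \Pi(n)$ obstructs a surjective, non-injective endomorphism. That step requires a case analysis of the possibilities for $\phi(y)$ and $\phi(x)$ that is substantially more involved than the outline suggests; its difficulty is precisely why Collins \& Levin themselves only managed the case $n = ms$ and why the full statement went unproved in print for another decade. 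As submitted, your proposal establishes one direction of the biconditional and half of the other, and the heart of the theorem remains a plan rather than an argument.
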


Thus, for example, $\BS(12, 18)$ is hopfian, but it is not residually finite. Indeed, as stated (but not proved) by Baumslag \& Solitar \cite[Theorem~2]{Baumslag1962c}, it has a finite index subgroup which is not hopfian (and therefore also not residually finite). The earliest complete published proof of Theorem~\ref{Thm:BS-hopfian-characterisation} that we are aware of only appeared in 1992, see \cite[Corollary~1]{Andreadakis1992}. There, the authors attribute this result to Collins \& Levin \cite{Collins1983}, who do not give a full proof of the result, but prove that for all $|m|, |s| \neq 1$, if $\Pi(ms) = \Pi(m)$, then $\BS(m, ms)$ is hopfian. This is one of the more difficult parts of proving Theorem~\ref{Thm:BS-hopfian-characterisation}, and it seems very likely that Collins \& Levin were aware of, and could have written down a proof of, Theorem~\ref{Thm:BS-hopfian-characterisation}. The above characterization also leads to some curious results: for example, it is not difficult to show that in $\BS(12,18)= \pres{}{x,y}{x^{-1}y^{12}x = x^{18}}$ the group generated by $\langle x, y^6 \rangle$ is isomorphic to $\BS(2,3)$. Thus $\BS(2,3) \leq \BS(12,18)$, even though the former is non-hopfian, but the latter is hopfian because $\Pi(12) = \Pi(18)$. Thus the Baumslag--Solitar groups also serve to illustrate just how elusive a property hopficity can be from a group-theoretic point of view.\footnote{Even understanding when direct products of hopfian groups is hopfian is rather complicated, see e.g.\ \cite{Corner1965,Hirshon1969}.}

Baumslag--Solitar groups have come to play a central role in the theory of one-relator groups since their introduction (indeed, recall that some examples of them appeared already in Magnus' 1930 article, \S\ref{Subsec:Freiheitssatz}). They act as counterexamples, as we have seen, to many natural conjectures about one-relator groups, and have a rich structural theory. We now give an overview of some of this structure to finish this section. We first remark that the groups $\BS(1,m)$, with $m \in \Z$, which above are distinguished as residually finite as well as hopfian, are in fact solvable. Indeed, the derived subgroup is isomorphic to $\Z[\frac{1}{m}]$, and the groups split as a semidirect product $\Z[\frac{1}{m}] \rtimes \Z$, with obvious conjugating action. Indeed, they are linear, arising as subgroups of $\GL_2(\R)$, via the representation
\begin{equation}\label{Eq:BS(1,m)-rep}
x \mapsto  \begin{pmatrix}
m^{\frac{1}{2}} & 0 \\ 0 & m^{-\frac{1}{2}}
\end{pmatrix} \quad \textnormal{and} \quad y \mapsto \begin{pmatrix}
1 & 1 \\ 0 & 1
\end{pmatrix}
\end{equation}
In fact, a Baumslag--Solitar group is linear if and only if it is residually finite. One direction is Mal'cev's famous theorem, and the other is obtained by constructing linear representations for all groups in Theorem~\ref{Thm:BS-residual-finiteness-characterisation}. Thus, in view of the linear representation \eqref{Eq:BS(1,m)-rep}, it suffices to show that $\BS(m,m)$ is linear for all $m \in \Z$. However, it is not hard to prove that $\BS(m,m)$ has a finite index subgroup isomorphic to a direct product $F_n \times \Z$, where $F_n$ is a free group of rank $n$. Since such direct products are obviously linear, so too is $\BS(m,m)$. 

The results by Baumslag \& Solitar were quickly picked up by Rapaport, who in 1964 used them to prove certain results on free products of groups. In a rather obscure article \cite{Rapaport1964b}, she proves among other results two which are relevant to our story: first, the minimal number of defining relations of a free product of two finitely presented groups may be less than the sum of that of the two factors (later rediscovered by 
Hog, Lustig \& Metzler \cite{Hog1985}). Second, she proved a result on the \textit{deficiency} of a one-relator group. Here, the deficiency of an $n$-generator $k$-relator presentation is defined to be $n-k$, and the deficiency of a finitely presented group is the maximum, taken over all presentations of the group, such deficiency. Thus the deficiency of an $n$-generator one-relator group is at least $n-1$, and Rapaport proved that it is no larger:

\begin{theorem}[{Rapaport, 1964 \cite[Corollary~2]{Rapaport1964b}}]
Let $G = \pres{}{A}{r=1}$ be a one-relator group. Then $G$ has deficiency $|A|-1$.
\end{theorem}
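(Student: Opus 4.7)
The lower bound $\operatorname{def}(G) \geq |A| - 1$ is immediate from the one-relator presentation itself. For the upper bound I would use the standard homological inequality
\[
\operatorname{def}(H) \leq b_1(H) - d(H_2(H, \Z)),
\]
valid for every finitely presented group $H$, where $b_1(H)$ is the first Betti number (torsion-free rank of $H^{\mathrm{ab}}$) and $d(\cdot)$ denotes the minimum number of generators of an abelian group. This is established by taking any presentation $2$-complex $X_P$ realising $\operatorname{def}(H)$: the group $H_2(X_P, \Z)$ embeds into the free abelian group $C_2(X_P) \cong \Z^{|R|}$ and hence is itself free abelian of some rank $b_2(X_P)$, while the Hopf-theoretic surjection $H_2(X_P, \Z) \twoheadrightarrow H_2(H, \Z)$ coming from attaching higher cells to build a $K(H,1)$ forces $b_2(X_P) \geq d(H_2(H, \Z))$. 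Equating the two expressions for the Euler characteristic $\chi(X_P) = 1 - \operatorname{def}(H) = 1 - b_1(H) + b_2(X_P)$ gives the inequality.

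Applied to $G = \pres{}{A}{r=1}$ with $|A| = n$, it then suffices to verify that $b_1(G) - d(H_2(G, \Z)) \leq n - 1$. Abelianising the defining relator gives $G^{\mathrm{ab}} = \Z^n / \langle \alpha \rangle$, where $\alpha \in \Z^n$ is the vector of exponent sums of $r$, so $b_1(G) = n$ exactly when $r \in [F_A, F_A]$ and $b_1(G) \leq n - 1$ otherwise. For the second homology I would invoke Lyndon's Identity Theorem: in the torsion-free case the presentation $2$-complex is aspherical, so $H_2(G, \Z) = \ker(\partial_2 \otimes \Z) \subseteq \Z$ is $\Z$ or $0$ according as $r \in [F_A, F_A]$ or not. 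In the torsion case $r = w^k$, the Cohen--Lyndon refinement identifies the relation module $R^{\mathrm{ab}}$ with the induced $\Z G$-module $\Z G \otimes_{\Z \langle w \rangle} \Z$, and Hopf's formula $H_2(G, \Z) = R^{\mathrm{ab}} \otimes_{\Z G} \Z$ evaluates to the coinvariants of this permutation module and so yields $\Z$ or $0$ by the same dichotomy.

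Combining these inputs: when $r \notin [F_A, F_A]$, the bound reduces to $\operatorname{def}(G) \leq b_1(G) \leq n - 1$; when $r \in [F_A, F_A]$, $b_1(G) = n$ but $d(H_2(G, \Z)) = 1$, again forcing $\operatorname{def}(G) \leq n - 1$. Together with the lower bound this yields $\operatorname{def}(G) = n - 1$. The principal technical obstacle is the $H_2$ computation in the torsion case, where the presentation complex ceases to be aspherical: one must pass through the Cohen--Lyndon description of $\langle\langle r \rangle\rangle$ as a free group before Hopf's formula can be evaluated. Everything else is either an immediate abelianisation calculation or a routine Euler-characteristic bookkeeping.
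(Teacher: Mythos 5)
Your argument is sound in structure and, with one small repair, complete. The homological inequality $\operatorname{def}(H) \leq b_1(H) - d(H_2(H,\Z))$ is indeed the right upper bound (sometimes attributed to Epstein), and your Euler-characteristic derivation of it is correct. The lower bound is trivial, and the computation of $b_1(G)$ from the abelianised relator is fine. The paper itself gives no proof—it simply cites Rapaport—so there is nothing to compare approaches with, but this homological route (Epstein inequality plus Lyndon's Identity Theorem) is the natural one.

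The one step that needs fixing is your appeal to ``Hopf's formula'' in the torsion case. You write $H_2(G,\Z) = R^{\mathrm{ab}} \otimes_{\Z G} \Z$, but that identification is wrong: for a one-relator group the coinvariants $R^{\mathrm{ab}} \otimes_{\Z G}\Z = R/[F,R]$ are \emph{always} $\Z$ (generated by the class of $r$), not ``$\Z$ or $0$ by the same dichotomy.'' What is actually true is the five-term exact sequence
\[
0 \longrightarrow H_2(G,\Z) \longrightarrow R/[F,R] \longrightarrow F^{\mathrm{ab}} \longrightarrow G^{\mathrm{ab}} \longrightarrow 0,
\]
where the middle map sends the class of $r$ to its exponent-sum vector in $F^{\mathrm{ab}} \cong \Z^{|A|}$. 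The dichotomy $H_2(G,\Z) \in \{0,\Z\}$ according as $r \notin [F,F]$ or $r \in [F,F]$ comes from the (in)jectivity of this map, not from the coinvariants themselves. With that correction your argument closes, and it does so uniformly in the torsion and torsion-free cases, since both give $R/[F,R] \cong \Z$. A slightly slicker alternative for the torsion case is to apply $-\otimes_{\Z G}\Z$ directly to Lyndon's periodic free resolution: in degrees $3,2,1$ one gets $\Z \xrightarrow{\ 0\ } \Z \xrightarrow{\ \alpha\ } \Z^{|A|}$, where $\alpha$ is the exponent-sum vector of $r^k$, and the same dichotomy for $H_2$ drops out immediately (note $r^k \in [F,F]$ iff $r\in[F,F]$ since $F^{\mathrm{ab}}$ is torsion-free). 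Either way, this is a recoverable slip, not a structural gap.
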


Baumslag--Solitar groups continue to be a rich area of research, and although we cannot possibly mention all results regarding them, we will mention a few which are naturally connected to hopficity and the above considerations. First, the endomorphisms of $\BS(m,n)$ with $\gcd(m,n) = 1$ and $m,n \neq \pm 1$ were described by Anshel \cite{Anshel1971, Anshel1972, Anshel1973} (see \cite{Anshel1976} for corrections). Using this, he proceeded to prove that in this case, $G = \BS(m,n)$ has a fully invariant subgroup $N$ such that $G / N \cong G$, a very strong form of non-hopficity. We mention also that Hirshon \cite{Hirshon1975} has given a full description of the intersection of the subgroups of finite index in Baumslag--Solitar groups. Finally, regarding the quotients of Baumslag--Solitar groups, Moldavanskii \& Sibyakova \cite{MS95} proved (with classical techniques) that the solvable Baumslag--Solitar groups $\BS(1,k)$ are \textit{profinitely rigid} among residually finite one-relator groups, i.e.\ for any residually finite one-relator group $G$, the set of finite quotients of $G$ is the same as that of $\BS(1,k)$ if and only if $G \cong \BS(1,k)$, cf.\ also \cite{Burrow1989}. We remark finally that a natural line of investigation beyond Baumslag--Solitar groups comes from considering relations of the type $x^{-1}w^mx = w^n$, where $w$ is some arbitrary word (possibly with more generators). Hopficity in this case has been investigated e.g.\ by Collins \cite{Collins1978b}, Rosenberger \cite{Rosenberger1980}, and Collins \& Levin \cite{Collins1983}.

\subsection{The Magnus--Moldavanskii hierarchy}\label{Subsec:MM-hierarchy}

One of Magnus' most prominent students was M.\ Greendlinger (b.\ 1932), who completed his PhD in 1960.\footnote{Martin Greendlinger was born in New York in 1932. In 1957, he met Elena Ivanovna in Moscow while visiting for a conference, and they married the next year. Because Elena was unable to get a US citizenship, Martin moved to the USSR in 1960, renounced his US citizenship, and became a Soviet citizen in 1961, eventually settling in Tula. Thus Magnus' school of combinatorial group theory also made its way to the USSR.  For more details, see \cite{Vankov2022}. Occasionally, Greendlinger's name is incorrectly transliterated (via Russian) in English as \textit{Grindlinger}.} The subject matter of his thesis \cite{Greendlinger1960c} was extending Dehn's algorithm from surface groups $\pi_1(\Sigma_g)$ with $g>1$ (see \S\ref{Subsec:Surface-and-knot-groups}) to broader classes. This class of groups, which lie somewhat outside the scope of this survey, is now called \textit{small cancellation groups}, as they satisfy the ``small cancellation condition'' $C'(1/6)$ (historically, groups satisfying this condition were also called \textit{sixth groups}). This condition, which traces back to the 1940s in work by Tartakovskii \cite{Tartakovskii1949} (and independently Britton \cite{Britton1956,Britton1957}), is intuitively speaking a condition on how much the relators of a given group presentation overlap. It can be used to guarantee a solution to the word problem and the conjugacy problem, using a well-known result known as \textit{Greendlinger's lemma}, which gives rise to a similar algorithm to Dehn's in the setting of $\pi_1(\Sigma_g)$. Greendlinger would expand on his work in a series of articles throughout the 1960s \cite{Greendlinger1960, Greendlinger1960b, Greendlinger1964}, which would be generalized by Lyndon \cite{Lyndon1966,Ly72} and his student Schupp \cite{Schupp1968, Schupp1970}. Today, a classical reference for small cancellation theory is \cite[Chapter~V]{Lyndon1977}. 

Greendlinger himself did not work directly with one-relator groups, even though his results were all directly inspired by this subject. After he moved from New York to the USSR (forgoing an NSF Grant for working with B.\ H.\ Neumann in Manchester), Greendlinger would end up as one of the core members of the algebraic school of mathematics in Ivanovo State University, and then in Tula, where he began working in 1967 \cite{Bezverkhnii2017}. He would bring many of the results and methods from Magnus to the USSR, and many of Greendlinger's students would end up proving significant results on one-relator groups. One of his students in Ivanovo was D.\ I.\ Moldavanskii, who became a graduate student of Greendlinger's in 1964, but ''almost immediately declared that small cancellation theory did not appeal to him'' \cite[p.~4]{Azarov2016}. Greendlinger, very familiar with, and having access to, Western literature on the subject, gave Moldavanskii a copy of Baumslag's 1964 survey \cite{Ba64} on one-relator groups, which ends with a question on the abelian subgroups of one-relator groups. Moldavanskii began working on this question, which would turn out to be very fortuitous for the development of one-relator groups. His approach to the subject would pass (more or less explicitly) via \textit{HNN-extensions}, and would lead to a simplification of the Magnus hierarchy. This, in turn, leads to simplifications of the classical results, and for deeper results to be proved, heralding in a new era of one-relator group theory.

Before presenting Moldavanskii's method, we give a brief summary of HNN-extensions. Such extensions were first introduced by G.\ Higman, B.\ H.\ Neumann, and H.\ Neumann \cite{Higman1949} in 1949, and carries their initials. The main idea of the extensions is that if $G$ is a group with two isomorphic subgroups $H_1, H_2 \leq G$, then such an isomorphism can be extended to an inner automorphism of a larger group $G^\ast$ into which $G$ embeds. Concretely, if $G = \pres{}{A}{R}$, and $\varphi \colon H_1 \to H_2$ is an isomorphism between two subgroups of $G$, then the \textit{HNN-extension} $G^\ast_\varphi$ of $G$ with \textit{associated subgroups} $H_1$ and $H_2$ is the group with the presentation 
\begin{equation}\label{Eq:HNN-def}
G^\ast_\varphi = \pres{}{A, t}{R, \: t h_1t^{-1} = \varphi(h_2) \: (\forall h_1 \in H_1)}
\end{equation}
where $t$ is some new symbol, called the \textit{stable letter}. A basic result proved in \cite{Higman1949} is that the natural map $G \to G^\ast_\varphi$ is injective, and one original motivation for introducing these extensions was to prove that \textit{every countable group $G$ is isomorphic to a subgroup of a two-generator group $H$}. However, 10 years after its introduction, the construction would find its key use by J.\ L.\ Britton (a student of B.\ H.\ Neumann's) in his proof of the undecidability of the word problem for groups. 

Let $G^\ast_\varphi$ be the group in \eqref{Eq:HNN-def}, and let $w$ be a word written as 
\begin{equation}\label{Eq:word-in-HNN-extension}
u_0 t^{\varepsilon_1} u_1 t^{\varepsilon_2} \cdots t^{\varepsilon_n} u_n
\end{equation}
where $u_i \in F_A$ and $\varepsilon_i = \pm 1$ for all $0 \leq i \leq n$. Suppose for simplicity of formulation that $n>0$. Then Britton realized that deciding that the word problem in $G^\ast_\varphi$ can be formulated in terms of $H_1, H_2$, and $G$ in the following way: 

\begin{lemma}[{Britton\footnote{HNN-extensions, and an essentially equivalent form of Britton's Lemma, also appear independently in Novikov's proof of the undecidability of the word problem in groups, where they play a key role \cite{Novikov1952, Novikov1955}.}  
\cite[Lemma~4]{Britton1957}}]
If $w = 1$ in $G_\varphi^\ast$, then $w$ contains a subword of the form $t h_1 t^{-1}$ or $t^{-1} h_2 t$, where $h_1 \in H_1$ and $h_2 \in H_2$. 
\label{Lem:BrittonsLemma}
\end{lemma}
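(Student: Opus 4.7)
The plan is to establish the stronger \emph{Normal Form Theorem} for HNN-extensions, of which Britton's Lemma is the immediate contrapositive. Call a word as in \eqref{Eq:word-in-HNN-extension} \emph{reduced} if it contains no \emph{pinch}, i.e.\ no adjacent factor $t^{\varepsilon_i} u_i t^{\varepsilon_{i+1}}$ with $\varepsilon_{i+1} = -\varepsilon_i$ satisfying $u_i \in H_1$ (when $\varepsilon_i = +1$) or $u_i \in H_2$ (when $\varepsilon_i = -1$). The task then reduces to showing: if $n \geq 1$ and the word is reduced, then $w \neq 1$ in $G_\varphi^\ast$.

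The strategy is van der Waerden's permutation trick. First I would fix right transversals $T_1, T_2 \subseteq G$ for the right cosets of $H_1, H_2$ in $G$, each containing $1$, so that every $g \in G$ admits a unique factorization $g = h_i \tau_i$ with $h_i \in H_i$ and $\tau_i \in T_i$. Let $W$ denote the set of all reduced sequences (including the length-zero sequences $(g)$ for $g \in G$). I would then define a partial action of the generating set $A \cup \{t^{\pm 1}\}$ of $G_\varphi^\ast$ on $W$ by permutations: any $g \in G$ acts by left multiplication on the leading entry $u_0$, while $t^{\pm 1}$ acts by prepending a new $t^{\pm 1}$-symbol, except when this would create a pinch, in which case the defining relation $t h_1 t^{-1} = \varphi(h_1)$ and the transversal factorization are used to absorb the resulting cancellation into a shorter reduced sequence.

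The main obstacle is verifying that this partial action on generators respects every defining relation of \eqref{Eq:HNN-def}, and hence extends to a genuine action of $G_\varphi^\ast$ on $W$. The relations of $G$ itself cause no difficulty, since they only affect the leading entry $u_0$. The conjugation relations $t h_1 t^{-1} = \varphi(h_1)$, on the other hand, require a careful case analysis, splitting on whether the input sequence begins with $t$, with $t^{-1}$, or with no $t^{\pm 1}$-symbol, and on which $H_i$-coset the relevant entries lie in; each subcase is a mechanical verification appealing to uniqueness of the transversal decomposition. Once the action is established, the theorem follows by contraposition: for a reduced word $w$ as in \eqref{Eq:word-in-HNN-extension}, evaluating the action of $w$ at the singleton $(1) \in W$ reconstructs the sequence $(u_0, t^{\varepsilon_1}, u_1, \dots, t^{\varepsilon_n}, u_n)$ by the very definition of the action, and this differs from $(1)$ whenever $n \geq 1$, so $w$ cannot equal $1$ in $G_\varphi^\ast$.
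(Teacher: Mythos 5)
The paper does not supply a proof of this lemma; as a historical survey it simply states Britton's Lemma with a citation to Britton's original article, remarking afterwards that the stronger ``Theorem~A'' admits a direct proof via normal forms for amalgamated free products. There is therefore no proof in the paper to compare against. Your proposal---establishing the Normal Form Theorem for HNN-extensions by the van der Waerden permutation trick and deducing Britton's Lemma as its contrapositive---is the standard textbook route and is essentially correct.

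Two points in your write-up need tightening, however. First, for the trick to give a genuine permutation action, the set $W$ must consist of \emph{normal form} sequences, in which the entries $u_1, \dots, u_n$ (though not $u_0$) are required to lie in the chosen transversals; pinch-freeness alone does not suffice, since without the transversal conditions the map induced by $t$ is not injective (two distinct pinch-free sequences can absorb to the same shorter one). You invoke ``transversal factorization'' when describing the absorbing step, so you clearly have the mechanism in mind, but your definition of $W$ as ``the set of all reduced sequences'' does not build it in. Second, evaluating the action of $w$ at $(1)$ does not literally ``reconstruct the sequence $(u_0, t^{\varepsilon_1}, u_1, \dots, t^{\varepsilon_n}, u_n)$'': it produces the normal form of $w$, whose entries $u_i$ are rewritten modulo transversals. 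What one does get, and what suffices, is a normal form of $t$-length exactly $n$; but to see this one must check inductively that, because $w$ is pinch-free, no application of any $t^{\varepsilon_i}$ ever triggers the absorbing case. This check is a genuine step---the leading entry after applying $u_j$ is $u_j$ times the previous normalized leading entry, not $u_j$ alone, so one must argue that this product lies in $H_1$ resp.\ $H_2$ if and only if $u_j$ does---and should not be dismissed as holding ``by the very definition of the action''.
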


Thus, via this ``spelling theorem'', one can immediately get a solution to the word problem as soon as one can solve membership in $H_1, H_2 \in G$. A stronger form of Britton's Lemma, called Theorem~A by Britton \cite{Britton1963}, has a direct proof via normal forms in amalgamated free products, see \cite{Miller1968}. A direct consequence of Britton's Lemma is, of course, the aforementioned result that the map $G \to G^\ast_\varphi$ is injective. But, more generally speaking, the lemma allows \textit{encodings} into groups of ``combinatorial sequences'', particularly those associated to the instructions of a (universal) Turing machine. In finitely presented \textit{semi}groups, such encodings are rather straightforward (cf.\ \S\ref{Subsec:Undecidability}), but for groups the first known encodings, by Boone \cite{Boone1957} and Novikov \cite{Novikov1955}, were messy and full of combinatorial intricacies. The simplicity of Britton's formulation permitted the explosion of new undecidability results in the 1950s (again see \S\ref{Subsec:Undecidability}). 

This simplicity of formulation would also turn out to be applicable to one-relator group theory. In 1967, Moldavanskii \cite{Mo67} would be the first to notice this, and used some of the properties of HNN-extensions in the study of the Magnus hierarchy. First, we remark that if $G = \pres{}{A}{r=1}$ is a one-relator group in which some generator $t$, actually appearing in $r$, has exponent sum zero in $r$, then it is not hard to see that $G$ is an HNN-extension of a one-relator group $G_1$, where the associated subgroups are Magnus subgroups of $G_1$ and $t$ is the stable letter (for a proof see \cite[Theorem~1]{MS73}). Indeed, continuing the example \eqref{Eq:MM-example} and \eqref{Eq:MM-example-cont}, the stable letter is $b$, the first associated subgroup is $\langle a_0, c_0 \rangle$, the second is $\langle a_1, c_1\rangle$, and the isomorphism is induced by $a_0 \mapsto a_1, c_0 \mapsto c_1$. Furthermore, by using Nielsen transformations and Euclidean division in the obvious way, we can always assume that at least one generator of $G$ has exponent sum zero in $r$ and appears in $r$. Let us call this the \textit{Euclidean trick}. For example, in the torus knot group $\pres{}{a,b}{a^2 = b^3}$, we can apply the Nielsen transformation $a \mapsto ba$ and fixing $b$, followed by the Nielsen transformation fixing $a$ and mapping $b \mapsto a^2b$, which results in the isomorphism 
\[
\mathcal{T}_{2,3} = \pres{}{a,b}{a^2 = b^3} \cong \pres{}{a,b}{ababa^{-2}b = 1}.
\]
In this new presentation, $a$ has exponent sum zero. Hence, our group is an HNN-extension with stable letter $a$, and applying the Magnus method (from \S\ref{Subsec:Freiheitssatz}) we see that it is an HNN-extension of $\pres{}{b_0, b_1, b_2}{b_1 b_2 b_0 = 1}$, a free group. 

The Euclidean trick is sufficient for proving certain results about one-relator groups, and Moldavanskii uses it e.g.\ to give a necessary and sufficient criterion for a one-relator group to be (f.g.\ free)-by-cyclic (see \S\ref{Subsec:Free-by-cyclic} below). However, in general the Euclidean trick will not lend itself easily to inductive arguments, since it can increase the length of the relator.\footnote{Moldavanskii seemingly missed this fact at first, since he only in a footnote \cite[Proof of Theorem~3]{Mo67} remarks that the Magnus trick is needed to make a proof of his correct.} Instead, we need another trick. Suppose (for simplicity) that $G_1 = \pres{}{a,b}{r(a,b) = 1}$, where the exponent sums of both $a$ and $b$ are non-zero, say $\alpha$ resp.\ $\beta$. Then we can map $G_1$ into the group 
\[
G_1' = \pres{}{y,x,b}{r(y,b) = 1, b = x^\alpha} \cong G_1 \ast_{b = x^\alpha} \Z
\]
where the exponent sum of $x$ is zero. When rewriting the relator via the Magnus process, the relator length does indeed decrease compared to that of $G_1$, allowing the use of induction. Let us call this the \textit{Magnus trick}. Using this trick, we can then prove the following:

\begin{theorem}[Moldavanskii, 1967 \cite{Mo67}]
Every one-relator group $G_1 = \pres{}{A_1}{r_1=1}$ can be embedded in a one-relator group $G'_1$ which is the HNN-extension of a one-relator group $G_2 = \pres{}{A_2}{r_2 = 1}$, in which the associated subgroups are Magnus subgroups and $|r_2| < |r_1|$.\label{Thm:Magnus-Moldavanskii}
\end{theorem}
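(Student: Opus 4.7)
\noindent The plan is to refine the classical Magnus breakdown (as used in the proof of the \textit{Freiheitssatz} in \S\ref{Subsec:Freiheitssatz}) so that, rather than exhibiting $G_1$ directly as an iterated amalgamated free product, one exhibits a one-relator overgroup $G_1' \supseteq G_1$ as an HNN-extension of a one-relator group $G_2$ whose defining relator is strictly shorter. I would carry this out in two stages: (i) a preliminary embedding $G_1 \hookrightarrow G_1'$ arranging that some letter appearing in the relator has exponent sum zero (the ``Magnus trick''), and (ii) the classical Reidemeister--Schreier rewriting with respect to the resulting $\Z$-quotient, reinterpreted as an HNN-decomposition via Britton's lemma.

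For stage (i), if some letter $t \in A_1$ already appears in $r_1$ with exponent sum zero, set $G_1' := G_1$. Otherwise, pick two letters $a, b \in A_1$ appearing in $r_1$ with respective nonzero exponent sums $\alpha, \beta$, introduce a fresh symbol $x$, and define $G_1' := \pres{}{A_1', x}{r_1'=1}$ where $A_1' = (A_1 \setminus \{a, b\}) \cup \{z\}$ and $r_1'$ is the image of $r_1$ under the substitution $a \mapsto zx^{-\beta}$, $b \mapsto x^\alpha$. A direct computation gives
\[
\sigma_x(r_1') = \alpha\, \sigma_b(r_1) - \beta\, \sigma_a(r_1) = \alpha\beta - \beta\alpha = 0,
\]
so $x$ has exponent sum zero in $r_1'$. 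Injectivity of $G_1 \hookrightarrow G_1'$ follows by recognising $G_1'$ as the amalgamated free product $G_1 \ast_{\langle b \rangle = \langle x^\alpha\rangle} \Z$; the amalgamation is well-defined because $\langle b\rangle$ is infinite cyclic in $G_1$, which is guaranteed by the \textit{Freiheitssatz} (since $a$ also occurs in $r_1$, so $\{b\}$ is a proper subset of the letters occurring in $r_1$).

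For stage (ii), let $t$ denote the letter of exponent-sum-zero in $r_1'$ (either the original $t$ or the new $x$) and set $B := A_1' \setminus \{t\}$. Using the Reidemeister--Schreier generators $c_j := t^j c t^{-j}$ for $c \in B$ and $j \in \Z$, the relator $r_1'$ rewrites, because $\sigma_t(r_1') = 0$, as a cyclically well-defined word $r_2$ in the $c_j$ involving only finitely many indices; say $c_j$ appears in $r_2$ for $j \in [m_c, M_c]$. Define $G_2 := \pres{}{A_2}{r_2=1}$ on $A_2 := \{c_j : c \in B,\; m_c \leq j \leq M_c\}$. The shift $c_j \mapsto c_{j+1}$ carries $H_- := \langle c_j : m_c \leq j < M_c\rangle$ onto $H_+ := \langle c_j : m_c < j \leq M_c\rangle$; both are Magnus subgroups of $G_2$, hence free of equal rank by the \textit{Freiheitssatz} applied to $G_2$. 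The classical decomposition of $\ker(G_1' \to \Z)$ as an infinite amalgam of shifted copies of $G_2$ along these Magnus subgroups then collapses, under the shift $\Z$-action, to the statement that $G_1'$ is the HNN-extension of $G_2$ over $H_- \xrightarrow{\sim} H_+$ with stable letter $t$.

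The length bound is then immediate: $|r_2|$ equals the number of non-$t$ letters in $r_1'$. When $G_1' = G_1$ this is $|r_1|$ minus the positive number of $t$-occurrences, hence $< |r_1|$. When the Magnus trick is used, the non-$x$ letters of $r_1'$ are precisely the $z^{\pm 1}$ letters, and their count equals the number of $a^{\pm 1}$ occurrences in $r_1$, which is strictly less than $|r_1|$ because $b$ also occurs. The main obstacle is stage (ii): it is easy to exhibit a surjection from the putative HNN-extension onto $G_1'$, but one must verify the reverse direction---that no extra relations are introduced---rigorously. This requires a normal form argument, either via Britton's lemma directly, or by careful inspection of the iterated amalgam structure of $\ker(G_1' \to \Z)$ to confirm that quotienting by the $\Z$-action produces exactly $G_2$ with $H_-$ and $H_+$ correctly identified as the associated subgroups.
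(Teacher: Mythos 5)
Your argument is correct and follows the same route the paper sketches and attributes to Moldavanskii and McCool--Schupp: the Magnus trick arranges a generator of exponent sum zero appearing in the relator, and Reidemeister--Schreier rewriting with respect to the resulting map to $\Z$ then realises the HNN-decomposition over Magnus subgroups. One small imprecision in the length estimate: in the Magnus-trick case the non-$x$ letters of $r_1'$ are not \emph{only} the $z^{\pm 1}$ occurrences, since the generators in $A_1 \setminus \{a,b\}$ survive unchanged; their total count is nevertheless $|r_1|$ minus the (positive) number of $b$-occurrences, so the bound $|r_2| < |r_1|$ stands, and free or cyclic reduction of $r_1'$ can only tighten it. You rightly flag the injectivity of the natural surjection from the putative HNN-extension onto $G_1'$ as the remaining verification; this is supplied by Britton's Lemma and is precisely the content of \cite[Theorem~1]{MS73}, to which the paper defers.
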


The finite hierarchy of one-relator groups $G_1, G_2, \dots, G_n$ that this gives rise to is called the \textit{Magnus--Moldavanskii hierarchy}. Notice that the groups of this hierarchy are the same as the original Magnus hierarchy, with the only conceptual difference being the addition of the HNN-extensions. Furthermore, Theorem~\ref{Thm:Magnus-Moldavanskii} as stated does not appear in Moldavanskii's article, but is implicit in his reasoning (e.g.\ \cite[p. 1379]{Mo67}). The main result of Moldavanskii's article is the following:

\begin{theorem}[Moldavanskii, 1967 \cite{Mo67}]
Any abelian subgroup of a one-relator group is cyclic, free abelian of rank $2$, or locally cyclic.
\label{Thm:Moldavanskii}
\end{theorem}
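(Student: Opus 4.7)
The plan is to proceed by induction on the length of the defining relator via the Magnus--Moldavanskii hierarchy (\cref{Thm:Magnus-Moldavanskii}), with the base case a free group, where every abelian subgroup is cyclic by the Nielsen--Schreier Theorem (\cref{Thm:Nielsen-Schreier}). For the inductive step, I would embed the given one-relator group $G_1$ into $G'_1 = G_2 \ast_\varphi$, an HNN-extension of a strictly shorter one-relator group $G_2$ whose associated subgroups are Magnus subgroups, and hence \emph{free} by the \textit{Freiheitssatz}. Since an abelian subgroup of $G_1$ is a fortiori abelian in $G'_1$, it suffices to classify abelian subgroups $H \leq G'_1$, which I would do by analysing the action of $H$ on the associated Bass--Serre tree $T$ (vertex stabilisers being conjugates of $G_2$, edge stabilisers conjugates of the free associated subgroups).

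The analysis splits according to whether $H$ contains an element acting hyperbolically on $T$. If it does, say $h \in H$, then all of $H$ preserves the unique axis of $h$, and the signed translation length along that axis yields a homomorphism $\ell \colon H \to \Z$. The kernel $K$ consists of elliptic elements commuting with $h$, and these necessarily fix the axis pointwise (an HNN-extension acts on its Bass--Serre tree without inversions); in particular $K$ lies in every edge stabiliser along the axis, so the abelian group $K$ embeds into a free Magnus subgroup and is therefore cyclic. The image of $\ell$ is a cyclic subgroup of $\Z$, and the element $h$ provides a splitting, so $H$ is isomorphic to $\Z$ or to $\Z^2$.

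If instead every element of $H$ is elliptic, then by Serre's classification $H$ either has a global fixed vertex on $T$ or fixes a unique end. In the former case, $H$ is conjugate into $G_2$, and the inductive hypothesis applies verbatim. In the latter case, where $H$ fixes some end $\xi$, I would argue that $H$ is locally cyclic. Given any finitely generated $H_0 \leq H$, one first observes that two commuting elliptic automorphisms of a tree must have intersecting fixed subtrees (otherwise their product has translation length twice the distance between those subtrees and is hyperbolic); applying Helly's theorem for trees to the pairwise-commuting generators of $H_0$ then produces a common fixed vertex $v_0$. Together with $\xi$, this vertex determines a ray on which every element of $H_0$ acts trivially, so $H_0$ lies in the stabiliser of every edge of that ray, i.e.\ in a conjugate of a free Magnus subgroup. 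Hence $H_0$ is cyclic, and consequently $H$ is locally cyclic.

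The main obstacle I anticipate is precisely this end-fixing subcase, where one must rule out the possibility that $H$ accumulates into an inflated rank-two abelian subgroup such as $\Z[1/p] \oplus \Z$. The rigidity required for this exclusion is delivered by the freeness of the edge stabilisers, which is itself a direct consequence of the \textit{Freiheitssatz}. This once again underscores the role of the \textit{Freiheitssatz} as the engine powering the Magnus--Moldavanskii hierarchy and the structural theorems it enables.
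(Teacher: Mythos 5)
The paper itself states this theorem without proof; it is given as a landmark result of Moldavanskii's 1967 article, and the surrounding discussion only indicates that the proof goes by the Magnus--Moldavanskii hierarchy together with HNN-extension structure. Your proposal is correct and is precisely the Bass--Serre modernization of that argument: you induct on relator length, embed a one-relator group into an HNN-extension $G'_1 = G_2 \ast_\varphi$ of a shorter-relator one-relator group $G_2$ whose associated subgroups are free Magnus subgroups (freeness being the \textit{Freiheitssatz}), and split according to the dynamics of an abelian subgroup $H$ on the Bass--Serre tree. The hyperbolic, fixed-vertex, and fixed-end subcases deliver $\Z$ or $\Z^2$, the inductively classified possibilities inside a conjugate of $G_2$, and locally cyclic, respectively. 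Moldavanskii of course worked with Britton's lemma and HNN normal forms rather than tree actions (Serre's formalism was not yet available in 1967), but the underlying shape of the argument is the same.

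One small inaccuracy worth fixing: in the hyperbolic subcase, the parenthetical ``(an HNN-extension acts on its Bass--Serre tree without inversions)'' does not by itself imply that the elliptic elements of $\ker(\ell)$ fix the axis of $h$ pointwise. Absence of inversions only rules out reflecting an edge across its midpoint; an elliptic element preserving the axis could still a priori reflect it across a vertex. The honest reason is commutation with $h$: if $g \in H$ restricted to the axis were a reflection, then $ghg^{-1}$ would restrict to the inverse translation along the axis, contradicting $ghg^{-1} = h$. The same observation is also needed to see that $\ell$ is a well-defined homomorphism in the first place, i.e.\ that every element of $H$ translates rather than reflects the axis. This is a short fix and does not affect the soundness of your proof; the remainder of the argument, in particular the Helly-plus-fixed-end analysis yielding local cyclicity in the unbounded elliptic case, is exactly the rigidity supplied by the freeness of Magnus subgroups, as you rightly emphasize.
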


This \textit{almost} resolved Baumslag's 1964 question of the abelian subgroups of one-relator groups. However, it leaves unanswered the question of which locally cyclic groups can appear as subgroups of a one-relator group, and in particular whether $(\Q, +)$ can appear as a subgroup of some one-relator group. This has a negative answer, and we shall see its full resolution, given by B.\ B.\ Newman in 1968, later in \S\ref{Subsec:Abelian-subgroups}. 

Moldavanskii continued to use this new hierarchy, and other tools, to prove more results about one-relator groups and combinatorial group theory in general. For example, in 1969 he gave an algorithm for deciding whether two subgroups of a free group are conjugate \cite{Moldavanskii1969b} (cf.\ also \cite{Greendlinger1970}), and an extension of the Nielsen method to free products of groups, resulting in a new proof of Grushko's theorem \cite{Moldavanskii1969c} (for more on extensions of Nielsen's methods in the context of one-relator groups, see \S\ref{Subsec:Torsion-isomorphism-problem}). The same year, he also gave a complete proof of the following claim made by Magnus \cite[p. 297]{Ma32} in 1932: 

\begin{theorem}[Magnus / Moldavanskii \cite{Moldavanskii1969}]
Except for $\Z$ and $\BS(1,k)$ for $k \in \Z$, every one-relator group contains a free group of rank $2$ \label{Thm:F2-subgroup-theorem}
\end{theorem}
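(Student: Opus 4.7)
My plan is to avoid any induction on relator length and instead give a direct structural proof via the HNN description of one-relator groups with an exponent-sum-zero generator, combined with Britton's Lemma. The dichotomy \emph{contains $F_2$} versus \emph{is $\Z$ or $\BS(1,k)$} will drop out of whether the associated subgroups of the resulting HNN-extension are both proper in the base.

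First I would reduce to the two-generator case. Let $G = \pres{}{A}{r = 1}$ with $r$ cyclically reduced and involving every letter of $A$. If $|A| \geq 3$, the \textit{Freiheitssatz} implies that any two-element subset $\{a,b\} \subsetneq A$ generates a Magnus subgroup freely on $\{a,b\}$, already a copy of $F_2 \leq G$. The case $|A|=1$ gives $G \cong \Z/k\Z$ (so $G \cong \Z$, or a trivially excluded finite cyclic group). So I may assume $G = \pres{}{a,b}{r}$; if $r$ uses only one generator, then $G \cong \Z/k\Z \ast \Z$, which is $\Z$ when $k=1$ and otherwise visibly contains $F_2$. Hence $r$ uses both generators. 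If neither of their exponent sums is already zero, the Euclidean trick (an integer solution to $p\alpha + q\beta = 0$ combined with B\'ezout) produces a Nielsen automorphism of $F_2$ yielding a new presentation $G \cong \pres{}{t,c}{r'}$ where $t$ has exponent sum zero in $r'$. Moldavanskii's observation (see \S\ref{Subsec:MM-hierarchy}) then realizes $G$ as an HNN-extension
\[
G \;\cong\; G_0 \ast_\phi, \qquad G_0 \;=\; \pres{}{c_{i_1}, \ldots, c_{i_N}}{r_0},
\]
with stable letter $t$ and $c_i = t^i c t^{-i}$, whose associated subgroups are the Magnus subgroups $A_- = \langle c_{i_1}, \ldots, c_{i_{N-1}}\rangle$ and $A_+ = \langle c_{i_2}, \ldots, c_{i_N}\rangle$ of $G_0$ joined by $\phi(c_{i_j}) = c_{i_{j+1}}$.

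It now suffices to split on whether both $A_-$ and $A_+$ are proper in $G_0$. \emph{Case A: both are proper.} No group is the union of two of its proper subgroups, so I may pick $g \in G_0 \setminus (A_- \cup A_+)$, and I claim $\{t, g t g^{-1}\}$ freely generates $F_2 \leq G$. Any non-trivial freely reduced word in these two symbols, unfolded in $G$, becomes an alternating product in which every interior $G_0$-syllable is $g$ or $g^{-1}$; by the choice of $g$, no subword of the form $t^{\varepsilon} h t^{-\varepsilon}$ with $h$ in the appropriate associated subgroup can arise, and Britton's Lemma (Lemma~\ref{Lem:BrittonsLemma}) then certifies non-triviality in $G$. \emph{Case B: one of the associated subgroups, say $A_-$, equals $G_0$.} The \textit{Freiheitssatz} forces $A_-$ to be free on its $N-1$ generators, so $G_0 \cong F_{N-1}$; for $N - 1 \geq 2$ this already gives $F_2 \leq G_0 \leq G$. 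For $N - 1 = 1$ we have $G_0 \cong \Z$ and the injection $\phi : \Z \to k\Z \leq \Z$ for some $k \neq 0$ produces $G \cong \BS(1, k)$; and for $N - 1 = 0$ the base $G_0$ is trivial and $G \cong \Z$. The possibility $A_+ = G_0$ is symmetric.

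The principal technical hurdle is the Case A ping-pong argument: one must both secure the existence of an element $g \in G_0$ lying simultaneously outside $A_-$ and $A_+$ (for which the ``no union of two proper subgroups'' argument is the linchpin), and then verify cleanly via Britton's Lemma that the alternating unfolded word never admits an internal cancellation of a $t^{\pm 1}$-pair. The other delicate point, in Case B, is recognising that whenever a Magnus subgroup of $G_0$ equals $G_0$, the \textit{Freiheitssatz} forces $G_0$ to be the free group on that proper subset of generators; this is precisely what reduces the escape cases to ascending HNN-extensions of trivial, infinite-cyclic, or higher-rank free groups, and hence to $\Z$, $\BS(1,k)$, or to a group already containing $F_2$.
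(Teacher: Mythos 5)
Your argument is correct in outline, and since the paper supplies no proof of this theorem (it only attributes the statement to Magnus's 1932 claim and Moldavanski\u{\i}'s 1969 write-up), it serves as a self-contained account; the route through the Moldavanski\u{\i} HNN decomposition plus Britton's Lemma is exactly the natural mechanism, and your Case A / Case B dichotomy on whether the associated Magnus subgroups fill the base $G_0$ captures the content of the statement cleanly.

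One subcase is skipped, though. After the Euclidean trick you pass directly to Moldavanski\u{\i}'s rewriting, which tacitly assumes that the new relator $r'$ actually \emph{involves} the zero-exponent-sum generator $t$ (so that the rewriting over the $c_i$ produces a base group $G_0$ with $N \geq 2$). But a Nielsen automorphism can turn a relator using both generators into one using only $c$: this happens precisely when the original $r$ is a (possibly trivial) power $p^k$ of a primitive element $p \in F_2$, in which case $r' = c^k$, the Moldavanski\u{\i} breakdown degenerates, and $G \cong \Z \ast \Z/k\Z$. You should explicitly route this back to your earlier ``$r$ uses only one generator'' alternative rather than forward into Case A/B; as you already observed there, $\Z \ast \Z/k\Z$ is $\Z$ when $k = \pm 1$ and contains $F_2$ when $|k| \neq 1$. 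A second, purely expository remark: ``every interior $G_0$-syllable is $g$ or $g^{-1}$'' is literally true only after grouping consecutive same-sign $t$-letters into blocks $t^{n_i}$; in the ungrouped Britton form there are trivial syllables inside each such block, but since those sit between stable letters of the same sign they can never furnish a pinch, so the conclusion stands and only the wording wants tightening. With the degenerate subcase patched, the Freiheitssatz step in Case B — identifying a one-relator group whose Magnus subgroup on $N-1$ generators is the whole group as the free group of rank $N-1$ — is exactly what collapses the escape cases to $\Z$, $\BS(1,k)$, or a group already containing $F_2$, and the proof is complete.
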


Note that the class of exceptional groups in Theorem~\ref{Thm:F2-subgroup-theorem} has a number of other characterizations, and appears as a frequent class of ``easy'' one-relator groups. They are, among other things precisely the \textit{amenable} one-relator groups, and as proved by Gildenhuys \cite{Gildenhuys1979}, the groups in Theorem~\ref{Thm:F2-subgroup-theorem} also form precisely the class of solvable groups of cohomological dimension $\leq 2$. We shall see some extensions by other students of Greendlinger of Theorem~\ref{Thm:F2-subgroup-theorem} in \S\ref{Sec:7-Subgroups-of-OR-groups}. 

For a final note on the Magnus--Moldavanskii hierarchy, we note that a modern and simple proof of its validity was given by McCool \& Schupp \cite{MS73}, which continues to be an excellent resource for understanding the elements of one-relator group theory. Indeed, therein they use the well-developed machinery of HNN-extensions to give simple proofs of the \textit{Freiheitssatz} (\S\ref{Subsec:Freiheitssatz}), the decidability of the word problem (\S\ref{Subsec:Applications-of-Frei}), and the B.\ B.\ Newman Spelling Theorem (which we will encounter later in \S\ref{Subsec:BBNewman}).

\subsection{Free-by-cyclic one-relator groups}\label{Subsec:Free-by-cyclic}

One important class of one-relator groups are those which are \textit{free-by-cyclic}. Recall that a group $G$ is said to be free-by-cyclic if there exists a short exact sequence
\begin{equation}
1 \longrightarrow F \longrightarrow G \longrightarrow C \longrightarrow 1,
\end{equation}
where $F$ is a free group and $C$ is cyclic. We do not \textit{a priori} assume that $F$ is finitely generated, nor non-trivial; similarly, we do not require $C$ to be non-trivial. However, in essentially all cases we will encounter in this brief section $F$ will be finitely generated, and $C$ will be infinite cyclic. Note that in the case that $C = \Z$, $G$ splits as a semidirect product $G = F \rtimes_\varphi \Z$, where $\Z$ acts by $F$ via conjugation by some automorphism $\varphi \in \Aut(F)$. 

Let now $G$ be a one-relator group. In 1967, Moldavanskii \cite{Mo67} proved that if $\phi \colon G \to \Z$ is any surjective homomorphism, then it is possible to find a generating set $a_1', a_2', \dots, a_n'$ of $G$, with a defining relator $r' = 1$, such that $\ker(\phi)$ is precisely the normal closure of $a_2', \dots, a_n'$. In particular, $a_1'$ has exponent sum zero in $r'$. Using this result, he proved that \cite[Corollary~2]{Mo67} the commutator subgroup of $G$ is finitely generated if and only if the following holds: $n=2$, and the normal closure of $a_2'$ is a finitely generated free group. Combining his results, we obtain the following characterization of precisely when this happens: 

\begin{theorem}[{Moldavanskii, 1967 \cite{Mo67}}]
Let $G = \pres{}{a,b}{r=1}$ be a non-abelian\footnote{In the statement of \cite[Corollary~2]{Mo67}, this assumption is inadvertently omitted, but it is there in the proof, cf.\ the statement of \cite[Theorem~1]{Mo67}.} one-relator group with the exponent sum of $a$ in $r$ equal to zero. Let $b_m$ resp.\ $b_M$ be such that $m$ resp.\ $M$ is the smallest resp.\ largest index occurring when rewriting $r$ over $b_i = a^i b a^{-i}$. Then the following are equivalent: 
\begin{enumerate}
\item $G$ is (finitely generated free)-by-cyclic; 
\item The commutator subgroup $G'$ is finitely generated; 
\item $m<M$, and $a_m$ resp.\ $a_M$ occur exactly once in the rewritten form of $r$. 
\end{enumerate}\label{Thm:Moldavanskii-Brown-Criteriton}
\end{theorem}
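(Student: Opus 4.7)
My plan is to analyse the kernel $N = \ker(\phi)$ of the natural surjection $\phi \colon G \to \Z$ given by $a \mapsto 1$, $b \mapsto 0$ (well-defined since the exponent sum of $a$ in $r$ vanishes), and to show that each of the three conditions is equivalent to $N$ being a finitely generated free group. The equivalence $(1) \Leftrightarrow (2)$ is largely formal: one has $G' \leq N$, and the quotient $N/G'$ embeds into the abelianisation $G^{\mathrm{ab}}$, which is $\Z \oplus \Z/\beta\Z$ or $\Z^2$ depending on whether the exponent sum $\beta$ of $b$ in $r$ is non-zero. In either case $N/G'$ is a finitely generated abelian group, so $G'$ is finitely generated precisely when $N$ is. Moreover, if $N$ is a finitely generated free group then the short exact sequence $1 \to N \to G \to \Z \to 1$ manifestly exhibits $G$ as (f.g.\ free)-by-cyclic; conversely, if $G$ is (f.g.\ free)-by-cyclic via $\phi$, then $N$ is f.g.\ free.

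The central structural tool is the Magnus--Moldavanskii hierarchy (Theorem~\ref{Thm:Magnus-Moldavanskii}): since $a$ has zero exponent sum in $r$, the group $G$ is an HNN-extension with stable letter $a$ of the one-relator group $H = \pres{}{b_m, \ldots, b_M}{r' = 1}$, where $r'$ is the Magnus rewriting of $r$ (so that both $b_m$ and $b_M$ genuinely appear in $r'$), and with associated subgroups the Magnus subgroups $A_- = \langle b_m, \ldots, b_{M-1}\rangle$ and $A_+ = \langle b_{m+1}, \ldots, b_M\rangle$, both free by the \textit{Freiheitssatz}; conjugation by $a$ realises the isomorphism $A_- \to A_+$ sending $b_i \mapsto b_{i+1}$. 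For the implication $(3) \Rightarrow (1)$, assume $m < M$ and that each of $b_m, b_M$ occurs exactly once in $r'$; then $r' = 1$ allows us to solve for either of $b_m$ or $b_M$ in terms of the remaining generators, so $H$ is a free group of rank $M-m$, and both associated subgroups coincide with $H$. The HNN-extension therefore collapses to a semidirect product $G \cong H \rtimes_\alpha \Z$ for the automorphism $\alpha$ induced by $a$-conjugation, exhibiting $G$ as (f.g.\ free)-by-cyclic.

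The main obstacle is the converse $(2) \Rightarrow (3)$, which I would establish by contrapositive. If $m = M$ then $r'$ is a power of $b_m$ alone, and $G$ decomposes as a free product $\Z \ast \langle b \mid b^k \rangle$ whose kernel $N$ is visibly an infinite free product of conjugates of $\langle b \mid b^k\rangle$, hence not finitely generated. Suppose therefore $m < M$ but, without loss of generality, that $b_m$ occurs in $r'$ at least twice. By the \textit{Freiheitssatz} applied inside $H$, no element of $H \setminus A_+$ can be rewritten as a word in $b_{m+1}, \ldots, b_M$; in particular $A_+ \subsetneq H$ is a proper subgroup. Unwinding the $a$-action then identifies $N$ with the bi-infinite iterated amalgamated product
\begin{equation*}
N \;=\; \cdots \ast_{A_{+,-1}} H_{-1} \ast_{A_{+,0}} H_0 \ast_{A_{+,1}} H_1 \ast_{A_{+,2}} \cdots,
\end{equation*}
where $H_k$ is the shifted copy of $H$ on generators $b_{m+k}, \ldots, b_{M+k}$ and $A_{+,k}$ is the corresponding shift of $A_+$. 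Because every amalgamation is along a proper subgroup of each factor, an application of Britton's Lemma (Lemma~\ref{Lem:BrittonsLemma}) within the HNN-extension shows that any finite subset of $N$ is contained in some finite sub-amalgam $H_{-k} \ast \cdots \ast H_k$, which is a proper subgroup of $N$; hence $N$ cannot be finitely generated. A symmetric argument treats the case where $b_M$ appears more than once, completing the plan.
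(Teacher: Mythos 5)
Your high-level plan---reducing all three conditions to the finite generation of $N = \ker\phi$ via the Magnus--Moldavanskii HNN-structure and the bi-infinite graph-of-groups decomposition of $N$---is the right framework, and your implication $(3)\Rightarrow(1)$ is correct. However, two of the steps have genuine gaps.

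Your claim that ``$G'$ is finitely generated precisely when $N$ is'' is justified only in one direction. That $N/G'$ is finitely generated abelian gives $G'$ f.g.\ $\Rightarrow$ $N$ f.g.; but the converse requires $N/G'$ to be \emph{finite}, i.e.\ requires the exponent sum $\beta$ of $b$ in $r$ to be non-zero. When $\beta = 0$ one has $N/G'\cong\Z$, and a normal subgroup of a finitely generated free group of rank $\geq 2$ with quotient $\Z$ has infinite rank, so $N$ f.g.\ free of rank $\geq 2$ would force $G'$ to be infinitely generated. This is not hypothetical: for $G = \pres{}{a,b}{[a^2,b]}$ the rewriting is $b_2b_0^{-1}$, so $m=0$, $M=2$, condition (3) holds and $N\cong F_2$, yet a direct computation (e.g.\ via the central extension $1\to\Z\to G\to \Z/2\Z*\Z\to 1$) shows $G'$ is free of infinite rank. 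So the equivalence $(1)\Leftrightarrow(2)$ is not ``formal''; it requires $\beta\neq 0$, a hypothesis implicit in Moldavanskii's Corollary~2 but omitted from both the printed statement and your proposal. Relatedly, for $(1)\Rightarrow(2)$ you silently assume the $($f.g.\ free$)$-by-cyclic structure is realised via this particular $\phi$; for $\beta = 0$ there are infinitely many epimorphisms $G\to\Z$ and you would need to argue that $\phi$ is the one with f.g.\ kernel. After reducing to $\beta\neq 0$, this is automatic since $\operatorname{Hom}(G,\Z)\cong\Z$.

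Second, in the contrapositive $(2)\Rightarrow(3)$, you assert under the WLOG that ``every amalgamation is along a proper subgroup of each factor,'' but the WLOG ($b_m$ appears $\geq 2$ times) gives you only $A_+\subsetneq H$. If $b_M$ happens to appear exactly once in $r'$, then $A_- = H$: the bi-infinite amalgam telescopes to a strictly ascending chain $H_0\subsetneq H_{-1}\subsetneq H_{-2}\subsetneq\cdots$ with $N = \bigcup_k H_{-k}$, so the step ``any finite subset lies in a proper finite sub-amalgam'' needs a different justification. The conclusion ($N$ not f.g.) is still correct---a finitely generated group cannot equal a strictly ascending union of proper subgroups---but this is a separate case, not a consequence of the properness claim you made. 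A further small point: $A_+\subsetneq H$ when $b_m$ appears $\geq 2$ times is not a consequence of the Freiheitssatz (which only gives that $A_+$ is free on $b_{m+1},\dots,b_M$, not that it is proper); you need Moldavanskii's criterion for when a Magnus subgroup of a one-relator group equals the whole group, i.e.\ the observation that $\langle b_{m+1},\dots,b_M\rangle = H$ only if $b_m$ occurs exactly once.
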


\begin{remark}
The equivalence (1) $\iff$ (3) is often called ``Brown's criterion'' for a one-relator group to be (finitely generated free)-by-cyclic, as it was independently rediscovered in 1987 by Brown \cite[\S4]{Br87}. 
\end{remark}

Now, (finitely generated free)-by-cyclic groups are residually finite, as proved by Mal'cev \cite[Theorem~1, p. 50]{Malcev1958} in 1958, see also \cite{Baumslag1971b} (also extended by Wong \cite{Wong1987}). Thus one consequence of Moldavanskii's results is the following: 

\begin{corollary}[{\cite[Corollary~1]{Mo67}}]
If the commutator subgroup of a one-relator group $G$ is finitely generated, then $G$ is residually finite. 
\end{corollary}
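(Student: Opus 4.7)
The plan is to chain Theorem~\ref{Thm:Moldavanskii-Brown-Criteriton} with the Mal'cev theorem cited just above the corollary: the equivalence (1)$\Leftrightarrow$(2) of the theorem identifies ``$G'$ finitely generated'' with ``$G$ is (finitely generated free)-by-cyclic,'' and Mal'cev's theorem then guarantees residual finiteness of any such group.

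First, I would dispose of the degenerate cases to which Theorem~\ref{Thm:Moldavanskii-Brown-Criteriton} does not literally apply. If $G$ is abelian, then $G$ is a finitely generated abelian group (being a finitely generated one-relator group), hence residually finite. If instead $G^{\operatorname{ab}}$ is finite, then $\Z^n/\langle \vec r\rangle$ being finite in a one-relator presentation $\pres{}{a_1,\ldots,a_n}{r=1}$ forces $n=1$, so $G$ is finite cyclic and thus residually finite trivially.

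Having handled these, I would assume $G$ is non-abelian with $G^{\operatorname{ab}}$ infinite. Fix any surjection $\phi\colon G \twoheadrightarrow \Z$. Moldavanskii's preliminary structural result (stated immediately before Theorem~\ref{Thm:Moldavanskii-Brown-Criteriton}) produces a presentation $\pres{}{a_1',\ldots,a_n'}{r'=1}$ of $G$ in which $a_1'$ has exponent sum zero in $r'$ and $\ker\phi = \normal{a_2',\ldots,a_n'}$. Since $\ker\phi/G'$ embeds in the finitely generated abelian group $G^{\operatorname{ab}}$, it is itself finitely generated abelian; together with the hypothesis that $G'$ is finitely generated, this forces $\ker\phi$ to be finitely generated. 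Theorem~\ref{Thm:Moldavanskii-Brown-Criteriton} then applies, yielding $n=2$ and exhibiting $G$ as (finitely generated free)-by-cyclic. The cited theorem of Mal'cev concludes that $G$ is residually finite.

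The main obstacle is arranging that the hypothesis of the corollary actually places us in the setting of Theorem~\ref{Thm:Moldavanskii-Brown-Criteriton}, which insists on a non-abelian two-generator presentation with a zero-exponent-sum generator. This reduction is precisely what Moldavanskii's preliminary result delivers, with the ``only if'' half of the characterization in the theorem supplying the crucial $n=2$; once this bookkeeping is in place, the remaining work is a bare concatenation of the two cited results.
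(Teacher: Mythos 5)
Your proof chains the Moldavanskii characterization with Mal'cev's theorem, which is exactly the route the paper intends: it states this corollary immediately after citing Mal'cev's result that (finitely generated free)-by-cyclic groups are residually finite and attributes the corollary to Moldavanskii's own paper, so no proof is spelled out. Two small remarks, though. First, the step establishing that $\ker\phi$ is finitely generated is an unnecessary detour: the version of Moldavanskii's Corollary~2 stated in the paragraph \emph{preceding} Theorem~\ref{Thm:Moldavanskii-Brown-Criteriton} applies to one-relator groups on any number of generators and directly asserts that if $G'$ is finitely generated then $n=2$ and $\ker\phi = \normal{a_2'}$ is a finitely generated free group, which is all one needs before invoking Mal'cev. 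Second, and relatedly, Theorem~\ref{Thm:Moldavanskii-Brown-Criteriton} as stated already presupposes a two-generator presentation and so cannot ``yield $n=2$'' as a conclusion; the conclusion $n=2$ must come from the paragraph-level Corollary~2 statement. With that citation corrected, the argument closes cleanly, and the explicit handling of the abelian and finite-abelianisation degenerate cases is a sensible precaution even if the cited sources make it implicit.
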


Given a one-relator group $G$ and a surjective homomorphism $\phi \colon G \to \Z$, Moldavanskii also notes that his criterion gives an effective procedure for deciding whether or not $\ker(\phi)$ is finitely generated. In particular, one corollary of his methods is the following.

\begin{corollary}[{Moldavanskii, 1967 \cite[p. 1375]{Mo67}}]
There is an algorithm for deciding if a one-relator group is (finitely generated free)-by-cyclic. 
\end{corollary}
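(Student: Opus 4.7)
The plan is to turn the equivalence of conditions (1), (2), (3) in Theorem~\ref{Thm:Moldavanskii-Brown-Criteriton} into an effective decision procedure, using the \emph{Euclidean trick} described earlier in the excerpt to normalise the input presentation so that the criterion applies.

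Given $G = \pres{}{A}{r=1}$, I would first cyclically reduce $r$, strip off the free factor on any letters of $A$ that do not appear in $r$, and dispose of the trivial case $|A|\le 1$ (where $G$ is cyclic and trivially (f.g.\ free)-by-cyclic). Assuming now $|A|\ge 2$, I run the Euclidean algorithm on the exponent-sum vector $(\varepsilon_1,\dots,\varepsilon_{|A|})\in\Z^{|A|}$ of $r$, tracking the corresponding sequence of Nielsen automorphisms of $F_A$, and thereby effectively replace the presentation by an equivalent one-relator presentation whose exponent-sum vector has been transformed to $(d,0,\dots,0)$, where $d=\gcd(\varepsilon_1,\dots,\varepsilon_{|A|})$. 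Call the resulting generators $a,b_1,\dots,b_{|A|-1}$ and the resulting relator $r'$. Each $b_j$ then has exponent sum zero in $r'$; pick one of the zero-sum generators as the \emph{stable letter} (taking $a$ if $d=0$, or else $b_1$ with $a$ playing the role of ``$b$'') and rewrite $r'$ over the conjugates $b_{j,i}:=a^{i}b_ja^{-i}$ by scanning $r'$ from left to right while maintaining a running count of the stable-letter exponent. This rewriting takes linear time in $|r'|$, and its output exposes, for each non-stable generator, the least and greatest indices $m,M$ appearing and the number of occurrences of the two extremal conjugates.

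The algorithm then outputs YES if and only if the reduced presentation has exactly two generators, $m<M$, and the two extremal conjugates each appear exactly once in the rewriting of $r'$; otherwise it outputs NO. Correctness is an immediate consequence of the equivalence of conditions (1)--(3) in Theorem~\ref{Thm:Moldavanskii-Brown-Criteriton}: since ``$[G,G]$ is finitely generated'' is an \emph{intrinsic} property of $G$, the particular choices made during the Euclidean trick are immaterial, and condition~(3) returns the same answer regardless. The higher-rank case is taken care of by Moldavanskii's Corollary~2 (as summarised in the excerpt): if after the normalisation the presentation still has $|A|\ge 3$, then $[G,G]$ fails to be finitely generated, and the algorithm correctly returns NO.

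The main obstacle is justifying that a single run of the check genuinely suffices, regardless of the choices made during the normalisation. One must argue that (i) different outputs of the Euclidean trick are bona-fide equivalent presentations of the same one-relator group, so Theorem~\ref{Thm:Moldavanskii-Brown-Criteriton} applies to each; (ii) the link between condition~(3) and the intrinsic condition~(2) is what makes the answer independent of the chosen stable letter; and most delicately (iii) when the rank exceeds two, no (f.g.\ free)-by-$\Z$ structure of $G$ can be hidden in a different surjection $\phi\colon G\to\Z$ that a higher-generator presentation might suggest. This last point rests on Moldavanskii's construction: for \emph{any} surjection $\phi$ one obtains an equivalent presentation adapted to $\phi$ with the same number of generators, so the $n=2$ barrier in Moldavanskii's Corollary~2 cannot be bypassed, and the Euclidean algorithm together with the Magnus rewriting reduces the whole problem to finitely many arithmetic operations on integers and comparisons on words.
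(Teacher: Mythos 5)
Your algorithm tests a \emph{single} surjection $\phi\colon G\to\Z$, the one determined by the stable letter you fix after the Euclidean trick, and point~(ii) of your outline --- that the answer does not depend on this choice because condition~(3) of Theorem~\ref{Thm:Moldavanskii-Brown-Criteriton} is tied to the intrinsic condition~(2) --- is exactly where the argument breaks down. That equivalence only holds when the second generator has nonzero exponent sum in the relator, i.e.\ when $G^{\operatorname{ab}}$ has rank one. When $r\in[F,F]$, so that $G^{\operatorname{ab}}\cong\Z^2$, the commutator subgroup has infinite index in $\ker\phi$ for \emph{every} character $\phi$; hence whenever $\ker\phi$ is finitely generated (and so free) the commutator subgroup is an infinite-index normal subgroup of a finitely generated free group and therefore is \emph{not} finitely generated. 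Conditions~(1) and~(2) already diverge, and condition~(3) genuinely depends on the chosen $\phi$. Concretely, take $G=\pres{}{x,t}{tx^{-1}txt^{-1}x^{-1}t^{-1}x}$, for which both exponent sums vanish so the Euclidean step does nothing. Rewriting over $t^{i}xt^{-i}$ gives $b_1^{-1}b_2b_1^{-1}b_0$, where $b_0$ and $b_2$ each occur once, and the corresponding one-relator splitting exhibits $G\cong F_2\rtimes\Z$; rewriting over $x^{i}tx^{-i}$ instead gives $c_0c_{-1}c_0^{-1}c_{-1}^{-1}$, where $c_{-1}$ and $c_0$ each occur twice, so condition~(3) fails. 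Whichever generator your procedure promotes to stable letter, there is an input ordering for which it declares this (finitely generated free)-by-$\Z$ group \emph{not} free-by-cyclic. Deciding whether \emph{some} $\phi$ has finitely generated kernel when $r\in[F,F]$ is precisely what Brown's 1987 computation of the BNS invariant supplies (cf.\ \S\ref{sec:brown}); it cannot be read off from a single run of Moldavanskii's rewriting.

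A second, independent gap concerns the finite cyclic quotients permitted by the statement. The word $a[b,c]$ is primitive in $F(a,b,c)$, so $\pres{}{a,b,c}{(a[b,c])^2}\cong\Z/2\ast F_2$, whose kernel under the visible surjection onto $\Z/2$ is free of rank $4$; this group is therefore (finitely generated free)-by-cyclic. Its exponent-sum vector is already $(2,0,0)$, so the Euclidean trick leaves the relator untouched, all three generators survive, and your ``exactly two generators'' test returns NO. Recognising such cases requires an application of Whitehead's algorithm to decide whether the root of the relator is a primitive element (cf.\ Theorem~\ref{Thm:Fischer-Karrass-Solitar}(2)); the Moldavanskii rewriting criterion cannot see it.
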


Residual finiteness is, in general, a significantly weaker property than being free-by-cyclic, even in the class of one-relator groups. It remains an open problem whether residual finiteness is decidable for one-relator groups.

\section{One-relator groups with torsion (1960--1980)}\label{Sec:6-OR-with-torsion}

\

{ \setlength{\epigraphwidth}{0.65\textwidth}
\epigraph{\textit{It seems to be commonly believed that the presence of elements of finite order in a group with a single defining relation is a complicating rather than a simplifying factor. This note is in support of the opposite point of view.}}{---G.\ Baumslag, 1967 \cite{Ba67}}

\epigraph{[...] \textit{apart from the complications introduced by having elements of finite order, one-relator groups with torsion behave much like free groups.}}{---S.\ J.\ Pride, 1977 \cite{Pride1977}}

}

\noindent This far into the story, none of the theorems about one-relator groups have singled out one-relator groups with torsion as forming a particularly simple class compared to their torsion-free counterparts. Indeed, sometimes the opposite seems true: for example, Lyndon's Identity Theorem (Theorem~\ref{Thm:Lyndon-Identity}, \S\ref{Subsec:LyndonIT}) is significantly neater to state in the torsion-free case. Nevertheless, in the 1960s and 1970s a number of remarkable results would be proved for one-relator groups with torsion which, in full accordance with the above quotations. The crowning achievement of such theorems was the \textit{B.\ B.\ Newman Spelling Theorem}, which in modern terminology states that \textit{one-relator groups with torsion are hyperbolic}. Before stating this result, we state a number of  important conjectures made by G.\ Baumslag around this time:

\begin{conjecture}[Baumslag's Conjectures]
Let $G = \pres{}{A}{r^n = 1}$, where $n>1$, be a one-relator group with torsion. Then the following hold: 
\begin{enumerate}
\item $G$ is virtually free-by-cyclic \cite[Problem~6]{Ba86}
\item $G$ is coherent \cite[Problem~1]{Ba86}
\item $G$ is residually finite \cite[Conjecture~A]{Ba67},
\item $G$ is hopfian \cite[Conjecture~A$\flat$]{Ba67},
\end{enumerate}\label{Conj:BaumslagConjectures}
\end{conjecture}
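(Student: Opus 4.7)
The plan is to deduce all four conjectures from a single geometric statement: every one-relator group with torsion is \emph{virtually compact special} in the sense of Haglund--Wise. The starting point is that the B.\ B.\ Newman Spelling Theorem supplies a Dehn-type algorithm for $G = \pres{}{A}{r^n = 1}$ with $n > 1$, which in modern language says that $G$ is word-hyperbolic and that every Magnus subgroup is quasi-convex. Given this, the task reduces to producing an action of $G$ on a CAT(0) cube complex with quasi-convex, specializable hyperplane stabilizers.

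First, I would set up a quasi-convex hierarchy for $G$ along the Magnus--Moldavanskii hierarchy of \S\ref{Subsec:MM-hierarchy}, verifying at each level that the edge subgroups (which are Magnus subgroups of one-relator groups with torsion, and hence free) remain quasi-convex in the ambient hyperbolic group. The base of the hierarchy is either a finite cyclic group or a virtually free group, both of which are trivially virtually special. The inductive step requires showing that if a hyperbolic group splits as an HNN extension or amalgamated free product over a quasi-convex, virtually special subgroup, with vertex groups virtually special, then the whole group is again virtually special. This inductive step is the heart of the argument and is where the Malnormal Special Quotient Theorem (or a close relative) enters: one performs a Dehn-filling-style argument to ``malnormalize'' the edge groups and then applies a combination theorem for special cube complexes.

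Once virtual specialness is established, the four conjectures fall in sequence. Specialness embeds a finite-index subgroup into a right-angled Artin group, which is residually finite and linear, yielding conjecture (3); since $G$ is finitely generated, Mal'cev's theorem (\S\ref{Subsec:Residualfiniteness}) then gives conjecture (4). For conjecture (1), virtually special hyperbolic groups are virtually RFRS, and Agol's virtual fibering theorem for hyperbolic RFRS groups produces a finite-index subgroup of the form $F_k \rtimes \Z$, provided one can arrange positive first Betti number in a further finite-index subgroup; for one-relator groups with torsion this positivity is readily obtained from the abelianization of a suitable cubulated cover. Finally, conjecture (2) follows from (1), because free-by-cyclic groups are coherent by Feighn--Handel, and coherence is inherited by finite-index overgroups.

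The principal obstacle, and the only genuinely hard step, is the cubulation: finding a $G$-invariant family of codimension-$1$ subgroups whose associated walls give a proper, cocompact action on a CAT(0) cube complex. The defining relator $r^n$ does not obviously distinguish any geometric cut, and simply inheriting walls from the Magnus--Moldavanskii HNN pieces will in general fail to produce enough walls to separate arbitrary points in the Cayley graph. The resolution I would pursue is to use the B.\ B.\ Newman Spelling Theorem to construct two-sided walls associated to the Magnus subgroups at each hierarchy level, and then apply Sageev's construction together with a separability criterion ensured inductively by the hypothesis that each level below is already virtually special. Balancing torsion (which forces cone-off arguments) against quasi-convexity (which is needed to keep the walls well-behaved) is the delicate point, and is where the bulk of the technical work would concentrate.
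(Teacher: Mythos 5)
Your overall plan — establish virtual compact specialness via Wise's quasi-convex hierarchy and then harvest the four conjectures as corollaries — is the right high-level picture, and the cubulation step you sketch is indeed the content of Wise's resolution of Baumslag's residual finiteness conjecture (see \cref{sec:qch} of this article). Virtual specialness does give (3) and then (4) via Mal'cev exactly as you say. However, the step you use to obtain conjecture (1), and hence also (2), contains a genuine error.

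You invoke Agol's virtual fibering theorem (in its group-theoretic incarnation due to Kielak): an infinite RFRS group virtually algebraically fibres if and only if its first $L^2$-Betti number vanishes. For a one-relator group with torsion $G = F/\normal{w^n}$ with $\rk(F) \geq 2$ and $n \geq 2$, Dicks--Linnell's computation gives $b_1^{(2)}(G) = \rk(F) - 1 - \tfrac{1}{n} > 0$. Since $L^2$-Betti numbers are multiplicative under passage to finite-index subgroups, \emph{no} finite-index subgroup of $G$ can be of the form $F_k \rtimes \Z$ with $F_k$ finitely generated: such a subgroup would force $b_1^{(2)}(G) = 0$ by L\"uck's theorem. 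So the virtual algebraic fibering you describe does not exist, and the issue is not ``arranging positive first Betti number'' (the ordinary $b_1$ is irrelevant here) but rather the non-vanishing $L^2$-invariant, which no passage to finite index can remove. The paper makes precisely this observation in \cref{sec:vfibring}, noting that a one-relator group that virtually algebraically fibres must be two-generator and torsion-free. The actual resolution of Baumslag's conjecture (1) due to Kielak and the first author \cite{KL24} proves something more subtle and sidesteps the $b_1^{(2)} = 0$ hypothesis: it produces a finite-index subgroup of $G$ that \emph{embeds} into a \{finitely generated free\}-by-cyclic group (rather than being one), or, in Fisher's variant \cite{Fi24}, a virtual free-by-cyclic structure with the free kernel infinitely generated. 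Coherence (2) then does follow — subgroups of the coherent \{fg free\}-by-cyclic groups are coherent, and coherence passes to finite-index overgroups — but only once the correct form of (1) is established by this different argument, not by Agol-type fibering. (Independently, Jaikin-Zapirain and the first author \cite{JZL23} prove coherence for \emph{all} one-relator groups by a homological argument that does not pass through (1) at all.)
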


We make some remarks on these conjectures. First, even for arbitrary groups $G$, (1) is the strongest of these properties, and implies all others: (1) $\implies$ (3), and (3) $\implies$ (4) are both classical results. The implication (1) $\implies$ (2) is due to Feighn \& Handel \cite{FH99} in 1999. Since $\BS(2,3)$ is non-hopfian, it follows that the only one of the four conjectures that can hold in the torsion-free case is (2). In fact, that is precisely the conjecture that Baumslag made in \cite[Problem~1]{Ba86}: \textit{all} one-relator groups are coherent. We note that already in 1974, Baumslag \cite{Ba71} had stated that it remained an open problem whether all one-relator groups are coherent. The above conjectures would be driving forces behind many of the developments of one-relator group theory in the 1970s until the present day. Today, we know that they all have affirmative answers, including (2) in the general case. In this section, we will present some of the partial progress made on these conjectures before their full resolution, and present some general theorems indicating why one-relator groups with torsion are significantly simpler than their torsion-free counterparts. 

\subsection{The B. B. Newman Spelling Theorem}\label{Subsec:BBNewman}

\

{ \setlength{\epigraphwidth}{0.57\textwidth}
\epigraph{\textit{I don't believe it! I don't believe it!}}{---W.\ Magnus in 1968, upon seeing B.\ B.\ Newman present his Spelling Theorem \cite[p. 119]{NybergBrodda2021}}
}

{ \setlength{\epigraphwidth}{0.5\textwidth}
\epigraph{\textit{He could not believe that an unheard-of mathematician, from some unknown university in outback Australia, could have come up with these results.}}{---B.\ B.\ Newman, remarking on Magnus' reaction above \cite[p. 120]{NybergBrodda2021}}
}

\noindent Spelling theorems, being theorems which provide information on the spelling of words equal to $1$ in a group, date back to the very beginning of one-relator group theory, the prime example being Dehn's Spelling Theorem (Theorem~\ref{Thm:Dehn-spelling-theorem}) for the fundamental groups of $2$-surfaces. Britton's Lemma (see Lemma~\ref{Lem:BrittonsLemma}), with its importance for the Magnus--Moldavanskii, is another. And in 1968, B.\ B.\ Newman, a PhD student of Baumslag's, provided perhaps the most important one. We will be brief on the biographical details of this story, since this is expanded on at length by the author in \cite{NybergBrodda2021}. We mention only that the starting point for Newman's work on the subject was Baumslag giving him a draft of the chapter on one-relator groups from the now famous book by Magnus, Karrass~\&~Solitar \cite{Magnus1966}. After some work, he was able to prove the following remarkable theorem:

\begin{theorem}[The B.\ B.\ Newman Spelling Theorem, 1968]
Let $G = \pres{}{A}{r^n = 1}$ be a one-relator group with torsion, with $n>1$, and $r$ cyclically reduced and not a proper power. Let $w \in F_A$ be a non-trivial word such that $w = 1$ in $G$. Then $w$ contains a subword $u$ such that either $u$ or $u^{-1}$ is a subword of $r^n$, and such that the length of $u$ is strictly more than $\frac{n-1}{n}$ times the length of $r^n$. 
\label{Thm:BBNewmanSpellingTheorem}
\end{theorem}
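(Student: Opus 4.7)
The plan is to induct on the length of the root relator $r$, following the Magnus--Moldavanskii hierarchy of \S\ref{Subsec:MM-hierarchy}. The base case $|r|=1$ is immediate: the only cyclically reduced non-proper-power of length one is a single generator $r = a^{\pm 1}$, so $G = \pres{}{A}{a^n = 1} \cong C_n \ast F_{A \setminus \{a\}}$, and any non-trivial reduced word $w$ equal to $1$ in $G$ must contain a subword $a^{\pm n}$ by the standard normal form for free products, providing a subword of $r^n$ of full length.

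For the inductive step, I would apply the Magnus trick to arrange (possibly after embedding $G$ in a one-relator overgroup as per Theorem~\ref{Thm:Magnus-Moldavanskii}) that some generator $t$ has exponent sum zero in $r$. Then $G$ is realised as an HNN-extension with stable letter $t$ of a one-relator group $G' = \pres{}{A'}{(r')^n = 1}$ with $|r'| < |r|$, the associated subgroups being Magnus subgroups of $G'$; since $G'$ inherits torsion of exponent $n$ and its root relator is strictly shorter, the inductive hypothesis applies to $G'$. Given $w \in F_A$ non-trivial with $w = 1$ in $G$, the kernel of the projection $G \twoheadrightarrow \Z$ sending $t \mapsto 1$ contains $w$, so the Magnus rewriting using the auxiliary generators $x_i = t^i x t^{-i}$ produces a word $\rho(w)$ in the free group on the $x_i$, non-trivial there but equal to $1$ in $G'$. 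By induction, $\rho(w)$ contains a subword $u'$ which is also a subword of $(r')^{\pm n}$ and satisfies $|u'| > \tfrac{n-1}{n}|(r')^n|$.

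To finish, one lifts $u'$ back to a subword of $w$. The Magnus rewriting $\rho$ is a bijection between the kernel $F_A^{(0)}$ of the $t$-exponent-sum map and $F_{\{x_i\}}$ under which each letter $x_i$ of $\rho(w)$ names a unique non-$t$ letter of $w$, with two consecutive letters $x_i, x_j$ of $\rho(w)$ separated in $w$ by exactly the $t$-string $t^{j-i}$. Since the identical correspondence relates $(r')^n$ to $r^n$, the subword $u'$ lifts to subwords $u \subseteq w$ and $\bar u \subseteq r^n$ agreeing letter for letter (including all intervening $t$-strings), and this $u$ is the desired candidate.

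The main obstacle I anticipate is the quantitative length bound: deducing $|u| > \tfrac{n-1}{n}|r^n|$ from $|u'| > \tfrac{n-1}{n}|(r')^n|$ does not follow by a direct proportional argument, since the portion of $(r')^n$ not covered by $u'$ may contain very few non-$t$ letters and yet correspond to a long, $t$-heavy region of $r^n$. Overcoming this requires a careful boundary analysis: one extends $u$ maximally to include the $t$-blocks flanking its outermost non-$t$ letters and then exploits both the strictness of the inductive inequality and the periodic structure of $r^n$ to control the length of the omitted portion. This bookkeeping, in which the particular constant $\tfrac{n-1}{n}$ becomes essential rather than incidental, constitutes the technical heart of the proof and is the step where the argument requires the most delicate handling.
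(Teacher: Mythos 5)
The paper itself gives no proof of this statement: it records only that the result appeared in Newman's inaccessible 1968 thesis and that a simpler proof via HNN-extensions was given by McCool \& Schupp. So there is nothing in the text to compare word-for-word against; what follows is an assessment of your proposal on its own terms and against the known argument structure.

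Your broad strategy — induct down the Magnus--Moldavanskii hierarchy, base case $|r|=1$, lift a subword back across a Magnus rewriting — is the right one, and your base case is fine. But there is a genuine gap in the inductive step at the sentence asserting that ``the Magnus rewriting \dots produces a word $\rho(w)$ in the free group on the $x_i$, non-trivial there but equal to $1$ in $G'$.'' That is not correct. What is true is that $\rho(w)=1$ in $N=\ker(G\to\Z)$. But $N$ is an infinite amalgamated free product of shifted copies of the base group $G'$ (equivalently: $G'$ is the vertex group of an HNN-splitting, and $N$ is the normal closure of $G'$). The group $G'$ is generated only by $x_m,\dots,x_M$ for a bounded window of indices, whereas $\rho(w)$ will in general involve $x_i$ far outside that window, so it simply does not make sense to say $\rho(w)=1$ in $G'$. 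The inductive hypothesis as you have set it up applies only to words equal to $1$ in $G'$, so the induction does not close.

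The repair is exactly the point the paper makes when it credits the McCool--Schupp simplification to ``the full power of HNN-extensions'': you need Britton's Lemma (Lemma~\ref{Lem:BrittonsLemma}), which is itself a spelling theorem for HNN-extensions. Write $G\cong G'\!\ast_\psi$ with stable letter $t$. Since $w$ must actually involve $t$ (if it did not, $w$ would lie in a Magnus subgroup of $G$ and the Freiheitssatz would force $w=1$ in $F_A$), Britton's Lemma gives a pinch $t^{\varepsilon}ht^{-\varepsilon}$ inside $w$ with $h$ equal in $G'$ to an element of an associated Magnus subgroup. It is from the analysis of that pinch — using the inductive hypothesis applied to $h$ \emph{inside $G'$}, the strengthened Freiheitssatz (Theorem~\ref{Thm:NewmanStrengthenedFrei}), and the malnormality of Magnus subgroups (Theorem~\ref{Thm:Newman-malnormal-thm}) — that the required long subword of $r^n$ in $w$ is extracted. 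You are correct that the quantitative bookkeeping for the constant $\tfrac{n-1}{n}$ is delicate and substantive, and you are correct to expect it to live in the lift from $(r')^n$ back to $r^n$; but the structural defect in the inductive step must be addressed first, because in your version there is no word to which the inductive hypothesis applies. (A secondary point: when the Magnus trick embeds $G$ into an overgroup $G^*$ with the exponent-sum-zero property, one must also check that a spelling statement proved for words over the generators of $G^*$ specialises correctly back to words over the generators of $G$; your sketch passes over this silently.)
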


The proof of this theorem goes, as expected, by the Magnus hierarchy. However, the 1968 article in which the theorem was first announced was only a bulletin article \cite{Ne68}, and no proofs appeared therein. The proof only appeared in Newman's PhD thesis \cite{Newman1968}, which was until recently completely inaccessible: indeed, the author's (eventually successful) hunt for this thesis is the subject of \cite{NybergBrodda2021}. A simpler proof of the Spelling Theorem was given by McCool \& Schupp \cite{MS73} in 1973, using the full power of HNN-extensions (see \S\ref{Subsec:MM-hierarchy}). 

We now give some immediate consequences of the Spelling Theorem, all of which appear in B.\ B.\ Newman's thesis. First, and most obviously, it gives a linear-time solution to the word problem in all one-relator groups with torsion. This is significantly better than the time-complexity of Magnus' solution to the word problem in general one-relator groups (see \S\ref{Subsec:Wordproblem-Magnus}). Next, Newman realized that his Spelling Theorem gave an easy solution also to the conjugacy problem: 

\begin{theorem}[{B.\ B.\ Newman, 1968 \cite[Theorem~3.2.3]{Newman1968}}]
The conjugacy problem is decidable in any one-relator group with torsion. 
\end{theorem}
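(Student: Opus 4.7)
The plan is to promote the B.\ B.\ Newman Spelling Theorem from a tool for the word problem to a tool for the conjugacy problem, by adapting the Dehn-style reduction strategy that succeeds for surface groups and small cancellation groups. Throughout, write $G = \pres{}{A}{r^n = 1}$ with $n > 1$ and $r$ cyclically reduced and not a proper power, and set $N = n|r|$. I would first formalize the Dehn algorithm implicit in Theorem~\ref{Thm:BBNewmanSpellingTheorem}: call a freely reduced word $w \in F_A$ \emph{Dehn-reduced} if it contains no subword $u$ such that $u$ or $u^{-1}$ is a subword of a cyclic conjugate of $r^n$ with $|u| > \tfrac{n-1}{n}N$. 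When such a $u$ occurs, the cyclic conjugate of $r^n$ containing it factors as $u \cdot v$ with $|v| < \tfrac{1}{n}N$, and substituting $v^{-1}$ for $u$ strictly shortens $w$ without changing its image in $G$. The Spelling Theorem guarantees that a nonempty Dehn-reduced word represents a nontrivial element, so iterating this substitution yields a canonical shorter representative. Call $w$ \emph{cyclically Dehn-reduced} if every cyclic conjugate of $w$ is Dehn-reduced; alternating Dehn reduction with cyclic reduction, a procedure that terminates by length, effectively transforms any input word into a cyclically Dehn-reduced conjugate.

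The heart of the argument is a conjugacy dichotomy for cyclically Dehn-reduced representatives: if $u$ and $v$ are cyclically Dehn-reduced and conjugate in $G$, then either $u$ is a cyclic permutation of $v$, or $\max(|u|, |v|) \leq N$. Granted this dichotomy, the conjugacy problem reduces to first computing cyclically Dehn-reduced representatives, and then either comparing cyclic permutations (in the long case), or searching the finite list of conjugates of length at most $N$ via the already-solved word problem (in the short case).

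The principal obstacle is proving the dichotomy. The approach is to take a putative conjugator $x$ with $xux^{-1}v^{-1} =_G 1$, chosen of minimal length among conjugators, and apply the Spelling Theorem to the freely reduced form of $xux^{-1}v^{-1}$. Cyclic Dehn-reducedness of $u$ and $v$ forbids the forced long subword of a cyclic conjugate of $r^n$ from lying wholly inside $u$ or inside $v$, so it must straddle at least one of the boundaries between the four blocks $x$, $u$, $x^{-1}$, $v^{-1}$. A careful case analysis of which boundaries are crossed, followed by replacing the long subword by its short complement, either produces a strictly shorter conjugator (contradicting minimality) or exhibits $u$ as a cyclic permutation of $v$ directly; the residual cases force $u$ and $v$ to have bounded length. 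This boundary analysis — structurally parallel to Greendlinger's conjugacy algorithm for $C'(1/6)$-groups, but empowered by the stronger $\tfrac{n-1}{n}$-overlap bound from the Spelling Theorem — is where the combinatorial bulk of the proof sits.
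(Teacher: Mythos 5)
The overall template — Dehn-reduce, cyclically Dehn-reduce, then split into a ``long'' case handled by a cyclic-permutation criterion and a ``short'' case handled by exhaustive search — is a faithful transplant of Dehn's conjugacy algorithm for surface groups. The difficulty is that the dichotomy lemma you declare as the heart of the argument is both unproved and, as stated, stronger than what is known to hold even in small cancellation settings. In $C'(1/6)$ theory the conjugacy theorem for cyclically $R$-reduced words has a third alternative: a reduced annular diagram can consist of a single band of 2-cells wrapping around the annulus, giving two conjugate long words that are \emph{not} cyclic permutations of each other. Your case analysis would have to show that this band case either cannot occur or is effectively checkable, and that is precisely ``where the combinatorial bulk of the proof sits'' — i.e., it is deferred, not handled.

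There is a more serious obstruction in the regime $n = 2$. The Spelling Theorem gives a piece of length $> \frac{n-1}{n}\,|r^n|$, which for $n = 2$ is exactly the minimal Dehn-algorithm threshold ($>\frac{1}{2}|r^2|$). The standard annular-diagram machinery that makes the long-case analysis close — what you call ``structurally parallel to Greendlinger's conjugacy algorithm for $C'(1/6)$ groups'' — requires at least $C(6)$-type cancellation, but as the chapter later notes, one-relator groups with torsion are only known to satisfy $C(2n)$ (so only $C(4)$ for $n=2$). Your boundary-crossing analysis, driven by the Spelling Theorem alone, therefore cannot be expected to terminate for small $n$ without extra structural input. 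The paper itself signals what that extra input is: right after Theorem \ref{Thm:Newman-malnormal-thm} it states that Newman's solution to the conjugacy problem uses the malnormality (and strong malnormality) of Magnus subgroups, feeding into Karrass--Solitar's theory of amalgams with malnormal amalgamated subgroups, on top of the hierarchy. Your proposal uses none of this, so the gap is genuine: the claimed dichotomy would need to be corrected, and even then the purely combinatorial case analysis cannot close without the malnormality/HNN ingredients for small exponents.
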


Newman also solved (see \cite[Theorem~3.3.1]{Newman1968}) the \textit{power problem} in one-relator groups with torsion, i.e.\ the problem of deciding whether a given element in a group is a proper power or not. This problem, which we will not discuss further, appeared in a similar setting of small cancellation groups also in work by Lipschutz \cite{Lipschutz1965, Lipschutz1968} and Reinhart \cite{Reinhart1962}. The next corollary of the Spelling Theorem was a stronger form of the \textit{Freiheitssatz}. 

\begin{theorem}[{B.\ B.\ Newman, 1968 \cite[Corollary~2.1.6]{Newman1968}}]
Let $G = \pres{}{a_1, a_2, \dots, a_k}{r^n = 1}$ with $n>1$ be a one-relator group with torsion, where $r$ is cyclically reduced involving the letters $a_1, a_2 \in A$ non-trivially. Let $\beta$ be any integer which does not divide the exponent sum of $a_1$ in $r^n$. Then $\{ a_1^\beta, a_2, \dots, a_k\}$ freely generates a free subgroup of $G$.\label{Thm:NewmanStrengthenedFrei}
\end{theorem}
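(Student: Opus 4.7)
Let $\sigma$ denote the exponent sum of $a_1$ in $r$, so the hypothesis reads $\beta \nmid n\sigma$. The plan is to argue by contradiction. Suppose $w(x_1, \dots, x_k)$ is a non-trivial freely reduced word in the abstract free group on $\{x_1, \dots, x_k\}$ such that $w(a_1^\beta, a_2, \dots, a_k) =_G 1$. Let $\bar w \in F_A$ be the word obtained by substituting $x_1 \mapsto a_1^\beta$ and $x_j \mapsto a_j$ for $j \geq 2$. First I would check that $\bar w$ is non-trivial and freely reduced over $A$: since $w$ is freely reduced, consecutive occurrences of $x_1^{\pm 1}$ must carry the same sign, so their images coalesce into a longer $a_1$-block rather than cancel, and no cancellation can occur at an $a_1^{\pm \beta}/a_j^{\pm 1}$ boundary. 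The key observation, on which everything rests, is that every maximal $a_1$-block of $\bar w$ has signed length divisible by $\beta$.

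Next, since $\bar w$ is a non-trivial reduced word with $\bar w =_G 1$, Theorem~\ref{Thm:BBNewmanSpellingTheorem} supplies a subword $u$ of $\bar w$ with $|u| > \frac{n-1}{n}|r^n| = (n-1)|r|$ such that $u$ or $u^{-1}$ is a subword of $r^n$. Replacing $w$ by its inverse if necessary, we may assume that $u$ itself is a subword of $r^n$.

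The heart of the argument is a block-structure comparison. Decompose $r$ along its maximal $a_1$-blocks as $r = v_0 \, a_1^{e_1} \, v_1 \, a_1^{e_2} \cdots v_{p-1} \, a_1^{e_p} \, v_p$ with each $v_i \in F(a_2, \dots, a_k)$ and $e_i \in \Z \setminus \{0\}$, so $\sigma = e_1 + \cdots + e_p$. As a subword of $\bar w$, any maximal $a_1$-block of $u$ that lies strictly in the interior of $u$ is flanked inside $u$ (and hence inside $\bar w$) by letters $a_j^{\pm 1}$ with $j \geq 2$; it therefore coincides with a full maximal $a_1$-block of $\bar w$ and so has signed length divisible by $\beta$. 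Only the (at most two) boundary $a_1$-blocks of $u$ can escape this divisibility constraint. On the other hand, $u$ is a subword of $r^n$ whose complement has length strictly less than $|r|$, so the interior of $u$ sweeps through the cyclic $a_1$-block pattern of $r^n$ enough times to exhibit each $e_i$ (and the merged junction block $e_p + e_1$, in the case $v_0 = v_p = 1$) as an interior $a_1$-block of $u$. Combining both constraints forces $\beta \mid e_i$ for every $i$, whence $\beta \mid \sigma$ and $\beta \mid n\sigma$, contradicting the hypothesis.

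The main obstacle is the final sweep step when $n$ and $p$ are both small: for instance $n = 2$ with $p = 1$, where a subword $u$ of length just $|r|+1$ can consist of a single $a_1$-block at its boundary and yield no divisibility constraint at all. To handle this I would either first pass to a cyclic conjugate of $\bar w$ whose endpoints lie at a transition between $a_1$- and $a_j^{\pm 1}$-letters before invoking Theorem~\ref{Thm:BBNewmanSpellingTheorem}, or else work with a counterexample $\bar w$ of minimal length and perform the Spelling-Theorem substitution replacing $u$ in $\bar w$ by its (strictly shorter) $G$-equal complementary piece in $r^n$, producing an even shorter counterexample. In all cases the cyclic reducedness of $r$ underpins the block-sweep, guaranteeing that no free cancellation occurs at the junctions of consecutive copies of $r$ in $r^n$, and hence that each $e_i$ is recoverable from the $a_1$-block pattern of any sufficiently long subword.
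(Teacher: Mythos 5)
Your overall strategy — substitute $x_1 \mapsto a_1^\beta$, observe that every maximal $a_1$-block of $\bar w$ has signed length divisible by $\beta$, invoke the Spelling Theorem to produce a long subword $u$ shared with $r^n$, and read off divisibility constraints from the interior $a_1$-blocks of $u$ — is the right kind of block-structure argument, and the paper does present this result as a corollary of the Spelling Theorem. The preliminary steps (that $\bar w$ is non-trivial and freely reduced, and that any interior maximal $a_1$-block of $u$ is a full $a_1$-block of $\bar w$) are correct. However, there is a genuine gap at the ``sweep'' step, and it is broader than the $n=2$, $p=1$ case you flag; moreover neither of your proposed remedies closes it.

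The Spelling Theorem, as stated in Theorem~\ref{Thm:BBNewmanSpellingTheorem}, only asserts the existence of \emph{some} subword $u$ of $\bar w$ with $|u| > (n-1)|r|$ matching a subword of $r^n$; it gives no control over the endpoints of $u$. Hence the two boundary $a_1$-blocks of $u$ can coincide exactly with cyclic $a_1$-blocks of $r$ and escape the divisibility constraint. For a concrete instance with $p=2$: take $r = a_2 a_1^3 a_2 a_1^2$, $n=2$, $\beta=3$ (so $n\sigma = 10$ and $3 \nmid 10$). The subword $u = a_1^2 a_2 a_1^3 a_2 a_1^2$ of $r^2$ has length $9 > (n-1)|r| = 7$, yet its only interior $a_1$-block has length $3$, which $\beta$ happily divides; the boundary blocks of length $2$ are fragments of larger blocks of $\bar w$ and carry no constraint. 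Your argument extracts nothing from this $u$, and the Spelling Theorem may return exactly this $u$. Passing to a cyclic conjugate of $\bar w$ (your first fix) does not help: that normalises the endpoints of $\bar w$, not the endpoints of $u$, and it is the latter that cause the trouble.

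Your second fix fails outright, and it is worth seeing why. Writing $\bar w = xuy$ and $r^n = pus$ in $F_A$, the replacement word $\bar w'' := x\,p^{-1}s^{-1}\,y$ has $a_1$-exponent sum $\sigma(\bar w) - n\sigma$. Since every $a_1$-block of $\bar w$ has length divisible by $\beta$, we have $\sigma(\bar w) \equiv 0 \pmod \beta$, while $n\sigma \not\equiv 0 \pmod\beta$ by hypothesis; so $\sigma(\bar w'') \not\equiv 0 \pmod\beta$, and $\bar w''$ cannot be a word in $\{a_1^\beta, a_2, \ldots, a_k\}$. The induction on a minimal counterexample has no shorter element of the relevant subgroup to pass to. To make the block-sweep work one needs a sharper spelling theorem that fixes the shape of the long subword — for instance Schupp's refinement (stated later in this paper as Theorem~\ref{Newman}), which produces a subword of the form $u'^{\,n-1}u_0$ with $u'$ a cyclic conjugate of $r^{\pm1}$ and $u_0$ a prefix of $u'$ mentioning every letter of $r$. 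With that form, $u'^{\,n-1}u_0$ genuinely exhibits every cyclic $a_1$-block of $r$ in its interior once $n\ge 2$, and your intended conclusion $\beta \mid \sigma$, hence $\beta \mid n\sigma$, follows; with the weaker statement of Theorem~\ref{Thm:BBNewmanSpellingTheorem} alone, the boundary blocks of $u$ leave a residual unresolved term.
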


Thus, for example, in $\pres{}{a,b}{(a^2b^3)^2 = 1}$, the subgroup generated by $\{ a^3, b \}$ is free on that basis. Note that the ordinary \textit{Freiheitssatz} only gives information about cyclic subgroups for this example. Thus Theorem~\ref{Thm:NewmanStrengthenedFrei} gives an abundance of free subgroups of one-relator groups with torsion. The same year, Ree \& Mendelsohn \cite{Ree1968} had proved a weaker version of this result, for the two-generator case, by using representations for some one-relator groups with torsion in $\SL(2,\C)$. Magnus \cite{Magnus1975}, however, has shown that their technique cannot yield the full strength of Theorem~\ref{Thm:NewmanStrengthenedFrei}. As a final application of the B.\ B.\ Newman Spelling Theorem, we mention the following subgroup result.

\begin{theorem}[{Newman, 1968 \cite[Corollary~2.3.4]{Newman1968}}]
The centralizer of every non-trivial element of a one-relator group with torsion is cyclic. \label{Thm:Centralizer-cyclic-in-torsion}
\end{theorem}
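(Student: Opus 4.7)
The plan is to prove the result by induction on the length of the defining relator, via the Magnus--Moldavanskii hierarchy (Theorem~\ref{Thm:Magnus-Moldavanskii}), using Britton's Lemma (Lemma~\ref{Lem:BrittonsLemma}) and the fine combinatorial control supplied by the Spelling Theorem and the Strengthened \textit{Freiheitssatz} (Theorems~\ref{Thm:BBNewmanSpellingTheorem} and~\ref{Thm:NewmanStrengthenedFrei}).

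For the base case, when the defining relator has length one, one has $G \cong (\Z/n\Z) \ast F$, where $F$ is free on the remaining generators. In any free product the centralizer of a non-trivial element is either contained in a conjugate of a factor (when the element is conjugate into that factor) or is infinite cyclic, generated by the primitive cyclic root of the element. Since both factors in our case have cyclic centralizers, every centralizer in $G$ is cyclic.

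For the inductive step, Theorem~\ref{Thm:Magnus-Moldavanskii} embeds $G$ into an HNN-extension $G^\ast = G' \ast_\phi$ of a one-relator group $G'$ with torsion (the relator remains an $n$-th power, since the breakdown preserves the exponent) and strictly shorter relator, whose associated subgroups are Magnus subgroups $H_1, H_2 \leq G'$. By the inductive hypothesis, centralizers of non-trivial elements of $G'$ are cyclic. I then analyse centralizers in $G^\ast$ via the Bass--Serre tree of the HNN extension. A non-trivial $g \in G^\ast$ is, up to conjugation, either elliptic (lying in $G'$) or hyperbolic (having cyclically reduced normal form of positive $t$-length). In the hyperbolic case, $C_{G^\ast}(g)$ preserves the axis of $g$, and a Britton normal-form argument shows it is infinite cyclic, generated by the primitive cyclic root of $g$. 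In the elliptic case, I take $g \in G'$ and any $h \in C_{G^\ast}(g)$ written in reduced normal form $h = g_0 t^{\varepsilon_1} g_1 \cdots t^{\varepsilon_m} g_m$ with $m \geq 1$, and iteratively apply Britton's Lemma to $hgh^{-1}g^{-1} = 1$; this forces $g$ and its iterated images to lie in $H_1$ or $H_2$, which together with a malnormality property of Magnus subgroups in one-relator groups with torsion (provable from the Spelling Theorem) collapses $h$ down into $G'$. Cyclicity then follows from the inductive hypothesis. Since $G \hookrightarrow G^\ast$ by the hierarchy, the centralizer of any element of $G$ is a subgroup of a cyclic group, hence cyclic.

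The main obstacle will be the elliptic case in the inductive step: ensuring that no new centralizing elements appear in $G^\ast$ beyond $G'$. This reduces to a malnormality/intersection property of the Magnus subgroups $H_1, H_2$ in the one-relator group $G'$ with torsion. Such a property is closely tied to the Spelling Theorem itself, and arranging the induction so that the Spelling Theorem applies at every layer of the hierarchy---which it does, since torsion is preserved under the Magnus--Moldavanskii breakdown---is the technical heart of the argument.
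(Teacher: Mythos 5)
Your overall strategy — induction via the Magnus--Moldavanskii hierarchy, base case $\Z/n\Z \ast F$, an elliptic/hyperbolic split on the Bass--Serre tree, and Britton's Lemma together with malnormality of Magnus subgroups — is exactly the route Newman (and Karrass--Solitar) take, and the paper itself gives no more than this sketch, citing the result as a consequence of the Spelling Theorem via the malnormal-amalgam machinery. So the plan is the right one. But two of your steps are too quick and would not close as written.

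The hyperbolic case is not "easy": $C_{G^\ast}(g)$ preserves the axis of $g$ and so maps to $\Z$ by translation length, but the \emph{kernel} of that map is the pointwise stabilizer of the axis, an intersection of conjugates of $H_1$ and $H_2$, and trivializing it requires the very intersection control you reserve for the elliptic case. More substantively, the malnormality statement the paper records (\cref{Thm:Newman-malnormal-thm}: $H \cap gHg^{-1}=1$ for $g\notin H$, for a \emph{single} Magnus subgroup $H$) is strictly weaker than what Britton's Lemma actually demands in the elliptic case, namely control on $H_1 \cap gH_2g^{-1}$ with $H_1\neq H_2$. In the Magnus--Moldavanskii hierarchy $H_1$ and $H_2$ overlap in a common Magnus subgroup, so these intersections are genuinely non-trivial; single-subgroup malnormality cannot rule out that some $G'$-conjugate of $g$ lands in $H_1\cap H_2$ and is carried by $\psi$ to another such conjugate, which is precisely what would allow $h$ to retain positive $t$-length while centralizing $g$. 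Newman closes this gap with what the paper calls \emph{strong malnormality} (mentioned immediately after \cref{Thm:Newman-malnormal-thm}); Moldavanskii's triviality of intersections of disjointly-generated Magnus subgroups and Collins' theorems on $H_1^g\cap H_2$ in the torsion case are the modern incarnations of the needed input. You correctly identify this as "the technical heart", but the property you propose to supply is not strong enough to fill it.
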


This theorem was also proved and extended by Karrass \& Solitar \cite[p.~956]{Karrass1971}, who elaborated on the properties of free products with a \textit{malnormal} amalgamated subgroup. B.\ B.\ Newman had made central use of some of the properties of such amalgams, particularly using the malnormality of Magnus subgroups in one-relator groups to prove his results. This property of Magnus subgroups remains also a key theme in the modern setting. We elaborate on this theme in \S\ref{Subsec:Magnus-subgroups}. 

These were only some the initial applications of the B.\ B.\ Newman Spelling Theorem, all of which appeared already in his thesis. Newman also used it to study abelian and solvable subgroups of one-relator groups with torsion; we detail this further in \S\ref{Subsec:Abelian-subgroups}. Furthermore, it would come to play a particularly prominent role in the subsequent work by S.\ J.\ Pride on one-relator groups with torsion, which we will detail more in \S\ref{Subsec:Torsion-isomorphism-problem}. We mention one of his results here, as its proof heavily relies on the Spelling Theorem. B.\ B.\ Newman and Pride once thought that \textit{every} finitely generated subgroup of a one-relator group would either be free or have torsion \cite[p. 483]{Pride1977}. That this is not true can be seen either from Theorem~\ref{Thm:Fischer-Karrass-Solitar} together with the fact that not every one-relator group with torsion is free, or else from the example \eqref{Eq:NewmanPride-example} below. Nevertheless, much can be said of the \textit{two}-generator subgroups of one-relator groups with torsion: 

\begin{theorem}[{Pride, 1977 \cite{Pride1977}}]
Every two-generator subgroup of a one-relator group with torsion is either a free product of cyclic groups, or a one-relator group with torsion. 
\end{theorem}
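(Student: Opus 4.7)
The plan is to proceed by induction on the length of the defining relator using the Magnus--Moldavanskii hierarchy (Theorem~\ref{Thm:Magnus-Moldavanskii}), with the B.~B.~Newman Spelling Theorem (Theorem~\ref{Thm:BBNewmanSpellingTheorem}) supplying the crucial input at the inductive step. Write $G = \pres{}{A}{r^n = 1}$ with $n \geq 2$ and $r$ cyclically reduced and not a proper power, and let $H = \langle u, v \rangle \leq G$. If $H$ happens to be free, then being of rank at most two it is already a free product of cyclic groups, so we may assume $H$ contains a non-trivial relation and aim to exhibit $H$ as a one-relator group with torsion. The base case of the induction is $|r| = 1$: after a Nielsen change of generators, $G \cong F_{A \setminus \{a\}} \ast \langle a \mid a^n = 1\rangle$, and Kurosh's subgroup theorem combined with Grushko's rank formula immediately realizes any 2-generator $H$ as a free product of at most two cyclic groups, already in the desired form.

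For the inductive step, embed $G$ in $\tilde G = G_1 \ast_\varphi$, an HNN extension where $G_1$ is a one-relator group with torsion whose relator is strictly shorter than $r$ and where the associated subgroups $M_1, M_2 \leq G_1$ are Magnus subgroups identified via $\varphi$. Using Britton's Lemma (Lemma~\ref{Lem:BrittonsLemma}) and the Karrass--Solitar subgroup theorems for HNN extensions, I would apply Nielsen-type moves to $\{u, v\}$ to put them in a form minimally adapted to the stable-letter structure of $\tilde G$. If after reduction $H$ lies entirely inside a conjugate of $G_1$, the inductive hypothesis applies directly. Otherwise, $H$ itself splits as a graph of groups whose vertex groups are intersections of $H$ with conjugates of $G_1$ and whose edge groups are intersections of $H$ with conjugates of the Magnus subgroups $M_i$.

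The key structural input, and the heart of the argument, is that the Spelling Theorem forces Magnus subgroups of one-relator groups with torsion to be malnormal (a fact closely related to Theorem~\ref{Thm:Centralizer-cyclic-in-torsion}), so the edge groups in this decomposition of $H$ are cyclic. Since $H$ is generated by only two elements, a rank-and-Euler-characteristic count restricts the underlying graph of the decomposition to be extremely simple: either it has no edges (so $H$ is a free product of cyclic vertex groups, already in the desired form), or it consists of a single edge or a single loop with at most one non-free vertex group.

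The main obstacle will be the latter case: the non-free vertex group need not itself be 2-generated, so the inductive hypothesis does not apply to it directly, and one must instead extract an explicit one-relator presentation for $H$. Here the Spelling Theorem again does the work, by showing that any non-trivial relation among $u, v$ in $\tilde G$ must, when expanded over $A$, contain a subword of length greater than $\tfrac{n-1}{n}|r^n|$ of some cyclic conjugate of $r^{\pm n}$; this constraint is rigid enough that, after the Nielsen reductions above, all non-trivial relations among $u, v$ are seen to be consequences of a single one of the form $s^m = 1$ for some word $s$ in $u, v$ and some $m \geq 1$. The Torsion Theorem (Theorem~\ref{Thm:torsion-theorem}) then forces $m > 1$ in the non-free case, so that $H$ has a one-relator presentation with torsion, completing the induction and delivering the stated dichotomy.
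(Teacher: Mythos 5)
You have correctly identified the architecture of Pride's argument: induction on the Magnus--Moldavanskii hierarchy, Nielsen-type reduction of the generating pair relative to the resulting HNN structure, and the B.~B.~Newman Spelling Theorem together with malnormality of Magnus subgroups supplying the combinatorial control. But the two places where the proof is hard are exactly the two places you assert rather than argue. Malnormality of a Magnus subgroup $M_i\leq G_1$ (Theorem~\ref{Thm:Newman-malnormal-thm}) says $gM_ig^{-1}\cap M_i=1$ for $g\notin M_i$; this controls the interaction of \emph{distinct} conjugates of $M_i$ with one another, but it does not bound the rank of $H\cap gM_ig^{-1}$, which can a priori be free of rank two or more, since $M_i$ is itself typically a free group of high rank. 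That these edge groups become cyclic after Nielsen reduction is the outcome of a delicate length-decrease analysis in which the Spelling Theorem is the engine; it does not drop out of malnormality. More seriously, your closing claim that the Spelling Theorem constraint is ``rigid enough that all non-trivial relations among $u,v$ are consequences of a single one of the form $s^m=1$'' is not a deduction but a restatement of the theorem, and it is precisely here that the bulk of Pride's work lies.

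A smaller but revealing slip is your opening reduction: you announce that if $H$ is not free you will ``aim to exhibit $H$ as a one-relator group with torsion,'' silently dropping the case of a non-free free product of cyclics. That case genuinely occurs: $\langle ab, ba\rangle\cong \Z/2*\Z/2\cong D_\infty$ inside $\pres{}{a,b}{(ab)^2=1}$, and $D_\infty$ is \emph{not} a one-relator group with torsion, since Theorem~\ref{Thm:torsion-theorem} would force all its involutions into a single conjugacy class whereas $D_\infty$ has two. Your later graph-of-groups analysis (the ``no edges'' case) does permit the free-product outcome, so this is an inconsistency of framing rather than a fatal error, but it shows the dichotomy is not the one your induction is set up around.
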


Further results on subgroups of one-relator groups is the subject of \S\ref{Sec:7-Subgroups-of-OR-groups}. We now conclude this present section with some brief remarks on the Spelling Theorem. First, a strengthening of the theorem was obtained by Gurevich \cite{Gurevich1972, Gurevich1973}, see also Schupp \cite{Schupp1976} for a complete proof and further strengthening. As noted by Pride \cite[p. 57]{Pride1974}, a proof of this stronger result can be obtained by the same method as the proof of Newman's result given by McCool \& Schupp \cite{MS73}. Furthermore, one of the consequences, indeed one of the reformulations, of the Spelling Theorem is that one-relator groups with torsion admit \textit{Dehn presentations}. This is known to be equivalent to Gromov hyperbolicity, and hence \textit{all one-relator groups with torsion are hyperbolic}. This places many of the above theorems in their proper context: for example, in any hyperbolic group the centralizer is virtually cyclic, yielding in particular Theorem~\ref{Thm:Centralizer-cyclic-in-torsion} for one-relator groups with torsion. However, the proof of Theorem~\ref{Thm:Centralizer-cyclic-in-torsion} predates the definition of hyperbolic groups by two decades.

\subsection{Residual and virtual properties}\label{Subsec:RF-of-Torsion}

As mentioned before, today we know that all of Baumslag's conjectures (Conjecture~\ref{Conj:BaumslagConjectures}) are true. However, there had been some partial progress on these before this resolution, including for residual finiteness. We now present some of these results. We remark first that a natural conjecture might be that all one-relator groups with torsion are residually (torsion-free) nilpotent. But this is not so, as indicated by the following example due to B.\ B.\ Newman \cite[p. 72]{Newman1968}: let 
\begin{equation}\label{Eq:NewmanPride-example}
G = \pres{}{a,b}{(a^{-1}b^{-1}ab^2)^2 = 1}
\end{equation}
Then the index $2$ subgroup of $G$ generated by $\langle a, b^2, bab^{-1} \rangle$ admits the presentation $\pres{}{x,y,z}{[y, z][y,x] = y}$, and clearly $y$ lies in the intersection of every term in the lower central series. Hence $G$ is not residually torsion-free nilpotent. One of the first theorems on residual properties of one-relator groups with torsion was also due to B.\ B.\ Newman, in his thesis:

\begin{theorem}[{\cite[Corollary~2.1.7]{Newman1968}}]
Let $G = \pres{}{a_1, a_2, \dots, a_k}{r^n = 1}$ where $n > 1$. Then $G$ is residually a two-generator one-relator group with torsion. 
\end{theorem}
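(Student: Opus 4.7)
The plan is to proceed by induction on $k$, the number of generators. The base case $k \leq 2$ is immediate: $G$ itself is already a two-generator one-relator group with torsion, and the identity map serves as the required witness. For the inductive step, fix $k \geq 3$ and a non-trivial $w \in G = \pres{}{a_1, \ldots, a_k}{r^n = 1}$. The task reduces to producing a homomorphism $\Psi \colon G \to G'$ with $G'$ a $(k-1)$-generator one-relator group with torsion and $\Psi(w) \neq 1$; applying the inductive hypothesis to $G'$ then completes the argument.

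I would work with the family of substitution homomorphisms $\psi_m \colon F(a_1, \ldots, a_k) \to F(a_1, \ldots, a_{k-1})$ parameterised by $m \in \mathbb{N}$, defined by $\psi_m(a_i) = a_i$ for $i < k$ and $\psi_m(a_k) = a_1^{m}\, a_2\, a_1^{-m}$. Writing $r_m := \psi_m(r)$, set
\[
G_m := \pres{}{a_1, \ldots, a_{k-1}}{r_m^n = 1}.
\]
Regardless of whether $r_m$ is a proper power in $F(a_1, \ldots, a_{k-1})$, the group $G_m$ is a one-relator group with torsion: if $r_m = t^d$ with $t$ not a proper power, then $G_m$ has the presentation $\pres{}{a_1, \ldots, a_{k-1}}{t^{nd} = 1}$ with $nd \geq n > 1$. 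Thus $\psi_m$ descends to a homomorphism $\Psi_m \colon G \to G_m$ valued in a $(k-1)$-generator one-relator group with torsion, and it remains to exhibit some $m$ for which $\Psi_m(w) \neq 1$.

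Suppose towards a contradiction that $\Psi_m(w) = 1$ for every $m$. Then the B.\,B.\ Newman Spelling Theorem (Theorem~\ref{Thm:BBNewmanSpellingTheorem}) applied in $G_m$ forces the reduced form of $\psi_m(w)$ to contain a subword of a cyclic conjugate of $r_m^{\pm n}$ of length strictly greater than $(n-1)|r_m|$, for every $m$. Let $d$ denote the number of $a_k^{\pm 1}$-letters in $w$ and $e \geq 1$ the corresponding count in $r$ (the case $e = 0$ is handled directly by the projection $a_k \mapsto 1$). Free reduction gives $|\psi_m(w)| \leq |w| + 2md$, while $|r_m| \geq 2em - C$ for some constant $C = C(r)$, since only boundedly many of the inserted blocks $a_1^{\pm m}$ can be absorbed by adjacent $a_1^{\pm 1}$ of $r$. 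The required Spelling estimate then forces $d \geq (n-1)e$ asymptotically in $m$, so whenever $d < (n-1)e$ the contradiction is immediate and the induction step is complete.

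The main obstacle is the residual $a_k$-heavy case $d \geq (n-1)e$, which can genuinely occur. To remove it I would exploit two forms of flexibility in the construction. First, the generator being eliminated is arbitrary: there are $k$ candidate substitutions, one for each index, and writing $d_i, e_i$ for the $a_i$-counts in $w$ and $r$, a pigeonhole over $\sum_i d_i = |w|$ and $\sum_i e_i = |r|$ yields some $i$ with $d_i < (n-1) e_i$ as soon as $|w| < (n-1)|r|$. Second, when $|w| \geq (n-1)|r|$ one preprocesses $w$ by applying the Spelling Theorem \emph{inside $G$ itself}, iteratively replacing any subword of $w$ coinciding with a piece of $r^{\pm n}$ of length $>(n-1)|r|$ by its shorter cyclic complement, producing a Dehn-reduced representative $w_0$ of $[w]$ to which the pigeonhole can be applied. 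Making the pigeonhole genuinely succeed on $w_0$ — which requires invoking the malnormality of Magnus subgroups to rule out pathological cancellations under $\psi_m$ — is the technical crux of the argument; the rest is formal.
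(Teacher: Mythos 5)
The paper does not reproduce Newman's proof (it only remarks that the proof "is not difficult, but uses the Spelling Theorem"), so a direct comparison with the source is not possible; the substitution-plus-Spelling-Theorem strategy is, however, certainly in the right neighbourhood. That said, your write-up has several gaps, and the most important one you yourself flag but do not close.

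The first gap is that the Spelling Theorem in $G_m$ has a hypothesis you never verify: it applies to elements $v\in F(a_1,\dots,a_{k-1})$ with $v\neq 1$ \emph{in the free group}. Since $\psi_m\colon F_k\to F_{k-1}$ has non-trivial kernel, you must rule out $\psi_m(w)=1$ before invoking the theorem. This is not automatic: for the unconjugated substitution $a_k\mapsto a_1^m$ the intersection $\bigcap_m\ker\psi_m$ is genuinely non-trivial (for example $[a_ka_1^{-1},a_ka_1^{-2}]$ is killed by every such map), so the conjugated form $a_k\mapsto a_1^ma_2a_1^{-m}$ you chose is not cosmetic — but you then owe an argument that it separates the given $w$ from $1$. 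The second gap is the estimate $|r_m|\geqslant 2em-C$: it is false as stated. If $r$ contains the subword $a_ka_k$, then
\[
\psi_m(a_ka_k)=a_1^ma_2a_1^{-m}\cdot a_1^ma_2a_1^{-m}=a_1^ma_2^2a_1^{-m}
\]
has length $2m+2$, not $\geqslant 4m-C$; the cancellation between \emph{consecutive inserted blocks} is $\Theta(m)$, not bounded. The growth rate of $|r_m|$ and $|\psi_m(w)|$ is governed by the number of maximal $a_k$-runs, not the number of $a_k$-letters. Once one repairs this, the pigeonhole no longer has the convenient form you use: the identities $\sum_i d_i=|w|$ and $\sum_i e_i=|r|$ are for letter counts, whereas the corrected slopes involve run counts, whose sums have no such normalisation. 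So a favourable index $i$ is not forced to exist.

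The third gap is the one you call "the technical crux." Dehn-reducing $w$ in $G$ is the right instinct, but it does not immediately help: after applying $\psi_m$ the word $\psi_m(w)$ can contain long matches against cyclic conjugates of $\psi_m(r)^{\pm n}$ that do not pull back to a subword of $w$ matching $r^{\pm n}$, because the boundary $a_1^{\pm m}$-blocks merge with neighbouring $a_1$-runs of $w$ and $r$ and the pullback loses an $O(1)$ margin — exactly the margin that the strict inequality $|u|>(n-1)|r|$ lives on. Closing this requires a genuine stability argument showing that, for $m$ large relative to $|w|$ and $|r|$, any spelling match in $\psi_m(w)$ must pass through the inserted $a_1^{\pm m}$-blocks coherently and hence descend to a spelling violation for the Dehn-reduced $w$ itself; one likely needs a sharper spelling estimate (the Gurevich strengthening mentioned in \S\ref{Subsec:BBNewman}) to absorb the $O(1)$ loss. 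As written, "the rest is formal" is not justified, and this, together with the false length bound, leaves the inductive step incomplete.
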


The proof is not difficult, but uses the Spelling Theorem. A few years later, Fischer, Karrass \& Solitar \cite{FKS72} wrote a short but influential note giving structural information about one-relator groups with torsion. Two of their results are as follows: 

\begin{theorem}[Fischer, Karrass \& Solitar, 1972 \cite{FKS72}]
Let $G = \pres{}{A}{r^n = 1}$ with $n > 1$ be a one-relator group with torsion. Then: (1) $G$ is virtually torsion-free; and (2) $G$ is virtually free if and only if $r$ is a primitive word in $F_A$.\label{Thm:Fischer-Karrass-Solitar}
\end{theorem}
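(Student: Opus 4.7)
The plan for \textbf{part (1)} is to induct on the length of the cyclically reduced relator $r$ using the Magnus--Moldavanskii hierarchy (Theorem~\ref{Thm:Magnus-Moldavanskii}). By the Karrass--Magnus--Solitar Torsion Theorem (Theorem~\ref{Thm:torsion-theorem}), the torsion elements of $G$ are precisely the conjugates of non-trivial powers of $r$, so a normal finite-index subgroup $K \trianglelefteq G$ is torsion-free if and only if the image of $r$ in $G/K$ has order exactly $n$. The task therefore reduces to producing a finite quotient $\pi\colon G \twoheadrightarrow Q$ with $\pi(r)$ of order $n$. When the abelianization $\rho(r) \in \Z^{|A|}$ is nonzero, a linear functional separating $\rho(r)$ from $0$, composed with reduction modulo a suitable multiple of $n$, yields such a $Q$ directly. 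In the residual case $r \in [F_A, F_A]$, the hierarchy embeds $G$ into an HNN extension $G^\ast = \langle G_2, t \mid tM_1t^{-1}=M_2\rangle$ of a strictly shorter one-relator group with torsion $G_2 = \pres{}{A_2}{r_2^n=1}$, whose associated subgroups $M_1, M_2$ are free Magnus subgroups by the \textit{Freiheitssatz}. The inductive hypothesis supplies a finite quotient of $G_2$ in which the image of $r_2$ has order $n$, which is then to be extended up the HNN extension; the base case of the recursion is a group of the form $\Z/n \ast F$, where the retraction onto the finite cyclic factor is the desired quotient.

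For \textbf{part (2)}, the backward direction is immediate: if $r$ is primitive, an automorphism of $F_A$ carries $r$ to a generator, giving $G \cong \pres{}{A}{a_1^n = 1} \cong \Z/n \ast F_{|A|-1}$. This is virtually free, since the kernel of the retraction onto $\Z/n$ has index $n$ and is free by the Nielsen--Schreier and Kurosh subgroup theorems. For the forward direction, assume $G$ is virtually free, and let $T(G) \trianglelefteq G$ denote the normal closure of all torsion elements. By the Torsion Theorem (Theorem~\ref{Thm:torsion-theorem}), $T(G) = \llangle r \rrangle$, so $G/T(G) \cong \widetilde{G} := \pres{}{A}{r = 1}$. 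On the other hand, a finitely generated virtually free group acts cocompactly on a Bass--Serre tree with finite vertex stabilizers; the associated quotient map onto the fundamental group of the underlying graph (a free group) has kernel equal to the normal closure of the vertex stabilizers, which coincides with $T(G)$. Hence $\widetilde{G}$ is free, and by Whitehead's theorem (Theorem~\ref{Thm:Whitehead-theorem}), $r$ is primitive.

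The \textbf{main obstacle} in part (1) is the inductive step in the commutator case $r \in [F_A, F_A]$: extending the finite quotient of $G_2$ up to $G^\ast$ requires realizing the isomorphism $\pi(M_1) \cong \pi(M_2)$ by inner conjugation in some finite overgroup of $\pi(G_2)$, which is a genuine separability-type statement for HNN extensions over free associated subgroups. In part (2), the work is concentrated in the structural claim that $G/T(G)$ is free for a finitely generated virtually free group $G$; this rests on presenting $G$ as the fundamental group of a finite graph of finite groups, after which the conclusion is read off by identifying the kernel of the quotient to the underlying graph's (free) fundamental group. One could alternatively bypass the last step by showing directly that $G \cong \Z/n \ast F_{|A|-1}$ and then invoking Theorem~\ref{Thm:If-Rn=Sn-thenR=S} to transfer the freeness conclusion across the single-relator isomorphism, but the combinatorial burden then shifts to a rigidity argument via Euler characteristic and deficiency.
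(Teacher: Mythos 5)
Your reduction of part (1) to producing a finite quotient $\pi\colon G\twoheadrightarrow Q$ with $\pi(r)$ of order exactly $n$ is correct, and the abelianization argument when $\rho(r)\neq 0$ is complete (pick $f\colon\Z^{|A|}\to\Z$ with $f(\rho(r))=d\neq0$ and reduce mod $dn$). The commutator case, however, contains a genuine gap which you flag but slightly misdiagnose. The step you single out --- realizing an isomorphism $\pi_2(M_1)\cong\pi_2(M_2)$ by conjugation in a finite overgroup of $Q_2=\pi_2(G_2)$ --- is elementary once the isomorphism exists: embed $Q_2\hookrightarrow\operatorname{Sym}(Q_2)$ by the right regular representation and observe that the two $M_1$-actions $q\mapsto q\pi_2(m)$ and $q\mapsto q\pi_2(\psi(m))$ are free actions of the same quotient of $M_1$ on sets of equal size, hence permutation-isomorphic, giving the desired $\tau$. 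The real obstacle is \emph{upstream}: one must arrange that $\psi$ descends at all, i.e.\ that $\ker(\pi_2|_{M_1})=\psi^{-1}(\ker(\pi_2|_{M_2}))$, simultaneously with $\pi_2(r_2)$ having order $n$. These two constraints are genuinely in tension. Take $G=\langle a,b\mid[a,b]^n\rangle$: the breakdown gives $G_2=\langle b_0,b_1\mid(b_1b_0^{-1})^n\rangle$ with $\psi(b_0)=b_1$ and $r_2=b_1b_0^{-1}$, and any quotient with $\pi_2(b_0)=\pi_2(b_1)$ (the naive way to force compatibility) kills $r_2$, while the abelianization also kills $r_2$ since $r\in[F,F]$. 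Producing a compatible non-abelian $\pi_2$ is possible (for $n=2$ one can hit $S_4$ via $b_0\mapsto(13)(24)$, $b_1\mapsto(14)(23)$, $r_2\mapsto(12)(34)$), but doing so in general requires a strengthened inductive hypothesis tracking the behaviour of $\pi_2$ on Magnus subgroups, and your proposal leaves this entirely open. A smaller slip: when $r\in[F_A,F_A]$ every generator already has exponent sum zero, so $G$ itself is the HNN extension and no Magnus-trick embedding into an auxiliary $G^*$ is needed.

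Part (2) is correct. The backward direction via an automorphism of $F_A$ plus Kurosh is standard. The forward direction --- decompose the finitely generated virtually free group $G$ as the fundamental group of a finite graph of finite groups, identify $\llangle\text{torsion}\rrangle=\llangle\text{vertex groups}\rrangle=\ker\left(G\to\pi_1(\text{underlying graph})\right)$, combine with $\llangle\text{torsion}\rrangle=\llangle r\rrangle$ (from the Torsion Theorem) to conclude $\widetilde G=F_A/\llangle r\rrangle$ is free, then apply Whitehead --- is sound, though it leans on the Karrass--Pietrowski--Solitar structure theorem (essentially Stallings' theorem plus induction), which is exactly the heavy input the survey alludes to. This is a perfectly reasonable modern route to the statement, though not the combinatorial one available to Fischer, Karrass \& Solitar in 1972.
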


The authors also prove that the elements of finite order in $G$ generate a subgroup that is the free product of conjugates of the cyclic group generated by $r$, a result that appears implicitly already in work of Cohen \& Lyndon \cite{CL63}. Part (2) of Theorem~\ref{Thm:Fischer-Karrass-Solitar} was reproved, among other results, by Karrass, Solitar \& Pietrowski \cite{Karrass1973} the next year. Part (1) of Theorem~\ref{Thm:Fischer-Karrass-Solitar} was expanded on by Fischer \cite{Fischer1977}, who proved that (with the same notation as in Theorem~\ref{Thm:Fischer-Karrass-Solitar}) if $H$ is a subgroup of $G$ with $[G : H] = k < \infty$ and $H$ is torsion-free, then $n$ divides $k$. Furthermore, if some generator in $r$ has exponent sum $s \neq 0$, then $G$ has a torsion-free $|s|$-relator subgroup of finite index; if further $\gcd(s,n) = 1$ then $G$ has a one-relator subgroup $H$ of finite index. As a corollary, he deduced that if $G$ contains a torsion-free subgroup $H$ with $[G : H] = kn$, then $H$ is a $k$-relator group. Finally, while we cannot pass into the details of \textit{ends} of groups here, instead referring the reader to \cite{St68}, we note that another important result proved in \cite{FKS72} is that any freely indecomposable one-relator group with torsion has at most one end. Thus, in this case, it follows from Theorem~\ref{Thm:Fischer-Karrass-Solitar} that if the defining word is non-trivial and not a power of a primitive word, then the group has exactly one end. 

We now list some families of one-relator groups with torsion which were known to be residually finite before the full resolution of Baumslag's conjecture:

\begin{enumerate}
\item (Fischer, 1977 \cite{Fischer1977}) The groups $\pres{}{A \cup B}{(u v)^n = 1}$, $n > 1$, where $u \in F_A$ and $v \in F_B$ are non-trivial elements, and $A \cap B = \varnothing$.
\item (Fischer, 1977 \cite{Fischer1977}) The groups $\pres{}{a,b}{(ab^k a^{-1}y^r)^n = 1}$, where $n >1$, $n$ does not divide $r$ or $k$, and $\gcd(r + k, n) = 1$.
\item (B.\ Baumslag \& Tretkoff, 1978 \cite{BBaumslag1978}) The groups $\pres{}{a,b}{(a^{-1}b^l a b^l)^n = 1}$ where $n > 1$ and $l \in \Z$.
\item (B.\ Baumslag \& Levin, 1979 \cite{BBaumslag1979}) The groups $\pres{}{A,x}{(xux^{-1}v)^n = 1}$ where $n >1$ and $u, v \in F_A$ and either (i) $u, v$ are words over disjoint sets of generators; or (ii) $u$ and $v$ are non-trivial powers of a single element, e.g.\ $\pres{}{x,y}{(xy^kx^{-1}y^l)^n = 1}$ with $k,l \in \Z$.
\item (Allenby \& Tang, 1981 \cite{Allenby1981b,Allenby1981}) Many different sporadic classes of one-relator groups with torsion; a typical example is \cite[Theorem~4.4]{Allenby1981b}, which states that $\pres{}{a,b}{([a^\alpha, b^\beta][a^{-\gamma}, b^{-\delta}])^t = 1}$, $n >1$ and $\alpha, \beta, \gamma, \delta \in \N$.
\end{enumerate}

After these sporadic results, Egorov \cite{Egorov1981} (a student of Moldavanskii) proved the following remarkable result on positive one-relator groups with torsion:

\begin{theorem}[Egorov, 1981 \cite{Egorov1981}]
Let $G = \pres{}{A}{r^n = 1}$ with $n > 1$ and $r$ a positive word in $F_A$, i.e.\ including no inverse of any generator. Then $G$ is residually finite.\label{Thm:Egorov-positive-ORT-is-RF}
\end{theorem}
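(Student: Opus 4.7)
The plan is to induct on the length of the positive relator $r$ using the Magnus--Moldavanskii hierarchy from \S\ref{Subsec:MM-hierarchy}, exploiting the fact that positivity of the defining relator is preserved as one descends the hierarchy, as is the presence of torsion.

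In the base case $|r|=1$, we have $r=a$ for some $a\in A$, so $G\cong C_n \ast F_{|A|-1}$ is a free product of a finite cyclic group with a free group, and hence residually finite.

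For the inductive step, assume the result holds for all positive relators of strictly shorter length. Apply the Magnus trick to embed $G$ into a one-relator group $G'$ in which some generator has exponent sum zero in the new defining relator. Since the trick only substitutes positive powers of new generators into $r$, the new defining relator is still positive; moreover, since $G \hookrightarrow G'$ and $G$ has torsion, $G'$ has torsion and so by Theorem~\ref{Thm:torsion-theorem} is itself a positive one-relator group with torsion. By Theorem~\ref{Thm:Magnus-Moldavanskii}, $G'$ is then an HNN-extension $K \ast_\varphi$, where $K$ is a one-relator group with torsion whose defining relator is strictly shorter and again positive (since the Magnus rewriting converts each positive occurrence of a generator into a positive occurrence of a conjugate generator indexed by the appropriate integer), and whose associated subgroups are Magnus subgroups of $K$. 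The inductive hypothesis gives that $K$ is residually finite. Since $G$ embeds into $G'$, it suffices to prove that $G'$ itself is residually finite.

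The principal obstacle is this last step: HNN-extensions of residually finite groups are not in general residually finite, so one must exploit the specific structure to ensure compatibility of finite quotients with the associated Magnus subgroups and the identifying isomorphism $\varphi$. Concretely, for each non-trivial $w \in G'$ whose Britton-reduced form (Lemma~\ref{Lem:BrittonsLemma}) involves the stable letter, one must produce a finite quotient $K \twoheadrightarrow \overline{K}$ in which $\varphi$ descends to an isomorphism between the images of the two associated subgroups, and in which the image of $w$ nevertheless survives. The B.~B.~Newman Spelling Theorem (Theorem~\ref{Thm:BBNewmanSpellingTheorem}) applies to $K$ and yields the combinatorial rigidity required to detect membership in Magnus subgroups via finite quotients, while positivity should play a decisive role in the construction of explicit finite images (for instance via homomorphisms into finite permutation or wreath-product groups built from the cyclic structure governing the positive exponent sums). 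Engineering such compatible finite quotients is the technical heart of the argument, and is where the hypothesis that $r$ is positive, rather than merely cyclically reduced with torsion, is expected to be indispensable.
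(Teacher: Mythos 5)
Your proof strategy breaks down at the very first step of the induction, and the breakdown is fatal rather than repairable. You claim that the Magnus trick ``only substitutes positive powers of new generators into $r$,'' hence preserves positivity while producing a generator with exponent sum zero in the new relator. These two requirements are mutually exclusive. If $r$ is a positive word, then every generator occurring in $r$ has strictly positive exponent sum, namely its number of occurrences. Any substitution $a_i \mapsto w_i$ in which the $w_i$ are themselves positive words produces a new relator whose exponent sum on a new generator $b$ is $\sum_i \sigma_{a_i}(r)\,\sigma_b(w_i)$, a sum of nonnegative terms that is strictly positive whenever $b$ appears at all. There is therefore no positivity-preserving substitution that introduces a generator of exponent sum zero. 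The Magnus trick, as actually carried out in the Magnus--Moldavanskii hierarchy, does introduce such a generator, but only via Nielsen-type substitutions involving negative exponents (of the form $a \mapsto yt^{-\beta}$, $b \mapsto t^{\alpha}$), which destroy positivity. Consequently the larger one-relator group $G'$ into which you embed $G$ does not have a positive defining relator, the base group $K$ of the ensuing HNN-decomposition likewise does not, and your inductive hypothesis can never be invoked. The premise that ``positivity of the defining relator is preserved as one descends the hierarchy'' is simply false, so the induction never gets started.

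There is a second, independent shortcoming: even granting the inheritance of positivity, you do not prove that the HNN-extension $G' \isom K \ast_\varphi$ is residually finite. HNN-extensions of residually finite groups over finitely generated subgroups are in general not residually finite -- $\BS(2,3)$ is an HNN-extension of $\Z$ over cyclic subgroups and is not residually finite -- and you explicitly defer the construction of compatible finite quotients, which you correctly identify as ``the technical heart of the argument,'' to unstated future work. Invoking the B.\ B.\ Newman Spelling Theorem does not by itself yield the required separability of the Magnus subgroups in finite quotients compatible with $\varphi$; supplying that control is exactly where the difficulty lies, and it is hard enough that the statement without the positivity hypothesis was not settled until Wise's work decades later. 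As it stands, the proposal contains no argument for the inductive step, and the reduction it relies on is structurally unavailable: whatever Egorov's actual argument is, it must exploit positivity in a way that does not require propagating it through the Magnus hierarchy.
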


For extensions of this result, we refer the reader to Wise \cite{Wise2001}; for comparisons with positive torsion-free one-relator groups, see \S\ref{Subsec:Quotients+moreresidualproperties}. Finally, we mention a connection with another ``residual-like'' property. Tang \cite{Tang1982} proved that the groups $\pres{}{a,b}{(a^m b^n)^k = 1}$ where $k > 1$ are all \textit{conjugacy separable}, a stronger property than residual finiteness. Recall that a group $G$ is conjugacy separable if for any $g, h \in G$ which are not conjugate, there exists some finite quotient $Q$ of $G$ such that the images of $g$ and $h$ are not conjugate in $Q$. Thus conjugacy separability is to the conjugacy problem what residual finiteness is to the word problem. Further results on conjugacy separability in some classes of one-relator groups with torsion were obtained by Allenby \& Tang \cite{Allenby1986} a few years later, see also \cite{Kim1995}. Today, like residual finiteness, we know that one-relator groups with torsion are conjugacy separable, a result due to Minasyan \& Zalesski \cite{Minasyan2013}.

\subsection{The isomorphism problem for one-relator groups with torsion}\label{Subsec:Torsion-isomorphism-problem}

By deep results of Dahmani \& Guirardel \cite{DG11}, we know today that the isomorphism problem for all hyperbolic groups is decidable. As mentioned in \S\ref{Subsec:BBNewman} (Theorem~\ref{Thm:BBNewmanSpellingTheorem}), all one-relator groups with torsion are hyperbolic. Hence, the isomorphism problem is decidable for one-relator groups with torsion. However, this problem was by no means a mystery before this point, and it was known to be decidable in a large number of cases. Thus, we will now go through some of this earlier literature on the subject, focussing particularly on the remarkable results by Pride. 

First, in line with asking about isomorphisms of one-relator groups, it is natural to ask whether there is a converse to the isomorphism result from Theorem~\ref{Thm:If-Rn=Sn-thenR=S}. Such a converse would show that the isomorphism problem for one-relator groups would be somewhat more tractable, since it would show that it would be sufficient to solve it in the torsion case. However, B.\ B.\ Newman suggested to Pride that a counterexample to this converse might exist \cite[p. 71]{Pride1974}, and Pride was able to construct such a counterexample: 

\begin{theorem}[Pride, 1974 {\cite[Theorem~4.7]{Pride1974}}]
Let $n \geq 1$ and consider the groups
\[
B_n = \pres{}{a,t}{(a^{-3}t^{-1}a^2t)^n = 1} \quad \text{and} \quad C_n = \pres{}{a,t}{(a^{-1}[t,a]^2)^n = 1}.
\]
Then $B_n \cong C_n$ if and only if $n=1$. 
\end{theorem}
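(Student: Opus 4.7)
The plan is to split into two directions. For $n = 1$, I establish $B_1 \cong C_1$ by identifying both groups with the Baumslag--Solitar group $\BS(2,3)$ via explicit Tietze transformations. For $n \geq 2$, I combine the rigidity of two-generator one-relator groups with torsion (Pride's solution to their isomorphism problem, mentioned at the close of this section) with Whitehead's algorithm to rule out an isomorphism.

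Explicitly, $B_1 = \pres{}{a,t}{t^{-1}a^2t = a^3}$ is $\BS(2,3)$ by inspection. For $C_1$, the defining relation $a = [t,a]^2$ suggests introducing $s := [t,a] = tat^{-1}a^{-1}$, so that $a = s^2$. Eliminating $a$ via this Tietze transformation turns $s = tat^{-1}a^{-1}$ into $s = ts^2 t^{-1}s^{-2}$, i.e.\ $t^{-1}s^3 t = s^2$, presenting $C_1$ as $\BS(3,2) \cong \BS(2,3)$. The essential feature of this isomorphism is the non-free substitution $a = s^2$: it is \emph{not} induced by any automorphism of the underlying free group $F(a,t)$, which is precisely what causes Conjecture 3.4 to fail in the torsion-free case.

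For the other direction, suppose $B_n \cong C_n$ with $n \geq 2$. By rigidity of two-generator one-relator groups with torsion, any such isomorphism is induced by an automorphism $\phi \in \Aut(F_2)$ sending the normal closure of $r_1^n = (a^{-3}t^{-1}a^2 t)^n$ to that of $r_2^n = (a^{-1}[t,a]^2)^n$. Magnus's Conjugacy Theorem (Theorem 3.3) then forces $\phi(r_1^n)$ to be conjugate to $r_2^{\pm n}$, and unique root extraction in the free group $F_2$ yields that $\phi(r_1)$ is conjugate to $r_2^{\pm 1}$. Whitehead's algorithm rules this out: a direct case-check over the finite list of Whitehead automorphisms shows that the cyclically reduced words $r_1$ and $r_2$ are both already at their Whitehead minima as cyclic words, of cyclic lengths $7$ and $9$ respectively, so their $\Aut(F_2)$-orbits of cyclic words are disjoint. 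The main obstacle is the rigidity step: the $n = 1$ isomorphism above explicitly requires the non-free substitution $a = s^2$, which is unavailable for $n \geq 2$, and formalising this as a non-existence theorem for isomorphisms is the heart of the argument and relies on the B.\ B.\ Newman Spelling Theorem via Pride's analysis; by contrast the Whitehead check is entirely routine.
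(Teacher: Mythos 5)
The $n=1$ direction is correct; your Tietze computation showing $B_1 \cong \BS(2,3) \cong C_1$ via the substitution $a = [t,a]^2 = s^2$ is exactly the observation the survey highlights as a counterexample to Magnus's conjecture. The gap is in the $n \geq 2$ direction, at precisely the step you yourself flag as ``the heart of the argument.'' You assert that any isomorphism $B_n \cong C_n$ must be induced by an automorphism of $F_2$ acting on the normal closures of the relators, citing ``rigidity of two-generator one-relator groups with torsion.'' But the rigidity result you are invoking --- Pride's 1977 theorem on $T$-systems --- yields a \emph{single} $T$-system of generating pairs only when $n=2$ (and, numerically, whenever $\tfrac{1}{2}\phi(n)=1$, i.e.\ $n \in \{2,3,4,6\}$). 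For $n \geq 5$ with $n \neq 6$, the count $\tfrac{1}{2}\phi(n)$ is at least $2$, so a putative isomorphism $B_n \to C_n$ could a priori carry the generating pair of $B_n$ into a \emph{different} $T$-system of $C_n$; in that case $r_1^n$ and $r_2^{\pm n}$ need not lie in the same $\Aut(F_2)$-orbit, and the chain Magnus Conjugacy Theorem $\Rightarrow$ root extraction $\Rightarrow$ Whitehead reduction that you build on top of it collapses. Deferring to ``Pride's analysis via the B.\ B.\ Newman Spelling Theorem'' names the tools but does not supply the missing argument that the two presentations necessarily land in the same $T$-system.

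There is also a chronological signal that the original proof must be different from what you sketch: the theorem is cited from Pride's 1974 thesis, while the $T$-system and isomorphism-problem machinery you lean on dates from his 1977 paper, so his 1974 argument was presumably more elementary and specific to this family. The survey itself does not reproduce Pride's proof (it states the result and discusses only $n=1$), so I cannot compare routes in detail; but as written, the $n \geq 2$ direction is incomplete. The Whitehead step --- that $r_1$ and $r_2$ have Whitehead-minimal cyclic lengths $7$ and $9$, hence lie in distinct $\Aut(F_2)$-orbits --- is plausible and, as you say, routine once the rigidity step is genuinely established, but it needs to actually be carried out rather than asserted.
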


Thus, this theorem provided a counterexample to the converse of Theorem~\ref{Thm:If-Rn=Sn-thenR=S}. Indeed, it also provided another counterexample to an old conjecture by Magnus (Conjecture~\ref{Conj:Magnus-conj}), since it proves that there is an isomorphism
\[
B_1 = \pres{}{a,t}{t^{-1} a^2 t = a^3} \cong \pres{}{a,t}{[t,a]^2 = a} = C_1
\]
but it is readily checked that $t^{-1}a^2ta^{-3}$ and $[t,a]^2a^{-1}$ are not related by a free group automorphism (this example was also obtained by Brunner \cite{Br74}, cf.\ \cite{Br76}). 

The above negative result is, in hindsight, not surprising: the isomorphism problem for one-relator groups with torsion turns out to be significantly easier than in the torsion-free case. To state some of the positive results, we recall the notion of a \textit{$T$-system} (Ger.\ \textit{Transitivitätssystem}, i.e.\ transitivity system) of a group, following \cite{Pr75}. Let $G$ be an $n$-generated group, and let 
\[
\mathfrak{g} = (g_1, \dots, g_n), \quad \textnormal{and} \quad \mathfrak{g}' = (g'_1, \dots, g'_n)
\]
be two generating tuples for $G$. We say that $\mathfrak{g}$ and $\mathfrak{g}'$ are \textit{Nielsen equivalent} if there is some automorphism 
\[
x_i \mapsto Y_i(x_1, \dots, x_n), \quad i = 1, 2, \dots, n
\] of the free group 
$F_n$ on $x_1, \dots, x_n$ such that $g_i' = Y_i(g_1, \dots, g_n)$. Thus, for example, the tuples $(g_1, g_2)$ and $(g_2g_1^{-1}, g_1)$ are always Nielsen equivalent. We say that $\mathfrak{g}$ and $\mathfrak{g}'$ lie in the same \textit{$T$-system} if there is some automorphism $\varrho \in \Aut(G)$ such that $\mathfrak{g}'$ is Nielsen equivalent to $\varrho(\mathfrak{g})$, where
\[
\varrho(\mathfrak{g}) = (\varrho(g_1), \dots, \varrho(g_n)).
\]
This equivalence relation was defined by B.\ H.\ Neumann \& H.\ Neumann \cite{Neumann1951} in 1951. For our purposes, the importance of $T$-systems comes via Magnus' Conjugacy Theorem (Theorem~\ref{Thm:Conjugacy-Theorem}). First, continuing the above notation, let
\[
\pres{}{x_1, \dots, x_n}{r_1 = 1, \dots, r_m = 1}
\]
is a presentation of $G$ with respect to the generating tuple $\mathfrak{g}$ (i.e.\ the kernel of the map $x_i \mapsto g_i$ coincides with the normal closure of the $r_j$). Then $\mathfrak{g}'$ lies in the same $T$-system as $\mathfrak{g}$ if and only if there is some $\phi \in \Aut(F_n)$ such that 
\[
\pres{}{x_1, \dots, x_n}{\phi(r_1) = 1, \dots, \phi(r_m) = 1}
\]
is a presentation of $G$ with respect to the generating tuple $\mathfrak{g}'$. In general, $G$ may have many different $m$-relator presentations with respect to $\mathfrak{g}'$, and one of the difficulties (indeed undecidability) of the isomorphism problem in full generality arises from this issue. However, when $m=1$, i.e.\ the one-relator case, Magnus' Conjugacy Theorem (Theorem~\ref{Thm:Conjugacy-Theorem}) tells us that two generating sets lying in the same $T$-system gives very strict control over the defining relator:

\begin{proposition}
Two one-relator presentations $\pres{}{A}{r=1}$ and $\pres{}{A}{s=1}$  of a group $G$ are associated with elements from the same $T$-system in $G$ if and only if $s$ is equal in $F_A$ to the image of $r^{\pm 1}$ under some automorphism of $F_A$. 
\label{Prop:MagnusConjugacyTSystems}
\end{proposition}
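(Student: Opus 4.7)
The plan is to reduce both directions of the equivalence to Magnus' Conjugacy Theorem (Theorem~\ref{Thm:Conjugacy-Theorem}), with the bridge being the reformulation of $T$-system equivalence stated just before the proposition: two generating tuples $\mathfrak{g}, \mathfrak{g}'$ of $G$ lie in a common $T$-system if and only if there exists $\phi \in \Aut(F_A)$ such that $\pres{}{A}{\phi(r)=1}$ is a presentation of $G$ with respect to $\mathfrak{g}'$, where $\pres{}{A}{r=1}$ is the original presentation with respect to $\mathfrak{g}$. Once this reformulation is in place, the entire question collapses to a comparison of normal closures in $F_A$, which is exactly the setting controlled by Magnus' Conjugacy Theorem.

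For the forward implication, suppose the two presentations correspond to the same $T$-system. The reformulation produces $\phi \in \Aut(F_A)$ such that $\pres{}{A}{\phi(r)=1}$ presents $G$ with respect to $\mathfrak{g}'$. Since $\pres{}{A}{s=1}$ is by hypothesis another presentation of $G$ with respect to $\mathfrak{g}'$, the normal closures $\llangle \phi(r) \rrangle_{F_A}$ and $\llangle s \rrangle_{F_A}$ coincide. Theorem~\ref{Thm:Conjugacy-Theorem} then forces $\phi(r) = w\,s^{\varepsilon}\,w^{-1}$ for some $w \in F_A$ and some sign $\varepsilon = \pm 1$. Precomposing $\phi$ with the inner automorphism of $F_A$ given by conjugation by $w^{-1}$ yields $\psi \in \Aut(F_A)$ with $\psi(r) = s^{\varepsilon}$; applying $\psi^{-1}$ (if needed with an extra inversion) then exhibits $s$ as the image of $r^{\pm 1}$ under an automorphism of $F_A$, as required.

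For the reverse implication, suppose $s = \phi(r^{\pm 1})$ in $F_A$ for some $\phi \in \Aut(F_A)$. Since automorphisms of $F_A$ preserve normal closures, $\llangle s \rrangle_{F_A} = \phi(\llangle r \rrangle_{F_A})$, so $\phi$ descends to an isomorphism $F_A/\llangle r \rrangle_{F_A} \to F_A/\llangle s \rrangle_{F_A}$, and identifying each quotient with $G$ via the respective presentation produces an automorphism $\varrho \in \Aut(G)$. Unwinding the construction, $\varrho$ sends each generator $g_a \in \mathfrak{g}$ to the element of $G$ represented by the word $\phi(a)$ in $\mathfrak{g}'$; consequently $\varrho(\mathfrak{g})$ is obtained from $\mathfrak{g}'$ by applying $\phi$ componentwise, and this is exactly the definition of a Nielsen equivalence between $\varrho(\mathfrak{g})$ and $\mathfrak{g}'$. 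Hence $\mathfrak{g}$ and $\mathfrak{g}'$ share a $T$-system.

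The main obstacle, such as there is one, is purely bookkeeping: one must track (i) the sign ambiguity $r^{\pm 1}$ produced by the Conjugacy Theorem, (ii) the inner automorphism absorbing the residual conjugator $w$, and (iii) the direction of the Nielsen equivalence between tuples in $G$ versus the direction of the action of $\phi$ on $F_A$. All of the genuine content is supplied by Theorem~\ref{Thm:Conjugacy-Theorem}; no further one-relator machinery beyond this and the preparatory reformulation of $T$-systems is required.
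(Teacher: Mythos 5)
Your argument is the same one the paper implicitly intends: reduce both directions to the reformulation of $T$-system equivalence in terms of $\Aut(F_A)$ together with Magnus' Conjugacy Theorem (Theorem~\ref{Thm:Conjugacy-Theorem}); the paper states the proposition as an immediate consequence of that combination, and you just supply the bookkeeping. Two cosmetic slips in the forward direction: to turn $\phi(r)=ws^{\varepsilon}w^{-1}$ into $\psi(r)=s^{\varepsilon}$ you should \emph{post}-compose $\phi$ with conjugation by $w^{-1}$ (precomposing as written gives $\phi(w^{-1}rw)$, not $s^{\varepsilon}$), and once $\psi(r)=s^{\varepsilon}$ you already have $s=\psi(r^{\varepsilon})$, so the final step about "applying $\psi^{-1}$" is unnecessary.
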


In particular, if a one-relator group $G$ only has one $T$-system of generating tuples, then the isomorphism problem is decidable for the class of one-relator groups with respect to that one-relator group; in other words, it is decidable whether a given one-relator group is isomorphic to $G$ or not. Brunner's example of $\BS(2,3)$ above shows that in general a one-relator group may have infinitely many $T$-systems of generating tuples. In the case of one-relator groups with torsion, however, more can be said. 

First, for example, consider the class of groups $S_n = \pres{}{a,b}{[a,b]^n = 1}$, where $n > 1$. These groups $S_n$ are virtually surface groups $\pi_1(\Sigma_g)$, indeed it is not hard to show that $\ker(S_n \twoheadrightarrow D_{2n})$ is isomorphic to $\pi_1(\Sigma_{2n-1})$ (see e.g.\ \cite{Baumslag2008}). The fact that $S_n$ is Fuchsian was observed also by Rosenberger \cite{Rosenberger1973}, who studied these groups in 1973. He furthermore proved that the only generating sets $\{ u, v\}$ of $S_n$ are those of the form where $[u, v]$ is conjugate to $[a, b]^{\varepsilon}$, where $\varepsilon = \pm 1$ (see also Purzitsky \& Rosenberger \cite{Purzitsky1972, Purzitsky1973} and Pride \cite{Pride1973}, cf.\ \cite[p.~72]{Pride1974}). By Nielsen's result (Proposition~\ref{Prop:Dehn-Nielsen-commutator}), this implies that $\{ u, v\}$ is a basis of the underlying free group, so in particular $S_n$ has only one $T$-system of generators. In 1972, Rosenberger \cite{Rosenberger1972} also considered the ``higher genus'' $S_{n,g}$ variants of the groups $S_n$, defined by 
\begin{equation}
\pres{}{a_1, b_1, \dots, a_g, b_g}{([a_1, b_1] [a_2, b_2] \cdots [a_g, b_g])^n = 1}
\end{equation}
with $n, g \geq 1$, and proved that the isomorphism problem for one-relator groups relative to this class is decidable. For other related examples, and a slight generalization, see \cite{Rosenberger1976, Rosenberger1976b}. 

However, a few years later, in 1977 Pride \cite{Pr77} was able to clarify the situation of generating tuples of all two-generator one-relator groups with torsion. He proved that if $G = \pres{}{A}{r^n = 1}$ is a one-relator group with torsion, with $n > 1$ and $r$ not a proper power, then $G$ has only one $T$-system of generating tuples (when $n=2$) or $\frac{1}{2}\phi(n)$ such systems when $n>2$. Here $\phi$ denotes Euler's totient function. He moreover showed that these $T$-systems can be effectively found, and hence deduced the following remarkable theorem: 

\begin{theorem}[Pride, 1977 \cite{Pr77}]
The isomorphism problem is decidable for two-generator one-relator groups with torsion. \label{Thm:Pride-isomorphism-theorem}
\end{theorem}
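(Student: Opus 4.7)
The plan is to reduce the isomorphism problem in this class to the decidability of $\Aut(F_2)$-equivalence of words in the free group $F_2$, which is classical via Whitehead's algorithm \cite{Wh36} (cf.\ the discussion following Conjecture~\ref{Conj:Magnus-conj}). The bridge between the two problems is provided by Proposition~\ref{Prop:MagnusConjugacyTSystems}: two one-relator presentations of a \emph{fixed} group correspond to generating pairs in the same $T$-system if and only if their defining relators lie in the same $\Aut(F_2)$-orbit (up to inversion).

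First I would reduce to a common torsion order. Given two candidate presentations $G_i = \pres{}{a,b}{r_i^{n_i} = 1}$ with $r_i$ cyclically reduced and not a proper power, the integer $n_i$ is an isomorphism invariant of $G_i$: by the Torsion Theorem (Theorem~\ref{Thm:torsion-theorem}) it is the maximal order of a torsion element of $G_i$. Hence one may assume $n_1 = n_2 =: n > 1$, and discard the problem otherwise. Now any isomorphism $\psi \colon G_1 \to G_2$ carries the generating pair $(a,b)$ of $G_1$ to a generating pair of $G_2$; transporting the defining relator $r_1^n$ back along $\psi^{-1}$ yields a one-relator presentation of $G_2$ whose generating pair sits in some $T$-system of $G_2$.

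The technical heart of the argument is then the effective rigidity statement: a two-generator one-relator group with torsion has only finitely many $T$-systems of generating pairs -- one if $n = 2$, and $\tfrac{1}{2}\phi(n)$ if $n > 2$ -- and a representative of each such $T$-system can be produced explicitly. Granting this, the algorithm runs as follows: compute representative generating pairs $(u_1, v_1), \dots, (u_k, v_k)$ for the $T$-systems of $G_2$; for each $j$, rewrite $r_2^n$ in the free basis $(u_j, v_j)$ of $F_2$ to obtain a defining word $s_j \in F_2$; then apply Whitehead's algorithm to decide whether $r_1^n$ lies in the $\Aut(F_2)$-orbit of $s_j^{\pm 1}$ for some $j$. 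By Proposition~\ref{Prop:MagnusConjugacyTSystems} together with the transport argument above, this holds for some $j$ if and only if $G_1 \cong G_2$, completing the decision procedure.

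The main obstacle is the rigidity statement itself, namely the effective enumeration of $T$-system representatives. This requires very tight control over which pairs of words can generate $G$, and it is precisely here that the torsion hypothesis is essential. The proof should use the B.\ B.\ Newman Spelling Theorem (Theorem~\ref{Thm:BBNewmanSpellingTheorem}) and the strengthened \textit{Freiheitssatz} (Theorem~\ref{Thm:NewmanStrengthenedFrei}) to force any generating pair of $G$ to be Nielsen equivalent, modulo an automorphism of $G$, to a pair of bounded and computable complexity; the residual flexibility consists of certain ``twists'' by units modulo $n$, which accounts for the appearance of $\tfrac{1}{2}\phi(n)$. The contrast with Brunner's torsion-free example $\BS(2,3)$, which carries infinitely many $T$-systems, shows why no such rigidity can survive the removal of the torsion hypothesis, and is consistent with the fact that the isomorphism problem for torsion-free one-relator groups remains open.
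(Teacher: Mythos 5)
Your proposal matches the paper's argument essentially step for step: reduce to a common torsion exponent via the Torsion Theorem, invoke Pride's rigidity result that a two-generator one-relator group with torsion has finitely many ($1$ or $\tfrac{1}{2}\phi(n)$) effectively computable $T$-systems of generating pairs, translate via Proposition~\ref{Prop:MagnusConjugacyTSystems} to $\Aut(F_2)$-equivalence of relators, and finish with Whitehead's algorithm; the paper likewise attributes the rigidity step to the B.\ B.\ Newman Spelling Theorem together with induction down the hierarchy. One small imprecision worth tidying: the phrase ``rewrite $r_2^n$ in the free basis $(u_j,v_j)$ of $F_2$'' conflates a generating pair of $G_2$ with a basis of $F_2$ -- an arbitrary lift of $(u_j, v_j)$ to $F_2$ need not generate $F_2$ -- so part of what Pride's ``effective'' enumeration must supply is, for each $T$-system, either a representative whose lift is a genuine $F_2$-basis (from which $s_j$ can be read off by applying the inverse automorphism to $r_2^{n_2}$) or a certificate that the $T$-system carries no one-relator presentation and can be discarded.
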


The method of proof involves much use of the B.\ B.\ Newman Spelling Theorem and, of course, induction on the length of the Magnus--Moldavanskii hierarchy. Beyond the isomorphism problem, other structural information can also be deduced from a one-relator group having only one $T$-system of generating tuples. 

\begin{theorem}[Pride, 1977 \cite{Pr77}]
Let $G$ be a two-generator one-relator group with torsion. Then:
\begin{enumerate}
\item $G$ is a hopfian group.
\item The automorphism group of $G$ is finitely generated;
\end{enumerate}\label{Thm:Pride-2-gen-hopfian}
\end{theorem}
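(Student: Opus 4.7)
The key input is Pride's preceding result: a two-generator one-relator group $G = \pres{}{x_1,x_2}{r^n=1}$ with torsion ($n>1$, $r$ not a proper power) has only finitely many $T$-systems of generating pairs (one if $n=2$, $\tfrac{1}{2}\phi(n)$ if $n>2$). I combine this with Magnus' Conjugacy Theorem (Theorem~\ref{Thm:Conjugacy-Theorem}), in the form of Proposition~\ref{Prop:MagnusConjugacyTSystems}. Write $\mathfrak{g}=(g_1,g_2)$ and let $H\leq\Aut(G)$ be the subgroup of automorphisms induced by $\psi\in\Aut(F_2)$ with $\psi(r^n)$ conjugate to $(r^n)^{\pm 1}$; by Magnus Conjugacy, these are precisely the $\psi$ that stabilize $\llangle r^n\rrangle$. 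A basic observation is that for any $\alpha\in\Aut(G)$, the pair $\alpha(\mathfrak{g})$ has defining relator $r^n$, since $w\in F_2$ lies in the kernel of $x_i\mapsto\alpha(g_i)$ iff $w(g_1,g_2)\in\ker\alpha=\{1\}$; by Proposition~\ref{Prop:MagnusConjugacyTSystems}, $\alpha(\mathfrak{g})$ lies in the $T$-system of $\mathfrak{g}$.

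For (1), let $\varphi\colon G\twoheadrightarrow G$ be a surjective endomorphism. Each iterate $\varphi^k(\mathfrak{g})$ is again a generating pair, so pigeonhole across the finitely many $T$-systems yields $i<j$ with $\varphi^i(\mathfrak{g})$ and $\varphi^j(\mathfrak{g})=\varphi^{j-i}(\varphi^i(\mathfrak{g}))$ in the same $T$-system. Proposition~\ref{Prop:MagnusConjugacyTSystems} then produces $\alpha\in\Aut(G)$ and $\psi\in\Aut(F_2)$ with $\varphi^{j-i}=\alpha\circ\beta_\psi$ on $\varphi^i(\mathfrak{g})$, where $\beta_\psi$ is the endomorphism of $G$ induced by $\psi$. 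Magnus' Conjugacy Theorem forces $\psi$ to preserve $\llangle r^n\rrangle$; since $\psi\in\Aut(F_2)$, the inverse $\psi^{-1}$ also preserves this normal closure, so $\beta_\psi\in\Aut(G)$. Hence $\varphi^{j-i}$ is an automorphism of $G$, forcing $\ker\varphi\subseteq\ker\varphi^{j-i}=\{1\}$, and $G$ is hopfian.

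For (2), the decomposition above gives $\alpha=\gamma\circ\beta_\psi$ for some $\gamma\in\Aut(G)$, $\beta_\psi\in H$. The subgroup $H$ is itself finitely generated, since it is a quotient of the stabilizer in $\Aut(F_2)$ of the conjugacy class of $r^n$ (up to inversion), which is finitely presented by McCool's 1975 theorem on stabilizers in $\Aut(F_n)$. What remains is the main obstacle: showing $[\Aut(G):H]<\infty$. The $T$-system count does not immediately yield this, because multiple Nielsen classes can in principle inhabit the single $T$-system containing $\Aut(G)\cdot\mathfrak{g}$, and $\Aut(G)/H$ acts faithfully on the Nielsen classes in that $T$-system. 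Pride's detailed analysis through the Magnus--Moldavanskii hierarchy (Theorem~\ref{Thm:Magnus-Moldavanskii}), combined with the strong combinatorial control on torsion relators provided by the B.\ B.\ Newman Spelling Theorem (Theorem~\ref{Thm:BBNewmanSpellingTheorem}), is what bounds the Nielsen classes inside each $T$-system and thus yields $[\Aut(G):H]<\infty$, completing the proof that $\Aut(G)$ is finitely generated.
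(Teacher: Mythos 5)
The paper does not prove Theorem~\ref{Thm:Pride-2-gen-hopfian}: it is stated as a consequence of Pride's $T$-system analysis with a reference to \cite{Pr77} and no in-text argument, so there is no proof to compare against. You have assembled the right ingredients (Pride's $T$-system result, Magnus' Conjugacy Theorem, McCool's finite presentability of stabilizers), but both parts of your argument have genuine gaps, and the root cause of each is the same: the bare \emph{count} of $T$-systems is too weak an input for either conclusion.

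For (1): write $\varphi_i := \varphi^i\circ\varphi_0$ for the presentation map of $\varphi^i(\mathfrak g)$ and set $N_i := \ker\varphi_i$; if $\ker\varphi\neq 1$ then $N_0 = \llangle r^n\rrangle\subsetneq N_1\subsetneq\cdots$ \emph{strictly}. Unwinding the $T$-system relation between $\varphi^i(\mathfrak g)$ and $\varphi^j(\mathfrak g)$ gives $\varphi_j = \alpha\circ\varphi_i\circ\psi$ with $\alpha\in\Aut(G)$ and $\psi\in\Aut(F_2)$, hence on kernels $\psi(N_j) = N_i\subsetneq N_j$. So $\psi$ carries $N_j$ \emph{properly} inside itself, and nothing is ``preserved.'' Magnus' Conjugacy Theorem cannot repair this: it has equality of normal closures as its \emph{hypothesis} and cannot be used to manufacture it, whereas your containment is strict by assumption. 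What you would need is a positive reason why no $\psi\in\Aut(F_2)$ can carry a normal subgroup $N$ with $F_2/N\isom G$ properly into itself, but that is essentially a restatement of the hopficity being proved and is exactly where the Magnus--Moldavanskii hierarchy and Spelling Theorem must do their work. A secondary problem: invoking Proposition~\ref{Prop:MagnusConjugacyTSystems} requires the presentations with kernels $N_i$ and $N_j$ to be \emph{one-relator}, i.e.\ that each $N_k$ is the normal closure of a single element; for $k>0$ this is not given and is itself a non-trivial part of Pride's analysis.

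For (2): your bookkeeping is correct. The subgroup $H$ (automorphisms induced by $\Aut(F_2)$-stabilizers of $\{[r^n],[r^{-n}]\}$) is precisely the $\Aut(G)$-stabilizer of the Nielsen class of $\mathfrak g$, it is finitely generated via McCool, and $[\Aut(G):H]$ is the number of Nielsen classes inside the $T$-system of $\mathfrak g$. But bounding that index \emph{is} the theorem. A $T$-system is, by definition, a single $\Aut(G)$-orbit of Nielsen classes, and it may a priori contain infinitely many of them (precisely when $\Aut(G)$ has automorphisms not induced by $\Aut(F_2)$); knowing that $G$ has one or $\tfrac{1}{2}\phi(n)$ $T$-systems says nothing about the number of Nielsen classes within any one of them. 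You explicitly flag this as ``the main obstacle'' and then attribute its resolution to ``Pride's detailed analysis'' — but that is a citation, not a proof, and it concedes the step that carries all the content.
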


We remark that Rosenberger was able to extend these methods to a slightly more general setting; we do not recount all such extensions here, but mention a typical example. Let $X_1 \cap X_2 \neq \varnothing$, let $w_i \in F_i$ for $i \in \{ 1, 2\}$ be two words. Then Rosenberger \cite{Rosenberger1977} gave conditions under which the group $\pres{}{X_1, X_2}{(w_1 w_2)^n = 1}$ with $n>1$ only has one $T$-system of generating tuples, and hence the isomorphism problem for one-relator groups is decidable with respect to this class.

\section{Subgroups of one-relator groups}\label{Sec:7-Subgroups-of-OR-groups}

\noindent Subgroups of zero-relator groups, i.e.\ free groups, are themselves zero-relator groups; this is the Nielsen--Schreier Theorem, as presented in \S\ref{Subsec:Nielsen-Schreier}. Similarly, many subgroups of one-relator groups are free, as the \textit{Freiheitssatz} tells us. Thus, it seems natural to ask: what are the subgroups of one-relator groups? In particular, is every subgroup of a one-relator group a one-relator group? The answer to this latter question is emphatically \textit{no}. The subgroup structure of one-relator group carries a great richness, and although we today know that all one-relator groups are coherent, much was known already before this was proved. In this section, we will present several different flavours of such results about subgroups of one-relator groups.

We first remark that there is, in general, an abundance of subgroups in one-relator groups. Baumslag \& Miller \cite{Baumslag1986} prove that certain one-relator groups contain many non-isomorphic subgroups. Indeed, they prove, using a simple argument that as soon as a group $G$ contains a countably infinite number of non-isomorphic, freely indecomposable subgroups, then $G \ast \Z$ contains continuously many non-isomorphic subgroups. For example, one can take the solvable Baumslag--Solitar group
\[
\BS(1,2) = \pres{}{a,b}{b^{-1}ab = a^2}
\]
in which the subgroup $E_i = \langle a, b^{2^i} \rangle$ is solvable, and $E_i \not\cong E_j$ if $i \neq j$ since $H_1(E_i, \Z)$ clearly grows in $i$. Thus $\BS(1,2) \ast \Z$ contains continuously many non-isomorphic subgroups. Note also that every one-relator group can be embedded into a $2$-generator one-relator group (see e.g.\ \cite[p. 259]{Magnus1966}), and hence there are $2$-generator one-relator groups containing continuously many non-isomorphic subgroups, e.g.\ the Baumslag--Gersten group
\begin{equation}\label{Eq:Extension-of-BS(1,2)}
\operatorname{BG} = \pres{}{a,t}{ta^{-1}t^{-1}atat^{-1} = a^2}
\end{equation}
which is clearly an HNN-extension of $\BS(1,2)$. By the above results, it contains continuously many non-isomorphic subgroups. Thus it also contains countable subgroups which are not recursively presentable. Nevertheless, in general, much can be said about particular classes of subgroups of one-relator groups, particularly their \textit{finitely generated} subgroups. In this section, we will see many examples of this. 

We caution the reader that because of the often simultaneous and overlapping developments at this time, unlike previous sections, the subsections of this section (and the next) can be read more or less independently of one another, and will frequently pass from one subject to another with only some connective tissue.

\subsection{Magnus subgroups and malnormality}\label{Subsec:Magnus-subgroups}

Let $G = \pres{}{A}{r=1}$, with $r$ cyclically reduced. Then recall (from \S\ref{Subsec:Freiheitssatz}) that a \textit{Magnus subgroup} of $G$ is any subgroup generated by a proper subset of $A$. The \textit{Freiheitssatz}, being the first result about one-relator groups, was a result about these subgroups: they are free. More can be said about Magnus subgroups, however. One particular classical theme which we now expand on is that of \textit{malnormality}. For an arbitrary group $G$, a non-trivial subgroup $H \leq G$ is said to be \textit{malnormal} if $gHg^{-1} \cap H = \{ 1 \}$ for all $g \in G \setminus H$. The terminology \textit{malnormal} for this property was introduced by B.\ Baumslag in his 1965 PhD thesis \cite{BBaumslag1965}, see also \cite[p.~601]{BBaumslag1968}. The structure of a \textit{finite} group with a malnormal subgroup is rather restricted. A finite group $G$ has a (non-trivial) malnormal subgroup $H$ if and only if it is a \textit{Frobenius group}, i.e.\ it acts transitively on some finite set where no non-trivial element fixes more than one point, and some non-trivial element fixes some point. Then $H$ being malnormal is a restatement of $H$ being a \textit{Frobenius complement}, and $G$ splits as a semidirect product $G = K \rtimes H$ where $K$ is the (normal) subgroup consisting of all elements not in any conjugate of $H$ \cite{Frobenius1901}. 

In general, in infinite groups malnormal subgroups are more complicated. Indeed, even the problem of deciding whether a subgroup of a \textit{free} group is malnormal or not is non-trivial, albeit decidable \cite{Baumslag1999}. Early work on malnormal subgroups, other than that by B.\ Baumslag \cite{BBaumslag1965, BBaumslag1968}, are four other PhD theses, all of which were by students of G.\ Baumslag: the first is by Driscoll \cite{Driscoll1967}, who used it in her study of the conjugacy problem is some amalgamated free products; the second was Whittemore \cite{Whittemore1967}, who used malnormal subgroups in her study of the Frattini subgroups of amalgamated free products; and third by Taylor \cite{Taylor1963,Lewin1967}\footnote{Here, a possible source of confusion regarding surnames arises. Tekla Taylor and Jacques Lewin were the first two PhD students of G.\ Baumslag's, and they married in 1965. Tekla took her husband's surname, and the two would publish together frequently, including the article containing their famous Lewin--Lewin construction for the group ring of a torsion-free one-relator group \cite{LL78}. Nevertheless, possibly because of delays in publishing, Tekla's surname still appears as \textit{Taylor} in 1968 in her joint article with Baumslag on the centre of one-relator groups \cite{BT68}, see \S\ref{Subsec:Centre}.} in her study of groups with unique roots. The fourth, and for us the most important, was by B.\ B.\ Newman, who proved the following remarkable result in his thesis:

\begin{theorem}[{B.\ B.\ Newman, 1968 \cite[Lemma~2.3.1(I)]{Newman1968}}]
Any Magnus subgroup of a one-relator group with torsion is malnormal.\label{Thm:Newman-malnormal-thm}
\end{theorem}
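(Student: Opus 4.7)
The plan is to assume a minimal failure of malnormality and derive a contradiction via the B.\ B.\ Newman Spelling Theorem (Theorem~\ref{Thm:BBNewmanSpellingTheorem}). As in the \textit{Freiheitssatz}, we may assume that $r$ is cyclically reduced, not a proper power, and involves every letter of $A$. Let $M = \langle A_0\rangle$ with $A_0 \subsetneq A$, and fix some letter $a \in A \setminus A_0$.

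Suppose towards contradiction that $g \in G \setminus M$ satisfies $v := g u g^{-1} \in M$ for some $u \in M \setminus \{1\}$. Represent $u$ and $v$ by freely reduced words over $A_0$, and $g$ by a freely reduced word $w \in F_A$, and among all such triples $(w,u,v)$ choose one minimizing $|w|+|u|+|v|$. I would first observe that $w$ may be assumed to neither begin nor end with a letter of $A_0^{\pm 1}$: if $w = a_0 w'$ with $a_0 \in A_0^{\pm 1}$, then $a_0^{-1}g$ still lies outside $M$ and conjugates $u$ to $a_0^{-1} v a_0 \in M$, strictly shortening $w$; symmetrically a trailing letter of $w$ in $A_0^{\pm 1}$ can be absorbed into $u$. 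Form the word $W := w\, u\, w^{-1}\, v^{-1}$ over $A$. By the boundary normalization on $w$ there is no cancellation at the four junctions, so $W$ is freely reduced; it is non-trivial since $u \ne 1$; and it equals $1$ in $G$.

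Applying Theorem~\ref{Thm:BBNewmanSpellingTheorem}, $W$ contains a subword $s$ occurring as a subword of some cyclic rotation of $r^{\pm n}$ with $|s| > (n-1)|r|$. Since the complement of $s$ inside the length-$n|r|$ cyclic word $r^{\pm n}$ has length strictly less than $|r|$, the subword $s$ must contain at least one occurrence of every letter appearing in $r$; in particular $s$ contains the letter $a \notin A_0$. This immediately rules out $s$ lying entirely inside $u$ or inside $v^{-1}$. If $s$ lies entirely inside $w$ (or, symmetrically, inside $w^{-1}$), then writing $r^{\pm n} \equiv s t$ cyclically gives $|t| < |r| \le |s|$ and $s =_G t^{-1}$; substituting $t^{-1}$ for $s$ inside $w$ yields, after free reduction and renormalization of boundary letters, a strictly shorter representative of $g$, contradicting the minimal choice. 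This leaves the straddling case: $s$ crosses one (or more) of the four junctions of $W$.

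The main obstacle is the straddling case, and here the plan is again to substitute $s$ by its cyclic complement $t^{-1}$ in $r^{\pm n}$ and show that the resulting shorter word still encodes a witness $(g',u',v')$ to the failure of malnormality, contradicting minimality of $|w|+|u|+|v|$. The key observation is that a straddling $s$ factors as $s = s_1 s_2$ along the junction with one factor lying in $F_{A_0}$ (the side meeting $u$ or $v^{-1}$) and the other containing the occurrence of $a$; since $s$ is a subword of $r^n$, this forces a precise $A_0$-versus-non-$A_0$ split on the complementary fragment $t$, so that after substitution and re-parsing we can shorten one of $w$, $u$, or $v$. Each of the four junctions (and the edge case of $s$ wrapping across two of them) must be inspected separately, but the mechanism is uniform. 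At bottom, the proof expresses the fact that the Magnus subgroup $\langle A_0 \rangle$ of the underlying free group $F_A$ is itself malnormal, as it is a free factor; the Spelling Theorem is precisely what ``lifts'' that malnormality from $F_A$ to $G$.
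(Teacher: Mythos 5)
The paper gives no proof of this theorem — it simply cites Newman's thesis — so your argument is being judged on its own. The overall plan, an extremal argument combined with the Spelling Theorem applied to $W = wuw^{-1}v^{-1}$, is natural and in the right spirit, but the hard work lies precisely in the cases you defer.

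Two issues. First, a small one: minimising $|w|+|u|+|v|$ does not justify your boundary normalisation, since replacing $g$ by $a_0^{-1}g$ changes $v$ into $a_0^{-1}va_0$, whose reduced length over $A_0$ may be $|v|+2$, so the total can increase. Minimise $|w|$ alone (breaking ties arbitrarily) and the normalisation goes through. Second, and more seriously, the straddling case is a genuine gap; the claim that ``the mechanism is uniform'' does not survive the attempt to carry it out. At the $w\cdot u$ junction, $s = s_1 s_2$ with $s_1$ a non-empty suffix of $w$ and $s_2$ a non-empty $A_0$-prefix of $u$; writing $r^n \equiv p\,s\,q$ with $|pq| < |r|$ and substituting $s_1 =_G p^{-1}q^{-1}s_2^{-1}$ into $w$ gives a representative of $g$ of length $|w|-|s_1|+|pq|+|s_2|$, and the bounds you quote, namely $|s| > (n-1)|r|$ and $|pq| < |r|$, guarantee this beats $|w|$ only when $(n-2)|r|\geq 2|s_2|$, which for $n=2$ forces $s_2=1$, i.e.\ no straddling. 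Substituting $s \mapsto p^{-1}q^{-1}$ inside $W$ does shorten the word, but $\hat w\,p^{-1}q^{-1}\,\hat u\,w^{-1}v^{-1}$ no longer visibly has the shape $w'u'(w')^{-1}(v')^{-1}$ with $u',v'$ over $A_0$, and no re-parsing argument is supplied. This is exactly where torsion must enter (Magnus subgroups of torsion-free one-relator groups are merely cyclonormal, as noted just after Theorem~\ref{Thm:Newman-malnormal-thm}), and $n=2$ is where the Spelling Theorem leaves the least slack. What rescues the argument at this junction is the \emph{structural} form of the Spelling Theorem — Schupp's Theorem~\ref{Newman} of the second chapter — which pins the forced piece down as $\rho'^{n-1}\rho'_0$ with $\rho'_0$ a prefix of a cyclic conjugate of the root mentioning every letter of $A\setminus A_0$: that extra information gives $|s_2| < |\rho'_0|$ and makes the $n=2$ count close. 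But you need that stronger statement, not just the bare length estimate, and the remaining junctions (including $s$ engulfing all of $u$, or wrapping cyclically through $v^{-1}$) still have to be settled case by case.
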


This result, and a strengthening of it using the notion of \textit{strong malnormality}, was used by Newman in his solution to the conjugacy problem in all one-relator groups with torsion. Newman's result would also, many decades later, come to play a key role in Wise's recent proof of the residual finiteness of one-relator groups with torsion \cite{Wise2009}. We remark that Moldavanskii \cite{Moldavanskii1968} further proved that the intersection of Magnus subgroups generated by disjoint subsets of the generators is always trivial in a one-relator group with torsion. 

Outside the torsion case, Bagherzadeh \cite{Bagherzadeh1976, Bagherzadeh1976b} proved in 1976 that for any one-relator group $G$, if $H < G$ is a Magnus subgroup, then $H \cap g^{-1} H g$ is cyclic for $g \not\in H$. Thus any Magnus subgroup of a one-relator group is what Bagherzadeh calls \textit{cyclonormal} (a term which later appears also in work by Wise \cite{Wise2001}). It is somewhat surprising that new results are still proved about Magnus subgroups in general one-relator groups, even using mostly classical methods. Moldavanskii \cite{Moldavanskii1968} proved that if $G$ is a one-relator group $\pres{}{A}{r=1}$ with $X_1, X_2 \subseteq A$ and $X_1 \cap X_2 = \varnothing$, then $\langle X_1 \rangle \cap \langle X_2 \rangle$ is either trivial or isomorphic to $\Z$. Collins \cite{Co04} extended this result in 2004 by proving that if $X_1, X_2 \subset A$ generate two Magnus subgroups $H_1 = \langle X_1 \rangle$ and $H_2 = \langle X_2 \rangle$, then $H_1 \cap H_2$ is either free on $X_1 \cap X_2$, or else is a free product of $\Z$ with this group.

\subsection{The centre of one-relator groups}\label{Subsec:Centre}

For any group $G$, it is natural to ask about its centre $Z(G)$. This is also true of one-relator groups. Indeed, Levi, in his 1933 investigation \cite{Levi1933b} on free groups 1933, includes as an example the group $\BS(2,2) = \pres{}{x,y}{x^{-1}y^2x = y^2}$ (see \S\ref{Subsec:Baumslag-Solitar}), and notes that its centre is infinite cyclic, and generated by the element $y^2$. Generalizing such examples, in 1935 Kalashnikov \& Kurosh \cite{Kalashnikov1935} studied subgroups of amalgamated free products in which the amalgamated subgroups are all subgroups of the centres of the factors, a precursor to the tree products which we shall encounter below. Computing the centre of finitely presented groups is in general an undecidable problem\footnote{This is not a direct consequence of the Adian--Rabin theorem \cite{Adian1955,Rabin1958}, but follows from a 1959 extension of it by Baumslag, Boone \& Neumann \cite{BBN59}.}, and Remeslennikov proved that there are finitely presented groups whose centre is not finitely generated \cite{Remeslennikov1974} (answering a question of Baumslag \cite[Question~1]{Baumslag1967c}). However, it turns out that for one-relator groups, there is a complete answer. We give a brief exposition of these results in this section. 

Perhaps the first general result on the centres of one-relator groups is that the centre of a one-relator group with torsion is necessarily trivial, proved in 1960 \cite[Corollary]{KMS60}. The real impetus for the study of the centre of one-relator groups, however, came in 1964, when Murasugi \cite{Mu64} proved several beautiful theorems. Let $G = \pres{}{a_1, \dots, a_n}{R=1}$ be a one-relator group. Then Murasugi proved: first, if $n>2$, then $Z(G) = 1$.\footnote{In a sequel article, Murasugi \cite{Murasugi1965} conjectured that any group with deficiency $d \geq 2$ has trivial centre, which remains an open problem.} Furthermore, if we suppose that $n=2$, $G$ is non-abelian, and $Z(G)$ is non-trivial, then $Z(G)$ is infinite cyclic. Thus the centre of any one-relator group $G$ is either trivial, infinite cyclic, or $\Z^2$ -- and the last case happens if and only if $G \cong \Z^2$ (and there is only one cyclically reduced one-relator presentation for $\Z^2$, see \S\ref{Subsec:Applications-of-Frei}). The proofs of this pass via the \textit{Alexander polynomial} of $G$, as defined by Fox \cite{Fox1954}, and the proofs also give sufficient conditions for $Z(G)$ to be trivial. However, the question of completely characterizing when $Z(G)$ is trivial was left open. In 1968, this problem was solved by Baumslag \& Taylor \cite{BT68}, who proved that there exists an algorithm for deciding whether a one-relator group has trivial centre or not. This was the starting point for a natural line of investigation: what is the structure of one-relator groups with a non-trivial centre? 

Some partial results in this line were obtained by Meskin, Pietrowski \& Steinberg \cite{Meskin1973}. The question was fully answered by Pietrowski in 1974 (although the results already appear in his 1972 PhD thesis \cite{Pietrowski1972}) using subgroup theorems of Karrass \& Solitar. Recall that a \textit{tree product} of groups is the fundamental group of a graph of groups, where the underlying graph is a simple tree. A \textit{stem} product is a tree product in which the underlying tree is a path, introduced in \cite{KS70}. 

\begin{theorem}[Pietrowski, 1974 \cite{Pi74}]
Any one-relator group with non-trivial centre is a (finite) stem product of cyclic groups or an HNN-extension of such a group with cyclic associated subgroups.\label{Thm:Pietrowski-structure}
\end{theorem}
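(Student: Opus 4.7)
The plan is to combine Murasugi's generator-count reduction with an induction on the length of the Magnus--Moldavanskii hierarchy (\S\ref{Subsec:MM-hierarchy}), using the Karrass--Solitar subgroup theorems to control how a central element must sit inside the HNN-extensions and amalgams that appear along the hierarchy.

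First I would dispose of the easy cases via the Murasugi dichotomy recorded in \S\ref{Subsec:Centre}. If $G = \pres{}{A}{R=1}$ has $|A| \geq 3$, then $Z(G) = 1$, contradicting the hypothesis; and if $Z(G)$ has rank $\geq 2$, then $G \cong \Z^2$, which is realized as the HNN-extension of $\Z$ by the identity automorphism (a trivial stem product with a single cyclic vertex group, then stabilized). Hence we may assume that $G = \pres{}{a,b}{R=1}$ is non-abelian with $Z(G) = \langle z \rangle \cong \Z$.

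Next I would induct on the length of the Magnus--Moldavanskii hierarchy of $G$. The base case, when $G$ is free, is trivial: among free groups only $\Z$ has non-trivial centre, and it is a stem product with a single factor. For the inductive step, Theorem~\ref{Thm:Magnus-Moldavanskii} allows me (after applying the Magnus trick if necessary) to write $G$ as an HNN-extension $G_1 \ast_\phi$ in which $G_1$ is a one-relator group with strictly shorter relator and the associated subgroups $M_1, M_2$ are Magnus subgroups of $G_1$. The heart of the argument is to track $z$ through this HNN structure: by applying Britton's Lemma (Lemma~\ref{Lem:BrittonsLemma}) to the identities $[z,a] = [z,b] = 1$, together with Bagherzadeh's cyclonormality of Magnus subgroups (\S\ref{Subsec:Magnus-subgroups}), one deduces that, after conjugation, $z$ lies in $M_1 \cap M_2$ and that this intersection is cyclic. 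It follows both that $G_1$ inherits a non-trivial centre (so the inductive hypothesis applies to $G_1$) and that the associated subgroups $M_1$ and $M_2$ of the HNN-extension producing $G$ may be taken to be infinite cyclic, generated by a power of $z$. Combining the decomposition of $G_1$ furnished by the inductive hypothesis with this final HNN-extension (or amalgam, in the exponent-sum-zero case) gives the desired structure for $G$, after a verification, via the Karrass--Solitar subgroup theorems for HNN-extensions, that the resulting graph of groups is a path rather than a genuine tree.

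The main obstacle will be the central lemma forcing $z$ into $M_1 \cap M_2$ and, in turn, forcing $G_1$ to have non-trivial centre. In the torsion case the malnormality of Magnus subgroups (Theorem~\ref{Thm:Newman-malnormal-thm}) would make this immediate, but here torsion may be absent and one must argue only with Bagherzadeh's weaker cyclonormality, analyzing reduced forms for $z$ in the HNN decomposition to rule out non-trivial occurrences of the stable letter. A secondary obstacle is ensuring that the underlying graph is linear rather than genuinely branching, which should follow from the $2$-generator hypothesis by tracking how the two generators $a$ and $b$ distribute across the HNN-extension at each level of the hierarchy, but verifying it cleanly will require the full strength of the Karrass--Solitar machinery on subgroups of tree products with cyclic edge groups.
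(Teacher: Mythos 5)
Your approach — inducting directly on the Magnus--Moldavanskii hierarchy and tracking the central element through a single HNN splitting — is genuinely different from Pietrowski's. As the surrounding text of \S\ref{Subsec:Centre} indicates, Pietrowski (and then Karrass, Pietrowski \& Solitar) proceeded by first establishing that $G$ is $2$-generator and (f.g.\ free)-by-cyclic (via Murasugi and Moldavanskii's Theorem~\ref{Thm:Moldavanskii-Brown-Criteriton}), and then analysing the structure of a $2$-generator (f.g.\ free)-by-cyclic group with non-trivial centre using the Karrass--Solitar subgroup theorems for tree products; the one-relator hypothesis plays essentially no role in the second stage. That route avoids the difficulty your argument runs into, which I now describe.

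The central lemma you propose — that Britton's Lemma plus cyclonormality forces $z$ into $M_1 \cap M_2$, and hence forces $Z(G_1) \neq 1$ — is false. A central element of $G$ need not be conjugate into the base group of the MM-hierarchy splitting at all. Take the trefoil knot group $G = \pres{}{a,b}{a^2 = b^3}$, the archetypal one-relator group with non-trivial centre, $Z(G) = \langle a^2 \rangle$. Here $G^{\mathrm{ab}} \cong \Z$, so there is (up to sign) a unique epimorphism $\phi\colon G \to \Z$, and $\phi(a^2) \neq 0$: the central element acts hyperbolically, not elliptically, on the Bass--Serre tree of the splitting. Correspondingly, once the presentation is put into exponent-sum-zero form (as in the example \eqref{Eq:MM-example} and the paper's discussion of $\mathcal{T}_{2,3}$), the base group $G_1$ of the resulting HNN-extension is a rank-$2$ free group, which has trivial centre, and $z$ has positive $t$-length. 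Your induction therefore cannot get started: the inductive hypothesis gives you nothing about $G_1$. The same phenomenon occurs even when exponent sums are already zero and there are several choices of stable letter: for $G = \pres{}{a,b}{[a^2,b]}$, taking $a$ as stable letter rewrites the relator to $b_2 b_0^{-1}$, making $G_1 \cong F_2$ (trivial centre) and placing $z = a^2$ at $t$-length two, whereas taking $b$ as stable letter does put $z$ in the base. So at minimum you must specify \emph{which} splitting to use, argue that a good one exists, and handle the rank-one-abelianization case (such as the trefoil) where no choice works. A second, smaller gap is the combination step: if the inductive hypothesis returns $G_1$ in the \emph{HNN-over-a-stem-product} case, your construction presents $G$ as a double HNN of a stem product, which is not of the form asserted; you need an argument that the HNN case never arises for $G_1$ in your induction, or a separate reduction.
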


Associated to any finite stem product of cyclic groups is a finite sequence of pairs of integers, corresponding to the amalgamated subgroups. This sequence can, it turns out, be used to essentially uniquely determine the one-relator group up to isomorphism. As a consequence, we have the following corollary: 

\begin{corollary}[Pietrowski, 1974 \cite{Pi74}]
The isomorphism problem is decidable for one-relator groups with non-trivial centre. \label{Thm:Isoproblem-for-centre-is-decidable}
\end{corollary}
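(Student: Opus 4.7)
The plan is to reduce the isomorphism problem to a comparison of finite combinatorial invariants extracted from Pietrowski's structure theorem (Theorem~\ref{Thm:Pietrowski-structure}). Given two one-relator presentations $G_1 = \pres{}{A_1}{r_1 = 1}$ and $G_2 = \pres{}{A_2}{r_2 = 1}$, the first step is to run the Baumslag--Taylor algorithm on each to check that both have non-trivial centre; if exactly one fails this test then the groups are non-isomorphic, and if both fail we are outside the hypothesis. Otherwise we may assume both $G_i$ have non-trivial centre and proceed.

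The second step is to apply Theorem~\ref{Thm:Pietrowski-structure} \emph{effectively} to each $G_i$. One needs to verify that the proof of Pietrowski's structure theorem is constructive, i.e.\ that from the presentation $\pres{}{A_i}{r_i=1}$ one can compute the data of the stem product of cyclic groups (the sequence of orders of vertex groups together with the pair of exponents $(p_j, q_j)$ identifying successive edge subgroups), and, when applicable, the conjugating pair of cyclic subgroups used in the outer HNN-extension. This is the step most likely to require care, but it should follow from tracing through the Murasugi--Baumslag--Taylor--Pietrowski arguments: the Alexander polynomial and the centralizer of a central element are effectively computable from the presentation, and they produce the decomposition explicitly.

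The third step is to show that this combinatorial data is a \emph{complete} and \emph{checkable} isomorphism invariant. For a stem product of cyclic groups along cyclic edge subgroups, the underlying graph-of-groups is a path, and a classical argument (using that the centre is generated by a power of each vertex generator, that the terminal vertex groups are canonically determined up to the obvious reversal, and that Magnus' Conjugacy Theorem pins down the identification) reduces isomorphism to equality of the associated integer sequences up to reversal and the obvious normalization of each cyclic factor. In the HNN-extension case one has in addition a pair of cyclic subgroups and an isomorphism between them, but since both are central cyclic groups in the base, these are also determined up to finitely many choices by integer invariants. One then compares the finite lists of invariants extracted in the second step.

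The hard part, as flagged above, is making the structural decomposition algorithmic and proving the invariants are complete: everything else is bookkeeping. Once that is done, the algorithm is simply: (i) test non-triviality of the centre on each input; (ii) compute the invariant list for each; (iii) compare lists up to the allowed normalizations and output \textbf{yes} or \textbf{no}. This yields decidability of the isomorphism problem for one-relator groups with non-trivial centre, as claimed.
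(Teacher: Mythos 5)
Your proposal follows the same route as the paper: first use the Baumslag--Taylor algorithm to decide whether each input has non-trivial centre, then extract from Pietrowski's structure theorem (\cref{Thm:Pietrowski-structure}) the finite sequence of integer pairs encoding the stem product of cyclic groups (and the cyclic associated subgroups in the HNN case), and finally compare these sequences up to the allowed normalizations. The paper's treatment is likewise terse about the two steps you rightly flag as the real work --- that the decomposition is effectively computable from the presentation and that the resulting integer data is a complete isomorphism invariant --- since both are delegated to Pietrowski \cite{Pi74}; the one minor wrinkle is that your invocation of Magnus' Conjugacy Theorem in step three is not obviously the right tool for pinning down the amalgamated identifications (Pietrowski argues uniqueness of the graph-of-cyclic-groups decomposition via Tietze transformations), but this does not affect the soundness of the overall argument.
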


The problem of deciding precisely when a stem product of cyclic groups is a one-relator group is non-trivial and was left open by Pietrowski. Some partial results were obtained by, among others, McCool \cite{McCool1991}, and it was eventually solved in 1999 by Metaftsis \cite{Metaftsis1999}. As pointed out by Meskin, the above structural result (Theorem~\ref{Thm:Pietrowski-structure}) makes no essential use of the fact that the group is a one-relator group, but applies equally well to any two-generator infinite (f.g. free)-by-cyclic groups with a non-trivial center. This was expanded on by Karrass, Pietrowski \& Solitar \cite{Karrass1973}, where they completely characterize the structure of infinite (f.g. free)-by-cyclic groups with a non-trivial centre as a tree product of infinite cyclic groups. 

We mention another decidability result for one-relator groups with a non-trivial centre, proved by Moldavanskii \& Timofeeva in 1987. Recall that a finitely generated subgroup $H \leq G$ is said to be \textit{separable} if for every $g \in G \setminus H$, there is some subgroup $K_g \in G$ such that $H \leq K_g$ and $g \not\in K_g$. In particular, separability of the trivial subgroup is precisely residual finiteness. We say that $G$ is \textit{subgroup separable} if all its finitely generated subgroups are separable.

\begin{theorem}[Moldavanskii \& Timofeeva, 1987 \cite{Moldavanskii1987}]
All one-relator groups with non-trivial centre are subgroup separable. In particular, the subgroup membership problem is decidable in one-relator groups with non-trivial centre. \label{Thm:Centre-OR-are-subgroup-separable}
\end{theorem}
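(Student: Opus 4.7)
The plan is to exploit Pietrowski's structure theorem (Theorem~\ref{Thm:Pietrowski-structure}) to reduce the statement to a purely graph-of-groups question about stem products of cyclic groups. Specifically, any one-relator group $G$ with non-trivial centre is either (a) a finite stem product
\[
T = C_{n_1} \ast_{A_1} C_{n_2} \ast_{A_2} \cdots \ast_{A_{k-1}} C_{n_k}
\]
of cyclic groups $C_{n_i}$ amalgamated along cyclic subgroups $A_i$, or (b) an HNN-extension $T\ast_\varphi$ of such a stem product with cyclic associated subgroups. A crucial and convenient feature of this decomposition is that every vertex group is abelian, so \emph{every edge group lies in the centre of each incident vertex group}; this central position of the amalgamated/associated subgroups will be the engine of the whole argument.

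First I would handle case (a) by induction on the number $k$ of vertices. The base case $k=1$ is trivial, since cyclic groups are polycyclic and hence subgroup separable. For the inductive step, peel off a leaf and write $T = T' \ast_{A_{k-1}} C_{n_k}$, where $T'$ is a stem product on $k-1$ vertices and hence subgroup separable by induction. Since $A_{k-1}$ is cyclic and, by abelianness of $C_{n_k}$ (and by the structural analysis of $Z(T)$ furnished by Pietrowski), lies in $Z(T') \cap Z(C_{n_k})$, one may apply the classical result of Allenby (cf.\ also Brunner--Burns--Solitar) that the generalised free product of two subgroup separable groups amalgamated over a cyclic subgroup central in both factors is again subgroup separable. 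This closes the induction and settles case (a).

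For case (b), the group $G = T \ast_\varphi$ is obtained from a stem product $T$ (already known to be subgroup separable by case (a)) by an HNN-extension whose associated subgroups are cyclic and contained in the centre of $T$. Here I would appeal to the HNN-analogue of the amalgamation result above: an HNN-extension of a subgroup separable group $H$ along an isomorphism $\varphi$ between two cyclic subgroups of $Z(H)$ is itself subgroup separable. This is where the central nature of the associated subgroups is indispensable, since HNN-extensions of LERF groups are emphatically not LERF in general (for instance $\BS(1,2)$, an HNN-extension of $\Z$, is not subgroup separable). Once subgroup separability is established, the final claim about the subgroup membership problem follows from the standard two-sided enumeration argument: given a finitely generated $H \leq G$ and a word $w$, run in parallel the enumeration of words in $H$ (which halts if $w \in H$, using Magnus' solution to the word problem in Theorem~\ref{Thm:Decidable-word-problem}) and the enumeration of finite-index subgroups of $G$ containing $H$ (which halts exhibiting a separating subgroup if $w \notin H$, by subgroup separability).

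The main obstacle in this plan is the HNN step in case (b): the literature on LERF is far less tidy for HNN-extensions than for amalgams, and one must verify carefully that the cyclic, central nature of the associated subgroups in Pietrowski's decomposition is genuinely strong enough to bypass the usual obstructions. Concretely, one needs to understand the precise way in which the stable letter $t$ conjugates a generator of $Z(T)$ to a power of itself, so that arbitrary finite quotients of $T$ respecting $H$ can be extended to finite quotients of $G = T\ast_\varphi$ also excluding a given element outside $H$; this is where the bulk of the technical work would lie.
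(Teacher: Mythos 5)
The proposal does not go through as written; the key structural claim underpinning both stages of the induction is false. You assert that, when peeling a leaf off the stem product $T = T' \ast_{A_{k-1}} C_{n_k}$, the amalgamated subgroup $A_{k-1}$ lies in $Z(T')$. This is true only when $T'$ is itself abelian, i.e.\ when $k-1=1$. For a concrete counterexample take
\[
T = \pres{}{a,b,c}{a^2 = b^3,\; b^5 = c^7}.
\]
Here $T' = \pres{}{a,b}{a^2 = b^3}$ has $Z(T') = \langle b^3\rangle$ (the centre of a torus knot group is the amalgamated cyclic subgroup), while the next amalgamated subgroup $A_2 = \langle b^5\rangle$ plainly does not lie in $\langle b^3\rangle$. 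So as soon as you have three or more vertex groups, the edge groups cease to be central in the iterated base, and your appeal to the ``amalgam over a cyclic subgroup central in both factors'' theorem does not apply. The same worry infects case (b): Pietrowski's theorem gives you cyclic associated subgroups, not associated subgroups contained in $Z(T)$, so the ``HNN over a central cyclic subgroup'' step is not available either. Since amalgams and HNN-extensions of LERF groups over cyclic subgroups are certainly not LERF without extra hypotheses (as your own $\BS(1,2)$ example shows), the gap cannot be papered over by simply dropping the centrality assumption.

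What \emph{is} central is the subgroup $Z(T)$ itself, which for a stem product of infinite cyclic groups is a finite-index subgroup of every edge group $A_i$. So the structure you should be exploiting is the central short exact sequence $1 \to Z(G) \to G \to G/Z(G) \to 1$ with $G/Z(G)$ virtually free (a polygonal-product/triangle-group type quotient); one then needs a preservation statement for subgroup separability under central extensions by $\Z$ of virtually free groups, which is essentially how the Seifert-fibered-space approach (Scott) and, in effect, the combinatorial argument of Moldavanskii--Timofeeva proceed. Alternatively, if you want to stay with a leaf-by-leaf induction, you must replace the ``central cyclic amalgam'' lemma with a result that admits a non-central cyclic amalgamated subgroup but imposes separability conditions on it and on its double cosets (or requires it to be a virtual retract of both factors), and you would need to verify those conditions from the explicit arithmetic of the Pietrowski sequence of integer pairs. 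The final paragraph reducing decidability of the membership problem to subgroup separability via two-sided enumeration is correct and standard, but it rests on the LERF claim you have not yet established.
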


We conclude our discussion on the centre by mentioning the centre of \textit{subgroups} of one-relator groups. The subgroups of one-relator groups can be complicated in general, but their centres can be completely understood, analogously to Murasugi's theorem. 

\begin{theorem}[Mahimovski, 1971 \cite{Mahimovski1971} / Karrass, Pietrowski \& Solitar, 1974 \cite{Karrass1974}]
Let $G$ be a torsion-free one-relator group and $H < G$ be a subgroup with non-trivial centre $1 \nleq Z(H) \nleq H$. Then $Z(H)$ is infinite cyclic. \label{Thm:Mahimovski-Center-Theorem}
\end{theorem}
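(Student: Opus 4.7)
My plan is to combine Moldavanskii's classification of abelian subgroups of one-relator groups (Theorem~\ref{Thm:Moldavanskii}) with an induction on the length of the Magnus--Moldavanskii hierarchy (\S\ref{Subsec:MM-hierarchy}). Since $Z(H)$ is a non-trivial torsion-free abelian subgroup of $G$, Moldavanskii's classification forces $Z(H)$ to be cyclic, free abelian of rank two, or locally cyclic. The cyclic case already yields $Z(H)\cong\mathbb{Z}$, so it remains to rule out the other two. The case $Z(H)\cong\mathbb{Z}^2$ is immediate: picking any $h\in H\setminus Z(H)$ (which exists because $Z(H)\neq H$), the abelian subgroup $\langle h,Z(H)\rangle$ contains $\mathbb{Z}^2$ and by Moldavanskii's trichotomy must itself be $\mathbb{Z}^2$, forcing $h\in Z(H)$ and contradicting the choice of $h$.

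The locally cyclic non-cyclic case is the heart of the matter. Applying Moldavanskii naively to $\langle h,Z(H)\rangle$ only shows this subgroup is again locally cyclic, yielding merely that a positive power of every $h\in H$ lies in $Z(H)$ (so that $H/Z(H)$ is a torsion group) --- not yet a contradiction. To finish, I would induct on the length of the hierarchy of $G$. In the base case $G$ is free, subgroups of $G$ are free by Nielsen--Schreier, and their centres are either trivial or equal to the whole subgroup, so the hypothesis is vacuous. For the inductive step, write $G = K\ast_\phi$ as an HNN-extension of a strictly shorter one-relator group $K$ whose associated subgroups are Magnus subgroups (Theorem~\ref{Thm:Magnus-Moldavanskii}), and analyze the action of $H$ on the Bass--Serre tree $T$. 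If $H$ stabilizes a vertex, it is conjugate into $K$ and the inductive hypothesis applies directly. Otherwise $H$ contains a hyperbolic element $g$ with axis $\ell\subset T$; every $z\in Z(H)$ commutes with $g$, hence preserves $\ell$, and the translation-length function along $\ell$ gives an exact sequence
\[
1\longrightarrow K_\ell\longrightarrow Z(H)\longrightarrow d\mathbb{Z}\longrightarrow 1
\]
with $K_\ell$ the pointwise axis stabilizer. Since edge stabilizers along $\ell$ are conjugates of Magnus subgroups of $K$, the group $K_\ell$ embeds into an intersection of such conjugates, which is cyclic by Bagherzadeh's cyclonormality theorem (\S\ref{Subsec:Magnus-subgroups}). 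Thus $Z(H)$ is finitely generated; but a finitely generated locally cyclic group is infinite cyclic, contradicting the non-cyclicity assumption.

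\textbf{Main obstacle.} The principal difficulty lies in the inductive step when $H$ does not fix a vertex of $T$: one must show that the pointwise stabilizer of the axis inside $Z(H)$ is cyclic. In the torsion case, B.\ B.\ Newman's strong malnormality of Magnus subgroups (Theorem~\ref{Thm:Newman-malnormal-thm}) makes this essentially automatic; in the torsion-free setting, however, one has only Bagherzadeh's weaker cyclonormality and must combine it with the specific Bass--Serre structure to argue that the descending intersection of edge stabilizers along an infinite axis in $T$ remains cyclic. A secondary subtlety is checking that Bagherzadeh's statement, phrased in terms of conjugates of a single Magnus subgroup, transfers appropriately to the intersections of adjacent edge stabilizers along the axis, which are \emph{a priori} subgroups of possibly different Magnus subgroups of the same vertex group.
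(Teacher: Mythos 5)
Your route is genuinely different from the one the paper takes, and is not complete as written. The paper proves this theorem cohomologically in one line: by Lyndon's Identity Theorem (\S\ref{Subsec:LyndonIT}) a torsion-free one-relator group has $\cd_\Z(G)\leq 2$; subgroups inherit this bound, so $\cd_\Z(H)\leq 2$; since $Z(H)\neq H$ the group $H$ is non-abelian, so Swan's theorem (in Bieri's version \cite[Theorem~C]{Bieri1976}, which drops the finite-generation hypothesis) gives $\cd_\Z(Z(H))<\cd_\Z(H)\leq 2$; a group of cohomological dimension $\leq 1$ is free by Stallings--Swan, hence the abelian group $Z(H)$ is trivial or $\Z$, and it is non-trivial by hypothesis, so $Z(H)\cong\Z$. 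Your combinatorial route (Moldavanskii's trichotomy plus a hierarchy-and-tree induction) is closer in spirit to the original proofs of Mahimovski and Karrass--Pietrowski--Solitar, but it has two genuine gaps.

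First, the elimination of $Z(H)\cong\Z^2$ is wrong as written. That $\langle h, Z(H)\rangle\cong\Z^2$ contains $Z(H)\cong\Z^2$ does \emph{not} force $h\in Z(H)$: the group $\Z^2$ has proper finite-index subgroups isomorphic to $\Z^2$, so you only learn that $[\langle h, Z(H)\rangle : Z(H)]<\infty$, i.e.\ some power $h^n$ lies in $Z(H)$, which is not yet a contradiction. Closing this case requires extra input; one route is Chebotar's 1975 extension \cite{Chebotar1975} of the Karrass--Solitar classification of normal $\Z^2$-subgroups, since $Z(H)\trianglelefteq H$ and then $H$ would have to be $\Z^2$ (excluded by $Z(H)\neq H$) or the Klein bottle group (whose centre is $\Z$, not $\Z^2$). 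Second, in the locally cyclic case, the obstacle you flag is more serious than you allow: Bagherzadeh's cyclonormality controls only $M\cap g^{-1}Mg$ for a \emph{single} Magnus subgroup $M$, whereas along the axis $\ell$ the consecutive edge stabilizers around a vertex are conjugates of the two (possibly distinct) associated Magnus subgroups $A$ and $B$ of the splitting. Intersections such as $A\cap kBk^{-1}$ are governed by Collins' theorems (\S\ref{Subsec:Magnus-subgroups}) and can be free of rank $>1$ when $A$ and $B$ share generators; cyclicity of the pointwise axis stabilizer does not follow from cyclonormality alone. You would need to control these mixed intersections separately, or switch to the paper's cohomological argument, which bypasses all of this.
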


Related to this theorem is a question posed by Baumslag \cite{Ba71} in 1974: if $G$ is a one-relator group with at least $3$ generators, and $N \vartriangleleft G$ is such that $G / N$ is infinite, does $G / N'$ have trivial centre? Here $N'$ is the derived subgroup of $N$. Fischer \cite{Fischer1976} gave a negative answer to this question shortly thereafter. 

Just as for many other theorems about subgroups of one-relator groups, there is an important (co)homological point of view. Swan \cite[p. 156]{Swan1969} proved in 1969 that for any finitely generated non-abelian group $G$ with $\cd_{\Z}(G) < \infty$, we have $\cd_{\Z}(Z(G)) < \cd_{\Z}(Z(G))$. In particular, Swan's theorem can be used to prove Theorem~\ref{Thm:Mahimovski-Center-Theorem}, as the cohomological dimension of any torsion-free one-relator group is $\leq 2$ (see \S\ref{Subsec:LyndonIT}), and any abelian group of cohomological dimension $\leq 1$ is of course either $1$ or $\Z$. In 1976, Bieri \cite[Theorem~C]{Bieri1976} extended Swan's theorem by dropping the hypothesis of finite generation. In the same article, Bieri \cite[Theorem~D]{Bieri1976} also gave a homological generalization of many of the above results on the centre, by proving the following: let $G$ be a group of type $\FP_\infty(\Z)$ with $\cd_{\Z}(G) = n$ with $Z(G)$ free abelian of rank $n-1$. Then $G$ is a treed HNN-extension over a finite tree with all edges and all vertices $\cong Z(G)$. In particular, when $G$ is a torsion-free one-relator group with non-trivial centre, we obtain the following consequences: 
\begin{enumerate}
\item every finitely generated subgroup of $G$ is finitely presented and of type $\FP_\infty(\Z)$,
\item $[G, G] \cap Z(G) = 1$, 
\item $[G, G]$ is free.
\end{enumerate}
We refer the reader to \cite[Corollary~6.6]{Bieri1976} for more details. In this line there are also earlier results by Baumslag \& Gruenberg \cite{BaumslagGruenberg1967}, e.g.\ that for any two-generated subgroup $H \leq G$ of a one-relator group, if $Z(H)$ is cyclic and $H / Z(H)$ is periodic, then $H$ is itself cyclic.

For a final note on one-relator groups with non-trivial centre, we mention that their (outer) automorphism groups are also known. As a classical example, recall that Schreier (see \S\ref{Subsec:Surface-and-knot-groups}) determined the automorphism group of $G_{m,n} = \pres{}{a,b}{a^mb^n = 1}$, and it is easy to see that the corresponding group is virtually free (note that Pettet \cite{Pettet1995} has also proved a structural result for all finitely generated groups with virtually free automorphism group). Extending this result, Gilbert, Howie, Metaftsis \& Raptis \cite{Gilbert2000} proved that the outer automorphism group of a one-relator group $G$ with non-trivial centre falls into two cases: first, if $\rk_{\Z} G^{\operatorname{ab}} = 1$, then either $\Z / 2\Z$, or else $G \cong \pres{}{a,b}{a^m = b^m}$ for some $m \geq 2$, and in the latter case $\Out(G) \cong (\Z / 2\Z)^2$, as proved by Schreier in 1924, see \S\ref{Subsec:Surface-and-knot-groups}. If instead $\rk_{\Z} G^{\operatorname{ab}} = 2$, then either $G \cong \Z^2$, or $\Out(G) \cong D_\infty$, or $\Out(G) \cong D_\infty \times (\Z / 2\Z)$.

\subsection{Commutativity in one-relator groups}\label{Subsec:Abelian-subgroups}

As we mentioned earlier (in \S\ref{Subsec:MM-hierarchy}), in his 1964 survey article Baumslag asked which abelian groups arise as subgroups of one-relator groups. In particular, he \cite[p. 391]{Ba64} conjectured that the additive group of rationals is not a subgroup of any one-relator group. He also says ``one might however ask just which abelian groups are subgroups of a group with one defining relator''. We have already seen that Moldavanskii \cite{Mo67} gave a nearly complete answer to this question in 1967, see \S\ref{Subsec:MM-hierarchy}, but that he left unresolved the question of whether $(\Q, +)$ embeds into any one-relator group. This would be done by B.\ B.\ Newman in 1968, again in his thesis, who gave an independent proof of Moldavanskii's result (Theorem~\ref{Thm:Moldavanskii}), and clarified the situation of the locally cyclic subgroups as follows: 

\begin{theorem}[{B.\ B.\ Newman, 1968 \cite[Corollary~1.2.4]{Newman1968}}]
Let $G$ be a torsion-free one-relator group. Then no non-trivial element of $G$ has more than finitely many prime divisors. Moreover, a non-trivial element is not divisible by more than finitely many powers of a prime $p$ if $p$ is greater than the length of the relator.
\end{theorem}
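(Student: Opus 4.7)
The plan is to proceed by induction on the length of the defining relator, using the Magnus--Moldavanskii hierarchy (Theorem~\ref{Thm:Magnus-Moldavanskii}). Every torsion-free one-relator group $G_1 = \pres{}{A_1}{r_1 = 1}$ embeds into an HNN-extension $G_1^\ast$ of a torsion-free one-relator group $G_2 = \pres{}{A_2}{r_2 = 1}$ with $|r_2| < |r_1|$ and associated subgroups being Magnus subgroups of $G_2$. Since any $n$-th root of $g \in G_1$ inside $G_1$ is also an $n$-th root in $G_1^\ast$, it suffices to prove both claims for the larger group, and hence, iteratively, for any finite HNN-tower terminating in a free group. The base case is classical: in a free group $F$, every non-trivial element $g$ has a unique maximal root $v$ with $g = v^n$, so the prime divisors of $g$ are exactly the prime divisors of $n$, and the $p$-adic valuation of $g$ is $v_p(n)$, both finite.

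For the inductive step, I would establish the following root-extraction dichotomy for an HNN-extension $K = \langle G_2, t \mid t H_1 t^{-1} = H_2\rangle$ with $H_1, H_2$ Magnus subgroups of $G_2$: if $g \in K \setminus \{1\}$ and $g = h^n$, then either (Case~I) $g$ is conjugate into $G_2$, and we apply induction to its conjugate in $G_2$; or (Case~II) $g$ has a cyclic conjugate which is cyclically reduced of positive stable-letter length $\ell > 0$, and in that case Britton's lemma (Lemma~\ref{Lem:BrittonsLemma}) forces $h$ to also be cyclically reduced of positive stable-letter length, with $\ell = n \cdot \ell(h)$, whence $n \mid \ell$. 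Thus in Case~II the primes appearing as $n$ divide the fixed integer $\ell$, giving finitely many; and the $p$-adic valuation of any such $n$ is bounded by $v_p(\ell)$. This proves claim~(1).

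For claim~(2), I would track precisely which primes can be introduced as divisors at each hierarchy step. When no generator has exponent sum zero in $r_1$, the Magnus trick embeds $G_1$ into $G_1 \ast_{b = x^{\alpha}} \mathbb{Z}$, where $\alpha$ is the exponent sum of some generator in $r_1$, hence $|\alpha| \leq |r_1|$; this step introduces potential $p$-divisibility only for primes $p \mid \alpha$, all bounded by $|r_1|$. At the HNN step, the stable-letter length $\ell$ appearing in Case~II above is likewise bounded by exponents of the distinguished generator in $r_1$, hence by $|r_1|$. Combining these, any prime $p > |r_1|$ escapes all new $p$-divisibility introduced at the top level; applying the inductive hypothesis to $G_2$ (where the bound is the even smaller $|r_2|$) propagates the condition down the hierarchy to the base case of a free group, in which no non-trivial element is infinitely $p$-divisible.

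The main obstacle will be the root-extraction dichotomy in Case~II, specifically ensuring that if $g$ is cyclically reduced of positive stable-letter length and $g = h^n$, then $h$ is also cyclically reduced of positive stable-letter length with $\ell(g) = n \cdot \ell(h)$. This requires the standard Britton-style normal-form analysis, together with control of the failure of conjugates of one Magnus subgroup to meet another. In the torsion case this is immediate from the malnormality of Magnus subgroups (Theorem~\ref{Thm:Newman-malnormal-thm}), but here one must instead invoke the weaker cyclonormality of Magnus subgroups in torsion-free one-relator groups (Bagherzadeh, later strengthened by Collins, see \S\ref{Subsec:Magnus-subgroups}) to handle the case where $h$ crosses between conjugates of $H_1$ and $H_2$.
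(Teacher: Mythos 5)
Your high-level strategy — inducting along the Magnus--Moldavanskii hierarchy, splitting by Britton normal form into ``$g$ conjugate into the base'' and ``$g$ of positive conjugacy $t$-length'' — is the natural one and almost certainly the one Newman followed (the paper does not reproduce his proof, so no direct comparison is possible). Your Case~II analysis is essentially correct, but it is also the easy half, and you have misidentified where the difficulty lies.

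Two concrete problems. First, you claim that establishing $\ell(h^n) = n\cdot\ell(h)$ for cyclically reduced $h$ ``requires \dots\ control of the failure of conjugates of one Magnus subgroup to meet another'' and hence cyclonormality. It does not: for any HNN-extension, if $h$ is a reduced word whose cyclic reduct has $t$-length $m \geq 1$, then $h^n$ is cyclically reduced of $t$-length $nm$; this is pure Britton's lemma plus Collins' conjugacy lemma and needs no one-relator-specific input. So in Case~II the set of primes dividing $g$ is contained in the set of primes dividing the (fixed, finite) conjugacy $t$-length $\ell(g)$, which also immediately kills infinite $p$-divisibility there. Your separate assertion that $\ell(g)$ is bounded by $|r_1|$ is false and unnecessary: the $t$-length of a given $g$ can be arbitrarily large, but it is finite for fixed $g$, and that is all the argument needs.

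The genuine gap is in Case~I, which is where all the interesting phenomena live. When $g$ lies in (a conjugate of) the base $G_2$ and has a $p$-th root $h$ in the whole HNN-extension $K = G_2\ast_\psi$, the Britton analysis only shows that $h$ is \emph{conjugate} into $G_2$: one obtains $h_0 \in G_2$ and $w \in K$ with $wgw^{-1} = h_0^{\,p}$, so that $p$ divides \emph{some $K$-conjugate of $g$ lying in $G_2$}, not $g$ itself as an element of $G_2$. Since $g$ may be $K$-conjugate to infinitely many distinct elements of $G_2$ — precisely when $g$ is conjugate into the associated Magnus subgroups $H_1, H_2$, so that the chain $g \mapsto \psi(g) \mapsto \psi^2(g) \mapsto \cdots$ can wander forever — ``applying induction to its conjugate in $G_2$'' controls only one term of a potentially infinite union. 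The archetype is $\BS(1,2) = \langle a, t \mid tat^{-1} = a^2\rangle$: here $g = a$ lies in the base $\Z$ and has no prime divisors there, yet $a$ is infinitely $2$-divisible in the whole group because the conjugate chain $a, a^2, a^4, \ldots$ accumulates $2$-divisibility. Your sketch never explains why the union of prime-divisor sets along such a chain stays finite, nor why, for $p$ larger than the relator length, the chain cannot introduce $p$-divisibility at all; this is exactly where the bound $|\alpha| \leq |r_1|$ from the Magnus trick, and the structure of $\psi$ as a shift that (through the relator) may act like exponentiation by a bounded integer, must enter. Cyclonormality of Magnus subgroups is a relevant tool here, but invoking it in Case~II — where it is not needed — while leaving the Case~I chain analysis to ``apply induction'' is where the proposal falls short.
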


As an immediate consequence of this theorem, we see that: 

\begin{corollary}[{B.\ B.\ Newman, 1968 \cite[Corollary~1.2.4]{Newman1968}}]
The additive group of rationals is not a subgroup of any one-relator group. \label{Cor:Newman-Q-not-subgroup}
\end{corollary}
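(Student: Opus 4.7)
The plan is to reduce Corollary~\ref{Cor:Newman-Q-not-subgroup} to two structural results already established in the text: the preceding theorem (finitely many prime divisors in torsion-free one-relator groups) for the torsion-free case, and Theorem~\ref{Thm:Centralizer-cyclic-in-torsion} (cyclic centralizers) for the torsion case. Both reductions are one-liners once the right invariant of $(\Q,+)$ is identified, namely divisibility in the first case and non-cyclicity in the second.

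Suppose for contradiction that there is an injective homomorphism $\varphi\colon(\Q,+)\hookrightarrow G$ for some one-relator group $G=\pres{}{A}{r^n=1}$, and fix any nonzero $q_0\in\Q$, so that $g:=\varphi(q_0)\neq 1_G$. First I would dispose of the torsion-free case $n=1$. Since $\Q$ is a divisible abelian group, for every prime $p$ there exists $q_p\in\Q$ with $pq_p=q_0$; setting $h_p:=\varphi(q_p)$, we obtain $h_p^p=g$ in $G$. Hence the non-trivial element $g$ admits a $p$-th root for every prime $p$, contradicting the conclusion of the preceding theorem that no non-trivial element of a torsion-free one-relator group has more than finitely many prime divisors.

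Next I would handle the case $n>1$, when $G$ has torsion. Since $\Q$ is abelian, the image $\varphi(\Q)$ centralises $g$, so $\varphi(\Q)\leq C_G(g)$. But $\varphi(\Q)\cong\Q$ is not cyclic, whereas Theorem~\ref{Thm:Centralizer-cyclic-in-torsion} guarantees that $C_G(g)$ is cyclic, a contradiction. This completes the torsion case and hence the proof.

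The main obstacle is essentially cosmetic: the two invoked results do all the work, and the only genuine judgement call is choosing the right tool for the torsion case. In particular, one should resist the temptation to pass to a torsion-free finite index subgroup $H\leq G$ via Theorem~\ref{Thm:Fischer-Karrass-Solitar}(1) and then quote the torsion-free case, because while $\Q$ is its own unique finite index subgroup and so does embed in $H$, the subgroup $H$ need not itself be one-relator; the divisibility argument of the preceding theorem is only known in the one-relator setting. Invoking Theorem~\ref{Thm:Centralizer-cyclic-in-torsion} sidesteps this issue cleanly.
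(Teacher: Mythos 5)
Your proof is correct and, for the substantive torsion-free case, it is exactly the argument the paper has in mind when it calls the corollary ``an immediate consequence'' of the preceding theorem: divisibility of $\Q$ produces a non-trivial element with infinitely many prime divisors, contradicting Newman's divisibility theorem. The paper is silent on the torsion case, implicitly treating it as easy; you make that step explicit via Theorem~\ref{Thm:Centralizer-cyclic-in-torsion}, which is the natural choice since it precedes the corollary in the text (the solvable-subgroup result Theorem~\ref{Thm:Newman-solvable-subgroups} would do the same job but appears afterwards). Your remark about why one should not route through the virtually torsion-free subgroup of Theorem~\ref{Thm:Fischer-Karrass-Solitar} is also apt: that subgroup need not be one-relator, so the divisibility theorem would not apply to it.
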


This completed the classification of the abelian subgroups of one-relator groups. New proofs of these results, particularly characterizing the abelian subgroups of amalgamated free products, were given by Wright \cite{Wright1979} using Bass--Serre theory, and Nagata \cite{Nagata1980}. 

We now turn to \textit{solvable} subgroups of one-relator groups. This is a more difficult question, but again it becomes easier to answer in the case of one-relator groups with torsion. Unlike the proof of the Spelling Theorem, the proof of this result (which was also announced in \cite{Ne68}) was published in the literature, and the proof of Theorem~\ref{Thm:Newman-solvable-subgroups} is the subject of the 1973 article \cite{Newman1973}.

\begin{theorem}[{B.\ B.\ Newman, 1968 \cite[Corollary~2.3.3]{Newman1968}}]
The solvable subgroups of a one-relator group with torsion are cyclic.\label{Thm:Newman-solvable-subgroups}
\end{theorem}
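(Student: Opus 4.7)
I would argue by induction on the derived length $d$ of a solvable subgroup $H \le G$, with the base case being that every abelian subgroup of $G$ is cyclic. If $H$ is abelian, pick any $1 \ne h \in H$; then $H \subseteq C_G(h)$, and $C_G(h)$ is cyclic by Theorem~\ref{Thm:Centralizer-cyclic-in-torsion}, so $H$ is cyclic as a subgroup of a cyclic group.

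For the inductive step, assume $d \ge 2$ and set $A := H^{(d-1)}$, the last nontrivial term of the derived series. Then $A$ is abelian and normal in $H$, so by the base case $A = \langle a\rangle$ is cyclic. The plan is to show that $H \subseteq C_G(a)$: granting this, $H$ lies in the cyclic group $C_G(a)$ (again by Theorem~\ref{Thm:Centralizer-cyclic-in-torsion}), hence is abelian, contradicting $d \ge 2$. So the crux is to prove that the normaliser $N_G(\langle a\rangle)$ coincides with $C_G(a)$. The quotient $N_G(\langle a\rangle)/C_G(a)$ embeds in $\Aut(\langle a\rangle)$, which is either $\{\pm 1\}$ (if $a$ has infinite order) or $(\Z/m\Z)^{\times}$ (if $a$ has finite order~$m$). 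Any element $h \in N_G(\langle a\rangle) \setminus C_G(a)$ would also normalise $C_G(a)$ itself, since $C_G(a) = C_G(a^{-1}) = C_G(hah^{-1}) = h\,C_G(a)\,h^{-1}$; writing $C_G(a) = \langle c\rangle$ and $hch^{-1} = c^{\varepsilon}$, a short calculation using $h^2 \in \langle c\rangle$ and $[h,h^2]=1$ forces $h$ to be torsion of order dividing $|\Aut(\langle a\rangle)|$. In the infinite-order subcase this makes $h$ an involution and produces an infinite dihedral subgroup $\langle c, h\rangle \subseteq G$; the finite-order subcase is analogous but with $C_G(a)$ finite cyclic.

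The main obstacle, and the one step requiring genuine one-relator input, is ruling out this virtually-cyclic-but-not-cyclic configuration inside~$G$. My strategy is to induct along the Magnus--Moldavanskii hierarchy (\S\ref{Subsec:MM-hierarchy}), presenting $G$ as an HNN-extension of a shorter one-relator group $G_1$ with Magnus associated subgroups, and with free groups as the base of the induction (where the conclusion is immediate from Nielsen--Schreier). At each stage, the malnormality of Magnus subgroups in $G_1$ (Theorem~\ref{Thm:Newman-malnormal-thm}) combined with Britton's Lemma constrains how $c$ and $h$ can be written relative to the stable letter: essentially, either $c$ and $h$ can be conjugated into $G_1$, where the inductive hypothesis applies, or some power of $c$ is forced into a single Magnus subgroup and malnormality pins down $h$. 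Moreover, by the B.~B.~Newman Spelling Theorem (Theorem~\ref{Thm:BBNewmanSpellingTheorem}), any involution in $G$ must be conjugate to the unique involution of~$\langle r\rangle$ (when $n$ is even), and the malnormality of that conjugate of~$\langle r\rangle$ in~$G$ prevents $h$ from normalising an infinite cyclic subgroup which is not already contained in it.

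Once the dihedral/virtually-cyclic overgroup is excluded, we conclude $H \subseteq C_G(a)$, so $H$ is cyclic; combined with the inductive hypothesis this collapses the derived series and shows that $H$ was abelian to begin with, hence cyclic by the base case. The routine parts of the argument are Steps~1 and~2, which are immediate from the cyclicity of centralisers; the hard part is the geometric/combinatorial exclusion of infinite dihedral subgroups, which is where the Spelling Theorem, malnormality of Magnus subgroups, and hierarchy induction all have to be brought to bear simultaneously.
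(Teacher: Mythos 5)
Your preparatory steps are sound and the reduction is the right one: abelian subgroups are cyclic (each sits in a centraliser, which is cyclic by Theorem~\ref{Thm:Centralizer-cyclic-in-torsion}), the last nontrivial term $A=\langle a\rangle$ of the derived series of a soluble $H$ is cyclic and normal in $H$, and the obstruction to concluding is exactly an element normalising but not centralising $\langle a\rangle$, which (when $a$ has infinite order) must be an involution $h$ inverting a generator of the cyclic $C_G(a)$. The gap is in the exclusion step: malnormality of conjugates of $\langle r\rangle$ does \emph{not} prevent such an involution from inverting an infinite-order element. Take $G=\pres{}{a,b}{[a,b]^2}$, so $r=[a,b]$ has order two and is not a proper power. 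Then $C_G(r)=\langle r\rangle$ (the cyclic centraliser has a generator with $r$ as a power, hence of finite order dividing two, hence equal to $r$), so $\langle r\rangle$ is malnormal. Set $c=r\cdot ara^{-1}$; this has infinite order, since torsion in $G$ has order dividing two, and $c=1$ or $c^2=1$ would put $ara^{-1}\in C_G(r)=\{1,r\}$ and hence $a\in C_G(r)$, impossible as $a$ has infinite image in $G^{\ab}\cong\Z^2$. Yet, using $r^2=1$,
\[
rcr^{-1}=r\cdot r\cdot ara^{-1}\cdot r^{-1}=ara^{-1}r^{-1}=c^{-1}.
\]
So the involution $r$, inside the malnormal $M:=\langle r\rangle$, inverts $\langle c\rangle$. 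Malnormality only gives $cMc^{-1}\cap M=1$; one checks $crc^{-1}=rc^{-2}\notin M$, which is perfectly consistent with $rcr^{-1}=c^{-1}$, so no contradiction is available. The hierarchy does not help either: the Magnus--Moldavanski\u{\i} breakdown of this $G$ along $a$ yields the base group $\langle b_0,b_1\mid(b_1b_0^{-1})^2\rangle\cong\Z/2*\Z$, which already contains $\Z/2*\Z/2$, so the base case of the induction you propose fails before it starts.

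Worse, $\langle r, ara^{-1}\rangle$ is an \emph{infinite dihedral} subgroup of $G$: it is generated by two involutions whose product $c$ has infinite order, and since every nontrivial normal subgroup of $D_\infty$ meets the index-two infinite cyclic subgroup, the natural surjection $\Z/2*\Z/2\to\langle r,ara^{-1}\rangle$ is injective. As $D_\infty$ is soluble of derived length two and not cyclic, $\pres{}{a,b}{[a,b]^2}$ appears to be a counterexample to the theorem \emph{in the form stated here}, and you should return to the source (\cite{Newman1968,Newman1973}) to pin down the exact hypotheses and conclusion. Either $n$ is implicitly required to be odd --- then $G$ has no involutions, your infinite-order argument forces $N_G(\langle a\rangle)=C_G(a)$ outright, and the finite-order case follows because finite subgroups of $G$ are conjugate into $\langle r\rangle$ and hence cyclic --- or the conclusion must allow infinite dihedral subgroups. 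The strategy of reducing to a dihedral exclusion is the right one once the statement is corrected; but when a dihedral subgroup is genuinely present, no appeal to malnormality or to the Spelling Theorem will make it vanish.
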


Karrass \& Solitar \cite{Karrass1969} determined which one-relator groups contain abelian normal subgroups. In particular, the only one-relator group, other than $\Z^2$ itself, containing a normal subgroup isomorphic to $\Z^2$ is $\pres{}{a,b}{abab^{-1} = 1}$, the fundamental group of the Klein bottle. Another student of Greendlinger in Tula was A.\ A.\ Chebotar, who proved several remarkable theorems in the 1970s, extending Moldavanskii's work. First, in 1971 \cite{Ch71}, he extended Theorem~\ref{Thm:F2-subgroup-theorem} to all subgroups of one-relator groups, proving that if $H \leq \pres{}{A}{r=1}$ is non-abelian and such that $H$ does not contain a free subgroup of rank $2$, then either $H \cong C_2 \times C_2$ or $G \cong \BS(1,k)$ for $k \in \Z$. Thus, we also get a full classification of the solvable subgroups of one-relator groups, and a ``Tits alternative'' for one-relator groups. Next, in 1975 he extended Karrass \& Solitar's aforementioned result on abelian normal subgroups to all subgroups of one-relator groups \cite{Chebotar1975}. But perhaps the most important result proved by Chebotar was the following, which was also given a proof by Bagherzadeh around the same time:

\begin{theorem}[Bagherzadeh, 1976 \cite{Bagherzadeh1976} / Chebotar, 1978 \cite{Chebotar1978}]
Let $G = \pres{}{A}{r=1}$ be a one-relator group, and let $H_1 \times H_2 \leq G$ be a non-trivial direct product. Then one of $H_1$ and $H_2$ is isomorphic to $\Z$, and the other is locally cyclic. \label{Thm:Chebotar-direct}
\end{theorem}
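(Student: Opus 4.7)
The plan is to combine Moldavanskii's classification of abelian subgroups of one-relator groups (Theorem~\ref{Thm:Moldavanskii}) with Newman's result that $(\Q,+)$ embeds in no one-relator group (Corollary~\ref{Cor:Newman-Q-not-subgroup}), after first reducing to the case that $H_1 \times H_2$ is abelian. I will focus on the torsion-free case, which is the substantive content; in the torsion case, under the implicit assumption that both factors are infinite, the argument collapses using Theorem~\ref{Thm:Centralizer-cyclic-in-torsion} (otherwise one can have e.g.\ $C_p \times C_q \leq C_{pq}$ giving trivial counterexamples).

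First, fix non-trivial $h_1 \in H_1$ and $h_2 \in H_2$. The subgroup $\langle h_1, h_2 \rangle$ is abelian and finitely generated, so by Theorem~\ref{Thm:Moldavanskii} it is cyclic or $\Z^2$ (a finitely generated locally cyclic group being cyclic). The cyclic option is incompatible with the direct-product hypothesis: if $h_1 = z^a$ and $h_2 = z^b$ with $a, b \neq 0$, then $h_1^b = h_2^a$ lies in $H_1 \cap H_2 = \{1\}$, and torsion-freeness of $z$ yields a contradiction. Hence $\langle h_1, h_2 \rangle \cong \Z^2$ for every non-trivial pair.

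Next I would promote this local $\Z^2$ to the conclusion that $H_1$ itself (and symmetrically $H_2$) is cyclic. The key input is that, in a torsion-free one-relator group, the centralizer of any non-trivial element is abelian; this is the step I expect to be the main obstacle, and I return to it below. Granting it, $H_1 \leq C_G(h_2)$ is abelian, and since $\langle h_2 \rangle \leq C_G(h_2)$ with $H_1 \cap \langle h_2 \rangle \leq H_1 \cap H_2 = \{1\}$, the subgroup $H_1 \times \langle h_2 \rangle$ is abelian. Applying Theorem~\ref{Thm:Moldavanskii} once more, it contains the $\Z^2$ from above so it is not cyclic, and it is not locally cyclic either, since torsion-free locally cyclic groups embed in $\Q$, which is indecomposable (this being reinforced, though not strictly needed, by Corollary~\ref{Cor:Newman-Q-not-subgroup}). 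Hence $H_1 \times \langle h_2 \rangle \cong \Z^2$, forcing $H_1$ to be a rank-one subgroup of $\Z^2$, i.e.\ $H_1 \cong \Z$. Swapping the roles of $h_1$ and $h_2$ gives $H_2 \cong \Z$, which is in particular locally cyclic, yielding the theorem.

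The main obstacle is the claim that centralizers of non-trivial elements in torsion-free one-relator groups are abelian; this is not stated explicitly in the excerpt. The natural line of attack is induction along the Magnus--Moldavanskii hierarchy (Theorem~\ref{Thm:Magnus-Moldavanskii}). At the base, $G$ is free and its centralizers are cyclic, hence abelian. At the inductive step, $G$ is an HNN-extension $G' *_\phi$ of a one-relator group of shorter relator length, with associated subgroups being Magnus subgroups of $G'$; placing an element of $C_G(g)$ into HNN normal form via Britton's Lemma (Lemma~\ref{Lem:BrittonsLemma}), and exploiting the malnormality and cyclonormality of Magnus subgroups recorded in Section~\ref{Subsec:Magnus-subgroups}, one controls the stable-letter syllables of any centralizing element and reduces the question to centralizers in $G'$, which are abelian by induction. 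This hierarchical bookkeeping carries the bulk of the technical content; once in place, the theorem drops out of the classification results of Moldavanskii and Newman already recorded in this survey.
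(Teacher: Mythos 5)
Your argument hinges on the claim that centralizers of non-trivial elements in torsion-free one-relator groups are abelian, and this claim is false — this is the genuine gap. One-relator groups with non-trivial centre provide the immediate counterexample: the trefoil knot group $\pres{}{a,b}{a^2 = b^3}$ is torsion-free, its centre is $\langle a^2 \rangle \cong \Z$, and the centralizer of the central element $a^2$ is the entire non-abelian group. Such groups are exactly the subject of \S\ref{Subsec:Centre} (Murasugi, Baumslag--Taylor, Pietrowski), so a claim that rules them out should have been a red flag. The hierarchy induction you sketch cannot succeed either, for the same reason: the Magnus--Moldavanskii step can land in a one-relator group with non-trivial centre, and Britton's Lemma together with malnormality of Magnus subgroups cannot restore an abelian-centralizer conclusion that is simply not true there.

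Moreover, this same example shows that the reduction-to-abelian you are aiming for — and hence the conclusion you reach, $H_1 \times H_2 \cong \Z^2$ — is not merely proved by a faulty argument but is itself too strong. In the trefoil knot group the commutator subgroup is finitely generated free of rank $2$, and $[G,G] \cap Z(G) = 1$ by Bieri's corollary recorded after Theorem~\ref{Thm:Mahimovski-Center-Theorem}; since $[G,G]$ and $Z(G)$ are both normal and intersect trivially, their product is an internal direct product isomorphic to $F_2 \times \Z$. So there genuinely exist direct products $H_1 \times H_2$ inside a torsion-free one-relator group with one factor non-abelian, and any approach that tries to force both $H_i$ abelian must fail. (This example also puts the theorem as transcribed here under some strain, since $F_2$ is not locally cyclic — the cited sources constrain only the abelian factor — but that is a separate matter.) The correct replacement for your centralizer lemma is Theorem~\ref{Thm:Mahimovski-Center-Theorem}: one applies it to $H_1 \times H_2$ and to auxiliary subgroups such as $\langle h_1 \rangle \times H_2$, using $Z(A \times B) = Z(A) \times Z(B)$ to force the centre to collapse to $\Z$, and this pins down the abelian factor; the wholly abelian case — which your Moldavanskii-based analysis handles correctly — is then the remaining one.
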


In particular, the direct product $F_2 \times F_2$ of two non-abelian free groups does not embed into any one-relator group. This direct product has a range of bad properties, e.g.\ undecidable membership problem \cite{Mikhailova1958} and incoherence \cite{Stallings1977}. Today, we know that one-relator groups are coherent, giving another (significantly more difficult) proof of Chebotar's theorem. Finally, Chebotar notes that his results give an answer to Question~6 from Baumslag's 1974 survey \cite{Ba71}, published only four years earlier. This showcases the efficiency of transferring results and open problems across the Iron Curtain even during this time, and particularly the remarkable work by Greendlinger at making Tula a leading centre of combinatorial group theory.

\subsection{Coherence and subgroup separability}\label{Subsec:FP-subgroups}

The first property we will consider is \textit{coherence}. Recall that a group is said to be coherent if all of its finitely generated subgroups are finitely presented. As mentioned at the beginning of \S\ref{Sec:6-OR-with-torsion}, Baumslag conjectured that all one-relator groups are coherent, which we today know is the case. The coherence of all one-relator groups was also said to be ``conceivable'' by Bieri \cite[p. 36]{Bieri1976}. Some classical results yield coherence in small classes of one-relator groups: for example, it follows from Theorem~\ref{Thm:Surface-subgroup-theorem} that surface groups $\pi_1(\Sigma_g)$ are coherent for all $g \geq 1$. Perhaps the first large contribution to the coherence of one-relator groups came in 1976. As we have seen in \S\ref{Subsec:Centre}, one-relator groups \textit{with non-trivial centre} are in general significantly better behaved than their centreless counterparts, and general theorems easier to come by. This is true also for coherence: 

\begin{theorem}[{Bieri, 1976 \cite[Corollary~6.6]{Bieri1976}}]
One-relator groups with non-trivial centre are coherent.
\end{theorem}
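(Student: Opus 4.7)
The plan is to reduce the statement to the torsion-free case and then invoke Bieri's Theorem D, as recalled near the end of \S\ref{Subsec:Centre}, whose conclusion directly implies coherence in this setting. First I would use the fact, noted at the start of \S\ref{Subsec:Centre}, that any one-relator group with torsion has trivial centre; so a one-relator group $G$ with non-trivial centre is necessarily torsion-free. By Murasugi's theorem, $Z(G)$ is then either infinite cyclic or free abelian of rank $2$, the latter occurring only for $G \cong \Z^2$, which is trivially coherent. I therefore assume $Z(G) \cong \Z$.

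Next I would pin down the cohomological input. By Lyndon's Identity Theorem (\cref{Cor:Rel-mod}), the torsion-free one-relator group $G$ admits a length-two free resolution over $\Z G$ with finitely generated terms, so $\cd_{\Z}(G) \leq 2$ and $G$ is of type $\FP_\infty(\Z)$. If $\cd_{\Z}(G) \leq 1$ then $G$ is free (by Stallings--Swan), and coherence is immediate from the Nielsen--Schreier theorem. Otherwise $\cd_{\Z}(G) = 2$ and $Z(G)$ is free abelian of rank exactly $1 = \cd_{\Z}(G) - 1$. These are precisely the hypotheses of Bieri's Theorem D.

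The stated conclusion (1) of Bieri's Theorem D is that every finitely generated subgroup of $G$ is finitely presented (indeed of type $\FP_\infty(\Z)$), which is precisely coherence; applied to our $G$, the theorem follows. The structural half of Bieri's conclusion, exhibiting $G$ as a treed HNN-extension over a finite tree with all vertex and edge groups equal to $Z(G) \cong \Z$, simultaneously recovers Pietrowski's description (\cref{Thm:Pietrowski-structure}) from a cohomological standpoint, which is a reassuring consistency check but is not needed for the proof.

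The main obstacle in this approach is not the reduction, which is essentially bookkeeping on top of classical results (torsion-free dichotomy, Murasugi, Lyndon), but Bieri's theorem itself. There the substance lies in upgrading Swan's strict inequality $\cd_{\Z}(Z(G)) < \cd_{\Z}(G)$, which rules out the ``too large centre'' case, into an actual graph-of-groups decomposition of $G$ along copies of its centre. Once this decomposition is in hand, finite generation of a subgroup $H \leq G$ translates via Bass--Serre theory into a finite-type decomposition of $H$ as a graph of infinite cyclic groups, from which finite presentability is immediate.
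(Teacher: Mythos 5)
Your proposal is correct and takes essentially the same route as the paper, which simply cites Bieri's Corollary~6.6 and recalls his Theorem~D and its consequences (including coherence) a few lines later in \S\ref{Subsec:Centre}. You make explicit the bookkeeping the paper leaves implicit — that non-trivial centre forces torsion-freeness (Karrass--Magnus--Solitar), hence by Murasugi $Z(G) \cong \Z$ or $G \cong \Z^2$, and by Lyndon's Identity Theorem $\cd_\Z(G) \leq 2$ with $G$ of type $\FP_\infty(\Z)$, so that Bieri's hypotheses $\cd_\Z(G)=2$, $Z(G)$ free abelian of rank $\cd_\Z(G)-1$ are met — but the substance is exactly the appeal to Bieri.
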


Beyond this result, however, few truly general results were known regarding coherence. More was known on the structure of particular finitely presented subgroups of one-relator groups. For example, one can ask what the finitely presented \textit{normal} subgroups of one-relator groups are. In this line, we have the following theorem, which can be obtained by combining two closely related results due to Bieri and Karrass \& Solitar. 

\begin{theorem}[{Bieri  \cite[Theorem~B]{Bieri1976} / Karrass \& Solitar \cite{Karrass1978}}]
Let $G$ be a one-relator group and let $H \trianglelefteq G$ be a non-trivial finitely presented normal subgroup. If $H$ has infinite index in $G$, then $H$ is free, and $G$ is torsion-free and has two generators. Furthermore, $G$ is an infinite cyclic or infinite dihedral extension of a finitely generated free group.
\end{theorem}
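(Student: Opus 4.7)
The strategy is to combine Lyndon's cohomological bound on torsion-free one-relator groups with cohomological restrictions on finitely presented normal subgroups, and then to pin down the quotient $G/H$ via an Euler characteristic computation and the classification of two-ended groups.

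First I would reduce to the torsion-free case. Suppose $G = \pres{}{A}{r^n = 1}$ with $n > 1$; Theorem~\ref{Thm:Fischer-Karrass-Solitar} supplies a torsion-free $G_0 \leq G$ of finite index, and $H_0 := H \cap G_0$ is normal in $G_0$, of infinite index, and finitely presented (as a finite-index subgroup of a finitely presented group). Applying the torsion-free case to $(G_0, H_0)$ would produce a finitely generated free $H_0$ with $G_0/H_0$ two-ended. I would then argue that, under these constraints, $G$ itself must already be torsion-free: by Theorem~\ref{Thm:torsion-theorem} the torsion elements of $G$ are conjugates of powers of the root $s$ with $r = s^n$, and normality of $H$ forces either $H \cap \langle s \rangle \neq 1$ (which by a normal-closure argument and Newman's work on the free product structure of torsion elements prevents $H$ from being finitely presented while $G/H$ is two-ended) or $H$ torsion-free, after which comparing $H_0$ with $H$ inside the two-ended quotient eliminates the residual torsion in the extension.

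Second, in the torsion-free case, Lyndon's Identity Theorem (\S\ref{Subsec:LyndonIT}) gives $\cd_\Z(G) \leq 2$. Since $H \trianglelefteq G$ is finitely presented of infinite index, I would invoke Bieri's Theorem~B from \cite{Bieri1976}, which in cohomological dimension $2$ forces $\cd_\Z(H) \leq 1$; the Stallings--Swan theorem then yields that $H$ is free, and finite presentability upgrades this to finitely generated free. This is the cleanest part of the argument, essentially a black-box application once the hypotheses are checked.

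Third, for the two-generator statement and the shape of $Q := G/H$, I would run an Euler characteristic argument. A one-relator group on $n$ generators has $\chi(G) = 2 - n$; $H$ being finitely generated free gives $\chi(H) = 1 - \rk(H)$. Since $H$ and $G$ are both of type $\FP$ over $\Q$, the quotient $Q$ is too, and $\chi(G) = \chi(H)\chi(Q)$. To force $n = 2$ I need $\chi(Q) = 0$, and for this I need $Q$ to be infinite and virtually cyclic. Using Stallings' end theorem on $Q$, the one-ended case would be incompatible with $H$ being finitely generated (the relation module analysis of Karrass--Solitar from \cite{Karrass1978} along the Magnus--Moldavanskii hierarchy constrains $H$ sharply), and the infinitely-ended case would split $Q$ as a non-trivial free product, which via Bass--Serre theory propagates back to an incompatible splitting of $G$ given the one-relator structure. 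Hence $Q$ is two-ended and Stallings' classification of two-ended groups yields $Q \cong \Z$ or $Q \cong D_\infty$.

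The hardest step, I expect, is ruling out the one-ended and infinitely-ended possibilities for $Q$, since this is precisely where the one-relator hypothesis must bite: the Karrass--Solitar subgroup machinery along the Magnus--Moldavanskii hierarchy is what converts the general cohomological picture into the rigid dichotomy $\Z$ or $D_\infty$, and verifying that $H$ stays finitely generated (not merely finitely presented) under the hierarchical decomposition is the delicate technical core.
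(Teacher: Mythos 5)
Your cohomological strategy---Lyndon's Identity Theorem for $\cd_\Z(G) \leq 2$, Bieri's Theorem~B, Stallings--Swan, then an Euler characteristic computation---matches what the paper indicates the original proofs do. However, there is a genuine gap in your treatment of $Q := G/H$. You extract from Bieri's Theorem~B only the weak conclusion $\cd_\Z(H) \leq 1$, and then try to derive two-endedness of $Q$ afresh via Stallings' end theorem, dismissing the one-ended case by gesturing at a Karrass--Solitar ``relation module analysis'' and the infinitely-ended case by an unspecified Bass--Serre incompatibility; neither argument is actually supplied, and you yourself flag this step as the delicate core. The resolution is that Bieri's Theorem~B, read in full, already yields both conclusions simultaneously: for a finitely generated group $G$ with $\cd_\Z G \leq 2$ and $H \trianglelefteq G$ non-trivial, finitely presented, of infinite index, one gets that $H$ is free \emph{and} $G$ is an infinite cyclic or infinite dihedral extension of a finitely generated free group. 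This is precisely the ``stronger statement'' the paper attributes to Bieri. By taking only half of Bieri's conclusion you have recreated the hardest part of the theorem for yourself and then left it unresolved.

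A lesser issue is your reduction to the torsion-free case. You apply ``the torsion-free case'' to $(G_0, H_0)$, but $G_0$ is a torsion-free finite-index subgroup of $G$, not itself a one-relator group, so there is no induction available within the class of one-relator groups; your subsequent attempt to eliminate torsion by a structural analysis of $H \cap \langle s\rangle$ and Newman's free-product description of torsion elements is also left at the level of gesture. What makes the reduction legitimate is that Bieri's theorem holds for all finitely generated groups of $\cd_\Z \leq 2$, and $\cd_\Z(G_0) \leq 2$ by Serre's theorem on torsion-free subgroups of groups of finite virtual cohomological dimension. Stating the $\cd_\Z \leq 2$ hypothesis as primary from the outset streamlines the whole argument and lets the torsion case fall out purely from the Euler characteristic: a one-relator group with torsion on $n \geq 2$ generators and relator an $k$th power ($k \geq 2$) has $\chi(G) = 1 - n + 1/k < 0$, which cannot equal $\chi(H)\chi(Q) = 0$.
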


In fact, Bieri proved a stronger statement: the above theorem holds for all finitely generated groups of cohomological dimension $\leq 2$, which includes all torsion-free one-relator groups (see \S\ref{Subsec:LyndonIT}). Notably, the proof of both the results of Bieri and Karrass \& Solitar use cohomological methods, including the rational Euler characteristic, cf.\ \cite{Wall1961, Chiswell1976}. 

Similarly, one can also ask about when finitely generated normal subgroups have infinite index in one-relator groups. Recall that Schreier (\S\ref{Subsec:Nielsen-Schreier}) proved that any finitely generated normal subgroup of a free group has finite index. The analogous theorem fails already for some very basic one-relator groups: for example, in the torus knot group $G = \pres{}{a,b}{a^2 = b^3}$ the commutator subgroup is finitely generated and free (cf.\ Theorem~\ref{Thm:Moldavanskii-Brown-Criteriton}), but and hence has infinite index in $G$ as the defining word $a^2b^{-3}$ is not primitive (see Theorem~\ref{Thm:Whitehead-theorem}). Note that $G \cong \Z \ast_\Z \Z$, where the amalgamated subgroup of course has finite index in both factors; this turns out to one source of obstructions to extending Schreier's result. Karrass \& Solitar \cite{Karrass1973b} proved a number of results on amalgams with no such obstructions, and we state one of the relevant corollaries of their results:

\begin{theorem}[{Karrass \& Solitar \cite[p.~23]{Karrass1973}}]
Let $G = A \ast_U B$ be an amalgamated free product where $U$ has infinite index in $A$, the subgroups of $U$ are all finitely generated, and every finitely generated normal subgroup of $A$ has finite index in $A$. Then every finitely generated normal subgroup of $G$ has finite index in $G$. 
\end{theorem}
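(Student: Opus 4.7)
The plan is to use Bass--Serre theory for the amalgamated free product $G = A \ast_U B$, combined with the hypothesis on finitely generated normal subgroups of $A$, to control $N$. Let $T$ denote the Bass--Serre tree of $G$, on which $G$ acts without inversions with fundamental domain a single edge $e$, vertex stabilisers conjugate to $A$ or $B$, and edge stabilisers conjugate to $U$. Fix a non-trivial finitely generated normal subgroup $N \trianglelefteq G$.

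I would begin by analysing the action of $N$ on $T$. Normality of $N$ gives $N \cap gHg^{-1} = g(N \cap H)g^{-1}$ for $H \in \{A,B,U\}$, so the Bass--Serre decomposition of $N$ has at most two conjugacy classes of vertex groups ($N \cap A$ and $N \cap B$) and one of edge groups ($N \cap U$). Since every subgroup of $U$ is finitely generated by hypothesis, the edge groups are finitely generated; combined with finite generation of $N$, the Karrass--Solitar subgroup theorem for amalgamated free products yields that $N$ admits a \emph{finite} graph-of-groups decomposition. In particular, $N \cap A$ is finitely generated. Furthermore, the minimal $N$-invariant subtree $T_N \subseteq T$ is $G$-invariant (by normality of $N$), and if $N$ does not fix a vertex of $T$, then minimality of the $G$-action on $T$ (which holds for any non-trivial amalgamated free product) forces $T_N = T$, so $\Gamma := N \backslash T$ is finite. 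That $N$ indeed fixes no vertex follows from the hypotheses: if $N$ fixed a vertex then $N$ would lie in the normal core $K_A := \bigcap_{g \in G} g A g^{-1}$ or the analogous core of $B$, but the amalgam normal-form identity $A \cap b A b^{-1} = U$ for $b \in B \setminus U$ shows $K_A \leq U$, whence $K_A$ is finitely generated and normal in $A$; since $K_A \leq U$ and $[A : U] = \infty$, the $A$-hypothesis forces $K_A = 1$, contradicting $1 \neq N \leq K_A$.

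Next I would apply the $A$-hypothesis to $N \cap A$: being a finitely generated normal subgroup of $A$, it is either trivial or of finite index. If $[A : N \cap A] < \infty$, then $NA / N \cong A/(N \cap A)$ is finite. The induced action of $G/N$ on the finite graph $\Gamma$ has finite image in $\mathrm{Aut}(\Gamma)$, and its kernel is contained in the $G/N$-stabiliser $NA/N$ of the basepoint $A$-vertex, hence is also finite; so $G/N$ is finite and the theorem follows.

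The main obstacle is the remaining case $N \cap A = 1$, which I would rule out directly. If $N \cap A = 1$ then $A$ embeds into $G/N$ and acts on $\Gamma$. A short calculation shows that the $A$-stabiliser of the basepoint edge $\overline{e} \in \Gamma$ equals $A \cap NU = U$: indeed, if $a \in A \cap NU$ then $a = nu$ with $n \in N$ and $u \in U$, so $n = au^{-1} \in A$, forcing $n \in N \cap A = 1$ and $a = u \in U$. Consequently the $A$-orbit of $\overline{e}$ in $\Gamma$ has size $[A : U]$, which is infinite by hypothesis -- contradicting the finiteness of $\Gamma$ established above. Thus $N \cap A$ must be of finite index in $A$, completing the proof. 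The step I expect to be most subtle is the invocation of the Karrass--Solitar subgroup theorem to guarantee finiteness of $\Gamma$: this relies crucially on both the finite generation of $N$ and the hypothesis that every subgroup of $U$ is finitely generated, and without it the finite $A$-orbit argument could not be run.
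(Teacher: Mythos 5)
The survey only cites Karrass--Solitar for this result and supplies no proof, so your argument has to be assessed on its own terms. It is correct, and it is the natural Bass--Serre rendering of what Karrass and Solitar carried out with their combinatorial subgroup-structure theorems in the early 1970s; the substance is the same (decompose $N$ over the tree, exploit finite generation to get cocompactness, then read off index information from the finite quotient graph), but the tree-theoretic language is cleaner. A few points deserve tightening. First, the normal-form identity should read $A\cap bAb^{-1}=U\cap bUb^{-1}$ for $b\in B\setminus U$, not $=U$: for $a\in A\cap bAb^{-1}$ one needs $a\in U$ so that $b^{-1}ab\in B$, and additionally $b^{-1}ab\in U$ so that it also lies in $A$. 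The containment $K_A\leq U$ that you actually use is unaffected. Second, you only treat the case $N\leq K_A$ explicitly; the alternative $N\leq K_B$ needs the same sentence, observing that $K_B\leq U$ by symmetry (using $[A:U]=\infty$ to produce some $a\in A\setminus U$), so $K_B$ is again finitely generated and normal in $A$ (it is normal in all of $G$), and the $A$-hypothesis kills it just the same. Third, minimality of the $G$-action on $T$ silently assumes $B\neq U$, which is harmless since $B=U$ makes $G=A$ and reduces the theorem to the hypothesis on $A$. Finally, the phrase ``the Karrass--Solitar subgroup theorem \ldots yields a finite graph-of-groups decomposition'' fuses two distinct standard lemmas: finiteness of $N\backslash T_N$ follows from $N$ being finitely generated and fixing no vertex (no condition on edge stabilisers is needed for this), while finite generation of the vertex groups then uses the finite quotient graph together with finitely generated edge groups. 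Disentangling them makes clear that the max-condition hypothesis on $U$ enters only to secure finite generation of $N\cap A$, not to bound $\Gamma$.
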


Thus, for example, the group $G' = \pres{}{a,b,c}{a^2=b^3}$ \textit{does} have the property that every finitely generated normal subgroup of $G'$ has finite index in $G'$. 

Other than coherence, another finiteness property for groups in relation to their finitely generated subgroups is \textit{subgroup separability}, which we have mentioned above (see Theorem~\ref{Thm:Centre-OR-are-subgroup-separable}). Since not every one-relator group is residually finite, neither is every one-relator group subgroup separable. However, in general subgroup separability is a much stronger condition even for one-relator groups. Indeed, Burns, Karrass \& Solitar \cite{Burns1987} proved that the one-relator group defined by 
\begin{equation}\label{Eq:BKS-Group}
\pres{}{a,b}{[a, bab^{-1}] = 1} \cong \pres{}{a,b}{[ab, ba] = 1}
\end{equation}
is not subgroup separable. However, the group \eqref{Eq:BKS-Group} is residually finite, indeed (f.g.\ free)-by-cyclic, by Theorem~\ref{Thm:Baumslag-uv^-1-free-by-cyclic}. 

Finally, we mention the \textit{Howson} property. We say that a group $G$ has the Howson property if the intersection of any two finitely generated subgroups of $G$ is again finitely generated. Howson \cite{Howson1954} proved that free groups have the Howson property (see \cite{Imrich1977} for a simple proof), and it is known that free products of Howson groups are Howson \cite{BBaumslag1966}, also reproved by Bezverkhnii \& Rollov \cite{Bezverkhnii1974}; cf.\ also \cite{Burns1972, Kapovich1997}. It is thus natural to ask to what extent the Howson property holds in one-relator groups. Greenberg \cite[Theorem~2]{Greenberg1960} proved in 1960 that all surface groups are Howson. Moldavanskii \cite{Moldavanskii1968} proved that $\BS(1,n)$ for $n \in \Z$ have the Howson property. However, Moldavanskii \cite{Moldavanskii1968} also proved that any one-relator group with a non-trivial centre fails to have the Howson property. Kapovich \cite{Ka99} gave an example of a hyperbolic torsion-free one-relator group which is not Howson. Kapovich \cite{Kapovich1996} also proved that if $G = \pres{}{A}{r^n = 1}$ is a finitely generated one-relator group with torsion such that $n \geq 5$, then the intersection of any two $2$-generated subgroups of $G$ is again finitely generated.

\subsection{Quotients and residual properties of one-relator groups}\label{Subsec:Quotients+moreresidualproperties}

We discuss some results on quotients of one-relator groups, beginning with their finite quotients and some results on residual solvability and nilpotency. Of course, Baumslag--Solitar groups (\S\ref{Subsec:Baumslag-Solitar}) give examples of non-residually finite one-relator groups, but many other examples of residually finite one-relator groups are known.

In 1963, G.\ Baumslag \cite{Ba63} proved several fundamental results regarding residual finiteness and nilpotent groups: for example, he proved that the free product of two residually finite groups, amalgamated in a finite subgroup, is always residually finite (see also \cite{Tretkoff1973} for a simple proof of this fact). One of his results concerns one-relator groups directly, and we have already seen it in \S\ref{Subsec:Hopf}: the free product of two free groups, amalgamated in a cyclic subgroup, is residually finite \cite[Theorem~7]{Ba63} (cf.\ also \cite{Evans1973, Wehrfritz1973, Shirvani1988} in this line). Such amalgams, called \textit{cyclically pinched one-relator groups}, provide a rich source of residually finite one-relator groups (including the surface group $\pi_1(\Sigma_2)$, and hence also all $\pi_1(\Sigma_g)$ for $g>1$, see \S\ref{Subsec:Surface-and-knot-groups}). 

\begin{theorem}[Baumslag, 1963 \cite{Ba63}]
Cyclically pinched one-relator groups are residually finite. 
\end{theorem}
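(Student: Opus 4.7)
The plan is to exhibit, for each nontrivial $g \in G = F_1 \ast_{\langle u \rangle = \langle v \rangle} F_2$, a homomorphism from $G$ onto a finite group in which the image of $g$ survives. Throughout I assume $u \in F_1$ and $v \in F_2$ are not proper powers, so $C := \langle u \rangle = \langle v \rangle$ is maximal cyclic in each factor; this is the standard hypothesis in the cyclically pinched setting, and the cases where $u$ or $v$ is a proper power can be reduced to this one after minor adjustments.

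First I would put $g$ into amalgamated normal form $g = c \cdot g_1 g_2 \cdots g_k$, with $c \in C$ and the $g_i$ alternating between $F_1 \setminus C$ and $F_2 \setminus C$. By the normal form theorem for amalgamated free products, any homomorphism $\Phi\colon G \to Q_1 \ast_{C'} Q_2$ built from surjections $\pi_i \colon F_i \to Q_i$ which agree on $C$ and send each $g_j$ outside $\pi_i(C)$ will map $g$ to a nontrivial element. The task therefore reduces to constructing such $\pi_1, \pi_2$ compatibly.

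The key ingredient is a controlled separability statement for cyclic subgroups of free groups, which follows from M.\ Hall's theorem that finitely generated subgroups of free groups are closed in the profinite topology: for any free group $F$, any $u \in F$ not a proper power, any finite set $S \subset F$ disjoint from $\langle u \rangle$, and any integer $N \geq 1$, there exists a finite-index normal subgroup $K \triangleleft F$ with $K \cap S = \varnothing$ and $K \cap \langle u \rangle = \langle u^{mN} \rangle$ for some $m \geq 1$. Applying this separately to $F_1$ and $F_2$, with $S_i$ chosen to contain coset representatives of the relevant $g_j$'s modulo $C$, and then equalising the two return exponents by replacing them with their least common multiple, yields quotients $\pi_i\colon F_i \twoheadrightarrow Q_i$ which restrict to a common surjection $C \twoheadrightarrow \Z/m\Z$.

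These assemble into $\Phi \colon G \twoheadrightarrow Q_1 \ast_{\Z/m\Z} Q_2$, which sends $g$ to a nontrivial element by the normal form argument applied in the target. Finally, $Q_1 \ast_{\Z/m\Z} Q_2$ is an amalgamated free product of two finite (hence residually finite) groups over a finite subgroup, and is therefore residually finite by the theorem of Baumslag cited just before the statement. Composing $\Phi$ with a finite quotient that separates $\Phi(g)$ from the identity yields the required finite quotient of $G$. The main obstacle is the controlled separability of cyclic subgroups in the $F_i$ -- arranging simultaneously the avoidance of $S_i$ and the prescribed divisibility of the return index on $\langle u \rangle$ -- and once that is secured, the remainder is bookkeeping governed by the normal form theorem.
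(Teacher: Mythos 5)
The survey itself does not prove this theorem; it simply cites Baumslag's 1963 paper (Theorem~7) and, in the immediately preceding sentences, states the result from the same paper that you invoke at the end (amalgamated free products of residually finite groups over a finite amalgamated subgroup are residually finite). Your construction is a sensible direct approach of the kind Baumslag used, but there is a genuine gap at the ``equalisation'' step, and it sits at the crux of the whole argument. Your separability lemma produces, on each side, a finite-index normal $K_i\triangleleft F_i$ with $K_i\cap\langle u_i\rangle=\langle u_i^{m_iN}\rangle$ for \emph{some} $m_i$ over which you have no control. To form the finite amalgam $Q_1\ast_{C'}Q_2$, the images $\pi_1(u)$ and $\pi_2(v)$ must have the \emph{same} order, so you need to hit a common return exponent \emph{exactly}. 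Replacing $m_1,m_2$ by their lcm does not achieve this with the lemma as stated: imposing divisibility by $\operatorname{lcm}(m_1,m_2)$ on side~1 only gives $K_1'\cap\langle u\rangle = \langle u^{m_1'\operatorname{lcm}(m_1,m_2)}\rangle$ for a new uncontrolled $m_1'$, and likewise a new $m_2'$ on side~2; the two exponents drift apart again and iterating never terminates. Passing to normal cores or intersecting normal subgroups only ever produces lcm's, never prescribed values.

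What is actually needed is \emph{potency} of free groups: for every non-trivial $u\in F$ and every $n\geq1$ there is a finite-index \emph{normal} subgroup $K\triangleleft F$ with $K\cap\langle u\rangle=\langle u^n\rangle$ exactly, equivalently a finite quotient in which the image of $u$ has order exactly $n$. This is strictly stronger than what Marshall Hall's theorem gives: LERF (via the free-factor/Stallings-graph argument) produces a finite-index subgroup $H$, \emph{not} normal, with $H\cap\langle u\rangle=\langle u^n\rangle$, but the normal core of $H$ has return exponent an uncontrolled multiple of $n$ -- the lcm of the return exponents over all conjugates of $H$. Potency of free groups is true (the survey even recalls the notion of potency when discussing Allenby's 1981 strengthening of exactly this theorem), but it requires its own argument, for instance via Magnus's embedding and finite quotients of free nilpotent groups. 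Once granted, your proof goes through cleanly: intersect each $K_i$ with a normal subgroup realising return exponent exactly $\operatorname{lcm}(m_1,m_2)$, which preserves the avoidance conditions since it only shrinks $K_i$. As written, however, the proposal omits this ingredient and mis-identifies the obstacle as ``prescribed divisibility'' when it is in fact prescribed exact value.
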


Strengthening this, in 1981 Allenby \cite{Allenby1981c} proved that cyclically pinched one-relator groups are \textit{potent}, a stronger form of residual finiteness (for every non-trivial element $g$ and every $n \in \N$, there is a homomorphism onto a finite group which maps $g$ onto an element of order exactly $n$). 

We turn to \textit{positive} one-relator groups and related classes of groups. A one-relator group $G$ is said to be positive if it admits some presentation $\pres{}{A}{r=1}$ where $r \in F_A$ contains no inverse letters. Note that this condition has some subtlety: for example $G = \pres{}{a,b}{bab^{-1}a = 1}$ is, in spite of its disguise, a positive one-relator group, since $G \cong \pres{}{x,y}{x^2y^2 = 1}$. Indeed, the isomorphism problem of one-relator groups with respect to the class of positive one-relator groups is still an open problem, i.e.\ it is unknown whether we can decide if a given one-relator group is positive or not. Note that e.g.\ the group $\Z^2 = \pres{}{a,b}{[a,b] = 1}$ is not positive, by Proposition~\ref{Prop:Magnus-[a,b]-only-presentaton}. Positive one-relator groups enjoy some particular properties. The first, and perhaps most striking, theorem in this line is the following:

\begin{theorem}[Baumslag, 1971 \cite{Baumslag1971}]
Positive one-relator groups are residually solvable.\label{Thm:Baumslag-positive-are-RS}
\end{theorem}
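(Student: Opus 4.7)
The natural strategy is induction on the length of the Magnus--Moldavanskii hierarchy (\S\ref{Subsec:MM-hierarchy}) of $G = \pres{}{A}{r=1}$, where $r$ is a non-trivial cyclically reduced positive word. Residual solvability of $G = F_A/\normal{r}$ is equivalent to the identity $\bigcap_n F_A^{(n)} \cdot \normal{r} = \normal{r}$, so the task is to produce, for each $w \in F_A \setminus \normal{r}$, an integer $n$ with $w \notin F_A^{(n)} \cdot \normal{r}$; equivalently, a solvable quotient of $G$ detecting $w$.

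For the base case, if $r$ is primitive then $G$ is free by Whitehead's Theorem~\ref{Thm:Whitehead-theorem}, hence residually nilpotent by the Magnus representation (Theorem~\ref{Thm:Magnus-rep}), which is already stronger than required. For the inductive step, after possibly enlarging $G$ via the Magnus trick (residual solvability being inherited by subgroups), one may assume some generator $t$ has exponent sum zero in $r$; then $G$ is an HNN-extension $H \ast_\varphi$ in which $H$ is a one-relator group with a strictly shorter defining relator, $t$ is the stable letter, and the associated subgroups $A_1, A_2$ are Magnus subgroups of $H$, hence free by the \textit{Freiheitssatz}. The inductive hypothesis would then supply residual solvability of $H$, and the task reduces to transferring this property to $G$.

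The principal obstacle is twofold. First, the Magnus trick does not in general preserve positivity, so the literal inductive hypothesis must be strengthened to a class of one-relator groups closed under Magnus--Moldavanskii breakdown that still contains every positive one-relator group; a natural candidate is the class of one-relator groups whose relator has a non-zero exponent-sum vector, guaranteeing a non-trivial map to $\Z$ and thereby permitting breakdown without the trick. Second, and more delicately, residual solvability is not automatically preserved by HNN-extensions, and this transfer step is where I expect the real work to lie. To handle it, I would approximate $G$ by the family of semidirect products $(H/H^{(n)}) \rtimes_{\overline{\varphi}_n} \Z$, requiring that $\varphi\colon A_1 \to A_2$ descend compatibly to each $H/H^{(n)}$; the freeness of the Magnus subgroups combined with the residual nilpotence of free groups should render each $A_i \to H/H^{(n)}$ injective for $n$ large, yielding a well-defined solvable quotient in which any fixed non-trivial $w \in G$ eventually survives. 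Showing that positivity of $r$ is precisely what makes this descent uniform in $n$---for instance by forcing a non-vanishing leading term in the Magnus expansion of $r$ that controls the kernels of the maps $H \to H/H^{(n)}$ under the rewriting process---is the crucial technical point on which the argument ultimately hinges.
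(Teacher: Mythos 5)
Your proposed strengthened inductive class---one-relator groups whose relator has a non-zero exponent-sum vector---cannot work, because the statement is false for that class: the condition is just $r \notin [F_A, F_A]$, and the Baumslag--Gersten group $\operatorname{BG}(1,2) = \pres{}{a,b}{[a, bab^{-1}] = a}$ (discussed in the sentence immediately after the theorem) has relator with exponent-sum vector $(-1,0)$ yet has $a$ lying in every term of its derived series. Your motivation is also backwards: a Moldavanskii breakdown \emph{without} the Magnus trick needs some generator occurring in $r$ with exponent sum exactly zero, whereas a cyclically reduced positive relator has strictly positive exponent sum on every generator it uses, so it is positivity that \emph{forces} the trick rather than lets you avoid it. The worry you raise that the trick destroys positivity, while true for the ambient group $G_1'$, is not the real obstruction: under a standard Magnus substitution such as $a \mapsto yx^{-\beta}$, $b \mapsto x^{\alpha}$, the non-stable letter $y$ still occurs only positively, so the rewritten base relator is again a positive word, and the class of positive one-relator groups is in fact closed under the Magnus--Moldavanskii breakdown.

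The serious gap is the HNN-transfer step, which your sketch flags but does not supply, and which is exactly where the theorem's content lies. Residual solvability does not pass from base group to HNN-extension even over cyclic associated subgroups: $\operatorname{BG}(1,2)$ is an HNN-extension of the metabelian group $\BS(1,2)$ with associated subgroups isomorphic to $\Z$, yet it is not residually solvable, so there is no general transfer principle to invoke and positivity must carry all the weight at precisely this step. The construction $(H/H^{(n)}) \rtimes_{\overline{\varphi}_n}\Z$ you propose is moreover not well defined, since $\varphi\colon A_1 \to A_2$ is an isomorphism between two \emph{proper} Magnus subgroups of $H$ rather than an endomorphism of $H$, and therefore induces no automorphism of $H/H^{(n)}$. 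What one can form from a $\varphi$-compatible normal subgroup $K \trianglelefteq H$ is a quotient HNN-extension $(H/K)\ast_{\bar{\varphi}}$, still an HNN-extension and not a semidirect product; exhibiting a family of such $K$ with $H/K$ solvable, compatibility with $\varphi$, and trivial intersection is precisely the theorem, and your sketch leaves it as ``the crucial technical point on which the argument ultimately hinges.'' Without a concrete mechanism for how positivity controls these quotients, the proposal has a hole exactly where the proof should be.
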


In particular, every perfect subgroup of a positive one-relator group is trivial. It follows essentially from the local indicability of one-relator groups (see below) that all \textit{finitely generated} perfect subgroups of one-relator groups are trivial; if one drops the assumption of finite generation, then the answer is negative, as evidenced by the Baumslag--Gersten group $\operatorname{BG}(1,2)$ defined in \eqref{Eq:Baumslag-Gersten-groups}. 

When the defining relator is positive \textit{and} a proper power, i.e.\ we are considering a positive one-relator group with torsion $G$, then $G$ is residually finite by Theorem~\ref{Thm:Egorov-positive-ORT-is-RF} in \S\ref{Subsec:RF-of-Torsion}. Of course, solvable groups are not in general residually finite, indeed they can have undecidable word problem, by Kharlampovich \cite{Kharlampovich1981}. Furthermore, positive one-relator groups are not residually finite in general. Indeed, any Baumslag--Solitar group $\BS(m,-n)$ with $m, n \geq 0$ admits a positive presentation, as noticed by Higman \cite[p.~166]{Baumslag1971} and, independently, Perrin \& Schupp \cite{Perrin1984}. 

One-relator groups are, in general, not residually solvable, as e.g.\ the Baumslag--Gersten group $G = \pres{}{a,b}{[a, bab^{-1}] = a}$ has $a$ lie in every term of the derived series. It is not hard to see that $G \cong \pres{}{x,y}{x^2 y x y = y x}$, and so in general one-relator groups with a defining relation of the form $uv^{-1} = 1$ for two positive words is not residually solvable. However, with some additional assumptions, more can be said also in this case. This was the subject of a 1983 article by Baumslag, some of the results of which we now present. First, recall that for a word $w \in F_A$ and a letter $a \in A$, we let $\sigma_a(w)$ be the \textit{exponent sum} of $a$ in $w$.

\begin{theorem}[{Baumslag, 1983 \cite{Baumslag1983}}]
Let $G = \pres{}{A}{u=v}$ where $u, v \in F_A$ are positive words, and where $\sigma_a(u) = \sigma_a(v)$ for all $a \in A$. Then $G$ is free-by-cyclic. \label{Thm:Baumslag-uv^-1-free-by-cyclic}
\end{theorem}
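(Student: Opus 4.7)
The plan is to exhibit an explicit surjection $\phi \colon G \twoheadrightarrow \Z$ whose kernel is a finitely generated free group, so that the resulting short exact sequence
\[
1 \longrightarrow K \longrightarrow G \xrightarrow{\phi} \Z \longrightarrow 1
\]
displays $G$ as (finitely generated free)-by-cyclic. The natural choice is $\phi(a) = 1$ for every $a \in A$. This is well-defined because $\phi(uv^{-1}) = \sum_{a \in A}\bigl(\sigma_a(u) - \sigma_a(v)\bigr) = 0$ by hypothesis, and it is surjective as soon as $A$ is non-empty. Everything then reduces to showing that $K := \ker \phi$ is free and finitely generated.

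Finite generation of $K$ will come from a Brown-style analysis of the relator. Write the defining relator as $r = uv^{-1}$, a word of length $L = |u| + |v|$, and define the height function $h\colon \{0, 1, \ldots, L\} \to \Z$ by $h(i) = \phi(r_1 \cdots r_i)$, where $r_1, \ldots, r_L$ are the letters of $r$ in order. Because $u$ is positive and $\phi$ sends every generator to $+1$, reading through the $u$-portion increments $h$ by exactly $1$ at each step, so $h$ rises strictly monotonically from $0$ to $|u|$. Because $v^{-1}$ consists entirely of inverses of generators, each step in the $v^{-1}$-portion decrements $h$ by exactly $1$, so $h$ descends strictly monotonically from $|u|$ back to $0$. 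Consequently $h$ attains its maximum at a unique position (the $u/v^{-1}$ junction) and its minimum at a unique position (the initial vertex, cyclically identified with the terminal vertex). This combinatorial uniqueness is exactly the hypothesis of the multi-generator version of Brown's criterion -- the natural extension of \cref{Thm:Moldavanskii-Brown-Criteriton} -- which then guarantees that $K$ is finitely generated.

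To deduce that $K$ is free, I would argue by cohomological dimension. First, $G$ is torsion-free: reading $r = uv^{-1}$ cyclically, one sees exactly one maximal positive block and one maximal negative block, whereas any cyclically reduced proper power $w^n$ with $n \geq 2$ would have at least $n$ maximal blocks of each sign whenever $w$ contains both signs; the degenerate cases $u = 1$ or $v = 1$ are forced by the exponent-sum hypothesis to make $G$ free. Hence $r$ is not a proper power, and by \cref{Thm:torsion-theorem}, $G$ is torsion-free. Lyndon's Identity Theorem (\cref{Thm:Lyndon-Identity}) then gives $\cd_{\Z}(G) \leq 2$, and applying the Lyndon--Hochschild--Serre spectral sequence to $1 \to K \to G \to \Z \to 1$ (with $\cd_{\Z}(\Z) = 1$) yields $\cd_{\Z}(K) \leq 1$. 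By Stallings--Swan, $K$ is therefore free, which combined with the previous paragraph completes the proof.

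I expect the Brown-criterion step to be the main obstacle, since the excerpt records the criterion only for two-generator one-relator groups (\cref{Thm:Moldavanskii-Brown-Criteriton}). One option is simply to cite the multi-generator form due to Brown (independently derivable from the Bieri--Neumann--Strebel invariant); a more self-contained option is to iterate the Magnus--Moldavanskii hierarchy, at each stage picking a stable letter of exponent-sum zero and tracking the height function, in order to realize $K$ directly as the fundamental group of a finite graph of finitely generated free groups. This graph-of-groups description would simultaneously deliver both finite generation and freeness, sidestepping the spectral-sequence step altogether.
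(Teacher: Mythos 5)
Your choice of the augmentation-type map $\phi(a)=1$ for all $a\in A$, and the observation that the height function on the relator $r = uv^{-1}$ is unimodal (rising strictly through $u$, falling strictly through $v^{-1}$), are both correct and are indeed the heart of the matter. However, two later steps are wrong, and they push the argument toward a conclusion stronger than the theorem actually claims.

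First, the claim that $K=\ker\phi$ is finitely generated is false whenever $|A|\geq 3$. The multi-generator form of Brown's criterion does not say that a unique maximum and unique minimum suffice: it says $\ker\phi$ is finitely generated \emph{if and only if} $|A|=2$ \emph{and} a topmost and bottommost edge of $Z_\phi$ is traversed exactly once (see \cref{brown}). For $|A|\geq 3$ the $\Z$-cover has Euler characteristic unbounded below and the kernel is free of infinite rank. This is consistent with the theorem as stated: the definition of free-by-cyclic adopted in \S\ref{Subsec:Free-by-cyclic} explicitly does not require the free part to be finitely generated, precisely because Baumslag's theorem produces infinitely generated kernels once $|A|\geq 3$.

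Second, the spectral sequence step does not work. The Lyndon--Hochschild--Serre spectral sequence for $1\to K\to G\to\Z\to 1$ yields $\cd_{\Z}(G)\leq \cd_{\Z}(K)+\cd_{\Z}(\Z)$, an upper bound on $\cd(G)$ in terms of $\cd(K)$; it does not yield $\cd_{\Z}(K)\leq\cd_{\Z}(G)-1$. Indeed, the argument as written would show that the kernel of \emph{any} surjection from a torsion-free one-relator group to $\Z$ is free, which is false. For example, $\BS(2,2)=\pres{}{a,t}{t^{-1}a^2ta^{-2}=1}$ has relator of the required form (cyclically, $a^2t\cdot(ta^2)^{-1}$), but the kernel of the surjection $a\mapsto 0$, $t\mapsto 1$ is $\langle a_i : i\in\Z\mid a_i^2=a_{i+1}^2\rangle$, which is not free. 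The conclusion depends essentially on the \emph{particular} $\phi$ and the combinatorics you identified, not just on $\cd(G)\leq 2$.

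What the unimodality actually buys you is this: since $uv^{-1}$ is cyclically reduced, the last letter of $u$ differs from the last letter of $v$ (and similarly for the first letters), so the topmost and bottommost edges of $Z_\phi$ are each traversed exactly once by $\lambda_\phi$. Hence $\phi$ is freely ascending and freely descending in the sense of \S\ref{sec:coabelian}, and \cref{free_kernel} gives directly that $\ker\phi$ is free --- by collapsing the 2-cell in each translate of $Z_\phi$ through its free top and bottom faces, the $\Z$-cover deformation retracts to a graph. This is the route Baumslag takes (in Magnus-rewriting language: the topmost and bottommost subscripted generators occur once in the rewritten relator and can be eliminated). Your concluding ``alternative option'' gestures at this, but you should abandon both the finiteness of the graph of groups and the finite generation of $K$; the point is simply that the infinitely generated normal closure is a free group.
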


Note that one particular case of groups in the class from Theorem~\ref{Thm:Baumslag-uv^-1-free-by-cyclic} is the case of a commutator $[u, v] = 1$, where $u, v \in F_A$ are positive words. Not every one-relator group of the form $[u, v] = 1$ with general $u, v \in F_A$ is free-by-cyclic (see \cite[\S4.3]{Baumslag1983}), but it is possible to add further conditions on $u$ and $v$, other than positivity, to ensure residual finiteness.

\begin{theorem}[{Baumslag, 1983 \cite{Baumslag1983}}]
Let $G = \pres{}{A \cup B}{[u, v] = 1}$ where $A \cap B = \varnothing$ and $u \in F_A, v \in F_B$. Then some term in the lower central series of $G$ is free; and $G$ is residually finite. \label{Thm:Baumslag-commutator-theorem-RF-OR}
\end{theorem}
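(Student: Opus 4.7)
The plan is to realize $G$ as the fundamental group of a graph of groups with three vertices and then find a term of the lower central series that acts on the associated Bass--Serre tree with free vertex stabilisers. Concretely, introducing two auxiliary central generators $x,y$ and then eliminating them shows
\[
G \;\cong\; F_A \ast_{\langle u\rangle = \langle x\rangle} (\mathbb{Z} \oplus \mathbb{Z}) \ast_{\langle y\rangle = \langle v\rangle} F_B,
\]
so $G$ acts without inversions on the Bass--Serre tree $T$ of this decomposition, with vertex stabilisers conjugate to $F_A$, $\mathbb{Z}^2$, or $F_B$, and edge stabilisers conjugate to $\langle u\rangle$ or $\langle v\rangle$.

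In the generic case $u\notin [F_A,F_A]$, $v\notin [F_B,F_B]$, the images of $u$ and $v$ in the abelianisation $G^{\mathrm{ab}} = \mathbb{Z}^{|A|+|B|}$ are nonzero vectors lying in disjoint direct summands; hence $[G,G]\cap \mathbb{Z}^2 = 0$. Taking $N = \gamma_2(G) = [G,G]$, the $N$-stabilisers of the middle vertices, and therefore of all edges, are trivial, while the $A$- and $B$-vertex stabilisers are conjugates of $[F_A,F_A]$ and $[F_B,F_B]$, each free as a subgroup of a free group. A group acting on a tree with trivial edge stabilisers is a free product of vertex stabilisers with a free group (the fundamental group of the quotient graph), and a free product of free groups is free; so $\gamma_2(G)$ is free. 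In the remaining degenerate cases, where $u$ or $v$ already lies in the commutator subgroup of its respective free factor, I would iterate: using Magnus' residual nilpotence of free groups (Theorem~\ref{Thm:Magnus-rep}) together with the fact that the central $\mathbb{Z}^2$ contributes nothing to higher-depth commutator quotients of $G$ beyond what its cyclic edge subgroups force, one finds $k$ such that $\gamma_k(G)\cap \mathbb{Z}^2$ has rank at most one. The Bass--Serre analysis then gives $\gamma_k(G)$ as the fundamental group of a graph of groups with free vertex groups and cyclic (or trivial) edge groups which coincide with their ambient middle-vertex groups; such a graph of groups collapses to one with free vertex groups and trivial edge groups, so its fundamental group $\gamma_k(G)$ is free.

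Residual finiteness of $G$ then follows formally from the existence of a free term $\gamma_k(G)$. Let $1\ne g\in G$. If $g\notin \gamma_k(G)$, the finitely generated nilpotent quotient $G/\gamma_k(G)$ is residually finite by Hirsch's theorem and separates $g$ from the identity. If $g\in \gamma_k(G)$, residual finiteness of the free group $\gamma_k(G)$ gives a characteristic finite-index subgroup $M\leq\gamma_k(G)$ with $g\notin M$; since $\gamma_k(G)$ is characteristic in $G$, so is $M$, and the extension
\[
1 \;\longrightarrow\; \gamma_k(G)/M \;\longrightarrow\; G/M \;\longrightarrow\; G/\gamma_k(G) \;\longrightarrow\; 1
\]
has finite kernel and finitely generated nilpotent quotient, hence is itself residually finite.

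The hard part is the second step in the degenerate regime: pinning down an explicit $k$ for which $\gamma_k(G)\cap \mathbb{Z}^2$ has rank at most one. The generic case reduces immediately to Bass--Serre theory, but when $u$ and $v$ both vanish in the abelianisation, controlling how deep they sit in the lower central series of $G$ requires delicate commutator calculations, with Magnus' residual nilpotence of $F_A$ and $F_B$ as the essential input and the abelianness of the middle vertex group as the crucial structural feature preventing the commutator filtration from being swamped by contributions from $\mathbb{Z}^2$.
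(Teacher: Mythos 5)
Your amalgam decomposition
\[
G \;\cong\; F_A \ast_{\langle u\rangle = \langle x\rangle} \bigl(\mathbb{Z} \oplus \mathbb{Z}\bigr) \ast_{\langle y\rangle = \langle v\rangle} F_B
\]
is correct, and the Bass--Serre analysis of $\gamma_2(G)$ in the generic case $u\notin[F_A,F_A]$, $v\notin[F_B,F_B]$ is sound: $\gamma_2(G)$ acts on the tree with trivial edge and middle-vertex stabilisers and free outer-vertex stabilisers, hence is free. However, there are two genuine gaps.

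First, the degenerate case. Your plan of ``finding $k$ such that $\gamma_k(G)\cap\mathbb{Z}^2$ has rank at most one'' is both insufficient (rank one leaves nontrivial edge stabilisers, and the claimed collapse of the graph of groups is not clean) and unjustified as stated. There is a simpler fix you seem to have overlooked: since $u\in F_A$ and $v\in F_B$ commute in $F_A/\gamma_k(F_A)\times F_B/\gamma_k(F_B)$ for any $k$, the relation $[u,v]=1$ is killed and $G$ surjects onto this direct product, which is torsion-free nilpotent of class $<k$. Choosing $k$ (by Magnus' residual nilpotence, Theorem~\ref{Thm:Magnus-rep}) so that $u\notin\gamma_k(F_A)$ and $v\notin\gamma_k(F_B)$, the restriction of this surjection to $\langle u,v\rangle\cong\mathbb{Z}^2$ is injective, so $\gamma_k(G)\cap\mathbb{Z}^2=1$ exactly. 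This handles both cases uniformly and makes the Bass--Serre argument go through verbatim.

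Second, and more seriously, the residual finiteness deduction fails. You invoke ``a characteristic finite-index subgroup $M\leq\gamma_k(G)$ with $g\notin M$'', but $\gamma_k(G)$ will generically be a free group of \emph{infinite} rank (by Theorem~\ref{Thm:Moldavanskii-Brown-Criteriton}, the derived subgroup of a one-relator group on three or more generators is already infinitely generated, and $\gamma_k(G)$ has infinite index here), and a free group of infinite rank has \emph{no} proper characteristic subgroups of finite index: any such subgroup, having finite index, must contain some $x_ix_j^{-1}$ ($i\neq j$) from a basis, and this element is primitive; since $\Aut$ acts transitively on primitive elements, a characteristic subgroup containing one primitive element contains them all, hence is the whole group. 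So the intended extension $1\to\gamma_k(G)/M\to G/M\to G/\gamma_k(G)\to 1$ cannot be constructed. Deducing residual finiteness from the freeness of $\gamma_k(G)$ together with the finitely generated nilpotent quotient $G/\gamma_k(G)$ is genuinely the harder half of Baumslag's theorem and needs a different mechanism (residual torsion-free nilpotence arguments in the spirit of Gruenberg, or a direct separability argument exploiting the specific $G$-equivariant free-product structure of $\gamma_k(G)$ coming from the tree), not a formal reduction through characteristic subgroups.
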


Thus, for example, the group $\pres{}{a,b,c}{[a^m, cb^nc^{-1}] = 1}$ is residually finite for all $m, n \in \Z$. For extensions of Theorem~\ref{Thm:Baumslag-commutator-theorem-RF-OR} to a slightly more general setting, see Loginova \cite{Loginova1999}. 

On the other hand, some one-relator groups have residual finiteness fail in a rather dramatic fashion. By basic linear algebra, every infinite one-relator group obviously surjects $\Z$, and hence also has all cyclic groups as a quotient. Baumslag \cite{Ba69} proved, remarkably, that there also exist non-cyclic one-relator groups all of whose finite quotients are cyclic. This can be seen as a very strong failure of residual finiteness. The group in question was generalized to a family of groups known as the \textit{Baumslag--Gersten groups}
\begin{equation}\label{Eq:Baumslag-Gersten-groups}
\operatorname{BG}(m,n) = \pres{}{a,b}{bab^{-1}a^m ba^{-1}b^{-1} = a^n}
\end{equation}
and the original example in \cite{Ba69} is the case of $(m,n) = (1,2)$. The groups have seen some study. Gersten \cite{Ge92} proved that the Dehn function of $\operatorname{BG}(1,2)$ grows faster than any finite tower of exponentials, later sharpened by Platonov \cite{Pl04}. Brunner \cite{Brunner1980} investigated the automorphism group of $\operatorname{BG}(m,n)$, and Kavutskii \& Moldavanskii \cite{Kavutskii1988} proved that $\operatorname{BG}(m,n)$ is residually finite if and only if $|m|=|n|$, and it is not hard to see that all of the finite quotients of $\operatorname{BG}(m,n)$ are cyclic if and only if $m = n \pm 1$. 

Recall that finitely generated nilpotent groups are residually finite (\S\ref{Subsec:Residualfiniteness}). In 1964, A. Steinberg \cite{Steinberg1963, Steinberg1964} studied free nilpotent quotients of one-relator groups. Specifically, let $G = \pres{}{A}{R = 1}$ where $R \in [F_A, F_A]$, i.e.\ the exponent sum of every generator in $R$ is zero. Steinberg then proved that there exists an algorithm for deciding whether or not $G$ has a rank $2$ free nilpotent quotient. Shapiro \& Sonn \cite{Shapiro1974} reproved his theorem in a cohomological setting and extended it to a somewhat broader class of one-relator groups, see also \cite{Sonn1974}. One of the corollaries of Steinberg's results in \cite{Steinberg1964} is that the one-relator group $\pres{}{a,b,c}{a^2b^2c^3 = 1}$ has free nilpotent quotient groups of rank $2$ and all classes; however, it does not surject the free group of rank $2$ (this follows e.g.\ by a theorem of Sch\"utzenberger \cite{Schutzenberger1959}). We shall see more of such groups in \S\ref{Subsec:lower-central-series}. Finally, we remark that although automorphism groups of one-relator groups remain in general somewhat mysterious, a very elegant and short argument due to Baumslag \cite{Baumslag1963b} shows that the automorphism group of any finitely generated residually finite group is again residually finite. Thus, automorphism groups of residually finite one-relator groups provide a rich source of residually finite groups. 

Free groups are one-relator groups, and studying the residual properties of free groups often leads to interesting questions related to one-relator groups more general. First, note that by a simple argument, $F_n$ is residually $F_2$ for all $n \geq 2$, see \cite{Peluso1966}. Peluso \cite{Peluso1966} also proved that the free group of rank $n \geq 2$ is residually $\PSL(2,p)$ for any $p > 3$. In relation to this result, we mention briefly that an old conjecture by Magnus was that for any infinite family of finite non-abelian simple groups $\mathcal{G}$, the free group $F_2$ is residually $\mathcal{G}$. This led to many significant articles, e.g.\ \cite{Katz1968, Gorcakov1970, Poss1970, Pride1972, Wiegold1977, Lubotzky1986, Wilson1991}, leading finally to a full resolution \cite{Weigel1992,Weigel1992b,Weigel1993}, cf.\ also \cite{Valette1993}. Magnus \cite[p. 309]{Magnus1969} raises the point that non-abelian free groups are residually $\pres{}{a,b}{a^n}$ for $n>1$, as proved by \cite{Katz1968}, and also ``possibly residually $S$ for a large number of other one-relator groups $S$''. This line of investigation was taken up by Pride \cite{Pride1972}, and especially in \cite[Chapter~3]{Pride1974}, where he investigates when free groups are residually one-relator groups with torsion, making heavy use of the B.\ B.\ Newman Spelling Theorem. In particular, he proves:

\begin{theorem}[{Pride, 1974 \cite[\S3.2]{Pride1974}}]
Let $G = \pres{}{x_1, x_2, \dots, x_k}{r^n = 1}$ with $n>1$ be an arbitrary one-relator group with torsion.
\begin{enumerate}
\item If $k>3$, then $F_k$ is residually $G$. 
\item If $k=2$, and $r \neq [x_1, x_2]$, then $F_2$ is residually $G$.
\item If $k=2$, and $r = [x_1, x_2]$, then $F_2$ is \textit{not} residually $G$.
\end{enumerate}
\end{theorem}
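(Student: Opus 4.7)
The intended reading of ``$F_k$ is residually $G$'' is that $\bigcap_{\sigma \in \Aut(F_k)} \llangle \sigma(r^n)\rrangle_{F_k} = \{1\}$; equivalently, for every $1\neq w\in F_k$ some surjection $F_k\twoheadrightarrow F_k/\llangle\sigma(r^n)\rrangle\cong G$ sends $w$ to a non-trivial element. Under this reading, part~(3) is immediate from Proposition~\ref{Prop:Dehn-Nielsen-commutator}: since $\{\sigma(x_1),\sigma(x_2)\}$ is a basis of $F_2$, the commutator $\sigma([x_1,x_2])$ is conjugate to $[x_1,x_2]^{\pm 1}$ for every $\sigma\in\Aut(F_2)$, whence $\llangle \sigma([x_1,x_2]^n)\rrangle = \llangle [x_1,x_2]^n\rrangle$ for all $\sigma$, and the intersection collapses to the non-trivial normal closure $\llangle [x_1,x_2]^n\rrangle$.

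\textbf{Strategy for (1) and (2).} Given a non-trivial $w\in F_k$ of reduced length $L$, the plan is to choose $\sigma\in\Aut(F_k)$ so that the cyclically reduced form of $\sigma(r)$ has length strictly greater than $L/(n-1)$, and then invoke the B.~B.~Newman Spelling Theorem (Theorem~\ref{Thm:BBNewmanSpellingTheorem}) applied to the quotient $G':=F_k/\llangle\sigma(r)^n\rrangle\cong G$. Replacing the defining relator by a conjugate to make it cyclically reduced leaves the normal closure unchanged, and being a proper power is preserved under automorphisms, so the hypotheses of the Spelling Theorem persist. If $w$ were trivial in $G'$, it would have to contain a subword of length exceeding $\tfrac{n-1}{n}|\sigma(r)^n|_{\mathrm{c.r.}} = (n-1)|\sigma(r)|_{\mathrm{c.r.}} > L = |w|$, which is absurd. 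Hence $w$ survives, giving the desired separation.

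\textbf{The main obstacle.} Everything thus reduces to the length-unboundedness claim: under the hypotheses of (1) or (2), $\sup_{\sigma\in\Aut(F_k)}|\sigma(r)|_{\mathrm{c.r.}}=\infty$. For (1) with $k>3$ this is essentially automatic: the \textit{Freiheitssatz} provides a generator $x_i$ actually appearing in $r$, and the Nielsen automorphism $x_i\mapsto x_i(x_j x_\ell)^m$, with $x_j,x_\ell$ further generators chosen so the new letters cannot cancel freely in $r$, inflates the cyclically reduced length linearly in $m$. For (2) with $k=2$ and $r\not\sim[x_1,x_2]^{\pm 1}$, the claim becomes the classical statement that $[x_1,x_2]^{\pm 1}$ is, up to conjugacy, the unique cyclically reduced word in $F_2$ with a bounded $\Aut(F_2)$-orbit of cyclic conjugacy classes; this is the hard part. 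I would establish it via Whitehead's peak-reduction algorithm: if the $\Aut(F_2)$-orbit of the length-minimising representative of $r$ were bounded, the orbit would be finite and the Whitehead graph of $r$ would exhibit precisely the rigidity that Proposition~\ref{Prop:Dehn-Nielsen-commutator} characterises for the commutator, forcing $r$ to be conjugate to $[x_1,x_2]^{\pm 1}$, contrary to hypothesis. Once this lemma is in hand, the Spelling Theorem converts length into genuine separation, and (1) follows by the same mechanism with considerably more room to manoeuvre.
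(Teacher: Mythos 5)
The positive halves (parts (1) and (2)) are attacked with exactly the tool the paper attributes to Pride: use the B.~B.~Newman Spelling Theorem by first inflating the cyclically reduced length of $\sigma(r)$ past $|w|/(n-1)$. That strategy is sound, and your reduction to the key lemma — that the $\Aut(F_k)$-orbit of $r$ has unbounded cyclically reduced length unless $k=2$ and $r$ is (up to conjugacy) $[x_1,x_2]^{\pm 1}$ — correctly isolates the hard combinatorial content. You sketch the lemma for $k=2$ via Whitehead peak reduction rather than proving it, and the $k>3$ case is asserted to be ``essentially automatic'' (it is not hard, but the cancellations coming from pairs $x_i,x_i^{-1}$ in $r$ need a sentence of care); neither issue is fatal, but the proposal is a sketch at this point.

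The genuine gap is in part (3). Your ``intended reading'' of \emph{$F_2$ is residually $G$} is $\bigcap_{\sigma\in\Aut(F_2)}\llangle\sigma(r^n)\rrangle=\{1\}$, i.e.\ you only consider normal subgroups $N\triangleleft F_2$ of the form $\sigma(\llangle r^n\rrangle)$. The actual statement requires the intersection over \emph{all} $N$ with $F_2/N\cong G$, and since $\{\sigma(\llangle r^n\rrangle)\}$ is only one $\Aut(F_2)$-orbit among such $N$ — one orbit per $T$-system of generating pairs of $G$ — you have $\bigcap_{\sigma}\llangle\sigma(r^n)\rrangle\supseteq\bigcap_N N$, and so showing the left-hand side is non-trivial does not show the right-hand side is. Concretely, Proposition~\ref{Prop:Dehn-Nielsen-commutator} is a statement about bases of the free group $F_2$ and hence controls only those surjections $F_2\twoheadrightarrow G$ that factor through an automorphism of $F_2$; it says nothing about an arbitrary surjection. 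What is missing, and what Pride (and the paper) actually lean on, is Rosenberger's theorem (see \S\ref{Subsec:Torsion-isomorphism-problem}) that every generating pair $(u,v)$ of $S_n=\pres{}{a,b}{[a,b]^n}$ has $[u,v]$ conjugate in $S_n$ to $[a,b]^{\pm1}$; equivalently, $S_n$ has a single $T$-system of generating pairs. With that in hand, part (3) is immediate without any normal-closure bookkeeping: for any surjection $\phi\colon F_2\twoheadrightarrow S_n$, $\phi([x_1,x_2])=[\phi(x_1),\phi(x_2)]$ is conjugate to $[a,b]^{\pm1}$ and hence has order exactly $n$, so $\phi([x_1,x_2]^n)=1$; thus $[x_1,x_2]^n$ dies in every quotient isomorphic to $S_n$. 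Without Rosenberger's result (or equivalently without knowing $S_n$ has a unique $T$-system, so that your family of kernels is in fact exhaustive), your argument for (3) proves the failure of a condition that is a priori stronger than ``residually $G$'' and so does not establish the claim.
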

For more on the class of groups $S_n = \pres{}{a,b}{[a,b]^n = 1}$ with $n>1$, see \S\ref{Subsec:Torsion-isomorphism-problem}. 

We now turn to more general quotients of one-relator groups. There is in general no shortage of quotients of one-relator groups. Recall that a countable group $G$ is \textit{SQ-universal} if every countable group embeds in some quotient of $G$. In 1973, P.\ Neumann \cite{Ne73} conjectured that any one-relator group is either solvable (and hence either $\BS(1,n)$ or cyclic) or else is SQ-universal, which remains an open problem. In 1974, Sacerdote \& Schupp \cite{SS74} proved that every one-relator group with at least three generators is SQ-universal. B.\ Baumslag \& Pride \cite{BP78} generalized this to prove that if $G$ is a group with $n$ generators and $r$ relators, such that $n - r \geq 2$, then $G$ has a finite index subgroup which surjects $F_2$. Finally, St\"ohr proved the following general result in 1983: 

\begin{theorem}[St\"ohr, 1983 \cite{Stohr1983}]
Every one-relator group with torsion is SQ-universal.
\end{theorem}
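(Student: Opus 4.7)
The plan is to reduce the problem to exhibiting a finite-index subgroup of large deficiency, and then to invoke the B.~Baumslag--Pride theorem to pass from deficiency to a surjection onto $F_2$. Writing $G = \pres{}{A}{r^n = 1}$ with $n > 1$ and $r$ cyclically reduced, not a proper power, the first move is to dispose of the case $|A| \geq 3$: this is the Sacerdote--Schupp theorem, cited immediately before the statement. Thus the whole problem reduces to the two-generator case $G = \pres{}{a,b}{r^n = 1}$.

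For the two-generator case, I would produce a torsion-free normal subgroup $K \trianglelefteq G$ of finite index $m$, furnished by Fischer--Karrass--Solitar (Theorem~\ref{Thm:Fischer-Karrass-Solitar}(1)) after passing to the normal core of the torsion-free finite-index subgroup it provides. Since every non-trivial torsion element of $G$ is a conjugate of a power of $r$, and $K$ is torsion-free and normal, the image of $r$ in $G/K$ has order exactly $n$; a direct analysis of the right-multiplication action of $r$ on $G/K$ then forces every orbit to have length exactly $n$, so there are precisely $m/n$ orbits. Running the Reidemeister--Schreier process with a Schreier transversal adapted to these orbits yields a presentation of $K$ with $1 + m$ generators (Schreier's index formula applied in the ambient free group on $\{a,b\}$) and exactly $m/n$ relators (one per orbit, since the $n$ Reidemeister--Schreier relators within a single orbit are cyclic rotates of one another and hence generate the same normal subgroup). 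The deficiency of this presentation is therefore
\[
(1 + m) - \tfrac{m}{n} \;=\; 1 + \tfrac{m(n-1)}{n} \;\geq\; 1 + (n-1) \;=\; n \;\geq\; 2.
\]

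With deficiency at least $2$ in hand, the B.~Baumslag--Pride theorem (cited just before the statement) produces a finite-index subgroup $K_0 \leq K$ surjecting onto $F_2$. Since $F_2$ is SQ-universal, so is any group surjecting onto it, and in particular $K_0$ is SQ-universal; the classical observation that SQ-universality is inherited by finite-index overgroups (P.~Neumann) then promotes this to SQ-universality of $G$.

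The hard part is the Reidemeister--Schreier bookkeeping: one needs a single presentation simultaneously witnessing both the sharp generator count $1+m$ and the sharp relator count $m/n$, rather than two independent bounds. The key leverage is the combination of normality of $K$ with the image of $r$ in $G/K$ having order precisely $n$, which forces all $r$-orbits on $G/K$ to have length $n$ and thereby partitions the raw $m$ coset-conjugates of $r^n$ into $m/n$ groups of $n$ cyclic rotates. This runs smoothly in the generic case, but must be handled with care in the degenerate regime where $r$ lies in the commutator subgroup of $F_{\{a,b\}}$ --- for instance when $r = [a,b]$ --- because then no abelian finite quotient of $G$ separates the powers of $r$, and one must genuinely exploit a non-abelian finite quotient provided by Fischer--Karrass--Solitar to locate $K$.
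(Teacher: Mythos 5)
The survey states St\"ohr's theorem without reproving it, so there is no in-text argument to compare against; your proof, however, assembles exactly the tools the surrounding paragraphs set up (Sacerdote--Schupp for $|A|\geq 3$, Fischer--Karrass--Solitar, Baumslag--Pride, and P.~Neumann's promotion lemma) and it is correct. The Reidemeister--Schreier bookkeeping you flag as the hard part does hold up: since $K$ is torsion-free and normal, $r$ has order exactly $n$ in $G/K$, and every $\langle r\rangle$-orbit on the cosets has length exactly $n$ (if $gr^dg^{-1}\in K$ with $0<d<n$, it would be a nontrivial torsion element of $K$); moreover, if $t,t'$ are transversal representatives in the same orbit with $Ht'=Htr$, then $tr^nt^{-1}=(trt'^{-1})(t'r^nt'^{-1})(trt'^{-1})^{-1}$ with $trt'^{-1}$ a Schreier generator, so the $n$ rewritten relators of one orbit are pairwise conjugate in the free preimage of $K$ and a single representative per orbit suffices. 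This gives $1+m$ generators, $m/n$ relators, deficiency $1+m(n-1)/n\geq n\geq 2$ (using $n\mid m$), and Baumslag--Pride together with the promotion lemma close the argument.

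The one place I would push back is your closing worry about the regime $r\in[F_{\{a,b\}},F_{\{a,b\}}]$: this is a red herring. Fischer--Karrass--Solitar supplies a torsion-free finite-index subgroup unconditionally, and taking the normal core makes it normal; the orbit analysis and relator count are then identical regardless of whether $r$ lies in the commutator subgroup, so no separate case is required.
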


In looking at quotients of one-relator groups, it is natural to ask the following question: given two one-relator groups which are homomorphic images of one another, are they necessarily isomorphic? This question was first posed by Moldavanskii in 1969 (see Question~3.33 in the Kourovka Notebook \cite{Khukhro2024}). The question was given a negative answer only 40 years later by Borshchev \& Moldavanskii \cite{Borshchev2006}, indicating the difficulty of even such simple-sounding problems. 

Finally, we briefly mention a very important result on quotients of one-relator groups. A group is said to be \textit{indicable} if it surjects $\Z$, and \textit{locally indicable} if all of its finitely generated subgroups also surject $\Z$. The notion of an indicable group was introduced by Higman \cite[p.~242]{Higman1940}, who proved that if $G$ is locally indicable and $R$ is an integral domain, then $RG$ has no zero divisors and no non-trivial units \cite[Theorem~12]{Higman1940}. Baumslag \cite{Ba71} conjectured that every torsion-free one-relator group is locally indicable. Brodskii \cite{Brodskii1980} announced in 1980 the theorem that this conjecture is true, i.e.\ all torsion-free one-relator groups are locally indicable. Brodskii \cite{Bro84} only published the full proofs of these facts four years later; in the meantime, the theorem was also given topological proof by Howie \cite{Ho82}. The subject of local indicability and its many fascinating links with many broader topics reaches far outside the scope of this survey, however.

\subsection{Lower central series and parafree groups}\label{Subsec:lower-central-series}

Let $G$ be any group. Recall that the \textit{lower central series} $\gamma_1(G), \gamma_2(G), \dots$ of $G$ is defined inductively by $\gamma_1(G) = G$ and $\gamma_{n+1}(G) = [G, \gamma_n(G)]$. This determines a $\Z$-graded Lie ($\Z-$)algebra $\gr(G) := \bigoplus_{n \geq 1} \gamma_n(G) / \gamma_{n+1}(G)$, where the factor $\gamma_n (G) / \gamma_{n+1}(G)$ is termed the $n$th \textit{lower central factor} of $G$. The sequence of lower central factors is sometimes called the \textit{lower central sequence} of $G$. The lower central factors of any finitely presented group are finitely generated abelian groups, and are furthermore computable from the presentation (by a commutator collection process \cite{Hall1934}, see e.g.\ \cite{Alperin1990, Baumslag1999b}) and thus form a particularly convenient set of invariants of groups. This is in stark contrast to the derived series of a group: if $G''$ denotes the second derived subgroup of $G$, then $G/G''$ need not be finitely presented, even if $G$ is a one-relator group. Indeed, as proved by Baumslag \& Strebel \cite{Baumslag1976}: if $G_{m,n,r} = \pres{}{a,t}{t^r a^m t^{-r} = a^n}$, where $mnr \neq 0$ and $\gcd(m,n)>1$ or $|r|>1$ and $mn=1$, then $H_2(G / G'', \Z)$ is not finitely generated. On the other hand, if $\gcd(m,n) = 1$ and $|m| \neq 1 \neq |n|$, then $G / G''$ is not finitely presented, even though $H_2(G / G'', \Z)$ is finitely generated. 

The structure of the Lie algebra $\gr(G)$ has seen some study in the case of one-relator groups. We mention some results briefly. First, if $G = F_n$, a free group on $n$ generators, then $\gr(F_n)$ is the free Lie algebra on $n$ generators (see \cite[Chapter~5]{Magnus1966}). 
If $G = F / R$, then there is a canonical surjection $\gr(F) \twoheadrightarrow \gr(G)$. This kernel is quite non-trivial already for some very simple examples of one-relator groups. For example, it was determined when $G = \pres{}{x,y}{x^p = 1}$ with $p$ a prime by Labute \cite{Labute1977}. In the case that the defining word is not a proper power, more can be said. Indeed, Labute \cite{Labute1970} found an explicit formula for the rank of $\gamma_n(G) / \gamma_{n+1}(G)$ when $G = \pres{}{A}{r=1}$, as long as $r$ is not an $n$th power modulo certain terms of the lower central series of the free group $F = F(A)$, and furthermore that in this case the associated Lie algebra $\gr(G)$ is itself definable by a single defining relation (as a Lie algebra), cf.\ also \cite{Labute1967}. This, for example, gives a closed formula for the rank of $\gamma_n(\pi_1(\Sigma_2))$, where $\pi_1(\Sigma_2)$ is the fundamental group of a genus $2$ surface. If $g_n$ denotes the $\Z$-rank of the free abelian group $\gamma_n(\pi_1(\Sigma_2))$, then
\begin{equation}\label{Eq:rank-for-pi2-lie}
g_n = \frac{1}{n} \sum_{d \mid n} \mu (\frac{n}{d}) \left( \sum_{0 \leq i \leq \lfloor d / 2 \rfloor} (-1)^i \frac{d}{d-i} \binom{d-i}{i} 4^{d-2i} \right)
\end{equation}
where $\mu$ denotes the Möbius function. One can use \eqref{Eq:rank-for-pi2-lie} to show that $g_n$ grows as $(2 + \sqrt{3})^n$, and more generally the exponential growth rate of the ranks in the case of $\pi_1(\Sigma_g)$ with genus $g>1$ is given by $g + \sqrt{g^2-1}$.  It also follows from the aforementioned result by Labute that $\gr(\pi_1(\Sigma_g))$ is a one-relator Lie algebra (in particular, it has decidable word problem \cite{Shirshov1962}). By contrast, it is not hard to show that $\gr(\BS(1,3))$ is not finitely presented as a Lie algebra. 

We now turn to a very important aspect of the lower central series. This goes back to the 1939 theorem by Magnus (see Theorem~\ref{Thm:Magnus-extended-Frei}(3)) that any $n$-generator group with the same sequence of lower central factors as the $n$-generator free group is itself free. This prompted H.\ Neumann to ask whether any residually nilpotent group with the same sequence of lower central factors as a free group is necessarily free. This leads to the following definition. A group $G$ is said to be \textit{parafree} if it is residually nilpotent and has the same sequence of lower central factors as some free group. Thus H.\ Neumann's conjecture simply asked: do non-free parafree groups exist? G.~Baumslag \cite{Ba64, Baumslag1965, Ba67b, Ba69b} would approach this question in a series of papers (cf.\ also \cite{BaumslagSteinberg1964}), and prove:

\begin{theorem}[G.\ Baumslag, 1964 \cite{Ba64}]
Non-free parafree groups exist. 
\end{theorem}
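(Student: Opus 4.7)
The plan is to produce explicit finitely presented groups witnessing the statement, following Baumslag. I would consider one-relator groups of the form $G = \pres{}{a,b,c}{a = w(a,b,c)}$, where $w$ lies in the commutator subgroup $[F_3, F_3]$ of $F_3 = F(a,b,c)$ and involves $a$ non-trivially but in a way that cannot be solved for $a$ in terms of $b$ and $c$ alone. A canonical test candidate is $G_0 = \pres{}{a,b,c}{a = [[a,c],b]}$, and the aim is to show that $G_0$ is parafree of rank $2$ but not free.

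Non-freeness comes first. Abelianising shows $G_0^{\ab} \cong \Z^2$ (the relation kills $a$), so any freeness of $G_0$ would force $G_0 \cong F_2$. By Theorem~\ref{Thm:Whitehead-theorem}, $G_0$ is free if and only if the relator $r = a^{-1}[[a,c],b]$ is primitive in $F_3$. I would show non-primitivity by arguing that the equation $a = [[a,c],b]$ cannot be solved for $a$ as an element of $F_2 = F(b,c)$: iterated substitution of the right-hand side into itself produces a sequence of increasingly deep commutator expressions in $b,c$ whose formal limit lies in the pro-nilpotent completion $\widehat{F_2}$ but not in $F_2$. A commutator-depth obstruction, tracking the lower central class at each substitution stage, produces the required contradiction.

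Next I would match the lower central factors of $G_0$ with those of $F_2$. Restricting the quotient $F_3 \to G_0$ to $\langle b, c\rangle$ gives a homomorphism $\iota\colon F_2 \to G_0$. Iterating the defining relation $a = [[a,c],b]$, each substitution pushes the occurrence of $a$ on the right one step deeper in the lower central series, so after $k$ substitutions $a$ is expressed as a word in $b,c$ modulo $\gamma_{k+1}(G_0)$. Hence $\iota$ induces surjections $F_2/\gamma_k(F_2) \twoheadrightarrow G_0/\gamma_k(G_0)$ for every $k \geq 1$. Promoting these to isomorphisms reduces to a rank count of the successive lower central factors, made possible by the fact that the relator lies sufficiently deep in $\gamma_k(F_3)$ to contribute nothing to the ranks at each level.

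The crux is residual nilpotence, $\bigcap_k \gamma_k(G_0) = \{1\}$. Since $b$ has exponent sum zero in $r$, the Magnus--Moldavanskii hierarchy (\S\ref{Subsec:MM-hierarchy}) realises $G_0$ as an HNN-extension of a shorter one-relator group over Magnus subgroups, which are free; iterating, the hierarchy terminates in a free group. To conclude, one must propagate residual nilpotence back up this hierarchy, invoking the residual nilpotence of free groups (Theorem~\ref{Thm:Magnus-rep}) and Baumslag's residual-freeness results for cyclically pinched amalgams (\S\ref{Subsec:Hopf}), bridging the HNN-extensions carefully. This is the hard step: matching lower central factors and establishing non-freeness are essentially careful bookkeeping once the right candidate is chosen, but controlling $\bigcap_k \gamma_k(G_0)$ demands a genuine structural argument, perhaps via a faithful Magnus-type power-series representation of $G_0$ lifted from the Magnus representation of $F_2$. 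This is where Baumslag's original proof concentrates its effort.
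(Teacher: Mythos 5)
The candidate group $G_0 = \pres{}{a,b,c}{a = [[a,c],b]}$ is not parafree; in fact it is not even residually nilpotent, so your plan fails at the step you yourself flag as the crux. The obstruction is structural and defeats the whole choice of example: the defining relation forces $a$ into every term of the lower central series of $G_0$, while $a$ remains non-trivial. Indeed, if $a \in \gamma_k(G_0)$ then $[a,c] \in \gamma_{k+1}(G_0)$ and hence $[[a,c],b] \in \gamma_{k+2}(G_0)$, so $a = [[a,c],b] \in \gamma_{k+2}(G_0)$; starting from $a \in \gamma_1(G_0)$ and iterating gives $a \in \bigcap_n \gamma_n(G_0)$. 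On the other hand $a \neq 1$ in $G_0$: if $a$ were trivial then $a \in \normal{r}$ where $r = a^{-1}[[a,c],b]$, and since clearly $r \in \normal{a}$ (killing $a$ kills every factor of $r$), we would have $\normal{a} = \normal{r}$ in $F_3$, whence Magnus' Conjugacy Theorem (Theorem~\ref{Thm:Conjugacy-Theorem}) would make $r$ conjugate to $a^{\pm 1}$ --- impossible, as $r$ cyclically reduces to length $9$. So $1 \neq a \in \bigcap_n \gamma_n(G_0)$ and $G_0$ is not residually nilpotent. Your "iterated substitution'' observation is correct as a computation, but it proves the opposite of what you want: it shows $a$ vanishes in every nilpotent quotient $G_0/\gamma_n(G_0)$ (so those quotients do coincide with those of $F_2$), but only at the cost of putting a non-trivial element in $\bigcap_n \gamma_n(G_0)$. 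Parafreeness requires both the lower-central match \emph{and} residual nilpotence, and your relator buys the former precisely by destroying the latter.

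The example your candidate is modelled on, Baumslag's 1969 family $\pres{}{a,b,c}{a = [c^i,a][c^j,b]}$ with $ij \neq 0$, avoids this trap for an essential reason: the factor $[c^j,b]$ does not involve $a$ and therefore does not sink deeper under iteration, anchoring $a$ in $\gamma_2 \setminus \gamma_3$ rather than forcing $a \in \bigcap_n \gamma_n$. Your choice of a pure nested commutator in $a$ removes exactly this anchor. The example the statement actually attributes to \cite{Ba64} is different again, namely $\pres{}{a,b,c}{a^2 b^3 c^5 = 1}$, and there the parafreeness argument proceeds by a careful analysis of amalgamated free products (the details appearing in \cite{Baumslag1965}), not via the Magnus--Moldavanskii hierarchy. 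Your proposed route to residual nilpotence through the HNN hierarchy is in any case unsound as stated, independently of the choice of example: HNN-extensions do not in general preserve residual nilpotence --- $\BS(1,2)$ is the standard witness, where $[x^{-1},y] = y$ forces $y$ into every $\gamma_k$.
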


One such example given in \cite{Ba64}, with proof going via a careful study of amalgamated free products to be found in \cite{Baumslag1965}, is $\pres{}{a,b,c}{a^2 b^3 c^5 = 1}$. We note the reader that the terminology used in \cite{Ba64} is \textit{pseudofree}, rather than \textit{parafree}. Other examples of non-free parafree groups given in 1969 by Baumslag \cite{Ba69b} are the groups $\pres{}{a,b,c}{a = [c^i, a][c^j, b]}$ which are parafree, but not free, for all $i, j \in \Z$ such that $ij \neq 0$. Of course, the free group with which they share their sequence of lower central factors is the free group of rank $2$. This family belongs to the broader class of parafree-by-cyclic groups, studied by Wong  \cite{Wong1978,Wong1980} in his PhD thesis, supervised by G.\ Baumslag. Parafree groups share a great deal of properties with free groups. Recall (\S\ref{Subsec:Quotients+moreresidualproperties}) that any non-cyclic free group is residually free of rank two. Similarly, one can show that any non-cyclic parafree group is also residually parafree of rank two \cite[Theorem~5.2]{Ba69b}. We also have the following curious result, showing that the local behaviour of parafree groups is remarkably like that of free groups:

\begin{theorem}[{G.\ Baumslag \cite[Theorem~4.2]{Ba69b}}]
Any two-generator subgroup of a parafree group is free.\label{Thm:Parafree-2-gen-subgroups-free}
\end{theorem}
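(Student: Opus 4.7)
My plan combines residual nilpotence of $H$ (inherited from $G$), the Magnus representation (Theorem~\ref{Thm:Magnus-rep}), and the Shirshov--Witt theorem that Lie subrings of free Lie rings are free. Suppose $G$ is parafree of rank $r$, so $G/\gamma_n(G) \cong F_r/\gamma_n(F_r)$ for every $n$, and let $H = \langle x, y\rangle \leq G$. Then $H$ inherits residual nilpotence and torsion-freeness from $G$: for the latter, $G$ embeds in the inverse limit $\varprojlim_n F_r/\gamma_n(F_r)$, which is torsion-free because each finitely generated free nilpotent group is. By the Magnus embedding this inverse limit sits inside the formal power series ring $R = \Z\langle\langle \xi_1,\ldots,\xi_r\rangle\rangle$, so $H$ embeds in $R^\times$, and the augmentation-ideal filtration on $R$ induces a Hausdorff Lie filtration $\{H_n\}$ on $H$ with $H_n = H \cap \gamma_n(G)$.

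\textbf{Graded Lie algebra.} The associated graded $\gr(G) = \bigoplus_n \gamma_n(G)/\gamma_{n+1}(G)$ is identified under the parafree hypothesis with $\gr(F_r)$, which is the free Lie ring $\mathcal{L}$ of rank $r$. The induced filtration $\{H_n\}$ on $H$ produces a graded Lie subring $L_H = \bigoplus_n H_n/H_{n+1} \hookrightarrow \mathcal{L}$. In the appropriate completion of $L_H$ over $\Q$, this subring is topologically generated by the two Lie elements $\log x$ and $\log y$; Shirshov--Witt then forces $L_H$ to be a free Lie ring on some $k \leq 2$ generators. Combined with torsion-freeness of each graded piece, I obtain that each quotient $H/H_n$ is a free nilpotent group of rank $k$, and hence $H$ is itself parafree of rank $k \leq 2$.

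\textbf{From parafree to free.} Since $H^{\ab} \cong \Z^k$ and $H$ is $2$-generated, the minimal number of generators $m$ of $H$ satisfies $k \leq m \leq 2$; an abelianization rank count then forces $m = k$. A minimal surjection $\phi\colon F_k \twoheadrightarrow H$ induces surjections $F_k/\gamma_n(F_k) \twoheadrightarrow H/\gamma_n(H)$ between free nilpotent groups of the same rank $k$, which are isomorphisms because finitely generated nilpotent groups are Hopfian. Residual nilpotence of $F_k$ (Theorem~\ref{Thm:Magnus-rep}) forces $\ker \phi \subseteq \bigcap_n \gamma_n(F_k) = 1$, so $\phi$ is an isomorphism and $H \cong F_k$ is free.

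\textbf{Main obstacle.} The delicate step is verifying Shirshov--Witt applies in the way claimed: one needs $L_H$ to be genuinely $2$-generated as a topological Lie algebra, but \emph{a priori} $H_n$ can be strictly larger than $\gamma_n(H)$, allowing $L_H$ to contain elements unaccounted for by $\log x$ and $\log y$. I would resolve this by passing to the pronilpotent completion $\hat G \cong \hat F_r$ and replacing $H$ by its topological closure $\bar H$, which is a closed subgroup generated by $x$ and $y$; in this setting $\gr(\bar H) = L_H$ really is topologically generated by $\log x$ and $\log y$, so Shirshov--Witt applies. The identification $H/\gamma_n(H) \cong H/H_n$ should then follow from a dimension/rank count on the free nilpotent quotients constructed above. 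I expect this completion argument to be the most technically demanding part of the proof, and it is plausible that Baumslag's original route proceeded by a more direct combinatorial induction along the Magnus filtration, bypassing the explicit use of Shirshov--Witt.
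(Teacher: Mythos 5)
Your overall strategy---embed $H$ via Magnus, pass to a graded Lie ring inside $\gr(G)$, invoke Shirshov--Witt, and then run a Hopficity argument from parafree to free---is essentially the ``alternative proof via Witt'' that the survey alludes to in the remark immediately following the theorem statement; Baumslag's original route in \cite{Ba69b} does not go through free Lie algebras. Your endgame is correct and standard: a $k$-generated group with the same lower central factors as $F_k$ receives a surjection from $F_k$, the induced maps on finitely generated nilpotent quotients are isomorphisms by Hopficity, and residual nilpotence of $F_k$ kills the kernel.

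The gap you flag, however, is real and the proposed fix does not close it. The difficulty is the persistent conflation of two filtrations on $H = \langle x,y\rangle$: the intrinsic lower central series $\gamma_n(H)$ and the induced filtration $H_n = H\cap\gamma_n(G)$. These differ in general, and the graded ring $L_H = \bigoplus H_n/H_{n+1}$ that you build is not $\gr(H)$, which is the object the parafree conclusion requires. A concrete example: in $G = F(a,b)$ take $x = a$ and $y = a^2[a,b]$. Then $H = \langle x,y\rangle$ is free of rank two, but $[a,b] = x^{-2}y \in H_2$ while $[x,y] = [a,[a,b]] \in \gamma_3(G)$, so $\gamma_2(H) = \normal{[x,y]}_H \subset \gamma_3(G)$ is properly contained in $H_2$; moreover $H/\gamma_2(H) \cong \Z^2$ whereas $H/H_2 \cong \Z$. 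So $L_H$ has degree-one piece $\Z\cdot a$ of rank one yet contains $[a,b]$ in degree two: it is \emph{not} generated by the leading symbols of $\log x$ and $\log y$ (both proportional to $a$), it is not generated in degree one, and the quotients $H/H_n$ are not free nilpotent groups of the needed rank. Passing to the pronilpotent completion and replacing $H$ by its closure changes none of this, since $\gr(\bar{H})$ is computed from the same induced filtration. Thus even after Shirshov--Witt gives freeness of $L_H$, you have no control of $\gr(H)$ and cannot yet conclude $H$ is parafree. The missing mechanism is a way to bound the rank and relate the intrinsic series $\gamma_n(H)$ to $L_H$---for instance an ``unmasking'' induction in the rational Malcev completion that uses non-commutativity of $x$ and $y$ to terminate, together with a comparison of $H/\gamma_n(H)$ with the resulting free nilpotent groups---and these steps are the substantive part of the proof that the sketch currently skips.
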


Baumslag \& Stammbach \cite{Baumslag1976b} have also given an example of a non-free parafree group all of whose countable subgroups are free. We remark that for a residually nilpotent group $G$, we have that $G$ is a parafree group if and only if $\gr(G)$ is a free Lie algebra. Using the Nielsen--Schreier theorem for free Lie algebras due to Witt \cite{Witt1937}, it is possible to give another proof of Theorem~\ref{Thm:Parafree-2-gen-subgroups-free}. This final part of the section is only intended to serve as a small sampler of parafree groups. Very much more can be written, particularly on parafree one-relator groups. Many of the techniques and results involved in the study of such groups go far beyond the classical, and is better left for a separate survey. 

We conclude this section by noting that one can go beyond residual nilpotence in the following way. Let $G$ be any group. Then its transfinite lower central series is defined as in the finite case by $\gamma_1(G) = G, \gamma_{\alpha + 1} = [G, \gamma_\alpha(G)]$, and $\gamma_\lambda (G) = \bigcap_{\alpha < \lambda} \gamma_\alpha(G)$ for limit ordinals $\lambda$. Then the nilpotency class of $G$ is the minimal $\alpha$ such that $\gamma_{1+\alpha}(G) = 1$. Thus, for example, a residually nilpotent group has nilpotency class $\leq \omega$. Baumslag \cite{Ba71} asked whether every one-relator group has nilpotency class $< \omega^2$. Indeed it was open for some time whether there were finitely presented groups of nilpotency class $> \omega$, until Levine \cite{Levine1991} gave an example of such groups in 1991. Baumslag's question was only answered in 2016, when Mikhailov \cite{Mi16} proved that $\pres{}{a,b}{ab^{-1}a^3 = b^{-2}ab}$ has nilpotency class $\omega^2$.

\section{More on decision problems}\label{Sec:8-Decisionproblems-1960-1980}

Even though the word problem was solved for one-relator groups almost a century ago, as presented in \S\ref{Subsec:Wordproblem-Magnus}, remarkably both the conjugacy problem and isomorphism problem -- the other two of Dehn's three fundamental problems -- remain open for one-relator groups. In this section, we will discuss some partial progress made on these problems, and the general approaches to the problems. Finally, we will discuss the word problem in some greater depth, including some open problems, and the possibility of extending the solution of the word problem to all \textit{two}-relator groups.

\subsection{The conjugacy problem}\label{Subsec:Conjugacyproblem}

We begin with a general remark regarding the difficulty of the conjugacy problem. Decidability of the conjugacy problem implies decidability of the word problem, and hence the conjugacy problem is for general finitely presented groups undecidable. However, P.\ S.\ Novikov \cite{Novikov1954} proved in 1955 the existence of finitely presented groups $A_{p_1, p_2}$ with undecidable conjugacy problem, the construction of which is significantly simpler than his group with undecidable word problem (see \S\ref{Subsec:Undecidability}). He left the question of the decidability of the word problem in $A_{p_1, p_2}$ open. In 1960, A.\ A.\ Fridman (a student of Novikov) proved that the groups $A_{p_1, p_2}$ have decidable word problem, thus showing that the conjugacy problem is in general strictly harder than the word problem. We note that in 1967, Bokut \cite{Bokut1967} simplified both Novikov's argument and Fridman's argument, and Collins \cite{Collins1969b} in 1969, proved that the conjugacy problem can be arbitrarily difficult -- i.e.\ of any recursively enumerable degree of unsolvability -- even in groups with decidable word problem.

More specifically to one-relator groups, recall that Magnus' solution to the word problem (\S\ref{Subsec:Wordproblem-Magnus}) in all one-relator groups depends, as clarified by Moldavanskii (\S\ref{Subsec:MM-hierarchy}), in large part on properties of HNN-extensions and amalgamated free products. Summarizing this argument, heavy appeal is made to the fact that if the word problem is decidable in two groups $G_1, G_2$, and $H \leq G_1, G_2$ is a common subgroup in which membership is decidable for both $G_1$ and $G_2$, then the amalgamated free product $G_1 \ast_H G_2$ has decidable word problem; the analogous statement also holds for HNN-extensions, \textit{mutatis mutandis}. Thus the word problem (more specifically membership in Magnus subgroups) can be passed down from a one-relator group to the next one-relator group in the Magnus--Moldavanskii hierarchy, possibly embedding the group into a larger one-relator group along the way, until a free group is reached, and the word problem is decidable in all one-relator groups by induction. In seeking to extend this line of argument to the conjugacy problem, we are faced with two problems: 
\begin{enumerate}
\item Decidability of the conjugacy problem is a much more delicate property to preserve when taking HNN-extensions or amalgamated free products; and 
\item Decidability of the conjugacy problem is not, in general, preserved by taking subgroups, even of finite index.
\end{enumerate}

We elaborate on these two points. For the first part, there is the following general undecidability result which, \textit{a priori}, shows that additional ingredients must be added to the Magnus--Moldavanskii hierarchy to adapt it to the conjugacy problem: 

\begin{theorem}[{Miller, 1971 \cite[Theorem~III.10]{Miller1971b}}]
There exists a finitely presented HNN-extension of a free group with undecidable conjugacy problem. There exists an amalgam $F_n \ast_{F_k} F_m$ of two finitely generated free groups over a finitely generated free subgroup with undecidable conjugacy problem. \label{Thm:CP-in-free-HNN-can-be-undecidable}
\end{theorem}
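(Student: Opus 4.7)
The plan is to reduce the unsolvability of the word problem in some finitely presented group (Novikov--Boone, \S\ref{Subsec:Undecidability}) to the unsolvability of the conjugacy problem in a suitably engineered HNN-extension of a finitely generated free group. The strategy exploits the fact that, although HNN-extensions of free groups have very controlled word problem -- via Britton's Lemma (\cref{Lem:BrittonsLemma}) combined with the decidability of membership in finitely generated subgroups of a free group (Nielsen's method, \S\ref{Subsec:Nielsen-Schreier}) -- their conjugacy problem is genuinely more sensitive: two elements in Britton normal form are conjugate only when their cyclic sequences of vertex labels match up to \emph{twisted} conjugation by the associated subgroups, and this twisted conjugation can be made to simulate a Turing machine.

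Concretely, I would fix a finitely presented group $G = \langle x_1,\ldots,x_n \mid r_1,\ldots,r_m\rangle$ with unsolvable word problem, and recall Mihailova's observation that inside $F(X) \times F(X)$ the finitely generated subgroup
\[
L = \langle (x_i,x_i) : 1 \le i \le n\rangle \cup \{(r_j,1) : 1 \le j \le m\}
\]
has unsolvable membership problem, since $(w,1) \in L$ iff $w =_G 1$. The task is then to build an HNN-extension $K$ of a finitely generated free group $F^\ast$, with finitely generated free associated subgroups $A,B$ and stable letter $t$, such that conjugacy of distinguished pairs of elements of the form $t$ and $wtw^{-1}$ in $K$ (with $w \in F(X) \subseteq F^\ast$) encodes precisely membership in $L$. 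The encoding is engineered by choosing $A,B \le F^\ast$ and an isomorphism $\phi \colon A \to B$ so that the Britton pinches needed to bring a conjugating word into normal form correspond, step by step, to a derivation of $w =_G 1$ from the relators of $G$. For the amalgam version, I would apply the standard Bass--Serre embedding of an HNN-extension into an amalgamated free product: after a harmless free product with $\Z$, the group $K = F^\ast \ast_\phi$ embeds into $(F^\ast \ast \langle s\rangle) \ast_U (F^\ast \ast \langle s\rangle)$, where $U$ is generated by $A$ in one factor and $sBs^{-1}$ in the other, identified via $a \leftrightarrow s\phi(a)s^{-1}$; normal-form arguments for amalgams translate conjugacy back and forth and keep all the groups involved finitely generated and free.

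The main obstacle is producing the subgroups $A, B \le F^\ast$ and the isomorphism $\phi$ so that $L$ becomes ``visible'' through conjugation, while keeping $A$ and $B$ themselves finitely generated free. Free groups do not contain $F_2 \times F_2$, so the product structure underlying Mihailova's construction must be manufactured indirectly through the action of the stable letter; arranging this without accidentally introducing additional relations (which would destroy the Britton normal form on which the whole reduction rests) is the delicate combinatorial core of the argument. Once the encoding is correctly set up, the remainder is essentially mechanical: a decision procedure for conjugacy in $K$ would, via the encoding, decide the word problem in $G$, contradicting Novikov--Boone and yielding both halves of \cref{Thm:CP-in-free-HNN-can-be-undecidable}.
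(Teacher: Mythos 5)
Your high-level plan --- encode an undecidable problem into conjugacy in a multiple HNN-extension of a free group, with the stable letters simulating the rewriting --- is indeed the shape of Miller's argument, but the proposal falls short of a proof in two ways. The missing device, which is the entire content of the theorem, is a ``tracking letter'': take $F^\ast = F(q, x_1, \dots, x_n)$ and adjoin a stable letter $t_i$ for each generator of $G$ and $d_k$ for each relator $R_k$, acting as automorphisms of $F^\ast$ that fix every $x_j$ and send $q$ to $x_i^{-1}qx_i$, resp.\ to $R_k^{-1}q$ (up to inessential variations). The resulting group $K$ is free-by-free, hence a finitely presented HNN-extension of a free group, and the decisive conjugacy question concerns \emph{base-group} elements rather than stable letters: $q$ is conjugate to $w^{-1}q$ in $K$ (for $w \in F(x_1,\dots,x_n)$) precisely when $w =_G 1$, so CP for $K$ decides WP for $G$. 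Your instinct about Mihailova's $L$ does in fact surface here --- the combined stable-letter action on $q$ realises exactly the pairs $(a,b)\in L$ --- but turning that observation into a reduction requires the tracking letter, which the proposal does not supply; you flag this gap yourself, but it is not a technical loose end, it \emph{is} the theorem. Note also that the target pair you propose cannot work even in principle: if $t$ is a stable letter and $w$ is any base-group element, then $t$ and $w t w^{-1}$ are always conjugate (by $w$ itself), so that pair encodes nothing.

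The second gap concerns the amalgam. There is no ``standard Bass--Serre embedding'' of a general HNN-extension $F^\ast \ast_\phi$ into an amalgam of the form you write, and it is not clear that the group $(F^\ast \ast \langle s\rangle) \ast_U (F^\ast \ast \langle s\rangle)$ you describe even contains $K$ as a subgroup. More fundamentally, even if such an embedding existed, undecidability of the conjugacy problem does \emph{not} pass from a subgroup to an overgroup, nor conversely; \S\ref{Subsec:Conjugacyproblem} recalls exactly this point, citing Collins \cite{Collins1969c} and Collins--Miller \cite{Collins1977}, who show that decidability of the conjugacy problem is not inherited even across finite-index inclusions. Miller obtains the amalgam $F_n \ast_{F_k} F_m$ with undecidable conjugacy problem by a parallel direct construction --- again encoding the word problem of $G$ into conjugacy of explicit double-coset words --- not by transporting undecidability along an inclusion from the HNN case.
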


Note that the groups in Theorem~\ref{Thm:CP-in-free-HNN-can-be-undecidable} all have decidable word problem, since the subgroup membership problem is decidable in free groups. We refer the reader to \cite{Borovik2007, Borovik2007b} for more in-depth investigations into the complexity of the conjugacy problem in such amalgams of free groups. We also note that the conjugacy problem in HNN-extensions, and thus in some one-relator groups, has a very combinatorial flavour, and can thus often be phrased in terms of semigroup-theoretic problems. For example, for certain HNN-extensions the conjugacy problem reduces to reachability problems of vector addition systems, see \cite{Anshel1976, Anshel1976b, Anshel1978}, and sometimes further reduced to the word problem for commutative semigroups; see \cite{Anshel1976c}. Similarly, Horadam \& Farr \cite{Horadam1994} have reduced the conjugacy problem in some HNN-extensions to membership problems in inverse semigroups.  

For the second part, i.e.\ closure under taking subgroups, it was probably first recognized by Collins \cite{Collins1969c} that there exists a finitely presented group $G$ with a finitely presented subgroup $H \leq G$ such that $G$ has decidable conjugacy problem, but $H$ has undecidable conjugacy problem. In 1971, C.\ F.\ Miller \cite{Miller1971b} proved that there exist subgroups of $F_2 \times F_2$ with undecidable conjugacy problem, even though the conjugacy problem in $F_2 \times F_2$ is almost trivial. His example is based on the undecidability of the membership problem in $F_2 \times F_2$ due to K.\ A.\ Mikhailova \cite{Mikhailova1958} (a PhD student of P.\ S.\ Novikov). In 1977, Collins \& Miller \cite{Collins1977} proved that in general neither passing to index $2$ subgroups nor index $2$ extensions preserve decidability of the conjugacy problem. In this line, see also \cite{Gorjaga1975}. A natural question that arises in this context is the following:

\begin{problem}
Do subgroups of one-relator groups have decidable conjugacy problem? \label{Prob:subgroups-of-OR-CP?}
\end{problem}

We emphasize that it is possible that Problem~\ref{Prob:subgroups-of-OR-CP?} has a negative answer, even if all one-relator groups have decidable conjugacy problem. 

In spite of all the above negative results, in the setting of one-relator groups there is nevertheless a case to be made for optimism. Collins \cite{Collins1969d} proved that the conjugacy problem can be undecidable in 11-relator groups, but the problem remains open for $k$-relator groups for all $1 \leq k \leq 10$. Furthermore, as mentioned in \S\ref{Sec:2-Dehn-freegroups-presentations}, Dehn gave already in 1911 a solution for the conjugacy problem in the fundamental group $\pi_1(\Sigma_g)$ of surface groups of genus $g > 1$. In 1966, Lipschutz \cite{Lipschutz1966} generalized this to all cyclically pinched one-relator groups, i.e.\ amalgamated free products $F_n \ast_{\Z} F_m$ of free groups amalgamating a cyclic subgroup (of which surface groups are central examples), to give the following theorem:

\begin{theorem}[Lipschutz, 1966 \cite{Lipschutz1966}]
Cyclically pinched one-relator groups have decidable conjugacy problem. \label{Thm:Cyclicallypinched-CP}
\end{theorem}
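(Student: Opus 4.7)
The plan is to apply the general conjugacy theorem for amalgamated free products (due to Schreier and Solitar, developed systematically in Magnus--Karrass--Solitar, Chapter~IV) to the defining decomposition $G = F_n \ast_{\langle u \rangle = \langle v \rangle} F_m$, and to verify that every subproblem it requires is algorithmically accessible because the factors are free and the amalgamated subgroup $C = \langle u \rangle = \langle v \rangle$ is cyclic. The preparatory step is effective normal-form and cyclic-reduction computation: membership in a cyclic subgroup of a free group is trivially decidable by cyclic reduction, so every element of $G$ can be converted into an alternating Schreier normal form $c_0 g_1 g_2 \cdots g_k$, and then cyclically reduced, by finitely many word-problem queries in the factors and checks of the form ``is a given element of $F_n$ conjugate in $F_n$ to some power of $u$?''.

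Once elements are in cyclically reduced normal form, Solitar's conjugacy criterion splits the problem by the syllable length $k$, which must agree for conjugate elements. If $k \geq 2$, conjugacy in $G$ is equivalent to $y$ being obtainable from a cyclic permutation of $x$ via conjugation by some $c \in C$; enumerating cyclic permutations is a finite process, and for each permutation the equation $u^\ell x' u^{-\ell} = y$ bounds $|\ell|$ a priori by inspecting how $u^\ell$ absorbs into the first and last syllables of $x'$ in their free factor, leaving finitely many candidates to verify via the word problem (\cref{Thm:Membership-gives-WP-in-amalgams}). If $k = 0$, both elements lie in $C$, and the AFP criterion reduces conjugacy to the existence of a chain of conjugations alternately in $F_n$ and $F_m$ between powers of $u$; but in a free group $w^i$ and $w^j$ are conjugate iff $i = j$ (since free groups contain no involutions and centralisers of non-trivial elements are cyclic), so the chain collapses and $u^i, u^j$ are conjugate in $G$ iff $i = j$.

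The case $k = 1$ is then immediate from the effective cyclic reduction: if $g \in F_n$ has cyclically reduced length $1$ in $G$, then by construction $g$ is not conjugate in $F_n$ to any element of $C$, and hence no $G$-conjugate of $g$ lies in $C$ or in the other factor. Thus two length-$1$ cyclically reduced elements $g, h \in F_n$ are conjugate in $G$ iff they are conjugate in $F_n$, which is decidable; and symmetrically for $F_m$.

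The main obstacle I expect is setting up the effective cyclic reduction robustly, and in particular certifying that an element of a free factor is \emph{not} conjugate into $C$ so that we may safely place it in the $k = 1$ case; this is the single step where the amalgamation structure could cause a naive normal form to mislead the algorithm. The conceptual clincher, however, is the collapse of the AFP conjugation chain in cases $k = 0$ and $k = 1$, which rests squarely on the cyclicity of $C$ together with centralisers in free groups being cyclic. This is precisely what fails for amalgams over larger subgroups, where the conjugacy problem can become undecidable (cf.\ \cref{Thm:CP-in-free-HNN-can-be-undecidable}), and so the cyclically pinched hypothesis is exactly the input that makes a uniform terminating algorithm possible.
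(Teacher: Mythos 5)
The paper gives no proof of this result, so there is nothing to compare against; it simply cites Lipschutz. Your route through the Solitar conjugacy criterion for amalgamated free products applied to the definitional decomposition $G = F_n \ast_{\langle u\rangle = \langle v\rangle} F_m$ is the natural and, as far as I am aware, the historically correct one, and the treatment of the cases $k = 0$ and $k = 1$ is sound. For $k=0$, the chain collapses because $u^i$ and $u^j$ conjugate in a free group forces $i=j$; your invocation of ``centralisers are cyclic'' is the right engine (the normaliser of $\langle u^i\rangle$ is the maximal cyclic group containing $u$, whose elements centralise $u^i$), though the remark about involutions is a red herring. For $k=1$, your cyclic-reduction convention correctly pushes any factor element that is factor-conjugate into $C$ down to the $k=0$ case, after which what remains is an honest appeal to the AFP conjugacy criterion.

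There is, however, a genuine gap in the $k\geq 2$ case. You assert that the equation $u^\ell x' u^{-\ell} = y$ ``bounds $|\ell|$ a priori by inspecting how $u^\ell$ absorbs into the first and last syllables of $x'$.'' This fails when $u$ (or $v$) is a proper power. If $u = z^b$ with $b>1$ and the first syllable $g_1$ of $x'$ lies in $\langle z\rangle$ but not in $\langle u\rangle$ (e.g.\ $g_1 = z^a$ with $b\nmid a$), then $g_1\notin C$ yet $g_1$ commutes with $u$, so $u^\ell g_1 u^{-a_1}$ can equal a fixed $h_1$ for infinitely many pairs $(\ell, a_1)$: the first syllable imposes only an affine relation, not a bound. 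The same can happen at the last syllable. The correct statement is a dichotomy: either every syllable of $x'$ lies in the maximal cyclic subgroup containing $u$ (resp.\ $v$) in its factor, in which case $u$ centralises $x'$ and one simply tests $\ell = 0$; or some syllable $g_i$ is ``non-degenerate,'' and then the equation $h_i = u^{a_{i-1}} g_i u^{-a_i}$ has at most one solution $(a_{i-1}, a_i)$, found by a length-bounded search, after which the remaining $a_j$'s including $\ell=a_0$ are forced and can be checked. Equivalently, one can solve the whole system of syllable-matching equations as an affine problem over $\Z$; either way the step is repairable, but the blanket ``inspect the first and last syllable'' claim is not correct as written. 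This is exactly where the cyclically pinched hypothesis earns its keep: the solution set of each syllable equation is an affine subset of $\Z^2$ of dimension at most one precisely because the associated subgroup is cyclic, and it is this that keeps the search finite, in contrast with the amalgams over larger free groups of \cref{Thm:CP-in-free-HNN-can-be-undecidable}.
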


Lipschutz later extended these results to other cyclic amalgams of non-free groups, see \cite{Lipschutz1969, Lipschutz1975}. For a different solution to the conjugacy problem in $\BS(m,n)$ with $\gcd(m,n) = 1$, see \cite{Anshel1974}. In 1980, Dyer \cite{Dyer1980} extended Theorem~\ref{Thm:Cyclicallypinched-CP} by proving that cyclically pinched one-relator groups are conjugacy separable. Larsen \cite{Lar77} also proved that any one-relator group with non-trivial centre has decidable conjugacy problem, using the structure theorem for one-relator groups (see \S\ref{Subsec:Centre}) with non-trivial centre. In this line, see also \cite{Hurwitz1980}. Finally, we have already mentioned that B.\ B.\ Newman proved (see \S\ref{Subsec:BBNewman}) that every one-relator group with torsion has decidable conjugacy problem. 

We remark that it might be tempting to consider the well-behaved class of residually nilpotent one-relator groups as natural targets, since finitely generated nilpotent groups have decidable conjugacy problem; indeed they are even conjugacy separable, as proved in 1965 by Blackburn \cite{Blackburn1965}. However, unlike for the word problem, it turns out that residually nilpotent groups need not have decidable conjugacy problem, as proved in 1999 by G.\ Baumslag \cite{Baumslag1999b}. Whether residually nilpotent one-relator groups have decidable conjugacy problem also seems to be an open problem.

\subsection{The isomorphism problem for torsion-free one-relator groups}\label{Subsec:iso-problem}

\

\epigraph{\textit{The isomorphism problem is perhaps the most important as well as the most intractable problem in the theory of one-relator groups.}}{---G.\ Baumslag, 1971 \cite[p. 75]{Ba71}}

\noindent We have already outlined some of the results about the isomorphism problem for one-relator groups with torsion in \S\ref{Subsec:Torsion-isomorphism-problem}. In particular, we know that in this case, the problem is decidable. By contrast, the isomorphism problem for torsion-free one-relator groups remains wide open. In this section, we will discuss some partial results, as well as some particularly difficult instances of the problem. Before beginning, we mention that by far the two largest classes of one-relator groups for which the isomorphism problem is decidable are one-relator groups with torsion (see \S\ref{Subsec:Torsion-isomorphism-problem}) and one-relator groups with non-trivial centre (see \S\ref{Subsec:Centre}). Beyond this, few truly general results are known: the quotation by G.\ Baumslag above remains as true today as it was 50 years ago. 

Nevertheless, we can say some generalities. First, recall that two one-relator groups $\pres{}{A}{r=1}$ and $\pres{}{A}{s=1}$ are said to be (\textit{Nielsen}) \textit{equivalent} if there is an automorphism of $F_A$ mapping $r$ to $s$. Of course, any two Nielsen equivalent one-relator groups are isomorphic, but whether the converse holds was for some time an open problem (Conjecture~\ref{Conj:Magnus-conj}, sometimes called the Magnus Conjecture, cf.\ e.g.\ \cite[p. 401]{Magnus1966}). We have seen that Whitehead proved the conjecture true in 1936 in the case that the one-relator group is free (Theorem~\ref{Thm:Whitehead-theorem}. However, for general one-relator groups the conjecture was disproved in 1970 by Zieschang \cite{Zi70} and independently in 1971 by McCool \& Pietrowski \cite{McCool1971}. The examples by McCool \& Pietrowski come from torus knot groups. Let $p, q \in \Z$ with $|p|, |q| > 1$, and let $T_{p,q} = \pres{}{x,y}{x^p = y^q}$. If $p = st + 1$ for some $s, t \in \N$ with $t \neq 1$, then the authors proved that $T_{p,q} \cong \pres{}{a,b}{b = (a^p b^{-s})^t}$, but that these presentations are not equivalent. Zieschang's examples also come from $T_{p,q}$. Such counterexamples produce finite classes of isomorphic but non-equivalent one-relator groups. This was extended in 1976 by Brunner \cite{Br76} (cf.\ also \cite{Brunner1980}), who proved that if for $r \geq 0$ we define
\begin{equation}
G_r = \pres{}{x,y}{y^{-1} x^{-2^r}yxy^{-1} x^{2^r} y = x^2},
\end{equation}
then $G_r \cong G_s$ for all $r, s \geq 0$, but no two such groups are Nielsen equivalent for $r \neq s$. This gives an even stronger failure of the Magnus Conjecture. Thus, it would be interesting to know whether it is decidable if a given one-relator group has infinitely many pairwise non-Nielsen-equivalent one-relator presentations. In principle this can be decidable even if the isomorphism problem is undecidable, and vice versa. 

The isomorphism problem, and the above failure of Magnus' Conjecture, is closely related to finding non-equivalent generating sets of one-relator groups, i.e.\ generating sets lying in different \textit{$T$-systems}. We discussed this subject in \S\ref{Subsec:Torsion-isomorphism-problem}, and note that there is also some literature on this subject in the torsion-free case (beyond what is cited above). For example, Zieschang's article \cite{Zi70} is primarily concerned with extending Nielsen's method of finding a basis for a subgroup in free groups to amalgamated free products. This is not possible in general, and Zieschang was able to point out several obstructions to this together with sufficient conditions on the amalgam for the method to work; such obstructions are also the source of the failure of Magnus' Conjecture. Similar conditions were given in 1978 by Peczynski \& Reiwer \cite{PeczynskiReiwer1978} for HNN-extensions. The Nielsen method in this broader context of amalgams and HNN-extensions was surveyed in \cite{Rosenberger1979}, in which it is among other things used to give a proof of the Tits alternative for $\operatorname{SL}_2(\R)$. Nielsen's method was extended to all Bass--Serre theory in \cite{Weidmann2002}; see also \cite{Fine1995} for a survey. We note that even Baumslag--Solitar groups $\BS(m,n)$ (see \S\ref{Subsec:Baumslag-Solitar}) can also be elusive from the point of view of their generating sets. In particular, the group $\BS(2,3) = \pres{}{x,y}{x^{-1}y^2x = y^3}$ requires more than one relation to define it in terms of the generators $x$ and $y^4$ (in this connection, see also \cite{Piccard1971}). Dunwoody \& Pietrowski \cite{Dunwoody1973} gave infinitely many non-equivalent generating sets of the trefoil knot group $\pres{}{a,b}{a^2 = b^3}$. These generating sets do not, however, have one-relator presentations.

One can also consider the isomorphism problem with respect to particular classes. For example, Whitehead's Theorem (Theorem~\ref{Thm:Whitehead-theorem}) implies that it is decidable whether a one-relator group is free or not; thus, the isomorphism problem is decidable for one-relator groups relative to free groups. Similarly, one can prove using classical methods that the isomorphism problem is decidable for one-relator groups relative to the class of cyclically pinched one-relator groups, i.e.\ cyclic amalgams of free groups $F_n \ast_{\Z} F_m$. This was proved for $n = m = 2$ and some constraints on the amalgam by Rosenberger \cite{Ro82}, and finally in general by Rosenberger \cite{Rosenberger1994} in 1994 (cf.\ also \cite{Zieschang1977} and \cite{Collins1978}). The strategy was to prove that such groups have only finitely many $T$-systems of generators (as in \S\ref{Subsec:Torsion-isomorphism-problem}), yielding in particular the following result:

\begin{theorem}[Rosenberger 1994, \cite{Ro82, Rosenberger1994}]
The isomorphism problem for one-relator groups is decidable relative to the class of cyclically pinched one-relator groups.
\end{theorem}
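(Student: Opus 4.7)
The plan is to reduce the isomorphism problem to Whitehead's algorithm for automorphic equivalence in a free group, using the $T$-system machinery developed in \S\ref{Subsec:Torsion-isomorphism-problem} together with Magnus' Conjugacy Theorem. Suppose we are given an arbitrary one-relator group $G_1 = \pres{}{A_1}{r_1 = 1}$ and a cyclically pinched one-relator group $G_2 = \pres{}{A_2}{r_2 = 1}$, where $r_2$ realizes $G_2$ as an amalgam $F_n \ast_\Z F_m$. First I would dispose of the easy obstructions: by Theorem~\ref{Thm:torsion-theorem} we can decide whether $G_1$ has torsion, and if so output NO, since cyclically pinched one-relator groups are torsion-free. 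By Rapaport's deficiency result, any one-relator presentation of a one-relator group $G$ realizes the deficiency of $G$, so $|A_1|$ and $|A_2|$ are invariants of the groups, and if $|A_1| \neq |A_2|$ then $G_1 \not\cong G_2$; otherwise fix $A := A_1 = A_2$ and $F := F_A$.

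Second, I would invoke the key classical ingredient: a cyclically pinched one-relator group has only \emph{finitely many} $T$-systems of generating $|A|$-tuples, and moreover a finite list of representatives can be effectively computed from the amalgam data. This is the substantive content stated in the paragraph immediately preceding the theorem (going back to Zieschang and Collins, completed by Rosenberger), and relies on the extended Nielsen method for amalgamated free products: any generating tuple can be Nielsen-reduced (relative to the amalgamated decomposition) to one of finitely many standard forms, and one reads off from each standard form a defining relator $s_i \in F$ of $G_2$ in the generators $A$. This yields an effectively computable finite list $s_1, \ldots, s_k \in F$ such that every one-relator presentation of $G_2$ on the generating set $A$ has defining relator conjugate to some $\phi(s_i)^{\pm 1}$ with $\phi \in \Aut(F)$.

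Third, I would apply Proposition~\ref{Prop:MagnusConjugacyTSystems} (Magnus' Conjugacy Theorem packaged through $T$-systems): the tuples associated to $r_1$ and to $s_i$ lie in the same $T$-system of $G_2$ if and only if $r_1$ is equal in $F$ to the image of $s_i^{\pm 1}$ under some automorphism of $F$. Combined with step two, this gives the decision criterion
\[
G_1 \isom G_2 \iff \exists\, i \in \{1, \dots, k\}, \ \exists\, \phi \in \Aut(F) : \phi(r_1) = s_i^{\pm 1} \text{ in } F.
\]
For each $i$, the existence of such a $\phi$ is decidable by Whitehead's algorithm \cite{Wh36}, which decides whether two elements of a finitely generated free group lie in the same $\Aut(F)$-orbit. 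Running Whitehead's algorithm on each of the finitely many pairs $(r_1, s_i^{\pm 1})$ and returning YES iff some pair is automorphically equivalent gives the required decision procedure.

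The principal obstacle, and the genuine content of the theorem, is the second step: proving finiteness and effective enumerability of the $T$-systems of $|A|$-generating tuples of a cyclically pinched one-relator group. The Nielsen reduction must be carried out relative to the amalgamated product structure $F_n \ast_\Z F_m$, and there are known obstructions (identified by Zieschang) that prevent the method from working for arbitrary amalgams; the argument must show that in the cyclically pinched case these obstructions produce only finitely many irreducible types of generating tuples, each giving (up to automorphism of $F$) finitely many candidate relators $s_i$. Everything after that is essentially bookkeeping plus the appeal to Whitehead's 1936 algorithm.
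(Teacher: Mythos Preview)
Your proposal is correct and follows essentially the same strategy the paper indicates: prove that cyclically pinched one-relator groups have only finitely many $T$-systems of minimal generating tuples (via the extended Nielsen method in amalgams, going back to Zieschang and completed by Rosenberger), then invoke Proposition~\ref{Prop:MagnusConjugacyTSystems} and Whitehead's algorithm to reduce the isomorphism test to finitely many automorphic-equivalence checks in the free group. Your preliminary reductions (torsion via Theorem~\ref{Thm:torsion-theorem}, matching the number of generators via Rapaport's deficiency result) are routine but correctly flagged, and you are right that the entire weight of the theorem lies in the effective finiteness of $T$-systems for the cyclic amalgam $F_n \ast_\Z F_m$.
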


Another reason for the difficulty of the isomorphism problem in torsion-free one-relator groups comes from parafree groups (discussed in \S\ref{Subsec:lower-central-series}). In this direction, Chandler \& Magnus \cite{Chandler1982} mention a particular family of parafree one-relator groups: 
\begin{equation}\label{Eq:Gij-parafree}
G_{i,j} = \pres{}{a,b,c}{a = [c^i, a][c^j, b]} \quad (i, j \in \N)
\end{equation}
as being particularly difficult. Finding necessary and sufficient conditions on the parameters $i, j \in \Z$ for pairs of groups in the family $G_{i,j}$ to be isomorphic is particularly difficult, and for decades, only sporadic isomorphism results were proved for these groups (see e.g.\ \cite{Fine1995, Fine1995}, and \cite{BCH04} for an overview). Finally, the isomorphism problem for the family of groups $\{ G_{i,j} \mid i, j \in \N \}$ was solved in 2021 by Chen \cite{Chen2021}, using character varieties: it turns out that the group $G_{i,j}$ is uniquely determined by the pair $(i,j)$ when $i, j > 0$. In a similar vein, Meskin \cite{Meskin1968, Meskin1969} in his PhD thesis studied the isomorphism problem for the one-relator groups 
\[
\pres{}{x_1, x_2, \dots, x_t}{x_1^{m_1} x_2^{m_2} \cdots x_n^{m_t} = 1}
\]
where $t \geq 2$ and $m_1, \dots, m_t$ are arbitrary integers. These groups are parafree when $t>3$ and $\gcd(m_1, \dots, m_t) = 1$. Already \textit{within} this class of one-relator groups the isomorphism problem is difficult, but was eventually fully solved in 1975 by Meskin \cite{Meskin1975}: the integers $m_1, \dots, m_t$, up to permutation, determine the group uniquely.

In practice, there are many useful group invariants that can distinguish a given pair of torsion-free one-relator groups. The most basic invariant comes from the abelianization. The second homology group also provides some information: it is a consequence of Lyndon's Identity Theorem (\S\ref{Subsec:LyndonIT}) that if $G = \pres{}{A}{r=1}$, then $H_2(G, \Z) = \Z$ if and only if $r \in [F_A, F_A]$. In this case, when the second homology group is non-trivial, there is much more to be said. This was explored by Horadam \cite{Horadam1981b, Horadam1981} in 1981, who provides a simple and straightforward test for non-isomorphism (later expanded on in \cite{Horadam1982a, Horadam1982b}). For example, as in \cite[p. 197]{Horadam1981}, if we let $A = \{ x_1, \dots, x_4\}$, and take 
\begin{align*}
r_1 &= [x_3, x_4] [x_1, x_2] [x_1, x_4^2] [x_3, x_2] [x_2, x_4] [x_3, x_4]^3, \\
r_2 &= [x_3, x_4] [x_1, x_2] [x_1, x_4^2] [x_3, x_2] [x_2, x_4] [x_3, x_4]^2,
\end{align*}
the only difference being the final exponent, and set $G_i = \pres{}{A}{r_i=1}$ for $i=1, 2$, then her methods prove that $G_1 \not\cong G_2$. The technique can also be used to show that there exist infinitely many pairwise non-isomorphic one-relator groups of the form $\pres{}{a,b}{r = 1}$ where $r \in [F_2, F_2]$, which is not trivial to demonstrate otherwise. We note that her methods were also strengthened later by Gromadzki \cite{Gromadzki1985}.

There is more to be said on using finite quotients, and whether the set of all finite quotients of a residually finite one-relator group uniquely determines the group up to isomorphism (\textit{profinite rigidity} of one-relator groups). In this direction, we mention only \cite{MS95}, who prove that the groups $\BS(1,k)$ for $k \in \Z$ are profinitely rigid among all one-relator groups, i.e.\ if a residually finite one-relator group has the same finite images as $\BS(1,k)$ for some $k \in \Z$, then $G \cong \BS(1,k)$.

\subsection{More on the word problem}\label{Subsec:More-on-WP}

\noindent The first major result on one-relator groups was the \textit{Freiheitssatz}, and one of the first application of this result was the decidability of the word problem in all one-relator groups. It is therefore remarkable that since these results, proved almost a century ago, few general results on the word problem in one-relator groups have appeared. For example, it remains an open problem whether it can be decided in polynomial time. There also exist examples of one-relator groups with very rapidly growing Dehn functions, but nevertheless having word problem decidable in polynomial time \cite{MUW11}. In this section, we will discuss some open problems closely related to the word problem in one-relator groups.

One approach to better understand time complexity of the word problem in one-relator groups is via \textit{finite complete rewriting systems}. We give brief definitions here, referring the reader to the monographs \cite{Jantzen1988, Otto1991} for significantly more thorough treatments. Let $A$ be a finite set, $A^\ast$ the free monoid on $A$, and let $\mathcal{R} \subseteq A^\ast \times A^\ast$ be a set of pairs of words, which we call \textit{rules}. The set $\mathcal{R}$ of rules induces a relation $\xrightarrow{}_\mathcal{R}$ on $A^\ast$, where $w_1 \xrightarrow{}_\mathcal{R} w_2$ if and only if there exist words $x, y \in A^\ast$ and some rule $r = (u, v) \in \mathcal{R}$ such that $w_1 \equiv x u y$ and $w_2 \equiv x v y$. The reflexive and transitive closure of $\xrightarrow{}_\mathcal{R}$ is denoted $\xrightarrow{}_\mathcal{R}^\ast$. If there is no infinite chain $u_1 \xrightarrow{}_\mathcal{R} u_2 \xrightarrow{}_\mathcal{R} \cdots$, then we say that $\mathcal{R}$ is\textit{terminating}. For $u \in A^\ast$, if there exists some $v \in A^\ast$ such that $u \xrightarrow{}_\mathcal{R} v$, then we say that $u$ is reducible, and otherwise \textit{irreducible}. If $\mathcal{R}$ is a terminating rewriting system in which for every word $w \in A^\ast$ there exists a unique irreducible $w' \in A^\ast$ such that $w \xrightarrow{}^\ast_\mathcal{R} w'$, then we say that $\mathcal{R}$ is \textit{complete} (cf.\ \cite[Prop.~1.1.25]{Jantzen1988}). Every rewriting system $\mathcal{R}$ induces an equivalence relation $\leftrightarrow^\ast_\mathcal{R}$, being the least equivalence relation containing $\mathcal{R}$. In fact, this is a congruence, and $A^\ast / \leftrightarrow^\ast_\mathcal{R}$, which we call the monoid \textit{associated} to $\mathcal{R}$, is simply the monoid presented by $\pres{Mon}{A}{R}$. If a group $G$ is isomorphic to the monoid associated to a finite complete rewriting system, then we say that $G$ \textit{admits} a finite complete rewriting system. Of course, any group admitting a finite complete rewriting system has decidable word problem: simply rewrite any word to an irreducible one by successively applying the rules of the system.

Thus a finite complete rewriting system for a group is a particularly nice type of monoid presentation for the group, giving rise to a simple solution to the word problem. This leads to the following natural question: 

\begin{problem}[{Otto \& Kobayashi, 1997 \cite[Q3]{KO97}}]
Does every one-relator group admit a finite complete rewriting system? \label{Prob:FCRS-for-OR}
\end{problem}

If Problem~\ref{Prob:FCRS-for-OR} has a positive answer, then if the rules are concrete enough, it may be possible to gain upper bounds on the complexity of the word problem in any one-relator group. On the other hand, the property of admitting a finite complete rewriting system does not, in general, entail any upper bound (beyond recursiveness) on the complexity of the word problem \cite{Bauer1984}. Regarding a \textit{negative} answer to Problem~\ref{Prob:FCRS-for-OR}, we first note that there exists a finitely presented monoid $\pres{}{A}{R}$ which does not admit a finite complete rewriting system $\mathcal{R} \subseteq A^\ast \times A^\ast$, but which does admit a finite complete rewriting system over some other generating set \cite{Otto1984}. Thus Problem~\ref{Prob:FCRS-for-OR} is intricately linked with the problem of generating sets and alternate presentations of one-relator groups. Second, it seems difficult to even approach the question of how one might give a negative answer to Problem~\ref{Prob:FCRS-for-OR}. However, it turns out that finite complete rewriting systems are closely related to homological finiteness properties. Squier \cite{Squier1987} proved in 1987 that every monoid admitting a finite complete rewriting system has the homological finiteness property $\FP_3(\Z)$, later strengthened to $\FP_\infty(\Z)$ by Kobayashi \cite{Kobayashi1990}, cf.\ also \cite{Anick1986, Groves1990, Squier1994}. Since there are finitely presented groups with decidable word problem but which are not $\FP_3(\Z)$ \cite{Stallings1963}, there are finitely presented groups with decidable word problem but not admitting any finite complete rewriting system. Whether the same is true for one-relator groups is thus the crux of Problem~\ref{Prob:FCRS-for-OR}. Lyndon's Identity Theorem (Theorem~\ref{Thm:Lyndon-Identity}) shows that all one-relator groups are $\FP_\infty(\Z)$, showing that any negative answer to Problem~\ref{Prob:FCRS-for-OR} would have to involve some fundamentally new technique. We also remark here that there exist finitely presented groups of cohomological dimension $2$ with undecidable word problem \cite{Collins1999}, and thus not admitting a finite complete rewriting system. 

Another angle on the word problem comes from embedding one-relator groups into finitely presented simple groups: any finitely presented simple group has decidable word problem by Kuznetsov's elegant algorithm \cite{Kuznetsov1958}.\footnote{If $G = \pres{}{A}{R}$ is a finitely presented simple group and $w \in F_A$, then to decide if $w = 1$ in $G$, run the following two procedures in parallel: (1) enumerate all words equal to $1$ in $G$; (2) enumerate the elements of $G_w = \pres{}{A}{R, w=1}$. Then (1) terminates if $w = 1$, and (2) terminates if $w \neq 1$ in $G$, since $G$ being simple then implies that $G_w$ is trivial.} Recall that the classical Boone--Higman Theorem \cite{Boone1974} from 1974 states that a finitely generated group $G$ has decidable word problem if and only if there exists a simple group $S$ and a finitely presented group $K$ such that $G \leq K \leq S$. The Boone--Higman Conjecture asks whether there is furthermore some such $S$ and $K$ exist with $K = S$, i.e.\ whether every finitely generated group with decidable word problem embeds in some finitely presented simple group. This conjecture remains open, but it has recently seen remarkable progress: Belk \& Bleak \cite{Belk2023} proved in 2023 that every finitely generated hyperbolic group embeds in some finitely presented simple group. Thus, for example, by combining this with the B.\ B.\ Newman Spelling Theorem (\S\ref{Subsec:BBNewman}) we have that \textit{every one-relator group with torsion embeds into a finitely presented simple group}. Whether this can be extended to all one-relator groups seems a next natural step for the Boone--Higman Conjecture.

\begin{problem}
Can every one-relator group be embedded into a finitely presented simple group? 
\end{problem}

Extensions of the Boone--Higman Theorem to other decision problems also exist, including for the order problem \cite{Boone1975} and the conjugacy problem \cite{Sacerdote1977}, giving rise to analogous questions for one-relator groups. Dual to the above problem is the following classical problem:

\begin{problem}[{Baumslag, 1964 \cite[p. 391]{Ba64}}]
Can a one-relator group contain a non-abelian simple subgroup?
\end{problem}

Finally, we make a brief remark on membership problems, being in some ways a natural step following the word problem. The \textit{subgroup membership problem} for a group $G = \pres{}{A}{R}$ asks, on input a set of words $w_1, \dots, w_n \in F_A$ and a word $w \in F_A$, whether or not $w$ belongs to the subgroup of $G$ generated by $w_1, \dots, w_n$. Thus the word problem is simply the membership problem for the trivial subgroup. Recall that Magnus (\S\ref{Subsec:Wordproblem-Magnus}) proved that membership in Magnus subgroups of one-relator groups is decidable. However, for arbitrary finitely generated subgroups, deciding membership remains an unsolved problem in general: 

\begin{problem}\label{Prob:Subgroup-membership}
Is the subgroup membership problem decidable in every one-relator group?
\end{problem}

Any subgroup separable one-relator group has decidable subgroup membership problem. Thus a large class of one-relator groups with decidable subgroup membership problem is formed by the one-relator groups with non-trivial centre (see \S\ref{Subsec:Centre} and particularly Theorem~\ref{Thm:Centre-OR-are-subgroup-separable}). However, there are non-subgroup separable one-relator groups which nevertheless have decidable subgroup membership problem. e.g.\ the groups $H_{m,k} = \pres{}{a,b}{a^{-m}ba^m = b^k}$, with $m, n \in \N$, of Novikova \cite{Novikova1998}. Problem~\ref{Prob:Subgroup-membership} can also be extended to instances of the sub\textit{monoid} membership problem, defined analogously. Here undecidability appears rather quickly, and many one-relator groups have undecidable submonoid membership problem, e.g.\ $\pres{}{a,b}{[ab, ba] = 1}$ or $\pres{}{a,b}{[a^2, b^2] = 1}$. There are also positive one-relator groups with undecidable submonoid membership problem \cite{Foniqi2024}. However, all currently known such examples arise from embedding the right-angled Artin group $A(P_4)$, which has undecidable submonoid membership problem (see \cite{Lohrey2008}). Such subgroups obviously cannot appear as subgroups of hyperbolic groups. Thus whether there is some hyperbolic one-relator group with undecidable submonoid membership problem, and whether there is a one-relator group with torsion with undecidable submonoid membership problem, both remain open problems, being Problems~20.68 \& 20.69, respectively, in the Kourovka Notebook \cite{Khukhro2024}. 

\subsection{Two-relator groups}

\

\epigraph{\textit{So my survey ends fittingly, as it began, with the work of Wilhelm Magnus.}}{---G.\ Baumslag \cite[p. 55]{Ba86}}

\noindent This survey has told the story of one defining relation. What question could now be more natural than: what about two-relator groups? Unfortunately, here much less is known. The question of extending the methods of one-relator groups to two relators dates back to the very foundations of the former subject, being present already in an appendix to Magnus' 1930 article \cite[\S8]{Ma30}. Very little has been produced in way of results for two-relator groups \textit{qua} two-relator groups. Even the problem of classifying all finite two-relator groups is still a mystery. Certainly, any finite two-relator group must by basic linear algebra also be generated by two elements, and thus admit a balanced (i.e.\ deficiency zero) presentation. It follows that any finite two-relator group $G$ must satisfy $H_2(G, \Z) = 0$. Beyond such generalities, the class of finite two-relator groups seems, at present, far too broad to understand in its entirety, and contains groups such as $\SL(2,5) \times \SL(2,25)$, as proved in \cite{Campbell1986}, and much computational work has been directed towards finding two-generator, two-relator presentations for similar groups, see e.g.\ \cite{Campbell1986, Robertson1989}. Even the finiteness problem for two-relator groups, i.e.\ the isomorphism problem for two-relator groups with respect to the class of finite two-relator groups, is at present wide open. 

The word problem, conjugacy problem, and isomorphism problem are all open for two-relator groups. The latter two are less surprising, since they also remain open for one-relator group, and currently known methods for encoding undecidable problems in groups tend to produce a relatively large number of relators. Indeed, as for the word problem, as discussed in \S\ref{Subsec:Undecidability} the currently smallest known number of defining relators for a group with undecidable word problem is 12, as proved by Borisov \cite{Borisov1969}. His proof (simplified by Collins \cite{Collins1972}) uses the fact that the word problem in any $n$-relation monoid can be encoded into an $(n+9)$-relator group, and Matiyasevich \cite{Matiyasevich1967} gave an example of a three-relation monoid with undecidable word problem. It seems very plausible, but remains an open problem, that there is a two-relation monoid with undecidable word problem; and recent trends (see e.g.\ \cite{Gra20, Foniqi2024}) have indicated that there may even be a \textit{one}-relation monoid with undecidable word problem (see \cite{NybergBrodda2021b} for an introduction to this problem). Nevertheless, we see that Borisov's method would only yield a 11 resp.\ 10-relator group with undecidable word problem if these undecidability results would hold, respectively. Thus, if the word problem is undecidable in two-relator groups, then to prove it would require some fundamentally new way of encoding undecidability into groups. If it is decidable, then some fundamentally new way of solving word problems must be developed, perhaps by extending the Magnus hierarchy to this setting. In either case, much exciting mathematics will surely follow!

\chapter{Recent progress}
\label{chap:survey2}

\section{Introduction}

The origins of combinatorial group theory are often identified with Dehn's solutions to the word and conjugacy problem for fundamental groups of closed hyperbolic surfaces. Dehn presented two approaches, one utilising the geometry of the hyperbolic plane $\mathbb{H}^2$ \cite{De11} and another abstracting the geometric properties to purely combinatorial properties of their one-relator presentations \cite{De12}:
\[
\Sigma_g = \left\langle a_1, b_1, \ldots, a_g, b_g \, \middle\vert \, \prod_{i=1}^g[a_i, b_i]\right\rangle, \quad g\geqslant 2.
\]
Generalisations of Dehn's ideas would eventually lead to the very fruitful study of small cancellation groups and, later, hyperbolic groups. 

The theory of amalgamated free products, developed by Schreier in the decade that followed \cite{Sc27}, provided a third method of solving the word problem for surface groups. This algebraic method was picked up by Dehn's doctoral student Magnus who used it to investigate the much more general class of \emph{one-relator groups}; that is, groups admitting a presentation
\[
G = \langle S \mid w \rangle
\]
with a single defining relation. Magnus made two fundamental initial contributions to the theory by proving the Frieheitssatz in 1930 \cite{Ma30} and solving the word problem for all one-relator groups in 1932 \cite{Ma32}. These results initiated the systematic study of one-relator groups which has continued to develop and mature over the last century. After Magnus' foundational work, one-relator groups quickly became a test-piece for new conjectures in group theory. Indeed, one-relator groups exhibit a wide variety of behaviours so that few properties hold for the whole class and characterising one-relator groups with a certain property is often a nuanced problem. This has the corollary that even fundamental problems like Dehn's conjugacy and isomorphism problems are very difficult to solve for all one-relator groups and in fact remain wide open. On top of this, not a single theorem is known to hold for all two-relator groups that does not already hold for all finitely presented groups. It is therefore remarkable that one-relator groups admit a cohesive theory at all and even more so that it is sufficiently deep and developed that one can hope to approach most difficult problems.

Until the 80s, group theory remained a mostly algebraic and combinatorial subject. The focus in group theory then shifted towards more geometrically flavoured problems when Gromov introduced hyperbolic groups in \cite{Gr87}. By the very nature of their definition, one-relator groups do not admit any obvious intrinsic geometry and so this shift initially had little impact on the theory of one-relator groups. Indeed, the geometry of one-relator groups has proven to be surprisingly complicated, resisting any form of decisive classification for some time.

On the other hand, one-relator groups have turned out to be very amenable for study via topological and homological means. The recent developments in $\cat(0)$ cube complexes, 2-complexes with non-positive immersions and $L^2$-invariants have all had a significant impact on one-relator groups, resulting in new insights and resolutions of old conjectures that had seen little progress for many years. This includes, but is not limited to, the residual finiteness conjecture (posed by Baumslag in \cite{Ba67} and solved by Wise in \cite{Wi21}), the coherence conjecture (posed by Baumslag in \cite{Ba74} and solved by Jaikin-Zapirain and the first author in \cite{JZL23}) and the virtually free-by-cyclic conjecture (posed by Baumslag in \cite{Ba86} and solved by Kielak and the first author in \cite{KL24}). 

The purpose of this chapter is to survey the current state of the art in the theory of one-relator groups with a particular emphasis on homological, topological and geometric aspects of one-relator groups, topics which, as mentioned above, have seen considerable advancement in the last decade. Some of the content of this survey is based on work of the first author, but the majority is based on recent work of Andrei Jaikin-Zapirain, Dawid Kielak, Lars Louder, Henry Wilton, Daniel Wise and many others, without whom this article would not warrant writing.

We aim to complement other pre-existing treatments of one-relator groups which are more classical and combinatorial in nature, see \cite{Ba64,Ba74,Ba86,LS01,MKS04,BFR19} for example. We include proofs of a few selected results, but mostly content ourselves with sketch proofs and ideas. Throughout, we also present open problems and directions for future research. As such, we hope this document can serve both as an invitation to the theory of one-relator groups and as a point of reference for anyone wishing to contribute and expand on the topics presented here.

Many results and questions we discuss here in reality apply (or can be generalised) to broader classes of groups which include:
\begin{itemize}
\item One-relator products of groups (possibly with extra conditions on the factors),
\item Fundamental groups of 2-complexes with non-positive immersions,
\item Groups of cohomological dimension two with vanishing second $L^2$-Betti number.
\end{itemize}
Indeed, often results for these classes of groups arise from direct generalisations of arguments used to prove the same result for the simplest non-trivial case in the corresponding class, one-relator groups. For the sake of simplicity, in this article we mostly only state the versions of results for one-relator groups, pointing the reader towards references for further information.

\subsection{A brief description of the content}

In \cref{sec:one-relator_splittings} we begin by discussing Magnus' main tool for studying one-relator groups, the Magnus hierarchy. We discuss subsequent improvements to the hierarchy and cover a topological version using two-complexes in detail, as developed by the first author in \cite{Lin22}. The treatment includes several examples and a somewhat complete proof of the Freiheitssatz.

In \cref{sec:splittings} we cover what is known about splittings of one-relator groups. We begin with an in depth analysis of the action of a one-relator group on the Bass--Serre tree associated with the splittings arising from its hierarchy. We then provide a sufficient condition for when a one-relator group is free-by-cyclic which we conjecture to also be necessary. We then consider the more specific case of when a one-relator group is \{finitely generated free\}-by-cyclic, which was completely determined by Moldavanski\u{\i} and later by Brown. Finally, we cover the work of Friedl--Tillmann and Henneke--Kielak on polytopes and splitting complexities for one-relator groups, finishing with some open problems on JSJ and amalgam decompositions.

In \cref{sec:cohomology} we cover (co)homological topics and properties of the group ring of one-relator groups. We discuss Lyndon's identity theorem and the implications this has for the (co)homology of a one-relator group. We then cover recent results on groups with cyclic relation module and connections with the relation gap and relation lifting problems. We also cover $L^2$-invariants, the resolution of the strong Atiyah conjecture for one-relator groups by Jaikin-Zapirain--L\'{o}pez-\'{A}lvarez and embeddings of group rings of torsion-free one-relator groups into division rings. Finally, we sketch a proof of the fact that the rational group ring of a one-relator group is coherent.

In \cref{sec:subgroups} we cover properties of subgroups of one-relator groups. We begin by covering the non-positive immersions property as introduced by Wise. We then cover work of Louder--Wilton who characterise when a one-relator group has the stronger property of negative immersions. This then leads us to connections between the $k$-freeness property and primitivity rank of elements of the free group. Combining these results with some homological properties, we then sketch a proof of the fact that all one-relator groups are coherent, covering also a strengthening of this theorem for the subclass of 2-free one-relator groups. Finally, we cover what is known about when a one-relator group is virtually free-by-cyclic, Mel'nikov's conjecture, surface subgroups and subgroups of infinite index.

In \cref{sec:geometry} we cover geometric properties of one-relator groups. We discuss Gersten's conjecture on the hyperbolicity of one-relator groups, discuss a method of proving a one-relator group is hyperbolic which leads to a reduction of Gersten's conjecture to the case of primitive extension groups and mention results that are known to hold for hyperbolic one-relator groups. Then, we cover $\cat(0)$ and cubulated one-relator groups, demonstrating further the difficulty of geometric characterisations of one-relator groups with some pathological examples due to Gardam--Woodhouse. Finally, we discuss Wise's quasi-convex hierarchies, Wise's solution of the residual finiteness conjecture and an extension to the class of 2-free one-relator groups.

In \cref{sec:residual} we cover residual properties such as residual solvability, residual nilpotence, parafree groups and profinite completions.

In \cref{sec:algorithms} we cover the work that has been carried out on the complexity of the word problem for one-relator groups, the conjugacy problem, various membership problems, the normal root problem, generating sets of one-relator groups and the isomorphism problem.

\section{One-relator splittings and the hierarchy}
\label{sec:one-relator_splittings}

\subsection{The Magnus--Moldavanski\u{\i}--Masters hierarchy}

If $S$ is a set, we shall write $S^{\pm1}$ to denote $S\sqcup S^{-1}$, the union of $S$ with the set of its formal inverses and by $F(S)$ the free group freely generated by the set $S$. A \emph{one-relator group} is a group of the form 
\[
G = F/\normal{w}
\]
where $F$ is a free group and where $\normal{w}$ denotes the normal closure of the single element $w\in F$. We will always be assuming that $w\neq 1$. The most general tool used to study one relator groups is known as the \emph{Magnus hierarchy}, developed by Magnus in his thesis \cite{Ma30} in order to prove the Freiheitssatz.

\begin{theorem}[Freiheitssatz]
\label{original_frei}
Let $F$ be a free group over a generating set $S$ and let $w\in F$ be a cyclically reduced element. If $T\subset S$ is a subset such that $w\notin \langle T\rangle$, then the natural map
\[
F(T) \to G = F/\normal{w}
\]
is injective.
\end{theorem}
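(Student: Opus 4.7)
My plan is to prove the Freiheitssatz by induction on the length of the cyclically reduced word $w$, following the Magnus hierarchy sketched in \S3.1 of the history chapter. The base case is when $w$ involves only a single generator $a\in S$: then $w=a^n$ for some $n\neq 0$, and since $w\notin\langle T\rangle$ forces $a\notin T$, the group $G$ splits as $\langle a \mid a^n\rangle \ast F(S\setminus\{a\})$, into which $F(T)\subseteq F(S\setminus\{a\})$ obviously injects. For the inductive step I would split into the two standard cases based on whether or not some generator appearing in $w$ already has exponent sum zero.

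Case 1 (exponent-sum-zero case). Suppose some generator $a\in S$ that actually appears in $w$ has exponent sum zero. Then, using the Reidemeister--Schreier process (or equivalently Moldavanski\u{\i}'s HNN reformulation), I would express $G$ as an HNN-extension of a shorter one-relator group $G_0 = \langle b_i : b\in S\setminus\{a\},\, m\le i\le M \mid w'\rangle$, where $b_i := a^ib a^{-i}$, the associated subgroups are Magnus subgroups of $G_0$, and $|w'|<|w|$. I would then apply the inductive hypothesis to $G_0$ to conclude that the relevant Magnus subgroup of $G_0$ is free on the obvious generators, and then use Britton's Lemma for HNN-extensions to deduce that $F(T)\to G$ is injective. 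The argument bifurcates according to whether $a\in T$ or $a\notin T$: in the former case one must additionally recognise $F(T)$ as generated by a Magnus subgroup of $G_0$ together with the stable letter $a$.

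Case 2 (Magnus trick). If no generator appearing in $w$ has exponent sum zero, pick two generators $a,b$ with nonzero exponent sums $\alpha,\beta$ in $w$, introduce a new generator $x$, and replace $b$ by $xa^{-\beta/\gcd(\alpha,\beta)}\cdot(\text{correction})$ so as to kill the exponent sum of $a$ in the new relator, obtaining a one-relator group $G'\supseteq G$ to which Case~1 applies. Here one must verify both that the inclusion $G\hookrightarrow G'$ holds (this follows from the structure as an amalgamated free product over the cyclic group $\langle b\rangle=\langle x^{\alpha/\gcd}\rangle$) and that $T$ maps to a subset of the generators of $G'$ whose corresponding relator does not lie in the subgroup they generate.

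The hard part will be the bookkeeping in Case~1: verifying that, when $T\subseteq S$ with $w\notin\langle T\rangle$, the hypothesis $w'\notin\langle T'\rangle$ is preserved for the corresponding subset $T'$ of the new generators of $G_0$, so that the inductive hypothesis genuinely applies. This amounts to a careful analysis of the Reidemeister--Schreier rewrite: if $T'$ failed the hypothesis, then all occurrences of some $a$-conjugate $b_i$ could be eliminated from $w'$, which, undoing the rewrite, would eliminate $b$ from $w$ and contradict the assumption on $T$. Once this combinatorial check is in place, the inductive machine runs cleanly and the Freiheitssatz follows.
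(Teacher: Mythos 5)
Your proposal is a correct outline of the classical combinatorial proof — the Magnus hierarchy with Moldavanski\u{\i}'s HNN reformulation, as sketched in \S\ref{Subsec:Freiheitssatz} of the history chapter — and the bookkeeping you flag is indeed the technical heart of that argument and is carried out in detail in \cite{Ma30} and \cite{MS73}. The paper, however, proves \cref{original_frei} by a genuinely different, topological route. Instead of working with presentations and word lengths, it works with one-relator complexes $X=(\Lambda,\lambda)$, $\Z$-covers $\rho\colon Y\to X$, and finite one-relator $\Z$-domains $Z\subset Y$; the decomposition $\pi_1(X)\isom\pi_1(Z)*_\psi$ is read off from a graph-of-spaces structure on a homotopy model $\mathcal{X}$ for $X$ (\cref{splitting}), and the induction runs on the complexity pair $c(X)=\bigl(c(\lambda),-\chi(X)\bigr)$ rather than on $|w|$. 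The crucial payoff is that this complexity decreases for \emph{every} connected $\Z$-cover (\cref{Z-domain_decrease_complexity}) — when $c(\lambda_Z)=c(\lambda)$, the Euler characteristic term drops — so your Case~2, the Magnus/Moldavanski\u{\i} embedding trick needed precisely because the relator length can grow when no generator has exponent sum zero, is eliminated entirely. The second thing the topological approach buys is the refinement \cref{Z-domain_one-relator}: one may prescribe in advance which 1-cell $e$ (equivalently which generator) is to be dropped, and then find a $\Z$-cover and a $\Z$-domain $Z$ with $\rho^{-1}(\Lambda-e)\subset Z$, so the argument that $\pi_1(\Gamma)\to\pi_1(X)$ is injective is a clean path-lifting argument with no Reidemeister--Schreier bookkeeping, and no bifurcation on whether the stable letter lies in $T$. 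This formulation also generalises more smoothly, for instance to one-relator products of locally indicable groups, which is part of why the paper adopts it; for proving the Freiheitssatz alone, your approach works but requires the more delicate length and membership verifications you correctly identified.
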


A free subgroup of a one-relator group arising from \cref{original_frei} is called a \emph{Magnus subgroup}. The reader is directed to \cref{Sec:3-Magnus} for more on the Freiheitssatz.

The idea of the original proof of \cref{original_frei} is to find a suitable epimorphism $G\to \Z$ in which the kernel can be decomposed as an infinite amalgamated free product of one-relator groups with strictly shorter relator length and then use the explicit structure of this decomposition, together with induction, to prove that the subset of generators we chose freely generate a free subgroup. There is a caveat with this sketch; such an epimorphism exists only when a generator appears with exponent sum zero in the relator, and this cannot always be the case. To get around this, Magnus embeds the one-relator group into another suitably chosen one-relator group in which some generator does appear with exponent sum zero. Then some technical arguments allows one to show that if the Freiheitssatz holds for the larger one-relator group, then it holds for the original one-relator group. Several proofs of \cref{original_frei} have appeared that do not (explicitly) make use of Magnus' hierarchy. See, for instance, \cite{Ly72} and \cite{LW22}.

The ideas in Magnus' proof are extremely powerful as they subsequently allowed Magnus to prove several strong results on the structure of one-relator groups, such as the decidability of the word problem \cite{Ma32}. As is often the case, in order to make an inductive argument work, one needs to prove something stronger than the desired statement. Indeed, in order to solve the word problem in HNN-extensions or amalgamated free products, one needs both decidability of the word problem in the base groups as well as decidability of the membership problem in the associated subgroups. Thus, the actual statement that Magnus proved inductively in order to solve the word problem for one-relator groups was that the membership problem for any Magnus subgroup is decidable. Most other results on one-relator groups follow a similar structure, usually one proves that the result is true `relative' (in an appropriate sense) to Magnus subgroups by induction on the length of a hierarchy, highlighting the importance of Magnus subgroups.

Over the many years that have passed since Magnus' first paper, there have been several improvements to Magnus' original version of the hierarchy, the most well known being due to Moldavanski\u{\i} \cite{Mo67} which utilises HNN-extensions. Explicitly, Moldavanski\u{\i} noticed that the amalgamated product decompositions of the kernel of some epimorphism $G\to \Z$ that arose in Magnus' work could be bundled together to obtain a HNN-decomposition $G\isom H*_{\psi}$, with the original epimorphism being recoverd by quotienting by the normal closure of $H$. In particular, if $G$ is a one-relator group, then Moldavanski\u{\i} proved that there is a diagram of one-relator groups
\[
\begin{tikzcd}
G = G_0 \arrow[r, hook]		   & G'_0 \\
G_1	 \arrow[ur, hook] \arrow[r, hook] & G'_1 \\
\cdots \arrow[ur, hook] \arrow[r, hook] & \cdots \\
G_N \arrow[ur, hook]			& 
\end{tikzcd}
\]
where $G_N$ is a free product of a free group and a (possibly trivial) finite cyclic group and where $G_{i-1}' \isom G_i*_{\psi_i}$ where $\psi_i$ identifies a pair of Magnus subgroups of $G_i$. All embeddings and splittings can also be described very explicitly in terms of the presentations. We shall not do this here, we direct the reader to \cite{LS01} for more details. We remark that HNN-extensions had been introduced almost two decades after \cite{Ma32}. A variation of this version of the hierarchy was also proven by Mihalik--Tschantz \cite{MT92} who proved that all the embeddings above can be replaced with embeddings as (proper) free factors.

Expressing the Magnus hierarchy with HNN-extensions simplifies many of Magnus' original arguments; indeed, McCool--Schupp then used Moldavanski\u{\i}'s improvement to reprove many of Magnus' original results on one-relator groups in a short paper \cite{MS73}.

Another version of the hierarchy that is worth mentioning is due to Masters. In an unpublished prepreint, Masters \cite{Ma06} showed that one can do away with the embedding step in the hierarchy and obtained the following theorem.

\begin{theorem}[The Algebraic Hierarchy]
\label{algebraic_hierarchy}
Let $F$ be a finitely free generated group, $w\in F$ cyclically reduced and consider the one-relator group $G = F/\normal{w}$. There is sequence of finitely generated subgroups $F_N\leqslant \ldots \leqslant F_1\leqslant F_0 = F$ containing $w$ and isomorphisms $\psi_i\colon A_i\to B_i$ between Magnus subgroups of $G_i = F_i/\normal{w_i}$ for each $1\leqslant i\leqslant N$, such that $F_N\isom \Z$ and
\[
G_{i-1} \isom G_i*_{\psi_i}
\]
for each $0< i\leqslant N$.
\end{theorem}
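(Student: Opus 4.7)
The plan is to proceed by induction on the length $|w|$ of a cyclically reduced relator in its ambient finitely generated free group, with base case the rank-one situation $F \isom \Z$ (for which $N = 0$ works trivially). After a preliminary reduction peeling off any free factors of $F$ corresponding to generators absent from $w$---each producing a hierarchy step $G_{i-1} \isom G_i *_{\psi_i}$ with trivial $\psi_i$---we may assume $w$ involves every generator of $F$. The core inductive step is to construct a topological HNN splitting of the standard presentation 2-complex via a suitable epimorphism to $\Z$, producing a vertex group that is itself a one-relator group whose relator is strictly shorter in the natural basis of a finite-rank free subgroup $F_1 \leqslant F$.

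Assuming $F$ has rank $k \geqslant 2$ and $w$ uses every generator, I would first produce a non-trivial homomorphism $\tilde\phi \colon F \to \Z$ with $\tilde\phi(w) = 0$: the abelianisation $G^{\mathrm{ab}}$ has $\Z$-rank at least $k - 1 \geqslant 1$, so a non-trivial $\phi \colon G \to \Z$ exists and lifts to such a $\tilde\phi$; since $w$ uses every generator, some generator $a_i$ satisfies $\tilde\phi(a_i) \neq 0$. This determines a cellular map $X \to S^1$ from the standard 2-complex $X$ of $G$ (one vertex, $k$ loops, one 2-cell), and pulling back the universal cover of $S^1$ yields the infinite cyclic cover $\tilde X \to X$ with deck group $\langle t \rangle \isom \Z$. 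The hypothesis $\tilde\phi(w) = 0$ ensures that each lifted 2-cell in $\tilde X$ has a closed attaching loop.

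Next, I would take $Y \subset \tilde X$ to be the smallest connected subcomplex containing a single lift of the 2-cell together with every vertex and edge on its boundary walk, so that $\tilde X = \bigcup_{n \in \Z} t^n \cdot Y$. Setting $F_1 := \pi_1(Y^{(1)})$, based at the starting vertex of the lifted boundary loop, embeds $F_1$ into $\pi_1(\tilde X^{(1)}) = \ker\tilde\phi \leqslant F$, so $F_1 \leqslant F$ and $w \in F_1$. Fixing a spanning tree of $Y^{(1)}$ compatible with spanning trees of the boundary subgraphs $Y \cap t^{\pm 1} Y$ yields a canonical free basis of $F_1$; reading the attaching word of the 2-cell in this basis produces an element $w_1 \in F_1$ conjugate in $F$ to $w$. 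Setting $G_1 := F_1 / \normal{w_1} \isom \pi_1(Y)$, Bass--Serre theory applied to the $G$-action on the Bass--Serre tree of $\tilde X \to X$ yields $G \isom G_1 *_{\psi_1}$, where $A_1, B_1 \leqslant G_1$ are the fundamental groups of $Y \cap t^{-1} Y$ and $Y \cap t Y$. These subgraphs are spanned by edges incident to extremal vertex levels of $Y$, hence correspond to proper subsets of the chosen basis of $F_1$, confirming $A_1, B_1$ as Magnus subgroups of $G_1$.

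The main obstacle, and the essential inductive content, is verifying that $|w_1| < |w|$ strictly, measured in the respective spanning-tree bases. The length $|w_1|$ equals the number of non-tree edge traversals of the 2-cell boundary walk, whereas $|w|$ counts every edge traversal. Because some generator occurring in $w$ has $\tilde\phi \neq 0$, the boundary walk visits more than one vertex level of $\tilde X$, so $Y^{(1)}$ has multiple vertex levels and every spanning tree must contain an edge connecting distinct levels. By the minimality of $Y^{(1)}$, every edge of $Y^{(1)}$ is traversed by the walk, so at least one spanning-tree edge traversal occurs, forcing $|w_1| < |w|$. Concatenating the splittings produced by the induction (augmented with rank-reducing HNN steps with trivial edge groups once the length reaches one) yields the desired hierarchy terminating at $F_N \isom \Z$.
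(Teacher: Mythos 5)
Your overall approach --- pass to the $\Z$-cover, carve out a compact subcomplex carrying one $2$-cell, and read off an HNN splitting over Magnus subgroups with a strictly shorter defining relator --- is essentially the topological-hierarchy route the paper itself uses to derive this theorem. There is, however, a genuine gap in your choice of vertex space. Taking $Y$ to be the smallest subcomplex containing a single lift of the $2$-cell together with its boundary walk is not enough for the graph-of-spaces (equivalently, Bass--Serre) argument. One also needs the nesting property $Y\cap t^i Y\subset Y\cap t^{i-1}Y$ for all $i>0$, which guarantees that for each cell $x$ of $X$ the set $\{n\in\Z : t^n\tilde{x}\in Y\}$ is a contiguous interval, and hence that the mapping telescope built from $Y$ and $Y\cap tY$ is actually homotopy equivalent to $X$. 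Your $Y$ is just the image of a single boundary traversal, and that image may skip an edge at an intermediate level entirely, so nesting can fail and the asserted isomorphism $G\cong G_1 *_{\psi_1}$ is then simply false.

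Here is a concrete failure. Take $w=a^3ba^{-3}b^{-1}aba^{-1}b^{-1}$ in $F(a,b)$ and $\tilde{\phi}(a)=1$, $\tilde{\phi}(b)=0$. Reading off the lift of the boundary walk at level $0$, one finds that $Y^{(1)}$ contains the $b$-loops $b_0,b_1,b_3$ but not $b_2$, so $\{n : b_n\in Y\}=\{0,1,3\}$ is not an interval; correspondingly $Y^{(1)}\cap t^2Y^{(1)}$ contains $b_3$ while $Y^{(1)}\cap tY^{(1)}$ does not. Your recipe then yields $G_1=F(b_0,b_1,b_3)/\normal{b_3b_0^{-1}b_1b_0^{-1}}$ with a stable letter conjugating $\langle b_0\rangle$ to $\langle b_1\rangle$, and one checks directly that this HNN extension is free of rank $2$. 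But $G$ is not free: $w$ is cyclically reduced, not a proper power, and lies in $[F,F]$, so $H_2(G;\Z)\cong\Z\neq 0$. The paper avoids this by first taking a $\Z$-domain for the $1$-skeleton and then passing to the fewest consecutive translates that carry a $2$-cell; this enlargement forces the nesting condition. Your length-decrease argument then goes through essentially unchanged (choose a spanning tree of the enlarged $1$-skeleton containing one for the boundary image, so $|w_1|$ is still the number of non-tree edge traversals of the walk, strictly less than $|w|$), and the rest of the induction is fine once this step is repaired.
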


The splittings that arise in this version are the same as those from the Magnus--Moldavanski\u{\i} hierarchy. However, in order to find them, Masters modifies the original presentation via Nielsen moves until a presentation with relator having exponent sum zero on some generator appears. This idea was already present in previous articles, the issue in using it to obtain a hierarchy had previously been that there was no argument to overcome the fact that the relator might get longer under this modification. The novelty of Masters proof is to show that, regardless, continuing to split one eventually reaches a free product of a free group with a finite cyclic group.

Some classical results follow directly from this version of the Magnus hierarchy, combined with general theory of HNN-extensions. However, the splittings are not given so explicitly in terms of the original presentation of the one-relator group making it more difficult to apply than the Magnus--Moldavanski\u{\i} hierarchy.

Yet another version of the hierarchy is provided by the first author in \cite{Lin22}. This hierarchy is phrased and proven in terms of 2-complexes. It both eliminates the embedding step from the Magnus--Moldavanski\u{\i} hierarchy and expresses the HNN-splittings very explicitly. Given that in this article we shall mostly be studying one-relator groups via 2-complexes, we shall cover this hierarchy in much more detail now. Moreover, this version implies \cref{algebraic_hierarchy}.

\subsection{The topological hierarchy and some classical results}
\label{sec:hierarchy}

Before reaching the topological hierarchy statement, let us first set up some notation and definitions. 

A \emph{graph} is a 1-dimensional CW-complex. A combinatorial map of graphs is one that sends 0-cells to 0-cells and open 1-cells homeomorphically to open 1-cells. An \emph{immersion} of graphs is a combinatorial map that is locally injective, which we denote by $\immerses$. If $\lambda\colon S^1\immerses\Lambda$ is an immersion of a cycle, denote by \emph{$\deg(\lambda)$} the maximal degree of a covering map $S^1\immerses S^1$ that $\lambda$ factors through. Note that $\deg(\lambda) = 1$ if and only if $\lambda_*(1)$ is not a proper power in $\pi_1(\Lambda)$, where here $1\in \Z = \pi_1(S^1)$ is a generator. If $\lambda = \sqcup_i\lambda_i$ where $\lambda_i \colon S^1\immerses \Lambda$, then write $\deg(\lambda) = \sum_i\deg(\lambda_i)$. Here we are implicitly putting a combinatorial structure on $S^1$. When $\lambda$ is an immersion of cycles $\bbS = \bigsqcup S^1$, we will write $|\lambda|$ to denote the number of 1-cells in $\bbS$.

A \emph{2-complex} will be used throughout to mean a two-dimensional CW-complex $X$ in which all attaching maps of 2-cells are combinatorial immersions. We will usually write $X = (\Lambda, \lambda)$ where $\Lambda = X^{(1)}$ is the 1-skeleton and where $\lambda\colon \bbS = \bigsqcup S^1\immerses \Lambda$ is the attaching map for the 2-cells in $X$.

Our maps between 2-complexes will always be assumed to be \emph{combinatorial}, that is, they will send open $n$-cells homeomorphically to $n$-cells. An \emph{immersion} of 2-complexes is a locally injective combinatorial map which we shall usually use the arrow $\immerses$ to denote. If $Z\immerses X$ is an immersion of 2-complexes, it will be useful to bear in mind that there is an induced diagram of immersions
\[
\begin{tikzcd}
\mathbb{S}_{\Gamma} \arrow[r, loop->] \arrow[d, loop->] & \mathbb{S}_{\Lambda} \arrow[d, loop->] \\
\Gamma \arrow[r, loop->]                        & \Lambda                       
\end{tikzcd}
\]
where the map $\mathbb{S}_{\Gamma}\immerses\mathbb{S}_{\Lambda}$ is a homeomorphism on each component.

A natural 2-complex to associate to a one-relator group is its presentation complex. That is, the 2-complex with a single 0-cell, a 1-cell for each generator and a single 2-cell whose attaching map spells out the relator. However, for the statement and proofs of results in this article it will be more useful to consider arbitrary 2-complexes with a single 2-cell.

\begin{definition}
A connected 2-complex $X = (\Lambda, \lambda)$ is a \emph{one-relator complex} if $\lambda$ is an immersion of a single cycle.
\end{definition}

In direct analogy with the definition of Magnus subgroups, we define Magnus subgraphs.

\begin{definition}
If $X = (\Lambda, \lambda)$ is a one-relator complex, say a connected subgraph $\Gamma\subset \Lambda$ is a \emph{Magnus subgraph} if $\Gamma$ does not support $\lambda$.
\end{definition}

The aim of the next sections is to sketch a proof of topological versions of Magnus' original results. The Freiheitssatz has an obvious translation.

\begin{theorem}[Topological Freiheitssatz]
\label{topological_Freiheitssatz}
If $X = (\Lambda, \lambda)$ is a one-relator complex and $\Gamma\subset \Lambda$ is a Magnus subgraph, then the map on $\pi_1$ induced by inclusion
\[
\pi_1(\Gamma) \to \pi_1(X)
\]
is injective.
\end{theorem}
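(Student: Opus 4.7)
The plan is to prove \cref{topological_Freiheitssatz} by induction on $|\lambda|$, carrying out in the topological setting the classical Magnus--Moldavanski\u{\i} strategy recalled earlier in the excerpt. In the base case $|\lambda|=1$, the unique edge traversed by $\lambda$ is a loop which does not lie in $\Gamma$ (since $\Gamma$ is a Magnus subgraph), so $X$ is homotopy equivalent to $\Lambda$ with that edge removed, and $\pi_1(\Gamma) \hookrightarrow \pi_1(\Lambda\setminus e) \cong \pi_1(X)$ is injective by Nielsen--Schreier applied to subgraphs. The inductive step divides into two cases.

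\textbf{Case 1: some edge $e\in\Lambda\setminus\Gamma$ appears with algebraic count zero in $\lambda$.} The map $\pi_1(\Lambda)\to\Z$ sending $e\mapsto 1$ and all other edges to $0$ kills $\lambda_*(1)$ and factors through $\pi_1(X)$. Take the associated infinite cyclic cover $p\colon\tilde X\to X$. Since $e\notin\Gamma$, the preimage $p^{-1}(\Gamma)$ is a disjoint union of copies $\Gamma_n$ ($n\in\Z$) of $\Gamma$, and each lift of the attaching 2-cell has boundary cycle strictly shorter than $\lambda$ (each occurrence of $e$ in $\lambda$ is unwound into a single traversal). Passing to a suitable compact fundamental region and folding outside the $\Gamma_n$, I would produce a one-relator complex $Y=(\Gamma_Y,\mu)$ with $|\mu|<|\lambda|$ that realises $\pi_1(X)$ as an HNN extension $\pi_1(Y)*_\psi$ in which the associated subgroups of $\psi$ are Magnus subgraphs of $Y$ and each $\Gamma_n$ sits in $Y$ as a Magnus subgraph. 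By induction $\pi_1(\Gamma_n)\hookrightarrow\pi_1(Y)$; Britton's lemma then promotes this to $\pi_1(\Gamma)\hookrightarrow\pi_1(X)$.

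\textbf{Case 2: every edge of $\Lambda\setminus\Gamma$ has nonzero algebraic count in $\lambda$.} Here I would implement a topological analogue of Magnus' trick, enlarging $X$ to a one-relator complex $X'$ with $\Gamma\subset\Lambda\subsetneq\Lambda'$ and with a new edge $t\in\Lambda'\setminus\Gamma$ whose algebraic count in the new attaching cycle $\lambda'$ is zero. Concretely, picking edges $e_1,e_2\in\Lambda\setminus\Gamma$ of algebraic counts $\alpha_1,\alpha_2$ and adjoining $t$ playing the combinatorial role of $e_1^{\alpha_2}e_2^{-\alpha_1}$ (together with the compensating Tietze-type substitutions) yields such an $X'$. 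Provided $\pi_1(X)$ is a retract of $\pi_1(X')$ via the map killing $t$, the Freiheitssatz for $X'$, proved by applying Case 1, implies the Freiheitssatz for $X$.

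\textbf{Main obstacle.} The delicate step is in Case 1: after passing to the cyclic cover, extracting a \emph{single} compact one-relator complex $Y$ of strictly shorter relator length in which (i) the copies $\Gamma_n$ remain distinct Magnus subgraphs and (ii) the associated subgroups of the HNN splitting are Magnus subgraphs of $Y$. Controlling the folding operations so that neither the length bound $|\mu|<|\lambda|$ nor the Magnus-subgraph property is lost is the real content of the inductive step, and it is precisely what the topological formalism of one-relator complexes and the hierarchy of \cite{Lin22} are designed to handle. Case 2 is easier but still requires a careful check that the enlargement $X'$ is genuinely a one-relator complex, i.e.\ that $\lambda'$ remains a combinatorial immersion, which is routine once the construction is made precise.
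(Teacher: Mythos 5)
Your outline transposes the classical Magnus--Moldavanski\u{\i} strategy (exponent-sum-zero case plus the Magnus embedding trick) to 2-complexes, with induction on $|\lambda|$. The paper does something structurally different: it uses the complexity tuple $c(X) = (c(\lambda), -\chi(X))$ where $c(\lambda) = |\lambda|/\deg(\lambda) - |\Ima(\lambda)^{(0)}|$, inducts on that in dictionary order, and works with arbitrary $\Z$-covers via \cref{Z-domain_one-relator}, so it never needs the enlargement step of your Case~2 at all (this is Masters' insight, reformulated topologically). That choice is not cosmetic; it is what makes the induction close, and both of your cases run into trouble precisely at the points where the paper's complexity measure is doing work.

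In Case~1 you assert that each lift of the attaching map has ``boundary cycle strictly shorter than $\lambda$ (each occurrence of $e$ in $\lambda$ is unwound into a single traversal).'' This is false in the one-relator-complex formalism. In the $\Z$-cover each occurrence of $e$ in $\lambda$ becomes an occurrence of a \emph{distinct lift} of $e$, so the lifted cycle $\lambda_Z \colon S^1 \immerses Z^{(1)}$ has exactly $|\lambda_Z| = |\lambda|$ edges. What the algebraic picture calls ``removing the exponent-sum-zero letter'' corresponds topologically to collapsing a spanning tree of the $\Z$-domain, but the paper never does that; it keeps the cycle length fixed and records the improvement in $c(\lambda)$, which drops because the number of \emph{distinct vertices} $|\Ima(\lambda_Z)^{(0)}|$ goes up (and, when $c(\lambda)$ stalls, $-\chi(Z)$ drops instead). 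Your claimed $|\mu| < |\lambda|$ needs either a spanning-tree collapse built into the definition of the hierarchy or a replacement of $|\lambda|$ by a quantity like $c(\lambda)$. In Case~2 the gap is the one every account of Magnus's proof must confront: after adjoining the new edge $t$ you have $|\lambda'| > |\lambda|$, so to invoke the inductive hypothesis for $X$ you cannot merely quote Case~1 applied to $X'$ (that only gives $|\mu'| < |\lambda'|$); you must show the breakdown of $X'$ along $t$ produces a complex whose complexity is strictly below that of the \emph{original} $X$. This is a nontrivial bookkeeping argument in Magnus's proof, and nothing in your outline supplies it. The paper's choice of complexity avoids both problems simultaneously, which is exactly why it takes the route it does.
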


The Magnus hierarchy and variations do not have an immediate useful translation to the setting of 2-complexes. However, in \cite{Lin22}, the following theorem is proven. The statement is heavily influenced by Howie's towers \cite{Ho81}.

\begin{theorem}[The Topological Hierarchy]
\label{topological_hierarchy}
If $X = (\Lambda, \lambda)$ is a finite one-relator complex, then there exists a sequence of immersions
\[
X_N \immerses \ldots \immerses X_1\immerses X_0 = X.
\]
such that
\begin{enumerate}
\item For all $0\leqslant i\leqslant N$, $X_i$ is a finite one-relator complex.
\item $\pi_1(X_N)\isom\Z/n\Z$ where $n = \deg(\lambda)$.
\item For all $1\leqslant i\leqslant N$, we have 
\[
\pi_1(X_{i-1}) \isom \pi_1(X_{i})*_{\psi_{i}}
\]
where $\psi_{i}$ is induced by an isomorphism between Magnus subgraphs of $X_i$.
\end{enumerate}
\end{theorem}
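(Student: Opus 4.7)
The plan is to build the hierarchy inductively: given $X_i = (\Lambda_i, \lambda_i)$, I would construct an immersion $X_{i+1} \immerses X_i$ of one-relator complexes together with an HNN-decomposition $\pi_1(X_i) \isom \pi_1(X_{i+1}) *_{\psi_{i+1}}$ over Magnus subgraphs, in such a way that a well-chosen complexity (a lexicographic pair such as the number of 1-cells in $\Lambda_i$ traversed by $\lambda_i$, together with $|\lambda_i|$) strictly decreases. The process terminates when no further splitting of this form exists, which I intend to characterise as the situation where $\Lambda_N$ deformation retracts onto a single cycle $\gamma$ and $\lambda_N$ covers $\gamma$ with degree $n = \deg(\lambda)$, so that $\pi_1(X_N) \isom \Z/n\Z$.

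For the single-step construction, I would first locate an edge $e \subset \Lambda_i$ such that $\lambda_i$ traverses $e$ with signed count zero. This is equivalent to the existence of a combinatorial map $\phi_i \colon \Lambda_i \to S^1$ wrapping $e$ once around $S^1$, sending every other edge to the basepoint, and satisfying $\phi_i\circ\lambda_i \simeq 0$; such an edge need not exist initially, and in that case — following the spirit of Masters' hierarchy \cite{Ma06} — I would modify $\Lambda_i$ by a Stallings-type fold/unfold move (a 2-complex analogue of a Nielsen transformation) to produce one, taking care that this modification preserves the one-relator complex structure and the homotopy type. Given such a $\phi_i$, form the infinite cyclic cover $\tilde{X}_i \to X_i$ pulled back along $\phi_i$; the relator lifts to a $\Z$-orbit $\{t^k\tilde\lambda_i\}$ of cycles. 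Let $X_{i+1} = (\Lambda_{i+1}, \lambda_{i+1})$ be the minimal connected subcomplex of $\tilde X_i$ containing a single lift $\tilde\lambda_i$ as its unique 2-cell attaching map. Then $X_{i+1}$ is a finite one-relator complex and the composition $X_{i+1} \hookrightarrow \tilde X_i \to X_i$ is a combinatorial immersion by construction.

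The HNN structure falls out of Bass--Serre theory applied to the $\Z$-action on $\tilde X_i$. Setting $A_{i+1} = \Lambda_{i+1} \cap t^{-1}\Lambda_{i+1}$ and $B_{i+1} = \Lambda_{i+1} \cap t\Lambda_{i+1}$ inside $\tilde\Lambda_i$, minimality of the fundamental domain forces $\lambda_{i+1}$ to traverse the lifts of $e$ that lie outside each of $A_{i+1}$ and $B_{i+1}$, so both are Magnus subgraphs of $X_{i+1}$. The deck translation $t$ restricts to a combinatorial isomorphism $\psi_{i+1} \colon A_{i+1} \to B_{i+1}$, and the standard identification $X_i = \tilde X_i / \langle t\rangle$ gives $\pi_1(X_i) \isom \pi_1(X_{i+1}) *_{\psi_{i+1}}$, as required by condition (3); the Topological Freiheitssatz (\cref{topological_Freiheitssatz}) is what justifies treating the Magnus subgraphs as literal subgroups in this decomposition.

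The principal obstacle is termination. Unlike the classical Magnus--Moldavanski\u{\i} hierarchy, the relator length $|\lambda_{i+1}|$ need not be shorter than $|\lambda_i|$, since lifting to a cyclic cover preserves the length of the lifted cycle but may add or remove free edges from the 1-skeleton. I would control this by identifying an invariant that combines the number of edges of $\Lambda_i$ actually traversed by $\lambda_i$ with the minimal number of 2-cells in a spanning forest complement, arguing that passing from $X_i$ to $X_{i+1}$ strictly reduces this invariant whenever $X_i$ is not already a cyclic cover of a single loop. The subtle case, and the most technically demanding part of the proof, is verifying that even when the Stallings modification step of paragraph two temporarily enlarges $\Lambda_i$, the composite step $X_i \leadsto X_{i+1}$ still registers as a net decrease — this is precisely where the argument must match (and topologically refine) Masters' combinatorial cancellation lemma to guarantee that no infinite descent of modifications without splittings is possible, so that after finitely many steps one reaches a complex whose 1-skeleton carries nothing but the relator cycle itself, giving the desired base case $\pi_1(X_N) \isom \Z/n\Z$.
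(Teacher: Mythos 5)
Your proposal follows Masters' route (find an edge traversed with signed exponent-sum zero, and perform Stallings/Nielsen-style fold--unfold modifications when no such edge exists), but this is precisely the detour that the paper's argument is designed to circumvent, and as written your version has two genuine gaps.

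First, there is no preliminary modification step in the paper's proof, and none is needed. The paper does not look for a distinguished edge with signed count zero; it takes an \emph{arbitrary} connected $\Z$-cover $\rho\colon Y\to X$, which exists whenever $\pi_1(X)$ is not finite cyclic (equivalently, whenever $c(X)>(0,0)$), and extracts a one-relator $\Z$-domain $Z\subset Y$ supporting exactly one lift of the relator (\cref{Z-domain}). The HNN-decomposition $\pi_1(X)\isom\pi_1(Z)*_{\psi}$ over Magnus subgraphs then follows from the graph-of-spaces argument of \cref{splitting}, with the Freiheitssatz supplying injectivity of the edge inclusions exactly as you observe. Your Stallings-fold step is not merely an inefficiency: it reintroduces the central difficulty of Masters' proof (that modifications can lengthen the relator) and thereby inherits the termination problem you then struggle with in your last paragraph.

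Second, and this is the concrete gap, the complexity you propose runs in the wrong direction. Passing from $X_i$ to the $\Z$-domain $X_{i+1}$ preserves the length $|\lambda_{i+1}|=|\lambda_i|$ (the attaching cycle simply lifts), while the number of distinct 1-cells traversed by the relator can only \emph{increase}, since the cover unwinds the image of $\lambda_i$. So the lexicographic pair (\#\{edges traversed\}, $|\lambda|$) is non-decreasing, and your induction does not get off the ground. The paper's complexity is
\[
c(\lambda)=\frac{|\lambda|}{\deg(\lambda)}-\bigl|\Ima(\lambda)^{(0)}\bigr|,
\]
paired with $-\chi(X)$. Since $|\lambda|$ and $\deg(\lambda)$ are fixed under lifting, the decrease is driven entirely by the number of vertices in the image \emph{increasing}, which is what \cref{decrease_complexity} isolates: for an immersion of one-relator complexes, $c(\lambda_Z)\leqslant c(\lambda_X)$, with equality exactly when the immersion is injective on $\Ima(\lambda_Z)$; and when $c$ stalls, the secondary coordinate $-\chi$ drops for a minimal $\Z$-domain (\cref{Z-domain_decrease_complexity}). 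This monovariant makes termination a short argument, rather than the ``most technically demanding part'' you defer to Masters' cancellation lemma. You should replace your invariant with a quantity that rewards the unwinding of the relator cycle rather than penalising it, and discard the modification step altogether in favour of arbitrary $\Z$-covers.
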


\begin{remark}
One could drop the assumption that $X$ be finite in \cref{topological_hierarchy} at the expense of removing the word `finite' from the first conclusion and modifying the second conclusion to `$\pi_1(X_N) \isom F*\Z/n\Z$ for some free group $F$ and where $n = \deg(\lambda)$'. However, if $X$ is a one-relator complex, all the interesting information about $\pi_1(X)$ can be derived by only looking at the smallest one-relator subcomplex, which is necessarily finite.
\end{remark}

We now note some classical results which were proved using Magnus original hierarchy for one-relator groups. The proofs can all be simplified somewhat by using \cref{topological_hierarchy} and the explicit description of the splittings provided by \cref{one-relator_splitting}.

The first result is known as the \emph{Magnus property}. A group $G$ has the Magnus property if for any two elements $w, v\in G$, we have $\normal{w} = \normal{v}$ if and only if $w$ is conjugate to $v$ or $v^{-1}$. Magnus proved in \cite{Ma30} that free groups have the Magnus property. Below is the restatement in terms of one-relator complexes.

\begin{theorem}[Magnus property]
\label{Magnus_property}
Let $X = (\Lambda, \lambda_1)$ and $Y = (\Lambda, \lambda_2)$ be two one-relator complexes such that the identity map on $\pi_1(\Lambda)$ induces an isomorphism $\pi_1(X)\to \pi_1(Y)$. Then the identity map on $\Lambda$ extends to an isomorphism of one-relator complexes $X\to Y$.
\end{theorem}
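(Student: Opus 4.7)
The plan is to prove the theorem by induction on the length $N$ of the topological hierarchy of $X = (\Lambda, \lambda_1)$ provided by \cref{topological_hierarchy}. This is the topological reformulation of Magnus' Conjugacy Theorem (\cref{Thm:Conjugacy-Theorem}) saying that if $\llangle \lambda_1 \rrangle_{\pi_1(\Lambda)} = \llangle \lambda_2 \rrangle_{\pi_1(\Lambda)}$, then $\lambda_1$ is freely homotopic to $\lambda_2^{\pm 1}$ in $\Lambda$, which is precisely what is needed to extend the identity $\Lambda \to \Lambda$ to a cellular isomorphism $X \to Y$. Two preliminary observations will be used throughout: first, since any one-relator quotient $\pi_1(\Lambda)/\llangle \lambda \rrangle$ with $b_1(\Lambda) \geqslant 2$ is infinite, the base case of the hierarchy forces $\Lambda$ to be a single circle whenever $\pi_1(X)$ is finite; second, passing to the abelianisation, the equality of normal closures implies $[\lambda_1] = \pm[\lambda_2]$ in $H_1(\Lambda; \Z)$, so an edge of $\Lambda$ has exponent sum zero in $\lambda_1$ if and only if it does in $\lambda_2$. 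By a straightforward Grushko-style argument one may further assume that both $\lambda_1$ and $\lambda_2$ traverse every edge of $\Lambda$; otherwise an unused edge produces a free $\Z$-factor of $\pi_1(X)$ (respectively $\pi_1(Y)$) and induction on $\mathrm{rk}(\pi_1(\Lambda))$ applies.

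For the base case $N = 0$, the remark above forces $\Lambda = S^1$, and both $\lambda_1, \lambda_2$ are degree-$n$ covers of $S^1$ (with $n = \deg(\lambda_1) = \deg(\lambda_2)$, the latter equality following from $\pi_1(X) = \pi_1(Y)$ being cyclic of order $n$). Thus $\lambda_1$ and $\lambda_2$ coincide up to orientation, and the identity on $\Lambda$ extends to the desired isomorphism. For the inductive step with $N \geqslant 1$, the first level of the hierarchy expresses $\pi_1(X) \isom \pi_1(X_1) *_{\psi_1}$ where $X_1 = (\Lambda_1, \lambda_1')$ and the stable letter corresponds to an edge $e \in \Lambda$ of exponent sum zero in $\lambda_1$. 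By the homological observation, the same edge $e$ has exponent sum zero in $\lambda_2$, so the same Reidemeister--Schreier rewriting step applied to $\lambda_2$ produces a one-relator complex $Y_1 = (\Lambda_1, \lambda_2')$ on the \emph{same} graph $\Lambda_1$ and an HNN-splitting $\pi_1(Y) \isom \pi_1(Y_1) *_{\psi_1'}$ using the same pair of Magnus subgraphs as associated subgroups.

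It remains to verify that the hypothesis of the theorem descends to $X_1$ and $Y_1$, i.e.\ that the identity on $\pi_1(\Lambda_1)$ induces an isomorphism $\pi_1(X_1) \to \pi_1(Y_1)$. Under the identification $\pi_1(X) = \pi_1(Y)$ via the identity on $\pi_1(\Lambda)$, the HNN-splittings of $\pi_1(X)$ and $\pi_1(Y)$ along the stable letter $e$ coincide; by uniqueness of HNN-splittings arising from the kernel of the map to $\Z$ sending $e \mapsto 1$, the vertex groups $\pi_1(X_1)$ and $\pi_1(Y_1)$ are identified inside $\pi_1(X) = \pi_1(Y)$, and this identification is compatible with the natural maps from $\pi_1(\Lambda_1)$. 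The Freiheitssatz (\cref{topological_Freiheitssatz}) is used here to guarantee that the Magnus subgraphs of $X_1$ and $Y_1$ embed faithfully, so Britton's Lemma controls the decompositions. The inductive hypothesis then gives an isomorphism $X_1 \to Y_1$ extending the identity on $\Lambda_1$, meaning $\lambda_1'$ and $\lambda_2'$ agree up to reparametrisation and orientation. Reversing the rewriting procedure—which is explicit in the topological hierarchy—transports this equivalence back to $\lambda_1$ and $\lambda_2$, completing the induction.

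The main obstacle is the descent step in the last paragraph: one must show that the identification $\pi_1(X) = \pi_1(Y)$ really does restrict to an identification $\pi_1(X_1) = \pi_1(Y_1)$ compatible with the identity on $\pi_1(\Lambda_1)$. Naively this follows from the coincidence of the HNN-splittings, but proving coincidence requires demonstrating that $\pi_1(X_1)$ is intrinsically characterised inside $\pi_1(X)$ as, say, the kernel of a specific map to $\Z$, and that the associated edge subgroups are forced to be the prescribed Magnus subgraphs. Here the fact that Magnus subgroups are malnormal in one-relator groups with torsion (\cref{Thm:Newman-malnormal-thm}), and the weaker but still useful cyclonormality property of Bagherzadeh in the torsion-free case, provide the rigidity required. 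A secondary subtlety is that the choice of stable letter in the hierarchy may not be canonical—different edges of $\Lambda$ with exponent sum zero may yield different decompositions—so care is needed to ensure the same breakdown is applied to both $X$ and $Y$, which is precisely what the homological observation $[\lambda_1] = \pm [\lambda_2]$ enables.
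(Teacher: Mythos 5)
Your inductive strategy via the topological hierarchy is the right one, and the base case, homological reduction, and Grushko step are fine. The genuine gap is the unjustified assertion that ``the same Reidemeister--Schreier rewriting step applied to $\lambda_2$ produces a one-relator complex $Y_1 = (\Lambda_1, \lambda_2')$ on the \emph{same} graph $\Lambda_1$.'' The observation $[\lambda_1] = \pm[\lambda_2]$ in $H_1(\Lambda;\Z)$ only guarantees that $X$ and $Y$ induce the same $\Z$-cover $\tilde{\Lambda}\to\Lambda$; it says nothing about the $\Z$-domains inside it. The one-relator $\Z$-domain of \cref{one-relator_splitting} is built (cf.\ \cref{Z-domain}) by fixing a $\Z$-domain $\Gamma'$ for the graph cover and taking the smallest $k$ such that $\Gamma_k = \bigcup_{i=0}^k t^i(\Gamma')$ supports a lift of the attaching map. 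Nothing forces this $k$ to agree for $\lambda_1$ and $\lambda_2$; if it does not, the two rewritings land on subcomplexes with different 1-skeletons and the inductive hypothesis, which requires a common underlying graph, cannot be invoked. This synchronisation is exactly the combinatorial heart of Magnus' Conjugacy Theorem, and your sketch passes over it. The tools you then invoke do not repair this: there is no relevant ``uniqueness of HNN-splittings'' (the base group $\pi_1(X_1)$ is not the kernel of the character $\pi_1(X)\to\Z$ --- that kernel is its infinite normal closure --- and many choices of base group realise the same character), and the malnormality/cyclonormality of Magnus subgroups is a red herring here.

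The synchronisation can be closed using the Freiheitssatz together with the single observation that, because $\normal{\lambda_1} = \normal{\lambda_2}$ in $\pi_1(\Lambda)$, the two quotient maps $\pi_1(\Lambda)\to\pi_1(X)$ and $\pi_1(\Lambda)\to\pi_1(Y)$ become the \emph{same} homomorphism under the identification in the hypothesis. If $k_1<k_2$, then $\Gamma_{k_1}$ supports a lift $\tilde{\lambda}_1$ of $\lambda_1$ but no lift of $\lambda_2$, hence is a Magnus subgraph of the $\Z$-domain for $Y$; so $\pi_1(\Gamma_{k_1})\to\pi_1(Y)$ is injective by \cref{topological_Freiheitssatz}, whereas $\pi_1(\Gamma_{k_1})\to\pi_1(X)$ has kernel $\normal{\tilde{\lambda}_1}\neq 1$. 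As the two composites coincide, this is a contradiction, so $k_1=k_2$. With a common $\Lambda_1$ in hand, the descent is then automatic for the simple reason (which you do not state) that $\pi_1(X_1)$ and $\pi_1(Y_1)$ are both the image of $\pi_1(\Lambda_1)$ under this one map into $\pi_1(X)=\pi_1(Y)$. For calibration, the paper itself does not prove \cref{Magnus_property}; it records it as Magnus' classical 1930 theorem, and \cref{Subsec:Applications-of-Frei} observes that the known proof remains essentially the same as Magnus' original --- a streamlining of precisely the synchronisation argument your proposal omits.
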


Cockcroft characterised when a one-relator complex is aspherical in \cite{Co54}. His proof made use of Lyndon's identity theorem \cite{Ly50}, which we shall cover in \cref{sec:homology}.

\begin{theorem}[Asphericity]
\label{aspherical}
If $X = (\Lambda, \lambda)$ is a one-relator complex, then $X$ is aspherical if and only if $\deg(\lambda) = 1$.
\end{theorem}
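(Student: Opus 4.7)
The plan is to prove the two directions separately, with the forward (asphericity implies $\deg(\lambda)=1$) being essentially a torsion obstruction and the converse being an induction along the topological hierarchy of \cref{topological_hierarchy}.

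For the "only if" direction, I would argue contrapositively. Suppose $\deg(\lambda) = n \geqslant 2$, so that $\lambda = \mu^n$ for some immersion $\mu \colon S^1 \immerses \Lambda$ that is not itself a proper power. Then by the Torsion Theorem (Theorem 5.2 / \cref{Thm:torsion-theorem}) the class $[\mu]$ is an element of order $n$ in $\pi_1(X)$. However, if $X$ were aspherical, then $\pi_1(X)$ would have cohomological dimension at most $2$, which is incompatible with having a non-trivial finite cyclic subgroup (such subgroups force infinite cohomological dimension over $\Z$). Hence $X$ cannot be aspherical.

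For the "if" direction, assume $\deg(\lambda) = 1$. First I would observe that asphericity is preserved going up the hierarchy: the chain of $\pi_1$-injective immersions given by \cref{topological_hierarchy} realises each $\pi_1(X_i)$ as a subgroup of $\pi_1(X) = \pi_1(X_0)$, so by the Torsion Theorem each $X_i$ also satisfies $\deg(\lambda_i) = 1$. Now I would prove by downward induction on $i$ that each $X_i$ is aspherical. The base case $X_N$ has $\pi_1(X_N) \cong \Z/n\Z$ with $n = 1$ (hence trivial in the finite case, or free in general by the remark), and the Whitehead theorem (\cref{Thm:Whitehead-theorem}) forces the relator to be primitive, which means the attaching map traverses some $1$-cell exactly once; collapsing across that $1$-cell exhibits $X_N$ as homotopy equivalent to a graph, hence aspherical. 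For the inductive step, the HNN-splitting $\pi_1(X_{i-1}) \cong \pi_1(X_i) \ast_{\psi_i}$ with associated Magnus subgraphs $\Gamma_i, \Gamma_i' \subset X_i$ realises $X_{i-1}$ (up to homotopy equivalence) as the graph of spaces with vertex space $X_i$ and edge space the mapping cylinder on $\Gamma_i$ glued via $\psi_i$. By the Freiheitssatz (\cref{topological_Freiheitssatz}) the inclusions $\Gamma_i, \Gamma_i' \hookrightarrow X_i$ are $\pi_1$-injective, and the edge space is a graph and hence aspherical; together with the inductive hypothesis that $X_i$ is aspherical, the standard asphericity criterion for graphs of aspherical spaces with $\pi_1$-injective edge maps gives that this mapping torus construction is aspherical, and thus so is $X_{i-1}$.

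The main obstacle I anticipate is the justification in the inductive step that $X_{i-1}$ is genuinely homotopy equivalent to the graph-of-spaces model built from $X_i$ and the Magnus subgraphs, rather than merely sharing a fundamental group with it. This requires identifying the immersion $X_i \immerses X_{i-1}$ of \cref{topological_hierarchy} explicitly with the natural inclusion of the vertex space into the mapping torus, i.e.\ verifying that the topological hierarchy literally constructs $X_{i-1}$ (up to collapse/expansion of free faces) as this HNN 2-complex. Once that identification is in place, the induction runs cleanly and, combined with the torsion obstruction of the first paragraph, yields the biconditional.
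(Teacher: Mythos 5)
Your proof is correct in broad outline and takes the topological-hierarchy route that the paper itself endorses just after the statement of \cref{topological_hierarchy} (``The proofs can all be simplified somewhat by using \cref{topological_hierarchy} and the explicit description of the splittings provided by \cref{one-relator_splitting}''), whereas the proof the paper actually cites is Cockcroft's original argument via Lyndon's identity theorem \cref{lyndon_id}. Cockcroft's route is more algebraic and actually yields strictly more: from the exact sequence $0 \to \pi_2(X) \to \Z G \to N_{\ab} \to 0$ together with $N_{\ab} \cong \Z G/(w-1)$ one reads off $\pi_2(X) \cong (w-1)\Z G$ as a $\Z G$-module, so that $\pi_2(X)=0$ exactly when $w=1$ in $G$, i.e.\ exactly when $n=1$ by \cref{finite_order}. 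Your topological induction gives only the yes/no answer, but it is more geometric and tracks better with the rest of the paper. The obstacle you flagged at the end --- whether $X_{i-1}$ really is, up to homotopy, the graph of spaces with vertex space $X_i$ and edge space a Magnus subgraph --- is not a gap at all: that is precisely the content of the second bullet of \cref{one-relator_splitting} (the map $h\colon \mathcal{X}\to X$ is a homotopy equivalence), so the inductive step is fully licensed.

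The one genuine flaw is in your base case. You invoke \cref{Thm:Whitehead-theorem} to conclude the relator of $X_N$ is primitive and then assert that primitivity ``means the attaching map traverses some $1$-cell exactly once.'' This implication is false: $a^2bab$ is primitive in $F(a,b)$ (it is sent to $a$ by the automorphism $a\mapsto a$, $b\mapsto a^{-1}b$ followed by $a\mapsto ab^{-2}$, $b\mapsto b$) but traverses $a$ three times and $b$ twice, so it has no free edge, and for an arbitrary one-relator complex with primitive relator one cannot simply collapse to a graph. The correct argument does not need Whitehead's theorem at all: the terminal complex $X_N$ of the hierarchy has $\pi_1(X_N)\cong \Z/n\Z$, so by \cref{rem:base_case} its complexity is $c(X_N)=(0,0)$; in particular $c(\lambda_N)=0$, meaning $\lambda_N$ factors through an \emph{embedded} cycle, and since $\deg(\lambda_N)=1$ it \emph{is} an embedded cycle. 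Now every $1$-cell on an embedded cycle is traversed exactly once, so the $2$-cell collapses and $X_N$ is homotopy equivalent to a graph, hence aspherical. Replace the Whitehead step with this complexity argument and the proof is complete.
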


Karrass--Magnus--Solitar characterised which elements in a one-relator group have finite order in \cite{KMS60}. The criterion is very easily checked by inspecting the relator.

\begin{theorem}[Torsion]
\label{finite_order}
If $X = (\Lambda, \lambda)$ is a one-relator complex, then $\pi_1(X)$ has torsion if and only if $\deg(\lambda)\geqslant 2$. Moreover, every element of finite order in $\pi_1(X)$ is conjugate to a root of $\lambda_*(1)$.
\end{theorem}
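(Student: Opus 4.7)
The plan is to proceed by induction on the length $N$ of the topological hierarchy guaranteed by \cref{topological_hierarchy}. Throughout, I will use the fact that the attaching maps in the hierarchy are compatible in the sense that if $\lambda_i = \mu_i^{k_i}$ is the maximal proper power decomposition, then $k_i$ is constant across the hierarchy and equals $n := \deg(\lambda)$; this should be extractable from the construction of the hierarchy, since each step is a Reidemeister--Schreier--type rewriting of the attaching cycle that commutes with taking roots.

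For the base case $N=0$, the conclusion of \cref{topological_hierarchy} reads $\pi_1(X) \cong \Z/n\Z$ with $n=\deg(\lambda)$. If $n=1$ then $\pi_1(X)$ is trivial and there is no torsion. If $n \geq 2$, the one-relator complex $X$ has abelianized fundamental group $\Z/n\Z$; since $\rk_{\Z}H_1(\Lambda;\Z) \leq \rk_{\Z}\pi_1(X)^{\ab} = 0$, the graph $\Lambda$ must be a tree joined to a single circle up to collapse, and the attaching map $\lambda$ factors through a degree-$n$ cover $S^1 \to S^1$ of the essential loop $\mu$. Then $\pi_1(X) = \langle \mu_*(1) \rangle$ is generated by the root, and every element has finite order dividing $n$ and is equal to a power of $\mu_*(1)$, proving both conclusions in the base case.

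For the inductive step, suppose the theorem holds for $\pi_1(X_1)$. We have an HNN-decomposition $\pi_1(X_0) \cong \pi_1(X_1) *_{\psi_1}$, with associated subgroups being Magnus subgroups of $\pi_1(X_1)$; by the topological Freiheitssatz (\cref{topological_Freiheitssatz}), the identifications $\psi_1$ are genuine isomorphisms between embedded subgroups, so Britton's lemma applies. The standard Bass--Serre theoretic consequence is that every element of finite order in an HNN-extension fixes a vertex of the Bass--Serre tree and is therefore conjugate into a conjugate of the vertex group $\pi_1(X_1)$. Combined with the inductive hypothesis and the fact that $\deg(\lambda_1) = \deg(\lambda_0) = n$, we conclude that $\pi_1(X_0)$ has torsion if and only if $n \geq 2$.

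For the \emph{moreover} clause, suppose $g \in \pi_1(X_0)$ has finite order. By the argument above, $g$ is conjugate in $\pi_1(X_0)$ to some finite-order element $h \in \pi_1(X_1)$. By induction, $h$ is conjugate in $\pi_1(X_1)$ to a power of a root of $(\lambda_1)_*(1)$. The key step, which I expect to be the main obstacle, is to show that roots of $(\lambda_1)_*(1)$ map, under the natural inclusion $\pi_1(X_1) \hookrightarrow \pi_1(X_0)$ coming from the HNN-decomposition, to conjugates of roots of $\lambda_*(1)$. This should follow from a careful inspection of how the attaching cycles $\lambda_1$ and $\lambda_0$ are related in the hierarchy: the rewriting procedure sends the primitive cyclic root $\mu_1$ with $\lambda_1 = \mu_1^n$ to a cyclic conjugate of $\mu_0$ in $\pi_1(X_0)$, since the rewriting is induced by a free-group homomorphism that is equivariant with respect to extracting $n$-th roots. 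Transitivity of conjugacy then completes the inductive step and yields the full statement of the theorem.
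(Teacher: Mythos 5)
Your approach matches the paper's proof sketch, which reads: ``\cref{finite_order} follows from \cref{topological_hierarchy} by noting that an element of finite order acting on a tree (the Bass--Serre tree of the splittings) must have a fixed point.'' Your handling of the \emph{moreover} clause is also correct and well-motivated: the immersion $X_1 \immerses X_0$ from the hierarchy carries the attaching cycle $\lambda_1$ to $\lambda_0$ via a homeomorphism of circles compatible with the degree-$n$ factorization through $S^1 \to S^1$, so it carries $(\mu_1)_*(1)$ to a cyclic conjugate of $(\mu_0)_*(1)$. One small arithmetic slip in the base case: attaching a single $2$-cell drops $\rk_{\Z} H_1$ by at most one, so the correct bound is $\rk_{\Z} H_1(\Lambda;\Z) \leqslant 1$ rather than $\leqslant 0$; combining this with the observation that rank zero would force $\pi_1(X)$ to be trivial pins down $\rk_{\Z} H_1(\Lambda;\Z) = 1$, which is what your description of $\Lambda$ as a tree wedged with a single circle actually requires.
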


\cref{finite_order} follows from \cref{topological_hierarchy} by noting that an element of finite order acting on a tree (the Bass--Serre tree of the splittings) must have a fixed point.

\cref{finite_order} can be stated in terms of presentations as follows. Let $F$ be a free group, $w\in F$ not a proper power and $n\geqslant 1$ an integer. Then $G = F/\normal{w^n}$ has torsion if and only if $n\geqslant 2$. Moreover, all elements of finite order in $G$ are conjugate into the subgroup $\langle w\rangle$.

To conclude this section, we state a result due to Howie \cite{Ho82} and, independently, Brodski\u{\i} \cite{Bro84}. Recall that a group is \emph{locally indicable} if every non-trivial finitely generated subgroup admits an epimorphism to $\Z$. The proof of this theorem is not carried out using the Magnus hierarchy, rather using the method of towers.

\begin{theorem}[Local indicability]
\label{locally_indicable}
If $X = (\Lambda, \lambda)$ is a one-relator complex with $\deg(\lambda) = 1$, then $\pi_1(X)$ is locally indicable.
\end{theorem}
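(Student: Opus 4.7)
The plan is to prove the theorem by induction on the length $N$ of the topological hierarchy provided by \cref{topological_hierarchy}, reducing the general problem to a statement about graphs of locally indicable groups. As a preliminary observation, the condition $\deg(\lambda) = 1$ propagates down the hierarchy: if $\deg(\lambda_i) \geqslant 2$ at some stage, then $\pi_1(X_i)$ would contain torsion by \cref{finite_order}, and this torsion would survive in $\pi_1(X_{i-1})$ under the HNN extension, eventually contradicting $\deg(\lambda) = 1$. Hence $\deg(\lambda_i) = 1$ for every $i$, and in particular the base of the hierarchy $\pi_1(X_N) \isom \Z/1\Z$ is trivial, which is vacuously locally indicable.

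For the inductive step, assume $H = \pi_1(X_i)$ is locally indicable. Then $\pi_1(X_{i-1}) \isom H *_{\psi_i}$ is an HNN extension of $H$ whose associated subgroups are Magnus subgraphs, and hence free by \cref{topological_Freiheitssatz}. The inductive step therefore reduces to the following statement, which I regard as the main content of the theorem and attribute to Howie and Brodski\u{\i}: \emph{the fundamental group of any graph of groups with locally indicable vertex groups is itself locally indicable.}

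To prove this key lemma, let $K \leqslant G = \pi_1(\mathcal{G})$ be a non-trivial finitely generated subgroup and consider its action on the Bass--Serre tree $T$. If $K$ fixes a vertex, then $K$ is conjugate into a vertex group and surjects $\Z$ by hypothesis. Otherwise $K$ preserves a unique minimal invariant subtree $T_K$ whose quotient $K \backslash T_K$ is a finite graph (since $K$ is finitely generated), inducing a graph-of-groups decomposition of $K$ with locally indicable, finitely generated vertex groups. If $K \backslash T_K$ has positive first Betti number, the composition $K \twoheadrightarrow \pi_1(K \backslash T_K) \twoheadrightarrow \Z$ gives the desired surjection.

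The main obstacle is the remaining case, where $K \backslash T_K$ is a non-trivial tree. Here $K$ decomposes as an iterated amalgamated free product of its (locally indicable) vertex groups over edge groups, and one must produce a nontrivial homomorphism $K \to \Z$ even when the edge groups contribute large abelianizations. The essential point is to exploit the non-triviality of the action of $K$ on $T$: this forces at least one edge group to have infinite index in an adjacent vertex group, and using local indicability one can then choose a map $\phi_v \colon K_v \to \Z$ that vanishes on the relevant edge group, extending by zero across the tree to yield a nontrivial global map $K \to \Z$. Making this choice simultaneously consistent across all edges and vertices of $K \backslash T_K$ -- possibly by means of an inner induction on the number of edges, together with rank inequalities coming from the Mayer--Vietoris sequence associated with the graph-of-groups decomposition of $K$ -- is the delicate heart of the proof.
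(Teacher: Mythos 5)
Your proof takes a genuinely different route from the one the paper indicates, and unfortunately the route contains a serious gap. As the paper notes immediately after the statement, this theorem (due to Howie \cite{Ho82} and independently Brodski\u{\i} \cite{Bro84}) is proved by the method of towers, \emph{not} by inducting along the Magnus hierarchy. Your attempt to induct along \cref{topological_hierarchy} reduces the problem to what you call the ``key lemma'': \emph{the fundamental group of any graph of groups with locally indicable vertex groups is itself locally indicable.} This statement is false, and it is not a theorem of Howie or Brodski\u{\i}. The simple groups constructed by Burger and Mozes are amalgamated free products $F_n *_{F_m} F_n$ of finitely generated free groups over a common free subgroup of finite index; being infinite and simple they are perfect and hence do not even surject onto $\Z$, yet both the vertex groups and the edge group are free, the most obviously locally indicable groups there are. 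So the reduction is unsound: ``locally indicable vertex groups with free edge groups'' gives no control over the indicability of subgroups of the amalgam. Any correct hierarchy-based argument would have to exploit the much more specific combinatorics of Magnus subgraphs, with a correspondingly stronger inductive hypothesis than bare local indicability.

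The same example also refutes the ``essential point'' you invoke in the tree case: the non-triviality of the action of a finitely generated subgroup $K$ on the Bass--Serre tree does \emph{not} force some edge group to have infinite index in an adjacent vertex group (in the Burger--Mozes amalgams every edge group has finite index). Moreover, even when an edge group $K_e \leqslant K_v$ does have infinite index, a nonzero homomorphism $K_v \to \Z$ vanishing on $K_e$ need not exist -- take $K_v = \Z$, $K_e = 2\Z$. You flag this step yourself as ``the delicate heart of the proof''; the problem is that it is not merely delicate but, as phrased, impossible, and avoiding exactly this difficulty is why Howie passed to towers and Brodski\u{\i} to orderability. The sound pieces of your proposal are the preliminary observation that $\deg(\lambda_i)=1$ propagates down the hierarchy (via the embeddings $\pi_1(X_i)\injects \pi_1(X)$ and \cref{finite_order}), the trivial base case, and the freeness of the associated subgroups via \cref{topological_Freiheitssatz}; but none of this suffices once the key lemma is removed.
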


\subsection{One-relator splittings}

Before diving into the details of \cref{topological_hierarchy}, we now explain further where exactly these splittings come from. Firstly, we need a notion of complexity. This notion is related to the relator length as used by Magnus, and is also closely related to a notion of complexity used by Howie in \cite{Ho82}. We are following \cite{Lin22} in this section and those that follow.

\begin{definition}
If $\lambda\colon S^1\immerses \Lambda$ is an immersed cycle, denote by
\[
c(\lambda) = \frac{|\lambda|}{\deg(\lambda)} - \left|\Ima(\lambda)^{(0)}\right|
\]
the \emph{complexity of $\lambda$}. If $X = (\Lambda, \lambda)$ is a one-relator complex, the \emph{complexity of $X$} is the tuple
\[
c(X) = (c(\lambda), -\chi(X))
\]
which is given the dictionary order.
\end{definition}

\begin{remark}
\label{rem:base_case}
If $X = (\Lambda, \lambda)$ is a one-relator complex, then $c(\lambda) = 0$ if and only if $\lambda$ factors through an embedded cycle $S^1\injects \Lambda$. In particular, if $c(\lambda) = 0$, then $\pi_1(X) \isom F*\Z/n\Z$ where $F$ is a free group and $n = \deg(\lambda)$. Moreover, $c(X)$ reaches its minimal value $(0, 0)$ if and only if $\pi_1(X)\isom \Z/n\Z$.
\end{remark}

The following theorem explicitly identifies the splittings that arise in \cref{topological_hierarchy}. The statement is rather technical, but we will show with some examples how exactly one can apply it to compute hierarchies. Recall that a $\Z$-cover is a covering space $\rho\colon Y \to X$ such that $\deck(\rho)\isom \Z$. We shall be using the notion of graphs of spaces, as introduced by Scott--Wall \cite{SW79}.

\begin{theorem}[One-relator splittings]
\label{one-relator_splitting}
Let $X = (\Lambda, \lambda)$ be a finite one-relator complex and let $\rho\colon Y\to X$ be a connected $\Z$-cover with $t\in \deck(\rho)$ a generator. 
\begin{itemize}
\item (Existence of a one-relator $\Z$-domain) There exists a finite connected subcomplex $Z\subset Y$, called a \emph{$\Z$-domain}, with the following properties:
\begin{enumerate}
\item $Z$ is one-relator and $Z^{(1)}$ supports the attaching map of precisely one 2-cell in $Y$.
\item $Y = \bigcup_{i\in \Z}t^i(Z)$.
\item $Z_0 = t^{-1}(Z)\cap Z$ and $Z_1 = t(Z_0) = Z\cap t(Z)$ are Magnus subgraphs of $Z$.
\item For any $i>0$, we have $Z\cap t^i(Z)\subset Z\cap t^{i-1}(Z)$.
\end{enumerate}
\item (Graph of spaces/groups decomposition) If $Z\subset Y$ is any one-relator $\Z$-domain as above, then, denoting by $\iota\colon Z_0\injects Z$ the inclusion and by
\[
\mathcal{X} = Z \sqcup \left(Z_0\times[-1, 1]\right)/\{ \iota(z)\sim (z, -1),\, (t\circ\iota)(z) \sim (z, 1),\, z\in Z_0\},
\]
the map
\[
h\colon \mathcal{X}\to X
\]
given by $\rho$ when restricted to $Z$ or $Z_0\times \{i\}$ for all $i\in (-1, 1)$, is a homotopy equivalence. This homotopy equivalence induces an isomorphism:
\[
\pi_1(X) \isom \pi_1(Z)*_{\psi}
\]
where $\psi\colon \pi_1(Z_0)\to \pi_1(Z_1)$ is induced by $t\mid_{Z_0}\colon Z_0\to Z_1$.
\item (Decreasing complexity) There exists some one-relator $\Z$-domain $Z\subset Y$ as above with
\[
c(Z)<c(X).
\]
\end{itemize}
\end{theorem}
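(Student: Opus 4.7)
First I would represent the connected $\Z$-cover by a surjective class $\phi\in H^1(X;\Z)$ realised by a cellular $1$-cocycle on $\Lambda$ assigning an integer height to every oriented edge, and lift this cocycle to $Y$. Fix a lift $\tilde{e}$ of the unique $2$-cell of $X$; its attaching map $\tilde\lambda\colon S^1\immerses Y^{(1)}$ is a lift of $\lambda$, and since $\phi(\lambda_*(1))=0$ the curve $\tilde\lambda$ is closed, with image $\Gamma_0$ a finite connected subgraph of $Y^{(1)}$ sitting in a bounded integer height interval $[a,b]$. To make the $t$-translates of $Z$ cover all of $Y$, I would augment $\Gamma_0$ by adding, for each edge of $\Lambda\setminus\Ima(\lambda)$, one carefully chosen lift (together with the vertices needed for connectivity) to obtain a finite connected graph $\Gamma$; set $Z=(\Gamma,\tilde\lambda)$. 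The four required properties of a $\Z$-domain then follow from the height function: $Y=\bigcup_i t^i(Z)$ because every edge of $Y^{(1)}$ is a $t$-translate of an edge of $\Gamma$; the intersections $Z\cap t^i(Z)$ live in the narrowing height interval $[a+i,b]$, giving the nesting; and since $\tilde\lambda$ attains both extremal heights $a$ and $b$ while $Z_0$ avoids height $b$ and $Z_1$ avoids height $a$, neither supports $\tilde\lambda$ and so both are Magnus.

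For the graph-of-spaces decomposition I would show that the natural map $h\colon\mathcal{X}\to X$ is a homotopy equivalence by factoring it through the infinite telescope $\tilde{\mathcal{X}}=\bigcup_{i\in\Z}\bigl(t^i(Z)\cup t^i(Z_0)\times[0,1]\bigr)/{\sim}$, with each cylinder slice attached via $\iota$ at $0$ and $t\circ\iota$ at $1$. This telescope carries a natural $\Z$-action with $\tilde{\mathcal{X}}/\Z=\mathcal{X}$, and admits a $\Z$-equivariant deformation retraction onto $Y\subset\tilde{\mathcal{X}}$ collapsing each cylinder onto its image in $t^i(Z)$; descending to the $\Z$-quotient gives $\mathcal{X}\simeq X$. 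Seifert--van Kampen applied to the resulting graph of groups, with single vertex group $\pi_1(Z)$ and single edge group $\pi_1(Z_0)$ attached via $\iota_*$ and $\psi\circ\iota_*$, then yields $\pi_1(X)\cong\pi_1(Z)*_{\psi}$.

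For the decreasing-complexity statement, note first that $\tilde\lambda$ is a lift of $\lambda$ along a covering of graphs, so $|\tilde\lambda|=|\lambda|$ and $\deg(\tilde\lambda)=\deg(\lambda)$ (using that $\Z$ is torsion-free to lift the maximal self-cover of $\lambda$), while $|\Ima(\tilde\lambda)^{(0)}|\geq|\Ima(\lambda)^{(0)}|$ with equality precisely when $\phi$ vanishes on $\pi_1(\Ima(\lambda))$. In the generic case, when $\phi$ is non-trivial on $\pi_1(\Ima(\lambda))$, this inequality is strict, so $c(\tilde\lambda)<c(\lambda)$ and hence $c(Z)<c(X)$ in the dictionary order. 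In the degenerate case $\phi$ vanishes on $\Ima(\lambda)$ but, by surjectivity, is non-trivial on some edge of $\Lambda\setminus\Ima(\lambda)$; then $c(\tilde\lambda)=c(\lambda)$, so one must drop the second coordinate instead, and I would use the mapping-torus Euler-characteristic identity $\chi(X)=\chi(Z)-\chi(Z_0)$ together with a judicious choice of lifts of the extra edges to arrange that $Z_0$ is a non-empty forest, forcing $\chi(Z_0)\geq 1$ and hence $-\chi(Z)<-\chi(X)$. The main obstacle is precisely this degenerate case: one must pick each extra edge's lift so that it straddles $[a,b]$ with exactly one endpoint lying in $Z_0$, contributing a vertex but no edge to $Z_0$, while simultaneously maintaining connectivity of $\Gamma$ and preserving the Magnus condition on $Z_0$ and $Z_1$. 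Once this bookkeeping is done, the other two parts of the theorem are standard consequences of the height-function construction and Bass--Serre theory.
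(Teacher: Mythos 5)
Your overall strategy matches the paper's: split the proof into (a) existence of a one-relator $\Z$-domain, (b) a graph-of-spaces argument for the homotopy equivalence, (c) a complexity-decrease argument. Your telescope argument for (b) is a valid variant of the paper's direct fiber-contractibility argument, and your dichotomy for (c) between $\phi$ non-trivial/trivial on $\pi_1(\Ima(\lambda))$ is the right one. But there is a genuine gap in (a), and it propagates into (c).

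The gap concerns your construction of $Z$ by taking $\Gamma_0 = \Ima(\tilde\lambda)$ and ad-hoc adding one lift of each edge of $\Lambda\setminus\Ima(\lambda)$ plus vertices for connectivity. You then claim the nesting property $Z\cap t^i(Z)\subset Z\cap t^{i-1}(Z)$ follows because "the intersections live in the narrowing height interval $[a+i,b]$." This is not an argument: heights living in a narrowing interval shows that a necessary condition for membership is narrowing, but does not show containment of the actual cell sets. Concretely, what you need is that for each cell $c$ with $c, t^{-i}(c) \in Z$, all intermediate translates $t^{-j}(c)$ for $0\leq j\leq i$ also lie in $Z$ -- a convexity statement on $t$-orbits restricted to $Z$ -- and your ad-hoc addition of extra edges and connecting paths provides no such control. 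Similarly, your Magnus argument for $Z_0, Z_1$ tacitly assumes $\tilde\lambda$ attains the extremal heights of the \emph{final} $Z$, not just of $\Gamma_0$; the extra edges and connecting paths can push the height range beyond $[a,b]$, invalidating this. The same difficulty resurfaces in your degenerate case for complexity decrease, where you must arrange $Z_0$ to be a nonempty forest, and you concede this requires unspecified "bookkeeping" that interacts poorly with connectivity.

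The paper sidesteps all of this with a preliminary lemma (\cref{Z-domain_graph}): a connected $\Z$-cover of a graph always admits a $\Z$-domain, and one that is minimal under inclusion has $Z\cap t(Z)$ a tree. A one-relator $\Z$-domain is then built by taking $\Gamma = \bigcup_{i=0}^k t^i(\Gamma')$ for the minimal $k$ for which $\Gamma$ supports a lift of $\lambda$, where $\Gamma'$ is a minimal graph $\Z$-domain. Because $\Gamma'$ already satisfies the nesting condition, so does $\Gamma$, and the Magnus condition holds automatically by minimality of $k$. The tree structure of $\Gamma'\cap t(\Gamma')$ then provides exactly the Euler-characteristic input needed for the degenerate complexity case. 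In short, the missing idea in your proposal is this graph-level $\Z$-domain lemma; without it, properties (3) and (4) of a $\Z$-domain and the degenerate complexity case are all left unjustified.
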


The first bullet point is \cite[Proposition 4.9]{Lin22}, the second is \cite[Proposition 4.3]{Lin22} and the third is \cite[Proposition 4.11]{Lin22}.

\begin{definition}
If $X$ is a one-relator complex, we will call a splitting $\pi_1(X)\isom \pi_1(Z)*_{\psi}$ as arising from \cref{one-relator_splitting} a \emph{one-relator splitting} of $\pi_1(X)$.
\end{definition}

\begin{definition}
If $X$ is a one-relator complex, a \emph{one-relator tower} for $X$ is a sequence of immersions of one-relator complexes
\[
X_N\immerses \ldots\immerses X_1\immerses X_0 = X
\]
such that for each $i$, $X_{i+1}$ is obtained from $X_i$ as in \cref{one-relator_splitting}.
\end{definition}

If $X$ is a 2-complex, the $\Z$-covers of $X$ are in bijection with non-trivial homomorphisms $\pi_1(X)\to \Z$, or rather, with non-zero elements of $H^1(X, \Z)$. The connected $\Z$-covers are in bijection with epimorphisms $\pi_1(X)\surjects\Z$, or rather, with basis elements of $H^1(X, \Z)$. We will make this bijection more explicit in \cref{sec:character_sphere}. When $X$ is a one-relator complex, such epimorphisms exist if and only if $\pi_1(X)$ is not isomorphic to a finite cyclic group. In particular, at least one epimorphism exists precisely when $c(X)>(0, 0)$. \cref{topological_hierarchy} now follows by \cref{one-relator_splitting} and induction.

\subsection{Explicit examples of the topological hierarchy}

Our first example will be the example which was the inspiration for \cref{one-relator_splitting}. The prototypical one-relator complex is a closed surface. So let $X$ be a closed surface, let $C\subset X$ be a non-separating simple closed curve and consider the epimorphism $\pi_1(X) \to H_1(X, \Z) \to \Z\cdot [C]$, where here $[C]$ denotes the homology class of $C$ in $H_1(X, \Z)$. We can always modify the 2-complex structure of $X$ so that $C$ actually lies in the 1-skeleton of $X$. Then if $\rho\colon Y\to X$ is the $\Z$-cover associated with this homomorphism, one can take $Z\subset Y$ from \cref{one-relator_splitting} to be the closure of $\rho^{-1}(X - C)$. Here $Z$ is a surface with two boundary components corresponding to two distinct lifts of $C$. If $t\in \deck(\rho)$ is a generator, then $Z_0 = t^{-1}(Z)\cap Z$ and $Z_1 = Z\cap t(Z)$ are precisely these two lifts. The space $\mathcal{X}$ from \cref{one-relator_splitting} is then obtained from $Z$ by attaching a cylinder connecting these two boundary components and expresses $\pi_1(X)$ as a HNN-extension over an infinite cyclic group. \cref{fig:genus_2} illustrates this example explicitly in the case $X$ is a genus two surface and $C$ is a simple closed curve going around one of the genus.

\begin{figure}
\centering
\includegraphics[scale = 0.5]{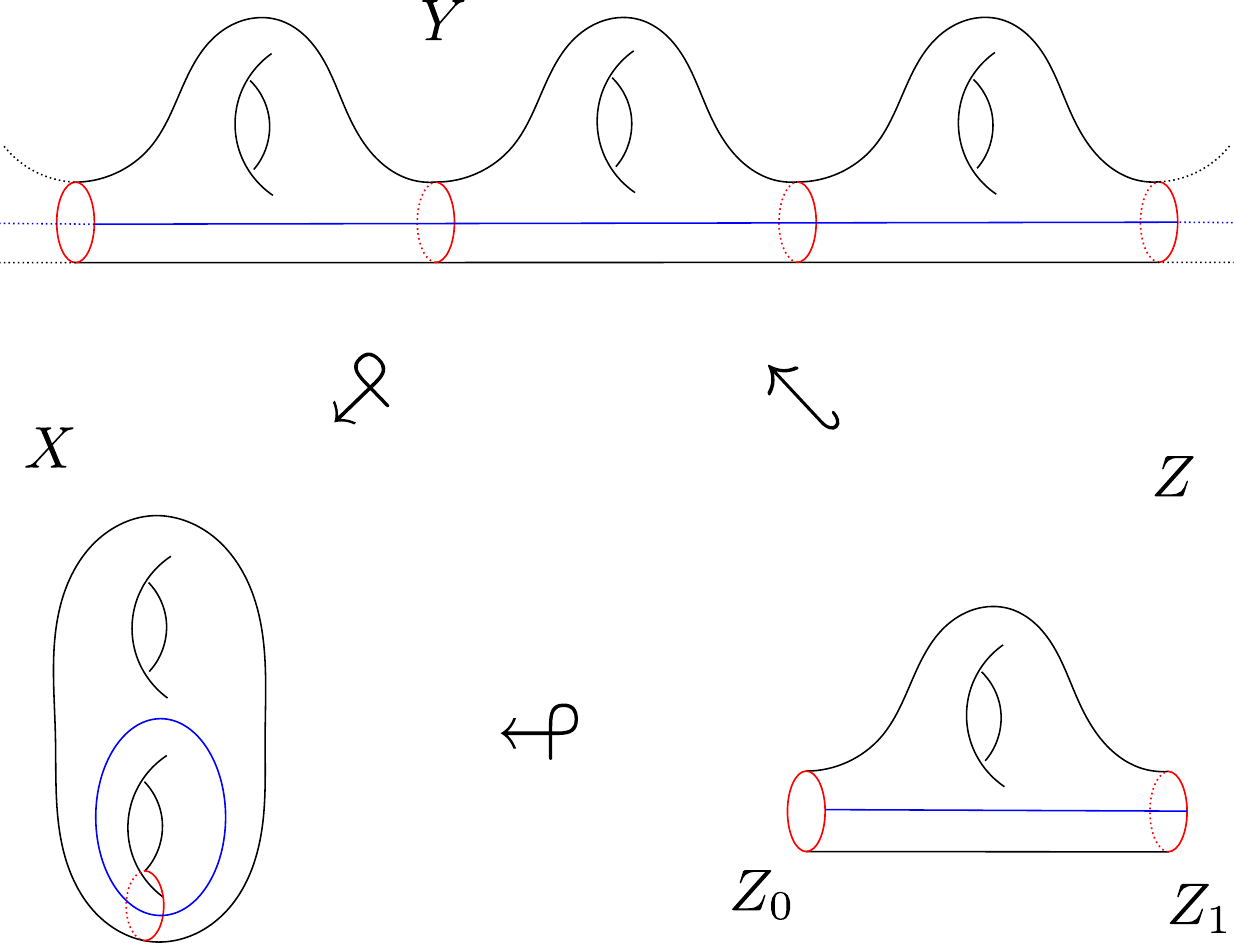}
\caption{A simple illustration of \cref{one-relator_splitting} in the case of a genus two surface.}
\label{fig:genus_2}
\end{figure}

Our next example will be the presentation complex $X$ of the following one-relator group
\[
G = \langle a, b \mid a^2 = b^3\rangle.
\]
A simple computation shows that $H_1(G, \Z) \isom \Z$ and so there is only one epimorphism $\phi\colon G\to \Z$ (up to change of sign) given by
\[
\phi(a) = 3, \quad \phi(b) = 2.
\]
Denote by $\rho\colon Y\immerses X$ the associated $\Z$-cover. Choose some lift $v_0\in Y$ of the 0-cell in $X$ and lifts $a_0, b_0\subset Y$ of the two 1-cells in $X$ such that they lead out of $v_0$. Denote by $v_i = t^i(v)$ and $a_i = t^i(a_0)$, $b_i = t^i(b_0)$ for each $i\in \Z$. Reading out the relator $a^2b^{-3}$, we see that for each $i\in \Z$ there is a 2-cell in $Y$ whose attaching map is given by the path
\[
a_ia_{i+3}b_{i+4}^{-1}b_{i+2}^{-1}b_{i}^{-1}.
\]
Hence, we see that there is a single attaching map (the one corresponding to $i = 0$) of a 2-cell of $Y$ supported in the subgraph
\[
\Gamma = \left(\bigcup_{i=0}^6v_i\right)\cup \left(\bigcup_{i=0}^3a_i\right)\cup\left(\bigcup_{i=0}^4b_i\right)
\]
and no attaching map of a 2-cell is supported in $t^{-1}(\Gamma)\cap \Gamma$ or $\Gamma\cap t(\Gamma)$. Letting $Z\subset Y$ be the one-relator subcomplex with 1-skeleton $\Gamma$, we see that $Z$ satisfies all the hypotheses of \cref{one-relator_splitting}. This is depicted in \cref{fig:hierarchy2}.

\begin{figure}
\centering
\includegraphics[scale = 0.5]{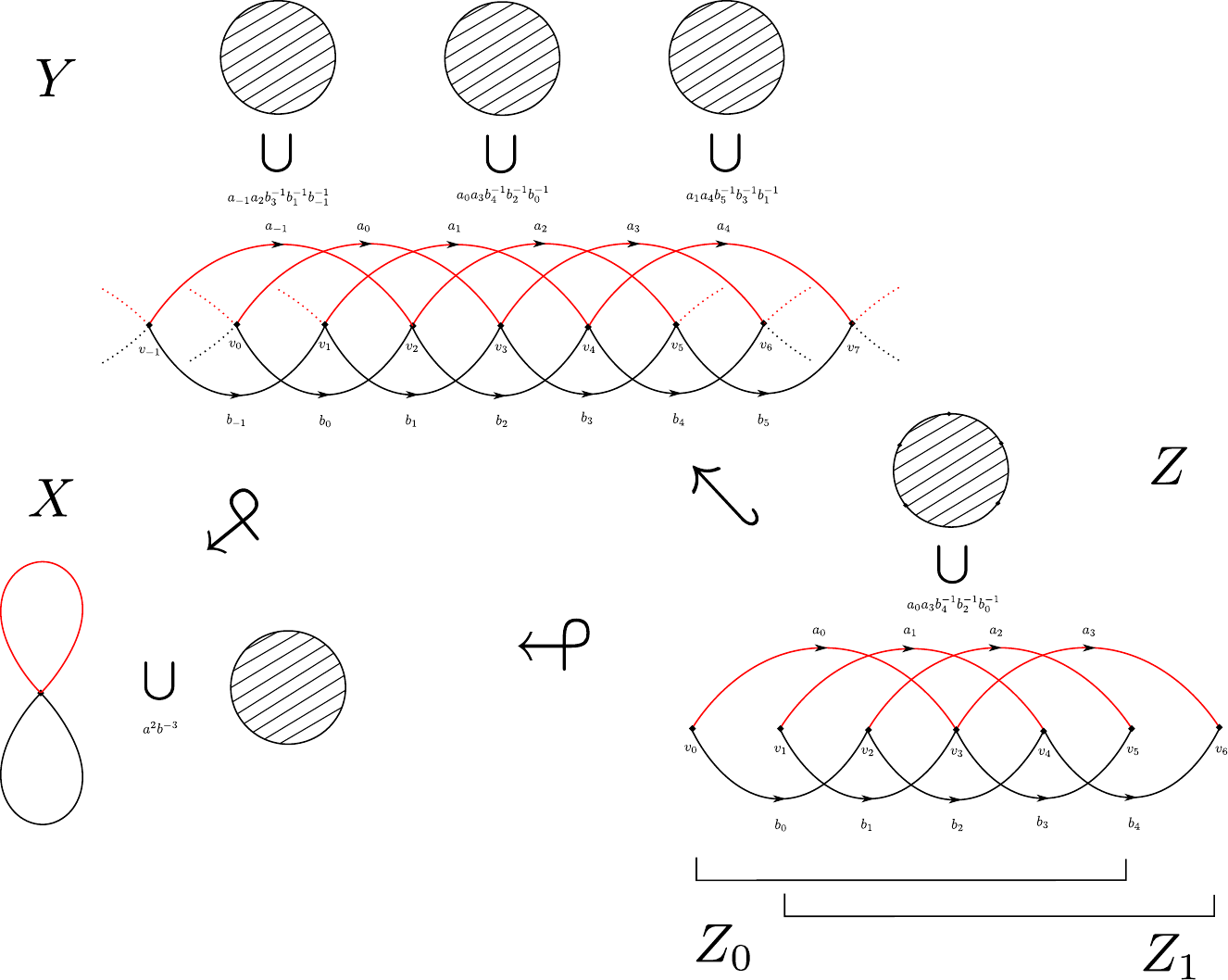}
\caption{A one-relator $\Z$-domain.}
\label{fig:hierarchy2}
\end{figure}

In order to obtain the one-relator splitting explicitly we need to choose generators for $\pi_1(Z)$. The elements
\[
c_1 = a_0b_1^{-1}a_1b_2^{-1}b_0^{-1}, \quad c_2 = b_0a_2b_3^{-1}a_0^{-1}, \quad c_3 = a_0a_3b_4^{-1}b_2^{-1}b_0^{-1}
\]
are seen to generate $\pi_1(Z, v_0)$, with the first two generating $\pi_1(Z_0, v_0)$ and the last being trivial. They come from choosing the spanning tree $T\subset \Gamma$ such that $\Gamma - T$ contains the 1-cells $a_1, a_2, a_3$. Denoting by
\[
d = a_0b_1^{-1},
\]
we have
\begin{align*}
\pi_1(X) \isom&\, \pi_1(Z)*_{\psi}\\
		\isom&\, \langle c_1, c_2, c_3 \mid c_3 = 1\rangle*_{\psi}\\
		\isom&\, \langle c_1, c_2, c_3, d \mid c_3 = 1, dc_1d^{-1} = c_2c_3, dc_2d^{-1} = c_3c_1^{-1}\rangle\\
		\isom&\, \langle c_1, c_2, d \mid dc_1d^{-1} = c_2, dc_2d^{-1} = c_1^{-1}\rangle
\end{align*}
where the last isomorphism is obtained by applying a Tietze transformation removing the redundant generator $c_3$ and relator $c_3 = 1$.

The last presentation above expresses $\pi_1(X)$ as an $F_2$-by-cyclic group with monodromy an order four automorphism. One can see the fact that $\pi_1(X)$ is free-by-cyclic geometrically: the 2-cell in $Z$ traverses the bottom edges and the top edges each precisely once and so $Z$ collapses to $Z_0 = t^{-1}(Z)\cap Z$ and to $Z\cap t(Z)$. Thus, the one-relator splitting expresses $\pi_1(X)$ as a HNN-extension where the identifying isomorphism is defined on all of $\pi_1(Z)$ - the fundamental group of a graph.

We remark here that our presentation for $\pi_1(X)$ depended on our chosen spanning tree, however it is not hard to see that we also could have obtained the following presentation:
\[
\pi_1(X) \isom \langle c_1, c_2, c_3, d \mid c_3c_1, d^{-1}c_1d = c_2, d^{-1}c_2d = c_3\rangle
\]
which is perhaps more what those familiar with the Magnus--Moldavanski\u{\i} method might expect.

\begin{remark}
If $G = F(S)/\normal{w}$ is a one-relator group in which some generator $t\in S$ appears with exponent sum zero, there is an epimorphism $\phi\colon G\to \Z$ defined by sending each generator different to $t$ to zero and by sending $t$ to one. Taking $X$ to be the presentation complex for $G$, the $\Z$-cover associated with $\phi$ has 1-skeleton a biinfinite line consisting of the lifts of 1-cells corresponding to $t$ and at each 0-cell there is a loop for every generator in $S - \{t\}$. The one-relator splitting arising from a $\Z$-domain of this cover as in \cref{one-relator_splitting} is precisely the type of HNN-splitting that Moldavanski\u{\i} finds in \cite{Mo67}.
\end{remark}

\subsection{Sketch of the proof of \cref{one-relator_splitting}}

We now sketch out the main ideas of the proof of \cref{one-relator_splitting}. There are essentially three parts to the proof with each part corresponding to the three points in \cref{one-relator_splitting}.

\subsubsection{Part one: HNN-decompositions from $\Z$-domains} The first step involves showing that certain kinds of subcomplexes of $\Z$-covers give rise to HNN-splittings. 

\begin{definition}
Let $\rho\colon Y\to X$ be a $\Z$-cover of 2-complexes and let $t\colon Y\to Y$ be a generator of $\deck(\rho) = \Z$. A subcomplex $Z\subset Y$ is a \emph{$\Z$-domain} if the following holds:
\begin{enumerate}
\item\label{itm:prop1} The $\Z$-translates of $Z$ cover all of $Y$. That is, $Y = \bigcup_{i\in \Z}t^i(Z)$.
\item\label{itm:prop2} $Z\cap t(Z)$ is connected and non-empty.
\item\label{itm:prop3} For each $i>0$, we have that $Z\cap t^i(Z)\subset Z\cap t^{i-1}(Z)$.
\end{enumerate}
\end{definition}

The following result is the motivation behind the definition of $\Z$-domains. 

\begin{proposition}
\label{splitting}
Let $\rho\colon Y\to X$ be a $\Z$-cover of 2-complexes, let $t\colon Y\to Y$ be a generator of $\deck(\rho)$, let $Z\subset Y$ be a $\Z$-domain and denote by $Z_{0} = t^{-1}(Z)\cap Z$ and $\iota\colon Z_0\injects Z$ the inclusion. If
\[
\mathcal{X} = Z \sqcup \left(Z_0\times[-1, 1]\right)/\{ \iota(z)\sim (z, -1),\, (t\circ\iota)(z) \sim (z, 1),\, z\in Z_0\},
\]
then the map
\[
h\colon \mathcal{X}\to X
\]
given by $\rho$ when restricted to $Z$ or $Z_0\times \{i\}$ for all $i\in (-1, 1)$, is a homotopy equivalence.

Moreover, if $\iota_*$ and $(t\circ\iota)_*$ are injective, then
\[
\pi_1(X) \isom \pi_1(Z)*_{\psi}
\]
where 
\[
\psi = (\iota\circ t\circ\iota^{-1})_*\colon \iota_*(\pi_1(Z_0))\to (t\circ \iota)_*(\pi_1(Z_0)).
\]
\end{proposition}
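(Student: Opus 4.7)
The plan is to construct the appropriate $\Z$-cover $\widetilde{\mathcal{X}}$ of $\mathcal{X}$ and exhibit a $\Z$-equivariant map $\widetilde{h} \colon \widetilde{\mathcal{X}} \to Y$ covering $h$ that is a homotopy equivalence; descending modulo the free $\Z$-action will then yield that $h$ itself is a homotopy equivalence. The graph-of-spaces structure on $\mathcal{X}$ (one vertex space $Z$, one edge space $Z_0$, attached via $\iota$ and $t \circ \iota$) provides a canonical epimorphism $\pi_1(\mathcal{X}) \surjects \Z$ counting signed cylinder traversals, and the associated $\Z$-cover $\widetilde{\mathcal{X}}$ consists of copies $Z^{(i)}$ of $Z$ indexed by $i \in \Z$, joined by cylinders $C_i = Z_0 \times [-1,1]$ with the $-1$ end of $C_i$ attached to $Z^{(i)}$ via $\iota$ and the $+1$ end attached to $Z^{(i-1)}$ via $t \circ \iota$. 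The generator of the deck group shifts the index by one.

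The map $\widetilde{h}$ is essentially forced. Since $h$ is constant along each cylinder fibre $\{z\} \times [-1,1]$, unique path lifting forces $\widetilde{h}$ to be constant on these fibres as well. Defining $\widetilde{h}|_{Z^{(i)}}$ to be the translate $t^i \colon Z \to t^i(Z) \subset Y$ and $\widetilde{h}(z,s) = t^i(z)$ on $C_i$ yields a compatible assignment: at $(z,-1) \sim \iota(z) \in Z^{(i)}$ we obtain $t^i(z)$, and at $(z,1) \sim (t \circ \iota)(z) \in Z^{(i-1)}$ we obtain $t^{i-1}(t(z)) = t^i(z)$. The resulting map is continuous, cellular, $\Z$-equivariant, and covers $h$.

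The crux of the proof is showing that $\widetilde{h}$ is a homotopy equivalence. Collapsing each cylinder $C_i$ along its $[-1,1]$ coordinate is a strong deformation retraction of $\widetilde{\mathcal{X}}$ onto the adjunction space obtained from $\bigsqcup_{i \in \Z} Z^{(i)}$ by identifying $Z_0 \subset Z^{(i)}$ with $Z_1 = t(Z_0) \subset Z^{(i-1)}$. Through the translates $t^i$, this quotient maps homeomorphically onto $\bigcup_{i \in \Z} t^i(Z) \subset Y$: surjectivity together with equality to $Y$ is property (1) of a $\Z$-domain, connectedness follows from property (2), and property (3) iterated gives $Z \cap t^k(Z) \subset Z \cap t(Z)$ for all $k \geq 1$, which ensures that every overlap $t^j(Z) \cap t^k(Z)$ with $k > j+1$ already lies in each intermediate consecutive overlap $t^l(Z) \cap t^{l+1}(Z)$. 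Hence all identifications in $Y$ among distinct translates of $Z$ are already encoded by the consecutive cylinder attachments, so no further collapsing is required. It follows that $\widetilde{h}$ is a homotopy equivalence, and quotienting by the free $\Z$-action yields that $h \colon \mathcal{X} \to X$ is a homotopy equivalence.

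For the moreover clause, the graph-of-spaces decomposition of $\mathcal{X}$ together with the injectivity hypothesis on $\iota_*$ and $(t \circ \iota)_*$ yields, via the Seifert--van Kampen theorem for graphs of spaces (equivalently, Bass--Serre theory applied to the graph-of-groups with one vertex group $\pi_1(Z)$ and one edge group $\pi_1(Z_0)$), the HNN-presentation $\pi_1(\mathcal{X}) \cong \pi_1(Z) *_\psi$ with $\psi = (\iota \circ t \circ \iota^{-1})_*$ exactly as stated. The main obstacle will be the bookkeeping in the deformation-retract step: one must verify that the three defining properties of a $\Z$-domain are precisely what is needed for the cylinder-collapsed quotient of $\widetilde{\mathcal{X}}$ to match $Y$ on the nose, neither requiring extra identifications that would demand a stronger property than (3) nor missing identifications that would indicate a failure of (1) or (2).
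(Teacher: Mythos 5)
Your approach is a genuine variant of the paper's. The paper's own proof argues that property (1) makes $h$ surjective, property (2) gives connectedness of $Z_0$, and property (3) forces the fibres $h^{-1}(x)$ to be connected lines, hence contractible; it then delegates the conclusion that contractible fibres imply $h$ is a homotopy equivalence to the more general \cite[Proposition 3.2]{Lin22}. You instead pass to the $\Z$-cover $\widetilde{\mathcal{X}}\to\mathcal{X}$ coming from the graph-of-spaces structure, build an explicit $\Z$-equivariant lift $\widetilde{h}\colon\widetilde{\mathcal{X}}\to Y$, argue $\widetilde{h}$ is a homotopy equivalence, and descend. These exploit the same phenomenon (property (3) forcing the preimages to be nested intervals), but the mechanism is different.

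Your analysis of where the three $\Z$-domain properties are used, the verification that the adjunction space maps bijectively (hence homeomorphically, being a cellular bijection of CW complexes) onto $Y$, the descent argument via equivariant Whitehead, and the HNN clause are all correct and well organized. In particular, the nesting observation — that if $y\in t^j(Z)\cap t^k(Z)$ then $y\in t^\ell(Z)$ for every $j\le\ell\le k$, by iterating property (3) — is exactly what makes the chain of consecutive cylinder identifications equivalent to the full set of identifications in $Y$, and you argue this correctly.

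The one real gap is the phrase ``strong deformation retraction.'' The adjunction space $Q=\bigsqcup_i Z^{(i)}/\sim$ is \emph{not} a subspace of $\widetilde{\mathcal{X}}$, so $\widetilde{\mathcal{X}}$ cannot deformation retract onto it. The map you have in mind is the quotient $q\colon\widetilde{\mathcal{X}}\to Q$ collapsing each fibre $\{z\}\times[-1,1]\subset C_i$ to a point. It is a homotopy equivalence, but this is not free: the point at which you need it is exactly the point at which the paper invokes the external lemma. The clean justification is that $\widetilde{\mathcal{X}}$ is the bi-infinite iterated double mapping cylinder (homotopy colimit) of the zigzag diagram $\cdots\leftarrow Z_0\xrightarrow{\iota}Z^{(i)}\xleftarrow{t\circ\iota}Z_0\rightarrow\cdots$, while $Q$ is the strict colimit; since both maps out of each $Z_0$ are CW inclusions (hence cofibrations), each individual cylinder collapse is a homotopy equivalence, these can be performed in stages, and the result passes to the colimit because the filtration by finite windows is by cofibrations. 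Alternatively, one could run a fibre argument on $\widetilde{h}$ directly. Either way, this step deserves a sentence or two more than the incorrect ``strong deformation retraction'' label; as written, it would not survive careful refereeing, even though the idea is sound and the conclusion is correct.
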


\begin{proof}
The map $h$ is surjective by property (\ref{itm:prop1}) of $\Z$-domains. Moreover, property \ref{itm:prop2} says that $Z_0$ is connected and non-empty. Finally, property (\ref{itm:prop3}) implies that the fibers $h^{-1}(x)$ are all connected (possibly infinite) lines. In particular, this implies that $h$ is a homotopy equivalence. See \cite[Proposition 3.2]{Lin22} for further details and a more general result.

If the homomorphisms $\iota_*, (t\circ\iota)_*$ are injective, then $\mathcal{X}$ is a graph of spaces with a single vertex space $Z$ and a single edge space $Z_0\times[-1, 1]$, attached to $Z$ via $\iota$ and $t\circ\iota$. Hence, we have $\pi_1(\mathcal{X}) \isom \pi_1(Z)*_{\psi}$. Since $h$ is a homotopy equivalence, we also have $\pi_1(X) \isom \pi_1(Z)*_{\psi}$ as claimed.
\end{proof}

\subsubsection{Part two: existence of one-relator $\Z$-domains} The second step involves proving that $\Z$-domains always exist for $\Z$-covers of one-relator complexes and that they may additionally be chosen so that they only support a single attaching map of a 2-cell. Then the Freiheitssatz combined with \cref{splitting} will allow us to find the desired splitting.

\begin{proposition}{\cite[Lemma 4.7]{Lin22}}
\label{Z-domain_graph}
If $\rho\colon Y\to \Lambda$ is a connected $\Z$-cover of 1-complexes, then there exists a $\Z$-domain $Z\subset Y$. Moreover, if $Z$ is minimal under inclusion of $\Z$-domains, then $Z\cap t(Z)$ is a tree and, if $\Lambda$ is finite, $Z$ is finite.
\end{proposition}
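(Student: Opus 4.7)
\emph{Plan of proof.} The existence of a $\Z$-domain is essentially trivial, since $Z=Y$ itself satisfies all three defining properties (covering is tautological, $Y\cap t(Y)=Y$ is connected, and nesting holds because every intersection equals $Y$). The substance of the proposition lies in producing a \emph{finite} $\Z$-domain when $\Lambda$ is finite, together with the tree property of $Z\cap t(Z)$ for minimal choices. To that end, fix a spanning tree $T\subset\Lambda$ and a connected lift $\tilde T_0\subset Y$, and write $\tilde T_i=t^i(\tilde T_0)$ so that $\rho^{-1}(T)=\bigsqcup_{i\in\Z}\tilde T_i$. For each edge $e\in\Lambda\setminus T$, orient $e$ so that the lift starting in $\tilde T_0$ ends in $\tilde T_{n_e}$ with $n_e\geq 0$; surjectivity of the classifying homomorphism $\pi_1(\Lambda)\to\Z$ forces $\gcd\{n_e\}=1$. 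Setting $N=\max_e n_e$ (finite when $\Lambda$ is), I will take
\[
Z \;=\; \bigcup_{i=0}^{N}\tilde T_i \;\cup\; \bigcup_{e\in\Lambda\setminus T}\;\bigcup_{i=0}^{N-n_e} t^i(\tilde e_0),
\]
a finite subcomplex of $Y$ for which the covering and nesting axioms are immediate from the definitions. The main obstacle in this step is verifying that $Z\cap t(Z)$ is connected: the $N$ translates of $\tilde T_0$ that make up $Z\cap t(Z)$ must be knit together by enough non-tree lifts, and one must use $\gcd\{n_e\}=1$ (possibly after modifying the spanning tree) to arrange this; the straight choice above may need pruning or augmentation.

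\emph{Tree property under minimality.} Assuming $Z$ is a minimal $\Z$-domain, I will argue by contradiction that $Z\cap t(Z)$ is a tree. Suppose $Z\cap t(Z)$ contains a cycle and pick an edge $e$ on it. Since $\Z$ acts freely on $Y$, the translates of $e$ are all distinct, and $e\in t(Z)$ gives $t^{-1}(e)\in Z$, hence $t^{-1}(e)\in Z':=Z\setminus\{e\}$. This recovers $e=t(t^{-1}(e))\in t(Z')$, so the covering axiom survives; the nesting axiom is inherited under shrinking; and $Z'\cap t(Z')$ is obtained from $Z\cap t(Z)$ by removing at most the two edges $e$ and $t(e)$ of a cycle, leaving a connected subgraph. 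Thus $Z'\subsetneq Z$ is still a $\Z$-domain, contradicting minimality, and so $Z\cap t(Z)$ is acyclic.

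\emph{Finiteness of minimal $\Z$-domains.} When $\Lambda$ is finite, the explicit construction above produces a finite $\Z$-domain $Z_0\subset Y$. The poset of subcomplexes of $Z_0$ is finite, so the subposet of $\Z$-domains contained in $Z_0$ has a minimal element $Z\subset Z_0$. Any $\Z$-domain contained in $Z$ is automatically contained in $Z_0$, so $Z$ is also minimal among all $\Z$-domains in $Y$, and being a subcomplex of the finite $Z_0$, it is finite. With these ingredients in hand the three conclusions of the proposition follow; the real technical heart of the proof is the connectedness check in the construction of $Z_0$, which is where a careful combinatorial argument exploiting $\gcd\{n_e\}=1$ is required.
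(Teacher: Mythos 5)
Your proposed $Z$ built from translates of a lifted spanning tree need not have $Z\cap t(Z)$ connected, and you acknowledge this, but the gap is not a cosmetic one to be dispatched by ``pruning or augmentation''. Concretely, suppose the non-tree edge winding numbers are $\{n_e\}=\{2,3\}$ (so $\gcd=1$) and hence $N=3$. Then $Z\cap t(Z)$ consists of the three tree copies $\tilde T_1,\tilde T_2,\tilde T_3$ together with non-tree lifts $t^i(\tilde e_0)$ for $1\leq i\leq N-n_e$; the $n_e=3$ edges contribute nothing ($N-n_e=0<1$) and the $n_e=2$ edges contribute only $t(\tilde e_0)$ joining $\tilde T_1$ to $\tilde T_3$, so $\tilde T_2$ is isolated. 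The obstruction is genuinely number-theoretic (one cannot tile $\{1,\dots,N\}$ with the available shifts), and it is not clear that changing $T$ or pruning cells of your $Z$ resolves it in general; a different construction of the finite $\Z$-domain is needed.

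The argument for the tree property under minimality has a more serious flaw: the assertion that ``the nesting axiom is inherited under shrinking'' is false as stated. Remove an arbitrary edge $e$ on a cycle of $Z\cap t(Z)$ and suppose $t(e)\in Z$ (which nothing forbids). Then $t(e)\in Z'\cap t^2(Z')$: indeed $t(e)\in Z$, $t(e)\neq e$, and $t^{-1}(e)\in Z$ with $t^{-1}(e)\neq e$ (using $e\in Z\cap t(Z)$ and freeness). But $t(e)\notin t(Z')$ since $e\notin Z'$, so $t(e)\notin Z'\cap t(Z')$, and the nesting inclusion $Z'\cap t^2(Z')\subset Z'\cap t(Z')$ fails. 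Relatedly, $Z'\cap t(Z')=(Z\cap t(Z))\setminus\{e,t(e)\}$ removes two edges, and you have not argued that $t(e)$ is not a bridge, so connectedness is also not automatic. Both problems can be repaired by choosing $e$ so that $t(e)\notin Z$: the nesting condition forces $S_e=\{j:t^j(e)\in Z\}$ to be an interval containing $\{-1,0\}$, so $t(e)\notin Z$ implies $t^j(e)\notin Z$ for all $j\geq 1$, which both preserves nesting and reduces the removed set to the single edge $e$. However, the existence of such an edge on the cycle $C$ requires an argument (if every edge $e$ of $C$ had $t(e)\in Z$, then every translate $t^j(C)$ with $j\geq 0$ would lie in $Z\cap t(Z)$, which is absurd when $Z$ is finite but needs a separate thought when $Z$ is not assumed finite a priori). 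As written, the proof does not close either gap.
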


From \cref{Z-domain_graph} it is not hard to prove the following.

\begin{corollary}
\label{Z-domain}
If $X = (\Lambda, \lambda)$ is a one-relator complex and $\rho\colon Y\to X$ is a $\Z$-cover, then there exists a $\Z$-domain $Z\subset Y$, finite if $X$ is finite, which supports precisely one lift of $\lambda$.
\end{corollary}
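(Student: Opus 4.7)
The plan is to deduce \cref{Z-domain} from \cref{Z-domain_graph} applied to the restricted cover of 1-skeletons $\rho^{(1)}\colon Y^{(1)}\to \Lambda$, by unioning enough $\Z$-translates of the resulting 1-dimensional $\Z$-domain to support the chosen lift, then adjoining the corresponding 2-cell.

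Concretely, I would first apply \cref{Z-domain_graph} to $\rho^{(1)}$ to obtain a $\Z$-domain $\Gamma_0 \subset Y^{(1)}$, chosen minimal under inclusion so that $\Gamma_0 \cap t(\Gamma_0)$ is a tree and $\Gamma_0$ is finite whenever $\Lambda$ is. Fix an arbitrary lift $\tilde\lambda\colon S^1\immerses Y^{(1)}$ of $\lambda$. Its image is a finite connected subgraph, so the index set $S(\tilde\lambda) := \{i\in\Z : \Ima(\tilde\lambda)\cap t^i(\Gamma_0)\neq\varnothing\}$ is a finite interval of $\Z$; the interval property follows from connectedness of $\Ima(\tilde\lambda)$ combined with the nesting axiom~$(3)$ for $\Gamma_0$. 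After shifting $\tilde\lambda$ by a deck transformation, I would arrange that $S(\tilde\lambda) = [0,M]$ for some $M\geqslant 0$, and set $\Gamma := \bigcup_{i=0}^{M} t^i(\Gamma_0)\subset Y^{(1)}$. Verifying that $\Gamma$ is itself a (finite) $\Z$-domain of $Y^{(1)}$ uses only the axioms for $\Gamma_0$: axiom~$(1)$ is inherited, axiom~$(3)$ is a direct index computation, and axiom~$(2)$ reduces to connectedness of $\Gamma\cap t(\Gamma)\supseteq \bigcup_{i=1}^M t^i(\Gamma_0)$ together with the observation that any extra overlap edges sit inside the tree $\Gamma_0\cap t(\Gamma_0)\subset t(\Gamma_0)$, already in this main part.

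By construction $\Ima(\tilde\lambda)\subset \Gamma$, so $\Gamma$ supports $\tilde\lambda$. For $k\neq 0$, the translate has $S(t^k\tilde\lambda) = [k,k+M]$, and the combination of minimality of $[0,M]$ with the tree structure of $\Gamma_0\cap t(\Gamma_0)$ should prevent $\Ima(t^k\tilde\lambda)$ from being fully contained in $\Gamma$; if this fails in a pathological configuration, a mild pruning of $\Gamma\cap t(\Gamma)$---removing leaves of the tree overlap that lie on some $\Ima(t^k\tilde\lambda)$ but not on $\Ima(\tilde\lambda)$---restores the property without disconnecting $\Gamma$ or violating the other axioms. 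Finally, I would define $Z\subset Y$ as $\Gamma$ together with the unique closed $2$-cell of $Y$ whose attaching map is $\tilde\lambda$; the $\Z$-domain axioms for $Z$ then follow from those of $\Gamma\subset Y^{(1)}$ together with the freeness of the $\Z$-action on $2$-cells of $Y$. The main technical hurdle I anticipate is simultaneously verifying the ``exactly one lift'' condition and the connectedness of $\Gamma\cap t(\Gamma)$ in pathological configurations where edges of $\Ima(t\tilde\lambda)$ all lie in the tree overlap $\Gamma_0\cap t(\Gamma_0)$ via the two-consecutive-translate overlap structure; the tree-overlap provided by the minimal version of \cref{Z-domain_graph} is precisely what controls this, making the minimality assumption essential to the plan.
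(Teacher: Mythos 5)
Your outline is close in spirit to the paper's---take a graph-level $\Z$-domain $\Gamma_0$ from \cref{Z-domain_graph}, union enough consecutive translates to build a $\Z$-domain $\Gamma\subset Y^{(1)}$ supporting a lift, then attach the corresponding $2$-cell---but the quantity you minimize is not the one that drives ``exactly one lift,'' and you correctly sense the resulting gap. Taking $M = \max S(\tilde\lambda)$ does guarantee $\Ima(\tilde\lambda)\subset\Gamma=\bigcup_{i=0}^M t^i(\Gamma_0)$, but nothing forces $M$ to be minimal with this property: it can easily happen that $\Ima(\tilde\lambda)\cap t^M(\Gamma_0)$ lies entirely inside the overlap $t^{M-1}(\Gamma_0)\cap t^M(\Gamma_0)$, so that $\Ima(\tilde\lambda)$ already fits in $\bigcup_{i=0}^{M-1}t^i(\Gamma_0)$. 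With $\Gamma$ larger than necessary there is no obstruction to a second lift $t^j\tilde\lambda$ also being supported in $\Gamma$, and your uniqueness step has no lever. The ``pruning'' fallback is not a repair: deleting leaves of $\Gamma\cap t(\Gamma)$ means deleting cells from $\Gamma$ itself, which threatens property~(1) (the translates need no longer cover $Y^{(1)}$) and the connectedness in~(2), and you give no argument that the result is still a $\Z$-domain, let alone one supporting a unique lift.

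The paper's proof makes one decisive change: take $k$ to be the \emph{smallest} integer such that $\bigcup_{i=0}^{k}t^i(\Gamma_0)$ supports \emph{some} lift of $\lambda$ (not a lift fixed in advance), and build $Z$ by attaching a $2$-cell along that lift. Minimality of $k$ then gives uniqueness essentially for free: the nesting axiom~(3) yields $t^{-1}(\Gamma)\cap\Gamma=\bigcup_{i=0}^{k-1}t^i(\Gamma_0)$ and, more generally, that $\Gamma\cap t^{-j}(\Gamma)$ for $j>0$ is a translate of a union of only $k$ consecutive translates of $\Gamma_0$; so if $\Gamma$ supported both $\tilde\mu$ and $t^j\tilde\mu$ with $j>0$, a shift of $\tilde\mu$ would be supported in $\bigcup_{i=0}^{k-1}t^i(\Gamma_0)$, contradicting the choice of $k$. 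Note that this argument uses only the nesting axiom---not the tree structure of $\Gamma_0\cap t(\Gamma_0)$ nor minimality of $\Gamma_0$ beyond finiteness---whereas your proposal leans on the tree overlap as the mechanism controlling uniqueness, a sign it is being asked to do work it cannot. The concrete fix for your write-up is to replace $M=\max S(\tilde\lambda)$ with this minimal $k$; the remainder of your argument then collapses to the paper's.
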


\begin{proof}
Let $\Gamma'\subset Y^{(1)}$ be a $\Z$-domain for the $\Z$-cover $Y^{(1)}\to X^{(1)}$. Let $k\geqslant 0$ be the smallest integer such that
\[
\Gamma = \bigcup_{i=0}^kt^k(\Gamma')
\]
supports some lift of $\lambda$. Let $Z\subset Y$ be the one-relator subcomplex obtained from $\Gamma$ by attaching a 2-cell along a lift of $\lambda$. Since $\Gamma'$ was a $\Z$-domain for $Y^{(1)}\to X^{(1)}$, the three properties (\ref{itm:prop1}), (\ref{itm:prop2}) and (\ref{itm:prop3}) required of a $\Z$-domain holds for $Z$. Since $\Gamma'$ is finite when $\Lambda$ is, $Z$ is finite when $X$ is. Furthermore, if $Z$ does not contain precisely one lift of an attaching map of some 2-cell in $X$, this would imply that $t^{-1}(Z)\cap Z$ supports a lift of $\lambda$, contradicting our choice of $\Gamma$. Thus, $Z$ supports precisely one lift of $\lambda$.
\end{proof}

It will sometimes be useful to have a refinement of \cref{Z-domain} specifically for one-relator complexes. This statement is \cite[Proposition 4.4]{Lin24b}.

\begin{proposition}
\label{Z-domain_one-relator}
Let $X = (\Lambda, \lambda)$ be a one-relator complex with $c(\lambda)>0$ and let $e\subset X$ be a 1-cell traversed by $\lambda$. There exists a $\Z$-cover $\rho\colon Y\to X$ and a one-relator $\Z$-domain $(\Lambda_Z, \lambda_Z) = Z\subset Y$ with $c(\lambda_Z)<c(\lambda)$ such that $\rho^{-1}(\Lambda - e)\subset Z$ and the following holds. If $\tilde{e}\subset Z$ is a lift of $e$ to $Z$ and $m, M\in \Z$ are, respectively, the minimal and maximal integers such that $t^m(\tilde{e})\subset Z$ and $t^M(\tilde{e})\subset Z$, then $\lambda_Z$ traverses both $t^m(\tilde{e})$ and $t^M(\tilde{e})$.
\end{proposition}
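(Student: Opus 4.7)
The strategy is to construct both the $\Z$-cover $\rho\colon Y\to X$ and the $\Z$-domain $Z$ explicitly, defining $Z$ as the union of $\rho^{-1}(\Lambda - e)$ with a consecutive range of $\Z$-translates of a chosen lift $\tilde{e}$ of $e$, chosen exactly large enough to support a single lift $\tilde{\lambda}$ of $\lambda$. The key observation is that $\rho^{-1}(\Lambda - e)$ is automatically $\Z$-invariant, so the overlap combinatorics of $Z$ with its translates are entirely controlled by which $\tilde{e}$-translates are included.

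First I would produce a primitive homomorphism $\phi\colon \pi_1(X) \surjects \Z$; its existence follows from the hypothesis $c(\lambda) > 0$, which forces $\Lambda$ to contain at least two 1-cells and therefore $H_1(X; \Z)$ to have positive rank. Let $\rho\colon Y \to X$ be the corresponding $\Z$-cover, $t$ a generator of $\deck(\rho)$, and fix lifts $\tilde{\lambda}$ of $\lambda$ and $\tilde{e}$ of $e$. Let $m, M \in \Z$ be the minimum and maximum integers such that $\tilde{\lambda}$ traverses $t^j(\tilde{e})$.

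Next I would define $Z$ to be the one-relator subcomplex with 1-skeleton
\[
\Lambda_Z = \rho^{-1}(\Lambda - e) \cup \bigcup_{j=m}^{M} t^j(\tilde{e})
\]
and 2-cell attached via $\tilde{\lambda}$. By construction $Z$ contains $\rho^{-1}(\Lambda - e)$, the indices $m$ and $M$ are the extremal $\tilde{e}$-translate indices in $Z$, and $\lambda_Z$ traverses $t^m(\tilde{e})$ and $t^M(\tilde{e})$. Properties (1) and (3) of the $\Z$-domain definition are immediate from the consecutive-range structure, and the complexity reduction $c(\lambda_Z) < c(\lambda)$ follows as in \cref{Z-domain_one-relator}: lifts preserve $|\lambda|$ and $\deg(\lambda)$, while $c(\lambda) > 0$ forces some 0-cell of $X$ to be revisited by $\lambda$, and these repeated visits are separated into distinct lifts by the nontrivial $\phi$.

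The main technical challenge is verifying property (2), that $Z \cap t(Z) = \rho^{-1}(\Lambda - e) \cup \{t^j(\tilde{e}) : m+1 \leq j \leq M\}$ is connected. Let $n = [\Z : \phi(\pi_1(\Lambda - e))]$ be the number of components of $\rho^{-1}(\Lambda - e)$, cyclically permuted by $t$, and let $c$ denote the level displacement of $\tilde{e}$. Primitivity of $\phi$ forces $\gcd(c, n) = 1$, since $\phi(\pi_1(\Lambda))$ is generated by $\phi(\pi_1(\Lambda - e)) = n\Z$ together with $c$, and $\phi$ is surjective onto $\Z$. Each $t^j(\tilde{e})$ contributes an edge between components $j \bmod n$ and $(j+c) \bmod n$ in the resulting Cayley graph of $\Z/n$, so $n-1$ consecutive such edges suffice to produce a spanning tree. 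One then verifies $M - m \geq n - 1$ by observing that the component of $\rho^{-1}(\Lambda - e)$ containing $\tilde{\lambda}$ changes precisely at $\tilde{e}$-traversals, shifting by $\pm c \bmod n$; since $\tilde{\lambda}$ is connected and $\gcd(c, n) = 1$ it must visit every residue class in $\Z/n$, so for each $r \in \Z/n$ there is an $\tilde{e}$-traversal leaving class $r$. This forces the $\tilde{e}$-traversal indices to cover all $n$ residues modulo $n$ and therefore to span an integer range of length at least $n - 1$, completing the connectivity argument.
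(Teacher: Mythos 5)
Your construction has a genuine gap in the connectivity argument, and the culprit is the unspecified choice of $\phi$. You write ``produce a primitive homomorphism $\phi$'' without constraints, but for a general surjective $\phi$ the complex $Z = \rho^{-1}(\Lambda - e) \cup \bigcup_{j=m}^M t^j(\tilde e)$ need not be connected, and the key assertion that ``since $\tilde\lambda$ is connected and $\gcd(c,n)=1$ it must visit every residue class in $\Z/n$'' is simply false: being a connected loop does not force $\tilde\lambda$ to hit every component of $\rho^{-1}(\Lambda - e)$. Here is a concrete counterexample. Let $\Lambda$ have a single vertex $v$ and loops $x,y,z,e$, let $\lambda = exy^{-1}e^{-1}z$ (cyclically reduced, $\deg(\lambda)=1$, $c(\lambda) = 5 - 1 = 4 > 0$), and take the surjection $\phi$ with $\phi(x)=\phi(y)=5$, $\phi(z)=0$, $\phi(e)=1$; then $\phi(\lambda)=0$, $\phi(\pi_1(\Lambda-e)) = 5\Z$ so $n=5$, and $c=\phi(e)=1$ with $\gcd(c,n)=1$, exactly as your argument requires. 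The lift $\tilde\lambda$ runs through levels $0,1,6,1,0,0$, so it traverses only the single edge $t^0(\tilde e)$ (once in each direction), giving $m=M=0$, and it visits only the residue classes $\{0,1\} \bmod 5$. Your $Z$ is then $\rho^{-1}(\Lambda - e)$ (five components) together with one edge joining two of them --- a disconnected complex, hence not a one-relator $\Z$-domain. The inequality $M-m \geq n-1$ fails ($0 < 4$) precisely because $\tilde\lambda$ does not visit all residue classes.

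The proposition is still true for this $(\Lambda,\lambda,e)$, but one must pick $\phi$ judiciously --- for instance $\phi(x)=\phi(y)=1$, $\phi(z)=\phi(e)=0$ makes $\phi|_{\pi_1(\Lambda-e)}$ surjective, so $\rho^{-1}(\Lambda-e)$ has a single component and connectivity of $Z$ and $Z\cap t(Z)$ is automatic, while $\tilde\lambda$ still separates two preimages of $v$, giving $c(\lambda_Z)<c(\lambda)$. This points to what a correct proof must establish: that one can always choose $\phi$ so that (i) $\phi$ restricted to $\pi_1(\Lambda - e)$ has image of finite (ideally trivial) index in $\Z$, so the components of $\rho^{-1}(\Lambda-e)$ can be bridged by finitely many $\tilde e$-translates, and (ii) $\phi$ restricted to $\pi_1(\Ima(\lambda))$ is nonzero, which is what actually forces the repeated visits of $\lambda$ to a vertex to separate into distinct lifts --- your appeal to ``the nontrivial $\phi$'' is not sufficient on its own, as one can build $\phi$ nontrivial overall but vanishing on the subgraph $\Ima(\lambda)$ when $\Lambda$ has extra edges. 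Neither of these constraints is verified in your proposal, and you also implicitly assume $\Lambda - e$ is connected and that $n$ is finite, neither of which is automatic from $c(\lambda)>0$. (A minor separate point: your citation for the complexity drop is circular --- you cite the very proposition being proved; the intended reference is the lemma that an immersion of one-relator complexes does not increase $c(\lambda)$, with equality exactly when the map is injective on $\Ima(\lambda_Z)$.)
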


For an arbitrary $\Z$-cover of a one-relator complex $\rho\colon Y\to X$, by \cref{Z-domain} we know that there is \emph{some} 1-cell $e\subset X$ and a one-relator $\Z$-domain $Z\subset Y$ such that the attaching map for the 2-cell in $Z$ traverses the minimal lift of $e$ and the maximal lift. The advantage of \cref{Z-domain_one-relator} is that it allows us to first choose a 1-cell in $X$ and then procure a $\Z$-cover and a $\Z$-domain with these properties. This version is particularly useful when generalising results on one-relator groups to the setting of one-relator products of locally indicable groups.

\begin{remark}
One should compare \cref{Z-domain_one-relator} with the usual method of finding a splitting of a one-relator group over a one-relator subgroup, as described, for example, in \cite{LS01}.
\end{remark}

\begin{remark}
With almost the same proof, \cref{Z-domain} can easily be generalised to arbitrary finite 2-complexes $X$. Then using \cref{splitting} one can prove that for any finitely presented group $G$ and any epimorphism $\phi\colon G\to \Z$, there is an element $t\in G$ such that $\phi(t) = 1$, a finitely generated subgroup $H\leqslant \ker(\phi)$ and two finitely generated subgroups $A = H^{t^{-1}}\cap H$, $B = H\cap H^t$ of $H$ such that
\[
G \isom H*_{\psi}
\]
where $\psi\colon A\to B$ is the isomorphism induced by conjugation by $t$. In fact, one only needs $G$ to have type $\fp_2(R)$ for some ring $R$. This is proven by Bieri--Strebel in \cite[Theorem A]{BS78}. We will state a much more general, but much less explicit, result in \cref{sec:coherence}.
\end{remark}

\subsubsection{Part three: decrease of complexity} The final step involves showing that the complexity of the one-relator $\Z$-domain from \cref{Z-domain} goes down.

\begin{lemma}
\label{decrease_complexity}
If $Z = (\Lambda_Z, \lambda_Z)$ and $X = (\Lambda_X, \lambda_X)$ are one-relator complexes and $\sigma\colon Z\immerses X$ is an immersion, then
\[
c(\lambda_Z)\leqslant c(\lambda_X)
\]
with equality precisely when $\sigma\mid\Ima(\lambda_Z)$ is injective.
\end{lemma}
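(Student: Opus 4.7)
My plan is to compare the three ingredients of $c(\lambda)=|\lambda|/\deg(\lambda)-|\Ima(\lambda)^{(0)}|$ separately, exploiting the rigidity that a combinatorial immersion forces between two one-relator complexes. First, because $\sigma$ sends open $2$-cells homeomorphically onto open $2$-cells and $Z$, $X$ each carry a single $2$-cell, the $2$-cell of $Z$ is mapped homeomorphically onto that of $X$; restricting this homeomorphism to the attaching circle yields a cellular homeomorphism $h\colon \bbS_Z\xrightarrow{\sim}\bbS_X$, so $|\lambda_Z|=|\lambda_X|$ and $\sigma^{(1)}\circ\lambda_Z=\lambda_X\circ h$. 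This pins down the numerator of the first term in $c$.

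Next I would extract a ``degree jump'' between the two primitive roots. Writing $\lambda_Z=\mu_Z\circ\pi_Z$ and $\lambda_X=\mu_X\circ\pi_X$ with $\mu_Z,\mu_X$ primitive and $\pi_Z,\pi_X$ the maximal cyclic coverings, the composite $\sigma^{(1)}\circ\mu_Z\colon S^1\immerses\Lambda_X$ has image $\Ima(\mu_X)$ and itself admits a primitive factorisation. Since a cyclic subgroup of the free group $\pi_1(\Ima(\lambda_X))$ has a unique primitive generator up to inversion, the equation $\sigma^{(1)}\circ\mu_Z\circ\pi_Z=\mu_X\circ\pi_X\circ h$ forces that primitive root to coincide with $\mu_X$ up to rotation. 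Comparing the covering factors then produces an integer $d\geq 1$ with $\sigma^{(1)}\circ\mu_Z=\mu_X\circ p$, $\deg(p)=d$; equivalently $\deg(\lambda_X)=d\cdot\deg(\lambda_Z)$ and $|\mu_Z|=d\cdot|\mu_X|$.

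The substantive step is a combinatorial estimate for the induced surjective immersion $\bar\sigma\colon\Ima(\lambda_Z)\immerses\Ima(\lambda_X)$. Interpreting $\mu_Z$ as the concatenation of $d$ consecutive $\bar\sigma$-lifts of $\mu_X$ and combining local injectivity of $\bar\sigma$ with the fact that $\mu_Z$ is not a proper power, I would show that these lifts cannot close up prematurely without introducing previously unseen $0$-cells; a careful accounting should produce the bound
\[
\bigl|\Ima(\lambda_Z)^{(0)}\bigr|\;\geq\;\bigl|\Ima(\lambda_X)^{(0)}\bigr|+(d-1)|\mu_X|,
\]
with equality exactly when $\bar\sigma$ is injective on $0$-cells, hence on all of $\Ima(\lambda_Z)$.

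Substituting these into $c$ then gives
\[
c(\lambda_X)-c(\lambda_Z)\;=\;-(d-1)|\mu_X|+\bigl(|\Ima(\lambda_Z)^{(0)}|-|\Ima(\lambda_X)^{(0)}|\bigr)\;\geq\;0,
\]
with equality iff $\sigma|_{\Ima(\lambda_Z)}$ is injective, as required. The main obstacle will be the combinatorial estimate in the third step: the primitivity of $\mu_Z$ has to be translated into a sharp lower bound on the number of distinct $0$-cells the $d$ lifts of $\mu_X$ are forced to visit, ruling out the pathology of the lifts folding back onto an already-traversed copy of $\mu_X$.
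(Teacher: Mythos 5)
Your first two steps are correct and are genuinely sharper than the paper's one-line proof. An immersion of one-relator complexes does induce a combinatorial homeomorphism of attaching circles, hence $|\lambda_Z|=|\lambda_X|$, and the relation $\deg(\lambda_X)=d\cdot\deg(\lambda_Z)$ for some integer $d\geq 1$ is real, because a graph immersion, while $\pi_1$-injective, can carry a non-power to a proper power; the paper's first display silently asserts $d=1$, so your caution is warranted. Unfortunately, the step-3 estimate you flag as "the main obstacle" is actually false, and the approach cannot be patched. Take $\Lambda_X$ a wedge of two circles $a,b$ with $\lambda_X=abab$, so $\deg(\lambda_X)=2$, $|\mu_X|=2$, $|\Ima(\lambda_X)^{(0)}|=1$ and $c(\lambda_X)=4/2-1=1$. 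Take $\Lambda_Z$ the connected double cover with vertices $u_1,u_2$, loops $a_1$ at $u_1$ and $a_2$ at $u_2$, edges $b_1\colon u_1\to u_2$, $b_2\colon u_2\to u_1$, and let $\lambda_Z$ be the embedded primitive cycle $a_1b_1a_2b_2$, so $\deg(\lambda_Z)=1$, $|\Ima(\lambda_Z)^{(0)}|=2$ and $c(\lambda_Z)=4/1-2=2$. The label-forgetting map $\sigma\colon Z\immerses X$ is a $2$-complex immersion (it is injective on links, and $\sigma^{(1)}\circ\lambda_Z=\lambda_X$ on the nose), yet $d=2$, $|\Ima(\lambda_Z)^{(0)}|-|\Ima(\lambda_X)^{(0)}|=1<2=(d-1)|\mu_X|$, and $c(\lambda_Z)=2>1=c(\lambda_X)$.

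In other words, once $d>1$ the conclusion of \cref{decrease_complexity} itself can fail, so the statement is implicitly relying on the extra hypothesis $\deg(\lambda_Z)=\deg(\lambda_X)$ — precisely what the paper's first display takes for granted. That hypothesis does hold where the lemma is actually invoked, namely \cref{Z-domain_decrease_complexity}: there $\sigma$ is a restriction of a $\Z$-cover, and since $\Z$ is torsion-free both $\lambda_X$ and its primitive root $\mu_X$ lift, which forces $d=1$ (the example above is exactly the analogous $\Z/2$-cover, where this argument breaks down). With $d=1$ imposed, your whole construction collapses to the paper's short proof: $|\lambda_Z|=|\lambda_X|$ and $\deg(\lambda_Z)=\deg(\lambda_X)$ make the first summands of $c$ equal, while $|\Ima(\lambda_X)^{(0)}|\leq|\Ima(\lambda_Z)^{(0)}|$ because $\sigma$ surjects on $0$-cells, with equality forcing a bijection on $0$-cells and hence, as $\sigma$ is an immersion, injectivity on all of $\Ima(\lambda_Z)$.
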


\begin{proof}
If $\lambda_X$ is the attaching map for $X$ and $\lambda_Z$ is the attaching map for $Z$, we have that
\[
\frac{|\lambda_Z|}{\deg(\lambda_X)} = \frac{|\lambda_Z|}{\deg(\lambda_Z)}.
\]
Moreover, we have
\[
\left|\Ima(\lambda_X)^{(0)}\right|\leqslant \left|\Ima(\lambda_Z)^{(0)}\right|
\]
with equality precisely when $\sigma\mid \Ima(\lambda_Z)^{(0)}$ is a bijection. Since $\sigma$ is an immersion, this would force $\sigma$ to be injective on all of $\Ima(\lambda_Z)$.
\end{proof}

\begin{corollary}
\label{Z-domain_decrease_complexity}
Let $X = (\Lambda, \lambda)$ be a one-relator complex and let $\rho\colon Y\immerses X$ be a $\Z$-cover. If $Z\subset Y$ is a one-relator $\Z$-domain, minimal under inclusion amongst one-relator $\Z$-domains, then
\[
c(Z)<c(X).
\]
\end{corollary}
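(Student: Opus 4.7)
The natural strategy is to exploit the lexicographic structure of $c(X) = (c(\lambda), -\chi(X))$: either the first coordinate already decreases, or it stays the same and we must squeeze a decrease out of the Euler characteristic. The restriction $\rho\mid_Z\colon Z \to X$ is an immersion, since $\rho$ is a covering map and $Z \subset Y$ is a subcomplex, so applying \cref{decrease_complexity} gives $c(\lambda_Z)\leqslant c(\lambda)$ at once. When this inequality is strict, we are done: $c(Z) < c(X)$ holds in the lexicographic order regardless of $-\chi$.

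The interesting case is $c(\lambda_Z) = c(\lambda)$. Then \cref{decrease_complexity} additionally tells us that $\rho\mid_{\Ima(\lambda_Z)}$ is injective, and I would aim to prove $\chi(Z)>\chi(X)$. For this I would invoke the graph-of-spaces homotopy equivalence $\mathcal{X}\simeq X$ from the second bullet of \cref{one-relator_splitting}. Computing $\chi$ on the pushout $\mathcal{X} = Z\cup(Z_0\times[-1,1])$, whose intersection with the cylinder is the disjoint union $Z_0\sqcup Z_0$, gives
\[
\chi(X) \;=\; \chi(\mathcal{X}) \;=\; \chi(Z) + \chi(Z_0) - 2\chi(Z_0) \;=\; \chi(Z) - \chi(Z_0),
\]
so $\chi(Z) > \chi(X)$ reduces to $\chi(Z_0) > 0$. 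Since $Z_0 = t^{-1}(Z)\cap Z$ is a non-empty connected graph (by properties (2) and (3) of a $\Z$-domain), this in turn reduces to the statement that $Z_0$ is a tree.

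The crux, and what I expect to be the main obstacle, is extracting `$Z_0$ is a tree' from the minimality of $Z$. I would argue by contrapositive: suppose $Z_0$ contains an embedded cycle $C$. Using the injectivity of $\rho$ on $\Ima(\lambda_Z)$ (together with an edge-count comparison between $|\lambda_Z|$ and the 1-cells of $Z$, analogous to the one used in \cref{decrease_complexity}), I would select a 1-cell $e\subset C$ whose translate $t(e)\subset Z_1$ is not traversed by $\lambda_Z$. Then $Z' := Z\setminus\{t(e)\}$ is still a one-relator complex with the same attaching map. Verifying that $Z'$ is a $\Z$-domain is where the bookkeeping lives: property (1) holds because $t(e)\in t(Z')$ (as $e \in Z$ and $e\ne t(e)$ since $t$ acts freely); property (3) is inherited from $Z$ because removing a single 1-cell from $Z_1$ cannot disturb the descending chain $Z\cap t^iZ\subset Z\cap t^{i-1}Z$ for $i\geqslant 2$; and property (2), which demands $Z'\cap tZ' = Z_1\setminus\{t(e)\}$ be connected, follows precisely because $t(e)$ lies on the cycle $t(C)\subset Z_1$, so its removal does not disconnect $Z_1$. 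This would contradict minimality of $Z$.

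The delicate points I anticipate are (i) guaranteeing the existence of an edge of $C$ whose $t$-translate is avoided by $\lambda_Z$---here the injectivity of $\rho\mid_{\Ima(\lambda_Z)}$ given by the equality case is the key lever, and if it does not yield such an edge directly one may need to first apply \cref{Z-domain_one-relator} to arrange that $\lambda_Z$ traverses a specified distinguished 1-cell, thereby controlling which edges of $Z_1$ are free---and (ii) the parallel argument for property (2) on the `left' side using $Z_0\setminus\{e\}$ if one elects instead to remove $e$ rather than $t(e)$, which may be unavoidable when the injectivity constraint forces the choice.
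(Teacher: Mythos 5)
Your strategy runs broadly parallel to the paper's: apply \cref{decrease_complexity} to reduce to the equality case, establish the Euler characteristic identity $\chi(X) = \chi(Z) - \chi(Z_0)$, and then use minimality of $Z$ to force $\chi(Z_0) = 1$. Your route to the identity is actually cleaner than the paper's: you quote the graph-of-spaces homotopy equivalence from \cref{splitting} directly (which is legitimate, since that homotopy equivalence does not depend on the Freiheitssatz or on this corollary), whereas the paper instead collapses the 2-cells to points, passes to the graph covers $Z'\immerses Y'\immerses X'$, and does a cell count. The two computations are equivalent — one checks $\chi(Z') - \chi(X') = \chi(Z) - \chi(X)$ because the injectivity from the equality case makes $\Ima(\lambda_Z)\to\Ima(\lambda)$ an isomorphism — but yours avoids the quotient bookkeeping. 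Your delicate point (i) also has a clean resolution that you come close to articulating: because $\rho$ is $t$-equivariant and injective on $\Ima(\lambda_Z)$, one can never have both $e\in\Ima(\lambda_Z)$ and $t(e)\in\Ima(\lambda_Z)$ (they would have equal images under $\rho$ but $t$ acts freely), so for any 1-cell of $Z_0$ at least one of $e$ or $t(e)$ is available for removal.

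The genuine gap is in the assertion that ``removing a single 1-cell from $Z_1$ cannot disturb the descending chain $Z\cap t^iZ\subset Z\cap t^{i-1}Z$ for $i\geqslant 2$.'' This is false in general. Set $Z^* = Z\setminus\{t(e)\}$ and compute $Z^*\cap t^i(Z^*) = (Z\cap t^i(Z))\setminus\{t(e),\,t^{i+1}(e)\}$. Taking $i=2$: if $t^2(e)\in Z$ (which can certainly happen, since the descending chain only controls the \emph{shape} of the set $\{\,j : t^j(e)\in Z\,\}$, forcing it to be an interval $[m,M]$ with $M\geqslant 1$ but not $M=1$), then $t^2(e)\in Z^*\cap t^2(Z^*)$ while $t^2(e)\notin Z^*\cap t(Z^*)$, so the chain breaks at $i=2$. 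The fix is to remove the \emph{maximal} translate $t^M(e)$ rather than $t(e)$ — then $t^{M+i-1}(e)\notin Z$ for $i\geqslant 2$ and the chain verification goes through — but one must then re-establish the connectedness of $Z_1\setminus\{t^M(e)\}$, which no longer obviously follows from $e$ lying on a cycle of $Z_0$ unless $M=1$. The paper sidesteps the whole issue by collapsing $\Ima(\lambda_Z)$ to a point \emph{before} appealing to minimality: in the quotient graph $Z'$ every 1-cell of $Z_0'$ is automatically outside the (collapsed) 2-cell image, so the edge-removal argument can be delegated wholesale to \cref{Z-domain_graph}, whose proof carries out exactly this surgery in the purely graph-theoretic setting where the 2-cell constraint has disappeared.
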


\begin{proof}
Let $\lambda_Z$ denote the attaching map of the 2-cell in $Z$. The cover $\rho$ sends $\Ima(\lambda_Z)$ surjectively to $\Ima(\lambda)$. If $c(\lambda_Z)\geqslant c(\lambda)$, then $c(\lambda_Z) = c(\lambda)$ by \cref{decrease_complexity} and $\Ima(\lambda_Z)$ maps isomorphically to $\Ima(\lambda)$. By collapsing each 2-cell to a point in $X$, we obtain an induced commutative diagram
\[
\begin{tikzcd}
Z \arrow[r, hook] \arrow[d, two heads] & Y \arrow[r, "\rho"] \arrow[d, two heads] & X \arrow[d, two heads] \\
Z' \arrow[r, hook]                     & Y' \arrow[r, "\rho'"]                    & X'                    
\end{tikzcd}
\]
where vertical maps are collapse maps. In particular, $Z', Y', X'$ are all 1-complexes. Since $Z' - t^{-1}(Z')\cap Z'$ contains precisely one lift of each cell in $X'$, we have $-\chi(Z') = -\chi(X') -\chi(t^{-1}(Z')\cap Z')$. Hence, by \cref{Z-domain_graph}, since $Z$ was minimal, it follows that $Z'\cap t(Z')$ is a tree and so $-\chi(Z') = -\chi(X') - 1$. This implies that $-\chi(Z)<\chi(X)$.
\end{proof}

\subsubsection{Completing the proof: the Freiheitssatz}

Combining \cref{splitting}, \cref{Z-domain} and \cref{Z-domain_decrease_complexity}, we have proven \cref{one-relator_splitting} under the assumption that the Freiheitssatz holds. This is because in order to apply \cref{splitting}, we needed $\pi_1(Z_0), \pi_1(Z_1)\to \pi_1(Z)$ to be injective.

In order to prove the Freiheitssatz for $X = (\Lambda, \lambda)$, we assume it holds for all one-relator complexes $Z$ with $c(Z)<c(X)$ (the base case $c(X) = 0$ is clear). Let $e\subset \Lambda$ be a 1-cell traversed by $\lambda$ and denote by $\Gamma = \Lambda - e$. By \cref{Z-domain_one-relator}, there is a $\Z$-cover $\rho\colon Y\to \Z$ and a one-relator $\Z$-domain $Z\subset Y$ with $c(Z)<c(X)$ and such that $Z\cap t(Z)$ does not support any attaching map of a 2-cell and such that $\rho^{-1}(\Gamma)\subset Z$. Thus, the homomorphisms $\pi_1(Z_0), \pi_1(Z_1)\to \pi_1(Z)$ are both injective by the inductive hypothesis and we may apply \cref{splitting} to conclude that
\[
\pi_1(X) \isom \pi_1(Z)*_{\psi}.
\]
In particular, $\pi_1(Z)$ injects into $\pi_1(X)$. This implies that two paths in $X$ that lift to $Z$ are path homotopic if and only if they are path homotopic in $Z$. Since $\rho^{-1}(\Gamma)\subset Z$, any pair of paths in $\Gamma$ lift to $Z$. Since each component of $\rho^{-1}(\Gamma)$ is a Magnus subgraph of $Z$, by induction it follows that any pair of paths in $\rho^{-1}(\Gamma)$ are path homotopic in $Z$ if and only if they are path homotopic in $\rho^{-1}(\Gamma)$. Thus, any pair of paths in $\Gamma$ that are path homotopic in $X$ are path homotopic in $\Gamma$. Thus, the map
\[
\pi_1(\Gamma) \to \pi_1(X)
\]
induced by inclusion is injective. We have proven the Freiheitssatz and hence, \cref{one-relator_splitting}.

\subsection{The character sphere and describing all one-relator splittings}
\label{sec:character_sphere}

If $G$ is a group, the \emph{character sphere} $S(G)$ is the following space
\[
S(G) = \left(H^1(G, \R) - \{0\}\right)/\R_{>0}
\]
of equivalence classes of homomorphisms $\phi\colon G\to \R$, where $\phi\sim\phi'$ if $\phi = r\cdot \phi'$ for some $r\in \R_{>0}$.

Let $S$ be a finite set of cardinality $n$, let $F = F(S)$ be a free group, let $w\in F$ be a cyclically reduced word and let $G = F/\normal{w}$ be a one-relator group. Denote by $\Gamma\subset \R^{n} = H^1(F, \R)$ the Cayley graph for $F_{\ab} = \Z^{n}$ over the generating set given by the image of $S$ in $F_{\ab}$. Let $\lambda_w\colon I\to \Gamma$ denote the path at the origin spelling out the word $w$ and denote by $\Lambda_w\subset \R^{n}$ its image. Let $p_w$ denote the endpoint of $\lambda_w$. We have $p_w = 0$ if and only if $w\in [F, F]$. The path $\lambda_w$ is called the \emph{trace} of $w$. See \cref{fig:abelian_cover} for an example.

There is a natural correspondence:
\[
S(G) \leftrightarrow \left\{\text{Directed lines $L\subset \R^{n}$ through the origin with $p_w\in L^{\bot}$}\right\}
\]
where a line $L_{\phi}\subset \R^{n}$ is associated with a homomorphism $\phi\colon G\to \R$ (unique in the $S(G)$ equivalence class) via the composition of the map $G\to H_1(G, \R) = \R^n$ with the orthogonal projection of $\R^{n}$ onto the line $L_{\phi}$. More explicitly, if $p_{\phi}\in L_{\phi}$ is the point at (positive) unit distance from the origin, then each element in $r\cdot p_{\phi} + L_{\phi}^{\bot}$ gets mapped to $r\in \R$.

\begin{figure}
\centering
\includegraphics[scale = 0.5]{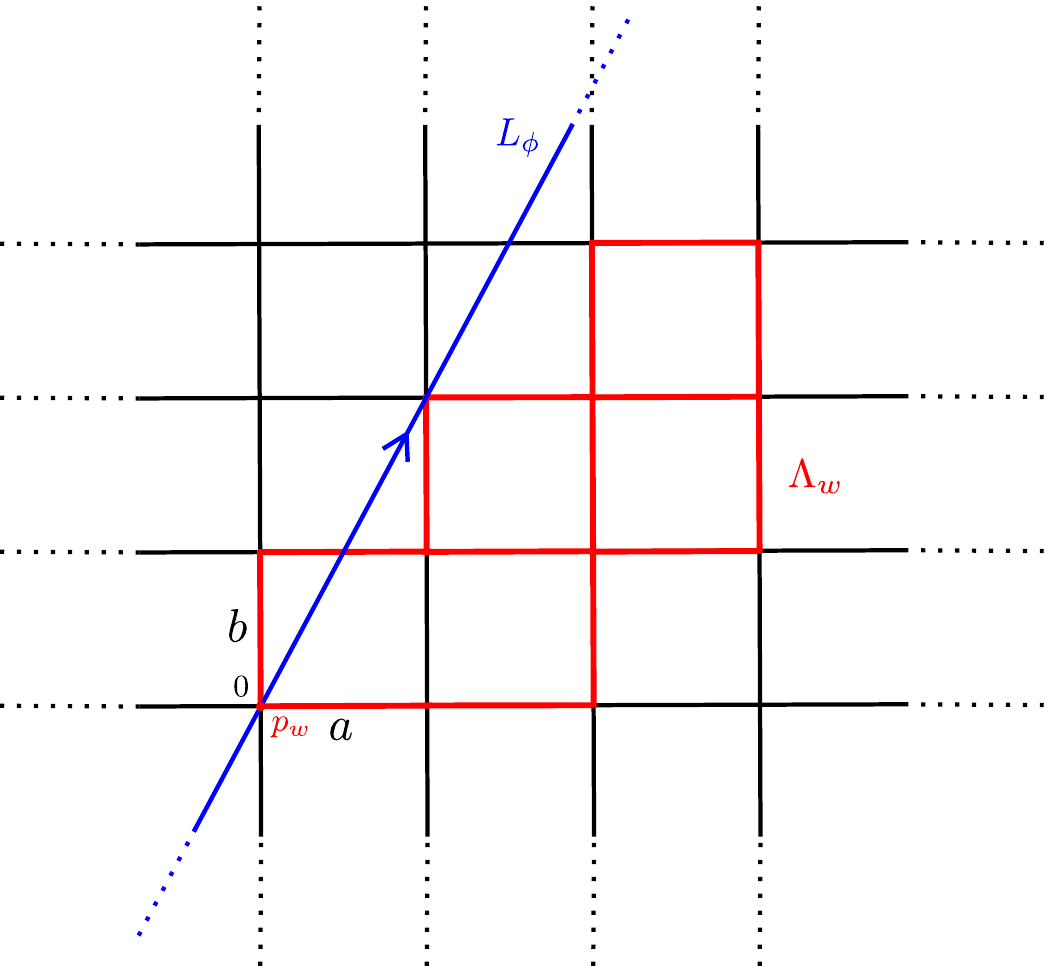}
\caption{The trace of the word $a^2b^3ab^{-2}a^{-1}bab^{-1}a^{-2}bab^{-1}a^{-2}b^{-1}$.}
\label{fig:abelian_cover}
\end{figure}

Now let $X = (\Lambda, \lambda)$ be the presentation complex for $G = F(S)/\normal{w}$ and let $\phi\in H^1(G, \R)$ be a homomorphism $G\to \R$. Let $L_{\phi}\subset \R^{n}$ be the corresponding line. The 1-skeleton of the abelian cover $Y\to X$ associated with $\phi$ is the quotient of $\Gamma$ defined by identifying all $\Z^{n}$ orbits of points that lie in the same plane $r\cdot p_{\phi} + L_{\phi}^{\bot}$ for each $r\in \R$. Note that $Y^{(1)}$ has a natural map $Y^{(1)}\to \R$ given by sending the points in $r\cdot p_{\phi} + L_{\phi}^{\bot}$ to $r\in \R$. The path $\lambda_w\colon I \to \Gamma$ projects to a loop in $Y^{(1)}$. Denote by $\lambda_{\phi}\colon S^1\immerses Y^{(1)}$ the cycle obtained from $I \xrightarrow{\lambda_w} \Gamma\xrightarrow{} Y^{(1)}$ by identifying the endpoints of $I$. The abelian cover $Y$ is then obtained from $Y^{(1)}$ by attaching cycles along the $G$-orbit of $\lambda_{\phi}$. Denote also by $\Lambda_{\phi}\subset Y^{(1)}$ the image of $\Lambda_w$ so that $(\Lambda_{\phi}, \lambda_{\phi}) = Z_{\phi}\subset Y$ is a one-relator subcomplex. This construction can also be generalised for arbitrary one-relator complexes by contracting and expanding a spanning tree, we leave the details to the reader. Call $Z_{\phi}$ \emph{the one-relator complex associated with $\phi$}. Note that $Z_{\phi} \isom Z_{\psi}$ for all $\psi\in \pm[\phi]$.

\begin{exmp}
\label{Z_example}
Let us illustrate how to obtain the one-relator complex $Z_{\phi}$ associated with $\phi$ through an example. In \cref{fig:abelian_cover} we have depicted the trace of the word $w = a^2b^3ab^{-2}a^{-1}bab^{-1}a^{-2}bab^{-1}a^{-2}b^{-1}$. Since $w\in [F(a, b), F(a, b)]$ we have $p_w = 0$ and so any directed line through the origin gives us a homomorphism $\phi\colon G\to \R$ where $G = F(a, b)/\normal{w}$. Let $L_{\phi}\subset \R^2$ be the directed line passing through the origin and through the point $(1, 2)$. Then the induced surjective map $\Lambda_w\to Z^{(1)}_{\phi}$ identifies two pairs of vertices, the pairs $(0, 1), (2, 0)$ and $(1, 2), (3, 1)$, and does not identify any edges. See \cref{fig:projection} for a depiction of $Z_{\phi}^{(1)}$. Then $Z_{\phi}$ is obtained from $Z_{\phi}^{(1)}$ by attaching a 2-cell along the loop $I \to \Lambda_w\to Z_{\phi}^{(1)}$.

Note that by adding the missing $b$-edge and the missing $a$-edge to $Z_{\phi}$ (that is, the $b$-edge connecting $\frac{1}{\sqrt{5}}$ with $\frac{3}{\sqrt{5}}$ and the $a$-edge connecting $\frac{7}{\sqrt{5}}$ with $\frac{8}{\sqrt{5}}$), then we obtain a $\Z$-domain for the $\Z$-cover of the presentation complex induced by $\phi$.
\end{exmp}

\begin{figure}
\centering
\includegraphics[scale = 0.65]{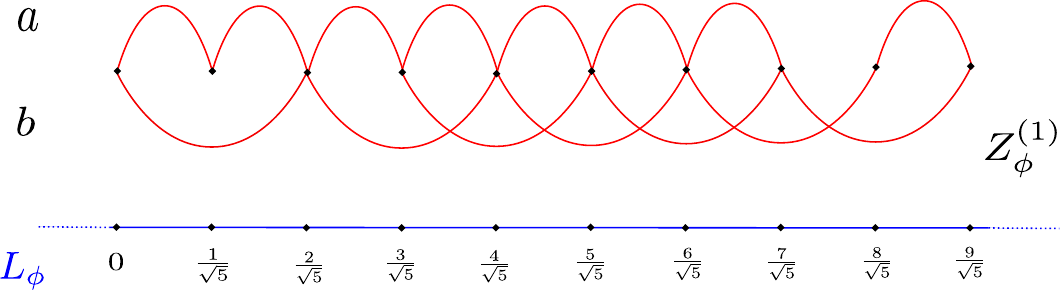}
\caption{The 1-skeleton of $Z_{\phi}$, obtained by projecting $\Lambda_w$ onto $L_{\phi}$.}
\label{fig:projection}
\end{figure}

We have the following which follows from the definition of one-relator splittings and the fact that every 1-cell in $\Lambda_{\phi}$ is traversed by the attaching map $\lambda_{\phi}$.

\begin{lemma}
\label{base_groups}
Let $X$ be a one-relator complex, let $\pi_1(X) \isom \pi_1(Z)*_{\psi}$ be a one-relator splitting with $\phi\colon \pi_1(X) \to \pi_1(X)/\normal{\pi_1(Z)} = \Z$ the associated epimorphism. Then, after possibly replacing $Z$ by a translate, the one-relator complex $Z_{\phi}$ associated with $\phi$ is a subcomplex of $Z$. In particular, $\pi_1(Z)$ splits as a free product of a free group and $\pi_1(Z_{\phi})$.
\end{lemma}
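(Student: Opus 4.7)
The plan is to exhibit $Z_\phi$ inside $Z$ by placing both as subcomplexes of the same $\Z$-cover $\rho\colon Y\to X$ underlying the one-relator splitting, and then to deduce the splitting of $\pi_1(Z)$ from the fact that the single 2-cell of $Z$ is already supported by $Z_\phi$.

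First I would unpack the two constructions. By \cref{one-relator_splitting} the splitting $\pi_1(X)\cong \pi_1(Z)*_\psi$ arises from a $\Z$-cover $\rho\colon Y\to X$ and a one-relator $\Z$-domain $Z\subset Y$ supporting exactly one lift of the attaching cycle $\lambda$ of $X$; the quotient map $\pi_1(X)\surjects \pi_1(X)/\normal{\pi_1(Z)}\cong\Z$ is precisely the deck character of $\rho$, namely $\phi$. On the other hand, unfolding the Cayley-graph construction of \cref{sec:character_sphere} identifies $Z_\phi$ with the following subcomplex of this same $Y$: picking any lift $\tilde\lambda$ of $\lambda$ to $Y$, the subgraph $\Lambda_\phi\subset Y^{(1)}$ consists of the 0-cells and 1-cells traversed by $\tilde\lambda$, and $\lambda_\phi=\tilde\lambda$. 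Indeed, the abelian-cover picture used to define $Z_\phi$ factors through $\rho$, since projecting the trace of $\lambda$ onto the line $L_\phi$ is exactly the cellular image of one lift of $\lambda$ in $Y^{(1)}$.

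Next I would use the $\Z$-domain property to align the two subcomplexes. Since $Y=\bigcup_{i\in\Z}t^i(Z)$, some translate $t^k(Z)$ contains the distinguished lift of $\lambda$ defining $Z_\phi$, and the replacement $Z\rightsquigarrow t^k(Z)$ is again a one-relator $\Z$-domain yielding the same one-relator splitting up to an inner automorphism of $\pi_1(X)$. We may therefore assume that the unique lift of $\lambda$ supported on $Z$ is $\lambda_\phi$ itself; since every cell of $Y$ traversed by $\lambda_\phi$ must then lie in $Z$, we obtain $\Lambda_\phi\subset Z^{(1)}$, and combined with the coincidence of the 2-cells this gives $Z_\phi\subset Z$. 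For the final clause, $Z$ is obtained from $Z_\phi$ by attaching finitely many 0-cells and 1-cells to $\Lambda_\phi$; choosing a spanning tree $T\subset Z^{(1)}$ restricting to a spanning tree of $\Lambda_\phi$ and reading off the standard presentation of $\pi_1(Z)$, the sole relator involves only generators dual to edges of $\Lambda_\phi\setminus T$, so $\pi_1(Z)\cong\pi_1(Z_\phi)*F$ with $F$ freely generated by the edges of $Z^{(1)}\setminus(\Lambda_\phi\cup T)$.

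The main obstacle I anticipate is the equivalent description of $Z_\phi$ as a subcomplex of the specific $\Z$-cover $\rho\colon Y\to X$, rather than of a higher-rank abelian cover, together with the $\phi\leftrightarrow -\phi$ ambiguity in choosing a generator of $\deck(\rho)$. Both are bookkeeping issues that the translate $t^k(Z)$ absorbs, but writing out a clean argument requires a careful unwinding of the construction in \cref{sec:character_sphere}.
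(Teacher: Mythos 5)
Your argument is correct and matches the approach the paper intends; the paper itself only gives a one-line justification (that the lemma ``follows from the definition of one-relator splittings and the fact that every 1-cell in $\Lambda_{\phi}$ is traversed by $\lambda_{\phi}$''), and you have correctly filled in the details: $Z_\phi$ sits inside $Y$ as the support of a distinguished lift of $\lambda$, that lift is the attaching map of a 2-cell which must live in some translate $t^k(Z)$ since translates of a $\Z$-domain cover $Y$ and each 2-cell of $Y$ is supported by exactly one translate, and the free-factor decomposition follows from extending a spanning tree of $\Lambda_\phi$ (connected, being the image of the cycle $\lambda_\phi$) to one of $Z^{(1)}$.
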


In this way we may see all base groups of all one-relator splittings of $\pi_1(X)$ by looking at appropriate quotients of $\Lambda_w, \lambda_w$. In fact, there is always a maximal such quotient, in the sense that this quotient one-relator complex quotients onto all other $Z_{\phi}$. Indeed, we may always choose a homomorphism $\phi\colon G\to \R$ which factors through an epimorphism $G\to \Z$ such that two points $p, q\in \Lambda_w\subset \Gamma$ in the same $\Z^n$-orbit are identified under the quotient map $\Lambda_w\to \Lambda_{\phi}$ if and only if $p + \R\cdot p_w = q + \R\cdot p_w$. Let $\Lambda_{\max}$ denote this maximal quotient and let $\lambda_{\max}\colon S^1\immerses \Lambda_{\max}$ be the induced cycle. Then by construction, the one-relator complex $Z_{\max} = (\Lambda_{\max}, \lambda_{\max})$ admits a unique immersion $Z_{\max}\immerses Z_{\phi'}$ for all epimorphisms $\phi'\colon G\to \Z$. We have proven the following lemma in the case $X$ has a single 0-cell, the more general case is left as an easy exercise to the reader.

\begin{lemma}
If $X$ is a one-relator complex with $\pi_1(X)$ infinite, then there is a unique immersion of a one-relator complex $Z_{\max}\immerses X$ such that $Z_{\max}\immerses X$ factors through all immersions of $\Z$-domains of all $\Z$-covers of $X$, with at least one $\Z$-domain containing $Z_{\max}$ as a subcomplex.
\end{lemma}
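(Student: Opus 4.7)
The strategy is to construct $Z_{\max}$ as the common refinement of all the $Z_\phi$'s described in \S\ref{sec:character_sphere}, deduce the universal factorisation property directly from the construction, and then use a genericity argument on the character sphere to select a single $\phi$ realising $Z_{\max}$ as an honest subcomplex of $Z_\phi$.

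First I would reduce to the case $X = (R_n, \lambda)$ has a single $0$-cell by collapsing a spanning tree $T \subset \Lambda$: the collapse is a homotopy equivalence through one-relator complexes, $\Z$-covers and one-relator $\Z$-domains pull back along it, and the $Z_{\max}$ constructed downstairs reinflates uniquely to $Z_{\max}$ for $X$. In the single-vertex case, with $w = \lambda_*(1) \in F_n$ and trace endpoint $p_w \in \Z^n$, I would define $Z_{\max} = (\Lambda_{\max}, \lambda_{\max})$ as the quotient of $(\Lambda_w, \lambda_w)$ identifying two vertices $p, q \in \Lambda_w$ in the same $\Z^n$-orbit iff $p - q \in \mathrm{span}_{\R}(p_w) \cap \Z^n$; when $w \in [F_n, F_n]$ this lattice is $\{0\}$ and no identifications occur. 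Standard duality for $\Z^n$ gives $\bigcap_{\phi\colon G \surjects \Z} \ker(\phi|_{\Z^n}) = \mathrm{span}_{\R}(p_w) \cap \Z^n$, so $\Lambda_{\max}$ is precisely the coarsest common refinement of the quotients $\Lambda_w \surjects \Lambda_\phi$.

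From this description the factorisation property is immediate. For any epimorphism $\phi \colon G \surjects \Z$, the equivalence $\sim_\phi$ is coarser than $\sim_{\max}$, so $\Lambda_w \surjects \Lambda_\phi$ factors as $\Lambda_w \surjects \Lambda_{\max} \to \Lambda_\phi$. The induced map $Z_{\max} \to Z_\phi$ is an immersion because at each vertex $v \in Z_{\max}$ with $\sim_{\max}$-class $V \subset \Lambda_w$ and $\sim_\phi$-class $V' \supseteq V$, the link of $v$ is $\bigsqcup_{p \in V} \operatorname{lk}(p, \Lambda_w)$, and this includes injectively into the link $\bigsqcup_{p \in V'} \operatorname{lk}(p, \Lambda_w)$ at the image vertex in $Z_\phi$. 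Composing with the inclusion $Z_\phi \injects Z$ given by Lemma \ref{base_groups} for any $\Z$-domain $Z$ of the $\phi$-cover, and with $Z \immerses X$, yields $Z_{\max} \immerses X$ factoring through every $\Z$-domain. Uniqueness is forced because $\lambda_{\max}$ traverses every $1$-cell of $\Lambda_{\max}$: two immersions satisfying the factorisation must agree on the image of the single $2$-cell, hence on everything.

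To produce a $\Z$-domain containing $Z_{\max}$ as a subcomplex, I would choose $\phi$ so that $Z_{\max} \immerses Z_\phi$ is injective. This fails precisely when some pair of distinct vertices $p, q \in \Lambda_w$ in the same $\Z^n$-orbit satisfies $p - q \in \ker(\phi|_{\Z^n}) \setminus \mathrm{span}_{\R}(p_w)$; each such "bad" pair imposes a codimension-$1$ linear condition on $\phi \in H^1(G, \R)$. Since $\Lambda_w$ has only finitely many vertices, there are only finitely many bad pairs, cutting out a finite union of proper rational subspheres of $S(G)$. Since $\pi_1(X)$ is infinite, $S(G) \neq \emptyset$; density of rational classes in the complement of a finite union of proper rational subspheres delivers $\phi \colon G \surjects \Z$ with $Z_{\max} \subseteq Z_\phi$ as subcomplexes, and \cref{Z-domain_one-relator} applied to the $\phi$-cover furnishes a one-relator $\Z$-domain $Z$ containing $Z_\phi$, hence $Z_{\max}$, as a subcomplex.

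The main obstacle I foresee is the degenerate rank-one case, where $G^{\mathrm{ab}}$ has rank $1$ (forcing $n = 2$ since $p_w \neq 0$); here the density argument collapses because, up to sign, only one $[\phi] \in S(G)$ exists and there are no non-trivial hyperplanes to avoid. This case must be treated directly: since $\ker(\phi|_{\Z^2})$ is a primitive rank-$1$ sublattice of $\Z^2$ containing $\Z p_w$, it coincides with $\mathrm{span}_{\R}(p_w) \cap \Z^2$, so $Z_\phi = Z_{\max}$ automatically. A secondary technical point is confirming that $\lambda_{\max}$ is still an immersion after making the identifications prescribed by $\sim_{\max}$, which follows from the same label-tracking observation used to verify that $Z_{\max} \to Z_\phi$ is an immersion.
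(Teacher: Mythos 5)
Your proposal is correct and follows essentially the same approach as the paper: define $Z_{\max}$ as the quotient of $\Lambda_w$ identifying vertices whose difference lies in $\R p_w \cap \Z^n$, observe that this quotient is universal among the $\Lambda_\phi$'s, and then pick a sufficiently generic $\phi$ factoring through $\Z$ so that $Z_\phi = Z_{\max}$ exactly. (Two small slips worth noting: the link of $v$ in $\Lambda_{\max}$ is a union, not a disjoint union, of the links of its preimages since like-labelled edge-ends collapse; and the final $\Z$-domain should be produced by \cref{Z-domain} for the chosen $\phi$-cover and then enlarged via \cref{base_groups}, rather than by \cref{Z-domain_one-relator}, which selects its own cover.)
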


Another way to see $Z_{\max}$ is as a minimal one-relator subcomplex of the maximal torsion-free abelian cover of $X$. This is explained in more detail in \cite{Lin22_thesis}.

As an illustration of how to obtain $Z_{\max}$, in \cref{Z_example} we may take the line $L_{\phi}$ to pass through the origin and $(1, 3)$ instead of $(1, 2)$. In this way, no vertices are identified under the projection map $\Lambda_w \to Z_{\phi}$.

\section{Splittings of one-relator groups}
\label{sec:splittings}

\subsection{Magnus subgroups and the Bass--Serre theory of one-relator splittings}

In this subsection, we discuss some properties of the action of a one-relator group on the Bass--Serre tree associated with a one-relator splitting. In order to understand the action, we need to understand the interaction between the edge groups and their conjugates. Thus, in the first part of this subsection we shall look at intersections of Magnus subgroups.

\subsubsection{Intersections of Magnus subgroups}

Two results due to Collins are particularly important for this topic. The first involves exceptional intersections. A pair of Magnus subgraphs $A, B\subset X$ of a one-relator complex with $A\cap B$ are said to have \emph{exceptional intersection} if $\pi_1(A)\cap \pi_1(B)\neq \pi_1(A\cap B)$. In other words, if the intersection is not the obvious one. Collins characterised exceptional intersections in \cite{Co04}. Howie also characterised exceptional intersections for one-relator products and presented an algorithm to compute intersections of Magnus subgroups in \cite{Ho05}.

\begin{theorem}
\label{Collins_intersections_1}
Let $X = (\Lambda, \lambda)$ be a one-relator complex and let $A, B\subset X$ be Magnus subgraphs such that $A\cap B$ is connected. One of the following holds:
\begin{itemize}
\item $\pi_1(A)\cap \pi_1(B) = \pi_1(A\cap B)$.
\item $\pi_1(A)\cap \pi_1(B) = \pi_1(A\cap B)*\langle c\rangle$ for some $c\in \pi_1(X)$.
\end{itemize}
Moreover, if $\deg(\lambda)\geqslant 2$, then only the first situation occurs.
\end{theorem}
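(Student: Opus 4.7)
My plan is to prove this by induction on the complexity $c(X)$ using the topological hierarchy (\cref{topological_hierarchy}), combined with a case analysis on whether the subgraph $A \cup B$ supports the attaching map $\lambda$. The base case $c(X) = (0,0)$ is trivial, since then $\pi_1(X)$ is finite cyclic and every Magnus subgraph has trivial fundamental group. For the inductive step, I would take a one-relator splitting $\pi_1(X) \isom \pi_1(Z) *_\psi$ from \cref{one-relator_splitting} with $c(Z) < c(X)$, and study the action of $\pi_1(A)$, $\pi_1(B)$, and their intersection on the associated Bass--Serre tree $T$.

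The first observation is that if $A \cup B$ is itself a Magnus subgraph, then the first conclusion is immediate. Indeed, by the Freiheitssatz (\cref{topological_Freiheitssatz}), $\pi_1(A \cup B)$ embeds into $\pi_1(X)$ as a free group, and the van Kampen decomposition realises $\pi_1(A \cup B)$ as $\pi_1(A) *_{\pi_1(A \cap B)} \pi_1(B)$ inside this free group; since $A \cap B$ is connected, the standard fact that in any amalgamated free product $G_1 *_H G_2$ one has $G_1 \cap G_2 = H$ (or, equivalently, the fact that intersections of fundamental groups of connected subgraphs of a graph are fundamental groups of their intersection) yields $\pi_1(A) \cap \pi_1(B) = \pi_1(A \cap B)$.

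So assume $A \cup B$ supports $\lambda$. Any element of $\pi_1(A) \cap \pi_1(B) \setminus \pi_1(A \cap B)$ is then realised by a loop in $A$ that is homotopic in $X$ to a loop in $B$ only via a homotopy which genuinely crosses the 2-cell. The heart of the argument is to show that, modulo $\pi_1(A \cap B)$, only one such element can arise, giving the extra factor $\langle c \rangle$. Using \cref{Z-domain_one-relator}, I would choose a $\Z$-cover so that a 1-cell of $\lambda$ missing from, say, $A$ lies in the deck-translate direction, and so that the connected components of the preimages of $A$ and $B$ inside a suitable $\Z$-domain $Z$ are Magnus subgraphs of $Z$. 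The intersection $\pi_1(A) \cap \pi_1(B)$ then acts on the Bass--Serre tree of the splitting; elements fixing a vertex fall under the inductive hypothesis inside $\pi_1(Z)$, while hyperbolic elements correspond to the "crossings" of the unique lift of $\lambda$ inside $Z$. Because $Z$ supports precisely one attaching map of a 2-cell, only a single translation length can occur, which produces the single cyclic factor $\langle c \rangle$.

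For the moreover statement, suppose $\deg(\lambda) \geq 2$. The plan is to invoke the malnormality of Magnus subgroups in one-relator groups with torsion (\cref{Thm:Newman-malnormal-thm}, B.\ B.\ Newman's theorem), together with the fact that $\deg$ of the lifted attaching map is preserved under the one-relator splittings used in \cref{topological_hierarchy}. A hypothetical extra generator $c$ would, by the above analysis, produce a non-trivial element in the intersection of distinct conjugates of Magnus subgroups of $\pi_1(Z)$ appearing as edge stabilizers in $T$; combined with the inductive hypothesis applied to $Z$, this contradicts malnormality. I expect the main obstacle to be precisely this bookkeeping step: ensuring that the lifts $\tilde A, \tilde B$ really are Magnus subgraphs of $Z$ and that the candidate element $c$ can be canonically pinned down in the Bass--Serre picture is delicate, particularly in the case where both $A$ and $B$ contain the 1-cell singled out by the $\Z$-cover, where the refined $\Z$-domain from \cref{Z-domain_one-relator} rather than the coarser \cref{Z-domain} is essential.
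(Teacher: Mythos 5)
The paper does not prove this statement: it is Collins' theorem, cited to \cite{Co04} (and to Howie's \cite{Ho05} for the algorithmic refinement), so there is no internal proof to compare against. Your strategy---induction on $c(X)$ through the topological hierarchy, with the Bass--Serre action doing the bookkeeping---is nonetheless in the right spirit and structurally close to how such results are proved, and your treatment of the case where $A\cup B$ does not support $\lambda$ is correct: the Freiheitssatz embeds the graph $A\cup B$, and the graph-theoretic intersection formula for connected subgraphs with connected intersection gives $\pi_1(A)\cap\pi_1(B)=\pi_1(A\cap B)$ on the nose.

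The gap is at the central step. You assert that because $Z$ supports a single attaching map, ``only a single translation length can occur, which produces the single cyclic factor $\langle c\rangle$.'' That inference does not follow: the number of $2$-cells in the $\Z$-domain does not on its face bound the number of independent hyperbolic elements of $\pi_1(A)\cap\pi_1(B)$ acting on $T$, nor relate their axes or translation lengths. More fundamentally, the theorem makes two assertions that your proposal conflates and establishes neither of: that $\pi_1(A\cap B)$ is a genuine \emph{free factor} of $\pi_1(A)\cap\pi_1(B)$ (rather than merely a subgroup), and that the complementary factor has rank at most one. A rank bound such as the one supplied by \cref{inert} does not yield a free-factor decomposition, and nothing in what you have written rules out $\pi_1(A)\cap\pi_1(B)$ being, say, $\pi_1(A\cap B)*F_2$, or some subgroup containing $\pi_1(A\cap B)$ in a non-free-factor way. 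Pinning this down---roughly, analysing how an arbitrary element of the intersection decomposes into alternating edge-group segments over the splitting and showing that all exceptional crossings can be uniformised against a single $c$---is the real content of the theorem, and your proposal names the difficulty (the lifts of $A$, $B$ into $Z$ when both contain the distinguished $1$-cell, and the non-ellipticity of $\pi_1(A)$, $\pi_1(B)$ in that situation) without resolving it. The ``moreover'' direction is more plausible: Newman's malnormality and the resulting acylindricity (\cref{acylindrical}) really should exclude the exceptional factor when $\deg(\lambda)\geq 2$, but even there one must say exactly which segment a hypothetical $c$ and a conjugate of it would jointly stabilize, and the argument as sketched does not pin that down.
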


Howie's result \cite{Ho05} provides further information: if $\pi_1(A)\cap \pi_1(B)$ have an exceptional intersection, then there is an immersion of a one-relator complex $Z\immerses X$ which expresses this intersection in an explicit way. This extra information was exploited heavily in \cite{Lin24}.

The second result involves intersections between conjugates of Magnus subgroups and was proved by Collins in \cite{Co08}. These turn out to be much more controlled.

\begin{theorem}
\label{Collins_intersections_2}
Let $X = (\Lambda, \lambda)$ be a one-relator complex and let $A, B\subset X$ be Magnus subgraphs such that $A\cap B$ is connected. If $g\in \pi_1(X)$, one of the following holds:
\begin{itemize}
\item $g\in \pi_1(A)\cdot \pi_1(B)$, in which case $\pi_1(A)^g\cap \pi_1(B) = (\pi_1(A)\cap\pi_1(B))^b$ for some $b\in \pi_1(B)$.
\item $\pi_1(A)^g\cap \pi_1(B) = 1$.
\item $\pi_1(A)^g\cap \pi_1(B) = \langle c\rangle$ for some $c\in \pi_1(X)$.
\end{itemize}
Moreover, if $\deg(\lambda)\geqslant 2$, then only the first two situations occur.
\end{theorem}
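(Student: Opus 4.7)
The plan is to argue by induction on the complexity $c(X) = (c(\lambda), -\chi(X))$, exploiting the topological hierarchy (\cref{topological_hierarchy}) and the one-relator splittings supplied by \cref{one-relator_splitting} and \cref{Z-domain_one-relator}. The first bullet follows formally from the $g=1$ case of \cref{Collins_intersections_1}: writing $g = a b$ with $a \in \pi_1(A)$ and $b \in \pi_1(B)$ gives $\pi_1(A)^g \cap \pi_1(B) = \pi_1(A)^b \cap \pi_1(B) = (\pi_1(A) \cap \pi_1(B))^b$. So the real content is to show that if $g \notin \pi_1(A) \cdot \pi_1(B)$, then $\pi_1(A)^g \cap \pi_1(B)$ is either trivial or infinite cyclic, with the cyclic case excluded when $\deg(\lambda) \geq 2$.

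The base case of the induction is $c(\lambda) = 0$. Here \cref{rem:base_case} gives $\pi_1(X) \cong F * \Z/n\Z$ with $n = \deg(\lambda)$, and each Magnus subgroup of $\pi_1(X)$ is (up to conjugacy) a free factor of this free product. The Bass--Serre tree of the free product splitting has trivial edge stabilizers, so the Kurosh subgroup theorem forces any conjugate intersection of two free factors to be either trivial, conjugate into the $\Z/n\Z$ factor (hence cyclic, and non-trivial only if $n = 1$), or a $\pi_1(B)$-conjugate of $\pi_1(A) \cap \pi_1(B)$ itself --- exactly the three possibilities, with the cyclic case excluded when $n \geq 2$.

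For the inductive step, invoke \cref{Z-domain_one-relator} to produce a $\Z$-cover $\rho\colon Y \to X$ and a one-relator $\Z$-domain $Z \subset Y$ with $c(Z) < c(X)$, yielding a one-relator splitting $\pi_1(X) \cong \pi_1(Z) *_\psi$ whose edge groups $\pi_1(Z_0), \pi_1(Z_1)$ are Magnus subgroups of $Z$. Let $T$ denote the associated Bass--Serre tree. After an appropriate choice of the $1$-cell of $\Lambda$ in \cref{Z-domain_one-relator} --- ideally one lying outside $A \cup B$ --- the Magnus subgraphs $A$ and $B$ lift into a single translate of $Z$, so both $\pi_1(A)$ and $\pi_1(B)$ act elliptically on $T$. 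If $\pi_1(A)^g$ and $\pi_1(B)$ fix a common vertex, then after absorbing a vertex-group element into $g$ we reduce to a problem inside $\pi_1(Z)$ and conclude by the inductive hypothesis (with $\deg$ unchanged) together with \cref{Collins_intersections_1}. Otherwise $\pi_1(A)^g \cap \pi_1(B)$ fixes the geodesic in $T$ between the two fixed vertices, hence lies in an intersection of conjugates of edge groups along that geodesic; iterating \cref{Collins_intersections_1} down this chain yields that the intersection is trivial or cyclic, and the moreover clause of \cref{Collins_intersections_1} forbids the cyclic possibility when $\deg(\lambda) \geq 2$, because that would require a chain of exceptional Magnus intersections which cannot occur in the torsion case.

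The main obstacle I expect is the case in which no $1$-cell of $\Lambda$ is available outside $A \cup B$, forcing the chosen $\Z$-cover to make $\pi_1(A)$ or $\pi_1(B)$ act hyperbolically on $T$. In that situation one must either replace $A$ or $B$ by a conjugate Magnus subgraph that lifts into a translate of $Z$, or directly analyze the intersection of a hyperbolic element's axis with the elliptic subtree, again using \cref{Collins_intersections_1} along that axis to bound the intersection. A secondary subtlety is keeping track of the conjugating element $b$ in the first bullet through the inductive passage from $X$ to $Z$: the element produced at the bottom of the hierarchy naturally lives in $\pi_1(Z)$, and one must verify using the explicit HNN-description of the splitting that it can be represented by an element of $\pi_1(B)$ at the level of $X$.
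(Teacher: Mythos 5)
The survey cites this theorem to Collins [Co08] without reproducing a proof, so there is no in-paper argument to compare against; I will therefore evaluate your blind attempt on its own merits. Your treatment of the first bullet is a correct formal computation, and the overall strategy (induction on complexity via one-relator splittings, Bass--Serre analysis, with \cref{Collins_intersections_1} as the engine) is a reasonable way to try to reprove Collins's result. However, the base case argument does not work as written. The Kurosh theorem does not deliver the trichotomy you ascribe to it, and the remark ``conjugate into the $\Z/n\Z$ factor, hence cyclic, and non-trivial only if $n=1$'' is backwards ($\Z/1\Z$ is trivial) and in any case a red herring: both $\pi_1(A)^g$ and $\pi_1(B)$ lie in conjugates of the torsion-free free factor $F$, so a non-trivial intersection never meets a conjugate of $\Z/n\Z$, and the base case reduces to the free group $F$. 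What you actually need there, and do not supply, is the graph-theoretic statement that for subgroups $P=\pi_1(A)$, $Q=\pi_1(B)$ of a free group coming from subgraphs with $A\cap B$ connected, one has $P^g\cap Q=1$ whenever $g\notin PQ$; this follows from the Stallings fibre product $A\times_\Lambda B\cong A\cap B$ having a single non-contractible component, but it is a concrete step that must be made.

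The more serious gap is the inductive step. You concede that the ``ideal'' choice of a $1$-cell outside $A\cup B$ may not exist; indeed $A\cup B=\Lambda$ is routine (e.g.\ $A$ misses only $e_1$ and $B$ misses only $e_2$), and then one of $\pi_1(A),\pi_1(B)$ contains elements acting hyperbolically on $T$. Neither of your proposed remedies resolves this: replacing a Magnus subgraph by a conjugate does not change which $1$-cells it contains, and ``directly analyze the intersection of a hyperbolic element's axis with the elliptic subtree'' is a programme rather than an argument. This non-elliptic case is where the bulk of Collins's case analysis lives, and without it the induction does not close. There are also secondary problems: along the geodesic in $T$ the relevant stabilizers are iterated intersections of conjugates of the edge group $\pi_1(Z_0)$, so the tool you should be iterating is the conjugate statement (the inductive hypothesis for $\pi_1(Z)$), not \cref{Collins_intersections_1}; a cyclic iterated intersection can appear simply because $\pi_1(Z_0)\cap\pi_1(Z_0)^h$ is cyclic, without any ``chain of exceptional Magnus intersections'', so your explanation of why the torsion case excludes the third bullet is not correct; and you have not checked that the lifts of $A$ and $B$ into $Z$ intersect in a connected subgraph, which \cref{Collins_intersections_1} requires as a hypothesis.
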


The \emph{rank} $\rk(G)$ of a group $G$ is the smallest cardinality of a generating set for $G$. The \emph{reduced rank} is then defined as:
\[
\rr(G) = \min{\{0, \rk(G)-1\}}.
\]
A much stronger, but less precise result on intersections of Magnus subgroups of one-relator groups was proven by Jaikin-Zapirain and the first author in \cite{JZL23}.

\begin{theorem}
\label{inert}
If $G = F/\normal{w}$ is a one-relator group and $A\leqslant G$ is a Magnus subgroup, then for any finitely generated subgroup $H\leqslant G$, we have
\[
\sum_{HgA}\rr(A\cap H^g)\leqslant \rr(H).
\]
\end{theorem}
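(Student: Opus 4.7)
My plan is to prove this Hanna-Neumann-type inequality by induction on the length of the Magnus--Moldavanski\u{\i} hierarchy (Theorem~\ref{topological_hierarchy}), after first recasting reduced rank as a Sylvester matrix rank function over a suitable division ring so that the invariant behaves additively under graph-of-groups decompositions. Concretely, I would use the strong Atiyah conjecture for torsion-free one-relator groups (Jaikin-Zapirain--L\'opez-\'Alvarez, as referenced earlier in the survey) to embed $\mathbb{Q}G$ into a Hughes-free division ring $\mathcal{D}_G$, and replace $\rr(H)$ throughout by the homological invariant $\widetilde{\rr}_{\mathcal{D}_G}(H) := \dim_{\mathcal{D}_G} H_1(H,\mathcal{D}_G) - \dim_{\mathcal{D}_G} H_0(H,\mathcal{D}_G) + 1$, which agrees with $\rr(H)$ for free $H$ and lower-bounds it in general. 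One-relator groups with torsion are virtually torsion-free (Theorem~\ref{Thm:Fischer-Karrass-Solitar}), so the torsion case reduces to the torsion-free case via a finite-index passage combined with standard multiplicativity of Lück dimensions.

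The base case is $c(X)=0$, so by Remark~\ref{rem:base_case} the group $G$ is a free product of a free group with a (possibly trivial) cyclic group, and every Magnus subgroup is (up to conjugation) a Magnus subgraph of the free factor. Here the desired bound follows from the Strengthened Hanna Neumann inequality of Friedman and Mineyev, combined with the Kurosh subgroup theorem to separate the free and torsion contributions. For the inductive step, \cref{one-relator_splitting} gives a splitting $G \cong \pi_1(Z) *_{\psi}$ as an HNN extension over a Magnus subgroup $E \leqslant \pi_1(Z)$ of a one-relator group $\pi_1(Z)$ of strictly smaller complexity. After conjugating, any Magnus subgroup $A$ of $G$ fixes a vertex of the Bass--Serre tree $T$ of this splitting and therefore lies inside some conjugate of $\pi_1(Z)$, where it is a Magnus subgroup in the sense of the induction.

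Given a finitely generated $H\leqslant G$, let $H$ act on $T$ with vertex stabilizers $H_v$ and edge stabilizers $H_e$ (both finitely generated by local finiteness of the quotient, using Theorems~\ref{Collins_intersections_1} and \ref{Collins_intersections_2} to control intersections of conjugates of $E$ with $H$). The equivariant Mayer--Vietoris sequence for $T$ with coefficients in $\mathcal{D}_G$ yields a graph-of-groups identity
\[
\widetilde{\rr}_{\mathcal{D}_G}(H) = \sum_{v}\widetilde{\rr}_{\mathcal{D}_G}(H_v) - \sum_{e}\widetilde{\rr}_{\mathcal{D}_G}(H_e) + \chi(H\backslash T),
\]
and an analogous identity holds for each intersection $A \cap H^g$ after further restricting the action of that intersection to $T$. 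The inductive hypothesis applied to each vertex group $\pi_1(Z)^{h}$ (with the Magnus subgroup $A$ replaced by its intersection with that vertex group) bounds the vertex-wise sums, and the edge-term contributions are absorbed using Collins's malnormality-type statements which force the edge intersections $A\cap E^{g}$ to vanish or to embed into a vertex-wise intersection already counted.

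The main obstacle I anticipate is precisely the edge-group bookkeeping: in the HNN decomposition, the edge group $E$ is itself a Magnus subgroup of $\pi_1(Z)$, and to close the induction one must show that the $H$-orbits of $A$-cosets and of $E$-cosets interact so that the Euler-characteristic correction $\chi(H\backslash T)$, the edge sums $\sum_e \widetilde{\rr}_{\mathcal{D}_G}(H_e)$, and the intersection sums $\sum_{HgE}\widetilde{\rr}_{\mathcal{D}_G}(E\cap H^g)$ fit together with the correct sign. This will require invoking \cref{Collins_intersections_1} and \cref{Collins_intersections_2} repeatedly to guarantee that the Sylvester-rank inequalities assembled from the hierarchy telescope cleanly, and careful attention to the torsion case where the relator is a proper power and one works inside a finite-index torsion-free subgroup of $G$ before projecting the estimate back down.
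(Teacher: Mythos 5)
The survey states this theorem as a result of \cite{JZL23} without reproducing a proof, so I compare against that reference. Your overall strategy—interpret $\rr$ through a Hughes-free division ring for $\Q G$ and induct on the Magnus hierarchy—is in the right technological neighbourhood, but there is a concrete gap in your inductive step. You assert that after conjugation any Magnus subgroup $A\leqslant G$ fixes a vertex of the Bass--Serre tree of the one-relator splitting, hence lies in a conjugate of $\pi_1(Z)$. This is false: a one-relator splitting comes from an epimorphism $\phi\colon G\to\Z$, and $A=\langle T\rangle$ is elliptic in the associated tree exactly when $\phi$ vanishes on $T$. For instance, in $\bs(2,3)=\langle a,t\mid ta^2t^{-1}=a^3\rangle$ the Magnus subgroup $\langle t\rangle$ contains the stable letter and acts hyperbolically; since $a$ has nonzero exponent sum in the relator, \emph{no} epimorphism to $\Z$ vanishes on $t$, so the obstruction cannot be removed by switching to another one-relator splitting. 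The same failure occurs with $A$ of any rank, e.g. $A=\langle a_2,\ldots,a_g\rangle\leqslant\langle a_1,\ldots,a_g\mid a_1^2\cdots a_g^2\rangle$. Your induction never closes in these cases, and the proposal does not address them. Note also that the argument in \cite{JZL23} is of a more algebraic flavour: following Lewin--Lewin, it works with the Hughes-free division ring $\mathcal D$ of $\Q G$ and the $(A,G)$-freeness property (injectivity of the multiplication $\mathcal D_A\otimes_{\Q A}\Q G\to\mathcal D$), which is established inductively along the hierarchy; for that induction the relevant subgroups are the \emph{edge} Magnus subgroups, which by construction \emph{are} always elliptic in the splitting, and the rank inequality is then a homological consequence that never requires $A$ itself to be elliptic.

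Two smaller points. Your torsion reduction is not routine: for a torsion-free finite-index $G'\leqslant G$, the intersection $A\cap G'$ need not be a Magnus subgroup of $G'$ in any usable sense, and the double-coset sums on the two sides do not transform by the index in an obvious way. Also, your formula $\widetilde{\rr}_{\mathcal{D}_G}(H)=\dim H_1-\dim H_0+1$ is off by one: for a free group $F_n$ it returns $n$ rather than $\rr(F_n)=n-1$, so the claim that it ``agrees with $\rr(H)$ for free $H$'' is false as written.
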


Dicks--Ventura introduced the notion of being inert in \cite{DV96}: a subgroup $A\leqslant G$ is \emph{inert} if for all finitely generated subgroups $H\leqslant G$ we have $\rk(A\cap H)\leqslant \rk(H)$. \cref{inert} says that Magnus subgroups of one-relator groups are \emph{strongly inert}.

\subsubsection{The action on the Bass--Serre tree}

Recall that if $G \isom H*_{\psi}$ is a HNN-extension, where $\psi\colon A\to B$ is the identifying isomorphism and $t\in G$ is the stable letter, then the \emph{Bass--Serre tree} associated with the splitting is the tree $T$ with vertices and edges:
\begin{align*}
V(T) &= \bigsqcup_{gH\in G/H}gH\\
E(T) &= \bigsqcup_{gA\in G/A} gA
\end{align*}
and adjacency given by:
\begin{align*}
o(gA) &= gH\\
t(gA) &= gtH.
\end{align*}
A priori it is not obvious why this is a tree: the fundamental theorem of Bass--Serre theory states that it is, see Serre's monograph \cite{Se80} for details. The Bass--Serre tree has a natural left $G$ action with a single orbit of vertices and a single orbit of edges.

If $gH\in V(T)$ is a vertex at distance $n$ from the vertex $H\in V(T)$, then we have
\[
gH = h_1t^{\epsilon_1}h_1\ldots h_nt^{\epsilon_n}H
\]
for some sequence of elements $h_i\in H$ and $\epsilon_i = \pm1$. 

If $g_1H, g_2H\in V(T)$, denote by $S_{g_1H, g_2H}\subset T$ the unique geodesic segment connecting $g_1H$ with $g_2H$. Then we have
\begin{align*}
\stab\left(S_{g_1H, g_2H}\right) &= \stab(g_1H)\cap \stab(g_2H)\\
							&= H^{g_1^{-1}}\cap H^{g_2^{-1}}\\
					&= (H\cap H^{g_2^{-1}g_1})^{g_1^{-1}}\\
					&= \stab\left(S_{H, g_1^{-1}g_2H}\right)^{g_1^{-1}}
\end{align*}
where $\stab(-)$ denotes the pointwise stabiliser. Note that the length of $S_{g_1H, g_2H}$ is precisely the minimal number of $t$-letters which appear in the reduced form of $g_1^{-1}g_2$. Call this the \emph{$t$-length}. More precisely, the $t$-length of an element $g\in H*_{\psi}$, denoted by $|g|_t$, is the smallest number of $t^{\pm1}$-letters that appear in a word over $H$ and $t^{\pm1}$ that equals $g$ in $H*_{\psi}$. 

If $n\geqslant 0$ is any integer, denote by
\[
W_n = \{ g\in G \mid |g|_t = n\}.
\]
By definition, we have $W_0 = H$. More generally, we have:
\[
d(H, gH) = n \iff g\in W_n.
\]

Using Collin's results, in \cite{Lin22} the following result is shown.

\begin{theorem}
If $G\isom H*_{\psi}$ is a one-relator splitting, $\psi\colon A\to B$ is the identifying isomorphism, $t\in G$ is the stable letter and $n\geqslant 1$ is an integer, then
\[
\sum_{g\in HW_nH} \rr(H\cap H^g)\leqslant \rr(A) + \rr(B).
\]
If $A\cap B$ has no exceptional intersection, then for some $k\geqslant 1$, we have
\[
\rr(H\cap H^g) = 0,
\]
for all $g\in HW_nH, n\geqslant k$.
\end{theorem}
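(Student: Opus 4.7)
The plan is to pass to the Bass--Serre tree $T$ of the splitting $G = H *_\psi$, on which $H$ acts with the base vertex $v_0 = H$ as a fixed point. Under this action the $H$-orbits of vertices of $T$ at combinatorial distance $n$ from $v_0$ correspond bijectively to the double cosets $H\backslash W_n / H$, and for a representative $g$ the intersection $H \cap H^g$ agrees (up to abstract isomorphism) with the pointwise stabiliser in $H$ of the geodesic from $v_0$ to $g^{-1}v_0$, which is in turn the intersection of the $n$ edge stabilisers along that geodesic.

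For the first inequality I would proceed by induction on $n$. The base case $n=1$ is direct: the two double cosets $HtH$ and $Ht^{-1}H$ yield stabilisers $A$ and $B$, summing to $\rr(A) + \rr(B)$. For the inductive step, fix an $H$-orbit $O'$ at distance $n-1$ with representative $v'$ and put $K = \stab_H(v')$. The $H$-orbits at distance $n$ having parent orbit $O'$ are in bijection with the $K$-orbits of forward edges at $v'$, and for each such child orbit $O$ with representative edge $e$ we have $\stab_H(v_O) = K \cap \stab_G(e)$. The unique back edge from $v'$ toward $v_0$ is pointwise fixed by $K$ (since $K$ stabilises the full geodesic $v_0v'$), giving a singleton $K$-orbit with $K$-stabiliser $K$, which contributes exactly $\rr(K)$ to the total sum of reduced ranks of $K$-orbit stabilisers over all edges incident to $v'$. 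After identifying $\stab_G(v') \cong H$ and transferring the $K$-action to a conjugate $K' \leq H$, the edges incident to $v_0$ fall into two families parameterised by $H/A$ and $H/B$; applying \cref{inert} to $K'$ and each of $A$, $B$ bounds the contribution of each family by $\rr(K') = \rr(K)$, for a grand total of $2\rr(K)$ on all edges. Subtracting the back-edge contribution $\rr(K)$ yields $\sum_{O\to O'} \rr(\stab_H(v_O)) \leq \rr(K) = \rr(\stab_H(v_{O'}))$. Summing over $O'$ at distance $n-1$ and applying the inductive hypothesis gives the desired bound $\rr(A) + \rr(B)$. Throughout this induction, all stabilisers $K$ that arise are automatically finitely generated, since the summed-rank bound prevents $\rr(K) = \infty$, validating the use of \cref{inert}.

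For the second statement, assume $A\cap B$ has no exceptional intersection. Then \cref{Collins_intersections_1} together with the first case of \cref{Collins_intersections_2} shows that any intersection of two Magnus conjugates in $H$ that is neither trivial nor infinite cyclic is a conjugate of $\pi_1(A\cap B)$, realised combinatorially by the Magnus subgraph intersection $A\cap B$ in the one-relator complex for $H$, as refined by Howie \cite{Ho05}. My plan is to show that $\stab_H(v)$ at a vertex $v$ at distance $n$, which by the first part appears as a nested intersection of Magnus conjugates in $H$, admits (up to $G$-conjugation) a graph-theoretic description as an intersection of nested Magnus subgraphs of a one-relator complex modelling $H$, and that each step in this nesting either produces a trivial or cyclic subgroup (so $\rr = 0$) or strictly reduces the combinatorial size of the intersection subgraph. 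Since this subgraph complexity is a non-negative integer bounded uniformly in terms of the one-relator complex, the descent must terminate in uniformly bounded time; any $k$ exceeding that bound will work. The main obstacle will be formalising this strict monotonicity of subgraph complexity under the no-exceptional-intersection hypothesis. I expect this to follow from a careful case analysis of Howie's combinatorial intersection description, together with the observation that a non-trivial, non-cyclic intersection of Magnus subgraphs must contain a cell that a generic further Magnus conjugate will avoid, so that the nesting cannot stabilise with a non-cyclic subgraph.
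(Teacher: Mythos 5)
Your proof of the first inequality is correct, and it is a genuinely different route from the one in \cite{Lin22}. The paper indicates that the original argument proceeds via Collins' intersection theorems; you instead invoke the strong inertness of Magnus subgroups (\cref{inert}), a result from \cite{JZL23} that postdates \cite{Lin22}. Your induction on distance in the Bass--Serre tree, applying \cref{inert} twice at each vertex and subtracting off the back-edge contribution, is clean, and the finiteness needed to apply \cref{inert} at level $n-1$ is correctly supplied by the inductive hypothesis. The tradeoff is that you import a deep theorem (whose proof itself rests on non-positive immersions) where the original proof is more elementary; the gain is a considerably shorter argument.

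The second part has a genuine gap, and in fact the ``strict descent in subgraph complexity'' that you are counting on does not occur. Trace through the combinatorics: after two edges, the geodesic stabiliser is (up to conjugation in $H$) equal to $\pi_1(A\cap B)$, where $A\cap B$ is the Magnus subgraph guaranteed by the no-exceptional-intersection hypothesis. When you intersect with the stabiliser of a third edge, which is an $H$-conjugate of $\pi_1(A)$ or $\pi_1(B)$, \cref{Collins_intersections_2} applied to the pair of Magnus subgraphs $A$ (or $B$) and $M := A\cap B$ gives: either the result has reduced rank zero, or it is a conjugate of $\pi_1(A\cap M) = \pi_1(M)$ --- because $M\subseteq A$ and $M\subseteq B$ as subgraphs, so $A\cap M = B\cap M = M$ and the ``intersection subgraph'' is unchanged. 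The dichotomy at every subsequent step is therefore ``$\rr = 0$'' or ``the stabiliser is still a conjugate of $\pi_1(M)$''; the subgraph never shrinks and your descent quantity is constant. The assertion that ``a generic further Magnus conjugate will avoid'' a cell of $M$ is not available: the conjugates that appear are dictated by the geodesic, and the relevant case of \cref{Collins_intersections_2} is exactly the one where the conjugating element is not generic but lies in $\pi_1(A)\cdot\pi_1(M)$.

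What your sketch is missing is an argument that the fixed subtree of the non-cyclic group $\pi_1(M)$ (and its conjugates) has uniformly bounded diameter in the Bass--Serre tree --- equivalently, that a single conjugate $\pi_1(M)^x$ cannot be contained in the edge stabilisers along an arbitrarily long geodesic. This is a global statement about the HNN structure and cannot be read off from a local subgraph-nesting count; it has to come from a finer analysis of which $H$-translates of $A$ and $B$ can simultaneously contain a fixed conjugate of $\pi_1(A\cap B)$, which is where the real combinatorial work of \cite{Lin22} (and Howie's effective version \cite{Ho05}) lives. Until that boundedness is established, the uniform $k$ is not justified.
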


If $k\geqslant 0$ is an integer, an action of a group $G$ on a tree $T$ is \emph{$k$-acylindrical} if for every segment $S\subset T$ of length at least $k$, we have $\stab(S) = 1$. Say the action is \emph{acylindrical} if it is $k$-acylindrical for some $k$. The following theorem was proven by Wise for one-relator groups with torsion in \cite{Wi21}. The full statement is one of the main theorems of \cite{Lin22}.

\begin{theorem}
\label{acylindrical}
Let $G$ be a one-relator group and let $G\isom H*_{\psi}$ be a one-relator splitting. If $G$ has torsion or is 2-free, then
\[
\sum_{HgH}\rk(H\cap H^g)<\infty.
\]
In particular, the action of $G$ on the Bass--Serre tree is acylindrical.
\end{theorem}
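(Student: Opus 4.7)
The plan is to combine the uniform bound from Theorem 2.11 with additional structural information about the intersections $H \cap H^g$ in order to show both that the contribution to the sum from each $t$-length $n \geq 1$ is finite and that only finitely many $t$-lengths contribute non-trivially. First I would invoke Theorem 2.11 directly: for every $n \geq 1$, $\sum_{g \in HW_n H} \rr(H \cap H^g) \leq \rr(A) + \rr(B)$, where $A, B$ are the Magnus subgroups identified by $\psi$. This immediately caps the reduced-rank contribution at each fixed $t$-length by a constant depending only on the splitting.

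Next I would verify that in both the torsion case and the 2-free case, the intersection $A \cap B$ has no exceptional intersection, so that the second conclusion of Theorem 2.11 becomes available. In the torsion case ($\deg(\lambda)\geq 2$) this is immediate from the moreover clause of Theorem 2.8. In the 2-free case, an exceptional intersection $\pi_1(A)\cap\pi_1(B) = \pi_1(A\cap B)\ast\langle c\rangle$ with $\pi_1(A\cap B)$ non-trivial would yield a two-generated subgroup $\langle a, c\rangle$ (for a suitable $a\in\pi_1(A\cap B)$) of known free product structure that I would rule out using 2-freeness together with Collins' classification in Theorem 2.8. Consequently there exists $k \geq 1$ with $\rr(H\cap H^g) = 0$ for every $g$ with $|g|_t \geq k$.

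The hard part — and the main obstacle I anticipate — is upgrading $\rr(H\cap H^g) = 0$ to the stronger conclusion $H\cap H^g = 1$ for all but finitely many double cosets. My approach is to analyze the pointwise stabilizer of a segment in the Bass--Serre tree as an iterated intersection of edge-group conjugates, each of which is a conjugate of $\pi_1(A)$ or $\pi_1(B)$, and then apply Theorem 2.9 successively. In the torsion case, Theorem 2.9 forces every such pairwise intersection to be either a conjugate of $\pi_1(A\cap B)$ or trivial, and an induction on segment length shows that a descending chain of non-trivial conjugates of Magnus intersections must already have positive reduced rank at each stage; thus $\rr = 0$ forces triviality. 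In the 2-free case the cyclic alternative in Theorem 2.9 is genuinely available, and this is the delicate point: I would rule out a cyclic stabilizer $\langle c\rangle$ persisting along a long segment by combining $c$ with suitably chosen elements in adjacent vertex stabilizers to produce a two-generated subgroup of $G$ of non-free structure, contradicting 2-freeness. Theorem 2.10 (strong inertness) enters here as auxiliary control on how rank behaves under intersection with Magnus subgroups.

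Assembling these pieces, only finitely many double cosets $HgH$ (those with $|g|_t < k$) contribute a non-trivial intersection $H\cap H^g$, and on each of these finitely many $t$-lengths Theorem 2.11 bounds the total reduced rank, hence the total rank up to an additive constant counting the cyclic contributions. Therefore $\sum_{HgH} \rk(H\cap H^g)$ is finite. Acylindricity is an immediate consequence: the pointwise stabilizer of any segment of length at least $k$ in the Bass--Serre tree is trivial, so the action is $k$-acylindrical.
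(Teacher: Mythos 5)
The outer framework --- using the per-$t$-length reduced-rank bound (the unlabelled theorem preceding \cref{acylindrical}), appealing to the \emph{moreover} clauses of \cref{Collins_intersections_1} and \cref{Collins_intersections_2} in the torsion case, and then trying to upgrade vanishing reduced rank to triviality --- is sensible, but there are genuine gaps. The first is your claim that 2-freeness rules out exceptional intersections $\pi_1(A)\cap\pi_1(B) = \pi_1(A\cap B)\ast\langle c\rangle$. This cannot work: the exceptional intersection is a subgroup of the free Magnus subgroup $\pi_1(A)$, hence itself free, so every two-generated subgroup of it is free and 2-freeness of $G$ imposes no constraint whatsoever. Concretely, exceptional intersections do occur in 2-free one-relator groups: Baumslag's parafree group $\langle a,b,c\mid a^2b^3c^5\rangle$ is 2-free, yet $c^5=b^{-3}a^{-2}$ lies in $\langle a,b\rangle\cap\langle b,c\rangle$ while not in $\langle b\rangle$. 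So the conditional second clause of the per-$t$-length bound cannot simply be invoked in the 2-free case; some other mechanism is needed there.

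The second gap is the step you yourself flag as the hard part --- passing from $\rr(H\cap H^g)=0$ (cyclic-or-trivial pointwise stabilizer) to $H\cap H^g=1$ for all but finitely many double cosets --- and this is precisely where the real content of the theorem lies. Your sketch of producing a non-free two-generated group from a persisting cyclic stabilizer does not identify the actual mechanism, and the paper itself does not reproduce the argument (it cites \cite{Lin22} for the full statement), so there is nothing to lean on. Note also a third problem you elide: even for the finitely many $t$-lengths $n<k$ that remain, the estimate $\sum_{g\in HW_nH}\rr(H\cap H^g)\leqslant\rr(A)+\rr(B)$ bounds only \emph{reduced} rank, so a priori infinitely many double cosets at a single $t$-length could each contribute a rank-one cyclic stabilizer and $\sum\rk(H\cap H^g)$ could diverge; the ``additive constant counting the cyclic contributions'' is not automatically finite and must itself be argued. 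Closing these gaps requires a careful analysis of iterated intersections of conjugates of the Magnus subgroups as segment length grows (this is where strong inertness, \cref{inert}, has to be deployed precisely rather than invoked as auxiliary control), and in the torsion case the established route is in any event through Newman's malnormality of Magnus subgroups, which you do not use.
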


Recall that a group $G$ is \emph{$k$-free} if all $k$-generated subgroups of $G$ are free. We will revisit 2-free one-relator groups in detail in \cref{sec:primitivity_rank} and again in \cref{sec:coherence_uni}. Importantly, the property of being 2-free is, in a precise sense, generic and there is a straightforward algorithm which decides whether a one-relator group is 2-free. Both of these results can be found in \cite{LW22}.

\subsection{Free co-abelian subgroups}
\label{sec:coabelian}

A \emph{co-abelian} subgroup of a group $G$ is a normal subgroup $N\triangleleft G$ such that $G/N$ is torsion-free abelian. If $G = F/\normal{w}$ is a one-relator group, the aim of this section is to provide a simple criterion for when a co-abelian subgroup of $G$ is a free group. In particular, if $G = H*_{\psi}$ is a one-relator splitting, we have a natural epimorphism 
\[
G \to G/\normal{H} = \Z
\]
and we want to determine when the co-abelian subgroup $\normal{H}$ is a free group.

Let $X$ be the presentation complex for $G$. Any co-abelian subgroup of $G$ can be obtained as $\ker(\phi)$ for some $\phi\in H^1(G, \R) \isom H^1(X, \R)$. Let $\phi\in H_1(X, \R)$ and let $Y\to X$ be the induced abelian cover. Let $\mu\colon Y^{(1)}\to \R$ be the map given by projection to $L_{\phi}$, as described in \cref{sec:character_sphere}. Since $\ker(\phi) = \pi_1(Y)$, we want to understand the structure of $Y$. For this we describe two conditions that depend only on the homotopy type of $Z_{\phi}\subset Y$.

Recall that a subcomplex of a combinatorial complex is \emph{full} if every cell whose attaching map is supported in the subcomplex, also lies in the subcomplex. If $J\subset \R$ is a connected subset, denote by $Y_{J}$ the full subcomplex of $Y$ containing all vertices that map within $J\subset \R$ via $\mu$. Let $a_{\min}, a_{\max}\in \R$ be the minimal and maximal values that 0-cells in $Z_{\phi}$ map to. Consider the following two conditions:
\begin{enumerate}
\item\label{itm:condition1}(\emph{$\phi$ is freely descending}) There is a 1-cell 
\[
e_L\subset Z_{\phi} - \left(Z_{\phi}\cap Y_{(a_{\min}, a_{\max}]}\right)
\]
such that, $Z_{\phi}$ is homotopic to 
\[
Z_{\phi}^{(1)} - e_L
\]
relative to $Z_{\phi}\cap Y_{(a_{\min}, a_{\max}]}$.
\item\label{itm:condition2}(\emph{$\phi$ is freely ascending}) There is a 1-cell 
\[
e_U\subset Z_{\phi} - \left(Z_{\phi}\cap Y_{[a_{\min}, a_{\max})}\right)
\]
such that, $Z_{\phi}$ is homotopic to 
\[
Z_{\phi}^{(1)} - e_U
\]
relative to $Z_{\phi}\cap Y_{[a_{\min}, a_{\max})}$.
\end{enumerate}

\begin{remark}
A homomorphism $\phi\in H^1(X, \R)$ is freely descending if, for instance, some minimal edge in $Z_{\phi}$ is a free edge in the sense that it is traversed precisely once by the attaching map of the 2-cell in $Z_{\phi}$. By symmetry, the same can be said about when $\phi$ is freely descending with minimal replaced with maximal.

Sapir--\v{S}pakulov\'{a} show in \cite{SS10} that a random one-relator group on three or more generators has some epimorphism $\phi\colon G\to \Z$ such that $Z_{\phi}$ has a topmost or bottommost edge that is free.
\end{remark}

\begin{theorem}
\label{free_kernel}
Let $X$ be a one-relator complex with a single 0-cell. If $\phi\in H^1(X, \R)$ is freely descending and ascending then $\ker(\phi)$ is a free group.
\end{theorem}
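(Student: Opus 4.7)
The plan is to show that the abelian cover $\rho\colon Y\to X$ associated with $\phi$ is homotopy equivalent to a 1-dimensional CW complex, which will immediately give that $\ker(\phi)=\pi_1(Y)$ is free. Since $X$ has a single 0-cell, $Y^{(0)}$ is canonically identified with the deck group $\Gamma = \mathrm{Im}(\phi)\subset\R$, with the height function $\mu$ restricting to the inclusion on vertices. Every 2-cell of $Y$ is a $\Gamma$-translate $g\cdot c$ of the unique 2-cell $c$ of $Z_\phi$; its support $g\cdot Z_\phi$ lies in the slab $\mu^{-1}[\phi(g)+a_{\min},\,\phi(g)+a_{\max}]$, and the $\Gamma$-translates of $e_L$ and $e_U$ sit at the bottom and top of these slabs respectively.

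I would first carry out the argument when $\Gamma\cong\Z$. Filter $Y$ by $Y_n = \bigcup_{|\phi(g)|\leqslant n} g\cdot Z_\phi$, so that $Y_0 = Z_\phi$ and $Y = \bigcup_n Y_n$. Freely descending gives $Z_\phi \simeq Z_\phi^{(1)} - e_L$ (a graph), handling the base case. For the inductive step, $Y_{n+1}$ is obtained from $Y_n$ by attaching two translates $g_+\cdot Z_\phi$ and $g_-\cdot Z_\phi$ at heights $\pm(n+1)$. The edge $g_+\cdot e_U$ lies at the maximum height of $Y_{n+1}$, and any other 2-cell $h\cdot c$ of $Y_{n+1}$ containing $g_+\cdot e_U$ in its support would require $\phi(h)\geqslant \phi(g_+)=n+1$; since $|\phi(h)|\leqslant n+1$, this forces $h=g_+$. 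Thus $g_+\cdot e_U$ is a free face of $g_+\cdot c$ in $Y_{n+1}$, and freely ascending supplies the elementary collapse of the pair $(g_+\cdot c,\,g_+\cdot e_U)$; the symmetric argument using freely descending collapses $(g_-\cdot c,\,g_-\cdot e_L)$. After both collapses $Y_{n+1}$ deformation retracts onto $Y_n$ together with some residual 0- and 1-cells, preserving the inductive hypothesis. Passing to the direct limit yields $Y\simeq$ graph, whence $\ker(\phi)$ is free.

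For a general $\phi\in H^1(X,\R)$ with possibly higher-rank dense image, I would recast the same strategy as a $\Gamma$-equivariant discrete Morse matching, pairing each $g\cdot c$ with $g\cdot e_U$ when $\phi(g)>0$ and with $g\cdot e_L$ when $\phi(g)<0$. Injectivity of $\phi\mid_\Gamma$ together with the same height comparisons rule out finite Morse cycles. The hard part will be excluding infinite V-paths in the modified Hasse diagram, since in the dense case a V-path can yield a strictly decreasing sequence of $\phi$-values with no minimum. This is precisely where both conditions must be used jointly: freely ascending controls V-paths beginning at 2-cells with $\phi > 0$ and freely descending controls those beginning at $\phi < 0$, and the $\BS(1,2)$ example (freely ascending only, with $\ker(\phi)=\Z[\tfrac{1}{2}]$ non-free) shows that neither condition suffices on its own. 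To overcome this obstacle I would either run a transfinite inductive collapse well-ordered by a refinement of $\phi$, or approximate $\phi$ by nearby rational classes in $H^1(X,\R)$ and use openness of the two free-face conditions to deduce the general statement from the rank-one case.
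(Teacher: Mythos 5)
Your argument for the rank-one case (image of $\phi$ infinite cyclic) is essentially the paper's. You filter the $\Z$-cover symmetrically outward from $Z_\phi$, collapsing a translate of $e_U$ at the top and a translate of $e_L$ at the bottom at each step; the paper instead pushes all the way down first (\cref{freely_descending}), then all the way up (\cref{freely_ascending}), and combines in \cref{freely_descending_ascending}. This is a bookkeeping difference, not a substantive one. One small caution: you phrase the freely ascending and descending conditions as supplying an elementary collapse of the pair $(g\cdot c,\, g\cdot e_U)$. The definition is weaker --- it only gives a homotopy equivalence $Z_\phi \simeq Z_\phi^{(1)} - e_U$ relative to the truncated complex, which need not be a collapse (see the example after the theorem, where the relevant edge is traversed more than once). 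The argument goes through the same way, but if you then run a discrete Morse matching you should work with a chosen relative homotopy equivalence rather than a literal Morse pairing.

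For the general case you offer two routes and correctly flag the real difficulty: when $\mathrm{Im}(\phi)$ is dense, an attempted infinite collapse has no floor. Your route (b) --- perturb $\phi$ to a nearby rational class $\phi'$ with $Z_{\phi'}\cong Z_\phi$ and deduce the theorem from the rank-one case --- is the paper's proof. But as you have stated it there is a gap: knowing $\ker(\phi')$ is free does not by itself tell you $\ker(\phi)$ is free. You need the containment $\ker(\phi)\leqslant\ker(\phi')$, after which Nielsen--Schreier finishes. That containment forces you to choose $\phi'$ not just nearby and rational but of the form $\psi\circ\phi$ for some epimorphism $\psi\colon\mathrm{Im}(\phi)\isom\Z^n\to\Z$ --- equivalently, $\phi'$ must lie in the rational subspace of $H^1(X,\R)$ consisting of characters vanishing on $\ker(\phi)$. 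A generic nearby rational class will enlarge the image but not the kernel, and will not give you any conclusion about $\ker(\phi)$. The paper arranges exactly this, by first composing with $\Z^n\to\Z$ and only then invoking density of rational slopes inside the relevant rational subspace. Your route (a), a transfinite $\Gamma$-equivariant discrete Morse collapse, is a genuinely different strategy and is harder: you would need to rule out infinite gradient paths, and you yourself note that this is the crux; the paper avoids this entirely by the rational reduction, so I would recommend committing to route (b) with the above fix. (Minor: your $\BS(1,2)$ example with the standard $\phi$ sending the stable letter to $1$ is freely descending but not ascending under the paper's conventions, not the other way round; the point stands after replacing $\phi$ by $-\phi$.)
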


\begin{exmp}
\label{exmp:free_face}
Let us return to \cref{Z_example}. In this example, both the topmost $b$-edge and the bottommost $b$-edge in $Z_{\phi}$ are traversed precisely once by the attaching map of the 2-cell. Hence, we may collapse the 2-cell through these free edges to obtain a homotopy equivalence as required by the definitions of $\phi$ being freely descending and ascending. Hence, the homomorphism $\phi$ from \cref{Z_example} is both freely descending and ascending so that $\ker(\phi)$ is a free group by \cref{free_kernel}. In fact, $\ker(\phi)$ is a finitely generated free group.
\end{exmp}

\begin{exmp}
For another type of example, consider the one-relator group
\[
G = \langle a, b \mid a^2babca^3a^2bababc^{-1}\rangle.
\]
Let $\phi\colon G\to \Z$ be the epimorphism defined by sending $c$ to $1$ and $a, b$ to $0$. See \cref{fig:free_kernel} for the one-relator complex $Z_{\phi}$. We see that $Z_{\phi}$ does not have any free faces and so we cannot conclude that $\ker(\phi)$ is free as in \cref{exmp:free_face}. However, since $a_0^2b_0a_0b_0$ is a primitive element of $F(a_0, b_0)$ (we can see this by applying the automorphism $a_0 \to a_0b_0^{-2}$, $b_0\to a_0^{-1}b_0$), we see that $Z_{\phi}$ is homotopy equivalent to $Z_{\phi}^{(1)} - a_0$ relative to $Z_{\phi}^{(1)} - (v_0\cup a_0\cup b_0\cup c_0) = Z_{\phi}\cap Y_{(0, 1]}$. Hence, $\phi$ is freely descending. Since $a_1^2b_1a_1b_1a_1b_1$ is a primitive element of $F(a_1, b_1)$, we may also conclude that $\phi$ is freely ascending. Thus, $\ker(\phi)$ is free by \cref{free_kernel}.
\end{exmp}

\begin{figure}
\centering
\includegraphics[scale = 0.9]{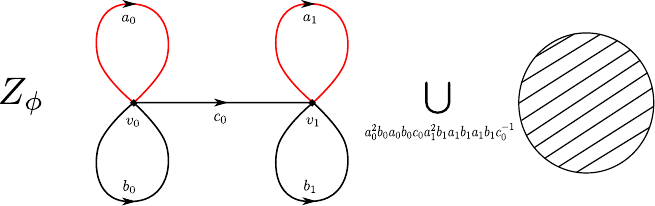}
\caption{An example of a one-relator complex $Z_{\phi}$ with $\phi$ freely descending and ascending.}
\label{fig:free_kernel}
\end{figure}

\begin{proposition}
\label{freely_descending}
Let $X$ be a one-relator complex with a single 0-cell and suppose that $\phi\in H^1(X, \R)$ factors through $\Z$ and is freely descending. 
Let $t$ be the generator of $\deck(Y\to X)\isom \Z$ that translates in the positive direction. Let $\Delta_L\subset Y^{(1)}$ be the subgraph consisting of the vertex mapping to $a_{\min}$ and all outgoing edges and their endpoints except for $e_L$. Then $Y$ is homotopy equivalent to
\[
\left(\bigcup_{i< 0}t^i(\Delta_L)\right)\cup Y_{[a_{\min}, +\infty)}
\]
relative to $Y_{[a_{\min}, +\infty)}$.
\end{proposition}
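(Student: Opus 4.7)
The plan is to construct an explicit strong deformation retract of $Y$ onto the claimed subspace by assembling, for each negative integer $i$, the $t^i$-translate of the freely descending deformation on $Z_\phi$. Denote by $v_{\min}$ the (unique, by the single $0$-cell hypothesis) vertex of $Y^{(1)}$ at height $a_{\min}$, and let $\delta>0$ be the translation length of $t$, so $t^i(v_{\min})$ has height $a_{\min}+i\delta$. The freely descending hypothesis gives a homotopy from $\mathrm{id}_{Z_\phi}$ to a retraction onto $Z_\phi^{(1)}-e_L$ that is stationary on $Z_\phi\cap Y_{(a_{\min},a_{\max}]}$; since $Z_\phi^{(1)}-e_L$ is a subcomplex of $Z_\phi$, I would first invoke a standard CW argument to upgrade this to a strong deformation retract $H\colon Z_\phi\times I\to Z_\phi$, whose support is therefore confined to the interior of the unique $2$-cell of $Z_\phi$ together with the interior of $e_L$.

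For each $i<0$ I would translate to obtain $H_i=t^i\circ H\circ(t^{-i}\times\mathrm{id})$, a strong deformation retract of $t^i(Z_\phi)$ onto $t^i(Z_\phi^{(1)}-e_L)$ fixing $t^i(Z_\phi\cap Y_{(a_{\min},a_{\max}]})$, whose support consists of the interior of the $2$-cell of $t^i(Z_\phi)$ and the interior of $t^i(e_L)$. Because $t$ acts freely on the cells of $Y$, distinct $\Z$-translates of the $2$-cell or of $e_L$ are distinct cells of $Y$; hence the supports of the $H_i$, $i<0$, consist of pairwise disjoint open cells. I can therefore patch the $H_i$ together with the identity outside their supports to produce a single strong deformation retract $H_Y\colon Y\times I\to Y$.

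Next I would verify that $H_Y$ is stationary on $Y_{[a_{\min},+\infty)}$. Every open cell in the support of $H_Y$ is incident to $t^i(v_{\min})$ for some $i<0$, and $t^i(v_{\min})$ has height $a_{\min}+i\delta<a_{\min}$, so it is not a vertex of the full subcomplex $Y_{[a_{\min},+\infty)}$, which therefore contains none of these open cells. Consequently $H_Y$ realises a homotopy equivalence $Y\simeq H_Y(Y,1)$ relative to $Y_{[a_{\min},+\infty)}$.

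Finally I would identify $H_Y(Y,1)$ with the target. The cells of $Y$ that remain are all vertices, all $1$-cells except $\{t^i(e_L)\}_{i<0}$, and all $2$-cells whose attaching map is supported in $Y_{[a_{\min},+\infty)}$, i.e.\ those of $t^i(Z_\phi)$ for $i\ge 0$. By the single $0$-cell hypothesis, every vertex of $Y^{(1)}$ at height less than $a_{\min}$ is $t^i(v_{\min})$ for a unique $i<0$; hence every remaining cell of $Y$ outside $Y_{[a_{\min},+\infty)}$ is incident to some $t^i(v_{\min})$ with $i<0$. Such a $1$-cell either equals some $t^j(e_L)$ (and has been removed) or lies in $t^i(\Delta_L)$, and such a $2$-cell is the unique $2$-cell of $t^i(Z_\phi)$ (also removed). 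This yields the required equality $H_Y(Y,1)=\bigl(\bigcup_{i<0}t^i(\Delta_L)\bigr)\cup Y_{[a_{\min},+\infty)}$. The most delicate step is this final identification: a $1$-cell of $Y$ may be incident to several vertices $t^i(v_{\min})$ (when $e_L$ or another edge at $v_{\min}$ terminates at some $\Z$-translate of $v_{\min}$), and one must use the combinatorial direction induced by $\mu$ to check that a removed $t^j(e_L)$ never also appears as an edge of some $t^i(\Delta_L)$ with $i\ne j$, so that no cell is simultaneously deleted from $H_Y(Y,1)$ and retained in the claimed subspace.
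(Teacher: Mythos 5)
There is a genuine gap in the patching step, and it is worth being precise about where the claimed disjointness fails to carry the weight you put on it.

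You argue that since the supports of the $H_i$ (the open $2$-cell of $t^i(Z_\phi)$ together with the interior of $t^i(e_L)$) are pairwise disjoint open cells, you can patch them with the identity to get a continuous strong deformation retraction $H_Y$ of all of $Y$. Disjointness of the \emph{open} supports is not enough: one must also check that each $H_i$, extended by the identity outside $t^i(Z_\phi)$, is continuous on all of $Y$, and that the various extensions agree on the \emph{closures} of the supports. The problem is that $t^i(e_L)$ can lie in the boundary of the $2$-cell of $t^j(Z_\phi)$ for $j<i$ (equivalently, $t^{i-j}(e_L)$ can lie in $\Lambda_\phi$ with $i-j>0$), and this genuinely happens: for instance with $w=a^2b^2$ and $\phi(a)=1,\phi(b)=-1$, the edge $e_L$ labelled $a$ from height $0$ to height $1$ has $t(e_L)$ also appearing in $\Lambda_\phi$, so $t^{-1}(e_L)$ is a free-face edge collapsed by $H_{-1}$ that simultaneously sits in the attaching circle of the $2$-cell of $t^{-2}(Z_\phi)$. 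Taking a sequence $q_n$ in the interior of that $2$-cell converging to a point $p$ in the interior of $t^{-1}(e_L)$, your $H_Y$ sends $q_n$ to $H_{-2}(q_n)\to H_{-2}(p)=p$ (since $H_{-2}$ fixes $t^{-1}(e_L)$), whereas $H_Y(p)=H_{-1}(p)\neq p$; so $H_Y$ is not continuous. The ``$e_L$ is a free face'' phenomenon that makes the collapse valid holds inside $Y_{[k,+\infty)}$, where $t^k(e_L)$ bounds only the single $2$-cell of $t^k(Z_\phi)$, but it is false in all of $Y$, which is precisely why the paper phrases the $k$-th retraction as a map $f_k\colon Y_{[k,+\infty)}\to t^k(\Delta_L)\cup Y_{[k+1,+\infty)}$ and then assembles the $f_k$'s rather than extending each collapse by the identity over the whole cover.

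The fix is to perform the collapses sequentially from the bottom up rather than simultaneously: after normalising $a_{\min}=0$, for each $N$ one composes $f_{-1}$ (suitably extended by the identity over $\bigcup_{-N\le i<-1}t^i(\Delta_L)$) with $f_{-2}$, \dots, $f_{-N}$, which is legitimate because $f_k$ fixes $Y_{[k+1,+\infty)}$ and $t^{k-1}(e_L)\notin t^k(\Delta_L)$, and these composites are compatible as $N$ grows because each $f_{-N-1}$ fixes $Y_{[-N,+\infty)}$. The resulting map $Y\to\left(\bigcup_{i<0}t^i(\Delta_L)\right)\cup Y_{[a_{\min},+\infty)}$ is continuous by the weak topology and is a homotopy equivalence rel $Y_{[a_{\min},+\infty)}$. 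Your final bookkeeping about the target (in particular the worry about some $t^j(e_L)$ reappearing in $t^i(\Delta_L)$, $i\ne j$) is a reasonable thing to flag, but it is a secondary concern next to the continuity of the deformation itself, which your current argument does not establish.
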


\begin{proof}
Since $\phi$ factors through $\Z$, after multiplying by a suitable real number, we may assume that $\phi$ maps surjectively to $\Z\subset \R$. After possibly replacing $Z_{\phi}$ with a translate, we may assume that $a_{\min} = 0$. If $k< 0$ is an integer, denote by
\[
Y_k = t^k(\Delta_L)\cup Y_{[k+1, +\infty)}.
\]
Since we have
\[
t^k(Z_{\phi})\cap Y_{[k+1, +\infty)} = t^k\left(Z_{\phi}\cap Y_{[a_{\min}+1, a_{\max}]}\right) = t^k\left(Z_{\phi}\cap Y_{(a_{\min}, a_{\max}]}\right)
\]
and since $Y_{[k, +\infty)} - Y_{[k+1, +\infty)}$ contains no 2-cell, it follows from the definition of $\phi$ being freely descending that $Y_{[k, +\infty)}$ is homotopy equivalent to $t^k(\Delta_L)\cup Y_{[k+1, +\infty)}$ relative to $Y_{[k+1, +\infty)}$. Hence, for each $k<0$ there is a map $f_k\colon Y_{[k, +\infty)}\to Y_k$ that is a homotopy equivalence relative to $Y_{[k+1, +\infty)}$. Now define a map 
\[
Y\to \left(\bigcup_{i< 0}t^i(\Delta_L)\right)\cup Y_{[a_{\min}, +\infty)}
\]
as the identity on $Y_{[a_{\min}, +\infty)}$ and as $f_k$ when restricted to $Y_{[k, +\infty)} - Y_{[k+1, +\infty)}$ for all $k< 0$. It can be checked that this is a homotopy equivalence relative to $Y_{[a_{\min}, +\infty)}$, using the same idea for the homotopy inverse of each $f_k$.
\end{proof}

A symmetric proof yields the ascending version of \cref{freely_descending}.

\begin{proposition}
\label{freely_ascending}
Let $X$ be a one-relator complex with a single 0-cell and suppose that $\phi\in H^1(X, \R)$ factors through $\Z$ and is freely ascending. 
Let $t$ be the generator of $\deck(Y\to X)\isom \Z$ that translates in the positive direction. Let $\Delta_U\subset Y^{(1)}$ be the subgraph consisting of the vertex mapping to $a_{\max}$ and all outgoing edges and their endpoints except for $e_U$. Then $Y$ is homotopy equivalent to
\[
\left(\bigcup_{i> 0}t^i(\Delta_U)\right)\cup Y_{(-\infty, a_{\max}]}
\]
relative to $Y_{(-\infty, a_{\max}]}$.
\end{proposition}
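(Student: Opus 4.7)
The plan is to mirror the argument of Proposition~2.17 with the roles of minimum and maximum, of negative and positive translates, and of $e_L$ and $e_U$ interchanged throughout. First I would reduce to the case where $\phi$ surjects onto $\Z \subset \R$ by scaling, and then translate $Z_\phi$ so that $a_{\max}=0$; this places the relevant deck transformation $t$ (acting in the positive direction) in the correct position so that the translates $t^k(Z_\phi)$ for $k>0$ are stacked above $Y_{(-\infty,0]}$ in a controlled way.

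Next I would set up, for each integer $k>0$, the intermediate subcomplex
\[
Y_k \;=\; t^k(\Delta_U)\cup Y_{(-\infty,\,k-1]},
\]
which is the analogue of the subcomplex $t^k(\Delta_L)\cup Y_{[k+1,+\infty)}$ from the descending argument. The key identity to verify is
\[
t^k(Z_\phi)\cap Y_{(-\infty,\,k-1]} \;=\; t^k\!\left(Z_\phi\cap Y_{[a_{\min}-1,\,a_{\max}-1]}\right) \;=\; t^k\!\left(Z_\phi\cap Y_{[a_{\min},\,a_{\max})}\right),
\]
where the second equality uses that the two expressions describe the same full subcomplex of $t^k(Z_\phi)$ since $\phi$ takes integer values on vertices. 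Because $Y_{(-\infty,k]} - Y_{(-\infty,k-1]}$ contains no $2$-cell other than the one supported on $t^k(Z_\phi)$, the hypothesis that $\phi$ is freely ascending then furnishes a homotopy equivalence
\[
f_k\colon Y_{(-\infty,\,k]} \longrightarrow Y_k
\]
relative to $Y_{(-\infty,\,k-1]}$, obtained by the same deformation that witnesses the equivalence of $Z_\phi$ with $Z_\phi^{(1)} - e_U$ rel $Z_\phi\cap Y_{[a_{\min},a_{\max})}$, translated by $t^k$.

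Finally I would assemble the $f_k$ into a global homotopy equivalence
\[
Y \;\longrightarrow\; \left(\bigcup_{i>0}t^i(\Delta_U)\right)\cup Y_{(-\infty,\,a_{\max}]}
\]
by declaring the map to be the identity on $Y_{(-\infty,\,a_{\max}]}$ and to agree with $f_k$ on each slab $Y_{(-\infty,\,k]} - Y_{(-\infty,\,k-1]}$ for $k>0$. Compatibility on overlaps is automatic from the fact that each $f_k$ is the identity on $Y_{(-\infty,\,k-1]}$, and the homotopy inverse is glued together from the homotopy inverses of the individual $f_k$ in the same fashion. The main step to be careful with is the verification of the intersection identity above and that the relative homotopy equivalences $f_k$ agree on the boundaries of neighbouring slabs; this is the only place where the proof uses anything beyond the formal symmetry with Proposition~2.17, and is a direct consequence of $\phi$ taking integer values on the $0$-skeleton together with the fact that the only $2$-cell of $Y$ lying in $Y_{(-\infty,\,k]} \setminus Y_{(-\infty,\,k-1]}$ is the $2$-cell of $t^k(Z_\phi)$.
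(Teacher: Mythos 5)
Your proof is correct and carries out exactly the symmetric argument the paper invokes in one line for this proposition, mirroring the descending case: the reduction to $\phi$ surjecting onto $\Z$, the translation so $a_{\max}=0$, the intersection identity, the slab-by-slab equivalences $f_k$, and the gluing. You are in fact slightly more careful than the paper's printed proof of the descending version in observing that $Y_{(-\infty,k]}-Y_{(-\infty,k-1]}$ contains exactly the one $2$-cell of $t^k(Z_\phi)$ rather than none.
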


\begin{proposition}
\label{freely_descending_ascending}
Let $X$ be a one-relator complex with a single 0-cell and suppose that $\phi\in H^1(X, \R)$ factors through $\Z$ and is freely ascending. 
Let $t$ be the generator of $\deck(Y\to X)\isom \Z$ that translates in the positive direction. Let $\Delta_L, \Delta_U$ be the subgraphs from \cref{freely_descending,freely_ascending}. Then there is a homotopy equivalence
\[
Y \to \left(\bigcup_{i\leqslant 0}t^i(\Delta_L)\right)\cup Y_{(a_{\min}, a_{\max}]}\cup \left(\bigcup_{i>0}t^i(\Delta_U)\right)
\]
fixing $Y_{(a_{\min}, a_{\max}]}$. In particular, $Y$ is homotopy equivalent to a graph.
\end{proposition}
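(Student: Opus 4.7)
The plan is to chain together the two preceding propositions and then perform one final elementary collapse to eliminate the remaining 2-cell of $Z_\phi$. Note first that the hypothesis should presumably read "freely descending and ascending", since the conclusion invokes $\Delta_L$, and I proceed under that understanding.

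First I would apply Proposition \ref{freely_descending} to obtain a homotopy equivalence
$$f_L \colon Y \;\to\; Y_1 \;:=\; \left(\bigcup_{i<0} t^i(\Delta_L)\right) \cup Y_{[a_{\min}, +\infty)}$$
which is the identity on $Y_{[a_{\min}, +\infty)}$. Next I want to apply the freely-ascending collapse to $Y_1$ rather than to $Y$. The key observation is that $Y_1$ and $Y$ agree as complexes on the subspace $Y_{[a_{\min}, +\infty)}$, and in particular on all cells involved in the analysis of the top free face $e_U$ of $Z_\phi$. The proof of Proposition \ref{freely_ascending} only modifies $Y$ strictly above $a_{\max}$, so the verbatim argument applies to $Y_1$, yielding a homotopy equivalence
$$f_U \colon Y_1 \;\to\; Y_2 \;:=\; Y_{1,(-\infty, a_{\max}]} \cup \left(\bigcup_{i>0} t^i(\Delta_U)\right)$$
which fixes $Y_{1,(-\infty, a_{\max}]} = \bigl(\bigcup_{i<0} t^i(\Delta_L)\bigr) \cup Y_{[a_{\min}, a_{\max}]}$.

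The composition $f_U \circ f_L$ is then a homotopy equivalence $Y \to Y_2$ fixing $Y_{[a_{\min}, a_{\max}]}$, and hence in particular fixing $Y_{(a_{\min}, a_{\max}]}$. To reach the target complex $W$ of the proposition, I would perform one more collapse inside $Y_2$: the 2-cell $\sigma$ of $Z_\phi$ still lies in $Y_2$ along with its free edge $e_L$, and by the freely descending hypothesis we have a deformation retraction of $Z_\phi$ onto $Z_\phi^{(1)} - e_L$ relative to $Z_\phi \cap Y_{(a_{\min}, a_{\max}]}$. Extending by the identity, this produces a homotopy equivalence $g \colon Y_2 \to W$ fixing $Y_{(a_{\min}, a_{\max}]}$, where replacing $Y_{[a_{\min}, a_{\max}]}$ by $\Delta_L \cup Y_{(a_{\min}, a_{\max}]}$ accounts for the reindexing from $i<0$ to $i\leqslant 0$ in the union over $\Delta_L$. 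The composite $g \circ f_U \circ f_L$ is the desired homotopy equivalence.

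For the ``in particular'' assertion, it suffices to observe that $W$ contains no 2-cells: every 2-cell of $Y$ is of the form $t^i(\sigma)$ whose attaching map touches a vertex at level $a_{\min}+i$, which lies outside $(a_{\min}, a_{\max}]$ for every $i\in\Z$, so the 2-cell fails to be contained in any of the three pieces of the union defining $W$. The main obstacle I anticipate is purely bookkeeping: verifying that the proof of Proposition \ref{freely_ascending} really does transfer to $Y_1$ without needing the full space $Y$ below $a_{\min}$, and then reconciling the half-open interval $(a_{\min}, a_{\max}]$ in the target with the closed interval $[a_{\min}, a_{\max}]$ that arises naturally from the two successive collapses. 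Both should reduce to careful tracking of which cells are being identified with which, rather than any genuinely new homotopical input beyond the two previous propositions.
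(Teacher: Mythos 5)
Your proof follows essentially the same route as the paper's: apply Propositions \ref{freely_descending} and \ref{freely_ascending} in sequence to reach $\left(\bigcup_{i<0}t^i(\Delta_L)\right)\cup Y_{[a_{\min},a_{\max}]}\cup\left(\bigcup_{i>0}t^i(\Delta_U)\right)$, and then perform one last collapse of $Z_\phi$ through $e_L$ to replace $Y_{[a_{\min},a_{\max}]}$ by $\Delta_L\cup Y_{(a_{\min},a_{\max}]}$; your care about why the ascending collapse still applies after the descending one only fills in a detail the paper leaves implicit. You also correctly observe that the hypothesis should read ``freely descending and ascending,'' which is indeed a typo in the stated proposition.
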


\begin{proof}
Combining \cref{freely_descending} and \cref{freely_ascending}, we see that $Y$ is homotopy equivalent to
\[
Y \to \left(\bigcup_{i< 0}t^i(\Delta_L)\right)\cup Y_{[a_{\min}, a_{\max}]}\cup \left(\bigcup_{i>0}t^i(\Delta_U)\right).
\]
Since $Y_{[a_{\min}, a_{\max}]}$ is homotopy equivalent to $\Delta_L \cup Y_{(a_{\min}, a_{\max}]}$, the result follows.
\end{proof}

\begin{proof}[Proof of \cref{free_kernel}]
The image of $\pi_1(X)$ under $\phi$ is isomorphic to $\Z^n$ for some $n$. Since $Z_{\phi}$ is finite, we may find an appropriate epimorphism $\Z^n\to \Z$ such that, if $\phi'$ denotes the composition of $\phi$ with this map, then there is an isomorphism $Z_{\phi}\to Z_{\phi'}$ respecting the order on the vertices. This may be done for instance by choosing a line $L\subset \R^{m}$ (where $m = \rk(\pi_1(X^{(1)}))$) with rational slope sufficiently close to the line associated with $\phi$. In particular, $\phi'$ is freely descending and ascending because $\phi$ is and so $\ker(\phi')$ is free by \cref{freely_descending_ascending}. Since $\ker(\phi)\leqslant\ker(\phi')$, we have that $\ker(\phi)$ is free as claimed.
\end{proof}

\begin{conjecture}
\label{conjecture_free_kernel}
If $X$ is a one-relator complex with a single 0-cell and if $\phi\in H^1(X, \R)$, then $\ker(\phi)$ is free if and only if $\phi$ is freely ascending and descending.
\end{conjecture}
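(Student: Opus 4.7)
The reverse implication $(\Leftarrow)$ is precisely \cref{free_kernel}, so the content of the conjecture is the forward implication: if $\ker(\phi)$ is free, then $\phi$ must be freely ascending and descending. My plan is to prove this by analyzing the Bass--Serre structure of $\ker(\phi)$ induced by the one-relator splitting along $\phi$. First, I would reduce to the situation where $X$ is aspherical. If $\deg(\lambda)\geqslant 2$ then $\pi_1(X)$ has torsion by \cref{finite_order}; since every torsion element of $\pi_1(X)$ lies in $\ker(\phi)$ (as the image of $\phi$ is torsion-free), $\ker(\phi)$ would have torsion, contradicting freeness. Hence $\deg(\lambda) = 1$ and $X$, together with the cover $Y$ satisfying $\pi_1(Y) = \ker(\phi)$, is aspherical. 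Consequently $Y$ is a $K(F,1)$ for some free group $F$, and in particular $H_2(Y,\Z) = 0$.

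Next, I would reduce to the case that $\phi$ factors through $\Z$. Since being freely ascending/descending depends only on the combinatorial structure of $Z_\phi$, and this structure is locally constant on the complement of a finite union of rational hyperplanes in $H^1(X,\R)-\{0\}$, I would approximate $\phi$ by a rational $\phi'$ with $Z_{\phi'}$ canonically isomorphic to $Z_\phi$. The subtle point is that freeness of $\ker(\phi)$ does not obviously transfer to freeness of $\ker(\phi')$; I would circumvent this by arguing directly that freeness of $\ker(\phi)$ forces a property of $Z_\phi$ (the existence of free faces at the top and bottom) that is insensitive to whether $\phi$ is rational.

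Assuming now that $\phi$ factors through $\Z$, let $Y$ be the associated $\Z$-cover and apply \cref{one-relator_splitting} to obtain a HNN decomposition $\pi_1(X)\cong \pi_1(Z_\phi)*_\psi$ where $\psi$ identifies Magnus subgroups $A = \pi_1(Z_\phi\cap t^{-1}Z_\phi)$ and $B = \pi_1(Z_\phi\cap tZ_\phi)$. By Bass--Serre theory, $\ker(\phi)$ is the fundamental group of a graph of groups whose underlying graph is a biinfinite line, with vertex groups the $\Z$-translates of $\pi_1(Z_\phi)$ and edge groups the corresponding translates of $A$ (equivalently $B$); all of these are free by \cref{topological_Freiheitssatz}. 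I would then invoke the following algebraic lemma: if $G$ is the fundamental group of a graph of free groups over a tree and $G$ is itself free, then each edge group is a free factor of at least one of the two adjacent vertex groups. Applied to our biinfinite periodic line, both ``descending'' and ``ascending'' free-factor collapses must occur — the former translates to the existence of $e_L$ and the latter to $e_U$ as required, yielding that $\phi$ is freely ascending and descending.

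The main obstacle I anticipate is establishing the algebraic lemma in precisely the form required. For the finite case $A*_C B$ with $A,B,C$ free, the result can be proved via a Stallings foldings analysis of the action on the Bass--Serre tree (or by noting that a free group cannot contain non-trivial Baumslag--Solitar-like obstructions), but extending this cleanly to an infinite periodic line of free groups, while identifying which end of $Z_\phi$ supplies the free factor, will require genuine care. One natural refinement is to combine this with the inertness of Magnus subgroups provided by \cref{inert}: the fact that $A\cap H^g$ has controlled reduced rank as $g$ varies may be precisely what is needed to force the free-factor structure to appear at both extremes rather than, say, only on one side. A secondary difficulty is faithfully transferring the conclusion back from a rational approximant to the original $\phi$, especially when the image of $\phi$ is dense in $\R$; here the deck translates of $Z_\phi$ pile up along $\R$ rather than $\Z$, and one must verify that the same local free-factor collapses still yield the required homotopy equivalences on the cover.
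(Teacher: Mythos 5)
The statement you are attempting to prove is, in the paper, a \emph{conjecture}, not a theorem. The paper only proves the implication $(\Leftarrow)$, which is \cref{free_kernel}, and then remarks that the full equivalence is known when $\chi(X)=0$, i.e.\ for two-generator one-relator groups, via Brown's criterion. Your writeup should register that you are attempting an open problem, and as it stands the argument does not close it.

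The central algebraic lemma you invoke is not correct as used. The Shenitzer/Swarup-type statement says that for a splitting of a free group over a free subgroup, \emph{at least one} of the two boundary embeddings of the edge group into its endpoint vertex groups is a free-factor inclusion. In a single HNN splitting $G\cong H*_{\psi\colon C_0\to C_1}$ the edge has one vertex $H$ and two boundary maps, and it can happen that only one of $C_0\hookrightarrow H$, $C_1\hookrightarrow H$ is a free-factor inclusion. Concretely, $G=\langle a,b,c\mid c^{-1}a^2cb^{-1}\rangle$ is a HNN extension of $F(a,b)$ identifying $\langle a^2\rangle$ with $\langle b\rangle$; here $\langle b\rangle$ is a free factor of $F(a,b)$ but $\langle a^2\rangle$ is not, and yet $G\cong F(a,c)$ is free. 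Pushing this around the periodic biinfinite line underlying $\ker(\phi)$, the free-factor collapses may all point in the same direction, so the lemma yields ``freely ascending \emph{or} freely descending'' — which is the content of \cref{ascending} and Conjecture~\ref{conjecture_locally_free_kernel} — rather than the conjunction ``ascending \emph{and} descending'' required here. Your step ``both `descending' and `ascending' free-factor collapses must occur'' simply does not follow from the lemma.

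In fact this same example appears to contradict the forward implication of the conjecture itself, and you should verify this before investing further in a proof. Let $X$ be the presentation complex of $\langle a,b,c\mid c^{-1}a^2cb^{-1}\rangle$ and let $\phi$ be the epimorphism to $\Z$ with $\phi(c)=1$, $\phi(a)=\phi(b)=0$. Because $b$ appears exactly once in the relator, $\pi_1(X)\cong F(a,c)$ is free, so $\ker(\phi)=\normal{a}\leqslant F(a,c)$ is a free group. Now $Z_\phi$ has two $0$-cells at levels $-1$ and $0$, an $a$-loop $a_{-1}$, a $b$-loop $b_0$, and a $c$-edge $c_{-1}$, with attaching map $c_{-1}^{-1}a_{-1}^2c_{-1}b_0^{-1}$; hence $a_{\min}=-1$, $a_{\max}=0$, and $Z_\phi\cap Y_{(a_{\min},a_{\max}]}=\{v_0,b_0\}$. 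The only candidates for $e_L$ are $a_{-1}$ and $c_{-1}$. Removing $c_{-1}$ disconnects $Z_\phi^{(1)}$. Removing $a_{-1}$ gives a graph with $\pi_1=\langle c_{-1}b_0c_{-1}^{-1}\rangle$, and the inclusion into $\pi_1(Z_\phi)=\langle a_{-1}\rangle$ sends the generator to $a_{-1}^2$, which has index two and hence is not an isomorphism. So $\phi$ is \emph{not} freely descending, even though it is freely ascending (via the free face $b_0$) and $\ker(\phi)$ is free. If this computation holds up, the conjecture as literally stated needs an additional hypothesis — for instance that $\pi_1(X)$ is not free, or that $Z_\phi$ is replaced by the canonical minimal complex $Z_{\max}$ — and your argument would then have to be reorganised around whichever hypothesis is imposed.
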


In the next section we discuss Brown's criterion which states that Conjecture \ref{conjecture_free_kernel} is true when $\chi(X) = 0$. In other words, when $\pi_1(X)$ is a two-generator one-relator group.

\cref{freely_descending,freely_ascending} actually imply the following weakening of \cref{free_kernel}. Recall that a HNN-extension $H*_{\psi}$ is \emph{ascending} if the domain of $\psi$ is all of $H$, \emph{descending} if the target of $\psi$ is all of $H$.

\begin{theorem}
\label{ascending}
Let $X$ be a one-relator complex with a single 0-cell and let $\phi\in H^1(X, \R)$. If $\phi$ is freely descending or freely ascending, then $\ker(\phi)$ is locally free. If $\phi$ also factors through $\Z$, then $\pi_1(X)$ splits as a descending or ascending HNN-extension of a free group $\pi_1(X) \isom F*_{\psi}$ such that $\normal{F} = \ker(\phi)$.
\end{theorem}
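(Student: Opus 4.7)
The argument naturally divides into three stages: reducing to the case where $\phi$ factors through $\Z$, establishing the ascending (or descending) HNN structure over a free group, and deducing local freeness from this structure. I will treat only the freely ascending case, the descending case being symmetric.

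For the reduction, I would mimic the argument used in the proof of \cref{free_kernel}. Since $Z_\phi$ is a finite one-relator complex, there is an open neighbourhood of $\phi$ in $H^1(X, \R)$ on which the ordered one-relator complex $Z_{\phi'}$ is isomorphic to $Z_\phi$; in particular, freely ascending (respectively, descending) is an open condition. Choose within this neighbourhood a rational character, and then by rescaling a character $\phi' \colon \pi_1(X) \twoheadrightarrow \Z$ which is still freely ascending. Since $\phi$ factors through $\phi'$ post-composed with a map $\Z \to \R$, we have $\ker(\phi) \leqslant \ker(\phi')$, and local freeness passes to subgroups, so it suffices to prove the theorem under the assumption that $\phi$ factors through $\Z$ as an epimorphism.

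To build the HNN structure, let $\rho\colon Y \to X$ be the $\Z$-cover corresponding to $\phi$, and let $Z \subset Y$ be a one-relator $\Z$-domain containing $Z_\phi$, as furnished by \cref{one-relator_splitting}. By the freely ascending assumption, the attaching map of the unique 2-cell of $Z$ collapses through $e_U$, yielding a deformation retraction $Z \simeq Z^{(1)} - e_U$ relative to $Z_0 = t^{-1}(Z) \cap Z$. Hence $Z$ is homotopy equivalent to a graph, so that $\pi_1(Z) =: F$ is a finitely generated free group. The plan is then to argue that, after possibly enlarging $Z$ within $Y$, the graph $Z^{(1)} - e_U$ further deformation retracts onto $Z_0$: the vertices at height $a_{\max}$ together with their incident 1-cells (minus $e_U$) collapse into $Z_0$ because the remaining cells above height $a_{\max} - 1$ form a forest hanging off $Z_0$. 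Consequently the inclusion $\pi_1(Z_0) \hookrightarrow \pi_1(Z) = F$ is an isomorphism, and the associated one-relator splitting $\pi_1(X) \isom \pi_1(Z) *_\psi$, with $\psi$ induced by the deck action of $t$, becomes an ascending HNN-extension $F *_\psi$ of a free group. Since $\phi$ projects $\pi_1(X)$ onto the cyclic quotient generated by the stable letter, the identification $\normal{F} = \ker(\phi)$ is automatic.

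From the ascending HNN structure the local freeness is routine. With $t$ the stable letter and $\psi(F) \subseteq F$, conjugation by $t^{-1}$ yields an ascending chain $F \leqslant t^{-1} F t \leqslant t^{-2} F t^2 \leqslant \cdots$, and $\ker(\phi) = \bigcup_{n \geqslant 0} t^{-n} F t^n$. Each term is a finitely generated free group, so any finitely generated $H \leqslant \ker(\phi)$ lies in some $t^{-n} F t^n$ and is free by Nielsen--Schreier; thus $\ker(\phi)$ is locally free. The principal technical obstacle in the plan is the argument that a $\Z$-domain $Z$ can be chosen so that $\pi_1(Z_0) \hookrightarrow \pi_1(Z)$ is surjective: the freely ascending hypothesis supplies only the relative homotopy equivalence $Z_\phi \simeq Z_\phi^{(1)} - e_U$ (rel $Z_\phi \cap Y_{[a_{\min}, a_{\max})}$), and passing from this to a full deformation retraction onto $Z_0$ requires careful combinatorial bookkeeping of the 1-cells incident to the top stratum of $Z$, possibly after enlarging $Z_\phi$ to include additional 1-cells of $Y$ that lie wholly inside $Z_0$.
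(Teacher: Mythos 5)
The reduction to $\phi$ factoring through $\Z$ mirrors the proof of \cref{free_kernel} and is fine, and the deduction of local freeness from an ascending HNN structure is routine, as you say. The gap is in the middle step, and it is not merely a matter of combinatorial bookkeeping: no \emph{finite} one-relator $\Z$-domain $Z\subset Y$ (as furnished by \cref{one-relator_splitting}) can have $\pi_1(Z_0)\to\pi_1(Z)$ an isomorphism once $X$ has three or more $1$-cells. Indeed, the cells of $Z$ not lying in $Z_0=t^{-1}(Z)\cap Z$ form a fundamental set for the deck action, so consist of exactly one $0$-cell, $n$ $1$-cells and one $2$-cell, where $n$ is the number of $1$-cells of $X$; hence $\chi(Z)-\chi(Z_0)=2-n$. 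You correctly show $\pi_1(Z)$ is free of rank $1-\chi(Z)$ after collapsing through $e_U$, and $\pi_1(Z_0)$ is free of rank $1-\chi(Z_0)$ because $Z_0$ is a graph. Since $\pi_1(Z_0)\hookrightarrow\pi_1(Z)$ is injective by the Freiheitssatz, it can be surjective only when the ranks agree, forcing $n=2$. Concretely, for $n\geqslant 3$ the part of $Z^{(1)}-e_U$ outside $Z_0$ is a single vertex with $n-1\geqslant 2$ edges running into $Z_0$; this has no free face and cannot collapse onto $Z_0$, and enlarging the finite $\Z$-domain never changes the count $2-n$. So there is no finitely generated free $F$ of the form $\pi_1(Z)$ that does the job when $n\geqslant3$.

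To recover the HNN structure one must allow the free base to be infinitely generated, taking for the vertex space the infinite half-cover $Y^+=Y_{(-\infty, a_{\max}]}$. This is still a $\Z$-domain in the sense used in \cref{splitting}, with $Y^+\cap t(Y^+)=Y^+$. The freely ascending hypothesis, applied level by level as in the proof of \cref{freely_ascending}, collapses the $2$-cells of $Y^+$ from the top downward compatibly with the exhaustion $Y^+=\bigcup_{n}Y_{[-n, a_{\max}]}$, so $Y^+$ deformation retracts onto a graph and $\pi_1(Y^+)$ is free (of infinite rank when $n\geqslant 3$, as it must be since $\ker(\phi)$ is then infinitely generated by Brown's criterion, \cref{brown}). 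The subgraph inclusion $t^{-1}(Y^+)\subset Y^+$ is $\pi_1$-injective, so \cref{splitting} with $Z=Y^+$ gives $\pi_1(X)\cong\pi_1(Y^+)*_\psi$ with one associated subgroup equal to all of $\pi_1(Y^+)$, i.e.\ a descending or ascending HNN-extension of the free group $F=\pi_1(Y^+)$ with $\normal{F}=\ker(\phi)$. Local freeness of $\ker(\phi)=\bigcup_{k\geqslant0}t^{k}\pi_1(Y^+)t^{-k}$ then follows exactly as you argued, the chain now being built from conjugates of an infinitely generated free group rather than of $\pi_1$ of a finite $\Z$-domain.
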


\begin{remark}
\label{ascending_remark}
A finitely generated ascending HNN-extension of a free group embeds into an ascending HNN-extension of a finitely generated free group by work of Chong--Wise \cite{CW24}. Since an ascending HNN-extension of a finitely generated free group is residually finite by work of Borisov--Sapir \cite{BS05}, \cref{ascending} also yields a criterion for residual finiteness of a one-relator group.
\end{remark}

We may also state a variation of Conjecture \ref{conjecture_free_kernel}.

\begin{conjecture}
\label{conjecture_locally_free_kernel}
If $X$ is a one-relator complex with a single 0-cell and if $\phi\in H^1(X, \R)$, then $\ker(\phi)$ is locally free if and only if $\phi$ is freely ascending or descending.
\end{conjecture}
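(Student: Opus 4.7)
\noindent The forward direction of \cref{conjecture_locally_free_kernel} is already given by \cref{ascending}, so the plan is to establish the converse: assuming $\ker(\phi)$ is locally free, deduce that $\phi$ is freely ascending or freely descending. I will work first in the case where $\phi$ factors through an epimorphism $\pi_1(X)\to\Z$, since the condition of being freely ascending or descending depends only on the combinatorial type of $Z_\phi$ and hence only on the direction of $[\phi]$ in a small neighbourhood of $H^1(X,\R)$. The reduction from real to rational characters is subtler than the one used in the proof of \cref{free_kernel}, because for a rational $\phi'$ approximating $\phi$ one has $\ker(\phi')\supseteq\ker(\phi)$, and local freeness of the smaller group need not pass to the larger one. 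A sensible strategy for the reduction is to show that a failure of the ascending/descending condition produces a non-free subgroup visible inside $\ker(\phi)$ itself, rather than only inside $\ker(\phi')$.

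Assume now that $\phi$ factors through $\Z$, and let $\rho\colon Y\to X$ be the associated $\Z$-cover, so that $\pi_1(Y)=\ker(\phi)$. By \cref{one-relator_splitting} there is a one-relator $\Z$-domain $Z\subset Y$ yielding a splitting $\pi_1(X)\cong\pi_1(Z)*_\psi$, and by \cref{base_groups} we have $\pi_1(Z)\cong\pi_1(Z_\phi)*F$ for some free group $F$. Since $\pi_1(Z)$ embeds in $\ker(\phi)$ and is finitely generated (as $X$ is finite), it must be free, and so the one-relator group $\pi_1(Z_\phi)$ is free. Whitehead's theorem then says that the attaching cycle $\lambda_\phi$ of the single $2$-cell of $Z_\phi$ is a primitive element of $\pi_1(Z_\phi^{(1)})$, so that $Z_\phi$ collapses homotopically through some 1-cell $e$ to $Z_\phi^{(1)}-e$.

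The main obstacle is to show that $e$ can be chosen to lie at the minimum or maximum level of $Z_\phi$, rather than somewhere in its interior: a priori primitivity may be realized through Whitehead moves that leave every boundary edge essential in $\lambda_\phi$. To force $e$ to the boundary, I would exploit the fact that \emph{every} finite tower $\bigcup_{|i|\leqslant k}t^i(Z_\phi)\subset Y$ has free fundamental group, not merely $\pi_1(Z_\phi)$ itself. Two tools look natural. First, the strong inertness of Magnus subgroups (\cref{inert}) together with Collins' intersection theorems (\cref{Collins_intersections_1} and \cref{Collins_intersections_2}) give tight quantitative control on how consecutive translates of $Z_\phi$ can be glued along Magnus subgraphs without creating non-free subgroups; this should force the glueing data on at least one of the two sides to saturate a Magnus subgroup, which combinatorially is exactly the freely ascending or freely descending condition.

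An attractive alternative, which I think is ultimately the cleanest route, is to reinterpret the statement in terms of the BNS invariant $\Sigma(\pi_1(X))$. Classically, $[\phi]\in\Sigma$ (resp.\ $-[\phi]\in\Sigma$) detects when the positive (resp.\ negative) half-space of the HNN-splitting along $\phi$ is finitely generated as a monoid, and for an HNN-extension of a free group this is equivalent to the extension being ascending (resp.\ descending). The conjecture then amounts to the statement that, for a one-relator group, $\ker(\phi)$ is locally free if and only if $[\phi]\in\Sigma\cup-\Sigma$. I would attempt to prove this via Brown's criterion (\cref{Thm:Moldavanskii-Brown-Criteriton}) together with the polytopal descriptions of $\Sigma$ due to Friedl--Tillmann and Henneke--Kielak mentioned later in the survey, translating membership in $\Sigma\cup-\Sigma$ directly into the presence of a free face of $Z_\phi$ at the extremal level. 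The hardest input here will be the ``only if'' direction of the BNS characterisation, which requires ruling out locally free kernels in the non-ascending/descending regime; this is where the inertness and acylindricity machinery from \cref{sec:splittings} would re-enter the argument.
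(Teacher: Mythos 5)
The statement you are trying to prove is labelled as a \emph{conjecture} in the paper, so there is no internal proof to compare against; the paper only establishes the ``if'' direction via \cref{ascending}, and leaves the ``only if'' direction open. Your reading of the problem is therefore correct, and the opening moves of your converse argument are sound: for rational $\phi$, local freeness of $\ker(\phi)$ forces the finitely generated subgroup $\pi_1(Z)\cong\pi_1(Z_\phi)*F$ to be free, hence $\pi_1(Z_\phi)$ is free and, by Whitehead's theorem, $\lambda_\phi$ is primitive in $\pi_1(Z_\phi^{(1)})$. You also correctly identify the crux: a priori nothing forces the homotopy collapse witnessing that primitivity to respect the $\mu$-filtration, i.e.\ to take place relative to $Z_\phi\cap Y_{(a_{\min}, a_{\max}]}$ or to $Z_\phi\cap Y_{[a_{\min}, a_{\max})}$. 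That is indeed where the conjecture's difficulty lies, and your observation that local freeness gives freeness of \emph{all} finite towers $\bigcup_{|i|\le k}t^i(Z_\phi)$, not merely of $Z_\phi$, is the right extra leverage to try to exploit. But neither of the two closing strategies you propose actually closes this gap, so what you have is a correct framing of the problem rather than a proof.

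The BNS-invariant route in particular cannot work as stated, and this is a concrete error rather than a mere lack of detail. You claim the conjecture ``amounts to the statement that $\ker(\phi)$ is locally free if and only if $[\phi]\in\Sigma\cup-\Sigma$.'' By Brown's criterion (\cref{brown}), $\Sigma^1(G)=\varnothing$ whenever the one-relator group has at least three generators; yet the paper's own second example accompanying \cref{exmp:free_face} and Figure~\ref{fig:free_kernel} is a $3$-generator one-relator group with a $\phi$ whose kernel is free, hence locally free, while $[\phi]\notin\Sigma\cup-\Sigma$. The discrepancy is not an accident: an ascending HNN-extension of a \emph{finitely generated} free group has Euler characteristic $0$, forcing $|S|=2$, so ``freely ascending'' for $|S|\geqslant3$ necessarily produces an HNN-extension over an infinitely generated free base, to which the Bieri--Neumann--Strebel/Brown dictionary between $\Sigma$-membership and ascending splittings does not apply. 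Even restricted to $|S|=2$ the translation is too coarse: Brown's criterion detects an actual free face at the extremal level, whereas ``freely ascending/descending'' only asks for a relative homotopy equivalence, and the two are genuinely different (as the paper illustrates with the relator $a_0^2b_0a_0b_0$). Your first route, via strong inertness (\cref{inert}) and Collins's intersection theorems (\cref{Collins_intersections_1}, \cref{Collins_intersections_2}), avoids this misidentification and is a more plausible line of attack, but as written it is a heuristic: you would need a precise argument that gluing consecutive translates of $Z_\phi$ along Magnus subgraphs, while keeping every finite stage free, forces the primitivity to be realised by a Whitehead sequence that terminates in an edge at an extremal level. Until that argument is supplied, the converse direction remains open, as the paper records.
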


Baumslag asked in \cite{Ba74} whether a (one-relator) group with locally free derived subgroup is residually finite. Answering Conjecture \ref{conjecture_locally_free_kernel} would solve this problem by \cref{ascending_remark}.

\subsection{Algebraic fibring, Brown's criterion and the Friedl--Tillmann polytope}
\label{sec:brown}

An \emph{algebraic fibring} of a group $G$ is an epimorphism $\phi\colon G\to \Z$ with $\ker(\phi)$ finitely generated. One can think of this as an algebraic analog of a fibration of a manifold over a circle. Algebraic fibrations of a group $G$ are encoded in its BNS-invariant, see Bieri--Neumann--Strebel \cite{BNS87}. We shall not define what the BNS-invariant of a group $G$ is, but we remark that it is an open subset of the character sphere $\Sigma^1(G)\subset S(G)$ and that $\ker(\phi)$ is finitely generated if and only if $[\phi], -[\phi]\in \Sigma^1(G)$. The reader is directed to \cite{St13} for further details.

Brown computed the BNS-invariant for all one-relator groups in \cite{Br87} and thus characterised when a one-relator group admits an algebraic fibring. Brown's theorem, known as \emph{Brown's criterion}, completely determines which homomorphisms $G\to \Z$ of a one-relator group have finitely generated kernel in terms of the relator. Below we state Brown's criterion in the language of \cref{sec:character_sphere}.

\begin{theorem}[Brown's criterion]
\label{brown}
Let $G = F(S)/\normal{w}$ be a one-relator group with $w$ cyclically reduced, let $X$ be its presentation complex and let $\phi\colon G\to \Z$ be an epimorphism. Then:
\begin{enumerate}
\item $[\phi]\in \Sigma^1(G)$ if and only if $|S| = 2$ and a bottommost edge in $Z_{\phi} = (\Lambda_{\phi}, \lambda_{\phi})$ is traversed once by $\lambda_{\phi}$.
\item $-[\phi]\in \Sigma^1(G)$ if and only if $|S| = 2$ and a topmost edge in $Z_{\phi} = (\Lambda_{\phi}, \lambda_{\phi})$ is traversed once by $\lambda_{\phi}$.
\end{enumerate}
In particular, $\ker(\phi)$ is finitely generated if and only if $|S| = 2$ and a topmost and a bottommost edge in $Z_{\phi}$ is traversed once by $\lambda_{\phi}$.
\end{theorem}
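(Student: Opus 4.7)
The strategy is to pass to the abelian cover $\rho\colon Y\to X$ associated with $\phi$, so that $\ker(\phi)=\pi_1(Y)$, and then to analyze $Y$ through its level filtration under $\mu\colon Y^{(1)}\to\R$ together with the $\Z$-translates of the one-relator subcomplex $Z_\phi\subset Y$. Finite generation of $\ker(\phi)$ becomes a statement that the tails of $Y$ at $\pm\infty$ can be deformation retracted onto the compact central slab, which will be controlled precisely by whether $Z_\phi$ admits a free-face collapse of its 2-cell at its bottom and top levels. The key tools to leverage are \cref{freely_descending}, \cref{freely_ascending}, \cref{freely_descending_ascending}, \cref{ascending}, and the one-relator splitting of \cref{one-relator_splitting}.

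For the sufficient direction, first take the hypothesis that $|S|=2$ and that a bottommost edge $e_L\subset Z_\phi$ is traversed exactly once by $\lambda_\phi$. Then $e_L$ is a free face of the 2-cell of $Z_\phi$ relative to $Z_\phi\cap Y_{(a_{\min},a_{\max}]}$, so collapsing across $e_L$ exhibits $\phi$ as freely descending, and \cref{ascending} produces an HNN-splitting $G\isom F*_\psi$ of $G$ over a finitely generated free group $F$ in which the identifying isomorphism is defined on all of $F$; this is precisely the Bieri--Neumann--Strebel characterization of $[\phi]\in\Sigma^1(G)$. The case of $-[\phi]\in\Sigma^1(G)$ is symmetric using freely ascending. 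Under both hypotheses simultaneously, \cref{freely_descending_ascending} realizes $Y$ as homotopy equivalent to an explicit 1-complex obtained from $Y_{(a_{\min},a_{\max}]}$ by attaching translates of $\Delta_L$ and $\Delta_U$ above and below; since $|S|=2$ and $\phi$ is integral, each level of $Y^{(1)}$ has a single 0-cell, so this 1-complex has a tree-like $\Z$-equivariant structure whose $\pi_1$ is readily seen to be finitely generated, giving finite generation of $\ker(\phi)$.

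For the necessary direction there are two assertions to establish: $|S|=2$ and the edge condition. For $|S|=2$: if $[\phi]\in\Sigma^1(G)$, then $G$ is an ascending HNN-extension of a finitely generated subgroup over itself, which has deficiency at most $1$; combined with the fact that a one-relator presentation has deficiency exactly $|S|-1$, this forces $|S|\leqslant 2$, and hence $|S|=2$ since $\phi$ is a non-trivial epimorphism onto $\Z$. For the edge condition, proceed by contrapositive: suppose every bottommost edge of $Z_\phi$ is traversed at least twice by $\lambda_\phi$, so that no free-face collapse of the 2-cell of $Z_\phi$ into its positive-level part exists. The plan is to show that this failure produces in $Y$ an infinite sequence of loops descending past arbitrarily low levels that are pairwise independent in $H_1(Y;\Z)$, so that $\pi_1(Y)$ cannot be finitely generated and $[\phi]\notin\Sigma^1(G)$.

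The main obstacle will lie in executing this last step rigorously, that is, turning the combinatorial failure of the single-traversal condition into a concrete infinite-generation obstruction. The cleanest route appears to go through the HNN-splitting supplied by \cref{one-relator_splitting}, so that $G\isom\pi_1(Z_\phi)*_\psi$ acts on its Bass--Serre tree $T$, and using the control over intersections of Magnus subgroups provided by \cref{Collins_intersections_1} and \cref{Collins_intersections_2} one can exhibit an infinite descending ray in $T$ whose edge stabilizers contribute independent loops to $\pi_1(Y)$ whenever the bottommost condition fails. Arranging these independent loops so that they survive in $H_1(Y;\Z)$, while simultaneously tracking the interplay between the filtration of $Y$ by $\mu$ and the tree action, is the technical heart of the proof.
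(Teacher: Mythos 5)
The sufficient direction is in the right neighbourhood, but the proposal has two substantive problems, one of them acknowledged and fatal.

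First, your argument that $|S|=2$ is forced relies on the claim that an ascending HNN-extension of a finitely generated subgroup over itself has deficiency at most one. That claim is not obviously true and you give no justification. The correct route here is an Euler characteristic (or $L^2$-Betti number) computation: if $G = B\ast_\psi$ with the ascending isomorphism $\psi$ defined on all of $B$, then $\chi(G) = \chi(B) - \chi(B) = 0$ (using coherence of one-relator groups to know $B$ is of type $F$), while an aspherical one-relator presentation complex gives $\chi(G) = 2 - |S|$, whence $|S|=2$. The paper sidesteps all of this: it first proves, under the hypothesis that both a topmost and bottommost edge are traversed once, that $Y_{k+1}\setminus Y_k$ contributes exactly one $0$-cell and $|S|-1$ edges, so $\chi(Y_{k+1}) = \chi(Y_k) - |S| + 2$ and finite generation of $\ker(\phi)$ is equivalent to $|S|=2$; in the converse direction it then derives the edge condition and simply quotes the first part to extract $|S|=2$.

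Second, and more seriously, the necessity of the free-face condition is exactly where your proof stops being a proof. You propose to run a contrapositive and produce infinitely many independent classes in $H_1(Y;\Z)$ by following a descending ray in the Bass--Serre tree and controlling edge stabilisers via \cref{Collins_intersections_1} and \cref{Collins_intersections_2}, and you explicitly say that turning this sketch into an argument is ``the technical heart of the proof.'' It is, and you have not supplied it. It is also not at all clear that Collins' exceptional-intersection results are the right tool here: they constrain \emph{intersections} of Magnus subgroups but do not obviously detect independent homology classes in the infinite cyclic cover. The paper's argument is much more economical and avoids any homological accounting. Starting from the doubly-infinite amalgam decomposition
\[
\ker(\phi) = \pi_1(Y) = \cdots \underset{\pi_1(Z_0)}{\ast}\pi_1(Z)\underset{\pi_1(t(Z_0))}{\ast}\pi_1(t(Z))\underset{\pi_1(t^2(Z_0))}{\ast}\cdots,
\]
finite generation forces $\pi_1(Y)$ to be generated by finitely many of the vertex groups; by the normal form theorem for amalgamated products this collapses the amalgamation at either end, giving $\pi_1(Z) = \pi_1(Z_0) = \pi_1(Z_1)$. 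Then \cref{Magnus_subgroup_isomorphism} (Moldavanski\u{\i}'s lemma, the topological version of \cite[Lemma 1]{Mo67}) converts the equality $\pi_1(Z) = \pi_1(Z_i)$ of a one-relator complex with a Magnus subgraph into exactly the statement that there is a $1$-cell outside the subgraph traversed precisely once by the attaching map, i.e.\ the free-face condition. If you want to rescue your homological approach, you would essentially have to reprove the normal form theorem for this infinite amalgam in a disguised form, which is more work and no more illuminating.

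Finally, a minor point on the sufficient direction: invoking \cref{ascending} and then appealing to ``the BNS characterisation'' requires you to check that the base of the resulting HNN-extension is a finitely generated subgroup of $\ker(\phi)$ and to sort out the sign convention relating freely descending/ascending to $[\phi]$ versus $-[\phi]$. These are routine, but they are not nothing, and as written your account does not make either explicit.
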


\begin{remark}
Dunfield--Thurston investigated algebraic fibring of one-relator groups from a probabilistic perspective in \cite{DT06} using Brown's criterion. Specifically they showed that the probability that a random two-generator one-relator group algebraically fibres lies somewhere between $0.0006$ and $0.975$. They also remark that computer experiments indicate the probability may be limiting towards $0.94$.
\end{remark}

As mentioned in \cref{sec:coabelian}, Brown's criterion is a converse to \cref{free_kernel} for two-generator one-relator groups. Let us provide a sketch proof of the last statement of \cref{brown}. This part of the statement was proved first by Moldavanski\u{\i} \cite{Mo67}.

\begin{proof}[Sketch proof]
Let us first assume that a topmost and bottommost edge in $Z_{\phi}$ is traversed once by $\lambda_{\phi}$, then by \cref{freely_descending_ascending}, we have
\[
\ker(\phi) = \pi_1\left(\left(\bigcup_{i\leqslant 0}t^i(\Delta_L)\right)\cup Y_{(a_{\min}, a_{\max}]}\cup \left(\bigcup_{i>0}t^i(\Delta_U)\right)\right)
\]
For each integer $k\geqslant1$ , denote by
\[
Y_k = \left(\bigcup_{-k\leqslant i\leqslant 0}t^i(\Delta_L)\right)\cup Y_{(a_{\min}, a_{\max}]}\cup \left(\bigcup_{0<i\leqslant k}t^i(\Delta_U)\right)
\]
By definition of $\Delta_L$ and $\Delta_U$, $Y_{k+1} - Y_k$ contains a single vertex and $|S| - 1$ many edges. Thus, $\chi(Y_{k+1}) = \chi(Y_k) - |S| + 2$. This implies that $\ker(\phi)$ is finitely generated if and only if $|S| = 2$. Note that $|S| = 1$ cannot hold since otherwise there would be no epimorphism to $\Z$. We have established one direction of the claim.

Now suppose that $\ker(\phi)$ is finitely generated. By \cref{one-relator_splitting}, if $\rho\colon Y\to X$ denotes the cover induced by $\phi$ and $t\in \deck(\rho)$ is a generator, there is a one-relator $\Z$-domain $Z\subset Y$ such that $\pi_1(X)\isom \pi_1(Z)*_{\psi}$ where $\psi$ is induced by an isomorphism between two Magnus subgraphs $Z_0, Z_1\subset Z$, where $Z_0 = t^{-1}(Z)\cap Z$ and $Z_1 = Z\cap t(Z)$. Importantly, we have that $Z_0, Z_1$ both have one more vertex and $|S|$ many more edges than $Z$. We have
\begin{align}
\label{amalgam}
\ker(\phi) = \pi_1(Y) = \ldots \underset{\pi_1(Z_0)}{*} \pi_1(Z)\underset{\pi_1(t(Z_0))}{*}\pi_1(t(Z))\underset{\pi_1(t^2(Z_0))}{*}\ldots
\end{align}
Hence, if $\pi_1(Y)$ is finitely generated, then for some $k\geqslant 0$ we must have
\[
\pi_1(Y) = \langle \pi_1(t^{-k}(Z)), \ldots, \pi_1(Z), \pi_1(t(Z)), \ldots, \pi_1(t^k(Z))\rangle.
\]
By the amalgam decomposition (\ref{amalgam}), we have that $\pi_1(t^{k+1}(Z))\leqslant \pi_1(t^k(Z))$ which implies that $\pi_1(t^{k+1}(Z)) = \pi_1(t^{k+1}(Z_0))$. Arguing in the same way for $\pi_1(t^{-k-1}(Z))$, we see that $\pi_1(t^{-k-1}(Z)) = \pi_1(t^{-k-1}(Z_1))$ also. Hence, we have
\begin{align}
\label{eqn:equality_1}
\pi_1(Z) &= \pi_1(Z_0)\\
\label{eqn:equality_2}		&= \pi_1(Z_1)
\end{align}
We shall now need a lemma. It is the topological translation of \cite[Lemma 1]{Mo67}.

\begin{lemma}
\label{Magnus_subgroup_isomorphism}
Let $X = (\Lambda, \lambda)$ be a one-relator complex and let $M\subset X$ be a Magnus subgraph. The inclusion $\pi_1(M)\injects \pi_1(X)$ is an isomorphism if and only if there is a 1-cell $e\subset \Lambda - M$ traversed by $\lambda$ precisely once and $\Lambda - e$ deformation retracts onto $M$.
\end{lemma}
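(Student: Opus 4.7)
For the forward direction, if $\lambda$ traverses $e$ exactly once then $e$ is a free face of the 2-cell of $X$, so $X$ deformation retracts onto $\Lambda-e$, and composing with the assumed retraction $\Lambda-e\searrow M$ yields $X\searrow M$, giving the isomorphism on $\pi_1$. The substance of the lemma is the converse, to which the rest of this plan is devoted.

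Assume that $\pi_1(M)\injects\pi_1(X)$ is an isomorphism. Since $\pi_1(M)$ is free by the topological Freiheitssatz (\cref{topological_Freiheitssatz}), so is $\pi_1(X)$; hence \cref{finite_order} gives $\deg(\lambda)=1$ and \cref{aspherical} makes $X$ aspherical, so $\chi(X)=1-\rk\pi_1(X)=1-\rk\pi_1(M)=\chi(M)$ and therefore $\chi(\Lambda)=\chi(M)-1$. Extending a spanning tree $T_M$ of $M$ to a spanning tree $T$ of $\Lambda$, each edge of $C:=T-T_M$ attaches one new vertex outside $M$, so $|C|=|V(\Lambda)|-|V(M)|$, and the Euler count leaves exactly one non-tree edge $e_0$ of $\Lambda$ lying outside $M$. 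Collapsing $C$ from its leaves inward realises $M$ as a deformation retract of $\Lambda-e_0$, establishing the retraction condition of the lemma with $e=e_0$.

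It remains to show that $\lambda$ traverses $e_0$ exactly once. The collapse above identifies $\Lambda$, rel $M$, with the one-edge extension $\Lambda':=M\cup e_0'$, carrying $\lambda$ to a loop $\lambda'$ in $\Lambda'$ with the same number of $e_0'$-traversals as $\lambda$ has $e_0$-traversals. Crucially, $\lambda'$ is itself cyclically reduced: any cancellation introduced by the collapse would force $\lambda$ to contain a subword $e_0^{\pm 1}\cdot p\cdot e_0^{\mp 1}$ (or the analogous pattern with two $M$-letters around a $C$-path) where $p$ is a closed path in the forest $C$, which would have to be trivial and would therefore contradict the assumption that $\lambda$ is an immersion. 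Let $\iota\colon\pi_1(M)\xrightarrow{\sim}\pi_1(X)$ denote the assumed isomorphism, and set $t_0:=\iota^{-1}(t)\in\pi_1(M)$ with $t=\gamma_{e_0'}$; then the kernel of the composite surjection $\pi_1(\Lambda')=\pi_1(M)*\langle t\rangle\surjects\pi_1(X)\xrightarrow{\iota^{-1}}\pi_1(M)$ is simultaneously the normal closure of $\lambda'\in\pi_1(\Lambda')$ and the normal closure of $tt_0^{-1}$. The Magnus property (\cref{Magnus_property}) then forces $\lambda'$ and $tt_0^{-1}$ to be cyclic permutations of one another up to inversion; since $tt_0^{-1}$ has $t$-length one, so does $\lambda'$, and hence $\lambda$ traverses $e_0$ exactly once. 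The principal subtlety is recognising that the classical Magnus property is exactly the right tool here, bypassing any need for an inductive reduction through the Magnus--Moldavanski\u{\i} hierarchy; the remaining Euler-characteristic and spanning-tree manoeuvres, together with the verification that the collapse preserves cyclic reducedness, are routine combinatorial bookkeeping.
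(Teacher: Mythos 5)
Your proof is correct. The paper itself gives no proof of this lemma, stating only that it is the topological translation of Lemma~1 of Moldavanski\u{\i}'s 1967 paper, so there is no proof in the paper to compare against; you have supplied a genuine argument where the authors supply only a citation.

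That said, here is a brief appraisal of the argument. The forward direction (free face collapse) is routine and correct. In the converse, the chain of deductions is sound: the Freiheitssatz gives injectivity, hence $\pi_1(X)$ free, hence torsion-free, hence $\deg(\lambda)=1$ by \cref{finite_order}, hence $X$ aspherical by \cref{aspherical}, hence the Euler-characteristic count $\chi(\Lambda)=\chi(M)-1$, which combined with $T_M\subset T$ pins down a unique non-tree edge $e_0$ of $\Lambda$ outside $M$ and exhibits the deformation retraction $\Lambda-e_0\searrow M$. The step you correctly flag as the crux — that $\lambda'$ remains cyclically reduced after collapsing $C$ — is right, and your reason is the right one, though stated tersely: the relevant "cancellation pattern" in $\lambda$ forces a nontrivial reduced closed path in the forest $C$, which cannot exist, and the degenerate case (trivial $C$-path) contradicts $\lambda$ being an immersion. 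Finally, the appeal to \cref{Magnus_property} is exactly the right tool and is applied correctly: both $\lambda'$ and $tt_0^{-1}$ are cyclically reduced in $\pi_1(\Lambda')=\pi_1(M)*\langle t\rangle$, they have equal normal closure, and the number of $t^{\pm1}$-letters is a cyclic-permutation- and inversion-invariant, giving exactly one traversal. One small implicit assumption worth acknowledging is finiteness of $\Lambda$: your Euler-characteristic bookkeeping needs it. This is harmless in context, since the lemma is invoked in the proof of Brown's criterion where the complex in play is finite, but it would be worth stating explicitly if the lemma were to be recorded as a stand-alone result.
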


In order to complete the proof, we may use (\ref{eqn:equality_1}) and (\ref{eqn:equality_2}) combined with \cref{Magnus_subgroup_isomorphism} to conclude that there are 1-cells $e_L\subset Z - Z_1$ and $e_U\subset Z - Z_0$ that are both traversed precisely once by $\lambda_{\phi}$. By the first part of the proof this also implies that $|S| = 2$ and so the proof is complete.
\end{proof}

\subsubsection{The Friedl--Tillmann polytope}

In this section we describe the Friedl--Tillmann of a two-generator one-relator group, as introduced in \cite{FT20}. This is a marked polytope in $H_1(G, \R) = \R^2$ which encodes the BNS-invariant of a one-relator group via its marking. It can be constructed directly from a presentation and it is (up to translation) independent of the choice of one-relator presentation.

Let us fix a free group $F = F(a, b)$ and a cyclically reduced word $w$ contained in the commutator subgroup $[F(a, b), F(a, b)]$. Recall from \cref{sec:character_sphere} the definition of $\Lambda_w$. Let $\mathcal{C}_w\subset \R^2$ be the convex hull of $\Lambda_w$. Mark all vertices in $\mathcal{C}_w$ that are traversed precisely once by $\lambda_w$, the trace of $w$. The \emph{Friedl--Tillmann polytope} $\mathcal{M}_w\subset H_1(G, \R) = \R^2$ for $F/\normal{w}$ is then defined as the convex hull of the midpoints of the squares contained in $\mathcal{C}_w$. A vertex of the polytope $\mathcal{M}_w$ is marked if all the vertices of the corresponding square in $\mathcal{C}_w$ are marked. See \cref{fig:polytope} for an illustration of the marked polytope $\mathcal{M}_w$ for the one-relator group from \cref{Z_example}.

\begin{figure}
\centering
\includegraphics[scale = 0.32]{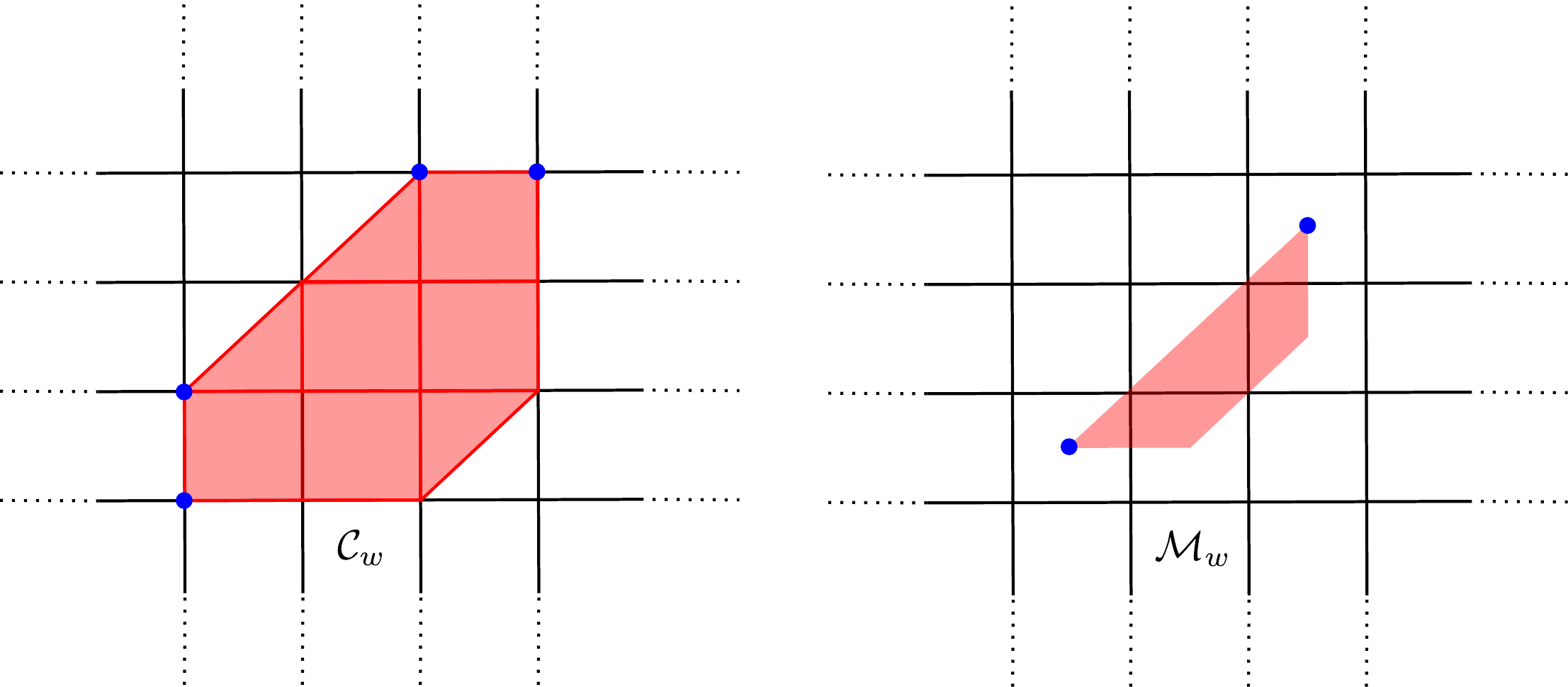}
\caption{Friedl--Tillmann's marked polytope $\mathcal{M}_w$ for the one-relator group with relator $w = a^2b^3ab^{-2}a^{-1}bab^{-1}a^{-2}bab^{-1}a^{-2}b^{-1}$. The marked vertices are in blue.}
\label{fig:polytope}
\end{figure}

An element $\phi\in H^1(G, \R)$ \emph{pairs maximally} with a vertex $v$ of $\mathcal{M}_w$ if $\phi(v)>\phi(w)$ for all other vertices $w\neq v$ in $\mathcal{M}_w$. Friedl--Tillmann prove the following result in \cite{FT20}, relating their polytope to the BNS-invariant of a one-relator group.

\begin{theorem}
Let $F = F(a, b)$, let $w\in [F, F]$ be a cyclically reduced word and let $G = F/\normal{w}$. Then $\phi\in H^1(G, \R)$ lies in the BNS-invariant $\Sigma^1(G)$ if and only if $\phi$ pairs maximally with a marked vertex of $\mathcal{M}_w$.
\end{theorem}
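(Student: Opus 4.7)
The plan is to derive the theorem from Brown's criterion (\cref{brown}), which characterises $\Sigma^1(G)$ in terms of the combinatorics of $Z_\phi$. The guiding principle is that the vertices of $\mathcal{M}_w$ parametrise the possible extremal local pictures of $Z_\phi$ as $\phi$ varies over $H^1(G, \R)\setminus\{0\}$, and the marking is designed precisely so that it records when the distinguished edge in Brown's criterion is traversed exactly once by $\lambda_\phi$.

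First I would reduce to rational characters. Both membership in $\Sigma^1(G)$ and pairing maximally with a fixed vertex $v$ of $\mathcal{M}_w$ are open conditions on $H^1(G, \R)\setminus\{0\}$: openness of $\Sigma^1(G)$ is the content of the Bieri--Neumann--Strebel theorem, while pairing maximally with $v$ is defined by the strict inequalities $\phi(v) > \phi(w)$ as $w$ ranges over the other (finitely many) vertices. Any $\phi$ can therefore be perturbed to a rational character, which after rescaling corresponds to an epimorphism $G\to \Z$, without altering either condition. This places us in the setting where Brown's criterion (\cref{brown}) applies directly.

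Next I would set up the geometric dictionary using the construction of $Z_\phi$ from \S\ref{sec:character_sphere}. Given an epimorphism $\phi$, the one-relator complex $Z_\phi$ arises by projecting $\Lambda_w\subset \R^2$ onto the line $L_\phi$. The hypothesis that $\phi$ pairs maximally with the midpoint $v_S$ of a unit square $S\subset \mathcal{C}_w$ forces $S$ to be the unique unit square in $\mathcal{C}_w$ whose centre maximises $\phi$, which in turn forces the $\phi$-extremal corner $c^*$ of $S$ to be the unique lattice point of $\mathcal{C}_w$ on the extremal fibre of the projection. Thus $c^*$ projects without identification to the unique $\phi$-extremal vertex of $Z_\phi$, and the translates of the remaining three corners of $S$ determine which edges of $\Lambda_\phi$ meet this extremal vertex.

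The key technical step is to translate the marking condition on $v_S$ into the edge-multiplicity condition of Brown's criterion. The plan is to argue that, because the vertices of $S$ are traversed precisely once by $\lambda_w$, the combinatorial structure of $\lambda_w$ in a neighbourhood of $S$ forces the corresponding extremal edge of $Z_\phi$ to be traversed exactly once by $\lambda_\phi$; conversely, a repeated visit by $\lambda_w$ to a corner of $S$ lifts, after projection, to multiple traversals of the extremal edges of $Z_\phi$ by $\lambda_\phi$, ruling out the Brown condition. Combining these two implications with \cref{brown} yields the theorem.

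The main obstacle will be Step 3: one must carefully account for how multiple visits of $\lambda_w$ to a lattice point of $\mathcal{C}_w$ interact with the identifications induced by the projection $\Lambda_w \to \Lambda_\phi$, and rule out accidental coincidences in multiplicities coming from lattice points outside $S$. Here the extremality of $v_S$ as a vertex (not merely a boundary point) of $\mathcal{M}_w$, combined with the geometry of the squares in the integer lattice, should be exactly what is needed to force the book-keeping to work.
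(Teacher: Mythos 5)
The paper does not prove this theorem; it simply states it and cites Friedl--Tillmann \cite{FT20}. Your overall strategy --- derive it from Brown's criterion and the $Z_\phi$ construction --- is the right one and is essentially what Friedl--Tillmann do, so I will treat this as an attempt at their proof.

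The main gap is Step 1. You claim both conditions are open and so can be checked after perturbing to a rational character. This works in only one direction. If $\phi$ pairs maximally with a marked vertex, then since that set is a finite union of open rational cones you can find a nearby rational $\phi'$ that still pairs maximally, and \cref{brown} gives $[\phi']\in\Sigma^1(G)$; but openness of $\Sigma^1(G)$ tells you nothing about $\phi$ itself (it guarantees a neighbourhood of points in $\Sigma^1$ once $\phi\in\Sigma^1$, not the other way around). For the converse, if $\phi\notin\Sigma^1(G)$ you need a nearby rational $\phi'$ also outside $\Sigma^1(G)$, and openness of $\Sigma^1(G)$ gives you nothing: $\Sigma^1(G)^c$ is closed, but nothing you have assumed forces it to contain rational points arbitrarily close to $\phi$. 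Indeed two open sets that agree on a dense set of rationals need not coincide --- take $(0,2)$ and $(0,\pi/2)\cup(\pi/2,2)$. What is really needed is that $\Sigma^1(G)^c$ is cut out by rational hyperplanes, but that rational polyhedrality is part of what the theorem is asserting, so using it here would be circular.

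The clean fix is to avoid the reduction entirely: Brown's theorem in \cite{Br87} applies to every non-trivial $\phi\in H^1(G,\R)$, not only to epimorphisms onto $\Z$. The restriction in the paper's \cref{brown} is a matter of exposition; the construction of $Z_\phi$ in \S\ref{sec:character_sphere} and the once-traversed-edge criterion make sense for irrational slopes as well, and Brown proves the criterion in that generality. With that version of Brown's criterion the comparison in your Step 3 becomes a direct combinatorial check for all $\phi$ at once. One further caution for that step: Brown's part (1) is about the \emph{bottommost} edge of $Z_\phi$, i.e.\ the $\phi$-minimal end of $\Lambda_\phi$, whereas you identify the relevant corner $c^*$ with the $\phi$-\emph{maximal} corner of the $\phi$-\emph{maximal} square. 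One of these two maximisations must be switched to a minimisation if it is to match the paper's convention of ``pairs maximally''; make sure the orientation is consistent before attempting the multiplicity bookkeeping you describe.
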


In the same article, Friedl--Tillmann conjecture that $\mathcal{P}_w$ is an invariant of the group $F/\normal{w}$, up to translation. They prove that it is the case for a large class of one-relator groups, but their conjecture was only confirmed in full by Henneke--Kielak in \cite[Theorem 5.12]{HK20}.

\begin{theorem}
Let $F = F(a, b)$, let $w\in [F, F]$ be a cyclically reduced word and let $G = F/\normal{w}$. Then the marked polytope $\mathcal{M}_w$ is an invariant of $G$, up to translation.
\end{theorem}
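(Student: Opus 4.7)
The plan is to exhibit an intrinsic group-theoretic polytope $P(G) \subset H_1(G,\mathbb{R})$, defined up to translation and depending only on the isomorphism type of $G$, and then to identify $P(G)$ with the Friedl--Tillmann marked polytope $\mathcal{M}_w$. Invariance of $\mathcal{M}_w$ under change of one-relator presentation would then be immediate. For the construction of $P(G)$, I would work agrarianly, following the spirit of Friedl--L\"uck and of Henneke--Kielak's earlier work. Since $w \in [F,F]$, \cref{locally_indicable} gives that $G = F/\normal{w}$ is torsion-free and locally indicable, so by Hughes's theorem $\mathbb{Q}G$ embeds into a Hughes-free division ring $\mathcal{D}_G$. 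The Dieudonn\'e determinant then assigns to any square matrix over $\mathbb{Q}G$ an element of $\mathcal{D}_G^{\times}/[\mathcal{D}_G^\times,\mathcal{D}_G^\times]$, and pushing forward under $G \to G^{\ab} = \mathbb{Z}^2$ gives a (formal difference of) Newton polytopes in $H_1(G,\mathbb{R})$, well-defined up to translation. From the cellular chain complex of the presentation $2$-complex, the only non-trivial boundary data are the two Fox derivatives $\partial w/\partial a,\, \partial w/\partial b \in \mathbb{Z}G$, and I would define $P(G)$ as the agrarian polytope built from them. Invariance of $P(G)$ under group isomorphism is then intrinsic to the construction.

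The second step is to compute $P(G)$ in terms of the trace $\Lambda_w$. Each Fox derivative $\partial w/\partial a \in \mathbb{Z}G$, pushed into $\mathbb{Q}[\mathbb{Z}^2]$, is a signed sum indexed by the prefixes of $w$ immediately preceding an $a^{\pm 1}$-letter, that is to say, by the horizontal edges of $\Lambda_w$; similarly $\partial w/\partial b$ is supported on the vertical edges. Their Newton polytopes are thus (up to translation) the convex hulls of the horizontal, respectively vertical, edges of $\Lambda_w$. Combining these as prescribed by the agrarian construction and halving produces the convex hull of the midpoints of the unit squares contained in $\mathcal{C}_w$ --- which is precisely $\mathcal{M}_w$ up to translation. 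For the marking, a vertex of the agrarian polytope is extremal with coefficient $\pm 1$ if and only if, in a unit square of $\mathcal{C}_w$, each of the four adjacent edges of $\Lambda_w$ is traversed by $\lambda_w$ exactly once; one would then check that this is exactly the Friedl--Tillmann marking condition.

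The main obstacle I expect is the matching of the agrarian marking with the combinatorial one. This requires carefully tracking signs and multiplicities through the Fox calculus and the Dieudonn\'e determinant, and verifying that cancellations that occur in $\mathcal{D}_G$ but not in $\mathbb{Z}G$ do not distort the support of the projection to $\mathbb{Z}[\mathbb{Z}^2]$. Concretely, one needs that an extremal lattice point in the Newton polytope of $\partial w/\partial a$ has coefficient $\pm 1$ precisely when the corresponding horizontal edge is traversed once by $\lambda_w$; this should follow from the hypothesis that $w$ is cyclically reduced, which prevents any cancellation at extremal positions in $\Lambda_w$. Once this dictionary between agrarian data and the combinatorics of $\Lambda_w$ is established, the two steps together yield the theorem.
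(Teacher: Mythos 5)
Your top-level strategy --- build an intrinsic agrarian polytope $P(G)$ from the Dieudonn\'e determinant over a Hughes-free division ring and identify it with $\mathcal{M}_w$ --- is exactly the Henneke--Kielak route that the paper cites, so the plan is sound. (A minor attribution: existence of the Hughes-free embedding for locally indicable $G$ is due to Jaikin-Zapirain and L\'opez-\'Alvarez, cf.\ \cref{division_ring}; Hughes's theorem supplies uniqueness, not existence.) But you have the difficulty upside-down, and the identification step as sketched does not compute what it needs to. First, the marking is essentially free: once the underlying convex set $\mathcal{M}_w$ is known to be intrinsic, Brown's criterion (\cref{brown}) identifies the marked vertices with those paired maximally by classes in $\Sigma^1(G)$, which is already a group invariant, so no further agrarian bookkeeping is required there. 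Second, and more seriously, the polytope you actually compute --- the Newton polytope of $\partial w/\partial a$ pushed into $\mathbb{Q}[\mathbb{Z}^2]$ --- is not the agrarian polytope. The agrarian polytope is the convex hull of the support of $\partial w/\partial a = \sum_{n\in\mathbb{Z}^2}c_n\cdot n$ with coefficients $c_n\in \mathcal{D}_{[G,G]}$, using the crossed-product structure $\mathcal{D}_G\cong\mathcal{D}_{[G,G]}\ast\mathbb{Z}^2$; your pushforward records only the augmentations $\epsilon(c_n)\in\mathbb{Q}$. At an extremal position visited more than once by $\lambda_w$, the $\pm 1$ contributions from distinct prefixes can cancel in $\mathbb{Q}$ while $c_n$ remains nonzero in $\mathbb{Q}[G,G]$, so your polytope can be a proper sub-polytope of $\mathcal{M}_w$. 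Proving $c_n\neq 0$ at the relevant positions amounts to showing that certain proper subwords of $w$ are nontrivial in $G$, which is Weinbaum's theorem \cite{We72}, not a formal consequence of cyclic reducedness in the free group. Your assertion that cyclic reducedness alone rules out cancellation at extremal positions is precisely the extra hypothesis under which Friedl--Tillmann had already proved the result for their ``large class''; removing it is the content of the theorem, and that is where your argument is silently assuming the conclusion.

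There is also a small unaddressed case. You invoke \cref{locally_indicable} directly from $w\in[F,F]$, but local indicability requires $w$ not to be a proper power, which the statement does not assume. For $w = v^n$ with $n\geq 2$, note that $\Lambda_{v^n}=\Lambda_v$ so $\mathcal{M}_{v^n}$ is $\mathcal{M}_v$ with empty marking, and if $F/\normal{v^n}\cong F/\normal{s^m}$ then the existence and maximal order of torsion forces $n=m$, whence $F/\normal{v}\cong F/\normal{s}$ by \cref{Thm:If-Rn=Sn-thenR=S}; this reduces the torsion case to the torsion-free one, but it is a step your outline should record.
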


See \cref{sec:splitting_complexity} for a connection between the thickness of the Friedl--Tillmann polytope of a one-relator group and its splitting complexity.

\subsection{Splitting complexity, JSJ-decompositions and amalgamated free products}
\label{sec:splitting_complexity}

If $G$ is a group and $\phi\colon G\to \Z$ is an epimorphism, a \emph{splitting of $(G, \phi)$} is an HNN-extension decomposition $G \isom H*_{\psi}$ such that $\normal{H} = \ker(\phi)$. Recall that if $\psi$ identifies $A\leqslant H$ with $B\leqslant H$, then $A$ and $B$ are the \emph{associated groups}. The \emph{splitting complexity} of $(G, \phi)$ is
\[
c(G, \phi) = \min{\{\rk(A) \mid (G, \phi) \text{ splits with associated group $A$}\}}.
\]
The \emph{free splitting complexity} of $(G, \phi)$ is
\[
c_f(G, \phi) = \min{\{\rk(A) \mid (G, \phi) \text{ splits with free associated group $A$}\}}.
\]
If $G$ is a finitely presented group (or more generally, of type $\fp_2(\Z)$) and $\phi\colon G\to \Z$ is an epimorphism, then Bieri--Strebel proved \cite[Theorem A]{BS78} that $c(G, \phi)<\infty$. Although $c_f(G, \phi)$ in general could be infinite, when $G$ is a finitely generated one-relator group it is finite for all $\phi$ by \cref{one-relator_splitting}.

In \cite{FT20}, Friedl--Tillmann related the splitting complexity of many pairs $(F/\normal{w}, \phi)$ with the \emph{thickness} of the polytope $\mathcal{M}_w$ with respect to $\phi$:
\[
\thick_{\phi}(\mathcal{M}_w) = \max{\{\phi(p) - \phi(q) \mid p, q\in \mathcal{M}_w\}}
\]
where here $\phi$ is defined on $\mathcal{M}_w$ via its natural extension to a homomorphism $H_1(G, \R) = \R^2\to \R$. Henneke--Kielak then generalised their results to all two-generator one-relator groups in \cite[Theorem 6.4]{HK20}.

\begin{theorem}
\label{splitting_complexity}
Let $F = F(a, b)$, let $w\in [F, F]$ be a cyclically reduced word that is not a proper power and let $G = F/\normal{w}$. For any epimorphism $\phi\colon G\to \Z$, we have
\[
c(G, \phi) = c_f(G, \phi) = \thick_{\phi}(\mathcal{M}_w) + 1.
\]
\end{theorem}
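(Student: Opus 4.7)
The plan is to establish the chain of inequalities $\thick_{\phi}(\mathcal{M}_w) + 1 \leq c(G,\phi) \leq c_f(G,\phi) \leq \thick_{\phi}(\mathcal{M}_w) + 1$, with the middle inequality being immediate from the definitions. The two substantive tasks are (i) constructing an explicit splitting with free associated subgroup of the right size, and (ii) showing that no splitting, even allowing non-free associated subgroups, can do better.

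For the upper bound $c_f(G,\phi) \leq \thick_{\phi}(\mathcal{M}_w) + 1$, I would apply Theorem~\ref{one-relator_splitting} to the $\Z$-cover $\rho\colon Y \to X$ of the presentation complex induced by $\phi$. This yields a one-relator $\Z$-domain $Z \subset Y$ giving a splitting $G \isom \pi_1(Z) *_{\psi}$ where $\psi$ identifies the Magnus subgraphs $Z_0 = t^{-1}(Z) \cap Z$ and $Z_1 = Z \cap t(Z)$ of $Z$, both of which are free by the Freiheitssatz. The key computation is to choose $Z$ minimally (containing $Z_\phi$ via Lemma~\ref{base_groups}) and count $\rk(\pi_1(Z_0))$ using its Euler characteristic. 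Under the projection $\Lambda_w \to L_\phi$, the 1-cells of $Z_0$ correspond to lattice edges in $\Lambda_w$ whose projected image lies strictly below the top slab of width one, and a direct tally in terms of the supports of the squares of $\mathcal{C}_w$ shows that $\rk(\pi_1(Z_0))$ equals the number of integer midpoints of squares in the $\phi$-direction minus a boundary correction, which works out to $\thick_\phi(\mathcal{M}_w) + 1$.

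For the lower bound $c(G,\phi) \geq \thick_\phi(\mathcal{M}_w) + 1$, suppose $G \isom H *_\psi$ is any splitting of $(G,\phi)$ with associated subgroup $A$. The idea is to obtain a cellular decomposition of $Y = \widetilde{X}/\ker(\phi)$ (with $X$ a presentation complex of $G$) in which the $1$-skeleton of the fundamental domain carries $A$, producing a finite CW-model for $Y$ whose structure bounds the Newton polytope of the Alexander-type matrix associated with the chain complex $C_*(Y;\Z)$. By Henneke--Kielak's invariance theorem, this polytope agrees (up to translation) with $\mathcal{M}_w$, so its thickness in direction $\phi$ is an invariant of $(G,\phi)$. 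The crucial estimate is that the thickness of the Newton polytope of any square presentation matrix for the second boundary map of such a CW-model is bounded above by $\rk(A) - 1$, yielding $\thick_\phi(\mathcal{M}_w) \leq \rk(A) - 1$ and hence the desired inequality.

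The main obstacle will be the lower bound: constructing the Newton polytope invariantly, and relating it to $\mathcal{M}_w$ for an arbitrary (not necessarily one-relator) splitting. The polytope $\mathcal{M}_w$ is defined combinatorially from the trace of $w$, while the splitting $G = H *_\psi$ is entirely abstract, so the bridge requires the Henneke--Kielak invariance theorem together with a careful comparison between the Fox-derivative matrix for $w$ and the boundary matrix one reads off from the graph-of-spaces decomposition of $Y$ associated with the splitting. Once this identification is set up, the polytope inequality $\thick_\phi(P_M) \leq n - 1$ for an $n \times n$ matrix $M$ over the group ring (with $P_M$ its Newton polytope) is a general algebraic statement about polytopes of determinants, which is the technical heart of \cite{HK20}.
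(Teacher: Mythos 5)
Your overall strategy---an explicit one-relator splitting for the upper bound, polytope invariance plus a rank bound for the lower bound---matches the cited references (Friedl--Tillmann \cite{FT20} and Henneke--Kielak \cite{HK20}). Note that the paper itself does not prove this theorem: it cites \cite[Theorem 6.4]{HK20}, and the remark immediately following the statement confirms your upper-bound route (one can exhibit a one-relator splitting for $(G,\phi)$ with associated Magnus subgroups of rank exactly $\thick_{\phi}(\mathcal{M}_w)+1$, giving $c_f(G,\phi)\le\thick_\phi(\mathcal{M}_w)+1$). Your upper-bound sketch is therefore sound in outline, modulo carrying out the Euler-characteristic tally for $\pi_1(Z_0)$ carefully.

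There is, however, a real gap in the lower-bound reasoning. You propose bounding the "Newton polytope of the Alexander-type matrix" of the chain complex of $Y$, but over $\Z[t^{\pm1}]$ the Alexander polynomial in direction $\phi$ can vanish identically, in which case its Newton polytope carries no information; this failure mode is precisely why Friedl--Tillmann's original lower bound held only for "many pairs" and not in general. Overcoming it is the technical heart of \cite{HK20}: one must pass to the Dieudonné determinant of the Fox-derivative matrix over the Linnell/Lewin skew field and work with the polytope homomorphism into the polytope group, where both the invariance theorem you invoke (\cite[Theorem 5.12]{HK20}) and the estimate $\thick_\phi\le\rk(A)-1$ are actually established. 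Your phrase "a general algebraic statement about polytopes of determinants" is also too strong as stated: the bound depends crucially on the specific shape of the boundary matrix arising from a graph-of-spaces model of the splitting (the $t$-dependence is confined to entries of $\phi$-degree span at most one), and on the fact that, for an arbitrary splitting $G\cong H*_\psi$, the associated subgroup $A$ is only finitely generated---not free a priori---so the construction of the CW-model and the reading-off of the matrix need to be justified in that generality before the determinant estimate can even be set up.
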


\begin{remark}
\label{remark_Z_splitting}
The upper bound $c_f(G, \phi)\leqslant \thick_{\phi}(\mathcal{M}_w)$ from \cref{splitting_complexity} is \cite[Proposition 7.3]{FT20}. The proof shows that there is a one-relator splitting for $(G, \phi)$ with associated Magnus subgroups of rank precisely $\thick_{\phi}(\mathcal{M}_w) + 1$. This can also be proved directly using \cref{one-relator_splitting} and the description of one-relator splittings from \cref{sec:character_sphere}. 
\end{remark}

\begin{qstn}
Is there an analogue to \cref{splitting_complexity} valid for all one-relator groups?
\end{qstn}

Let $G = F/\normal{w}$ be a one-relator group as in \cref{splitting_complexity}. One immediate consequence of \cref{splitting_complexity} is that $G$ splits as a HNN-extension over $\Z$ if and only if $\thick_{\phi}(G) = 0$ for some $\phi$, or, equivalently, if and only if the polytope $\mathcal{M}_{w}$ is a line. In light of \cref{remark_Z_splitting}, $G$ then splits as a HNN-extension over an infinite cyclic group if and only if $G$ admits a one-relator splitting with associated Magnus subgroups isomorphic to $\Z$.

Gardam--Kielak--Logan applied this to two-generator one-relator hyperbolic groups in \cite{GKL24} to compute the JSJ-decomposition of $G$. Logan was able to understand the case of two-generator one-relator groups with torsion in even more detail and worked out a complete characterisation of all outer automorphism groups in \cite{Lo16}. The reader is directed towards Guirardel--Levitt \cite{GL17} for more information on JSJ-decompositions.

\begin{problem}
Characterise the JSJ-decompositions of one-relator groups.
\end{problem}

Although a lot is known about HNN-extension decompositions, as evidenced above and by \cref{sec:one-relator_splittings}, not much is known about amalgamated free product decompositions of one-relator groups. Call an amalgamated free product $G = A*_CB$ \emph{proper} if $A, B, C$ are finitely generated, $C$ is a proper subgroup of $A$ and $B$, and $C$ has index at least three in $A$ or $B$. Wall asked the following question in \cite[Question F2]{Wa79}.

\begin{qstn}[Wall]
Which one-relator groups split as proper amalgamated free products?
\end{qstn}

It is not too hard to show that a proper amalgamated free product contains a free subgroup of rank two. Thus, $\bs(1, n)$ is an infinite family of one-relator groups that cannot admit any proper amalgamated free product decomposition. Nevertheless, many one-relator groups do andmit such decompositions. Baumslag--Shalen showed in \cite{BS90} that every group $G$ of deficiency at least two admits a \emph{proper} amalgamated free product decomposition. That is, $G = A*_CB$ with $A, B, C$ finitely generated, $C$ a proper subgroup of $A$ and $B$ and with $C$ of index greater than two in $A$ or $B$. In particular, this applies to one-relator groups generated by at least three elements. Such decompositions were also shown to hold for all non-cyclic one-relator groups with torsion by Benyash-Krivets \cite{BK98}. Wall's question does not yet have an answer for torsion-free two-generator one-relator groups.

\begin{theorem}
A one-relator group $G = F/\normal{w}$ splits as a proper amalgamated free product unless $\rk(F) = 2$ and $G$ is torsion-free.
\end{theorem}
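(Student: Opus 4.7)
The plan is to deduce the theorem directly from the two results cited in the paragraph just preceding the statement, splitting on $\rk(F)$ and the presence of torsion. First I would handle $\rk(F) \geqslant 3$: the one-relator presentation $\langle S \mid w \rangle$ realising $G$ has deficiency $|S|-1 \geqslant 2$, and by Rapaport's theorem (\cref{Sec:5-1960s-resurgence}) this equals the deficiency of $G$. The Baumslag--Shalen theorem then produces a proper amalgamated free product decomposition $G = A \ast_C B$ with $A, B, C$ finitely generated, $C$ properly contained in each factor, and $C$ of index at least three in $A$ or $B$.

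Next I would handle $\rk(F) = 2$ together with torsion. By the torsion theorem (\cref{finite_order}), the hypothesis that $G$ has torsion forces $w$ to be a proper power, so $G$ is a non-cyclic one-relator group with torsion. The Benyash-Krivets theorem then immediately yields a proper amalgamated free product decomposition, completing this case.

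The case $\rk(F) = 2$ with $G$ torsion-free is precisely the exception singled out in the statement, so nothing needs to be shown there. The remaining case $\rk(F) = 1$ gives $G$ cyclic; any proper amalgamated free product contains a nonabelian free subgroup, so cyclic $G$ cannot split properly, and this case is tacitly excluded from the statement (interpreting the theorem as applying to the non-cyclic one-relator groups, which is the convention in force throughout the section).

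The hard part is not the proof itself---which is essentially a two-line assembly of Baumslag--Shalen and Benyash-Krivets---but rather the delicate question of whether the excluded rank two torsion-free case really is exceptional. Neither the deficiency argument nor the torsion argument applies there, and the $\bs(1,n)$ family (which contains no nonabelian free subgroup, and hence admits no proper amalgamated free product splitting) shows that at least some such groups genuinely fail to split. Characterising precisely which rank two torsion-free one-relator groups admit proper amalgamated free product decompositions is exactly Wall's open question recalled just above the theorem, and any honest attempt to push beyond the present statement would have to grapple with it.
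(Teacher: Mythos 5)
Your proof is correct and reproduces exactly what the paper intends: the statement is an immediate assembly of Baumslag--Shalen (for $\rk(F)\geqslant 3$, via the deficiency-two hypothesis) and Benyash-Krivets (for the non-cyclic torsion case), and the paper itself offers no proof beyond the paragraph preceding the theorem, so there is no divergence in approach. Two minor remarks: the appeal to Rapaport's theorem is unnecessary, since Baumslag--Shalen only requires the deficiency to be at least two, and exhibiting a presentation of deficiency $|S|-1\geqslant 2$ already gives the lower bound without needing to know that this is sharp; and you are right that the $\rk(F)=1$ case (finite cyclic groups) must be tacitly excluded for the theorem to be literally true, which the paper does not flag but which is the natural reading given the surrounding discussion of non-cyclic groups.
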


Beyond existence of amalgamated free product decompositions of one-relator groups, it would also be interesting to know what structure such amalgamated free products can have. Fine--Peluso proposed several conjectures in \cite{FP99} which became known as the Amalgam Conjectures. We state the weakest possible such conjecture.

\begin{conjecture}[The Amalgam Conjecture]
\label{q: BS}
If $G$ is a one-relator group with at least three generators, then $G\isom A*_CB$ with $C$ a free group and $A, B$ either one-relator groups or free groups.
\end{conjecture}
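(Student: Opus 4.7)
The plan is to begin with the Baumslag--Shalen theorem \cite{BS90}, which guarantees a proper amalgamated decomposition $G = A *_C B$ with $A$, $B$, $C$ all finitely generated whenever $\mathrm{def}(G) \geq 2$; since a one-relator group on $n \geq 3$ generators has deficiency $n - 1 \geq 2$, this provides the correct starting point. What remains is to refine this initial decomposition so that $C$ is free and so that $A$ and $B$ are each either one-relator or free.

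For the edge group, the idea is to use the flexibility afforded by the topological hierarchy. Because $n \geq 3$ there is an abundance of epimorphisms $\phi\colon G \to \mathbb{Z}$, and each produces a one-relator HNN-splitting $G \cong \pi_1(Z) *_\psi$ whose associated subgroups are Magnus subgraphs of $Z$ by \cref{one-relator_splitting}, hence free by the Freiheitssatz (\cref{topological_Freiheitssatz}). I would attempt to convert such an HNN-splitting into an amalgamated free product in a controlled way, for instance by passing to the index-two subgroup $\ker(\phi \bmod 2)$ and analysing the induced $G$-action on a subdivision of the Bass--Serre tree with a segment fundamental domain. Collins's intersection theorems (\cref{Collins_intersections_1} and \cref{Collins_intersections_2}), combined with the inertness of Magnus subgroups (\cref{inert}) and the acylindricity of the Bass--Serre tree action under the hypotheses of \cref{acylindrical}, should furnish enough control to guarantee that after this manipulation the edge group remains Magnus, and hence free.

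The vertex groups can then be analysed by induction on the complexity $c(X)$ introduced in \cref{sec:hierarchy}. The candidate vertex groups will be expressible as fundamental groups of one-relator subcomplexes of a cover of $X$, together with finite decorations coming from the index-two quotient; combining the inductive hypothesis with Bass--Serre manipulations, Pride's analysis of two-generator subgroups of one-relator groups with torsion, and the recent coherence theorem of Jaikin-Zapirain and the first author \cite{JZL23}, should force $A$ and $B$ to be either one-relator or free. Some care is required when torsion is present, where \cref{finite_order} ensures that elements of finite order in the vertex groups are conjugate into powers of the original relator.

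The principal obstacle is the conversion step from HNN-splittings, which the hierarchy produces in abundance, to the genuine amalgam splittings the conjecture demands: the Bass--Serre graphs of the two types of decomposition, a loop and a segment, are topologically distinct and there is no purely group-theoretic device to interchange them. The index-two trick breaks down precisely when $G$ admits no useful finite quotient detecting the chosen $\phi$, a situation exemplified by certain Baumslag--Gersten-type groups. In those cases one is forced to construct the amalgam directly from level sets of a Morse-type function on the presentation $2$-complex, and I expect this to be the part of the argument requiring genuinely new ideas beyond the classical and topological-hierarchy toolbox surveyed here.
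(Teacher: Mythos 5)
This statement is an open conjecture --- the Amalgam Conjecture of Fine--Peluso --- and the paper offers no proof of it. What is actually known is only the Baumslag--Shalen theorem \cite{BS90} (and Benyash-Krivets \cite{BK98} in the torsion case), which guarantees the existence of \emph{some} proper amalgam $G \isom A*_CB$ with $A$, $B$, $C$ finitely generated, with no structural control whatsoever over the three pieces. Upgrading that decomposition so that $C$ is free and $A$, $B$ are one-relator or free is exactly what remains open.

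Your plan does not close this gap, and the step you yourself flag as problematic is where it fails. The hierarchy of \cref{one-relator_splitting} produces HNN-splittings $G\isom\pi_1(Z)*_\psi$, and there is no mechanism to convert these into amalgams of $G$. Your index-two device does not do it: composing $\phi\colon G\to\Z$ with reduction mod $2$ and restricting the action on the Bass--Serre tree $T$ to $G_0=\ker(G\to\Z/2)$ yields a quotient graph $T/G_0$ with two vertices and two edges, a bigon, so $G_0$ is presented as an HNN-extension of an amalgam, not an amalgam. Subdividing $T$ instead and keeping the full $G$-action also fails: the two half-edges of a subdivided edge have stabilisers conjugate to $A$ and to $B$ respectively and are therefore in distinct $G$-orbits, so the quotient is a $2$-cycle, not a segment. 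More fundamentally, the quotient graph for an HNN-splitting has first Betti number one, and neither subdivision nor passage to a finite-index subgroup (which preserves this topological invariant of the quotient) can lower it to zero, which is what an amalgam requires. Even if one tolerated working with a finite-index subgroup, both vertex groups in any such decomposition would be copies of the single base $\pi_1(Z)$, giving no route to two independent groups to be forced into the required shape.

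The remaining tools you invoke --- Collins's intersection theorems (\cref{Collins_intersections_1}, \cref{Collins_intersections_2}), inertness of Magnus subgroups (\cref{inert}), acylindricity (\cref{acylindrical}), coherence \cite{JZL23} --- are plausibly useful for controlling the pieces of an amalgam once one exists, but none of them produces the amalgam. Your closing paragraph correctly identifies the situation: the HNN-to-amalgam passage for $G$ itself is a missing \emph{idea}, not a missing detail, and until it is supplied the proposal recapitulates the Baumslag--Shalen starting point without advancing toward the conjecture.
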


Note that in general, if $A*_CB$ is finitely presented and $A, B, C$ are finitely generated, it is possible that $A, B, C$ are not finitely presented, see \cite[Section 6]{BS90} for an example. In case $G = A*_CB$ is a one-relator group, we know that $A, B, C$ are finitely presented since $G$ is coherent, see \cref{sec:coherence}.

\section{Group algebra properties}
\label{sec:cohomology}

Recall that if $G$ is a group and $R$ is a ring, then the \emph{group algebra} $RG$ (or $R[G]$) is the $R$-algebra with free $R$-basis in correspondence with the elements of $G$ and with multiplication of basis elements given by the multiplication from $G$. In other words, it is the ring with underlying set
\[
RG = \left\{ \sum_{g\in G}r_gg \, \middle\vert\, r_g\in R \text{ and } r_g = 0 \text{ for all but finitely many $g\in G$}\right\}.
\]
and ring operations defined in the natural way. In this section we will be studying the properties of group rings of one-relator groups.

\subsection{(Co)homology and the relation module}
\label{sec:homology}

Let $F$ be a free group and let $N\triangleleft F$ be a normal subgroup. Then $F$ acts by conjugation on the abelian group 
\[
N_{\ab} = N/[N, N].
\]
This action extends linearly to a (left) action of $\Z F$ on $N_{\ab}$. Since $N$ acts trivially on its abelianisation, this action descends to an action of $\Z G$, making $N_{\ab}$ a (left) $\Z G$-module called the \emph{relation module}. Another way to view the relation module of a group presentation $G = F(S)/N$ is as the $\Z G$-module of cycles $N_{\ab} = Z_1(\cay(G, S))$ of the Cayley graph with respect to the generating set $S$. See \cref{fig:relation_module}.

\begin{figure}
\centering
\includegraphics[scale = 0.5]{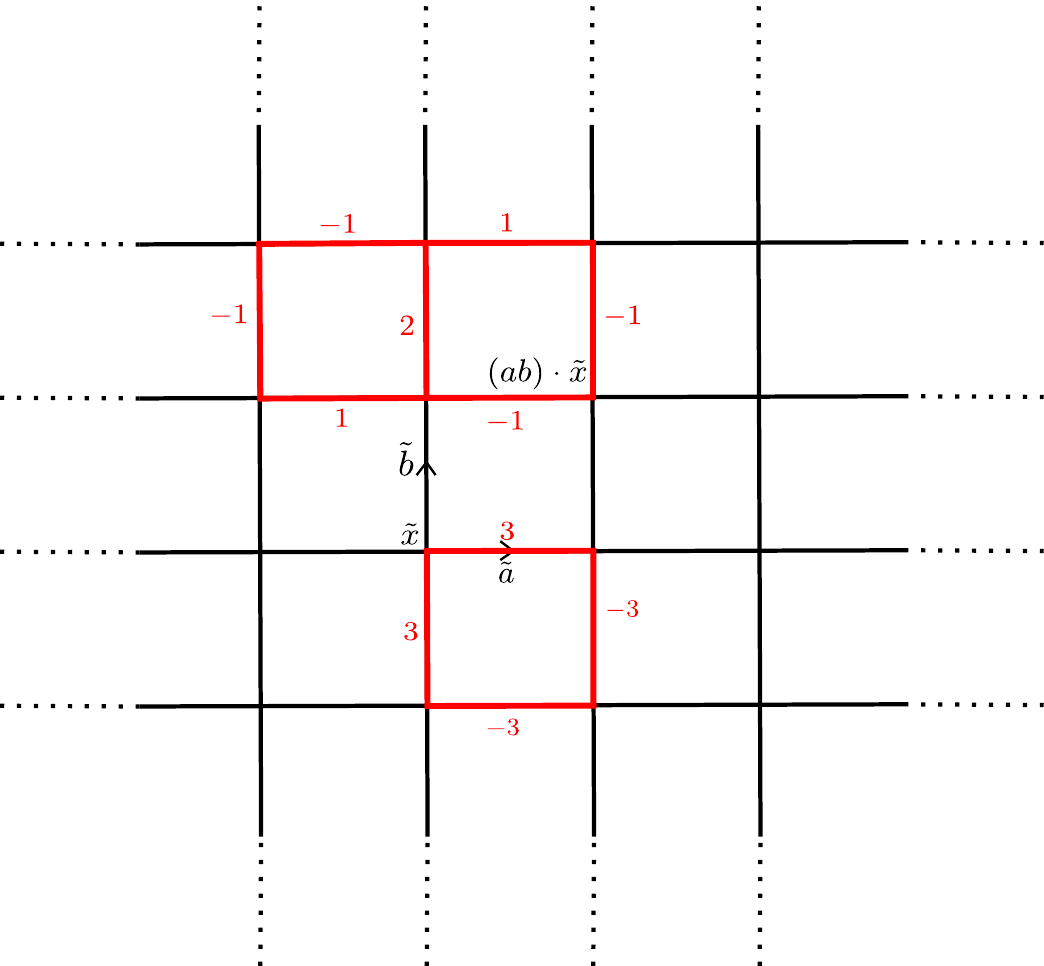}
\caption{The Cayley graph for $G = F(a, b)/\normal{[a, b]}$. The cycle $(3a^{-1} + b - ab^{-1})\cdot [a, b][N, N]$ in $Z_1(\cay(G, \{a, b\})$ is depicted in red.}
\label{fig:relation_module}
\end{figure}

Lyndon computed the relation module of a one-relator group in \cite{Ly50}, a theorem which became known as Lyndon's identity theorem.

\begin{theorem}[Lyndon's Identity Theorem]
\label{lyndon_id}
Let $G = F/\normal{w^n}$ be a one-relator group with $w$ not a proper power and with $n\geqslant 1$. Denoting by $N = \normal{w^n}$, we have
\[
N_{\ab} \isom \Z G/(w-1).
\]
In particular, if $n = 1$ then
\[
N_{\ab} \isom \Z G.
\]
\end{theorem}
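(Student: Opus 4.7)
The plan is to deduce the module isomorphism from a stronger combinatorial statement, namely that $N = \normal{w^n}$ is itself a free group on a prescribed set of conjugates of $w^n$, and then extract the relation module structure by a direct calculation. Concretely, I aim to show that $N$ is freely generated as a group by $\{t\,w^n\,t^{-1} : t \in T\}$, where $T \subset F$ is a left transversal for the subgroup $\langle w\rangle\cdot N$. Once this is established, $N_{\ab}$ is free abelian on the classes $[t\,w^n\,t^{-1}]$, and the $F$-conjugation action turns $N_{\ab}$ into a permutation $\Z G$-module on the coset space $G/\langle\bar w\rangle$, where $\bar w$ is the image of $w$ in $G$. By \cref{finite_order}, $\bar w$ has order exactly $n$ (and is trivial when $n=1$), so $N_{\ab} \cong \Z[G/\langle\bar w\rangle] \cong \Z G \otimes_{\Z\langle\bar w\rangle} \Z \cong \Z G/(w-1)\Z G$, which is the desired isomorphism.

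The free-basis statement for $N$ is proved by induction on the complexity $c(X)$ using the topological hierarchy of \cref{topological_hierarchy}. For the base case $c(X) = (0,0)$, \cref{rem:base_case} gives $G \cong \Z/n\Z$ with $F = \langle w\rangle$; then $T = \{1\}$ and $N = \langle w^n\rangle$ is trivially free on $\{w^n\}$. For the inductive step, \cref{one-relator_splitting} supplies a one-relator splitting $G \cong \pi_1(Z) *_{\psi}$, where $Z$ is a one-relator $\Z$-domain of smaller complexity embedded in a $\Z$-cover $Y \to X$, with attaching map $\lambda_Z$ that lifts $\lambda$. Since covering preserves degree, $\lambda_Z$ spells out $\tilde w^{\,n}$ for some $\tilde w \in \pi_1(\Lambda_Z)$ that is not a proper power, and $\pi_1(\Lambda_Z)$ injects into $F = \pi_1(\Lambda)$ through the inclusion $\Lambda_Z \hookrightarrow Y^{(1)}$ and the covering map. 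By the inductive hypothesis, the normal closure $N' = \normal{\tilde w^{\,n}}$ in $\pi_1(\Lambda_Z)$ is freely generated by conjugates indexed by a transversal $\tilde T$ for $\langle\tilde w\rangle\cdot N'$ in $\pi_1(\Lambda_Z)$. I would then build the transversal $T$ for $\langle w\rangle\cdot N$ in $F$ by combining $\tilde T$ with a set of representatives for cosets of $\pi_1(\Lambda_Z)$ in $F$ coming from the fibers of the $\Z$-cover, and use Bass--Serre theory for the action of $G$ on the Bass--Serre tree of the splitting to arrange these conjugates in a tree-like fashion.

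The main obstacle is verifying freeness at the inductive step: generation is routine, but one must rule out nontrivial relations among the conjugates $\{t\,w^n\,t^{-1}\}$. For this I would exploit the injectivity of Magnus subgroups into $\pi_1(Z)$ provided by the Freiheitssatz (\cref{topological_Freiheitssatz}), together with the explicit identification of vertex and edge stabilizers of the Bass--Serre tree as $G$-translates of $\pi_1(Z)$ and its Magnus subgroups. A hypothetical nontrivial relation among the $t\,w^n\,t^{-1}$ could be normal-formed along the tree and pushed into a vertex stabilizer; this would force a relation among conjugates of $\tilde w^{\,n}$ inside $\pi_1(\Lambda_Z)$, contradicting the inductive hypothesis. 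Once freeness is in hand, the permutation-module computation described above produces the isomorphism $N_{\ab} \cong \Z G/(w-1)\Z G$, with the degenerate case $n=1$ giving $N_{\ab} \cong \Z G$.
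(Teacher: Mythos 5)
Your proposal is correct and follows essentially the same approach the paper indicates: reduce Lyndon's Identity Theorem to the Cohen--Lyndon property (that $\normal{w^n}$ is free on conjugates $tw^nt^{-1}$ over a transversal $T$ of $\langle w\rangle\normal{w^n}$, which is the same as a set of double coset representatives since $\normal{w^n}$ is normal), prove that by induction on the hierarchy, and then read off the relation module as the permutation module $\Z[G/\langle\bar w\rangle]\cong \Z G/(w-1)$. The paper defers the hierarchy argument to Cohen--Lyndon \cite{CL63}, while you recast it via \cref{topological_hierarchy}, but the underlying strategy is the same.
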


The simplest way to prove \cref{lyndon_id} is to show that $\normal{w^n}\triangleleft F$ has the \emph{Cohen--Lyndon property}: there is a set of $\normal{w^n}, \langle w\rangle$ double coset representatives $T\subset F$ such that $\normal{w^n} \isom *_{t\in T}tw^nt^{-1}$. This was first done by Cohen and Lyndon in \cite{CL63}.

The main use for \cref{lyndon_id} is to compute the (co)homology of a one-relator group. Indeed, by \cite[Proposition 5.4]{Br82}, if $G = F(S)/N$, we have an exact sequence
\[
\begin{tikzcd}
0 \arrow[r] & N_{\ab} \arrow[r] & \bigoplus_{s\in S}\Z G \arrow[r] & \Z G \arrow[r] & \Z \arrow[r] & 0.
\end{tikzcd}
\]
So in the case that $G = F/\normal{w^n}$, with $w$ not a proper power and $n=1$, the above gives us a free resolution of $\Z$ as a trivial $\Z G$-module and we may compute homology by applying $-\otimes_{\Z G}M$ or cohomology by applying $\hom_{\Z G}(-, M)$ to the above. If $n>1$, then $N_{\ab}$ is not free (or even projective), but we may extend the exact sequence to a free resolution:
\[
\begin{tikzcd}
\ldots \arrow[r, "w-1"] & \Z G \arrow[r, "\sum_{i=0}^{n-1}w^i"] & \Z G \arrow[r, "w-1"] & \Z G \arrow[r] & \bigoplus_{s\in S} \Z G \arrow[r] & \Z G \arrow[r] & \Z \arrow[r] & 0.
\end{tikzcd}
\]
We have not described three of the above maps. The remaining maps are:
\[
z \mapsto z\cdot\left(\frac{\partial}{\partial s}(w^n)\right)_{s\in S}, \quad (z_s)_{s\in S}\mapsto\sum_{s\in S}z_s\cdot (s-1), \quad z\mapsto \sum_{g\in G}z_g
\]
where $\frac{\partial}{\partial s}$ denotes the \emph{Fox derivative} with respect to $s$.

\begin{corollary}
If $G = F/\normal{w^n}$ is a one-relator group with $w$ not a proper power, then $\cd_{\Q}(G)\leqslant 2$. If $n = 1$, then $\cd_{\Z}(G) \leqslant 2$.
\end{corollary}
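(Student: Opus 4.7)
The plan is to read off both bounds directly from the resolution of $\Z$ displayed immediately before the corollary, which is itself built from Lyndon's identity theorem. For the case $n=1$, the identity theorem gives $N_{\ab}\cong\Z G$ as a $\Z G$-module, so the four-term exact sequence
\[
0\longrightarrow\Z G\longrightarrow\bigoplus_{s\in S}\Z G\longrightarrow\Z G\longrightarrow\Z\longrightarrow 0
\]
is already a free $\Z G$-resolution of the trivial module $\Z$ of length $2$. By definition this forces $\cd_{\Z}(G)\leqslant 2$, settling the second assertion. Since tensoring over $\Z$ with $\Q$ preserves freeness, the bound $\cd_{\Q}(G)\leqslant 2$ also follows in this case.

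For the first assertion (arbitrary $n\geqslant 1$) the obstruction is that the relation module $N_{\ab}\cong \Z G/(w-1)$ is not $\Z G$-projective when $n\geqslant 2$, which is precisely why the free resolution given in the excerpt is infinite in this case. I would reformulate the relation module as an induced module. Writing $H=\langle w\rangle\leqslant G$, which by \cref{finite_order} is a cyclic group of order exactly $n$, the augmentation ideal of $\Z H$ is the principal left ideal $(w-1)\Z H$, so
\[
\Z G\otimes_{\Z H}\Z \;\cong\; \Z G\big/\Z G\cdot(w-1)\;\cong\; N_{\ab},
\]
the last isomorphism coming from Lyndon's identity theorem.

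Now I would pass to rational coefficients and exploit the fact that $\Q H\cong \Q[\Z/n]$ is semisimple. Explicitly, the element $e=\tfrac{1}{n}\sum_{i=0}^{n-1}w^i\in\Q H$ is an idempotent in $\Q H$ satisfying $(w-1)e=0=e(w-1)$, so
\[
\Q H \;=\; \Q H\cdot e\;\oplus\;\Q H\cdot(1-e),
\]
with $\Q H\cdot e\cong \Q$ as a trivial $\Q H$-module. Inducing up to $\Q G$ and using that induction preserves direct sums,
\[
\Q G \;\cong\; \bigl(\Q G\otimes_{\Q H}\Q H\cdot e\bigr)\oplus\bigl(\Q G\otimes_{\Q H}\Q H\cdot(1-e)\bigr),
\]
exhibits $\Q G\otimes_{\Q H}\Q\cong \Q G/(w-1)$ as a direct summand of the free module $\Q G$, hence as a projective $\Q G$-module. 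Tensoring the exact sequence from the excerpt with $\Q$ over $\Z$ therefore yields a length-$2$ projective $\Q G$-resolution of $\Q$, giving $\cd_{\Q}(G)\leqslant 2$.

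The genuine content sits in Lyndon's identity theorem (used to identify $N_{\ab}$) and in \cref{finite_order} (used to ensure that $\langle w\rangle$ really is a cyclic subgroup of order $n$, so that $\Q[\langle w\rangle]$ is semisimple of the expected shape); these are the only places where the one-relator hypothesis is employed, and once they are in hand the remainder is a routine homological algebra verification. The main conceptual obstacle is thus the jump from $n=1$ to $n\geqslant 2$, which is bridged precisely by Maschke's theorem: the relation module fails to be projective integrally because $\Z H$ is not semisimple, but becomes projective after inverting $n$, and in particular after passing to $\Q$.
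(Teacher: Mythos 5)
Your proof is correct and follows the paper's intended route (rationalise Lyndon's resolution), but you supply the key detail that the paper's sketch glosses over. The paper merely asserts that ``after applying $-\otimes_{\Z}\Q$ to Lyndon's resolution, the map $w-1$ becomes the zero map,'' which is not literally true as a statement about the map $\Q G\xrightarrow{w-1}\Q G$ (that map has nonzero image $(w-1)\Q G$). What is really going on --- and what your argument makes precise --- is that the idempotent $e=\tfrac{1}{n}\sum_{i=0}^{n-1}w^i$ splits $\Q G$ as $e\Q G\oplus(1-e)\Q G$ with $(1-e)\Q G=(w-1)\Q G$, so the rationalised relation module $\Q\otimes_{\Z}N_{\ab}\cong\Q G/(w-1)\cong e\Q G$ is a direct summand of $\Q G$ and hence projective. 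The four-term exact sequence then gives a length-two projective $\Q G$-resolution of $\Q$, as required. Your formulation via induction from $H=\langle w\rangle$ and Maschke's theorem is a clean, standard way to exhibit this; the only mild redundancy is that you do not need the exact order of $w$ (any finite order would yield the same idempotent argument), though invoking \cref{finite_order} certainly does no harm, and Lyndon's identity theorem carries the real one-relator content. The $n=1$ case you handle exactly as the paper does.
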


The fact that $\cd_{\Q}(G)\leqslant 2$ when $G$ has torsion follows by noting that after applying $-\otimes_{\Z}\Q$ to Lyndon's resultion, the map $w-1$ becomes the zero map and so we obtain a resolution 
\[
\begin{tikzcd}
0 \arrow[r] & \Q G \arrow[r] & \bigoplus_{s\in S} \Q G \arrow[r] & \Q G \arrow[r] & \Q \arrow[r] & 0.
\end{tikzcd}
\]
since $\Q$ is a flat $\Z$-module.

Lyndon's identity theorem allows us to compute the derived functors $\Tor^{RG}_n(-, R)$ and $\Ext_{RG}^n(-, R)$ when $G$ a one-relator group. In order to compute 
\[
\Tor^{RG}_n(-, -), \quad \Ext_{RG}^n(-, -)
\]
one needs to be able to produce resolutions of arbitrary $RG$-modules $M$, not just the trivial one $R$. Lewin--Lewin proved a `two sided simple identity theorem' in \cite{LL78}. As a corollary, they showed that if $G = F(S)/\normal{w}$ is a torsion-free one-relator group, $K$ is a field and $M$ is a (left) $KG$-module, then
\[
\begin{tikzcd}
0 \arrow[r] & KG\otimes_{K}M \arrow[r] & \bigoplus_{s\in S}KG\otimes_{K}M \arrow[r] & KG\otimes_{K}M \arrow[r] & M \arrow[r] & 0.
\end{tikzcd}
\]
is a free resolution of $M$. In particular, $\Tor_i^{KG}(-, -) = \Ext^i_{KG}(-, -) = 0$ for all $i>2$. This resolution was used in \cite{JZL23} to compute the weak dimension of certain $KG$-modules with $G$ a torsion-free one-relator group.

Recall that a ring $R$ has \emph{(left) global dimension} at most $n$ if $\Ext^{n+1}_R(A, B) = 0$ for all (left) $R$-modules $A$ and $B$. \cite[Theorem 9]{LL78} states:

\begin{corollary}
Let $G = F/\normal{w}$ be a torsion-free one-relator group and let $R$ be a ring. Then the (left) global dimension of $RG$ is at most the (left) global dimension of $R$ plus two. In particular, if $R = K$ is a field, the global dimension of $KG$ is at most two.
\end{corollary}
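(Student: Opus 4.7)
The goal is to bound the projective dimension of an arbitrary left $RG$-module $M$ over $RG$ by $\operatorname{gl.dim}(R) + 2$; once this is established, the "in particular" for a field follows immediately from $\operatorname{gl.dim}(K) = 0$.  My plan proceeds in two main steps, combining the Lewin--Lewin resolution with a base-change argument.

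First, I would invoke (the two-sided form of) the Lewin--Lewin identity theorem to obtain, for any ring $R$, an exact sequence of $RG$-bimodules
\[
0 \longrightarrow RG\otimes_R RG \longrightarrow \bigoplus_{s\in S} RG\otimes_R RG \longrightarrow RG\otimes_R RG \longrightarrow RG \longrightarrow 0,
\]
in which each middle term $RG\otimes_R RG$ is free on both sides as an $RG$-module (it is the bimodule induced from the trivial $R$-bimodule $R$).  In particular each middle term is flat as a right $RG$-module, so tensoring on the right over $RG$ with an arbitrary left $RG$-module $M$ preserves exactness and yields
\[
0 \longrightarrow RG\otimes_R M \longrightarrow \bigoplus_{s\in S} RG\otimes_R M \longrightarrow RG\otimes_R M \longrightarrow M \longrightarrow 0.
\]
This is the desired length-two "resolution" of $M$, except that the middle terms are not themselves $RG$-projective unless $M$ happens to be $R$-projective.

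Second, I would bound the projective dimension of the middle terms.  The key claim is that $\operatorname{pd}_{RG}(RG\otimes_R N) \leqslant \operatorname{pd}_R(N)$ for every left $R$-module $N$.  Indeed, $RG$ is free as a right $R$-module, so the functor $RG\otimes_R -$ is exact; moreover it carries projective $R$-modules to projective $RG$-modules, because $RG\otimes_R R^{(I)} \isom RG^{(I)}$ is $RG$-free and projectivity is preserved by direct summands.  Applying $RG\otimes_R -$ to any projective $R$-resolution of $N$ of length $\operatorname{pd}_R(N)$ therefore produces a projective $RG$-resolution of $RG\otimes_R N$ of the same length.  Splicing such a projective $RG$-resolution of each of the three middle terms into the Lewin--Lewin sequence above gives a projective $RG$-resolution of $M$ of length at most $\operatorname{pd}_R(M) + 2 \leqslant \operatorname{gl.dim}(R) + 2$, and the bound on $\operatorname{gl.dim}(RG)$ follows.

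The only real obstacle is the passage from the field-coefficient version of the Lewin--Lewin resolution explicitly quoted in the excerpt to the arbitrary-ring setting needed here.  Over a field $K$, the resolution is automatically a free resolution of $M$ because every $K$-module is $K$-free, and no further work is required.  Over a general ring $R$, the middle terms $RG\otimes_R M$ are not $RG$-projective, and one must appeal to the stronger two-sided (bimodule) version of the simple identity theorem to justify that the sequence is exact for every $M$ (not only $R$-projective ones).  Once this bimodule statement is in hand, the splicing argument in the previous paragraph is routine and goes through verbatim; the whole proof can then be summarised as $\operatorname{gl.dim}(RG) \leqslant \cd_R(G) + \operatorname{gl.dim}(R) \leqslant 2 + \operatorname{gl.dim}(R)$, where the first inequality is a general Auslander-type bound and the second is Lyndon's identity theorem (in its bimodule strengthening) for torsion-free one-relator groups.
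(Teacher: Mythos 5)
The paper does not actually supply a proof of this corollary---it simply cites it as Lewin--Lewin's Theorem~9 after exhibiting the length-two $KG$-free resolution of an arbitrary left $KG$-module over a field $K$. Your reconstruction of the intended argument is correct, and it is the natural one: establish the bimodule form of the identity theorem (a free $RG$-bimodule resolution of $RG$ of length two), tensor it down against an arbitrary left module $M$ using that each term is flat as a right $RG$-module, bound $\operatorname{pd}_{RG}(RG\otimes_R M)\leqslant\operatorname{pd}_R(M)$ by exactness of induction and preservation of projectives, and finish with the standard splicing inequality $\operatorname{pd}(M)\leqslant 2+\max_i\operatorname{pd}(A_i)$ applied to the four-term sequence. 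Two small phrasing points: exactness after tensoring uses that the \emph{entire} bounded sequence consists of flat right $RG$-modules (including the tail term $RG$, which is trivially free), not merely the middle terms; and your closing one-line summary invokes an ``Auslander-type bound'' via $\cd_R(G)$, whereas the quantity that actually drives the general bound is the Hochschild (bimodule) projective dimension of $RG$ over $R$---for group rings these coincide, which is precisely what the two-sided identity theorem provides, but it is worth being aware that the equality is itself a nontrivial feature of the Hopf-algebra structure of $RG$. Neither issue affects the validity of the argument.
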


Note that if $R$ is commutative, the opposite ring $RG^{\op}$ is isomorphic to $RG$ and so left and right global dimension of $RG$ coincide.

\begin{remark}
Such a resolution can be constructed in a similar way for arbitrary groups, see \cite[Proposition 2.2]{JZL23}.
\end{remark}

\subsection{Groups with cyclic relation module}

A natural question that arises from \cref{lyndon_id} is whether having a cyclic relation module is limited to one-relator groups.

\begin{qstn}[Harlander]
\label{qstn:Harlander}
If $G = F/N$ is a group with relation module $N_{\ab}$ cyclic, is $N$ normally generated by a single element?
\end{qstn}

This question is a special case of the \emph{relation gap problem} which asks whether a finitely presented group $G = F/N$ can have strictly fewer relation module generators than $N$ has normal generators. Bestvina--Brady showed in \cite{BB97} that there exist groups with infinite relation gap. That is, groups that are not finitely presented but have type $\fp_2(\Z)$, a property equivalent to the relation module being finitely generated. Counterexamples to the relation gap problem could lead to counterexamples to old conjectures on the homotopy type of CW-complexes such as Wall's $D(2)$ conjecture. The following theorem from \cite{Lin24b} makes it unlikely that counterexamples will come from groups with cyclic relation module.

\begin{theorem}
\label{no_relation_gap}
If $G = F/N$ is a right orderable group with $N_{\ab}$ a cyclic relation module, then $N$ is normally generated by a single element. In particular, $G$ is a one-relator group.
\end{theorem}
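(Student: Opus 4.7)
The plan is to pick a lift $w \in N$ of a $\Z G$-module generator of $N_{\ab}$, arrange that $w$ is not a proper power in $F$, set $M = \normal{w}$ and $G' = F/M$, and then to show that $K = N/M \triangleleft G'$ is trivial. To arrange that $w$ is not a proper power, I use that $G$ is right orderable, hence torsion-free: if $w = u^n$ in $F$ with $n \geq 2$, then $u^n \in N$ forces $u \in N$, and the relation $w = n \cdot u$ in $N_{\ab}$ combined with $w$ generating $N_{\ab}$ as a $\Z G$-module shows that $u$ also generates $N_{\ab}$; iterating a finite number of times finishes this reduction. With $w$ so chosen, \cref{finite_order} makes $G' = F/M$ torsion-free and \cref{locally_indicable} makes it locally indicable.

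Next I would show that $K$ is perfect. The inclusion $M \hookrightarrow N$ induces a $\Z G'$-linear map $\phi \colon M_{\ab} \to N_{\ab}$, where $N_{\ab}$ is regarded as a $\Z G'$-module via the quotient $G' \twoheadrightarrow G$. The image of $\phi$ contains $w[N,N]$, which by construction generates $N_{\ab}$ as a $\Z G$-module and hence as a $\Z G'$-module, so $\phi$ is surjective. Computing the cokernel from the point of view of $K = N/M$ gives $K_{\ab} = N_{\ab}/\operatorname{image}(\phi) = 0$, so $K$ is perfect.

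It remains to rule out the existence of a non-trivial perfect normal subgroup $K \triangleleft G'$ with right orderable quotient $G = G'/K$. Local indicability of $G'$ alone is not enough, since there exist locally indicable groups with non-trivial perfect subgroups (e.g.\ suitable direct limits of free groups along maps that express each generator as a commutator), so the right orderability of $G$ must be used decisively. The approach is to apply Lyndon's identity theorem (\cref{lyndon_id}) to identify $M_{\ab} \cong \Z G'$ as a free cyclic $\Z G'$-module, so that $\phi$ becomes a surjection $\Z G' \twoheadrightarrow N_{\ab}$ factoring through $\Z G$, and then to analyse the Magnus exact sequence
\[
0 \to N_{\ab} \to \Z G^{|S|} \to \Z G \to \Z \to 0
\]
after passing to an appropriate division ring. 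Right orderability of $G$ makes $\Z G$ a domain and gives a Malcev–Neumann-type embedding $\Z G \hookrightarrow \mathcal{D}$; a cyclic $\Z G$-module has at most one-dimensional image in $\mathcal{D}$, and this dimensional rigidity in the Magnus sequence is intended to force $\Ann_{\Z G}(w[N,N]) = 0$, so that $N_{\ab} \cong \Z G$ freely. Matching this free cyclic structure with $M_{\ab} \cong \Z G'$ identifies $\ker(\phi)$ with the ideal of $\Z G'$ generated by $\{k - 1 : k \in K\}$, and then strong inertness of Magnus subgroups (\cref{inert}) together with local indicability of $G'$ is to be used to conclude that no non-trivial $k \in K$ can exist.

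The main obstacle is precisely this dimension-theoretic step: propagating right orderability of $G$ through the Magnus sequence to rule out a non-trivial annihilator of a generator of the relation module, while simultaneously respecting the free cyclic structure of $M_{\ab}$ coming from Lyndon's identity theorem. Once this is in hand, passing from $N_{\ab} \cong \Z G$ to $K = 1$ is a matter of carefully identifying $\ker(\phi)$ with commutators $[k, w]$ for $k \in K$ and invoking the structural results already available for one-relator groups.
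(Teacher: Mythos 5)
Your opening two steps are correct and match the structure of the paper's actual proof: you replace $w$ by a root to arrange that $w$ is not a proper power (using that the right orderable group $G$ is torsion-free), set $G' = F/\normal{w}$ (a torsion-free, hence locally indicable, one-relator group), and deduce from the surjectivity of the $\Z G'$-linear map $\phi\colon M_{\ab}\to N_{\ab}$ that $K = N/\normal{w}$ is perfect. This is precisely Harlander's reduction, which the paper cites, and you also correctly flag that local indicability of $G'$ alone cannot finish the job.

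The genuine gap is in your final step. First, you invoke a Mal'cev--Neumann-type embedding $\Z G\injects\mathcal D$ for the right orderable group $G$; this is not available. Mal'cev--Neumann requires a bi-order, and the Gr\"ater/Jaikin-Zapirain--L\'opez-\'Alvarez construction (\cref{division_ring}) requires a Conradian right order, i.e.\ local indicability of $G$. Neither is given: $G = G'/K$ is a quotient of a locally indicable group by a perfect normal subgroup, and such quotients need not be locally indicable, while for arbitrary right orderable groups, embeddability of the group ring in a division ring is open. The dichotomy you want --- $N_{\ab}\isom\Z G$ freely or $N_{\ab}=0$ --- can be salvaged using only that $\Z G$ is a domain (which right orderability does give): in the Magnus sequence $0\to N_{\ab}\to(\Z G)^{|S|}\to\Z G\to\Z\to0$, a non-trivial annihilator of the generator $w[N,N]$ would kill its image in $(\Z G)^{|S|}$ coordinate by coordinate, forcing $w[N,N]=0$ and hence $N$ perfect and free, thus trivial. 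But second, and more seriously, the step from ``$\ker\phi\isom I_K$'' to ``$K=1$'' --- where all the real difficulty lies --- is not argued. Strong inertness of Magnus subgroups (\cref{inert}) controls ranks of intersections of \emph{finitely generated} subgroups with Magnus subgroups, while a non-trivial perfect $K$ must be infinitely generated, so the cited tool does not obviously apply, and local indicability of $G'$ was already conceded to be insufficient. The actual proof rests on a separate key lemma, noted in the surrounding discussion, about normal generation of normal subgroups of one-relator groups with right orderable quotient; your proposal does not engage with this, and without something playing its role the argument does not close.
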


Recall that a locally indicable group is right orderable by the Burns--Hale theorem \cite{BH72} and a right orderable group is one with a total order on its elements, invariant under multiplication from the right. A key step in the proof of \cref{no_relation_gap} involved showing that normal subgroups of one-relator groups which have right orderable quotients, must be normally generated by a single element. Indeed, if $G = F/N$ and $N_{\ab}$ is cyclic, then there is an element $w\in N$ whose image in $N_{\ab}$ generates the relation module and such that the map $F/\normal{w}\to G$ has perfect kernel. This is explained in detail in \cite{Ha15}.

\begin{qstn}
Are normal perfect subgroups of one-relator groups normally generated by a single element?
\end{qstn}

A positive answer to this question would solve Question \ref{qstn:Harlander}. Note that perfect subgroups of one-relator groups are necessarily infinitely generated. In the torsion-free case this is by local indicability, in the torsion case, the natural quotient map $F/\normal{w^n}\to F/\normal{w}$ onto the torsion-free one-relator group allows us to reduce to considering finitely generated subgroups in the kernel. But the kernel splits as a free product of (infinitely many) copies of $\Z/n\Z$, a group which contains no perfect subgroups. This last fact is due to Fisher--Karrass--Solitar \cite[Theorem 1]{FKS72}.

Another closely related problem is the \emph{relation lifting problem}, asked by Wall in \cite{Wa66}. If $G = F/N$ is a presentation and $r_1, \ldots, r_n\in N_{\ab}$ are $\Z G$-module generators, then the relation lifting problem asks whether there exist elements $w_1, \ldots, w_n\in N$ such that
\[
\normal{w_1, \ldots, w_n} = N, \quad w_1[N, N] = r_1, \ldots, w_n[N, N] = r_n.
\]
The relations $w_1, \ldots, w_n$ are called \emph{lifts} of $r_1, \ldots, r_n$. Using the fact that $\Z G$ is a domain and contains no non-trivial units under the hypotheses of \cref{no_relation_gap}, one can show that every generator of $N_{\ab}$ can be lifted. On the other hand, Dunwoody showed in \cite{Du72} that the following element
\[
(1-a+a^2)b\cdot a^5[N, N]
\]
cannot be lifted to a normal generator of $\normal{a^5} =N\triangleleft F(a, b)$. The proof uses the Magnus property for free groups, \cref{Magnus_property}, to show that there is no lift and the fact that $(1-a+a^2)b$ is a non-trivial unit in $\Z G$ to show that $(1-a+a^2)b\cdot a^5[N, N]$ is a generator.

Generalisations of both the relation lifting and relation gap problems can be considered for relation modules over arbitrary rings $R$. In this case, the $R$-relation module of $G = F/N$ is the left $RG$-module
\[
R\otimes_{\Z}N_{\ab}.
\]
The main theorem of \cite{Lin24b} applies to $R$-relation modules with $R$ an arbitrary domain. In this slightly more general setting, it is no longer true that every generator $r\in R\otimes_{\Z}N_{\ab}$ may be lifted to a normal generator of $N$. However, there will always be some element $w\in N$ and a unit $u\in R^{\times}$ such that $\normal{w} = N$ and $r = u\otimes w[N, N]$.

\begin{conjecture}
Let $G = F/N$ be a torsion-free group such that $\Q\otimes_{\Z}N_{\ab}$ is cyclic as a $\Q G$-module, then $N$ is normally generated by a single element.
\end{conjecture}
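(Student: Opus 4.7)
The plan is to mirror the strategy behind \cref{no_relation_gap}, replacing right orderability with the stronger structural properties available for torsion-free one-relator groups. Given $G = F/N$ with $\Q\otimes_{\Z} N_{\ab}$ cyclic over $\Q G$, I would first pick an element $w \in N$ whose image in $\Q\otimes_{\Z}N_{\ab}$ is a $\Q G$-generator. After clearing denominators and then passing to a primitive root in $F$ (which preserves being a $\Q G$-generator since integers act invertibly over $\Q$), I may assume $w$ is not a proper power. By \cref{finite_order}, $H := F/\normal{w}$ is then a torsion-free one-relator group, and the inclusion $\normal{w}\subseteq N$ yields a surjection $\pi\colon H\twoheadrightarrow G$ whose kernel I denote $K$. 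The goal reduces to showing $K = 1$.

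The next step is to translate the cyclicity hypothesis into a vanishing condition on $K$. By Lyndon's identity theorem (\cref{lyndon_id}), the $\Q H$-relation module of $H$ is $\Q\otimes\normal{w}_{\ab}\cong \Q H$, generated by $w$. The comparison map $\Q\otimes \normal{w}_{\ab}\to \Q\otimes N_{\ab}$ is then the natural surjection of $\Q H$-modules $\Q H\twoheadrightarrow \Q\otimes N_{\ab}$ sending $1\mapsto w\otimes 1$, which descends from a $\Q G$-isomorphism after extension of scalars. Plugging the Lyndon resolution into the five-term exact sequence from the Lyndon--Hochschild--Serre spectral sequence for $1\to K\to H\to G\to 1$ with $\Q$-coefficients, one obtains that $H_1(K,\Q)_G = K_{\ab}\otimes_{\Z G} \Q$ vanishes; equivalently, $K$ is rationally perfect as a normal subgroup of $H$.

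The final and most delicate step is to show that a non-trivial normal subgroup $K\triangleleft H$ with $K_{\ab}\otimes\Q = 0$ and $H/K$ torsion-free cannot exist. For finitely generated $K$ this is immediate: torsion-free one-relator groups are locally indicable by \cref{locally_indicable}, so any finitely generated $K\neq 1$ would admit a surjection to $\Z$, contradicting rational perfectness. The real obstacle is therefore the possibility of an infinitely generated, rationally perfect $K$. My plan is to exploit the strong Atiyah conjecture for torsion-free one-relator groups, which embeds $\Q H$ in a division ring $\mathcal{D}_H$, together with the coherence of $\Q H$ discussed in \cref{sec:coherence} and the strong inertness of Magnus subgroups (\cref{inert}). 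Rational perfectness of $K$, combined with the Lewin--Lewin two-sided Lyndon resolution, should force $\dim_{\mathcal{D}_H}(\mathcal{D}_H\otimes_{\Q H}\Q[K{\setminus}\{1\}]) = 0$; an $L^2$-Betti number calculation across the Magnus--Moldavanski\u{\i}--Masters hierarchy (\cref{topological_hierarchy}), propagating rational acyclicity down through the one-relator splittings, should then rule out a non-trivial such $K$.

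The hard part will be genuinely leveraging torsion-freeness of $G$ rather than the much stronger right-orderability used by Harlander in the proof of \cref{no_relation_gap}. The example of the Baumslag--Gersten group shows that perfect commutator subgroups \emph{do} occur in torsion-free one-relator groups, so any successful argument must use both that $K$ is rationally perfect \emph{and} that the ambient quotient $G$ is torsion-free, not merely that $H$ itself is torsion-free. Should the $L^2$-route run into difficulties, a fallback plan is to first attack the intermediate question posed after \cref{no_relation_gap}, namely whether every normal perfect subgroup of a one-relator group is normally generated by a single element; a positive answer to that question, combined with the torsion-free hypothesis on $G$ and Higman's theorem on units in group rings of locally indicable groups, would also close the argument by collapsing any such $K$ to the trivial subgroup.
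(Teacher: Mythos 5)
This statement is posed in the paper as an open conjecture, not a theorem, so there is no proof in the paper to compare your attempt against; what you have written is a research programme rather than a verification.

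The reduction in your first paragraph already contains a gap that locates exactly where the difficulty of the conjecture lies. You pick $w\in N$ whose image generates $\Q\otimes_{\Z}N_{\ab}$, pass to a primitive root, form $H = F/\normal{w}$, and then assert that the goal reduces to showing the kernel $K$ of $H\twoheadrightarrow G$ is trivial. But this is only a valid reduction if the particular $w$ you chose happens to satisfy $\normal{w}=N$, and there is no reason it should. Concretely, take $F=F(a,t)$, $N=\normal{a}_F$, $G=F/N\cong\Z$. Then $\Q\otimes_{\Z}N_{\ab}\cong\Q[t^{\pm1}]$ as a $\Q\Z$-module, so the hypotheses of the conjecture hold. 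The Baumslag--Gersten relator $r=a^{a^t}a^{-2}$ lies in $N$, is not a proper power, and its image in $\Q\otimes N_{\ab}$ is $-1\in\Q[t^{\pm1}]$, a unit, hence a $\Q G$-module generator. Choosing $w=r$ in your step one gives $H=\bg(1,2)$ and $K=\normal{a}_{\bg(1,2)}$, which is nontrivial (and rationally perfect, with $H/K\cong\Z$ torsion-free), even though the conjecture is certainly true for this example since $N=\normal{a}_F$. Your steps three and four would thus be trying to prove something false.

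The genuine content of the conjecture is therefore the \emph{existence} of a good generator: not that an arbitrary $\Q G$-generator of the rational relation module lifts to a normal generator of $N$, but that some scalar multiple of some generator does. This is precisely the relation-lifting subtlety the paper discusses immediately before the conjecture, and what the cited work resolves when $G$ is right orderable; any proof must interleave the choice (or successive modification) of $w$ with the argument that the kernel vanishes, rather than fixing $w$ at the outset. Beyond this, your third and fourth paragraphs also have problems: the intermediate $H_1(K,\Q)_G$ coinvariants statement is strictly weaker than $K_{\ab}\otimes\Q=0$ (the latter follows directly from the cyclicity hypothesis, not from the five-term sequence), the $L^2$ route rests on several unproved ``should force'' claims involving non-standard objects such as $\Q[K\setminus\{1\}]$, and your fallback appeals to the paper's own open question about perfect normal subgroups of one-relator groups, which moreover concerns perfect rather than rationally perfect subgroups and so would not directly apply to your $K$.
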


\begin{remark}
There exist groups with torsion which are not one-relator groups, but which have a presentation with cyclic $\Q$-relation module. Such an example is given in \cite{Lin24b}.
\end{remark}

\subsection{$L^2$-Betti numbers and the strong Atiyah conjecture for one-relator groups}

Let $G$ be a group and denote by $\ell^2(G)$ the space of square summable functions $G\to \C$. In other words, $\ell^2(G)$ is the space of formal sums
\[
\ell^2(G) = \left\{ \sum_{g\in G} c_g\cdot g \,\Bigm\vert\, c_g\in \C, \sum_{g\in G}\|c_g\|<\infty\right\}.
\]
with square summable support. The \emph{group von Neumann algebra} is the algebra of bounded $G$-equivariant operators from $\ell^2(G)$ to itself
\[
\mathcal{N}(G) := \mathcal{B}(\ell^2(G))^G.
\]
The ring of affiliated operators $\mathcal{U}(G)$ is the Ore localisation of $\mathcal{N}(G)$ with respect to the set of non-zero divisors. Then one way to define the $n^{\text{th}}$ \emph{$L^2$-Betti number} of $G$ is:
\[
b_n^{(2)}=\dim_{\mathcal{U}(G)}\Tor^{\Z G}_n(\mathcal{U}(G), \Z)
\]
where $\dim_{\mathcal{U}(G)}$ is the von Neumann dimension, see L\"uck's book \cite[Definition 8.30]{Lu02} for details. The $L^2$-Betti numbers of one-relator groups were computed by Dicks--Linnell \cite{DL07}.

\begin{theorem}
\label{L2_betti}
Let $G = F/\normal{w^n}$ be a one-relator group with $w$ not a proper power and $n\geqslant 1$. Then
\begin{align*}
b_0^{(2)}(G) =& \begin{cases}
	\frac{1}{n} & \text{ if $G\isom \Z/n\Z$}\\
	0 & \text{ otherwise}
	\end{cases}\\
b_1^{(2)}(G) =&\, 1 - \rk(F) + \frac{1}{n}\\
b_2^{(2)}(G) =&\, 0
\end{align*}
\end{theorem}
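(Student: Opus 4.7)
My approach is to apply the functor $\mathcal{U}(G) \otimes_{\Z G} (-)$ to an explicit free $\Z G$-resolution of the trivial module obtained from Lyndon's Identity Theorem (\cref{lyndon_id}), and compute the von Neumann dimensions of the homology of the resulting $\mathcal{U}(G)$-complex directly. Writing $k = \rk(F)$, for $n = 1$ the Lyndon resolution has the short form
$$0 \to \Z G \xrightarrow{D} \bigoplus_{s \in S} \Z G \xrightarrow{d} \Z G \to \Z \to 0$$
recalled in \S\ref{sec:homology}, and for $n \geq 2$ it has a two-periodic tail above degree two whose boundary maps alternate between multiplication by $w - 1$ and multiplication by $\sigma_w := 1 + w + \cdots + w^{n-1}$. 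The plan is then to extract each Betti number from the corresponding graded piece.

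The value of $b_0^{(2)}(G)$ is a standard consequence of the definition: $H_0^{(2)}(G)$ identifies with the $G$-invariants in $\mathcal{U}(G)$, which has von Neumann dimension $1/|G|$ if $G$ is finite and $0$ otherwise. A one-relator group is finite only when $F \cong \Z$ and $G \cong \Z/n\Z$, giving the first formula. For the higher Betti numbers the crucial ingredient is the computation of dimensions of kernels and images of $w - 1$ and $\sigma_w$ acting on $\mathcal{U}(G)$ by multiplication. When $w$ has order $n$ in $G$ (the case $n \geq 2$, by \cref{finite_order}), the subgroup $\langle w \rangle \cong \Z/n\Z$ is finite; restricting $\mathcal{U}(G)$ along $\langle w \rangle \leq G$ and decomposing the regular representation of $\Z/n\Z$ into its one-dimensional isotypic components yields $\dim \ker(w - 1) = \dim \operatorname{im}(\sigma_w) = 1/n$, together with $\dim \operatorname{im}(w - 1) = \dim \ker(\sigma_w) = 1 - 1/n$. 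Consequently, the periodic tail is exact above degree two. When $n = 1$ the element $w$ has infinite order by \cref{finite_order}, so $w - 1$ is injective and the tail disappears entirely.

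The main obstacle is the vanishing $b_2^{(2)}(G) = 0$, which reduces to verifying that $\ker(D) = \operatorname{im}(w-1)$ at degree two after tensoring. Using the factorization $\partial(w^n)/\partial s = \sigma_w \cdot \partial w/\partial s$ in the free group ring and the image/kernel identities above, this reduces to showing that the map $\mathcal{U}(G) \to \mathcal{U}(G)^k$ given by multiplication by the column $(\partial w/\partial s)_{s \in S}$ is injective. One route is via the strong Atiyah conjecture for one-relator groups, treated later in the paper, which for torsion-free $G$ embeds $\Q G$ into a division ring and forces non-zero Fox derivatives to be non-zerodivisors; the torsion case can then be reduced to the torsion-free quotient $F/\normal{w}$ using \cref{lyndon_id}. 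An alternative and more classical route is to use the Magnus hierarchy from \cref{topological_hierarchy} inductively, invoking the Mayer--Vietoris sequence for $L^2$-Betti numbers at each HNN-splitting together with the exactness of the periodic tail, and reducing to the base case $\Z/n\Z$ where the computation is immediate. With $b_0^{(2)}$ and $b_2^{(2)}$ in hand, $b_1^{(2)}$ is then forced by the $L^2$-Euler characteristic identity $\sum_i (-1)^i b_i^{(2)}(G) = \chi_{\Q}(G)$, where $\chi_\Q(G) = 1 - k + 1/n$ is obtained by weighting the single $2$-cell of the presentation complex by $1/n$ to account for the order-$n$ torsion.
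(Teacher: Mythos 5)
The paper itself gives no proof of this theorem, only a citation to Dicks--Linnell, so your proposal must be judged on its own terms rather than against a text. Your overall framework --- tensoring Lyndon's resolution with $\mathcal{U}(G)$, splitting the periodic tail using the order-$n$ cyclic subgroup $\langle w\rangle$, and reducing $b_2^{(2)}=0$ to injectivity of right multiplication by the column of Fox derivatives --- is exactly the correct skeleton, and indeed mirrors the structure of Dicks--Linnell's argument. Your dimension counts for $\ker$ and $\operatorname{im}$ of $w-1$ and $\sigma_w$ on $\mathcal{U}(G)$ are right, and combined with additivity of von~Neumann dimension they do show the tail is exact above degree two.

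There is one serious point you should not pass over silently: carrying out your own Euler-characteristic step yields a \emph{different} formula from the one in the theorem. With $\chi_\Q(G) = 1 - \rk(F) + \tfrac1n$ and $b_0^{(2)}=b_2^{(2)}=0$ for $G$ infinite, the identity $b_0^{(2)}-b_1^{(2)}+b_2^{(2)}=\chi_\Q(G)$ forces $b_1^{(2)}(G) = \rk(F) - 1 - \tfrac1n$, not $1 - \rk(F) + \tfrac1n$. The latter is negative already for $\rk(F)\geqslant 3$, $n=1$ (e.g. $F_3/\normal{c}\isom F_2$ has $b_1^{(2)}=1$, not $-1$), and is wrong already at $\rk(F)=2$, $n=2$ ($\Z/2\ast\Z$ has $b_1^{(2)}=\tfrac12$, not $-\tfrac12$). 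So the statement in the paper has a sign error; your argument, if finished honestly, would have caught it, and you should record the discrepancy rather than leave "$b_1^{(2)}$ is then forced" dangling.

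The genuine gap in your proposal is the torsion case of $b_2^{(2)}=0$. Neither of your two suggested routes, as written, closes it. The "reduce to the torsion-free quotient $F/\normal{w}$" idea is not a reduction: there is no ring map $\mathcal{U}(G)\to\mathcal{U}(\bar G)$ along which to transport injectivity of the Fox-Jacobian multiplication, and passing to a torsion-free finite-index subgroup loses the one-relator structure. The Mayer--Vietoris alternative also stops short: with $G\isom G_1\ast_\psi$ a one-relator splitting over a free Magnus subgroup $A$, vanishing of $H_2(A;\mathcal{U}(G))$ and of $b_2^{(2)}(G_1)$ (by induction plus the L\"uck induction principle) only yield an \emph{injection} $H_2(G;\mathcal{U}(G))\hookrightarrow H_1(A;\mathcal{U}(G))$, not vanishing; you would still need to show that the connecting map $H_1(A;\mathcal{U}(G))\to H_1(G_1;\mathcal{U}(G))$ has trivial kernel, which is where the real content lies and which you have not addressed. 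Dicks--Linnell resolve this by a structural lemma about kernels of matrices over the Linnell ring of a group satisfying the Atiyah conjecture with torsion bounded by $n$ (a semisimple Artinian ring, not a division ring); your division-ring argument is fine for $n=1$ but some replacement along these lines is needed for $n\geqslant 2$.
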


The strong Atiyah conjecture predicts that the von Neumann dimension of the kernel of operators $\ell^2(G)^n\to \ell^2(G)^m$ induced by right multiplication by $m\times n$ matrices over $\C G$ should lie in $\frac{1}{\lcm(G)}\Z$, where $\lcm(G)$ denotes the lowest common multiple of the orders of finite subgroups of $G$. Another equivalent formulation of the strong Atiyah conjecture can be stated in terms of $L^2$-Betti numbers, see \cite[Chapter 10]{Lu02}. For torsion-free groups, the strong Atiyah conjecture over a subfield $K\subset \C$ is equivalent to the division closure of $K G$ in $\mathcal{U}(G)$ being a division ring. See \cite[Proposition 1.2]{LS12} for this fact.

Jaikin-Zapirain--L\'{o}pez-\'{A}lvarez confirmed the strong Atiyah conjecture for one-relator groups in \cite{JL20}. In fact, they prove something much stronger.

\begin{theorem}
\label{strong_Atiyah}
If is a $G$ is a locally indicable group, then $G$ satisfies the strong Atiyah conjecture over any subfield $K\subset \C$.
\end{theorem}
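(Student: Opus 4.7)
Since a locally indicable group $G$ is torsion-free (every finitely generated subgroup surjects the torsion-free group $\Z$), we have $\lcm(G)=1$, so the strong Atiyah conjecture over $K$ reduces, as noted in the excerpt via \cite{LS12}, to showing that the division closure $\mathcal{D}_{KG}$ of $KG$ inside the ring of affiliated operators $\mathcal{U}(G)$ is a division ring. The plan is to construct an abstract division ring of fractions for $KG$ — a \emph{Hughes-free} one — and to identify it with $\mathcal{D}_{KG}\subseteq\mathcal{U}(G)$.

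First, I would recall the notion of a Hughes-free division $KG$-ring: a division ring $\mathcal{D}\supseteq KG$ such that for every finitely generated subgroup $H\leqslant G$ and every epimorphism $\phi\colon H\to \Z$ with kernel $N$ and stable letter $t$, the subring of $\mathcal{D}$ generated by the division closure $\mathcal{D}_{KN}$ of $KN$ and $t^{\pm1}$ is the twisted Laurent polynomial ring $\mathcal{D}_{KN}[t^{\pm1};\sigma]$ (i.e.\ the powers $\{t^i\}_{i\in\Z}$ are $\mathcal{D}_{KN}$-linearly independent). Hughes' uniqueness theorem then asserts that, if a Hughes-free $KG$-ring exists, it is unique up to $KG$-isomorphism. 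The proof then has two halves: \emph{existence} of an abstract Hughes-free $\mathcal{D}_{KG}$, and \emph{realisation} of it inside $\mathcal{U}(G)$.

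For existence I would proceed by an inductive construction over the lattice of finitely generated subgroups. For finitely generated indicable $H\leqslant G$ with $\phi\colon H\twoheadrightarrow \Z$ and kernel $N$, $N$ is again locally indicable, so by induction (transfinite on some suitable chain of finitely generated subgroups exhausting $N$, then taking a direct limit of the resulting Hughes-free division rings, which behaves correctly thanks to Hughes' uniqueness) one obtains a Hughes-free division ring $\mathcal{D}_{KN}$ for $KN$. One then forms the twisted Laurent polynomial ring $\mathcal{D}_{KN}[t^{\pm1};\sigma]$, verifies it is an Ore domain (using that $\sigma$ is a ring automorphism), and takes its Ore division ring of fractions to obtain $\mathcal{D}_{KH}$; a direct check shows this is Hughes-free. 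Finally, the direct limit over all finitely generated subgroups yields $\mathcal{D}_{KG}$.

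The hard part will be the second half: producing a $KG$-embedding $\mathcal{D}_{KG}\hookrightarrow\mathcal{U}(G)$. The strategy is to verify, by the same inductive pattern, that the division closure of $KG$ inside $\mathcal{U}(G)$ is itself Hughes-free, so that Hughes' uniqueness identifies it with $\mathcal{D}_{KG}$ — and in particular forces it to be a division ring. The main obstacle is the Hughes-freeness condition for the $\mathcal{U}(G)$-side: one must show, for $H\leqslant G$ finitely generated and $\phi\colon H\twoheadrightarrow \Z$ with kernel $N$, that the powers of a stable letter $t$ are linearly independent over the division closure of $KN$ in $\mathcal{U}(G)$. Equivalently, a putative non-trivial relation $\sum a_i t^i=0$ with $a_i$ in that division closure must be excluded. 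This is the content, in operator-algebraic terms, of an approximation/rank computation for matrices over $KG$ using the von Neumann dimension, and is where $L^2$-techniques enter: one compares the $\mathcal{U}(G)$-rank of a matrix with the corresponding Sylvester rank function on $\mathcal{D}_{KN}[t^{\pm1};\sigma]$ (which counts $t$-degrees) via L\"uck's continuity of the von Neumann dimension under approximations of $N$ by finitely generated subgroups. This Sylvester-rank/continuity comparison is the technical heart of \cite{JL20}; once it is established, Hughes-freeness of the division closure in $\mathcal{U}(G)$ follows, and by uniqueness this division closure is the division ring $\mathcal{D}_{KG}$, yielding the strong Atiyah conjecture.
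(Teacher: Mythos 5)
The paper itself gives no proof of this theorem: it is stated as a result of Jaikin-Zapirain and L\'{o}pez-\'{A}lvarez \cite{JL20} without argument, so there is no internal proof to compare against; your proposal has to stand or fall against what \cite{JL20} actually does.

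Your opening moves are correct and match the relevant circle of ideas: local indicability gives $\lcm(G)=1$, so the claim reduces to the division closure $\mathcal{D}_{KG}$ of $KG$ in $\mathcal{U}(G)$ being a division ring, and Hughes-freeness together with Hughes' uniqueness theorem is indeed the organising principle. The gap is in your ``existence'' half. You propose to manufacture an abstract Hughes-free division $KG$-ring by transfinite induction: pass from a finitely generated indicable $H$ to $N=\ker(H\twoheadrightarrow \Z)$, exhaust $N$ by its finitely generated (locally indicable) subgroups, invoke an inductive hypothesis on those, take a direct limit, and Ore-localise $\mathcal{D}_{KN}[t^{\pm1};\sigma]$. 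But this recursion has no well-founded parameter and never grounds out: the finitely generated subgroups of $N$ are not ``smaller'' than $H$ in any sense that terminates. Already for $H=F_2$ the kernel of an epimorphism to $\Z$ is free of infinite rank, its finitely generated subgroups are free of arbitrary finite rank, and those contain further copies of the same infinite-rank kernel -- so ``transfinite on some suitable chain'' never bottoms out. This is precisely why existence of Hughes-free division rings for general locally indicable groups was a long-standing open problem and is \emph{the} theorem rather than a preliminary; the inductive construction works for classes carrying an honest hierarchy (Mal'cev--Neumann series for free groups, the Magnus hierarchy for one-relator groups, poly-$\Z$ groups), but not in general. The argument in \cite{JL20} runs in the opposite direction: one works directly inside $\mathcal{U}(G)$, using Sylvester rank functions, $\ast$-regular closures and a L\"uck-approximation argument to show that the division closure there is a division ring satisfying the Hughes-free condition; an abstract Hughes-free division ring is then an \emph{output} (identified via Hughes' uniqueness with Gr\"ater's independent construction inside $\End(K\series{G,\leq})$), not an input. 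You also misdescribe the rank comparison in your second half: the relevant rank function on $\mathcal{D}_{KN}[t^{\pm1};\sigma]$ is matrix rank over its Ore division ring of fractions, not a ``$t$-degree'' count, and proving that the $\mathcal{U}(G)$-rank restricted to matrices over $KH$ agrees with it is exactly the Hughes-free linear-independence statement -- the hard technical content, not a soft continuity corollary.
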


Combining \cref{strong_Atiyah} with a result of Kevin Schreve \cite{Sc14}, Jaikin-Zapirain--L\'{o}pez-\'{A}lvarez obtain the following corollary.

\begin{corollary}
If $G$ is a one-relator group, then $G$ satisfies the strong Atiyah conjecture.
\end{corollary}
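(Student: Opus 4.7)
The natural plan is to split into two cases according to whether the one-relator group has torsion.

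In the torsion-free case, write $G = F/\normal{w}$ with $w$ not a proper power. By the Brodski\u{\i}--Howie result (Theorem \ref{locally_indicable}), every such $G$ is locally indicable. Thus $G$ lies in the hypothesis of Theorem \ref{strong_Atiyah}, and so $G$ satisfies the strong Atiyah conjecture over any subfield of $\C$. This half requires no further work beyond citing the two theorems, and for torsion-free $G$ it even gives the equivalent statement that the division closure of $KG$ in $\mathcal{U}(G)$ is a division ring for every subfield $K\subset\C$.

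In the case that $G = F/\normal{w^n}$ with $n \geq 2$ and $w$ not a proper power, Theorem \ref{finite_order} says that all torsion in $G$ is conjugate to a power of $w$, so that $\lcm(G) = n$ is finite. By Fischer--Karrass--Solitar (Theorem \ref{Thm:Fischer-Karrass-Solitar}), $G$ is virtually torsion-free; let $H \leq G$ be a torsion-free subgroup of finite index. The next step is to observe that $H$ itself fits into the hypothesis of Theorem \ref{strong_Atiyah}: the natural route is to show $H$ is locally indicable, which is plausible since $H$ embeds as a finite-index subgroup of a virtually torsion-free hyperbolic/coherent group whose torsion-free finite-index subgroups are (as shown by Fischer) finitely presented with controlled deficiency. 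Granted local indicability of $H$, Theorem \ref{strong_Atiyah} applies to $H$ directly and gives the strong Atiyah conjecture for $H$ over $\Q$.

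The last step is to propagate the conjecture from $H$ back up to $G$. Here one invokes Schreve's result from \cite{Sc14}: if $G$ has a finite-index subgroup $H$ satisfying the strong Atiyah conjecture, and if the orders of the finite subgroups of $G$ divide $\lcm(G)<\infty$, then $G$ itself satisfies the strong Atiyah conjecture. Both hypotheses are available for us: the index $[G:H]$ is finite, and the order of every finite subgroup of $G$ divides $n = \lcm(G)$ by Theorem \ref{finite_order}. Combining these two ingredients concludes the torsion case and hence the corollary.

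The main obstacle in this plan is the middle step, namely establishing the strong Atiyah conjecture for the finite-index torsion-free subgroup $H$ of a one-relator group with torsion. Unlike $G$, the subgroup $H$ need not be a one-relator group, so one cannot directly appeal to Theorem \ref{locally_indicable} to get local indicability; one must either exhibit local indicability of $H$ via the explicit structure coming from the Magnus--Moldavanski\u{\i} hierarchy (using that the torsion-free finite-index subgroups produced by Fischer are built from free products of copies of $\Z/n\Z$ amalgamated in a controlled way, so their torsion-free finite-index subgroups are (finitely generated free)-by-\{torsion-free\}), or replace the ad hoc argument by a black-box application of Schreve's theorem, which is precisely designed to transfer the conjecture across finite-index inclusions with bounded torsion. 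Either way, the serious analytic content is already packaged inside Theorem \ref{strong_Atiyah} and \cite{Sc14}, and the remaining work is the structural bookkeeping described above.
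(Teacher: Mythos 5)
Your proof follows the same route as the paper, which derives the corollary by combining Theorem \ref{strong_Atiyah} with the transfer machinery from \cite{Sc14}. The local indicability of the torsion-free finite-index subgroup $H$, which you flag as the main obstacle, is in fact routine: writing $N = \normal{w}/\normal{w^n}\triangleleft G$, the Fischer--Karrass--Solitar/Cohen--Lyndon theorem identifies $N$ with a free product of copies of $\Z/n\Z$, so $H\cap N$ is a torsion-free subgroup of a free product of finite groups and hence free by the Kurosh subgroup theorem, while $H/(H\cap N)$ embeds in the locally indicable group $F/\normal{w}$; since local indicability is closed under extensions with free kernel and locally indicable quotient, $H$ is locally indicable. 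One caution: the strong Atiyah conjecture does not automatically ascend to finite overgroups, so the transfer statement you attribute to \cite{Sc14} (finite index plus $\lcm(G)<\infty$ plus Atiyah for $H$ implies Atiyah for $G$) is stronger than what is literally proved there; the Linnell--Schick/Schreve transfer requires an additional structural (cohomological completeness) hypothesis on $H$, which is part of what \cite{JL20} and \cite{Sc14} supply and is implicit in the paper's black-box citation.
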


\subsection{Embedding the group ring of a torsion-free one-relator group into a division ring}

Recall that a \emph{division ring} is a ring in which each non-zero element is a unit. Lewin--Lewin proved in \cite{LL78} that the group ring of a torsion-free one-relator group embeds into a division ring.

\begin{theorem}
\label{LL_embedding}
If $G = F/\normal{w}$ is a torsion-free one-relator group and $k$ is a division ring, then $kG\injects \Di$ for some division ring $\Di$. 
\end{theorem}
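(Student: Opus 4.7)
The plan is to induct on the length of the topological hierarchy (Theorem~\ref{topological_hierarchy}). Since $G$ is torsion-free we have $\deg(\lambda)=1$ for the attaching map of the presentation complex, so by Theorem~\ref{locally_indicable} every group appearing in the hierarchy is torsion-free and locally indicable. The base case is a finitely generated free group $F_N$: a free group is bi-orderable, so the Malcev--Neumann construction yields a division ring of formal series $k((F_N))$ (with well-ordered support) in which $kF_N$ embeds as a subring.

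For the inductive step, Theorem~\ref{one-relator_splitting} expresses $G$ as an HNN extension $G \isom H *_\psi$, where $H$ is a torsion-free one-relator group of strictly smaller complexity and $\psi \colon A \to B$ identifies two Magnus subgroups of $H$. By induction there is an embedding $kH \injects \Di_H$ into some division ring. To build $\Di \supset kG$ I would construct a "division-ring HNN extension" of $\Di_H$: let $\Di_A, \Di_B \subset \Di_H$ denote the division subrings generated by $A$ and $B$; show that $\psi$ extends to an isomorphism $\Di_A \to \Di_B$; then adjoin a stable letter $t$ realising this conjugation by first forming a skew Ore-type ring $\Di_H[t,t^{-1};\psi]$ and then taking its division hull via Cohn's theory of amalgamated products of division rings.

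The key technical obstacle is the ring-theoretic analogue of the Freiheitssatz, i.e.\ the need to know that Magnus subgroups sit inside $\Di_H$ sufficiently "freely''. Concretely, one needs: (a) the division subring $\Di_A \subset \Di_H$ is canonically isomorphic to the abstract division hull of $kA$, independent of the chosen embedding; and (b) $\Di_H$ is a free left (and right) $\Di_A$-module on a basis of coset representatives for $A$ in $H$. This is the content of Lewin--Lewin's two-sided identity theorem, proved by an intricate analysis of the Magnus hierarchy that leverages Lyndon's Identity Theorem (Theorem~\ref{lyndon_id}) to obtain the required freeness over $\Z H$ before passing to $\Di_H$. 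Once (a) and (b) are in place the HNN construction closes up, yielding the embedding $kG \injects \Di$. The hardest single ingredient in the whole scheme is (b), since ensuring that the basis chosen at one level of the hierarchy remains a basis after passing to the next HNN extension demands a delicate bookkeeping argument that respects the Magnus--Moldavanski\u{\i} splittings at every stage.
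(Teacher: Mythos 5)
Your overall architecture matches the Lewin--Lewin construction sketched in the paper: build $\Di$ inductively along the hierarchy, with the base case handled by Mal'cev--Neumann series over an ordered free group, and each inductive step realised as a division-ring analogue of the HNN extension via a skew Laurent/Ore construction followed by Cohn's coproduct-and-localisation machinery. You also correctly identify the crux, namely condition (b) — the paper's $(N,H)$-freeness — that coset representatives for a Magnus subgroup $A$ in $H$ must remain $\Di_A$-linearly independent inside $\Di_H$.

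However, your account of \emph{how} (b) is obtained has a genuine gap. Freeness of $kH$ over $kA$ is completely trivial (coset representatives are a $kA$-basis of $kH$ for any subgroup), so Lyndon's Identity Theorem plays no role at the level of $\Z H$, and the phrase ``obtain the required freeness over $\Z H$ before passing to $\Di_H$'' points in exactly the wrong direction: the entire difficulty is the passage to $\Di_H$, since taking division closures can, and in general does, collapse coset decompositions. Lewin--Lewin do not deduce $(A,H)$-freeness from their two-sided identity theorem; they carry it as an explicit inductive hypothesis, verified afresh at each level of the hierarchy while $\Di$ is being built, and the two-sided identity theorem is a \emph{corollary} of the finished construction rather than an input to it. Likewise your (a), that $\Di_A$ is canonically the division hull of $kA$ independent of the ambient embedding, is far from automatic (a ring can have many non-isomorphic epic division hulls) and must also be tracked through the induction — it is exactly the uniqueness issue that Hughes-freeness later systematised. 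So the plan is structurally right, but as written the logical dependency is reversed at the one step that is actually hard, and carrying it out would stall precisely there.
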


\cref{LL_embedding} provided the first proof of Kaplansky's zero divisor conjecture for torsion-free one-relator groups. Kaplansky's zero divisor conjecture is also a consequence of local indicability, proved later by Brodskii and Howie.

The proof of \cref{LL_embedding} uses the Magnus hierarchy and various ring constructions to explicitly and inductively construct the division ring $\Di$. These constructions include Ore localisation, skew polynomial ring constructions, Cohn's universal matrix inverting construction and coproducts of rings. The background and details on these notions are covered in detail in Cohn's book \cite{Co06}. 

One of the main difficulties in Lewin--Lewin's construction was ensuring that at each stage in the hierarchy, the division rings were $(H, G)$-free for all Magnus subgroups $H$ of the one-relator group $G$. Let $k$ be a division ring, $G$ a group and $kG\injects \Di$ an embedding into a division ring. If $N\leqslant H\leqslant G$ are subgroups and $\Di_{N}\leqslant \Di$ denotes the division closure of $kN$ in $\Di$, then $\Di$ is \emph{$(N, H)$-free} if the multiplication map
\[
\Di_N\otimes_{kN}kH \to \Di
\]
given by $d\otimes h \mapsto dh$ is injective. Note that if $T\subset H$ is a right transversal for $N$, then we have $\Di_N\otimes_{kN}kH\isom \bigoplus_{t\in T}\Di_N\otimes t$ as a left $kN$-module.

There is another way to embed a group ring of a torsion-free one-relator group into a division ring which we shall now explain. Firstly, we define two types of division ring embeddings which play a particularly important role. Let $kG\injects \Di$ be an epic division ring embedding. Recall that $kG\injects \Di$ is epic of $\Di$ coincides with the division closure of $kG$. The embedding is \emph{Hughes-free} if it is $(N, H)$-free for every finitely generated subgroup $H\leqslant G$ and every normal subgroup $N\triangleleft H$ with $H/N\isom \Z$. A well-known theorem of Hughes \cite{Hu70} states that a locally indicable group admits at most one Hughes-free embedding into a division ring, up to isomorphism. The embedding is \emph{Linnell} if it is $(N, G)$-free for any subgroup $N\leqslant G$. When the division closure of $\C G$ in the ring of affiliated operators $\mathcal{U}(G)$ is a division ring, this division ring embedding is Linnell. Furthermore, if $G$ is locally indicable, then by Hughes' theorem, its Linnell division ring in $\mathcal{U}(G)$ will also be Hughes-free. In particular, the division ring embedding one obtains for $KG$, with $G$ a torsion-free one-relator group and $K\subset \C$ a subfield, from \cref{strong_Atiyah} is Linnell.

Let $G$ be a right orderable group, let $\leqslant$ be a right $G$-invariant total order and let $k$ be a division ring. Denote by
\[
k\series{G, \leq} = \left\{r = \sum_{g\in G}r_g\cdot g \Bigm\vert \supp(r) \text{ is well-ordered}\right\}
\]
the \emph{space of Mal'cev--Neumann series}. A powerful theorem of Malcev and Neumann \cite{Ma48,Ne49} shows that if $\leq$ is a bi-ordering of $G$, then $k\series{G, \leq}$ actually has the structure of a division ring called the Mal'cev--Neumann completion. In particular, if $G$ is a free group, then it admits a bi-ordering and so it embeds in its Mal'cev--Nuemann completion. This is in fact the base case of the Lewin--Lewin division ring construction. Lewin showed in \cite{Le74} that in this case the embedding of $kG$ into its division closure in its Mal'cev Neumann completion is Hughes-free (in fact, also Linnell).

When $\leq$ is not a bi-ordering, $k\series{G, \leq}$ does not have a natural ring structure. In this case one should look at the \emph{endomorphism ring} of $k\series{G, \leq}$, denoted by $\End(k\series{G, \leq})$. The group ring $kG$ acts naturally on the right, and so $kG$ embeds in $\End(k\series{G, \leq})$. It turns out that the endomorphism ring $\End(k\series{G, \leq})$ is a division ring in the case that $G$ is locally indicable and $\leq$ is a \emph{Conradian} right order. Note that admitting a Conradian right order is equivalent to being locally indicable, a fact which is a consequence of a result of Brodskii \cite{Bro84}, explained in \cite{RR02}. The following is obtained by combining a result of Jaikin-Zapirain--L\'opez-\'Alvarez \cite[Corollary 1.4]{JL20} with a result of Gr\"ater \cite[Theorem 8.1 \& Corollary 8.3]{Gr20}. See also \cite[Theorem 2.5]{JZL23} and the preceding discussion.

\begin{theorem}
\label{division_ring}
Let $G$ be a locally indicable group, $\leq$ a Conradian right order on $G$ and let $K$ be a field of characteristic zero. Then the ring $\End(K\series{G, \leq})$ is a division ring. Moreover, the division closure of $KG$ in $\End(K\series{G, \leq})$ is a Linnell ring.
\end{theorem}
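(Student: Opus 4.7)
The plan is to assemble the theorem from two existing results, linked by Hughes' uniqueness theorem. First I would observe that, because the order $\leq$ is right-invariant, right multiplication by elements of $KG$ preserves well-orderedness of supports, so $KG$ embeds in $\End(K\series{G, \leq})$ as already indicated in the paper.

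For the first assertion, I would invoke Gr\"ater's theorem \cite[Theorem 8.1]{Gr20}: under the hypothesis that $\leq$ is Conradian, $\End(K\series{G, \leq})$ is a division ring. The Conradian property is exactly what makes a ``leading term''/geometric series argument work: given a nonzero $\varphi \in \End(K\series{G, \leq})$, one isolates the smallest element in the support of a suitable image, factors out this leading monomial, and inverts the remainder by a Neumann-type series whose support stays well-ordered precisely because of the Conradian condition on $\leq$. Let $\Di$ denote the division closure of $KG$ inside this ring.

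For the Linnell assertion I would proceed via Hughes-freeness, in three steps. Step (a): verify that the embedding $KG \hookrightarrow \Di$ is Hughes-free, i.e.\ for every finitely generated $H \leqslant G$ and every normal $N \triangleleft H$ with $H/N \cong \Z$, the natural map $\bigoplus_{n \in \Z} \Di_N t^n \to \Di$ is injective, where $t \in H$ lifts a generator of $H/N$ and $\Di_N$ is the division closure of $KN$. The action on $K\series{H, \leq}$ can be read off explicitly using the decomposition of $K\series{H, \leq}$ into ``layers'' indexed by the cosets $Nt^n$, and well-orderedness of supports forbids any non-trivial collapse. Step (b): by \cite[Corollary 1.4]{JL20}, the division closure of $KG$ inside $\mathcal{U}(G)$ is a Hughes-free division ring (this is a reformulation of the strong Atiyah conjecture for locally indicable groups, which applies here by \cref{strong_Atiyah}), and it is Linnell by \cite[Proposition 1.2]{LS12}. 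Step (c): Hughes' theorem \cite{Hu70} asserts that a locally indicable group admits at most one Hughes-free division $KG$-ring, so $\Di$ is isomorphic as a $KG$-ring to the Linnell ring from step (b), whence $\Di$ itself is Linnell.

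The hard part will be step (a). Controlling division closures inside the endomorphism ring of a Mal'cev--Neumann space is subtle: one knows how $KG$ acts, but elements of $\Di_N$ are only given abstractly as rational expressions in $KN$-operators and their formal inverses, and one must show that such expressions, restricted to the $N$-layer $K\series{N, \leq}$, behave sufficiently like classical Mal'cev--Neumann series that the coset-layer decomposition of $K\series{H, \leq}$ separates the translates $\Di_N t^n$ cleanly. Concretely, I would try to show that restriction to $K\series{N, \leq}$ realises $\Di_N$ faithfully as the division closure of $KN$ inside $\End(K\series{N, \leq})$; once this is in place, Hughes-freeness reduces to linear independence of the layers, which is immediate from disjointness of the cosets $Nt^n$ in $H$ and the well-ordered-support condition.
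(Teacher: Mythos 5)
Your proposal is correct and follows essentially the same route as the paper, which assembles the statement from Gr\"ater's theorems, the Jaikin-Zapirain--L\'{o}pez-\'{A}lvarez corollary, and Hughes' uniqueness theorem. The Hughes-freeness you isolate as step (a) and correctly flag as the hard part is exactly the content of Gr\"ater's Corollary 8.3 (cited by the paper alongside Theorem 8.1), so in practice that step is also supplied by citation rather than argued from scratch; your layer-by-layer sketch, restricting to $K\series{N, \leq}$, checking that $\Di_N$ stabilises it and acts faithfully there, and then invoking disjointness of the cosets $Nt^n$, is in the spirit of Gr\"ater's argument.
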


\begin{corollary}
If $G$ is a torsion-free one-relator group and $K$ is a field of characteristic zero, then $KG$ embeds into a Linnell division ring $\Di_{KG}$. Moreover, $\Di_{KG}$ is the unique epic Linnell division $KG$-ring.
\end{corollary}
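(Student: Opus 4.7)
The plan is to assemble the corollary directly from two ingredients already stated in the excerpt: the local indicability of torsion-free one-relator groups (Theorem~\ref{locally_indicable}), and the general construction of a Linnell division ring from a Conradian right order on a locally indicable group (Theorem~\ref{division_ring}). Uniqueness will come from Hughes' theorem on Hughes-free division rings, invoked through the remark in the surrounding discussion that Linnell embeddings of locally indicable group rings are Hughes-free.

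First I would recall that if $G = F/\normal{w}$ is a torsion-free one-relator group, then $w$ is not a proper power, so Theorem~\ref{locally_indicable} (Brodski\u{\i}--Howie) applies and $G$ is locally indicable. As noted in the paragraph preceding Theorem~\ref{division_ring}, a result of Brodski\u{\i}, via the reformulation of Rivas--Rolfsen, says that local indicability is equivalent to admitting a Conradian right order; pick one such order $\leq$ on $G$. With $K$ a field of characteristic zero, Theorem~\ref{division_ring} then tells us that $\End(K\series{G,\leq})$ is a division ring and that the division closure $\Di_{KG}$ of $KG$ inside it is a Linnell ring. By construction the embedding $KG \hookrightarrow \Di_{KG}$ is epic.

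For uniqueness, I would invoke Hughes' theorem \cite{Hu70}: a locally indicable group admits at most one Hughes-free epic division ring, up to $KG$-isomorphism. The key connecting step, flagged by the paragraph before Theorem~\ref{division_ring}, is that for a locally indicable group any epic Linnell division $KG$-ring is automatically Hughes-free. Granting this, if $\Di'$ is any other epic Linnell division $KG$-ring, then both $\Di_{KG}$ and $\Di'$ are Hughes-free epic $KG$-division rings, hence isomorphic as $KG$-rings by Hughes' theorem.

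The only step that requires genuine care is the implication ``Linnell $\Rightarrow$ Hughes-free'' for a locally indicable group; the rest is assembly of cited theorems. Concretely, if $H\leq G$ is finitely generated and $N\triangleleft H$ has $H/N\cong\Z$, then being $(N,G)$-free, as built into the Linnell property, must be promoted to $(N,H)$-freeness, which is a genuinely weaker containment statement and requires that the division closure $\Di_N$ of $KN$ inside $\Di_{KG}$ agrees with the division closure of $KN$ inside the division closure of $KH$. I would check this by using that the multiplication map $\Di_N\otimes_{KN}KG\to \Di_{KG}$ is injective (by Linnellness) and restricting to the $KH$-submodule $\Di_N\otimes_{KN}KH$; this forces $\Di_N\otimes_{KN}KH\hookrightarrow \Di_{KG}$, and thus already into the division closure of $KH$, yielding the required $(N,H)$-freeness. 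With this verified, the application of Hughes' theorem completes the proof.
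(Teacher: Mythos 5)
Your proposal is correct and follows the argument the paper intends: existence of the Linnell embedding $KG\hookrightarrow\Di_{KG}$ comes from Theorem~\ref{locally_indicable} together with Theorem~\ref{division_ring}, and uniqueness comes from Hughes' theorem once one observes that an epic Linnell division $KG$-ring is automatically Hughes-free. The one step you flag as requiring "genuine care" is in fact even simpler than your discussion suggests, given the paper's convention: $\Di_N$ is defined as the division closure of $KN$ inside the ambient $\Di$, and $(N,H)$-freeness asks only for injectivity of $\Di_N\otimes_{KN}KH\to\Di$, so it follows from $(N,G)$-freeness by mere restriction to the direct summand $\Di_N\otimes_{KN}KH$ of $\Di_N\otimes_{KN}KG$; your extra observation that the image lands in $\Di_H$ is true but not needed for the definition as stated.
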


By a result of Fisher--S\'{a}nchez-Peralta \cite{FSP24}, the Lewin--Lewin division ring construction coincides with the division ring from \cref{division_ring} for $KG$ where $K$ is a field of characteristic zero and $G$ a torsion-free one-relator group.

\subsection{Coherence of the group ring}
\label{sec:ring_coherence}

Recall that a presentation of a (left) $R$-module $M$ is an exact sequence
\[
\begin{tikzcd}
F_1 \arrow[r] & F_0 \arrow[r] & M \arrow[r] & 0
\end{tikzcd}
\]
where $F_0$ and $F_1$ are free (left) $R$-modules.

\begin{definition}
A ring $R$ is \emph{(left) coherent} if all of its finitely generated (left) ideals are finitely presented.
\end{definition}

Coherence of rings was introduced by Bourbaki in the 1960's. In general, it is important to distinguish between left and right modules as there are examples of rings which are left coherent but not right coherent. See, for example, \cite[Example (e), p. 139]{La72}. However, for group rings over a commutative ring, the two notions coincide. One of the main results in \cite{JZL23} is the following.

\begin{theorem}
\label{ring_coherence}
If $G$ is a one-relator group and $K$ is a field of characteristic $0$, then $K[G]$ is coherent.
\end{theorem}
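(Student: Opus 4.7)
The overall strategy is induction along the Magnus--Moldavanski\u{\i}--Masters hierarchy provided by Theorem~\ref{topological_hierarchy}, working downwards from the top of the hierarchy back to $G$. The base case is $\pi_1(X_N) \cong \mathbb{Z}/n\mathbb{Z}$, whose group algebra is finite-dimensional over $K$, hence Noetherian and therefore coherent. The inductive step then asks: if $K[H]$ is coherent and $G = H *_\psi$ is a one-relator splitting (in the sense of Theorem~\ref{one-relator_splitting}) with $\psi\colon A \to B$ an isomorphism between Magnus subgroups of $H$, must $K[G]$ be coherent? All of the real work lies here.

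For the inductive step I would carry along a strengthened hypothesis, namely that the pair $(K[H], \Di_{K[H]})$ — with $\Di_{K[H]}$ the Hughes-free/Linnell division ring furnished by Theorem~\ref{division_ring} — satisfies a relative form of coherence compatible with Magnus subgroups. Because the Hughes-free embedding is automatically $(A, H)$-free and $(B, H)$-free in the sense of Lewin--Lewin, the natural maps $\Di_{K[H]} \otimes_{K[A]} K[H] \hookrightarrow \Di_{K[H]}$ and its $(B,H)$ analogue are injective, which lets one detect finite presentability of modules faithfully after passing to the division ring. The HNN-structure presents $K[G]$ as a ring built from two copies of $K[H]$ amalgamated along $K[A] \cong K[B]$ and twisted by the stable letter $t$, so any finitely generated ideal $I \subseteq K[G]$ can be filtered by its $t$-length and studied slice-by-slice via its intersections with $K[H]$-translates, very much in analogy with the Bass--Serre picture of the action of $G$ on its one-relator splitting tree.

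The decisive input is the strong inertness of Magnus subgroups (Theorem~\ref{inert}), together with Collins's results (Theorems~\ref{Collins_intersections_1} and~\ref{Collins_intersections_2}) on intersections of conjugates of Magnus subgroups. Passing through the Hughes-free embedding, the reduced-rank inequality
\[
\sum_{HgA} \rr(A \cap H^g) \leqslant \rr(H)
\]
can be upgraded to a Sylvester-style rank inequality for finitely generated $K[G]$-submodules of free modules, bounding the ranks of the syzygies that arise from the Mayer--Vietoris slicing above. Combining this with the inductive coherence of $K[H]$ yields finite presentability of $I$, and the induction closes.

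The main obstacle will be this inductive step, and more specifically the translation of the purely group-theoretic inequality in Theorem~\ref{inert} into a rank inequality over the non-commutative, non-Noetherian ring $K[G]$. General HNN-extensions do not preserve coherence of group algebras, so the argument cannot be formal; it must use the special features of Magnus subgroups. The cleanest route is probably to show that the Sylvester rank function on $K[G]$ induced by $\Di_{K[G]}$ makes $K[G]$ into a Sylvester domain, from which coherence follows by a general theorem of Cohn — and verifying the Sylvester domain axioms is precisely where Hughes-freeness of $\Di_{K[G]}$ and strong inertness of Magnus subgroups must be synthesized. Carrying out that synthesis inductively, while maintaining the relative coherence hypothesis through every HNN-step of the hierarchy, is the technical heart of the proof.
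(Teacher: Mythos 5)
Your proposal takes a genuinely different route from the paper: you induct on the Magnus hierarchy at the level of coherence, propagating a ``relative coherence'' hypothesis through each HNN-step, whereas the paper's argument is \emph{not} inductive on the hierarchy at all. Instead, the paper uses \cref{ring_coherence_criterion}: if $R$ has global dimension at most two and $R\hookrightarrow\Di$ is a division ring embedding with $\Di$ of weak dimension at most one over $R$, then $R$ is coherent. For torsion-free $G$ this is then a single homological computation: one checks directly, using the Lewin--Lewin resolution and Hughes-freeness of $\Di$, that $\Tor_2^{KG}(\Di, M)=0$ for every $KG$-module $M$. The hierarchy is used only implicitly, to construct $\Di$; no module-by-module induction is required. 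Your approach would, if it worked, give a more hands-on ``Bass--Serre'' understanding of ideals in $K[G]$, but the paper's weak-dimension criterion is what makes the problem tractable.

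There are two genuine gaps in your proposal. First, and most seriously, your inductive step invokes the Hughes-free/Linnell division ring $\Di_{K[H]}$ of \cref{division_ring} for the intermediate one-relator groups $H = \pi_1(X_i)$ of the hierarchy. But when $G$ has torsion, so does every $\pi_1(X_i)$ in the hierarchy (all have the same $\deg(\lambda)$), including your base case $\Z/n\Z$; such groups are not locally indicable, and their group rings $K[H]$ have zero divisors and hence do not embed into \emph{any} division ring. \cref{division_ring} simply does not apply, and the $(A,H)$-freeness and Sylvester-rank arguments you build on it have no meaning. The paper sidesteps this entirely by not running an induction on the hierarchy in the torsion case: it passes to a finite-index free-by-cyclic subgroup $F\rtimes_\psi\Z$ (using \cref{v_fbc}), embeds $K[F\rtimes_\psi\Z]\cong (K[F])[t^{\pm1},\psi]$ into the Ore localisation $\Ore(\Di[t^{\pm1},\overline\psi])$ of a twisted Laurent ring over the Mal'cev--Neumann division ring of $K[F]$, verifies weak dimension one there, and then transfers coherence across the finite-index extension. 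Your sketch has no analogue of this reduction.

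Second, the final step of your argument asserts that being a Sylvester domain implies coherence ``by a general theorem of Cohn.'' This is not a theorem: Sylvester domains need not be coherent, and in any case the paper's decisive homological condition (that $\Di$ have weak dimension $\leq 1$ as a right $K[G]$-module) is different from, and not implied by, the law of nullity for the Sylvester rank function coming from $\Di$. Even restricting to the torsion-free case, where the division ring exists, you would still need to close this gap: the passage from the strong inertness inequality of \cref{inert} to the statement $\Tor_2^{KG}(\Di, M)=0$ is exactly the ``delicate part'' the paper carries out via Hughes-freeness and torsion-freeness of $\Di\otimes_K M$, and your proposal leaves it unresolved.
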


A right $R$-module $M$ is said to have \emph{weak dimension at most $k$} if $\Tor_{k+1}^R(M, N) = 0$ for all left $R$-modules $N$. Say $M$ has weak dimension $k$ if it has weak dimension at most $k$, but not at most $k-1$. The proof of \cref{ring_coherence} makes use of the following criterion for when a finitely generated $R$-module is finitely presented. See \cite[Proposition 3.1]{JZL23}.

\begin{proposition}
\label{ring_coherence_criterion}
Let $R$ be a ring of global dimension at most two and let $R\injects \Di$ be an embedding into a division ring. If $M$ is a finitely generated right $R$-module, then
\[
M \text{ is finitely presented} \iff \dim_{\Di}\Tor_1^R(M, \Di)<\infty.
\]
In particular, if $\Di$ has weak dimension at most one, then $R$ is coherent.
\end{proposition}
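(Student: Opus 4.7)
The plan is to establish both directions of the iff and then derive the coherence statement. The key tool throughout is the long exact sequence of $\Tor^R(-, \Di)$ applied to the short exact sequence $0 \to K \to R^n \to M \to 0$, where $R^n \twoheadrightarrow M$ is any surjection from a finitely generated free module and $K$ is its kernel; using that $R^n$ is $R$-flat, this gives
\[
0 \to \Tor_1^R(M, \Di) \to K \otimes_R \Di \to \Di^n \to M \otimes_R \Di \to 0.
\]

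First I would handle the easy forward direction. If $M$ is finitely presented by some $R^m \to R^n \to M \to 0$, then $K$ is finitely generated as the image of $R^m$, so $K \otimes_R \Di$ is a finitely generated $\Di$-module and hence finite-dimensional; the $\Di$-subspace $\Tor_1^R(M, \Di)$ is then automatically finite-dimensional.

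Next I would turn to the reverse direction, which is the main content. Assuming $\dim_\Di \Tor_1^R(M, \Di) < \infty$ with $M$ finitely generated, the four-term exact sequence above exhibits $K \otimes_R \Di$ as an extension of a $\Di$-subspace of $\Di^n$ by $\Tor_1^R(M, \Di)$, making it finite-dimensional. Writing $K$ as the filtered union $\varinjlim K_F$ over its finitely generated submodules, we have $K \otimes_R \Di = \varinjlim (K_F \otimes_R \Di)$; since this filtered colimit of $\Di$-subspaces inside a finite-dimensional space must stabilize, there is some finitely generated $K_0 \subset K$ such that $K_0 \otimes_R \Di \twoheadrightarrow K \otimes_R \Di$, equivalently $(K/K_0) \otimes_R \Di = 0$. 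The main obstacle, and the heart of the proof, is promoting this vanishing to $K = K_0$. Here the global dimension bound must enter: $K$ has projective dimension at most one, which constrains how its submodules can sit inside, and one should argue that the rank function $\rk_\Di(N) := \dim_\Di(N \otimes_R \Di)$ is faithful on the class of quotients $K/K_0 \hookrightarrow R^n/K_0$ that arise here. An alternative strategy I would pursue if the direct argument proved elusive is to iteratively enlarge $K_0$, at each stage adjoining finitely many generators detected by a non-vanishing Sylvester-type rank invariant associated to the embedding $R \hookrightarrow \Di$, and to use the global dimension hypothesis to force termination after finitely many steps.

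Finally, the coherence corollary under the weak dimension hypothesis is a clean consequence of the main iff. Given any finitely generated submodule $K \subset R^n$, the LES of $\Tor^R(-, \Di)$ applied to $0 \to K \to R^n \to R^n/K \to 0$ gives an exact sequence
\[
\Tor_2^R(R^n/K, \Di) \to \Tor_1^R(K, \Di) \to \Tor_1^R(R^n, \Di) = 0.
\]
The weak dimension hypothesis kills the leftmost term, forcing $\Tor_1^R(K, \Di) = 0$, which is trivially finite-dimensional. The main iff, applied now to the finitely generated $R$-module $K$, gives that $K$ is finitely presented; since $K$ was an arbitrary finitely generated submodule of a free module, $R$ is coherent.
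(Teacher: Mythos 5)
Your forward direction is correct, and your derivation of the coherence corollary matches the paper's own brief explanation (which uses finitely generated ideals $I\leqslant R$ rather than finitely generated submodules $K\subset R^n$; the two give equivalent characterisations of coherence).

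The reverse direction has a genuine gap at exactly the step you yourself flag as ``the heart of the proof,'' and the framing you propose for that step is in fact wrong. Having produced a finitely generated $K_0\subset K$ with $(K/K_0)\otimes_R\Di=0$, you propose to conclude $K=K_0$ by arguing that $N\mapsto\dim_{\Di}(N\otimes_R\Di)$ is faithful on the quotients $K/K_0\injects R^n/K_0$ that arise. That faithfulness is false: take $R=\Z$ (global dimension one, inside $\Di=\Q$), $M=\Z/2\Z$ with $0\to 2\Z\to\Z\to\Z/2\Z\to 0$, so $K=2\Z$, and choose $K_0=4\Z$. Then $K/K_0\cong\Z/2\Z\injects\Z/4\Z=R/K_0$ and $(K/K_0)\otimes_{\Z}\Q=0$, yet $K\neq K_0$. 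The colimit-stabilisation step does not single out a canonical $K_0$, and the $\Di$-rank is precisely the invariant that is blind to $K/K_0$ here, so no amount of projective-dimension bookkeeping on its own rescues the claim. What must be shown is that $K$ is finitely generated, not that it equals the particular $K_0$ you constructed; your alternative iterative-enlargement strategy is closer to a workable idea, but you neither identify an invariant that detects $K\neq K_0$ when the $\Di$-rank cannot, nor explain why the global dimension hypothesis forces the enlargement to terminate. That argument is the actual content of \cite[Proposition 3.1]{JZL23} and is absent from your proposal.
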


To see that the last claim in \cref{ring_coherence_criterion} follows from the first, note that if $I\leqslant R$ is an ideal, then
\[
\Tor_1^{R}(I, \Di) = \Tor_2^R(R/I, \Di)
\]
and so if $\Di$ has weak dimension at most one, then $\Tor_1^R(I, \Di) = 0$.

The idea to prove \cref{ring_coherence} is to use \cref{ring_coherence_criterion} on the group ring of a one-relator group. There are two cases to consider: when $G$ is torsion-free and when $G$ has torsion.

Suppose that $G$ is a torsion-free one-relator group and $K$ is a field. As we saw in \cref{sec:homology}, $K[G]$ has global dimension at most two and so it suffices to show that $\Di$ has weak dimension at most one. Let $M$ be a left $K[G]$-module. From \cref{sec:homology} we have the following resolution
\[
\begin{tikzcd}
0 \arrow[r] & KG\otimes_{K}M \arrow[r] & \bigoplus_{s\in S}KG\otimes_{K}M \arrow[r] & KG\otimes_{K}M \arrow[r] & M \arrow[r] & 0.
\end{tikzcd}
\]
which comes from the standard resolution of $K$ as a trivial $K[G]$-module. By \cref{division_ring}, there exists a division ring embedding $K[G]\injects \Di$, turning $\Di$ into a right $K[G]$-module. Applying $\Di\otimes_{K[G]}-$ we obtain a complex
\[
\begin{tikzcd}
\Di\otimes_{K}M \arrow[r, "d_2"] & \bigoplus_{s\in S}\Di\otimes_{K}M \arrow[r, "d_1"] & \Di\otimes_{K}M \arrow[r, "d_0"] & M \arrow[r] & 0.
\end{tikzcd}
\]
By definition:
\[
\Tor_2^{K[G]}(\Di, M) =\ker(d_2).
\]
The delicate part of the proof of \cref{ring_coherence} is now to show that $\Di\otimes_KM$ is a torsion-free $K[G]$ module, and show that this implies that $\ker(d_2) = 0$, see \cite[Lemma 5.1 \& Theorem 5.3]{JZL23}. In order to carry out this step, one needs to know that the division ring $\Di$ into which $K[G]$ embeds is Hughes-free. Since such a division ring is only known to exist when $K$ has characteristic 0 by \cref{division_ring}, \cref{ring_coherence} requires this hypothesis. However, the proof carries over directly if one were to show that a Hughes-free division ring exists for $K[G]$ with $G$ a torsion-free one-relator group and $K$ an arbitrary field.

If $G$ is a one-relator group with torsion, the ring $K[G]$ does not have global dimension at most two. Dawid Kielak and the first author showed in \cite{KL24} that one-relator groups with torsion have a finite index subgroup that is free-by-cyclic, see \cref{sec:vfibring}. Let $F\rtimes_{\psi} \Z$ be the finite index free-by-cyclic subgroup of $G$. Now $K[F]$ embeds into its Mal'cev--Neumann completion, giving us a division ring embedding $K[F]\injects \Di$. Since $K[F\rtimes_{\psi} \Z] \isom (K[F])[t^{\pm1}, \psi]$, localising $\Di[t^{\pm1}, \overline{\psi}]$, where $\overline{\psi}$ denotes the extension of $\psi\colon K[F]\to K[F]$ to $\Di$, yields a division ring embedding $K[F\rtimes_{\psi}\Z]\injects \Ore\left(\Di\left[t^{\pm1}, \overline{\psi}\right]\right)$. Then general properties of the twisted Laurent polynomial ring and Ore localisation implies that $\Ore\left(\Di\left[t^{\pm1}, \overline{\psi}\right]\right)$ has weak dimension one and so \cref{ring_coherence_criterion} can also be applied in this case. Since the coherence property passes between finite codimensional ideals, the group ring $K[G]$ is also coherent.

We conclude this section by remarking that the proof of group ring coherence does not carry over to $R[G]$ when $R$ is not a field as the global dimension of $R[G]$ is no longer at most two. Even the case $R = \Z$ is open and interesting.

\section{Subgroup properties}
\label{sec:subgroups}

\subsection{Stackings and non-positive immersions}
\label{sec:npi}

Dani Wise introduced the notions of non-positive immersions and negative immersions in \cite{Wi03,Wi04} as tools to understand better the subgroup structure of certain groups with an eye towards solving Baumslag's conjecture on the coherence of one-relator groups.

The independent proofs that torsion-free one-relator groups have presentation complexes with non-positive immersions by Helfer--Wise \cite{HW16} and Louder--Wilton \cite{LW17}, led to a sequence of beautiful developments in understanding the subgroups of one-relator groups which we shall explain in the next few sections.

\begin{definition}
\label{def:npi}
A 2-complex $X$ has \emph{non-positive immersions} if for every immersion $Y\immerses X$ with $Y$ compact and connected, either $\pi_1(Y) = 1$ or $\chi(Y)\leqslant 0$.
\end{definition}

\begin{remark}
Other variations of the non-positive immersions property replace the condition in \cref{npi} that $Y$ be simply connected with the condition that $Y$ be contractible or collapsible. Yet another variation, \emph{weak non-positive immersions}, requires that $\chi(Y)\leqslant 1$ for all immersions $Y\immerses X$ with $Y$ compact and connected.
\end{remark}

When Dani Wise introduced the notion of non-positive immersions in \cite{Wi03}, he conjectured that all one-relator complexes $X = (\Lambda, \lambda)$ with $\deg(\lambda) = 1$ have non-positive immersions. Wise had reduced his conjecture to a conjecture about free groups which he called the $w$-cycles conjecture: if $F$ is a free group, $w\in F$ is not a proper power and $H\leqslant F$ is any finitely generated free group, then
\[
\sum_{WfH}\rk(W^f\cap H)\leqslant \rk(H)
\]
with equality only if $H\leqslant \normal{w}$. This can also be interpreted as a rank 1 version of the (strengthened) Hanna Neumann inequality. In \cite{LW17} Louder--Wilton solved the $w$-cycles conjecture. Their proof uses the idea of a stacking.

\begin{definition}
\label{def:stacking}
Let $\lambda\colon\bbS\immerses \Gamma$ be an immersion of a disjoint union of copies of $S^1$ into a graph $\Gamma$. A \emph{stacking} of $\lambda$ is an embedding $\widetilde{\lambda}\colon \bbS\injects \Gamma\times \R$ such that $\pi\circ\widetilde{\lambda} = \lambda$ where $\pi\colon \Gamma\times \R\to \Gamma$ is the projection map.
\end{definition}

Now fix an immersion $\lambda$ as in \cref{def:stacking} and a stacking $\widetilde{\lambda}$. Denote by $\iota\colon \Gamma\times \R\to \R$ the projection to $\R$. Define the \emph{top} of the stacking to be
\[
\mathcal{T} = \{x\in \bbS \mid \forall y\neq x \quad (\lambda(x) = \lambda(y) \implies \iota(\widetilde{\lambda}(x))>\iota(\widetilde{\lambda}(y)))\}\subset \bbS
\]
whereas the \emph{bottom} is the subset
\[
\mathcal{B} = \{x\in \bbS \mid \forall y\neq x \quad (\lambda(x) = \lambda(y) \implies \iota(\widetilde{\lambda}(x))<\iota(\widetilde{\lambda}(y)))\}\subset \bbS
\]
The stacking is \emph{good} if the top and the bottom intersect each component of $\bbS$ non-trivially. Louder--Wilton then show that good stackings always exist for immersions $\lambda\colon S^1\immerses \Lambda$, where $\deg(\lambda) = 1$. Their argument makes use of the existence of maximal $\Z$-tower liftings of $\lambda$, a fact due to Howie \cite{Ho82}. The topological one-relator hierarchy described in \cref{sec:hierarchy} yields, in fact, an explicit maximal tower lifting of $\lambda$. Alternatively, as Wise points out in \cite{Wi20}, one can use a result of Farrell \cite{Fa76}, the right orderability of torsion-free one-relator groups and Weinbaum's theorem \cite{We72}.

A key insight of Louder--Wilton is that good stackings can be used to compute Euler characteristics. This led to the following result, whch is a combination of \cite[Lemma 12 \& 16]{LW17}. Recall that the \emph{core} of a graph $\Gamma$, denoted by $\core(\Gamma)$, is the union of the images of all immersed cycles $S^1\immerses\Gamma$.

\begin{proposition}
\label{good_stacking}
Let $\Lambda$ be a graph and let $\lambda\colon \bbS\immerses \Lambda$ be an immersion of a disjoint union of cycles $S^1$. Suppose that $\lambda$ admits a good stacking (for example, if $\bbS = S^1$ and $\deg(\lambda) = 1$). Let $\Lambda'\immerses \Lambda$ be a graph immersion and consider the following diagram
\[
\begin{tikzcd}
\core(\mathbb{S}\times_{\Lambda}\Lambda') \arrow[r, "\lambda'"] \arrow[d, "\sigma"] & \Lambda' \arrow[d] \\
\mathbb{S} \arrow[r, "\lambda"]                        & \Lambda           
\end{tikzcd}
\]
arising from the pullback diagram. Then either there is some edge $e\subset \Lambda'$ such that $\lambda'^{-1}(e)$ is a single edge, or we have
\[
-\chi(\im(\lambda'))\geqslant \deg(\sigma).
\]
\end{proposition}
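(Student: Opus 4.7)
My plan is to pull back the good stacking on $\bbS$ to a stacking on $\Sigma$, use it to exhibit distinguished edges of $\Sigma$ that inject into $\Lambda'$, and conclude via an Euler characteristic count.

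First I would define the pullback stacking $\widetilde{\lambda'}\colon \bbS\times_\Lambda\Lambda'\to\Lambda'\times\R$ by $(s,l)\mapsto(l,\iota\tilde\lambda(s))$; this is an embedding lifting $\lambda'$, and its restriction gives a stacking of $\lambda'|_\Sigma$. Because $\bbS$ is a disjoint union of cycles and $\sigma\colon\Sigma\to\bbS$ is an immersion, every vertex of $\Sigma$ has valence at most two; combined with the core hypothesis that every vertex has valence at least two, we conclude that $\Sigma$ is itself a disjoint union of cycles and $\sigma$ is a finite cover, with total covering degree $\deg(\sigma)$.

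Next I would observe that $\mathcal{T}'_\Sigma=\sigma^{-1}(\mathcal{T})$ and $\mathcal{B}'_\Sigma=\sigma^{-1}(\mathcal{B})$ meet every component of $\Sigma$, since $\mathcal{T},\mathcal{B}$ meet every component of $\bbS$ and $\sigma$ is a cover. The restrictions $\lambda'|_{\mathcal{T}'_\Sigma}$ and $\lambda'|_{\mathcal{B}'_\Sigma}$ are then injective on edges: if $(s_1,l),(s_2,l)\in\mathcal{T}'_\Sigma$ have the same $\lambda'$-image, then $s_1,s_2$ both lie in $\mathcal{T}\cap\lambda^{-1}(p(l))$, and uniqueness of the top within a fiber forces $s_1=s_2$.

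Finally, assume no edge of $\Lambda'$ has a single preimage in $\Sigma$. Writing $m_v$ and $n_e^{\Sigma}$ for the vertex and edge multiplicities of $\lambda'\colon \Sigma\to X=\im(\lambda')$, the identity
\[
-\chi(X)=\sum_{v\in V(X)}(m_v-1)-\sum_{e\in E(X)}(n_e^{\Sigma}-1),
\]
together with $\chi(\Sigma)=0$ and the cover equalities $\sum_v m_v=\sum_e n_e^{\Sigma}=\deg(\sigma)\cdot|\bbS^{(1)}|$, reduces the desired inequality $-\chi(X)\geqslant\deg(\sigma)$ to a combinatorial excess between vertex-gluings and edge-gluings under $\lambda'$. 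The main obstacle is extracting this excess: the multiplicity sums alone produce only a tautological equality, so one must exploit the stacking order on preimages above each vertex of $X$, combined with the injectivity of top and bottom on edges and the fact that $\mathcal{T}'_\Sigma,\mathcal{B}'_\Sigma$ hit every component of $\Sigma$, to force at each vertex of $X$ enough additional vertex-identifications to total $\deg(\sigma)$ globally. This combinatorial step is the technical heart of Louder--Wilton's argument in \cite[Lemmas 12 and 16]{LW17}, and would likely proceed by induction on $|E(\Lambda')|$, peeling off a topmost or bottommost edge in the stacking order at each step.
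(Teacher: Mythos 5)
The paper does not give a proof of this proposition at all; it simply quotes it as a combination of Lemmas 12 and 16 of Louder--Wilton, so there is no in-paper proof to compare against. Your proposal actually does more than the paper: the preliminary reductions are correct and useful. You are right that $\sigma\colon\Sigma\to\bbS$ is a covering map (an immersion into a $2$-regular graph is $\leqslant 2$-regular, the core condition forces $\geqslant 2$-regularity, and a $2$-regular graph immersion is a covering), that the stacking of $\lambda$ pulls back to a stacking of $\lambda'|_\Sigma$ via $(s,l)\mapsto(l,\iota\widetilde\lambda(s))$, and that $\lambda'$ is injective on $\sigma^{-1}(\mathcal{T})$ and on $\sigma^{-1}(\mathcal{B})$ because distinct preimages of an edge of $X$ would, after projecting via $\sigma$, give two distinct points of $\mathcal{T}$ (resp.\ $\mathcal{B}$) in a single $\lambda$-fibre, contradicting the definition of top (resp.\ bottom).

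The problem is that you then stop exactly where the proof actually begins. You set up the identity
\[
-\chi(X)=\sum_{v\in V(X)}(m_v-1)-\sum_{e\in E(X)}\left(n_e^{\Sigma}-1\right),
\]
and explicitly say that this ``produces only a tautological equality'' and that the real work --- forcing enough vertex-identifications to outweigh edge-identifications by $\deg(\sigma)$ globally --- is ``the technical heart of Louder--Wilton's argument,'' which ``would likely proceed by induction, peeling off a topmost or bottommost edge.'' That guess about the shape of the argument is reasonable, but it is a description of a proof strategy, not a proof. The inequality $-\chi(X)\geqslant\deg(\sigma)$ under the hypothesis $n_e^\Sigma\geqslant 2$ for all $e$ is the entire content of the statement, and it is nowhere established. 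A careful reader would still have to open Louder--Wilton and verify Lemmas 12 and 16 themselves; your proposal reduces the problem to the same citation the paper gives.

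Two smaller remarks. First, the ``cover equalities'' are misstated: what is true is $\sum_v m_v=|V(\Sigma)|$ and $\sum_e n_e^\Sigma = |E(\Sigma)|$, and these equal $\deg(\sigma)\cdot|\bbS^{(1)}|$ only when all components of $\bbS$ have the same edge length; fortunately you only use $\chi(\Sigma)=0$, which is fine. Second, you should note why the injectivity and the surjectivity onto components of $\Sigma$ together matter: the goal of the peeling induction is to guarantee that each component of $\Sigma$ contributes an honest excess vertex-identification, and this is precisely where ``$\mathcal{T}$ and $\mathcal{B}$ meet every component'' is used. As things stand, that connection is asserted but not made.
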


The $w$-cycles conjecture follows immediately from \cref{good_stacking}. We explain in some detail how to obtain the non-positive immersions property from \cref{good_stacking}.

\begin{theorem}
\label{npi}
If $X = (\Lambda, \lambda)$ is a one-relator complex with $\deg(\lambda) = 1$, then $X$ has non-positive immersions.
\end{theorem}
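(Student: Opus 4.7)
The plan is to reduce an arbitrary immersion $Y \immerses X$ to a minimal form by elementary collapses that preserve both $\chi$ and the homotopy type, and then apply Proposition \ref{good_stacking} directly to this reduced complex. The hypothesis $\deg(\lambda) = 1$ is exactly what is needed to supply a good stacking of $\lambda$, which is the input to \ref{good_stacking}. Given a compact connected $Y = (\Lambda', \lambda_Y) \immerses X$, the goal is to show either $\pi_1(Y) = 1$ or $\chi(Y) \leq 0$.

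First I would introduce two kinds of elementary collapse. A \emph{free face collapse} removes a pair $(e, c)$ where $c$ is a 2-cell of $Y$, $e$ is an edge of $\Lambda'$ traversed exactly once by $c$'s attaching map, and no other 2-cell of $Y$ traverses $e$. A \emph{leaf collapse} removes an edge with a degree-one endpoint, together with that endpoint. Both operations preserve $\chi$ and are homotopy equivalences, and since $Y$ is finite the process terminates at some $Y_* = (\Lambda_*', \lambda_{Y_*})$ with $\chi(Y_*) = \chi(Y)$ and $\pi_1(Y_*) = \pi_1(Y)$, admitting no further collapses of either kind. If $Y_*$ has no 2-cells, then $Y_*$ is a graph with no leaves, so either $Y_*$ is a single point (in which case $\pi_1(Y) = 1$) or every vertex has degree $\geq 2$ and hence $\chi(Y_*) \leq 0$, completing this case.

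Next, assume $Y_*$ has 2-cells. Let $\Gamma \subseteq \Lambda_*'$ be the union of images of all attaching maps of 2-cells of $Y_*$. Then $\Gamma \immerses \Lambda$ via the composition through $Y_*$, so Proposition \ref{good_stacking} applies with $\Lambda' = \Gamma$ and the good stacking provided by $\deg(\lambda) = 1$. The key observation is that alternative (a) of \ref{good_stacking} cannot occur: if $\lambda'^{-1}(e)$ were a single edge for some $e \subset \Gamma$, then since $e \in \Gamma$ some attaching map of $Y_*$ traverses $e$, and any such traversal must correspond to an edge in $\lambda'^{-1}(e)$; so exactly one 2-cell of $Y_*$ traverses $e$ exactly once and no other does, making $(e, c)$ a free face collapse in $Y_*$, contradicting maximality. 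Therefore alternative (b) holds, giving $-\chi(\Gamma) \geq \deg(\sigma) \geq k_*$, where $k_*$ is the number of 2-cells of $Y_*$ (since every attaching map of $Y_*$ contributes at least one component of degree $\geq 1$ to $\sigma$). Finally, because $Y_*$ has no leaves, every edge of $\Lambda_*' \setminus \Gamma$ has both endpoints of degree $\geq 2$ in $\Lambda_*'$, so adding such edges to $\Gamma$ does not increase Euler characteristic; hence $\chi(Y_*) = \chi(\Lambda_*') + k_* \leq \chi(\Gamma) + k_* \leq 0$.

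The main obstacle I anticipate is the interplay between the pullback $\core(\bbS \times_\Lambda \Gamma)$ appearing in \ref{good_stacking} and the attaching maps actually realized by 2-cells of $Y_*$: the map $\lambda'$ in the proposition sees \emph{all} possible lifts of $\lambda$ to $\Gamma$, whereas $Y_*$ only carries some of them. The argument above circumvents this by restricting $\Lambda'$ to $\Gamma$, the image of $Y_*$'s attaching maps, so that any single-edge preimage of $\lambda'$ over some $e \subset \Gamma$ is forced to come from a $Y_*$-attaching map, recovering a free face. A secondary delicate point is verifying that the two reduction moves can be iterated independently without interfering, and that the definition of $\Gamma$ together with the no-leaf condition gives the inequality $\chi(\Lambda_*') \leq \chi(\Gamma)$; these should be routine but warrant care.
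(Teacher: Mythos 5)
Your proof is correct and takes essentially the same approach as the paper: reduce by free-face collapses, apply Proposition~\ref{good_stacking} to the immersion given by the images of the attaching maps (where the first alternative of that proposition is then exactly equivalent to the existence of a free face, so it cannot occur), and combine the resulting inequality with the fact that a non-empty subgraph of a connected graph has Euler characteristic at least that of the ambient graph. The leaf collapses, and the appeal to the no-leaf condition when comparing $\chi(\Lambda_*')$ with $\chi(\Gamma)$, are superfluous---the subgraph Euler-characteristic inequality and the no-2-cells base case both hold without them---but they do no harm.
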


\begin{proof}
Let $Z = (\Gamma, \gamma)$ be a 2-complex and let $Z\immerses X$ be an immersion. If there is a 1-cell $e\subset \Gamma$ such that $\gamma^{-1}(e)$ contains a single 1-cell, then by removing $e$ and the 2-cell whose boundary cycle traverses $e$, we obtain a subcomplex of $Z$ which is homotopy equivalent to $Z$. Thus, we may assume that no such $1$-cell exists. Consider the following commutative diagram
\[
\begin{tikzcd}
\mathbb{S}_{\Gamma} \arrow[rrd, loop->, bend left, "\gamma"] \arrow[rd, hook] \arrow[rdd, loop->, bend right, "\sigma"'] &                                                                &                  \\
                                                                       & S^1\times_{\Lambda}\Gamma \arrow[r, loop->] \arrow[d, loop->] & \Gamma \arrow[d, loop->] \\
                                                                       & S^1 \arrow[r, loop->, "\lambda"]                      & \Lambda         
\end{tikzcd}
\]
where $\gamma\colon\bbS_{\Gamma}\immerses \Gamma$ is the attaching map for the 2-cells in $Z$ and where the fact that $\bbS_{\Gamma}\injects S^1\times_{\Lambda}\Gamma$ is injective follows from the fact that $Z\immerses X$ is an immersion. We may apply \cref{good_stacking} to the immersion $\Gamma\immerses\Lambda$ to conclude that
\[
-\chi(\Gamma)\geqslant-\chi(\gamma(\bbS_{\Gamma})) \geqslant \deg(\sigma) = |\pi_0(\bbS_{\Gamma})|.
\]
Since
\[
\chi(Z) = \chi(\Gamma) - |\pi_0(\bbS_{\Gamma})|
\]
the proof is complete.
\end{proof}

\begin{remark}
The proof actually shows something stronger: if $Z\immerses X$ is an immersion of a compact and connected 2-complex, then either $\chi(Z)\leqslant 0$ or $Z$ collapses through free faces to a point (collapsing non-positive immersions).
\end{remark}

Other proofs of \cref{npi} are also worth mentioning. Helfer--Wise prove \cref{npi} at the same time as Louder--Wilton in \cite{HW16}, using the notion of \emph{(bi)-slim} structures. In \cite{BCGW24} Bamberger--Carrier--Gaster--Wise showed that the existence of a good stacking for an immersion $\lambda\colon \bbS\immerses\Lambda$ is equivalent to the existence of a bi-slim structure on $X = (\Lambda, \lambda)$. Since the existence of a slim structure is strictly more general, the proof in \cite{HW16} applies to a somewhat larger class of 2-complexes. For example, they show that \emph{reducible} 2-complexes without proper powers (in the sense of Howie \cite{Ho82}) have slim structures and so have non-positive immersions. A third proof was provided by Howie--Short in \cite{HS23} using orderability and diagrams. This last proof could be applied in a more general `relative' setting. A similar result was independently obtained by Millard in his thesis \cite{Mi21} using `relative' stackings. Finally, a completely different proof was given in \cite{JZL23}, using homological algebra.

Not many general properties of 2-complexes with non-positive immersions are known. Wise conjectured in \cite{Wi03} that if $X$ has non-positive immersions, then $\pi_1(X)$ is coherent. That is, every finitely generated subgroup of $\pi_1(X)$ is finitely presented. Although this conjecture remains open, in \cref{sec:coherence} we will see that significant progress has been made. Properties that are known to hold for all 2-complexes with non-positive immersions are the following, due to Wise \cite{Wi22} (see \cite[Proposition 2.7]{JZL23}).

\begin{theorem}
\label{npi_properties}
If $X$ is a 2-complex with non-positive immersions and $\pi_1(X)\neq 1$, then:
\begin{itemize}
\item $\pi_1(X)$ is locally indicable.
\item $X$ is aspherical.
\end{itemize}
\end{theorem}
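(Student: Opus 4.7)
The plan is to establish local indicability first and then deduce asphericity by combining it with a separate observation ruling out proper power attaching maps. For local indicability, fix a finitely generated non-trivial subgroup $H\leqslant \pi_1(X)$ and let $\rho\colon \widetilde{X}_H\to X$ be the covering corresponding to $H$, so that $\pi_1(\widetilde{X}_H) = H$. The crux is to construct a compact connected sub-2-complex $Y\subset \widetilde{X}_H$ whose inclusion induces both a surjection $\pi_1(Y)\twoheadrightarrow H$ and an isomorphism $H_1(Y, \Q)\xrightarrow{\isom} H_1(\widetilde{X}_H, \Q) = H^{\ab}\otimes \Q$ on rational first homology. Starting from any compact connected $Y_0\subset \widetilde{X}_H$ which carries a set of loops generating $H$ (such $Y_0$ exists because $H$ is finitely generated), one would enlarge $Y_0$ iteratively by adjoining the supports of 2-chains in $\widetilde{X}_H$ which bound rationally-independent classes in the kernel of $H_1(Y_i, \Q)\to H^{\ab}\otimes \Q$; the finite generation of $H^{\ab}$ should force this procedure to terminate in finitely many steps. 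With such a $Y$ in hand, the composition $Y\hookrightarrow \widetilde{X}_H\xrightarrow{\rho}X$ is an immersion of a compact connected 2-complex, and $\pi_1(Y)\neq 1$ because it surjects onto the non-trivial group $H$, so non-positive immersions yields $\chi(Y)\leqslant 0$, whence $b_1(Y) = 1 - \chi(Y) + b_2(Y)\geqslant 1$. Since $b_1(Y) = \rk(H^{\ab})$ by construction, this forces $\rk(H^{\ab})\geqslant 1$ and produces the desired surjection $H\twoheadrightarrow \Z$.

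For asphericity, the first step is to observe that non-positive immersions forbids any attaching map of $X$ from being a proper power: if $\lambda$ restricted to some 2-cell factored as $\mu^n$ with $\mu$ an immersed cycle in $\Lambda$ and $n\geqslant 2$, then the 2-complex obtained by taking an $|\mu|$-subdivided circle together with a 2-cell attached by going $n$ times around it would admit an immersion into $X$ (using that $\mu$, being an immersed cycle, has no backtracks) while satisfying $\pi_1\isom \Z/n\Z\neq 1$ and $\chi = 1>0$, contradicting the non-positive immersions hypothesis. Having ruled out proper powers, and with $\pi_1(X)$ locally indicable by the previous part, the asphericity of $X$ then follows from the classical theorem of Howie \cite{Ho82} asserting that a 2-complex whose fundamental group is locally indicable and whose attaching maps are not proper powers is aspherical.

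The main obstacle is the finite termination of the enlargement procedure in the local indicability argument: adjoining the support of a 2-chain to $Y_i$ both kills rational $H_1$-classes (via the new 2-cells) and can introduce new ones (via new 1-cells brought along in the supports), so the kernel of $H_1(Y_i, \Q)\to H^{\ab}\otimes \Q$ does not decrease monotonically in any naive sense. A careful double induction on $\rk(H^{\ab})$ and on the dimension of the kernel, exploiting the finite generation of $H^{\ab}$, should nevertheless control the bookkeeping. A cleaner alternative would be to invoke a Howie-style tower construction to produce directly an immersion $Y\immerses X$ with $\pi_1(Y)$ realised as $H$ exactly; then $b_1(Y) = \rk(H^{\ab})$ would follow immediately and no iterative enlargement would be required.
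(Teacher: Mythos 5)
Your proposal identifies the right shape of argument, but both parts contain genuine gaps.

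For local indicability, you acknowledge the obstacle yourself: the iterative enlargement procedure has no reason to terminate, since adjoining the support of a bounding 2-chain kills old rational $H_1$-classes but introduces new ones through the new 1-cells, and the kernel of $H_1(Y_i,\Q)\to H_1(\widetilde{X}_H,\Q)$ need not decrease in dimension. The suggested ``double induction'' would require a monotone quantity that does not obviously exist, and the Howie-tower alternative does not realise arbitrary finitely generated subgroups exactly in this generality. What works is the approximation sequence of \cref{2-complex_delzant}: a chain of $\pi_1$-surjective immersions of compact connected 2-complexes $Z_0\immerses Z_1\immerses\cdots\immerses X$ with $\lim\pi_1(Z_i)\isom H$. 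The surjections $\pi_1(Z_i)\surjects\pi_1(Z_{i+1})$ induce surjections on $H_1(-,\Q)$, so $b_1(Z_i)$ is a non-increasing sequence of non-negative integers; it therefore stabilises, and since $H_1(-,\Q)$ commutes with direct limits the stable value is $b_1(H)$. Non-positive immersions gives $\chi(Z_i)\leq 0$, hence $b_1(Z_i)\geq 1+b_2(Z_i)\geq 1$ for every $i$ (each $\pi_1(Z_i)$ surjects onto $H\neq 1$), so $b_1(H)\geq 1$. The essential point is this limiting argument, not a single subcomplex $Y$ with exactly the right rational $H_1$.

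For asphericity, your observation that non-positive immersions forbids proper power attaching maps is correct: the 2-complex $W$ you build does immerse in $X$ with $\pi_1(W)\isom\Z/n\Z$ and $\chi(W)=1$. But the theorem you then invoke --- that a 2-complex with locally indicable fundamental group and no proper power attaching maps is aspherical --- is simply false. The presentation complex of $\Z^3 = \langle a,b,c\mid[a,b],[b,c],[a,c]\rangle$ has $\pi_1\isom\Z^3$ locally indicable, commutator (hence non-proper-power) attaching maps, and $\chi = 1-3+3 = 1$; since any $K(\Z^3,1)$ has Euler characteristic $0$, this complex is not aspherical. It is not a counterexample to \cref{npi_properties} itself, because it fails non-positive immersions (the identity immersion already has $\pi_1\neq 1$ and $\chi=1>0$), but it shows that the intermediate statement your proof leans on cannot hold. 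The non-positive immersions hypothesis must enter the asphericity argument directly, not merely through local indicability.
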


Note that these properties were known to hold for (presentation complexes of) torsion-free one-relator groups, see \cref{sec:hierarchy}.

\subsection{(Uniform) Negative immersions}
\label{sec:unpi}

In \cite{Wi04}, Dani Wise introduced a strengthening of non-positive immersions which he called negative immersions. After Louder--Wilton introduced stackings in \cite{LW17}, they then wrote \cite{LW22} in which they used stackings to characterise precisely when a one-relator complex has negative immersions. However, the definition Louder--Wilton used was slightly different, although morally very similar, to that of Wise. The advantage of the Louder--Wilton definiton is that it is invariant under Nielsen equivalence. Since we shall be covering the results of Louder--Wilton, we shall be working with their definition here.

Before stating the definition of negative immersions, we shall need to explain what it means for a 2-complex to be reducible or irreducible. For this we follow \cite{LW24}. There the notion of \emph{branched immersions} is also used. However, we stick to regular immersions.

Let $X$ be a 2-complex. We say $X$ is \emph{visibly reducible} if one of the following holds:
\begin{enumerate}
\item\label{itm:red_1} $X$ contains a vertex of degree at most one.
\item\label{itm:red_2} $X$ contains a locally separating vertex.
\item\label{itm:red_3} $X$ contains a free face.
\end{enumerate}
Morally, $X$ is visibly reducible if there is an obvious way to simplfy it: remove hanging trees (corresponding to (\ref{itm:red_1})), decompose $X$ as a wedge of two subcomplexes (corresponding to (\ref{itm:red_2})) or collapse a 2-cell through an edge (corresponding to (\ref{itm:red_3})). A combinatorial map of 2-complexes $\phi\colon Y\to X$ is an \emph{essential equivalence} if the following two conditions hold:
\begin{enumerate}
\item  $\phi^{(1)}\colon Y^{(1)}\to X^{(1)}$ is a homotopy equivalence that factors as a sequence of folds (in the sense of Stallings \cite{St83})
\item $\phi$ is a homeomorphism on the interiors of 2-cells.
\end{enumerate}

\begin{definition}
A 2-complex $X$ is \emph{reducible} if there exists an essential equivalence $Y\to X$ such that $Y$ is visibly reducible, \emph{irreducible} otherwise.
\end{definition}

\begin{definition}
A 2-complex $X$ has \emph{negative immersions} if for every immersion $Y\immerses X$ with $Y$ compact and connected, either $\chi(Y)<0$ or $Y$ is reducible.
\end{definition}

One of the main theorems of \cite{LW22} characterises when a one-relator complex has negative immersions in terms of the subgroup structure of the fundamental group. Recall that a group is \emph{$k$-free} if every $k$-generated subgroup is free. 

\begin{theorem}
\label{nim}
If $X$ is a one-relator complex, then $X$ has negative immersions if and only if $\pi_1(X)$ is 2-free.
\end{theorem}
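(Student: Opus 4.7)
My approach is to prove both implications by contradiction, using the non-positive immersions theorem (Theorem 2.3.1, which applies since any one-relator complex $X=(\Lambda,\lambda)$ with non-trivial $\pi_1(X)$ has $\deg(\lambda)=1$ in the 2-free setting) as the base case, and the stacking technology of Section 4.1 to control Euler characteristics at the boundary case $\chi(Y)=0$. The plan in each direction is to either build or extract a 2-generator subgroup of $\pi_1(X)$ realised by a critical immersed 2-complex of Euler characteristic zero.

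For the forward direction (negative immersions implies 2-free), I would suppose that $H=\langle g_1,g_2\rangle\leqslant \pi_1(X)$ is not free, so there is a non-trivial $w\in F_2$ with $w(g_1,g_2)=1$ in $\pi_1(X)$. Form the 2-complex $Y_0$ consisting of two loops representing $g_1,g_2$ together with a 2-cell attached along $w$; the nullhomotopy in $X$ gives a combinatorial map $Y_0\to X$. Apply Stallings folds to the 1-skeleton and identify the resulting 2-cell with the unique 2-cell of $X$ to produce an immersion $Y\immerses X$, with $\pi_1(Y^{(1)})\twoheadrightarrow H$. Since $H$ is not cyclic (else free), the folded graph has rank exactly two, giving $\chi(Y)=1-2+1=0$. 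If $Y$ is irreducible, negative immersions force $\chi(Y)<0$, a contradiction; if instead $Y$ is reducible, the only possible reduction (after arranging the construction to avoid low-degree or locally separating vertices) is collapse through a free face, producing a graph whose fundamental group is free and cyclic in its image in $\pi_1(X)$, again contradicting the non-freeness of $H$.

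For the reverse direction (2-free implies negative immersions), let $Y\immerses X$ be compact, connected, irreducible. Theorem 2.3.1 gives $\chi(Y)\leqslant 0$ or $\pi_1(Y)=1$. The simply-connected case is ruled out by the asphericity of $X$ together with irreducibility: each 2-cell of $Y$ covers the unique 2-cell of $X$, so a compact simply-connected $Y$ with no free faces and no locally separating vertices would force a pair of 2-cells sharing a boundary cycle, contradicting $\pi_2(X)=0$. Assume then for contradiction that $\chi(Y)=0$. I would analyse a good stacking (Proposition 4.1.3) of the attaching cycles of $Y$: equality $\chi(Y)=0$ saturates the stacking inequality, and from the saturation I would extract a one-2-cell subcomplex $Y'\subseteq Y$ with $\chi(Y')=0$ (so $\pi_1((Y')^{(1)})$ has rank two) that is still irreducible. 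Then $\pi_1(Y')$ surjects a 2-generator subgroup of $\pi_1(X)$; if this image were free, $Y'$ would essentially reduce to a graph (using that the defining word is not primitive in the image), contradicting irreducibility, so the image is a 2-generator non-free subgroup, contradicting 2-freeness.

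The hard part will be the reverse direction, specifically extracting the one-2-cell irreducible subcomplex $Y'$ from the saturation of the stacking inequality and showing that its $\pi_1$-image in $\pi_1(X)$ is genuinely 2-generator and non-free. This is where Louder--Wilton's interpretation via \emph{primitivity rank} enters most naturally: the equality case of the stacking inequality is precisely the combinatorial witness that some image word is non-primitive in a rank-two subgroup, i.e.\ that the primitivity rank of the defining word is at most two. Packaging this as a clean correspondence between saturated stackings and rank-two non-free images is the delicate step; the alternative, going through the primitivity rank characterisation of $k$-freeness (cf.\ the discussion in Section 4.3), reduces the theorem to the equivalence NI $\iff \pi(w)\geqslant 3$, but either route must handle the same combinatorial core.
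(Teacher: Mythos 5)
There is a genuine gap in the forward direction, and it sits at the very first step. From a non-free two-generated subgroup $H=\langle g_1,g_2\rangle\leqslant \pi_1(X)$ and a relator $w\in F_2$ with $w(g_1,g_2)=1$, you build $Y_0$ by attaching a 2-cell along $w$ to a wedge of two loops and assert that "the nullhomotopy in $X$ gives a combinatorial map $Y_0\to X$.'' It does not. The relation $w(g_1,g_2)=1$ holds in $\pi_1(X)$, \emph{not} in $\pi_1(\Lambda)$: the boundary of the 2-cell of $Y_0$ maps to a loop in $X^{(1)}$ that is nullhomotopic in $X$ but is \emph{not} (even up to folding) the attaching cycle of the single 2-cell of $X$. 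The nullhomotopy is a van Kampen diagram over $X$ with arbitrarily many 2-cells, so $Y_0$ has no combinatorial map to $X$ sending the 2-cell to the 2-cell, and in particular no immersion with $\chi=0$ falls out of it. What you actually need is a rank-$\leqslant 2$ subgroup $K\leqslant F=\pi_1(\Lambda)$ that \emph{contains} $w$ with $w$ imprimitive in $K$ — that is, you need $\pi(w)\leqslant 2$ — and then the core graph of $K$ with a 2-cell attached gives the desired irreducible immersion. But establishing that a non-free two-generated subgroup of $\pi_1(X)$ forces $\pi(w)\leqslant 2$ is exactly the content of the $w$-subgroup theorem (\cref{w-subgroup}), which itself rests on the stacking machinery; it is not a free lunch.

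The reverse direction has a similar softness: you propose to "extract a one-2-cell irreducible subcomplex $Y'\subseteq Y$ with $\chi(Y')=0$'' from the saturation of the stacking inequality, but removing 2-cells from $Y$ only \emph{decreases} $\chi$ when the 1-skeleton is fixed, and shrinking the 1-skeleton can destroy irreducibility. There is no direct extraction of this kind in Louder--Wilton's argument. You acknowledge all of this at the end and point to the primitivity rank route, which is indeed the route the source takes: the paper (following \cite{LW22}) establishes the equivalences "negative immersions $\iff \pi(w)\geqslant 3 \iff$ 2-free,'' where the second equivalence goes via the $w$-subgroup theorem and the first is where the stacking analysis is actually spent. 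But those equivalences are genuinely separate theorems with their own proofs, not a repackaging of the direct argument you sketch — so the claim that "either route must handle the same combinatorial core'' understates the distance between the two. The direct route as you have written it does not close, and the indirect route still needs the $w$-subgroup machinery which your sketch does not supply.
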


Louder--Wilton later introduced a stronger uniform variant of negative immersions in \cite{LW24}.

\begin{definition}
\label{def:unim}
A 2-complex $X$ has \emph{uniform negative immersions} if there exists some $\epsilon>0$ such that for every immersion $Y\immerses X$ with $Y$ compact, connected and irreducible, we have
\[
\frac{\chi(Y)}{\#\{\text{2-cells in $Y$}\}} \leqslant -\epsilon.
\]
\end{definition}

\begin{remark}
Wise's definition of negative immersions is more akin to \cref{def:unim}, except that reducible is replaced with `no free faces and not a point'. In \cite[Section 3.4]{LW24} it is shown that these two notions are distinct even for one-relator complexes.
\end{remark}

The main advantage of the uniform negative immersions property over the negative immersions property is that it allows for much stronger results about subgroups. This was exploited in \cite{LW24} in which Louder--Wilton showed that 2-free one-relator groups are coherent. We will cover this result and other further corollaries in \cref{sec:coherence}. The following is \cite[Theorem C]{LW24}.

\begin{theorem}
\label{unim}
If $X$ is a one-relator complex with negative immersions, then $X$ has uniform negative immersions.
\end{theorem}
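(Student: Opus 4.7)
The plan is to argue by contradiction using a compactness/limit argument, combined with the equivalence between negative immersions and 2-freeness given by Theorem \ref{nim}. Suppose that $X$ has negative immersions but that uniform negative immersions fails. Then there is a sequence of immersions $Y_n \immerses X$ with each $Y_n$ compact, connected and irreducible, such that
\[
\frac{-\chi(Y_n)}{f(Y_n)} \longrightarrow 0,
\]
where $f(Y_n)$ denotes the number of $2$-cells of $Y_n$. Since $Y_n$ is irreducible and $X$ has negative immersions, we have $\chi(Y_n) \leq -1$, and so $f(Y_n) \to \infty$. Note that irreducibility forces the $1$-skeleton to have no valence-one or locally separating vertex and no free face, so the valences of vertices and the number of $2$-cells incident to each $1$-cell admit uniform lower bounds; in particular the sizes $|Y_n^{(0)}|$ and $|Y_n^{(1)}|$ grow at most linearly in $f(Y_n)$, and the local combinatorial complexity of $Y_n$ is bounded.

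The first main step is to extract a ``limit immersion'' from $(Y_n)$. Because $X$ is compact and each $Y_n \immerses X$ is locally modelled on $X$, the immersions are locally uniformly finite: after fixing a basepoint in each $Y_n$ and passing to a subsequence, one can arrange that the combinatorial balls of radius $R$ around the basepoint of $Y_n$ stabilise to a fixed immersion $B_R \immerses X$ for each $R$. Taking a direct limit yields an immersion $Y_\infty \immerses X$ of a (possibly infinite) irreducible $2$-complex. The hypothesis $-\chi(Y_n)/f(Y_n) \to 0$ then translates to the statement that $Y_\infty$ carries an exhaustion by finite subcomplexes on which the ``Euler characteristic per $2$-cell'' is asymptotically zero.

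The second, and hardest, step is to convert this asymptotic statement on $Y_\infty$ into a finite irreducible sub-immersion $Y' \immerses X$ with $\chi(Y') = 0$, directly contradicting the negative immersions hypothesis (via Theorem \ref{nim}). For this one exploits $2$-freeness: any finitely generated subgroup $H \leq \pi_1(X)$ of rank $\leq 2$ is free, and combined with Louder--Wilton's good stacking apparatus (\cref{good_stacking}) one can control how attaching maps of $2$-cells in the exhaustion of $Y_\infty$ organise themselves. Concretely, one finds a nested family of finite subcomplexes $Y_n' \subseteq Y_n$ (after perhaps replacing $Y_n$ by an essential equivalent) on which the ``excess'' Euler characteristic is concentrated; outside this family, the $2$-cells contribute Euler characteristic zero per $2$-cell, which using the $w$-cycles inequality forces equality in Proposition \ref{good_stacking}, hence produces an irreducible subcomplex of Euler characteristic $0$.

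The main obstacle is precisely this last step: passing from an asymptotic statement about an infinite limit to a \emph{finite} violation of negative immersions. The subtlety is that in principle a sequence $(Y_n)$ could have its ``deficit'' $-\chi(Y_n)$ remain bounded while $f(Y_n) \to \infty$, which a priori cannot be detected cell by cell. Resolving this requires showing, via the $2$-freeness hypothesis, that the $2$-cells contributing Euler characteristic zero can actually be excised without destroying irreducibility, which is a genuinely global combinatorial argument. This is where the strengthening of Theorem \ref{nim} to its uniform version really lives, and one should expect the proof to rely essentially on the stacking technology of \cite{LW17,LW22} rather than on a soft compactness step alone.
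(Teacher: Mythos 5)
Your proposed route is genuinely different from the paper's, and unfortunately it has a real gap at exactly the step you flag as hard. The paper (following Louder--Wilton) does not use a compactness or limit argument at all: it encodes irreducible maps $Y \to X$ as solutions to a finite linear system of equations and inequalities, shows that the quantity $\chi(Y)/(\text{number of 2-cells in }Y)$ is maximised at a \emph{rational vertex} of a convex polytope, and then observes that such a rational vertex is realised by an actual finite irreducible map $Y_{\max}\to X$. By \cref{nim} that realised map has $\chi(Y_{\max})<0$, so the maximum value is a negative rational number, which immediately supplies the uniform $\epsilon$. Rationality of the polytope is the entire mechanism: it is what converts a supremum into an achieved maximum.

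Your sketch never achieves this. The first step (extracting a pointed limit $Y_\infty \immerses X$) is harmless but unhelpful, because the quantity you are controlling, $-\chi(Y_n)/f(Y_n)$, is not a local quantity: it is a ratio of global totals, and an exhaustion of $Y_\infty$ by finite pieces can have this ratio tend to $0$ without any finite piece ever having Euler characteristic $0$. (Think of a sequence in which $-\chi(Y_n)$ stays bounded while $f(Y_n)\to\infty$: nothing ``concentrates'' anywhere, and no cell-by-cell or subcomplex-by-subcomplex argument detects this.) Your proposed fix for the second step --- that $2$-freeness plus the stacking inequality of \cref{good_stacking} let you excise the ``zero-contribution'' $2$-cells and produce an irreducible subcomplex with $\chi = 0$ --- is precisely the content that needs to be proved, and you explicitly say you don't see how to do it. That is not a minor technicality to be filled in; as you observe yourself, that is where the uniform statement actually lives. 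In effect the proposal identifies the difficulty correctly but does not solve it, and there is no reason to expect a soft limit argument to close it: the LP/rationality mechanism was introduced by Louder--Wilton exactly because this passage from asymptotic to achieved cannot be done by compactness alone.
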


The proof of \cref{unim} is a beautiful application of linear programming techniques. The authors encode certain types of maps from 2-complexes to a 2-complex $X$ in a linear system of equations and inequalities, showing that the maps which maximise ``curvature" are rational vertices in a convex polytope. Since these rational vertices can be realised by some map $Y_{\max}\to X$, the ``curvature" of $Y_{\max}$ provides the $\epsilon$ needed to verify that $X$ has uniform negative immersions. The reader is invited to consult \cite{Wi24} in which Wilton formally defines these curvature invariants for all 2-complexes, explores their properties and proposes several interesting directions for further research.

So far, we have only considered one-relator complexes $X = (\Lambda, \lambda)$ with $\deg(\lambda) = 1$ and so the reader might be wondering what can one say about the case $\deg(\lambda)>1$. In \cite{LW17}, Louder--Wilton show that such one-relator complexes have the property of \emph{not-too-positive immersions}. Stronger properties were obtained in \cite{LW20} and \cite{LW24}. We state \cite[Theorem 5.1]{LW24} below which combines \cref{good_stacking} with a result of Fisher--Karrass--Solitar \cite{FKS72} stating that a one-relator group with torsion is virtually torsion-free. A similar result can be found in \cite{Wi22} (see also \cite{Wi20}).

\begin{theorem}
\label{v_unim}
If $X = (\Lambda, \lambda)$ is a one-relator complex with $\deg(\lambda)>1$, then there is a finite sheeted cover $Y\immerses X$ and an inclusion $\iota\colon Z\injects Y$ inducing an isomorphism on $\pi_1$ such that $Z$ has uniform negative immersions.
\end{theorem}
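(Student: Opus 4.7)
The plan is to reduce the torsion case to the torsion-free case via a finite cover and then invoke the earlier theorems on negative immersions. Write $\lambda = \mu^n$ with $n>1$ and $\mu$ not a proper power, and set $G = \pi_1(X)$ and $u = \mu_*(1) \in G$. By the Fisher--Karrass--Solitar theorem that a one-relator group with torsion is virtually torsion-free, there exists a torsion-free subgroup $H \leqslant G$ of finite index; after replacing $H$ by its normal core we may assume $H$ is normal. Let $\rho \colon Y \to X$ be the cover associated with $H$.

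By \cref{finite_order} every element of finite order in $G$ is conjugate to a power of $u$, so torsion-freeness of $H$ forces $H \cap \langle u \rangle^g = 1$ for every $g \in G$. Equivalently, the stabiliser in $H$ of any lift of the 2-cell of $X$ to the universal cover is trivial, which implies that the attaching map of each 2-cell of $Y$ is an embedded cycle in $Y^{(1)}$, i.e.\ an immersed cycle of degree one. A parallel observation is that $\pi_1(Y) = H$ is $2$-free: by Pride's two-generator theorem, every two-generator subgroup of a one-relator group with torsion is either a free product of cyclic groups or itself a one-relator group with torsion, and a torsion-free two-generator subgroup of $G$ is therefore a free product of infinite cyclic groups, hence free.

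The main step is to locate a one-relator subcomplex $Z \hookrightarrow Y$ realising $\pi_1(Y)$. A priori the cover $Y$ is a multi-relator complex, but the normality of $H$ produces a residual $\langle u \rangle/(H\cap\langle u\rangle)\cong \Z/n\Z$ action on $Y$ that cyclically permutes certain families of 2-cells which share the same underlying embedded boundary cycle (up to basepoint). Exploiting this symmetry—perhaps after passing to a further finite cover in order to arrange that the full $\langle u\rangle$-orbit of each 2-cell is contained in $Y$—one may delete all but one representative from each such family without changing the fundamental group, arriving at a one-relator subcomplex $Z \subseteq Y$ for which the inclusion $\iota\colon Z \hookrightarrow Y$ is a $\pi_1$-isomorphism. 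This reduction is the main technical content of the theorem and is carried out in \cite{Wi22,LW24}.

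Finally, $Z$ is a one-relator complex with $\deg(\lambda_Z) = 1$ and $\pi_1(Z) \cong H$ a $2$-free group, so \cref{nim} gives that $Z$ has negative immersions, and \cref{unim} then upgrades this to uniform negative immersions. The principal obstacle is the middle step: without a careful choice of $H$ and a delicate analysis of the deck-transformation symmetry among the 2-cells of $Y$, the cover remains a genuine multi-relator complex and no direct appeal to \cref{nim} and \cref{unim} is available.
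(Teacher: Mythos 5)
There is a genuine gap in the middle step, and it is fatal to the appeal to \cref{nim} and \cref{unim}. You correctly observe that the $2$-cells of $Y$ fall into groups of $n$ sharing a common boundary cycle — a $\langle u\rangle$-orbit of $2$-cells in $\tilde X$ maps injectively to $Y$ because $H \cap \langle u\rangle = 1$, and deleting redundant $2$-cells is a $\pi_1$-isomorphism. But deleting all but one representative from \emph{each} such family leaves a subcomplex $Z$ with $[G:H]/n$ two-cells, not one. By Fischer's refinement of \cite{FKS72}, $n$ necessarily divides $[G:H]$ for any torsion-free finite-index $H$, but $[G:H]=n$ is not guaranteed in general — it holds only under additional arithmetic conditions on the exponent sums of the relator. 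Passing to a further finite cover as you suggest only increases the number of $2$-cells, so it cannot repair this. Thus $Z$ is in general a genuine multi-relator complex, and Theorems \ref{nim} and \ref{unim}, which are stated only for one-relator complexes, do not apply to it.

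The rest of your argument is sound: $H$ torsion-free gives degree-one attaching maps in $Y$ (the ``embedded cycle'' phrasing overstates this — the attaching maps are degree-one \emph{immersions}, not embeddings — but the intended conclusion is correct), and the $2$-freeness of $H$ via Pride is fine. The paper's proof (and \cite[Theorem~5.1]{LW24}) proceeds exactly where you stop: $Z$ is a multi-relator complex with $\pi_1(Z)$ $2$-free and all attaching maps of degree one, and the argument must establish uniform negative immersions for $Z$ directly, using \cref{good_stacking} in its full generality for a disjoint union $\bbS = \bigsqcup S^1$ of attaching cycles. The existence of a good stacking for $\lambda_Z$ is what the orderability of the torsion-free quotient and the $G/H$-symmetry buy you, and it is this multi-cycle stacking argument — not a reduction to the one-relator case — that carries the proof. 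So the skeleton (FKS cover, reduce $2$-cells, $2$-freeness, upgrade) is right, but the theorem you invoke in the final step is for the wrong class of complexes.
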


In order to get around passing to a finite index subgroup, a one-relator group with torsion can be considered as the fundamental group of a one-relator orbicomplex with a single non-trivial cone point in the centre of the 2-cell. Louder--Wilton show in \cite{LW20} that such complexes have uniform negative immersions.

\subsection{Primitivity rank and $w$-subgroups}
\label{sec:primitivity_rank}

Recall that an element $w$ in a free group $F$ is \emph{primitive} if it is part of a free basis for $F$. Equivalently, if $F\isom F'*\langle w\rangle$ for some $F'\leqslant F$. Call $w$ \emph{imprimitive} if $w$ is not primitive.

Primitivity rank was introduced by Puder in \cite{Pu14}. The definition plays a key role in Louder--Wilton's results in \cite{LW22}. 

\begin{definition}
\label{def:prim_rank}
If $F$ is a free group and $w\in F$ is an element, its \emph{primitivity rank} $\pi(w)$ is the following quantity:
\[
\pi(w) = \min{\{\rk(H) \mid w\in H\leqslant F, \, \text{$w$ is imprimitive in $H$}\}}\in \N\cup \{+\infty\},
\]
where $\pi(1) = 0$ and $\pi(w) = +\infty$ if $w$ is primitive.
\end{definition}

The first connection Louder--Wilton discovered between properties of a one-relator group and the primitivity rank of its defining relation is the following.

\begin{theorem}
\label{k-free}
If $G = F/\normal{w}$ is a one-relator group, then $G$ is $k$-free if and only if $k<\pi(w)$. In particular:
\begin{enumerate}
\item $G$ is torsion-free if and only if $\pi(w) \geqslant 2$.
\item The one-relator presentation complex for $G$ has uniform negative immersions if and only if $\pi(w)\geqslant 3$.
\end{enumerate}
\end{theorem}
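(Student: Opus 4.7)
The plan is to prove both directions of the equivalence, with the two numbered corollaries following as specializations. The main technical difficulty lies in the forward direction's embedding lemma and the backward direction's tower argument.

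For the easy direction, that $G$ being $k$-free implies $k < \pi(w)$, I would argue by contrapositive. If $\pi(w) \leq k$, choose a subgroup $H \leq F$ of minimal rank $m = \pi(w)$ containing $w$ with $w$ imprimitive in $H$. Whitehead's theorem (Theorem \ref{Thm:Whitehead-theorem}) then guarantees that the one-relator group $H/\normal{w}_H$ is not free. The key technical step is to show that $H$ can be chosen so that the induced map $H/\normal{w}_H \to G$ is injective, equivalently so that $H \cap \normal{w}_F = \normal{w}_H$. This would be arranged by taking $H$ to be a so-called \emph{$w$-subgroup} in the sense of Louder--Wilton; existence of a minimal-rank such $w$-subgroup can be established using the Cohen--Lyndon structure of $\normal{w}_F$ together with the Magnus hierarchy (Theorem \ref{topological_hierarchy}). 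The resulting embedding produces a non-free subgroup of $G$ generated by $m \leq k$ elements, contradicting $k$-freeness.

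For the hard direction, that $k < \pi(w)$ implies $G$ is $k$-free, let $\bar{K} \leq G$ be a $k$-generated subgroup. Lifting chosen generators to $F$, form the finitely generated subgroup $K \leq F$ whose image in $G$ equals $\bar{K}$, and consider the corresponding covering $Y \to X$ of the presentation complex $X$, whose 1-skeleton is the cover of $X^{(1)}$ corresponding to $K$, with 2-cells attached for each $F$-conjugate of $w$ lying in $K$. Then $\pi_1(Y) \cong \bar{K}$, and letting $C = \core(Y)$ be the core subcomplex, we have $\pi_1(C) \cong \pi_1(Y)$. I would show that the hypothesis $k < \pi(w)$ forces every 2-cell of $C$ to admit a free edge, using the stacking technique (\cref{good_stacking}) applied to the immersed cycle corresponding to a chosen 2-cell: indeed, if no such free edge existed, one could extract a subgroup of $F$ of rank at most $k$ in which a conjugate of $w$ sits imprimitively, contradicting $k < \pi(w)$. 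Collapsing through a free edge removes a 2-cell and an edge without changing homotopy type, and iterating reduces $C$ to a graph, showing $\bar{K}$ is free. The main obstacle here is to carefully control the rank bound under iteration and to make the ``free edge'' extraction argument precise via a good stacking.

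The two numbered corollaries follow directly. For (1), $G$ being $1$-free is equivalent to every non-trivial element having infinite order, i.e., to torsion-freeness; by the characterization of torsion (Theorem \ref{finite_order}), this occurs iff $w$ is not a proper power, and $w$ is a proper power precisely when $\pi(w) = 1$ (with a primitive root of $w$ generating a rank-one subgroup in which $w$ is imprimitive). For (2), $G$ being $2$-free is equivalent to $\pi(w) \geq 3$ by the general statement, which by Theorem \ref{nim} is equivalent to the one-relator presentation complex having negative immersions, and Theorem \ref{unim} upgrades this equivalence to uniform negative immersions.
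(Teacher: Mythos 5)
The paper does not actually give a proof of this theorem — it is cited from Louder--Wilton \cite{LW22} — so your proposal is being judged on its own terms rather than against an in-paper argument.

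Your forward direction is structured correctly, and your derivations of parts (1) and (2) from the main statement together with \cref{finite_order}, \cref{nim}, and \cref{unim} are exactly right. One caveat on the forward direction: the injectivity of $H/\normal{w}_H \to G$ for a $w$-subgroup $H$, which does all the work in your argument, is itself a deep theorem of \cite{LW22} (the paper records it right after the statement of \cref{k-free}). It is not a routine consequence of Cohen--Lyndon together with the Magnus hierarchy; it sits at roughly the same depth as the theorem you are proving, so invoking it makes the ``easy'' direction rather less easy than it looks.

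The hard direction has a genuine gap in its very first step. You assert a compact core $C$ with $\pi_1(C) \cong \bar K$ for an arbitrary $k$-generated $\bar K \leq G$, but no such complex is available at this stage. If $Y$ is the honest cover of $X$ corresponding to $\bar K$, its $1$-skeleton is the cover of $X^{(1)}$ along $\phi^{-1}(\bar K) \leq F$, which is infinitely generated, and there is no reason for $Y$ to deformation retract to a compact subcomplex — that assertion is essentially the coherence of $G$, itself at least as hard as the theorem. If instead you build $Y$ over the cover of $X^{(1)}$ corresponding to a finitely generated lift $K$ of $\bar K$, attaching $2$-cells along the $F$-conjugates of $w$ that lie in $K$, then $\pi_1(Y) = K/\normal{\{fwf^{-1}\in K\}}_K$, which surjects onto $\bar K$ but need not equal it; closing that gap is precisely the nontrivial injectivity lemma again. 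Louder--Wilton circumvent this by never asserting $\pi_1(Z) \cong \bar K$ for a single compact complex: they take the increasing sequence of compact irreducible immersions $Z_i \immerses X$ from \cref{2-complex_delzant}, whose $\pi_1$ merely converge to $\bar K$, and prove an Euler-characteristic inequality on each $Z_i$ via \cref{good_stacking} together with a linear-programming step. The conclusion of the stacking argument is an inequality of the form $\chi(Z_i)$ bounded above in terms of $-\pi(w)$ and the number of $2$-cells; comparing this with $\chi(Z_i) \geq 1 - k$ rules out any $2$-cells at all when $k < \pi(w)$, forcing $Z_i$ to be a graph. Your ``every $2$-cell of $C$ admits a free edge'' is the right intuition but is not what good stackings deliver directly, and the step where you ``extract a subgroup of rank at most $k$ in which a conjugate of $w$ sits imprimitively'' does not follow from the absence of free edges without the Euler-characteristic bookkeeping.
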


Several $k$-freeness results had been previously established for cyclically pinched, cyclically conjugated and related one-relator groups, for instance, by Baumslag in \cite{Ba62}, Rosenberger in \cite{Ro82} and Fine--Gaglione--Rosenberger--Spellman in \cite{FGRS95}. These are all encompassed by Louder--Wilton's results.

If $w\in F$ is an element, one might wonder what the subgroups that arise in \cref{def:prim_rank} tell us about the subgroup structure of $F/\normal{w}$. This leads us to the notion of $w$-subgroups.

\begin{definition}
If $F$ is a free group and $w\in F$ is an element, the $w$-subgroups of $F$ are the subgroups $H\leqslant F$ satisfying the following:
\begin{itemize}
\item $w\in H$ and $\pi(w) = \rk(H)$.
\item $H$ is maximal for inclusion amongst subgroups satisfying the above.
\end{itemize}
\end{definition}

The following is due to Puder \cite[Appendix A]{Pu14}. See also \cite[Lemma 6.4]{LW22}.

\begin{lemma}
For any given $w\in F$, there are finitely many $w$-subgroups and there is an algorithm to compute them, and hence to compute $\pi(w)$.
\end{lemma}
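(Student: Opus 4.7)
The plan is to prove both finiteness and algorithmic computability by reducing the problem to finitely many Stallings graphs containing the cycle labelled by $w$. I would begin by recalling that to any finitely generated subgroup $H\leqslant F$ one can associate its Stallings graph $\Gamma_H$, a finite folded graph with $\pi_1(\Gamma_H)\isom H$ and with first Betti number equal to $\rk(H)$. If $w\in H$, then the immersion $S^1\immerses\Gamma_H$ representing $w$ factors the cyclically reduced word $w$ through $\Gamma_H$. Equivalently, $\Gamma_H$ is obtained from the cycle $C_w$ (with $|w|$ vertices, labelled by the letters of $w$) by first quotienting by some equivalence relation on the vertex set of $C_w$ and then folding the resulting labelled graph. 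Since there are only finitely many equivalence relations on a set of $|w|$ elements, there are only finitely many graphs that arise this way.

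Next I would show that every $w$-subgroup is finitely generated, so that this Stallings-graph analysis applies. If $\pi(w) = +\infty$ there is nothing to check, so suppose $\pi(w) < +\infty$. By definition there is at least one finitely generated $H\leqslant F$ of rank $\pi(w)$ with $w\in H$ and $w$ imprimitive in $H$. Given any ascending chain $H_1\leqslant H_2\leqslant\cdots$ of such subgroups, all of rank $\pi(w)$, Takahasi's theorem on ascending chains of finitely generated subgroups of bounded rank in a free group implies that the chain stabilises; hence Zorn's lemma produces a maximal such subgroup, which is also finitely generated. This gives existence, and maximality together with the previous paragraph gives finiteness: any $w$-subgroup $H$ has $\Gamma_H$ among the finitely many folded quotients of $C_w$, and the correspondence $H\mapsto\Gamma_H$ is injective on conjugacy classes relative to the marked basepoint spelling $w$.

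For the algorithm, I would explicitly enumerate all set partitions of the vertex set of $C_w$, perform the Stallings folding algorithm on each resulting labelled graph, and discard duplicates. This produces a finite list $\Gamma_1,\ldots,\Gamma_N$ of candidate Stallings graphs, each with an associated subgroup $H_i\leqslant F$ of known rank (the first Betti number of $\Gamma_i$) and containing $w$. For each $H_i$, I would apply Whitehead's algorithm to decide whether $w$ is primitive in $H_i$: since each $H_i$ is free of known rank with a computable basis (read off from $\Gamma_i$), and since primitivity in a finitely generated free group is decidable by Whitehead, this is effective. Let $S$ be the set of indices $i$ for which $w$ is imprimitive in $H_i$; then $\pi(w) = \min_{i\in S}\rk(H_i)$ (with $\pi(w) = +\infty$ if $S=\varnothing$), and the $w$-subgroups are precisely the $H_i$ with $i\in S$, $\rk(H_i) = \pi(w)$, and $H_i$ maximal under inclusion among these, which is checkable by testing containments of Stallings graphs.

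The main obstacle is justifying that restricting to subgroups whose Stallings graph is a folded quotient of $C_w$ loses no generality when computing $\pi(w)$ and the $w$-subgroups: one must check that any $H$ realising the minimum is conjugate in $F$ to one of the $H_i$, since only the conjugacy class matters for $\rk(H)$ and for whether $w\in H$ is primitive. This is standard, but requires choosing the basepoint of each $\Gamma_H$ to lie on the image of the cycle representing $w$; every such choice produces one of our $\Gamma_i$, and different basepoints on the same graph give conjugate subgroups, so the algorithm enumerates every relevant conjugacy class. The remaining verifications---that folding terminates, that Whitehead's algorithm decides primitivity, and that maximality can be tested among finitely many candidates---are all classical.
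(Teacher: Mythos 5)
Your approach is essentially the same as the paper's (and Puder's original argument): enumerate the finitely many folded quotients of the $w$-cycle, test primitivity in each with Whitehead's algorithm, and minimise over rank. However, there is a genuine gap in your first paragraph. You write that for any finitely generated $H\leqslant F$ with $w\in H$, the Stallings graph $\Gamma_H$ is ``equivalently'' obtained from $C_w$ by quotienting and folding. This is false in general: the $w$-loop immerses into $\Gamma_H$ and its image is a subgraph $\Gamma'_H\subseteq\Gamma_H$, but $\Gamma_H$ can be strictly larger (for instance, if some generator of $F$ does not appear in $w$, then $\Gamma_F$ is not a quotient of $C_w$, yet $w\in F$). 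Your ``main obstacle'' paragraph correctly flags that one must justify restricting to quotients of $C_w$, but the conjugacy/basepoint discussion you offer there does not actually address the issue, since the basepoint is already canonically forced to be the image of the start of $C_w$.

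The missing argument is the following. Suppose $H$ is a minimal-rank witness (so $w\in H$, $w$ imprimitive in $H$, $\rk(H) = \pi(w)$). Let $\Gamma'\subseteq\Gamma_H$ be the image of the $w$-loop and $H' = \pi_1(\Gamma')\leqslant H$. Since $\Gamma'$ is a connected subgraph containing the basepoint, $H'$ is a \emph{free factor} of $H$. If $w$ were primitive in $H'$, then $\langle w\rangle$ would be a free factor of $H'$, hence of $H$, contradicting imprimitivity of $w$ in $H$; so $w$ is imprimitive in $H'$. But if $\Gamma'\subsetneq\Gamma_H$, then since $\Gamma_H$ is core and connected one has $\rk(H')<\rk(H)$, contradicting minimality. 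Hence $\Gamma_H = \Gamma'$ is a folded quotient of $C_w$, and the same argument (with equal ranks and the same free-factor observation) shows that every rank-$\pi(w)$ witness, in particular every $w$-subgroup, has this form. Once that is established, your finite enumeration, the Whitehead primitivity test, and the maximality check among finitely many candidates go through as you describe; the Takahasi/Zorn detour is then unnecessary, since finiteness of the candidate list already gives existence of the maximal ones.
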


The upshot of the above and \cref{k-free} is that there is an algorithm to decide when a one-relator group is $k$-free for some $k\geqslant 1$ and hence when its presentation complex has (uniform) negative immersions.

We briefly sketch the algorithm, due to Puder \cite{Pu14}. Let $\lambda\colon S^1\immerses \Lambda$ be an immersion realising the inclusion $\langle w\rangle \injects F$. Enumerate all graph immersions $\gamma\colon\Gamma\immerses\Lambda$ that $\lambda$ surjectively factors through:
\[
\begin{tikzcd}
S^1\arrow[r, twoheadrightarrow] & \Gamma \arrow[r, loop->, "\gamma"] & \Lambda
\end{tikzcd}
\]
Using Whitehead's algorithm \cite{Wh36}, determine in which graphs $\Gamma$ the cycle $S^1\surjects\Gamma$ represents a primitive element of $\pi_1(\Gamma)$ and throw all of these away. Additionally, throw away all graphs $\Gamma$ such that there is already a graph immersion $\Gamma'\immerses\Lambda$ in our list that $\Gamma\immerses\Lambda$ factors through and such that $\chi(\Gamma')\geqslant \chi(\Gamma)$. Finally, throw away all graphs $\Gamma$ whose Euler characteristic do not attain the maximal value amongst the graphs remaining. The graph immersions $\Gamma\immerses\Lambda$ left over each correspond to $w$-subgroups of $F = \pi_1(\Lambda)$ (under the $\pi_1$-map at the basepoint) and the primitivity rank can be computed as $\pi(w) = 1-\chi(\Gamma)$.

In \cite{LW22} it is shown that if $H\leqslant F$ is a $w$-subgroup, then $H$ is malnormal and the homomorphism
\[
H/\normal{w} \to F/\normal{w}
\]
is injective \cite[Lemma 6.3 \& Theorem 6.17]{LW22}. The image subgroups $H/\normal{w}\leqslant F/\normal{w}$ are then also called \emph{$w$-subgroups} of the one-relator group. \cref{k-free} tells us about subgroups of rank at most $\pi(w) - 1$, it turns out that one can also say something about the subgroups of rank $\pi(w)$, see \cite[Theorem 1.5]{LW22}.

\begin{theorem}
\label{w-subgroup}
If $G = F/\normal{w}$ is a one-relator group, then every subgroup $H\leqslant G$ with $\rk(H) = \pi(w)$ is either free or conjugate into a $w$-subgroup of $G$.
\end{theorem}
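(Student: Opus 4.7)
The plan is to realize $H$ topologically by a minimal immersed $2$-complex, and then invoke the Louder--Wilton structure theory of irreducible immersions to identify that complex with a $w$-subgroup template.

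First I would fix the presentation complex $X = (\Lambda,\lambda)$ of $G$, pass to the cover $p \colon X_H \to X$ corresponding to $H$, and choose a compact connected $2$-subcomplex $Y \hookrightarrow X_H$ whose inclusion is a $\pi_1$-isomorphism, so that $\pi_1(Y) \cong H$. By successively applying essential equivalences (collapsing free faces, folding, removing hanging trees and locally separating vertices) we may assume $Y$ is irreducible in the sense of \cref{sec:unpi}. If $Y$ has no $2$-cell then $H \cong \pi_1(Y)$ is free and there is nothing to prove, so assume $Y$ contains at least one $2$-cell. Set $\Gamma = Y^{(1)}$ and let $K \leqslant F$ denote the subgroup $\pi_1(\Gamma)$, well-defined up to conjugacy via the graph immersion $\Gamma \immerses \Lambda$. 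The attaching maps of the $2$-cells of $Y$ give finitely many elements $w_1, \dots, w_f \in K$, each $F$-conjugate to $w^{\pm 1}$, and we have $H = K/\normal{w_1,\dots,w_f}_K$.

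The heart of the argument is to show that $Y$ can be chosen with $f = 1$ and with $\Gamma \immerses \Lambda$ realizing the $1$-skeleton of a $w$-subgroup of $F$. The input is the sharp Euler characteristic analysis of Louder--Wilton underlying \cref{k-free}: for an irreducible immersion $Y \immerses X$ with $\pi_1(Y)$ non-free, there is always an immersed one-relator subcomplex of $Y$ whose $1$-skeleton's $\pi_1$ is a $w$-subgroup of $F$ witnessing the primitivity rank. In the saturated boundary case $\rk(H) = \pi(w)$ the bound is tight, and I would argue by minimality of $Y$ that this immersed subcomplex must account for all of $\pi_1(Y)$ rather than a proper subgroup. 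Malnormality of $w$-subgroups in $F$ (\cite[Lemma 6.3]{LW22}) supplies the rigidity needed to prevent exotic configurations: two distinct lifts of the defining $2$-cell sharing a Magnus-subgroup-like overlap on $\Gamma$ would produce an intersection of conjugates of a $w$-subgroup that malnormality forbids, ruling out $f \geqslant 2$ and pinning $K$ down to be precisely a $w$-subgroup $K_0$ of $F$.

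Once $K = K_0$ and $f = 1$ are established, the basepoint path in $X_H$ conjugates $H$ in $G$ into the image of $K_0$ under $F \to G$, which is exactly a $w$-subgroup of $G$ in the sense recalled immediately before the statement of \cref{w-subgroup}. The main obstacle is the identification step: extracting from the Louder--Wilton analysis the statement that in the saturated case $\rk(H) = \pi(w)$ there is no combinatorial slack, so that $H$ arises only from the $w$-subgroup template. Beyond the sharp Euler characteristic inequalities of \cref{k-free} and the malnormality of $w$-subgroups in $F$, I expect this to require either an induction on $f$ combined with a decomposition of $Y$ along Magnus-type subgraphs, or a direct stacking argument in the style of \cref{sec:npi} to exclude multi-$2$-cell configurations at the boundary rank.
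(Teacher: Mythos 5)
The paper does not give its own proof of this statement; it is quoted directly from \cite[Theorem~1.5]{LW22}, with the surrounding discussion simply recalling the definition and basic properties of $w$-subgroups. So your proposal should be read as an attempt to reconstruct the Louder--Wilton argument, and while the high-level skeleton (realise $H$ by an irreducible immersion, control Euler characteristic, use malnormality of $w$-subgroups) is indeed the right one, there are two concrete gaps that keep this from being a proof.

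First, the opening step presumes a compact connected $2$-subcomplex $Y \hookrightarrow X_H$ with $\pi_1(Y)\cong H$. That is equivalent to $H$ being finitely presented in $G$, which at this point in the theory is not available: coherence of one-relator groups is proved \emph{using} (or alongside) results of this kind, not before them, so invoking a compact core is circular. The correct substitute is the Delzant-type direct limit of irreducible $\pi_1$-surjective immersions $Z_0 \immerses Z_1 \immerses \cdots \immerses X$ with $\lim \pi_1(Z_i)\cong H$ (\cref{2-complex_delzant}), and one then argues that the Euler characteristic bound from (uniform) negative immersions forces the sequence to stabilise at a complex of the required form. Second, the sentence that carries all the weight --- ``for an irreducible immersion $Y\immerses X$ with $\pi_1(Y)$ non-free, there is always an immersed one-relator subcomplex of $Y$ whose $1$-skeleton's $\pi_1$ is a $w$-subgroup witnessing the primitivity rank'' --- is precisely the nontrivial content of the Louder--Wilton analysis and is asserted rather than argued. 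The malnormality observation you add to rule out $f\geqslant 2$ is also not quite correct as stated: each $2$-cell of $Y$ attaches along a conjugate of $w^{\pm1}$, but the minimal supporting subgraph of a given $2$-cell need not a priori represent a $w$-subgroup of $F$ (it only contains a conjugate of $w$), so ``two lifts overlapping on $\Gamma$ gives intersecting conjugates of a $w$-subgroup'' doesn't immediately follow. Ruling out multiple $2$-cells at the boundary rank $\rk(H) = \pi(w)$ requires the sharp Euler characteristic accounting of \cite{LW22}, not just malnormality. You are upfront that the identification step is the main obstacle; the point is that this step is essentially the theorem itself.
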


If $X = (\Lambda, \lambda)$ is a one-relator complex, there are natural one-relator complexes $Q\immerses X$ immersing into $X$ which represent the $w$-subgroups. These are obtained by taking the unique core graph immersion $\Gamma\immerses \Lambda$ which represents a $w$-subgroup of $\pi_1(\Lambda)$ and attaching a 2-cell along the unique lift of $\lambda$. The following is \cite[Theorem 3.3.16]{Lin22_thesis}.

\begin{theorem}
If $X = (\Lambda, \lambda)$ is a one-relator complex and $Q\immerses X$ represents a $w$-subgroup, then there is a one-relator tower
\[
Q = X_N\immerses\ldots\immerses X_1\immerses X_0 = X.
\]
\end{theorem}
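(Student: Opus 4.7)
The plan is to prove the statement by induction on the complexity $c(X)$, constructing at each stage of the tower a one-relator splitting $X_1 \immerses X_0 = X$ through which the immersion $Q \immerses X$ factors, and then invoking the inductive hypothesis on $Q \immerses X_1$. For the base case $c(X) = (0,0)$, $\pi_1(X)$ is a finite cyclic group generated by a root of $\lambda_*(1)$ (see \cref{rem:base_case}), and the characterisation of $w$-subgroups forces $Q = X$, so the tower is trivial.

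For the inductive step, assume $c(X) > (0,0)$ and that $Q \immerses X$ is not an isomorphism. First I would observe that since $Q$ represents a $w$-subgroup, the image $H = \pi_1(Q) \leq \pi_1(\Lambda)$ is a proper malnormal subgroup of $\pi_1(\Lambda)$ of rank $\pi(w)$; this is \cite[Lemma~6.3]{LW22}. Using malnormality together with the constraint $H \neq \pi_1(\Lambda)$, the image of $H$ in $\pi_1(X)^{\mathrm{ab}} = \pi_1(\Lambda)^{\mathrm{ab}} / \langle [\lambda] \rangle$ has rank strictly smaller than that of $\pi_1(X)^{\mathrm{ab}}$, so one can produce an epimorphism $\phi\colon \pi_1(X) \twoheadrightarrow \Z$ whose restriction to $\pi_1(Q)$ is trivial. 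Letting $\rho\colon Y \to X$ be the associated $\Z$-cover, the inclusion $\pi_1(Q) \leq \ker \phi$ implies that the immersion $Q \immerses X$ lifts to an embedding of $Q$ as a subcomplex $\tilde Q \subset Y$.

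Next I would construct a finite one-relator $\Z$-domain $X_1 \subset Y$ containing $\tilde Q$ and satisfying $c(X_1) < c(X)$, whereupon the second bullet of \cref{one-relator_splitting} provides the desired one-relator splitting $\pi_1(X) \isom \pi_1(X_1) *_{\psi}$. Applying \cref{Z-domain_graph} to $Y^{(1)} \to X^{(1)}$, I would produce a $\Z$-domain $\Gamma \subset Y^{(1)}$ containing $\tilde Q^{(1)}$ that is minimal subject to this containment, and then take $X_1 = (\Gamma, \lambda_{X_1})$ where $\lambda_{X_1}$ is the attaching map of the (unique) 2-cell of $\tilde Q \subset Y$. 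Because the attaching map of $\tilde Q$'s 2-cell factors through $\tilde Q^{(1)} \subset \Gamma$, this produces a bona fide one-relator $\Z$-domain with $\tilde Q \subset X_1$, and the inductive hypothesis applied to $Q \immerses X_1$ furnishes the remainder of the tower.

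The main obstacle I anticipate is verifying the strict complexity decrease $c(X_1) < c(X)$. The analogue in \cref{Z-domain_decrease_complexity} establishes this for a $\Z$-domain chosen minimally among all $\Z$-domains, but the minimality here is constrained by the requirement that $\tilde Q \subset X_1$, so the Euler-characteristic bookkeeping of \cref{Z-domain_graph} must be redone in this constrained setting. The cleanest way forward is to argue that \cref{decrease_complexity} applied to $X_1 \immerses X$ yields $c(\lambda_{X_1}) \leq c(\lambda)$, with strict inequality whenever the immersion is not injective on the image of the boundary cycle; in the equality case, the corresponding lift of $\lambda$ would factor through an embedded cycle identifiable with $\lambda$ itself, forcing $X_1 = X$ and contradicting $Q \neq X$. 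A secondary subtlety lies in the borderline case where $\pi(w) = \rk \pi_1(\Lambda)$: here one uses that a malnormal subgroup of a free group of full rank must be the whole group, so this case is inconsistent with $Q \neq X$ and can be discarded.
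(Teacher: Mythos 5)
The overall inductive strategy (pick an epimorphism $\phi\colon\pi_1(X)\to\Z$ vanishing on $\pi_1(Q)$, lift $Q$ into the associated $\Z$-cover $Y$, build a one-relator $\Z$-domain $X_1$ through which $Q$ immerses, recurse) is the right one, but there are two genuine gaps and a couple of smaller inaccuracies.

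The most serious issue is the complexity-decrease step, which you rightly flag as the main obstacle but do not actually resolve. Your proposed fix argues that $c(\lambda_{X_1}) = c(\lambda)$ would force $X_1 = X$. This does not follow from \cref{decrease_complexity}: equality there only says that the immersion is injective on $\mathrm{Im}(\lambda_{X_1})$, which leaves $X_1^{(1)}$ free to have many more cells off the boundary cycle than $\Lambda$ has, so $X_1 = X$ is not forced. Moreover $c(X) = (c(\lambda), -\chi(X))$ is a lexicographic tuple, and you never address the $-\chi$ coordinate; the proof of \cref{Z-domain_decrease_complexity} handles $-\chi$ by using that a \emph{globally} minimal one-relator $\Z$-domain has tree overlap $Z'\cap t(Z')$, but a domain chosen minimal \emph{subject to containing} $\tilde Q^{(1)}$ need not have this property, and the Euler-characteristic bookkeeping can then fail to give a strict drop. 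The repair is to avoid the constrained minimisation entirely: the core graph $Q^{(1)}$ of a $w$-subgroup is the \emph{surjective} image of the cycle $\lambda_Q\colon S^1\to Q^{(1)}$ (this is part of Puder's description of $w$-subgroups, used in the algorithm in \cref{sec:primitivity_rank}), so the image of the lift of $Q^{(1)}$ in $Y^{(1)}$ is exactly $\mathrm{Im}(\mu_0)$ where $\mu_0$ is the attaching map of the 2-cell of $\tilde Q$. That image automatically lies in any one-relator $\Z$-domain whose 2-cell is the one attached along $\mu_0$, in particular in a translate of the unconstrained minimal one, and then \cref{Z-domain_decrease_complexity} gives $c(X_1)<c(X)$ with no extra work.

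The second gap is that invoking the inductive hypothesis on $Q\immerses X_1$ requires knowing that this immersion represents a $w_1$-subgroup of $X_1$, where $w_1 = (\lambda_1)_*(1)$, and you never check this. It is true, by maximality: writing $\tilde H = \pi_1(\tilde Q)\leqslant\pi_1(X_1^{(1)})\leqslant F$, the group $\tilde H$ is conjugate in $F$ to the $w$-subgroup $H$; one has $\pi_{\pi_1(X_1^{(1)})}(w_1) \geqslant \pi_F(w_1) = \pi_F(w)$ in general and $\leqslant \rk(\tilde H) = \pi_F(w)$ because $w_1$ is imprimitive in $\tilde H$, so $\pi_{\pi_1(X_1^{(1)})}(w_1) = \pi_F(w)$; and any $K$ with $\tilde H\leqslant K\leqslant\pi_1(X_1^{(1)})$, $\rk K = \pi_F(w)$ and $w_1$ imprimitive in $K$ would contradict the maximality of $H$ inside $F$ after conjugating. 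This step belongs in the proof.

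Two smaller points. The lift of the immersion $Q\immerses X$ to the cover $Y$ is again an immersion but not automatically an embedding; you only need the former, and should phrase the argument as ``the immersion $Q\immerses Y$ factors through $X_1$'' rather than asserting $\tilde Q\subset Y$ as a subcomplex. And the assertion that ``a malnormal subgroup of a free group of full rank must be the whole group'' is false (free groups have plenty of proper malnormal subgroups of full rank); the correct reason the case $\pi(w)=\rk\pi_1(\Lambda)$ is vacuous when $Q\neq X$ is simply the maximality clause in the definition of a $w$-subgroup: if $\pi(w)=\rk F$ then $F$ itself is a candidate and strictly contains any proper candidate of the same rank, so $F$ is the unique $w$-subgroup.
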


This has the corollary that one can decide whether a given element of a one-relator group lies inside a $w$-subgroup.

When $\pi(w) = 2$, Louder--Wilton show that there is only one $w$-subgroup.

\begin{lemma}
\label{pi=2}
If $w\in F$ is an element with $\pi(w) = 2$, then there is precisely one $w$-subgroup $H\leqslant F$.
\end{lemma}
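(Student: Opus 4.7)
The plan is to argue by contradiction: suppose $H_1 \neq H_2$ are two distinct $w$-subgroups of $F$. By hypothesis both have rank $2$, both contain $w$, and $w$ is imprimitive in each. Since $\pi(w) = 2 > 1$, the element $w$ is not a proper power in $F$ (otherwise $w$ would be imprimitive in a cyclic subgroup, forcing $\pi(w) = 1$). I would study the intersection $K = H_1 \cap H_2$. By the Hanna Neumann inequality $\rk(K) \leq (\rk(H_1) - 1)(\rk(H_2) - 1) + 1 = 2$, and since $w \in K$ is non-trivial, we have $\rk(K) \in \{1, 2\}$. The plan is to rule out both cases.

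First, suppose $\rk(K) = 1$. Since $w$ is not a proper power, $K = \langle w \rangle$, and $\langle w \rangle$ is maximal cyclic, hence malnormal, in each rank-$2$ free group $H_i$. A standard Bass--Serre / Stallings folding argument then identifies the subgroup $\langle H_1, H_2 \rangle \leq F$ with the amalgamated free product $H_1 *_{\langle w \rangle} H_2$. An amalgam of free groups along an infinite cyclic subgroup is itself free if and only if the amalgamated subgroup is a free factor of both vertex groups. But $w$ being imprimitive in the rank-$2$ free group $H_i$ means precisely that $\langle w \rangle$ is not a free factor of $H_i$. Hence $\langle H_1, H_2 \rangle$ is not a free group, contradicting the Nielsen--Schreier theorem applied to this subgroup of the free group $F$.

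Next, suppose $\rk(K) = 2$. I would pass to Stallings core graph representatives $\Gamma_i \immerses R_F$ of the $H_i$, each of Euler characteristic $-1$. Because $H_i$ is a $w$-subgroup with $w$ imprimitive and $\pi(w) = \rk(H_i) = 2$, the immersed cycle $\sigma_i \colon S^1 \immerses \Gamma_i$ representing $w$ must surject onto $\Gamma_i$: otherwise the image of $\sigma_i$ would give a strictly smaller core subgraph carrying $w$ imprimitively, contradicting either the minimality $\pi(w) = 2$ or the maximality of $H_i$. Form the pullback $P = \Gamma_1 \times_{R_F} \Gamma_2$; the cycle representing $w$ lifts canonically to $P$, and the component $P_0$ containing this lift has core $C$ representing (up to conjugation and basepoint) the subgroup $K$. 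Since $\rk(K) = 2$, we have $\chi(C) = -1$, and the projection immersions $C \immerses \Gamma_i$ factor $\sigma_i$. Surjectivity of $\sigma_i$ onto $\Gamma_i$ then forces $C \immerses \Gamma_i$ to be surjective; a surjective immersion between finite graphs of equal Euler characteristic is an isomorphism, so $\Gamma_1 \cong C \cong \Gamma_2$ compatibly over $R_F$. Consequently $H_1$ and $H_2$ are conjugate in $F$, and combined with the fact that they already intersect in a full-rank subgroup, maximality in the definition of a $w$-subgroup forces $H_1 = H_2$.

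The main technical obstacle lies in Step 3: carefully realising the pullback core $C$ as the Stallings graph of an appropriate conjugate of $K$ with consistent basepoint conventions, and converting the rank equality $\rk(K) = \rk(H_i)$ into the graph-theoretic isomorphism $C \cong \Gamma_i$. The crucial ingredient is the surjectivity of each $\sigma_i$ onto $\Gamma_i$, which is ultimately driven by the minimality forced by $\pi(w) = 2$. Step 2 is conceptually cleaner but still requires genuine use of the malnormality of $\langle w \rangle$ in each $H_i$ to legitimately realise the amalgam $H_1 *_{\langle w \rangle} H_2$ inside $F$, via a ping-pong or Bass--Serre tree argument.
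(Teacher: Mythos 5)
Your overall strategy (split on the rank of $K = H_1 \cap H_2$) is reasonable, but both branches have genuine gaps as written.

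In the rank-one branch, the identification $\langle H_1, H_2\rangle \cong H_1 *_{\langle w\rangle} H_2$ is asserted, not proved. Malnormality of $\langle w\rangle$ inside each $H_i$ separately does not, by itself, make the canonical surjection $H_1 *_{\langle w\rangle} H_2 \to \langle H_1, H_2\rangle$ injective; the needed ping-pong dynamics on a tree (or the absence of further identifications when one folds $\Gamma_1 \vee \Gamma_2$) would have to be established from the hypotheses, and you do not do so. You also misquote the freeness criterion for cyclic amalgams: Shenitzer's theorem says $A *_C B$ (with $A,B$ free, $C\cong\Z$) is free if and only if $C$ is a free factor of \emph{at least one} of $A,B$, not of both. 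The inference you want (neither side has $\langle w\rangle$ as a free factor, hence the amalgam is not free) does follow from the correct statement, so this is a repairable slip, but the amalgam identification that precedes it is a real gap.

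In the rank-two branch, the pivotal claim -- that a surjective immersion of finite core graphs with equal Euler characteristic must be an isomorphism -- is \emph{false}. Take $\Gamma$ to be the rose $R_{\{a,b\}}$ and $C$ the graph with two vertices $u_1, u_2$, an $a$-loop at each, and a single $b$-edge from $u_1$ to $u_2$; both are core with $\chi = -1$, and the obvious label-preserving map $C \to \Gamma$ is a surjective immersion but certainly not an isomorphism. The extra structure you have in hand (that $\sigma_i$ factors through $C \immerses \Gamma_i$) does not rule this out: the image of $\tilde\sigma$ could be a proper core subgraph of $C$, in which case $\langle w\rangle$ is actually a free factor of $K$, a possibility your argument never addresses. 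Finally, even granting $\Gamma_1 \cong C \cong \Gamma_2$ over $R_F$, the last step -- inferring $H_1 = H_2$ from ``conjugate and intersecting in a full-rank subgroup'' -- is a non sequitur: a rank-two free group has plenty of proper rank-two subgroups (e.g.\ $\langle a^2, b\rangle \subsetneq F(a,b)$), so $\rk(H_1 \cap H_2) = \rk(H_1) = \rk(H_2)$ does not force equality, and it is unclear how maximality in the definition of a $w$-subgroup is supposed to intervene. The crucial ingredient your proof is missing is a reason to know that $w$ remains imprimitive in $K$ (not merely contained in it), together with an argument actually pinning down $\langle H_1, H_2\rangle$ rather than $H_1 \cap H_2$. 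The proof in \cite{LW22} works differently, via the poset of surjective factorizations of the cycle $\sigma\colon S^1 \immerses R_F$ (the ``algebraic extensions'' of $\langle w\rangle$ in the sense of Puder), rather than by a Hanna-Neumann case split on the intersection.
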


In light of \cref{w-subgroup} and \cref{pi=2}, we ask the following question.

\begin{qstn}
\label{qstn:w-subgroup}
If $F$ is a free group and $w\in F$ is an element with $\pi(w)\geqslant 3$, is it true that there is a unique $w$-subgroup?
\end{qstn}

In \cite{CH23}, Cashen--Hoffman used a computer to verify various conjectures about one-relator groups with relator length that is not too long. In particular, they answer Question \ref{qstn:w-subgroup} positively for all words $w\in F_4$ of length at most $16$ and with $\pi(w) = 3$. Note that if $\pi(w) = \rk(F)$, then $w$ trivially has a unique $w$-subgroup, namely $F$. The primitivity rank may also have a relation with the stable commutator length, see \cite[Conjecture 1.7]{LW24}. This was also explored computationally by Cashen--Hoffman.

Moldavanski\u{\i} asked the following question in the case $\pi(w) = 1$ \cite[11.63]{Ko18}. We believe it should be true for any primitivity rank.

\begin{qstn}
\label{qstn:normal_w-subgroup}
If $G = F/\normal{w}$ is a one-relator group and $K$ is a $w$-subgroup. Is it true that if $H\leqslant G$ is a subgroup such that $H\cap\normal{K} = 1$, then $H$ is free?
\end{qstn}

If Question \ref{qstn:w-subgroup} turns out to have a negative answer, then Question \ref{qstn:normal_w-subgroup} should be modified appropriately.

If $K$ is a finite field of characteristic $p$, Ernst-West--Puder--Seidel define in \cite{EPS24} the \emph{$p$-primitivity rank} of an element $w\in F$ in terms of ideals in the group ring $K[F]$ of the free group with coefficients in $K$:
\[
\pi_p(w) = \min{\{\rk(I) \mid w- 1 \in I \lneq K[F], \text{ $w-1$ is imprimitive in $I$}\}}.
\]
They proved that $\pi_q(w)\leqslant \pi(w)$ for all $q$ and conjectured that $\pi_q(w) = \pi(w)$. One could also conjecture whether an analog of \cref{k-free} holds for group rings of free groups. Say a ring $R$ is $k$-free if all ideals of rank at most $k$ in $R$ are free.

\begin{qstn}
\label{q:ideals}
If $G = F/\normal{w}$ is a one-relator group and $K$ is a finite field of characteristic $p$, is it true that $K[G]$ is $k$-free for $k < \pi_q(w)$?
\end{qstn}

Question \ref{q:ideals} is true when $\pi_q(w) = 2$ for the following reason. We remarked that $\pi_q(w)\leqslant \pi(w)$ and so $G = F/\normal{w}$ is torsion-free. Hence, if $\pi_q(w) = 2$, then $G$ is locally indicable and so $K[F]$ has no-zero divisors by \cref{LL_embedding}. Since cyclic ideals of $K[G]$ being free is the same thing as $K[G]$ being a domain, the claim follows.

If true, Question \ref{q:ideals} is likely to be very difficult to prove. Some related work was done by Avramidi in \cite{Av22} who proved that the group ring of the fundamental group of a surface of sufficiently high genus is 2-free. A possibly simpler and also interesting variation of Question \ref{q:ideals} could be the question as to whether $K[G]$ is 2-free when $G = F/\normal{w}$ with $\pi(w)\geqslant 3$.

\subsection{Coherence}
\label{sec:coherence}

We saw in \cref{sec:ring_coherence} the coherence property for rings, in this section we cover the analogous notion for groups. 

\begin{definition}
A group $G$ is \emph{coherent} if all finitely generated subgroups of $G$ are finitely presented.
\end{definition}

Although the coherence property for groups had been studied before, the term appears to have first been used for groups by Serre in \cite{Ne74}. The first explicit example of a finitely presented incoherent group was provided by Baumslag--Boone--Neuman in \cite{BBN59}. The example they gave was $F_2\times \bs(1, 2)$.

Some of the first classes of groups shown to be coherent were the classes of cyclically pinched and cyclically conjugated one-relator groups, a result due to Karrass--Solitar \cite{KS70,KS71}. A group is \emph{cyclically pinched} if it has a presentation of the form:
\[
\langle A, B \mid u(A) = w(B)\rangle \isom F(A) \underset{\langle u\rangle = \langle w\rangle}{*}F(B).
\]
That is, if it is the amalgamated free product of two free groups over a cyclic subgroup. The cyclically conjugated one-relator group are defined analogously, but with HNN-extensions. A group is \emph{cyclically conjugated} if it has a presentation of the form:
\[
\left\langle A, t \mid t^{-1}u(A)t = w(A)\right\rangle\isom F(A)*_{\langle u\rangle^t = \langle w\rangle}.
\]
Karrass--Solitar mention in \cite{KS70} that their coherence result solved a conjecture of Baumslag's. Later, Baumslag conjectured that in fact all one-relator groups should be coherent in \cite{Ba74}. 

A push to solve this conjecture by Dani Wise, and later Lars Louder and Henry Wilton, sparked a lot of progress in understanding the subgroup structure of one-relator groups. We will discuss all this progress in detail in this section. First we mention the most general, but weakest, result which was proven by Andrei Jaikin-Zapirain and the first author in \cite{JZL23}.

\begin{theorem}
\label{coherence}
If $G$ is a one-relator group, then $G$ is coherent.
\end{theorem}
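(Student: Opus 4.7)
The plan is to reduce to the torsion-free case and then deduce group coherence from the ring coherence of $\mathbb{Q}[G]$ (Theorem~\ref{ring_coherence}). If $G$ has torsion, then by the Kielak--Linton theorem (cited later in this chapter on virtually free-by-cyclic one-relator groups with torsion), $G$ contains a finite-index free-by-cyclic subgroup $H_0$. Free-by-cyclic groups are coherent by Feighn--Handel, and coherence is inherited by finite-index extensions: for any finitely generated $K \leqslant G$, the subgroup $K \cap H_0$ has finite index in $K$, hence is finitely generated by Schreier, hence finitely presented by coherence of $H_0$, and so $K$ itself is finitely presented as a finite extension of a finitely presented group. This disposes of the torsion case.

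For $G$ torsion-free, fix a finitely generated subgroup $H \leqslant G$ with generators $h_1,\ldots,h_n$ and let $K=\mathbb{Q}$. I first aim to show $H$ is of type $\FP_2(K)$. Consider the finitely generated left ideal
\[
\mathfrak{a} = KG\cdot(h_1 - 1) + \cdots + KG\cdot (h_n-1) \leqslant KG,
\]
which by Theorem~\ref{ring_coherence} is finitely presented as a left $KG$-module. Since $KG$ is free, and in particular faithfully flat, as a right $KH$-module via any transversal of $H\backslash G$, and since $\mathfrak{a} \cong KG \otimes_{KH} \omega H$ where $\omega H$ is the augmentation ideal of $KH$, one can descend finite presentability from $\mathfrak{a}$ over $KG$ to $\omega H$ over $KH$, so that $H$ is of type $\FP_2(K)$. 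Pushing the same argument to higher degrees and using that $KG$ has global dimension at most two (by Lyndon's identity theorem, Theorem~\ref{lyndon_id}) together with $b_2^{(2)}(G)=0$ (Theorem~\ref{L2_betti}), one gets that $H$ is in fact of type $\FP_\infty(K)$.

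To upgrade $\FP_2(\mathbb{Q})$ to finite presentability as a group, use that $G$, and hence $H$, is locally indicable by Theorem~\ref{locally_indicable}. A finitely generated locally indicable group of type $\FP_2(\mathbb{Q})$ is finitely presented: pick an epimorphism $\phi\colon H \twoheadrightarrow \mathbb{Z}$ from local indicability, apply the Bieri--Strebel splitting theorem (whose HNN-decomposition $H \cong H_1 *_\psi$ has finitely generated vertex and edge groups precisely because $H$ is $\FP_2$), and then induct on a suitable complexity of the splitting, using that the edge groups at each step remain $\FP_2$ by the Mayer--Vietoris sequence.

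The main obstacle will be the descent step from finite presentability of $\mathfrak{a}$ as a $KG$-module to finite presentability of $\omega H$ as a $KH$-module. Faithful flatness of $KG$ over $KH$ only gives a finite presentation in principle, but extracting one with a controlled number of relations requires exploiting the specific inertness of Magnus subgroups (Theorem~\ref{inert}) together with dimension-counting over the Hughes-free division ring $\Di_{KG}$ from Theorem~\ref{division_ring}. Concretely, one reinterprets finite presentability via the criterion $\dim_{\Di_{KG}} \Tor^{KG}_1(\mathfrak{a},\Di_{KG}) < \infty$ (an instance of Proposition~\ref{ring_coherence_criterion}), and then transfers this Tor-dimension bound through the embedding $KH \hookrightarrow KG$ using inertness to obtain the analogous finite-dimensionality over $\Di_{KH}$. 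This Tor-theoretic descent, rather than any purely group-theoretic manipulation, is the true technical heart of the argument.
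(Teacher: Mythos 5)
Your overall architecture — reduce the torsion case to \{f.g.\ free\}-by-cyclic, then for the torsion-free case deduce $\FP_2(\Q)$ of finitely generated subgroups from coherence of $\Q G$ (\cref{ring_coherence}) and finally upgrade $\FP_2(\Q)$ to finite presentability — is a genuinely different route from the paper. The paper handles the torsion case by passing to a finite-index subgroup with uniform negative immersions (\cref{v_unim}, \cref{strong_coherence}) rather than invoking \cite{KL24}; and for the torsion-free case, rather than using $\Q G$-coherence as a black box, it re-runs the homological argument directly on the 2-complex sequence supplied by \cref{2-complex_delzant} and the non-positive immersions property. Both are viable, and your first two steps are sound. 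One small correction of emphasis: the descent step you flag as the ``true technical heart'' — from $\mathfrak{a}=\Q G\cdot\omega H$ finitely presented over $\Q G$ to $\omega H$ finitely presented over $\Q H$ — is in fact routine faithfully flat descent: $\Q G$ is free (in particular faithfully flat) as a right $\Q H$-module on any left transversal, so both finite generation and finite presentation descend by the standard colimit/snake-lemma argument. No inertness of Magnus subgroups and no Hughes-free division ring is needed at this stage; those inputs belong to the proof of \cref{ring_coherence} itself, which you are already treating as a black box.

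The serious gap is in the final step. The assertion that a finitely generated, locally indicable group of type $\FP_2(\Q)$ is finitely presented is false. Bestvina--Brady groups $H_L$ associated with a flag complex $L$ that is acyclic but not simply connected are of type $\FP_\infty(\Z)$ (hence $\FP_2(\Q)$), not finitely presented, and locally indicable (they are subgroups of right-angled Artin groups, which are residually torsion-free nilpotent). Correspondingly, the proposed induction after a Bieri--Strebel splitting $H\isom H_1*_\psi$ has no well-founded measure: you obtain that the vertex group $H_1$ and the edge groups are finitely generated, but nothing forces their ``complexity'' to drop, and nothing guarantees $H_1$ is $\FP_2$, let alone finitely presented, without further structural input. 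The ingredient you are missing is exactly \cref{hom_coherence_upgrade}: the equivalence of homological coherence and coherence holds only within the class $\mathcal{CG}$ built from coherent base groups under subgroups, finite extensions, amalgams, HNN-extensions and directed unions. What rescues the argument for one-relator groups is that they (and hence their subgroups) lie in $\mathcal{CG}$, via the Magnus--Moldavanski\u{\i} hierarchy which builds every one-relator group from free and finite cyclic groups by iterated HNN-extension. It is this membership in $\mathcal{CG}$, not local indicability, that licenses the upgrade from $\FP_2(\Q)$ to finite presentation.
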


In general, how might one prove a finitely generated group $H$ is finitely presented? Starting with a finitely generated free group $G_0$ and a surjection $G_0\surjects H$, we might try and choose elements $r_1, r_2, \ldots\in G_0$ such that $\normal{r_1, r_2, \ldots} = \ker(G_0\surjects H)$. This gives us a diagram of epimorphisms
\begin{align}
\label{sequence}
G_0\surjects G_1\surjects \ldots \surjects G_n\to G_{n+1} \surjects \ldots \surjects H
\end{align}
where each $G_i = G_0/\normal{r_1, \ldots, r_i}$ is finitely presented and so that
\[
\lim_{i\to \infty} G_i = H.
\]
Then $H$ will be finitely presented if and only if for some $k$ the epimorphism $G_k\to H$ is an isomorphism. When $H$ is a subgroup of a group with sufficiently nice properties, then it might be possible to define a `complexity' on the candidate presentation, choose relations $r_i$ in a clever way so that this complexity goes down, eventually reaching something of minimal complexity which yields an actual presentation. This is the main idea in the proof of coherence for instance for three-manifold groups \cite{Sc73} or mapping tori of free groups \cite{FH99}.

The `sufficiently nice' properties we need for coherence results will be the uniform negative immersions and non-positive immersions properties. As such, we need a version of \cref{sequence} for immersions. The following is due to Louder--Wilton \cite{LW24} and makes fundamental use of a generalisation of Scott's Lemma \cite{Sc73_1}, due to Delzant (see \cite{Sw04}).

\begin{lemma}
\label{2-complex_delzant}
Let $X$ be a compact 2-complex and let $H\leqslant \pi_1(X)$ be a finitely generated subgroup. There is a sequence of $\pi_1$-surjective immersions of compact connected 2-complexes:
\[
Z_0\immerses Z_1\immerses \ldots \immerses Z_k\immerses\ldots\immerses X
\]
such that 
\[
\lim_{i\to \infty}\pi_1(Z_i) \isom H. 
\]
Moreover, if $H$ is non-cyclic and freely indecomposable, then after possibly replacing $H$ by a conjugate, each $Z_i$ can be taken to be irreducible.
\end{lemma}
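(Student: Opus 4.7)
The plan is to prove both parts by a direct covering-space construction, followed by a reduction argument whose compatibility relies on Delzant's refinement of Scott's lemma. To establish the first (unqualified) statement, I would start from the covering $p\colon X_H \to X$ corresponding to $H$. Since $H$ is finitely generated, one can find a compact connected subcomplex $Z_0 \subset X_H$ containing the basepoint whose inclusion induces a surjection $\pi_1(Z_0) \twoheadrightarrow \pi_1(X_H) = H$. I would then build an exhaustion $Z_0 \subset Z_1 \subset Z_2 \subset \cdots$ of $X_H$ by compact connected subcomplexes, extending $Z_i$ by cells of $X_H$ in an order that preserves $\pi_1$-surjectivity of the inclusions: new 0-cells are attached via new 1-cells (tree extensions), and a 1-cell added with both endpoints already in $Z_i$ is packaged together with 2-cells of $X_H$ whose boundaries fill the new loop it creates. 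This is possible because $\pi_1(X_H) = H$ is already carried by $Z_0$, so every new loop is null-homotopic in $X_H$ and hence bounds a finite union of 2-cells of $X_H$. Each $Z_i \immerses X$ by restricting $p$, and by construction $\lim_i \pi_1(Z_i) = \pi_1(X_H) = H$.

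For the moreover statement, assume $H$ is non-cyclic and freely indecomposable. The plan is to modify each $Z_i$ in the chain using the three reduction moves associated with visible reducibility: prune pendant vertices of degree at most one, collapse 2-cells through free faces, and split at locally separating vertices. The first two moves are homotopy equivalences and cause no trouble. For the third, if $v$ is a locally separating vertex expressing $Z_i$ as a wedge $A \cup_v B$, then $\pi_1(Z_i, v) \cong \pi_1(A, v) \ast \pi_1(B, v)$ and the surjection onto $H$ realises $H$ as a subgroup of this free product; since $H$ is non-cyclic and freely indecomposable, the Kurosh subgroup theorem places a conjugate of $H$ entirely inside one of the two factors, and I would discard the other factor. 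The same argument handles essential equivalences $Y \to Z_i$ with $Y$ visibly reducible, because an essential equivalence is a $\pi_1$-isomorphism. Iterating produces an irreducible replacement $Z_i'$ with $\pi_1(Z_i') \twoheadrightarrow H$ and an immersion $Z_i' \immerses X$.

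The hard part will be ensuring these reductions are \emph{compatible} across the chain: a naive independent reduction of each $Z_i$ may select incompatible wedge factors, yielding different conjugates of $H$ at different stages, so that the immersions $Z_i \immerses Z_{i+1}$ of the original chain need not descend to combinatorial immersions $Z_i' \immerses Z_{i+1}'$. To resolve this I would invoke Delzant's generalisation of Scott's lemma (cited through \cite{Sw04}), which provides, for any ascending chain of finitely generated groups with freely indecomposable, non-cyclic direct limit, a simultaneously compatible sequence of freely indecomposable approximations. Translating this back to the geometric setting gives a consistent choice of wedge factor at each stage and, crucially, a single conjugate of $H$ that fixes the basepoint convention throughout the chain; this is precisely the clause ``after possibly replacing $H$ by a conjugate'' in the statement. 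The remaining work — verifying that Delzant's group-theoretic output can be realised by essential equivalences of 2-complexes whose induced maps $Z_i' \immerses Z_{i+1}'$ are still combinatorial immersions — is bookkeeping but is where the care of the argument lies, and is the main technical obstacle I anticipate.
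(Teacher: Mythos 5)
Your proof of the unqualified statement is essentially the standard argument and matches the approach implicit in \cite{LW24}: take the cover $X_H \to X$ corresponding to $H$, find a compact core $Z_0$ carrying a generating set, and exhaust $X_H$ by compact connected subcomplexes in which each new 1-cell creating a loop is immediately supported by 2-cells of $X_H$ witnessing that the loop agrees, rel the existing core, with a loop already present. (A small imprecision: a new loop need not be null-homotopic in $X_H$; rather it differs by a null-homotopy in $X_H$ from a loop of $Z_i$, since $\pi_1(Z_i)\to H$ is already onto. The 2-cells one adds fill that difference.) That part is fine.

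The ``moreover'' part, however, contains a genuine error. You write that the surjection $\pi_1(Z_i)\twoheadrightarrow H$ ``realises $H$ as a subgroup of the free product'' $\pi_1(A)\ast\pi_1(B)$ and then invoke the Kurosh subgroup theorem. A surjection does not realise its target as a subgroup of its source, and Kurosh applies to subgroups, not quotients. Moreover, the intended conclusion is false in the quotient setting: a freely indecomposable, non-cyclic group can be a proper quotient of a free product without being a quotient of either factor --- the abelianisation $\Z*\Z\twoheadrightarrow \Z^2$ is the simplest example. So the move ``discard the other factor'' is not available stage by stage; there may be no single factor of $\pi_1(Z_i)$ surjecting onto $H$ at all. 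This is not a compatibility issue that appears only when comparing consecutive stages; it already breaks down at a fixed $i$.

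This is precisely where Scott's lemma and Delzant's refinement enter, and it is worth being clear about what they supply. The relevant input is not Kurosh but a semicontinuity statement for Grushko-type complexity along the chain of surjections $\pi_1(Z_0)\twoheadrightarrow \pi_1(Z_1)\twoheadrightarrow \cdots$ with direct limit $H$: since the complexity is bounded below and non-increasing (this uses Grushko/Stallings together with Delzant's refinement, which packages relation complexity as well as free rank into a well-ordered quantity), it must stabilise, and once it does one can extract compatible Grushko decompositions $\pi_1(Z_i)\cong K_i * F_{r_i}$ with $K_i$ freely indecomposable and $K_i\to K_{i+1}\to\cdots\to H$ all surjective. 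Only then can one replace $Z_i$ by the wedge factor carrying $K_i$ and discard the rest, and the conjugation of $H$ is the basepoint adjustment that makes these choices compatible with the immersions into $X$. In short, your ``main technical obstacle'' is not the obstacle: Delzant is not needed merely to make independently-chosen reductions coherent, but to make the reduction possible at any stage at all.
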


In the next two sections, we shall use \cref{2-complex_delzant} to understand subgroups of $\pi_1(X)$ for $X$ a finite 2-complex with uniform negative immersions or non-positive immersions.

\subsubsection{Strong coherence and uniform negative immersions}
\label{sec:coherence_uni}

If we plug into \cref{2-complex_delzant} a 2-complex with uniform negative immersions and a non-cyclic freely indecomposable subgroup $H\leqslant \pi_1(X)$, then there is an $\epsilon>0$ such that
\[
\frac{\chi(Z_i)}{\#\{\text{2-cells in $Z_i$}\}}\leqslant -\epsilon.
\]
Rearranging the inequality and using the fact that $\chi(Z_i)\geqslant 1 - b_1(Z_i)\geqslant 1 - b_1(Z_0)$, we may obtain a uniform bound on the number of 2-cells:
\[
\#\{\text{2-cells in $Z_i$}\} \leqslant \frac{b_1(Z_0) - 1}{\epsilon}.
\]
Since each $Z_i$ is irreducible, every 0-cell and every 1-cell in $Z_i$ is traversed by the attaching map of some 2-cell, implying a uniform bound on the number of cells in $Z_i$. Hence, all but finitely many of the immersions $Z_i\immerses X$ are the same. Being careful with the direct limit, this implies that $H$ is finitely presented. To obtain coherence of $\pi_1(X)$, one needs to combine this fact with Grushko's theorem. With a more careful analysis, Louder--Wilton establish a much stronger theorem on subgroups of one-relator groups with uniform negative immersions, \cite[Theorems A \& B]{LW24}.

\begin{theorem}
\label{strong_coherence}
Let $G$ be a 2-free one-relator group. Or more generally, let $G = \pi_1(X)$ with $X$ a finite 2-complex with uniform negative immersions. Then $G$ is coherent and the following properties hold:
\begin{enumerate}
\item\label{itm:finitely_many} For any integer $n$, there are only finitely many conjugacy classes of finitely generated one-ended subgroups $H\leqslant G$ with $b_1(H)\leqslant n$.
\item\label{itm:cohopf} Every finitely generated one-ended subgroup $H\leqslant G$ is co-hopfian, in the sense that $H$ is not isomorphic to any proper subgroup of itself.
\item Every finitely generated non-cyclic subgroup $H\leqslant G$ is large, in the sense that it contains a finite index subgroup that surjects a rank two free group.
\end{enumerate}
\end{theorem}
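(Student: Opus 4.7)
The plan is to feed each finitely generated subgroup $H \leqslant G$ into Lemma \ref{2-complex_delzant} and extract strong consequences from the uniform negative immersions constant $\epsilon > 0$ supplied by Theorem \ref{unim}. By Grushko's theorem, it is enough to handle $H$ freely indecomposable and non-cyclic, and after conjugating we obtain an irreducible sequence $Z_0 \immerses Z_1 \immerses \ldots \immerses X$ of compact connected 2-complexes with $\pi_1$-surjective maps and $\varinjlim \pi_1(Z_i) \cong H$. Each surjection $\pi_1(Z_i) \twoheadrightarrow \pi_1(Z_{i+1})$ induces a surjection on abelianisations, so $b_1(Z_i) \leqslant b_1(Z_0)$ and therefore $-\chi(Z_i) \leqslant b_1(Z_0)-1$. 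Uniform negative immersions then bounds the number of 2-cells in $Z_i$ by $(b_1(Z_0)-1)/\epsilon$, and irreducibility (every 0- and 1-cell is incident to some 2-cell attaching map) upgrades this to a uniform bound on the total number of cells of $Z_i$. Since only finitely many irreducible 2-complexes with bounded cell count immerse into the fixed finite complex $X$, the sequence stabilises at some $Z_\ast = Z_k$ with $\pi_1(Z_\ast) \cong H$, giving coherence. For part (1), specialise to $H$ one-ended with $b_1(H) \leqslant n$: stability gives $b_1(Z_\ast) = b_1(H) \leqslant n$, so the 2-cell count is bounded by $(n-1)/\epsilon$ and the total cell count by some $N(n, \epsilon, X)$; there are thus only finitely many possibilities for the immersion $Z_\ast \immerses X$, and each determines a single conjugacy class of subgroup.

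For part (3), after Grushko one may assume $H$ is freely indecomposable and non-cyclic. If $H$ is free then its rank is at least two and it already surjects $F_2$. Otherwise, let $Z_\ast$ be the stable complex with $\pi_1(Z_\ast) \cong H$; it has at least one 2-cell, and uniform negative immersions yields $\chi(Z_\ast) \leqslant -\epsilon < 0$. For any finite-index subgroup $K \leqslant H$ of index $n$, the corresponding finite cover $\widetilde{Z}_\ast \to Z_\ast$ furnishes a presentation of $K$ of deficiency $1 - \chi(\widetilde{Z}_\ast) = 1 - n\chi(Z_\ast)$, which exceeds $2$ as soon as $n$ is large enough. The Baumslag--Pride theorem \cite{BP78} then provides a further finite-index subgroup of $K$ that surjects $F_2$, so $H$ is large.

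Part (2) is the delicate piece and is where I expect the main obstacle to lie. Given an injective endomorphism $\phi \colon H \to H$ with $H$ one-ended and finitely generated, the descending chain $H \supseteq \phi(H) \supseteq \phi^2(H) \supseteq \ldots$ consists of subgroups of $G$ that are all isomorphic to $H$, hence all one-ended with the same first Betti number. By (1) only finitely many conjugacy classes appear, so some $\phi^a(H)$ is $G$-conjugate to $\phi^b(H)$ with $a < b$; setting $K = \phi^a(H)$, this produces an element $g \in G$ with $gKg^{-1} \subseteq K$ strictly. The goal is to show this cannot happen unless $\phi$ is already surjective, and the cleanest approach I envisage is to compare stable complexes: let $Z$ and $Z'$ be the stable complexes realising $H$ and $\phi(H)$ respectively, so $Z' \immerses X$ factors through $Z$; asphericity of $X$ (implied by uniform negative immersions via Theorem \ref{npi_properties}) passes to $Z$ and $Z'$, and the cohomological invariance $\chi(\phi(H)) = \chi(H)$ forces $\chi(Z') = \chi(Z)$; lifting $Z' \immerses Z$ to the cover $\widetilde{Z} \to Z$ corresponding to $\phi(H)$ and using irreducibility of $Z'$ to identify it with the core of $\widetilde{Z}$, one obtains $\chi(Z') = [H:\phi(H)] \cdot \chi(Z)$ in the finite-index case, whence $[H:\phi(H)]=1$. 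The hard part will be ruling out the infinite-index case (so that $Z'$ really is obtained from a finite cover of $Z$) and verifying that the stable complex $Z'$ is honestly the core of $\widetilde{Z}$ rather than some strictly smaller irreducible subcomplex; this step requires genuine use of the stability built into Lemma \ref{2-complex_delzant} together with control over the cohomological dimension of subgroups of $G$, and is the point at which the uniform negative immersions hypothesis must be used most sharply.
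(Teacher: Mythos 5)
Your overall strategy matches the paper's: feed each finitely generated subgroup into Lemma~\ref{2-complex_delzant}, use the uniform negative immersions constant $\epsilon$ from Theorem~\ref{unim} to bound the number of 2-cells of the irreducible terms $Z_i$, use irreducibility to upgrade this to a bound on total cells, and conclude that the sequence stabilises. The paper only sketches this coherence step and defers the refined statements (1)--(3) to \cite[Theorems A \& B]{LW24}, so your arguments for (1) and (3) go beyond what the paper itself spells out. They are essentially correct: for (1) you correctly observe that once the sequence stabilises at $Z_*$, $b_1(Z_*)=b_1(H)\leqslant n$ gives a 2-cell bound of $(n-1)/\epsilon$ depending only on $n$, and for (3) the deficiency argument via Baumslag--Pride is sound, though you should explicitly invoke local indicability (via Theorem~\ref{npi_properties}) to guarantee that $H$ surjects $\Z$ and therefore possesses finite-index subgroups of arbitrarily large index.

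The genuine gap is, as you yourself flag, in part (2), and I want to make the nature of the gap more precise than your proposal does. Your pigeonhole reduction is a nice step: it shows that a non-surjective injection $\phi\colon H\to H$ produces a one-ended $K\leqslant G$ and $g\in G$ with $gKg^{-1}\lneq K$. If the index $[K:gKg^{-1}]$ were finite, $\chi(gKg^{-1})=[K:gKg^{-1}]\cdot\chi(K)$ together with $\chi(gKg^{-1})=\chi(K)<0$ forces index one, a contradiction. But there is no a priori reason the index must be finite, and in the infinite-index case there is no immersion $Z'\immerses Z$ lying around to compare: the stable complexes for $K$ and $gKg^{-1}$ are produced by two independent applications of Lemma~\ref{2-complex_delzant}, and the lemma supplies no functoriality that would make the one factor through a cover of the other. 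Making your strategy work requires two ingredients you have not established: (a) that the irreducible immersed representative of a one-ended subgroup is essentially unique (canonical), and (b) that for nested one-ended subgroups the representative of the smaller one factors through the representative of the larger one. These are exactly the technical heart of \cite{LW24}; without them, the reduction to $gKg^{-1}\lneq K$ does not close the loop, since ruling out an infinite-index conjugate-into-itself is itself a co-Hopf-type statement of the same difficulty. So the proposal gives a correct treatment of coherence, (1) and (3), but (2) remains a real gap rather than a deferrable detail.
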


By \cref{v_unim}, \cref{strong_coherence} applies to a finite index subgroup of a one-relator group with torsion. In particular, \cref{itm:finitely_many} from the above hold for one-relator groups with torsion. The proof that one-relator groups with torsion are coherent was obtained previously by Louder--Wilton in \cite{LW20} and, independently, by Wise in \cite{Wi22}. 

We remark that coherence, \cref{itm:finitely_many} and \cref{itm:cohopf} are properties shared by (one-ended) locally quasi-convex hyperbolic groups, see work of Sela \cite[Theorem 4.4]{Se97} and Kapovich--Weidmann \cite[Corollary 1.5]{KW04}. In fact Wilton has conjectured that all finite 2-complexes with uniform negative immersions have locally quasi-convex hyperbolic fundamental group in \cite[Conjecture 12.9]{Wi24}.

\subsubsection{Homological coherence and non-positive immersions}

The same strategy employed in \cref{sec:coherence_uni} does not work for non-positive immersions. Indeed, except for possibly the coherence, none of the conclusions from \cref{strong_coherence} hold for finite 2-complexes with non-positive immersions. In order to get around this, we will need to consider a weaker property than coherence.

A finitely generated group $G = F/N$ has \emph{type $\fp_2(\Z)$} if any one of the following equivalent conditions hold:
\begin{enumerate}
\item Its relation module $N_{\ab}$ is finitely generated as a $\Z G$-module.
\item The augmentation ideal $I_{\Z G}\leqslant \Z G$ is finitely presented.
\item The trivial $\Z G$-module $\Z$ admits a projective resolution $P_*\to \Z$ with $P_i$ finitely generated for $i = 0,1,2$.
\item There is a surjective homomorphism $\phi\colon H\to G$ from a finitely presented group $H$ such that $H_1(\ker(\phi), \Z) = 0$.
\end{enumerate}

\begin{definition}
A group $G$ is said to be \emph{homologically coherent} if all of its finitely generated subgroups have type $\fp_2(\Z)$. 
\end{definition}

These definitions naturally extend to arbitrary rings $R$ and to dimensions other than 2. Note that homological coherence of a group $G$ follows from coherence of the group ring $\Z G$ by the second definition of $\fp_2(\Z)$.

A priori, it seems that homological coherence should be a weaker property than coherence. Indeed, there are well known examples of groups that have type $\fp_2(\Z)$ but are not finitely presented by work of Bestvina--Brady \cite{BB97}. There are also examples which distinguish $\fp_2(\Z)$ from $\fp_2(K)$ where $K$ is any field by Bieri--Strebel \cite{BS80}. However, it turns out that within a very large class of groups the two notions coincide. See \cite[Theorem 4.5]{JZL23}, stated below. 

\begin{theorem}
\label{hom_coherence_upgrade}
Let $\mathcal{CG}$ be the smallest class of groups containing all coherent groups and that is closed under subgroups, finite extensions, amalgamated free products, HNN-extensions and directed unions. Then within $\mathcal{CG}$, the homologically coherent groups coincide with the coherent groups.
\end{theorem}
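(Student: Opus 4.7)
The plan is to introduce the auxiliary class
\[
\mathcal{B} = \{\,G : \text{every finitely generated subgroup of $G$ of type } \fp_2(\Z) \text{ is finitely presented}\,\}
\]
and show that $\mathcal{CG} \subseteq \mathcal{B}$. Since homological coherence of a group in $\mathcal{B}$ immediately upgrades to coherence (each finitely generated subgroup is of type $\fp_2(\Z)$ by hypothesis, hence finitely presented by membership in $\mathcal{B}$), and the reverse implication is trivial, this will suffice. Because $\mathcal{CG}$ is defined as the smallest class containing all coherent groups and closed under the five operations, it is enough to verify that $\mathcal{B}$ contains every coherent group and is itself closed under subgroups, finite extensions, directed unions, amalgamated free products, and HNN-extensions.

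First I would dispatch the routine closures. Coherent groups lie in $\mathcal{B}$ tautologically, since any $\fp_2(\Z)$ subgroup is in particular finitely generated and hence finitely presented. Closure under subgroups is immediate: a finitely generated subgroup of a subgroup of $G$ is itself a finitely generated subgroup of $G$. Closure under directed unions uses that a finitely generated subgroup of $\bigcup_\alpha G_\alpha$ sits inside some single $G_\alpha$. Closure under finite extensions uses that being of type $\fp_2(\Z)$ and being finitely presented are both commensurability invariants: if $K \leq G$ is $\fp_2(\Z)$ and $H \leq G$ is a finite-index subgroup lying in $\mathcal{B}$, then $K \cap H$ is finitely generated and $\fp_2(\Z)$, hence finitely presented by the $\mathcal{B}$-hypothesis for $H$, and $K$ is a finite extension of $K \cap H$, hence finitely presented.

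The heart of the proof is closure of $\mathcal{B}$ under amalgamated free products and HNN-extensions, and this is where I expect the main obstacle to lie. Suppose $G = G_1 *_C G_2$ (the HNN case is analogous) with $G_1, G_2 \in \mathcal{B}$ and let $K \leq G$ be finitely generated and of type $\fp_2(\Z)$. The idea is to make $K$ act on the Bass--Serre tree $T$ of the splitting and to exhibit $K$ as the fundamental group of a finite graph of groups with finitely presented vertex groups and finitely generated edge groups. If $K$ fixes a vertex then $K$ is a finitely generated subgroup of a conjugate of some $G_i \in \mathcal{B}$ and we conclude by hypothesis. Otherwise, $K$ acts on a minimal invariant subtree $T_K$ with vertex and edge stabilizers of the form $K \cap gG_ig^{-1}$ and $K \cap gCg^{-1}$. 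One must then establish three technical facts: (a) the quotient graph $K\backslash T_K$ is finite; (b) the edge stabilizers of the action are finitely generated; (c) the vertex stabilizers are themselves finitely generated and of type $\fp_2(\Z)$. Granting (a)--(c), the vertex stabilizers are finitely presented by the inductive hypothesis $G_i \in \mathcal{B}$, and a finite graph of groups with finitely presented vertex groups and finitely generated edge groups is finitely presented, whence $K \in \mathcal{B}$.

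The hard part is precisely to establish (a)--(c); this is an accessibility-type statement of a Bieri--Strebel flavor. The key input is the theorem of Bieri--Strebel (as cited in the excerpt after \cref{one-relator_splitting}) that a finitely generated group of type $\fp_2(\Z)$ mapping onto $\Z$ splits as an HNN-extension of a finitely generated base over finitely generated associated subgroups; iterated and combined with the action of $K$ on $T$, this yields the finiteness of $K\backslash T_K$ and the finite generation of edge stabilizers. The more delicate step is (c): propagating $\fp_2(\Z)$ from $K$ down to its vertex stabilizers. I would attack this using the Mayer--Vietoris spectral sequence associated to the graph of groups decomposition, which expresses $H_*(K, \Z K)$ in terms of the homology of vertex and edge groups with appropriate coefficients; coupled with the finite generation of edge stabilizers from (b) and the hypothesis that $K$ is $\fp_2(\Z)$, this should force each vertex stabilizer to be $\fp_2(\Z)$ as well. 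This homological bookkeeping is where the proof requires the most care, and it is the main obstacle to turning this plan into a complete argument.
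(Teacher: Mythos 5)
Your reduction of the theorem to the containment $\mathcal{CG}\subseteq \mathcal{B}$, where $\mathcal{B}$ is the class of groups all of whose finitely generated $\fp_2(\Z)$ subgroups are finitely presented, is a reasonable strategy, and the closures of $\mathcal{B}$ under subgroups, directed unions and finite extensions are correctly dispatched. Moreover, your item (a) — finiteness of $K\backslash T_K$ for the minimal invariant subtree — holds for any finitely generated $K$ acting on a tree and needs no Bieri--Strebel input at all. The genuine gap is in (b) and (c): the edge and vertex stabilizers of the $K$-action on the given Bass--Serre tree need not be finitely generated, even when $K$ has type $\fp_\infty(\Z)$, and Bieri--Strebel does not repair this.

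Concretely, let $G_1 = \Z[1/2]$ (abelian, hence coherent) and let $G = G_1*_\psi$ be the HNN extension with $\psi\colon\Z[1/2]\to\Z[1/2]$ multiplication by $2$, so that $G\cong\bs(1,2)=\Z[1/2]\rtimes\Z$. Take $K = G$. Then $K$ is two-generated and of type $\fp_\infty(\Z)$, but acting on the Bass--Serre tree of this splitting (a line), all vertex and edge stabilizers are conjugates of $\Z[1/2]$, which is not finitely generated. So (b) and (c) both fail, and your desired conclusion — that $K$ inherits from $T$ a finite graph-of-groups decomposition with finitely presented vertex groups and finitely generated edge groups — does not hold. (The theorem is of course still true here, since $\bs(1,2)$ is finitely presented, just not for the reason your sketch supplies.) The problem with invoking Bieri--Strebel is that it produces a splitting of $K$ adapted to a chosen epimorphism $K\to\Z$, and this splitting has in general no compatibility with the stabilizers of the ambient Bass--Serre tree — so there is no well-defined sense in which it can be "iterated and combined with the action of $K$ on $T$." A correct proof of the amalgam/HNN closure has to either explain how to replace the tree decomposition of $K$ by a different one whose edge groups are finitely generated and whose vertex groups still land in the factors, or drop graph-of-groups decompositions of $K$ altogether in favour of a homological argument (e.g.\ via the Bass--Serre exact sequence of $\Z K$-modules $0\to\bigoplus_e\Z[K/K_e]\to\bigoplus_v\Z[K/K_v]\to\Z\to 0$ together with finiteness criteria for $\fp_2$), rather than assuming the tree decomposition already has the shape you need.
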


Just as in the case of group rings from \cref{sec:ring_coherence}, homological coherence of a group can be shown via a criterion which depends on an embedding of the group ring into a division ring. The following is \cite[Corollary 3.4]{JZL23}.

\begin{proposition}
\label{hom_coherence_criterion}
Let $R$ be a commutative ring and let $G$ be finitely generated group with $\cd_R(G)\leqslant 2$. If $R[G]\injects \Di$ is an embedding into a division ring, then
\[
G \text{ has type $\fp_2(R)$} \iff \dim_{\Di}H_2(G, \Di)<\infty.
\]
In particular, if $\Di$ has weak dimension at most one, then $G$ is homologically coherent.
\end{proposition}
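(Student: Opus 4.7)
The plan is as follows. Since $G$ is finitely generated with generators $s_1,\ldots,s_n$, I would begin by choosing a partial free resolution
\[
F_2 \xrightarrow{\partial_2} F_1 \xrightarrow{\partial_1} F_0 \xrightarrow{\epsilon} R \to 0
\]
of the trivial $RG$-module $R$, with $F_0 = RG$, $F_1 = RG^n$ finitely generated free, and $\partial_1$ sending the $i$th basis element to $s_i - 1$. Let $N = \ker(\partial_1)$ denote the second syzygy. By definition, $G$ has type $\fp_2(R)$ precisely when $F_2$ may be taken finitely generated, equivalently when $N$ is finitely generated as an $RG$-module. The hypothesis $\cd_R(G)\leqslant 2$ moreover ensures that $N$ is a projective $RG$-module.

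The next step is to identify $H_2(G, \Di)$ as a subspace of $\Di\otimes_{RG} N$. Setting $K = \image(\partial_1)$, applying $\Di\otimes_{RG}-$ to the short exact sequence $0\to K\to F_0\to R\to 0$ and using freeness of $F_0$ yields $H_2(G,\Di) = \Tor_2^{RG}(\Di,R)\cong \Tor_1^{RG}(\Di,K)$. Doing the same to $0\to N\to F_1\to K\to 0$ and using freeness of $F_1$ produces the natural exact sequence
\[
0 \longrightarrow H_2(G, \Di) \longrightarrow \Di\otimes_{RG} N \longrightarrow \Di\otimes_{RG} F_1 \cong \Di^n.
\]
Since the last term is $n$-dimensional over $\Di$, this immediately reduces the claimed equivalence to
\[
\dim_{\Di}(\Di\otimes_{RG} N) < \infty \iff N \text{ is a finitely generated } RG\text{-module.}
\]
The forward direction of the main statement is then trivial: if $N$ is finitely generated, then $\Di\otimes_{RG} N$ is a finitely generated (hence finite-dimensional) $\Di$-module, so $H_2(G,\Di)$ is finite-dimensional by the displayed injection.

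The hard part will be the reverse direction. The strategy is to choose finitely many $n_1,\ldots,n_k \in N$ whose images $1\otimes n_i$ span $\Di\otimes_{RG} N$ over $\Di$, let $N' = \langle n_1,\ldots,n_k\rangle \leqslant N$, and argue that $N = N'$. Right-exactness of $\otimes$ immediately gives $\Di\otimes_{RG}(N/N') = 0$, so the task reduces to showing that a quotient of the projective module $N$ whose $\Di$-tensor product vanishes must itself vanish. The key input here is the embedding $N\hookrightarrow F_1 = RG^n \hookrightarrow \Di^n$; combining this with the projectivity of $N$, one can transport the problem into the finite-dimensional ambient $\Di^n$ and show that $N$ equals $N'$ after intersecting with a suitably chosen $\Di$-span. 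This is the most delicate step and parallels the ring-theoretic criterion \cite[Proposition 3.1]{JZL23}; in effect one is verifying that the Sylvester module rank function on $RG$ induced by the embedding $RG\hookrightarrow\Di$ detects finite generation for projective submodules of finitely generated free modules when $\cd_R(G)\leqslant 2$.

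Finally, the \emph{In particular} clause is obtained by applying the main equivalence to each finitely generated subgroup $H\leqslant G$. One uses the standard inheritance $\cd_R(H)\leqslant \cd_R(G)\leqslant 2$, the composed embedding $RH\hookrightarrow RG\hookrightarrow \Di$, and the flat base change isomorphism $\Tor_n^{RH}(\Di, M)\cong \Tor_n^{RG}(\Di, RG\otimes_{RH} M)$ (valid since $RG$ is free as a right $RH$-module) to transfer the weak dimension hypothesis on $\Di$ from $RG$ to $RH$. This forces $H_2(H,\Di) = \Tor_2^{RH}(\Di, R) = 0$, whose $\Di$-dimension is trivially finite, and the main equivalence then yields that $H$ has type $\fp_2(R)$, proving $G$ is homologically coherent.
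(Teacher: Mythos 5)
The forward direction, the dimension-shifting to produce the exact sequence $0 \to H_2(G,\Di) \to \Di\otimes_{RG} N \to \Di^n$, and the ``In particular'' clause are all handled correctly. The gap is in the reverse direction, where you reduce to the claim that a quotient of the projective module $N$ whose $\Di$-tensor product vanishes must itself vanish (equivalently, that your chosen $N'$ equals $N$). That claim is false. Take $R = \Q$, $G = F(a,b)$, $N = RG$ (free, hence projective, and embedded in $RG^1$), and $N' = RG\cdot(a-1)$. Then $1\otimes(a-1)$ spans $\Di\otimes_{RG} N = \Di$, so $N'$ is exactly the kind of submodule your procedure produces, and $\Di\otimes_{RG}(N/N') = \Di/\Di(a-1) = 0$ because $a-1$ is a unit in $\Di$; yet $N/N' = RG/RG(a-1) \neq 0$ since $a-1$ is not right-invertible in $RG$ (apply the augmentation). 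The same phenomenon occurs for $N$ an actual second syzygy, e.g.\ for $G = \Z^2$ where $N \cong RG$. So the ``transport into $\Di^n$'' gesture does not repair things either: in the example, $\Di\cdot N' = \Di\cdot N$ and $N\cap\Di\cdot N' = N$, so no intersection trick of the kind you describe recovers $N'$.

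What actually saves the implication is a different use of projectivity. Write $N\oplus M = RG^{(I)}$ via the dual basis lemma. Tensoring with $\Di$ gives a split embedding $\Di\otimes_{RG}N \hookrightarrow \Di^{(I)}$, and since $\dim_\Di(\Di\otimes N) < \infty$ you can choose a finite basis, each vector of which has finite support; hence $\Di\otimes N \subseteq \Di^J$ for some finite $J\subseteq I$. Because $RG\injects\Di$, the support of $n\in N\subseteq RG^{(I)}$ equals the support of its image in $\Di^{(I)}$, which lies in $\Di\otimes N \subseteq \Di^J$; thus $N\subseteq RG^J$. A direct summand of $RG^{(I)}$ contained in the direct summand $RG^J$ is a direct summand of $RG^J$, so $N$ is finitely generated projective. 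In short, the conclusion ``$N$ is finitely generated'' is correct, but it does not follow by showing $N=N'$ for your particular $N'$; the projectivity must be exploited through the direct-sum decomposition and the compatibility of supports under $RG\injects\Di$, not through the false quotient lemma.
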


Now let us plug into \cref{2-complex_delzant} a 2-complex $X$ with non-positive immersions and a finitely generated non-cyclic and freely indecomposable subgroup $H\leqslant \pi_1(X)$. We have a sequence of immersions
\[
Z_0\immerses Z_1\immerses \ldots \immerses Z_k\immerses\ldots\immerses X
\]
such that the following properties hold:
\begin{enumerate}
\item The homomorphisms $\pi_1(Z_i)\to \pi_1(Z_{i+1})\to H\leqslant \pi_1(X)$ are surjective for all $i$.
\item $\lim_{i\to \infty}\pi_1(Z_i) = H$.
\item $Z_i$ is compact for all $i$.
\item $\chi(Z_i)\leqslant 0$ and $Z_i$ is aspherical for all $i$ (see \cref{npi_properties}).
\end{enumerate}
In particular, letting $F = \pi_1\left(Z_0^{(1)}\right)$ and $N_i = \ker(F\to \pi_1(Z_i))$, we obtain a dual sequence
\[
N_0\injects N_1\injects \ldots \injects N_k\injects \ldots \injects F
\]
with the following dual properties:
\begin{enumerate}
\item The homomorphisms $N_i\to N_{i+1}\to \ker(F\to H)\leqslant F$ are injective for all $i$.
\item $\lim_{i\to \infty}N_i = \bigcup_{i\in \N}N_i = \ker(F\to H) = N$.
\item The relation module $(N_i)_{\ab}$ for $\pi_1(Z_i) = F/N_i$ is a finitely generated (left) $\Z[\pi_1(Z_i)]$-module for all $i$.
\item If $\Di$ is a division ring which is also a right $\Z[H]$-module, then 
\[
\dim_{\Di}(\Di\otimes_{\Z[F/N_i]}(N_i)_{\ab}) - \rk(F) + 1\leqslant 0
\]
for all $i$.
\end{enumerate}
This last inequality comes from the fact that each $Z_i$ is aspherical and so we have the following projective resolution of $\Z$ as a trivial $\Z[\pi_1(Z_i)]$-module:
\[
\begin{tikzcd}
0 \arrow[r] & (N_i)_{\ab} \arrow[r] & \bigoplus_{s\in S}\Z[\pi_1(Z_i)] \arrow[r] & \Z[\pi_1(Z_i)] \arrow[r] & \Z \arrow[r] & 0.
\end{tikzcd}
\]
where here $S\subset F$ is a free basis. See \cref{sec:homology}.

Since $\pi_1(X)$ is locally indicable by \cref{npi_properties}, we may use \cref{division_ring} to obtain a division ring embedding $\Q[H]\injects \Di$. When $X$ is a one-relator complex, we may instead use the Lewin--Lewin division ring from \cref{LL_embedding}. We have
\[
\dim_{\Di}(\Di\otimes_{\Z H}N_{\ab})\leqslant \sup_{i\in \N}\dim_{\Di}\left(\Di\otimes_{\Z[\pi_1(Z_i)]}(N_i)_{\ab}\right)\leqslant \rk(F) - 1.
\]
Since $H_2(H, \Di)$ is a $\Di$-submodule of the finite dimensional $\Di$-module $\Di\otimes_{\Q H}N_{\ab}$, it is itself finite dimensional. Hence, we obtain the following theorem by \cref{hom_coherence_criterion}. This is \cite[Theorem 1.2]{JZL23}.

\begin{theorem}
\label{npi_hom_coherence}
If $X$ is a 2-complex with non-positive immersions, then $\pi_1(X)$ is homologically coherent.
\end{theorem}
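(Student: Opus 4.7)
The plan is to deduce the statement from the criterion \cref{hom_coherence_criterion} applied to each finitely generated subgroup $H \leq \pi_1(X)$. By a Grushko-style reduction, I may first assume $H$ is non-cyclic and freely indecomposable: infinite cyclic groups are trivially $\fp_2(\Z)$, and a finitely generated subgroup is $\fp_2$ if and only if each of its free factors is. The goal is then to produce a Hughes-free embedding of $\Q H$ into a division ring $\Di$ and to bound $\dim_\Di H_2(H, \Di)$.

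To produce the bound, I would apply \cref{2-complex_delzant} to the inclusion $H \leq \pi_1(X)$, possibly after conjugating $H$, to obtain a direct system of $\pi_1$-surjective immersions of compact, irreducible, connected 2-complexes $Z_0 \immerses Z_1 \immerses \cdots \immerses X$ with $\varinjlim \pi_1(Z_i) = H$. Since $X$ has non-positive immersions, \cref{npi_properties} ensures that every $Z_i$ is aspherical and satisfies $\chi(Z_i) \leq 0$. Writing $F = \pi_1(Z_0^{(1)})$, $N_i = \ker(F \surjects \pi_1(Z_i))$, and $S$ for a free basis of $F$, asphericity gives the exact sequence of \cref{sec:homology}:
\[
0 \longrightarrow (N_i)_{\ab} \longrightarrow \bigoplus_{s \in S} \Z[F/N_i] \longrightarrow \Z[F/N_i] \longrightarrow \Z \longrightarrow 0.
\]

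For the division ring, since $\pi_1(X)$ is locally indicable by \cref{npi_properties} and local indicability passes to subgroups, \cref{division_ring} provides a Hughes-free (in fact Linnell) embedding $\Q H \injects \Di$. Tensoring the sequence above with $\Di$ over $\Z[F/N_i]$ and using $\chi(Z_i) \leq 0$ together with finiteness of $S$ yields the uniform bound
\[
\dim_\Di \bigl(\Di \otimes_{\Z[F/N_i]} (N_i)_{\ab}\bigr) \leq \rk(F) - 1.
\]
The Hughes-freeness of $\Di$ ensures that the inclusions $N_i \hookrightarrow N_{i+1}$ induce injections of the tensored relation modules, so the bound is inherited by $N = \varinjlim N_i = \ker(F \surjects H)$, giving $\dim_\Di(\Di \otimes_{\Z H} N_{\ab}) \leq \rk(F) - 1$. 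Since $H_2(H, \Di)$ embeds into this finite-dimensional $\Di$-module and $\cd_\Q(H) \leq 2$ by asphericity of the relevant limit complex, \cref{hom_coherence_criterion} then delivers $\fp_2(\Q)$-ness of $H$, completing the argument.

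The delicate point I expect to be the main obstacle is the passage from the uniform bound on $\dim_\Di \bigl(\Di \otimes_{\Z[F/N_i]} (N_i)_{\ab}\bigr)$ to a bound on the colimit $\dim_\Di \bigl(\Di \otimes_{\Z H} N_{\ab}\bigr)$. A priori, tensoring with $\Di$ over different group rings $\Z[F/N_i]$ need not preserve injectivity of the transition maps, so the dimension could behave pathologically along the tower. It is precisely Hughes-freeness of $\Di$, guaranteed by the Jaikin-Zapirain--L\'opez-\'Alvarez construction of \cref{division_ring} under local indicability, that makes this compatibility hold and turns the geometric bound from non-positive immersions into a homological bound on $H_2(H, \Di)$.
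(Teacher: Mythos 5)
Your proposal follows the same overall structure as the paper: reduce to a non-cyclic, freely indecomposable $H$ via Grushko, invoke \cref{2-complex_delzant} to build the tower of $\pi_1$-surjective immersions $Z_i\immerses X$, use \cref{npi_properties} for asphericity and $\chi(Z_i)\leqslant 0$, get the division ring embedding $\Q H\injects\Di$ from local indicability via \cref{division_ring}, and conclude with \cref{hom_coherence_criterion}. The exact sequences and the uniform bound $\dim_{\Di}(\Di\otimes_{\Z[F/N_i]}(N_i)_{\ab})\leqslant\rk(F)-1$ are all set up as in the paper.

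However, your explanation of the colimit step is misplaced. You claim that Hughes-freeness of $\Di$ forces the inclusions $N_i\injects N_{i+1}$ to induce injections on the tensored relation modules, and that this is what propagates the uniform bound to $N=\bigcup_i N_i$. Neither part of this is right: the maps $(N_i)_{\ab}\to(N_{i+1})_{\ab}$ need not be injective to begin with, Hughes-freeness does not repair this after tensoring, and — most importantly — no injectivity is needed. The actual reason the bound passes to the colimit is purely linear-algebraic. One checks that $N_{\ab}=\varinjlim_i(N_i)_{\ab}$ as $\Z F$-modules (surjectivity is clear, and any element of $(N_i)_{\ab}$ dying in $N_{\ab}$ already dies in some $(N_j)_{\ab}$). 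Since $-\otimes_{\Z F}\Di$ commutes with directed colimits, $\Di\otimes_{\Z H}N_{\ab}=\varinjlim_i\Di\otimes_{\Z F}(N_i)_{\ab}$ is an increasing union of images of $\Di$-spaces of dimension at most $\rk(F)-1$, and a nested union of subspaces of dimension $\leqslant d$ has dimension $\leqslant d$. So Hughes-freeness (or Linnellity) plays no role in this step; all \cref{division_ring} is used for here is the bare existence of a division ring embedding of $\Q H$, which needs only local indicability of $H$, inherited from $\pi_1(X)$ by \cref{npi_properties}. Note also that \cref{hom_coherence_criterion} itself asks only for a division ring embedding, not a Hughes-free one. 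A minor point on $\cd_{\Q}(H)\leqslant 2$: the cleanest justification is not ``asphericity of a limit complex'' (which would need to be constructed and checked) but simply that $X$ is an aspherical $2$-complex, so $\cd_{\Z}(\pi_1(X))\leqslant 2$ and hence $\cd_{\Z}(H)\leqslant 2$ for every subgroup $H$.
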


The coherence of torsion-free one-relator groups now follows by applying \cref{npi,hom_coherence_upgrade,npi_hom_coherence}.

It remains an open question of Wise as to whether all 2-complexes with non-positive immersions are coherent, see \cite[Conjecture 12.11]{Wi20}. It is also open as to whether the rational group ring of the fundamental group of a 2-complex with non-positive immersions is coherent, see \cite[Conjecture 6]{JZL23}.

\subsubsection{Effective coherence}

A coherent group $G$ is \emph{effectively coherent} if there is an algorithm which, given as input a finite subset $S\subset G$, computes a finite presentation for the subgroup $\langle S\rangle \leqslant G$. For now, the following is wide open.

\begin{problem}
\label{effective_coherence}
Prove that one-relator groups are effectively coherent.
\end{problem}

Cyclically pinched and cyclically conjugated one-relator groups are known to be effectively coherent by work of Kapovich--Myasnikov--Weidmann \cite{KMW05}. They show that graphs of effectively coherent groups with virtually polycyclic edge groups, and some extra conditions, are effectively coherent. Unfortunately, the same techniques do not work when applied to graphs of groups with free edge groups. New ideas will be needed for one-relator groups.

The proof that one-relator groups are coherent is very non-constructive and so unfortunately does not shed light on further structure of finitely generated subgroups of one-relator groups. Thus, in order to approach Problem \ref{effective_coherence}, it will be necessary to understand the subgroup structure of one-relator groups more explicitly. Even the case of one-relator groups $F/\normal{w}$ with $\pi(w)\geqslant 3$ is open and was explicitly proposed by Louder--Wilton in \cite{LW24}.

\subsection{Virtual algebraic fibring and generalisations}
\label{sec:vfibring}

Recall from \cref{sec:brown} that a group $G$ algebraically fibres if there is a finitely generated normal subgroup $N\triangleleft G$ such that $G/N\isom \Z$. We say a group $G$ \emph{virtually algebraically fibres} if it has a finite index subgroup $H\leqslant G$ which algebraically fibred. We saw with Brown's criterion that we may characterise precisely when a one-relator group algebraically fibres. Determining when it is virtually algebraically fibred is significantly more difficult.

Dawid Kielak obtained a powerful criterion in \cite{Ki20} for when a group with the RFRS property virtually algebraically fibres. A group $G$ is \emph{RFRS} if it admits a sequence of finite index subgroups $N_i\triangleleft G$ such that $N_{i+1}\leqslant \ker(N_i\to H_1(N_i, \Q))$ and $\bigcap N_i = 1$. The main class of examples of RFRS groups are right angled Artin groups and all their subgroups. In particular, fundamental groups of special cube complexes are RFRS. Kielak proved that an infinite RFRS group $G$ virtually algebraically fibres if and only if $b_1^{(2)}(G) = 0$.

 An immediate consequence of Kielak's result and Dicks--Linnell's computation of the $L^2$-Betti numbers of one-relator groups is that If $G$ is a virtually RFRS torsion-free two-generator one-relator group, then $G$ is virtually algebraically fibred. Since one-relator groups are coherent, the fibre subgroup will be a finitely presented subgroup. Using a result of Fel'dman \cite[Theorem 2.4]{Fe71}, we may conclude that the fibre subgroup drops in cohomological dimension and so is a free group by a result of Stallings \cite{St68}. These results together imply the following statement.

\begin{theorem}
\label{v_fibring}
If $G$ is a torsion-free two-generator one-relator group that is virtually RFRS, then $G$ is virtually \{finitely generated free\}-by-cyclic.
\end{theorem}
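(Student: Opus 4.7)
The plan is to assemble the four ingredients indicated in the preceding paragraph: the $L^2$-Betti number computation of Dicks--Linnell, Kielak's virtual algebraic fibring criterion for RFRS groups, coherence of one-relator groups, and the classical results of Fel'dman and Stallings on cohomological dimension. First, I would invoke \cref{L2_betti} in the torsion-free ($n=1$) two-generator ($\rk(F)=2$) case to compute
\[
b_1^{(2)}(G) \;=\; 1 - 2 + 1 \;=\; 0.
\]
Since $L^2$-Betti numbers are multiplicative under passage to finite-index subgroups, if $H\leqslant G$ is a finite-index RFRS subgroup (which exists by hypothesis), then $b_1^{(2)}(H)=[G:H]\cdot b_1^{(2)}(G)=0$ as well.

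Next, I would apply Kielak's theorem to the infinite RFRS group $H$ to produce a further finite-index subgroup $H'\leqslant H$ together with an algebraic fibring $\phi\colon H'\twoheadrightarrow \Z$ whose kernel $K=\ker(\phi)$ is finitely generated. Because $H'$ is a finitely generated subgroup of the one-relator group $G$, coherence of one-relator groups (\cref{coherence}) guarantees that $H'$ itself is finitely presented; then a standard argument, or direct appeal to the fact that the kernel of a surjection from a finitely presented group onto $\Z$ is finitely presented whenever it is finitely generated, shows that $K$ is finitely presented as well. At this point, $H'\cong K\rtimes \Z$ with $K$ a finitely generated finitely presented group.

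To upgrade ``finitely generated'' to ``free'' I would feed this into the cohomological dimension argument. Since $G$ is torsion-free, by Lyndon's Identity Theorem (see the discussion after \cref{lyndon_id}) one has $\cd_\Z(G)\leqslant 2$, and hence $\cd_\Z(H')\leqslant 2$ as well. Fel'dman's theorem then gives
\[
\cd_\Z(K) \;\leqslant\; \cd_\Z(H') - 1 \;\leqslant\; 1,
\]
and Stallings' theorem that groups of cohomological dimension at most one are free allows me to conclude that $K$ is a (finitely generated) free group. Therefore $H'$ is a finite-index subgroup of $G$ that is (finitely generated free)-by-cyclic, completing the proof.

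The main obstacle, at least bookkeeping-wise, is ensuring that after two passes to finite-index subgroups (first to land inside an RFRS subgroup, then to obtain an actual algebraic fibring via Kielak) one still has a subgroup of the original one-relator group to which coherence, Lyndon's cohomological dimension bound, and Fel'dman's inequality apply. This is more a matter of careful naming than of real difficulty: all of these properties pass to finite-index subgroups, so no substantive obstruction arises. The genuinely heavy lifting is done by the cited theorems -- Kielak's RFRS criterion and coherence of one-relator groups -- rather than by anything new in the present argument.
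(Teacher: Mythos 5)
Your route matches the paper's own sketch step for step: Dicks--Linnell gives $b_1^{(2)}(G)=1-2+1=0$, Kielak's RFRS criterion then produces a finite-index subgroup with an algebraic fibring over $\Z$, and one finishes by bounding the cohomological dimension of the fibre via Fel'dman and invoking Stallings' theorem on groups of cohomological dimension one.

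Two points deserve correction. First, the parenthetical ``fact'' you offer as an alternative --- that the kernel of any surjection from a finitely presented group onto $\Z$ is finitely presented whenever it is finitely generated --- is false: the Bieri--Stallings kernel $\ker(F_2\times F_2\to \Z)$ is finitely generated but not finitely presented, i.e.\ lying in $\Sigma^1$ does not imply lying in $\Sigma^2$. Second, you apply coherence to $H'$, where it buys you nothing (a finite-index subgroup of a finitely presented group is automatically finitely presented); the place coherence is genuinely needed is for the fibre $K$. Since $K$ is a finitely generated subgroup of the one-relator group $G$, \cref{coherence} gives directly that $K$ is finitely presented, and combined with $\cd_{\Z}K\leqslant \cd_{\Z}G\leqslant 2$ this makes $K$ of type $\fp_\infty(\Z)$, which is the finiteness hypothesis that Fel'dman's theorem actually requires before one can conclude $\cd_{\Z}K\leqslant \cd_{\Z}H'-1\leqslant 1$.
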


Note that by \cite[Theorem 7.2]{Lu02}, a virtually algebraically fibred group $G$ has $b_1^{(2)}(G) = 0$. Hence, if $G$ is a one-relator group that virtually algebraically fibres, then $G$ must be two-generator and be torsion-free by \cref{L2_betti}.

An interesting variation of the virtual algebraic fibration problem which has no restriction on $b_1^{(2)}$ is the following. Given a group $G$ with $\cd(G) = n$, when does $G$ admit a finite index subgroup which embeds as a subgroup of a group $H$ that algebraically fibres over a group $N$ with $\cd(N) = n-1$? For one-relator groups this amounts to asking when they admit finite index subgroups that embed in \{finitely generated free\}-by-cyclic groups. Under strong geometric hypotheses, Kielak and the first author proved a criterion in \cite{KL24} for when a one-relator group admits a positive answer to this question, solving a problem of Baumslag from \cite{Ba86}.

\begin{theorem}
\label{v_fbc}
Let $G$ be a one-relator group. If $G$ is hyperbolic and virtually special, then $G$ has a finite index subgroup $H$ that is a subgroup of a \{finitely generated free\}-by-cyclic group.
\end{theorem}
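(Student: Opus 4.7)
The plan is to combine virtual specialness, which furnishes an RFRS structure amenable to Kielak's virtual algebraic fibering theorem, with the $L^2$-homological and cohomological control afforded by the one-relator and hyperbolicity hypotheses. First I would reduce to the torsion-free case: by \cref{Thm:Fischer-Karrass-Solitar}, one-relator groups with torsion are virtually torsion-free, so after passing to a finite-index subgroup I may assume that $G$ is torsion-free. By Lyndon's Identity Theorem this gives $\cd_{\Z}(G) \leqslant 2$. Passing to a further finite-index subgroup, I may assume that $G$ is compact special, so by Haglund--Wise $G$ embeds into a right-angled Artin group and in particular is RFRS.

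The chief obstruction to applying Kielak's theorem directly to $G$ is that in general $b_1^{(2)}(G) > 0$: by \cref{L2_betti}, vanishing of the first $L^2$-Betti number essentially forces $G$ to be two-generator and torsion-free, the case already covered by \cref{v_fibring}. The strategy is therefore to embed $G$ as a subgroup of an auxiliary group $H$ which is hyperbolic, virtually special, of rational cohomological dimension at most two, and which satisfies $b_1^{(2)}(H) = 0$. Granting such an $H$, Kielak's theorem applied to a finite-index RFRS subgroup of $H$ produces a further finite-index subgroup $H'$ with an epimorphism $H' \twoheadrightarrow \Z$ whose kernel $N$ is finitely generated. The short exact sequence $1 \to N \to H' \to \Z \to 1$ combined with Fel'dman's theorem yields $\cd_{\Q}(N) \leqslant 1$, so $N$ is free by Stallings' ends theorem, and necessarily of finite rank. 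Thus $H'$ is finitely generated free-by-cyclic, and the intersection $G \cap H'$ is a finite-index subgroup of $G$ embedding into $H'$, as required.

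To construct the ambient group $H$, the natural candidate is a malnormal amalgamation or HNN extension of $G$ along a suitable quasi-convex malnormal subgroup. Wise's malnormal special combination theorem then preserves virtual specialness, and the Bestvina--Feighn combination theorem preserves hyperbolicity, provided the amalgamating subgroups are quasi-convex and malnormal. In the torsion-free hyperbolic one-relator setting, candidate malnormal subgroups can be produced using the malnormality results for Magnus subgroups (\cref{Thm:Newman-malnormal-thm}, \cref{Collins_intersections_2}) together with the quasi-convexity that hyperbolicity provides. The cohomological dimension bound $\cd_{\Q}(H) \leqslant 2$ is preserved by amalgamations over infinite subgroups, and a careful $L^2$-Mayer--Vietoris computation lets one choose the auxiliary pieces so that $b_1^{(2)}(H) = 0$; alternatively, one may attempt a direct inductive construction proceeding along the Magnus--Moldavanski\u{\i} hierarchy of $G$, enriching the HNN-structure at each stage into a free-by-cyclic structure while using virtual specialness to control the monodromy.

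The main technical obstacle is precisely this construction of $H$: one must simultaneously arrange for hyperbolicity, virtual specialness, vanishing of $b_1^{(2)}$, and the cohomological dimension bound, while keeping enough control on the extension to deduce the desired embedding. The balancing act between killing $L^2$-Betti numbers and preserving the delicate quasi-convexity, malnormality, and separability conditions required by the combination theorems is where the essential difficulty lies; in particular, finite generation of the eventual kernel $N$ is not automatic from Kielak's theorem for an arbitrary RFRS group with $b_1^{(2)} = 0$ and must be extracted from the specific structure of the hierarchical construction, most likely via Fisher's refinement of Kielak's theorem for groups of type $\FP(\Q)$, whose hypotheses are guaranteed by the coherence of one-relator groups (\cref{coherence}).
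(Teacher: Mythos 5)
Your framework is right at the top level: embed a finite-index subgroup of $G$ into an auxiliary group $H$ with $\cd_{\Q}(H) \leqslant 2$, $b_1^{(2)}(H) = 0$, and the RFRS property; apply Kielak's fibering criterion to $H$; then use Fel'dman's theorem and Stallings--Swan to see that the fiber has cohomological dimension one and hence (being finitely generated and torsion-free) is free of finite rank. This is indeed the skeleton of the argument in \cite{KL24}. But there are two problems with the proposal as it stands.

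The first is the gap you yourself flag: the construction of $H$. You correctly identify that this is where the entire difficulty lies, but you then offer only two un-carried-out sketches — a malnormal quasi-convex doubling whose $L^2$-Mayer--Vietoris bookkeeping, hyperbolicity, specialness and cohomological dimension bound would all need to be verified simultaneously, and a vague ``enrich the Magnus--Moldavanski\u{\i} hierarchy into a free-by-cyclic structure.'' Neither is actually executed, and neither is how \cite{KL24} proceeds. The clue is in the paper's remark that the theorem continues to hold for any group satisfying only $b_2^{(2)}(G) = 0$ and $\cd_{\Q}(G) \leqslant 2$, with higher-dimensional analogues: the real construction of $H$ is a direct, dimension-general thickening argument that exploits the vanishing of the \emph{top} $L^2$-Betti number (automatic for all one-relator groups by \cref{L2_betti}), rather than anything specific to one-relator combinatorics or Magnus subgroups. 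Your malnormal-combination proposal is a genuinely different route and might even work with enough labour, but as presented it is a conjectural outline, not a proof, and it would not yield the stated generalisation.

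The second problem is a factual error about Kielak's theorem. You write that finite generation of $N$ ``is not automatic from Kielak's theorem for an arbitrary RFRS group with $b_1^{(2)} = 0$.'' It is: ``virtually algebraically fibres'' means precisely that some finite-index subgroup admits an epimorphism to $\Z$ with finitely generated kernel, and Kielak proves this is equivalent to $b_1^{(2)} = 0$ for infinite RFRS groups. Once the kernel is finitely generated and has cohomological dimension one, it is free of finite rank — there is no additional finiteness to extract and no need to invoke Fisher's refinement or the coherence of one-relator groups for this step. Fisher's refinement enters in \cref{v_fbc2}, not here. A smaller point: when applying Stallings--Swan you should work with $\cd_{\Z}(N) \leqslant 1$ (after reducing to the torsion-free case), not $\cd_{\Q}$, since freeness of the fiber is a $\Z$-coefficient statement.
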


The class of \{finitely generated free\}-by-cyclic groups form an extensively studied class of groups with a deep and rich theory, connecting dynamics, geometry and algebra. Such groups lie within the slightly more general class of free-by-cyclic groups, where here the free part is not necessarily finitely generated. In this generality, less is known although most results for \{finitely generated free\}-by-cyclic groups can be upgraded to the more general case. Weakening the assumptions from \cref{v_fbc}, Fisher proved in \cite{Fi24} that any virtually RFRS one-relator group is virtually free-by-cyclic.

\begin{theorem}
\label{v_fbc2}
Let $G$ be a one-relator group. If $G$ is virtually RFRS, then $G$ is virtually free-by-cyclic.
\end{theorem}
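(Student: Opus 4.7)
My approach combines Kielak's virtual algebraic fibring criterion with the coherence of one-relator groups and a cohomological dimension argument.

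First, I would reduce to the torsion-free case. If $G$ has torsion, then $G$ is hyperbolic by the B.~B.~Newman spelling theorem and virtually special by Wise, so \cref{v_fbc} applies and yields that $G$ is virtually (finitely generated free)-by-cyclic. Henceforth assume $G = F/\normal{w}$ with $w$ not a proper power, so that $G$ is torsion-free, locally indicable (\cref{locally_indicable}), of cohomological dimension at most two (\cref{lyndon_id}), and coherent (\cref{coherence}).

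Second, I would pass to a finite-index RFRS subgroup $H \leqslant G$, which inherits all of these properties. I would then apply Kielak's theorem: when $b_1^{(2)}(H) = 0$, we obtain a finite-index subgroup $K \leqslant H$ and an epimorphism $\phi \colon K \to \Z$ with finitely generated kernel $N$. By coherence of $G$, the subgroup $N$ is finitely presented; by the Lyndon--Hochschild--Serre spectral sequence for $1 \to N \to K \to \Z \to 1$ combined with $\cd(K) \leqslant 2$, we deduce $\cd(N) \leqslant 1$; Stallings--Swan then gives $N$ free, so $K$, and hence $G$, is virtually free-by-cyclic.

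The main obstacle is the case $b_1^{(2)}(H) > 0$ --- for example, when $G = \pi_1(\Sigma_g)$ with $g \geqslant 2$, in which case $b_1^{(2)}(G) > 0$ by \cref{L2_betti}. Here Kielak's criterion does not directly produce a fibration, and passing to finite-index subgroups cannot decrease $b_1^{(2)}$. Fisher's argument presumably proceeds via agrarian or twisted Novikov--Betti invariants, leveraging the Hughes-free division ring $\Di_{\Q H}$ guaranteed by local indicability (\cref{division_ring}), to extract a virtual epimorphism $\phi \colon K \to \Z$ whose kernel $N$ is locally free but possibly infinitely generated. Coherence and $\cd \leqslant 2$ together force every finitely generated subgroup of $N$ to be finitely presented of cohomological dimension at most one, hence free by Stallings--Swan. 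The final and hardest step is to upgrade local freeness to freeness of $N$ itself --- a delicate issue in general, as illustrated by the additive group of rationals, which is locally cyclic but not free --- and would be accomplished by exploiting the RFRS filtration on $H$ to exhibit $N$ as a suitably compatible directed union of free subgroups, where the compatibility comes from the one-relator structure together with the vanishing of the relevant agrarian Betti number.
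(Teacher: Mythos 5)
Your reduction to the torsion-free case is sound, and your handling of the two-generator case reproduces the paper's own proof of \cref{v_fibring} in spirit (though the Lyndon--Hochschild--Serre spectral sequence by itself only kills the $\Z$-coinvariants of $H^2(N;M)$, not $H^2(N;M)$ itself; the actual input is Fel'dman's theorem, which requires $N$ to be of type $\FP$ --- supplied here by coherence together with $\cd(K)\leq 2$). The genuine gap is that the $\geq 3$-generator case, which is precisely the new content of \cref{v_fbc2}, is not proved. You correctly observe that $b_1^{(2)}(G)>0$ blocks Kielak's theorem at every finite index, but at that point the proposal substitutes a guess ("Fisher's argument presumably proceeds via agrarian or twisted Novikov--Betti invariants\dots") and then explicitly defers the "final and hardest step" of upgrading a locally free kernel to a free one. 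No actual mechanism is offered: exhibiting $N$ as a "suitably compatible directed union of free subgroups" is exactly the thing one does not know how to do in general (your own $\Q$ example makes the point), and appealing to "the RFRS filtration" without a concrete lemma does not close this.

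More importantly, the proposed architecture does not match the actual one. Fisher does not first produce a kernel that is merely locally free and then try to promote it; the RFRS hypothesis is consumed inside a generalisation of Kielak's fibring criterion, not afterwards. As the paper notes immediately after the theorem, the relevant hypotheses are $b_2^{(2)}(G)=0$ and $\cd_\Q(G)\leq 2$: it is the \emph{top} $L^2$-Betti number, not $b_1^{(2)}$, whose vanishing drives the argument, and the output is a finite-index subgroup fibring over $\Z$ with kernel of cohomological dimension one lower than the ambient group. Once $\cd(\ker\phi)\leq 1$ is in hand, Stallings--Swan (valid for groups of arbitrary cardinality) gives freeness outright, so the "locally free $\Rightarrow$ free" upgrade you flag as the crux does not occur in the form you imagine. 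Framing Kielak's original $b_1^{(2)}=0$ criterion as the engine, and treating the higher-generator case as an obstruction to be circumvented ad hoc, inverts the logic: the higher-Betti-number criterion handles all torsion-free one-relator groups uniformly, with the two-generator case as a special instance rather than the model.
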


Replacing the hypothesis that $G$ be a one-relator group with the hypothesis that $b_2^{(2)}(G) = 0$ and $\cd_{\Q}(G)\leqslant 2$, \cref{v_fbc} and \cref{v_fbc2} remain valid. There are also similar statements for groups of arbitrary finite cohomological dimension in \cite{KL24,Fi24}.

Chong--Wise show in \cite{CW24} that a finitely generated ascending HNN-extension of a free group is a subgroup of an ascending HNN-extension of a finitely generated free group. The slight mismatch between the conclusions of \cref{v_fbc} and \cref{v_fbc2} would be resolved if a similar statement was proven for free-by-cyclic groups (see \cite[Conjecture 1.2]{CW24}):

\begin{conjecture}[Chong--Wise]
If $G$ is a finitely generated free-by-cyclic group, then $G$ is a subgroup of a \{finitely generated free\}-by-cyclic group.
\end{conjecture}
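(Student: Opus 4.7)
The plan is to adapt the Chong–Wise embedding theorem for ascending HNN-extensions of free groups (\cite{CW24}) to the more restrictive setting of finitely generated free-by-cyclic groups, upgrading the injective self-map produced by Chong–Wise to a genuine automorphism of a finitely generated free group. Write $G = F \rtimes_\phi \Z$ with $F$ free and $\phi \in \Aut(F)$, fix a finite generating set of the form $T \cup \{t\}$ with $T \subseteq F$, and set $F_0 := \langle T \rangle$, a finitely generated free subgroup of $F$. The crucial feature distinguishing this case from the general ascending HNN setting is that the nested subgroups $F_n := \langle \phi^k(T) : |k| \leq n \rangle$ form a \emph{two-sided} exhaustion of $F$, with $\phi^{\pm 1}(F_n) \subseteq F_{n+1}$; exploiting this symmetry is the key.

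First I would invoke the Chong–Wise theorem to obtain an embedding $\iota_+ \colon G \hookrightarrow K_+ = F'_+ *_{\psi_+}$ with $F'_+$ finitely generated free and $\psi_+$ injective but a priori not surjective. Since $G$ equally admits a realisation as an ascending HNN-extension with respect to $\phi^{-1}$ (via the automorphism of $G$ sending $t \mapsto t^{-1}$), I would run the Chong–Wise construction again to obtain a dual embedding $\iota_- \colon G \hookrightarrow K_- = F'_- *_{\psi_-}$. The goal is then to amalgamate $\iota_+$ and $\iota_-$ into a single embedding $G \hookrightarrow F'' \rtimes_{\psi''} \Z$, with $F''$ finitely generated free and $\psi''$ an automorphism. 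Concretely, one would form the pushout of $K_+$ and $K_-$ over $G$, find inside it a common finitely generated free subgroup containing $F_0$, and attempt, via Stallings-fold techniques, to identify a finitely generated free normal subgroup admitting a self-map which is simultaneously an extension of $\psi_+$ and of $\psi_-^{-1}$, hence an automorphism.

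The principal obstacle will be controlling the iterative combinatorics of the construction. The Chong–Wise procedure adjoins new free generators at each fold to accommodate the image of the next iterate of the endomorphism, and running this procedure bidirectionally introduces competing constraints that may fail to stabilise into a single finitely generated group. A natural approach to force stabilisation is to model $\phi$ by a bi-infinite direct system of Stallings graph inclusions on which $\phi$ acts by a shift, and to use that $\phi$ is a genuine automorphism (not merely an injection) to extract — via a compactness or periodicity argument on the space of rooted Stallings graphs of bounded complexity — a finite quotient graph $\Gamma$ on which $\phi$ descends to an automorphism $\psi''$ of $\pi_1(\Gamma) =: F''$. Should this direct approach fail, a weaker fallback is to restrict to those $G$ which are virtually RFRS, where Fisher's theorem (\cref{v_fbc2}) already yields a finite-index subgroup which is \{f.g.\ free\}-by-cyclic, and then to embed $G$ itself into the mapping torus of the automorphism inherited by this subgroup; however, this gives only a conditional conclusion and does not resolve the full conjecture.
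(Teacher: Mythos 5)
This statement is presented in the paper as an \emph{open conjecture} (\cite[Conjecture~1.2]{CW24}); the paper offers no proof, only the observation that a proof would remove the mismatch between the conclusions of \cref{v_fbc} and \cref{v_fbc2}. So there is no ``paper's own proof'' to compare against, and your proposal should be evaluated on its internal merits as a candidate strategy.

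As such a strategy, there is a real gap at the amalgamation step. Applying Chong--Wise to $\phi$ and to $\phi^{-1}$ does give two embeddings $\iota_\pm\colon G\hookrightarrow K_\pm$, but the ``pushout of $K_+$ and $K_-$ over $G$'' is just the amalgamated free product $K_+ *_G K_-$, and there is no reason this group should be \{f.g.\ free\}-by-cyclic, nor that it should contain a finitely generated free normal subgroup on which the two injections $\psi_+$ and $\psi_-^{-1}$ glue to a single automorphism. The forward construction enlarges $F_0$ to accommodate positive iterates $\phi, \phi^2, \dots$; the backward construction enlarges it to accommodate $\phi^{-1}, \phi^{-2},\dots$; these enlargements are driven by unrelated Stallings-fold sequences, and the ``new generators'' adjoined on one side are invisible to the other. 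Nothing in the proposal forces the two sequences of folds to become compatible, and if some clean compactness or periodicity argument on rooted Stallings graphs of bounded complexity did the job here, the conjecture would not still be open.

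A more structural remark on why the bidirectional idea does not reduce to the unidirectional one: the crux of Chong--Wise is that an injective endomorphism of an infinitely generated free group can be ``propagated forward'' into a larger finitely generated free overgroup because one never needs to invert the map. An automorphism of an infinitely generated free group, by contrast, need not preserve \emph{any} finitely generated free subgroup up to commensurability, and there is no a priori finitely generated $F''\le F$ with $\phi(F'') = F''$. Any genuine proof will have to produce such an invariant finitely generated subgroup (or an overgroup of $G$ playing that role) essentially from scratch, and that is precisely the missing idea in your sketch. Your fallback via virtual RFRS and \cref{v_fbc2} is also weaker than the conjecture in a second way: it only produces a \emph{finite-index} subgroup of $G$ that is \{f.g.\ free\}-by-cyclic, whereas the conjecture asks for an embedding of $G$ itself as a subgroup of such a group, which is a different direction of containment.
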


We do not know to what extent \cref{v_fbc2} characterises which one-relator groups are virtually free-by-cyclic. Button showed in \cite[Corollary 8]{Bu19} that there are residually finite one-relator groups which are not virtually free-by-cyclic (not even virtually an ascending HNN-extension of a free group). Wise made the following conjecture in \cite[Conjecture 17.8]{Wi20}.

\begin{conjecture}[Wise]
If $G$ is a hyperbolic one-relator group, then $G$ is virtually free-by-cyclic.
\end{conjecture}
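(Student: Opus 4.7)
The strategy I would pursue reduces the conjecture to two ingredients already surveyed above: Fisher's theorem (\cref{v_fbc2}), which produces a virtual free-by-cyclic structure from virtual RFRS, and Wise's malnormal quasi-convex hierarchy theorem, which shows that any hyperbolic group possessing such a hierarchy is virtually compact special and therefore virtually RFRS. It would thus suffice to produce, for every hyperbolic one-relator group $G$, a malnormal quasi-convex hierarchy terminating in a virtually special group (any finite cyclic group or free group will do for the base).

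The natural candidate is the topological hierarchy of \cref{topological_hierarchy}, which decomposes $G$ as an iterated HNN-extension with edge groups given by Magnus subgraphs and terminating in a finite cyclic group. In a hyperbolic ambient group, quasi-convexity of finitely generated edge subgroups is tightly controlled by acylindricity of the action on the Bass--Serre tree, so the key technical step is to show the Magnus splittings are always acylindrical when $G$ is hyperbolic. By \cref{acylindrical}, this is already known when $G$ has torsion or is 2-free; the malnormality of Magnus subgroups in these cases (\cref{Newman_malnormal_thm} and \cref{Collins_intersections_2}) then upgrades the hierarchy to a malnormal quasi-convex one, and Wise's theorem together with \cref{v_fbc2} closes the argument in both regimes.

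The hard case, and the main obstacle, is that of torsion-free hyperbolic one-relator groups $G = F/\normal{w}$ that are not 2-free, i.e.\ those with primitivity rank $\pi(w) = 2$. Here the structural obstruction is carried by the unique rank-two $w$-subgroup $H \leqslant F$ of \cref{w-subgroup}, whose quotient $H/\normal{w}$ is precisely the primitive extension subgroup that already appears as the remaining case in the reduction of Gersten's conjecture. I would argue inductively along the Magnus hierarchy that any failure of acylindricity in a higher stratum is detected by a sub-splitting involving such a primitive extension subgroup, thereby localising the whole problem to showing that rank-two torsion-free hyperbolic one-relator groups are virtually RFRS. For this base case the toolkit is considerably richer: Brown's criterion (\cref{brown}) and the Friedl--Tillmann polytope (\cref{splitting_complexity}) yield explicit control on algebraic fibrings of two-generator one-relator groups, and one can attempt to construct a proper cocompact action on a $\cat(0)$ cube complex directly from the Friedl--Tillmann polytope, after which \cref{v_fibring} applied to a suitable finite-index subgroup together with Wise's special-ness criteria would finish the argument. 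Establishing this rank-two primitive-extension base case is where I expect essentially all of the difficulty to lie; once it is in hand, the hierarchical machinery packages everything into a proof of Wise's conjecture.
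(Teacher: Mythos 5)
This statement is presented in the paper as an \emph{open conjecture} (Wise's Conjecture 17.8 in his monograph), so there is no proof in the paper for you to match; what you have written is a research programme, not a proof. Your organisation of the problem by primitivity rank is the natural one, and the cases $\pi(w)\neq 2$ are indeed handled by existing machinery surveyed in the paper: \cref{v_special} gives that $G$ is virtually compact special, hence virtually RFRS, and \cref{v_fbc2} (or \cref{v_fbc} directly, which is precisely how the paper treats the torsion case) yields virtually free-by-cyclic.

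The conjecture remains open precisely because of the $\pi(w)=2$ case, and your proposal has two genuine gaps there. First, you propose to ``argue inductively along the Magnus hierarchy that any failure of acylindricity in a higher stratum is detected by a sub-splitting involving such a primitive extension subgroup.'' But the reduction to primitive extension subgroups that the paper establishes (\cref{main_hyperbolic}) is a reduction \emph{for hyperbolicity}, not for virtual specialness or virtual RFRS. To propagate quasi-convexity of the hierarchy, or virtual specialness, from the $w$-subgroup $P$ up to $G$, you would need precisely the acylindricity/quasi-convexity statement that is unavailable when $\pi(w)=2$; you cannot obtain it as a by-product of the reduction, since the reduction itself would require it. Note also that when $\rk(F)>2$ the group $G$ is not equal to $P$, so even a complete result for two-generator groups does not by itself yield the general case without a further combination argument that is nowhere supplied. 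Second, your proposed base-case method---constructing a proper cocompact action on a $\cat(0)$ cube complex ``directly from the Friedl--Tillmann polytope''---is pure speculation with no known technique behind it. The Friedl--Tillmann polytope encodes the BNS-invariant and splitting complexity; there is no construction in the paper or the literature it cites that extracts a cubulation from it. Finally, observe that Wise's conjecture implies that hyperbolic one-relator groups are residually finite, which the paper explicitly flags as open (Question \ref{qstn:hyperbolic_rf}); any proof would have to resolve that as well, which confirms that your honest admission that ``essentially all of the difficulty'' lies in the base case is accurate---but it also means the difficulty has not been addressed.
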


\subsection{Mel'nikov's conjecture, finite index subgroups and infinite index subgroups}

A well-known conjecture in the theory of one-relator groups is due to Mel'nikov, first stated in the 1980 edition of the Kourovka notebook \cite[Problem 7.36]{Ko18}: if $G$ is a residually finite group in which all finite index subgroups are one-relator, then $G$ is either free or the fundamental group of a closed surface. The entry in the Kourovka notebook notes that a counterexample was quickly provided by Churkin: the Baumslag--Solitar groups $\bs(1, n)$ each have all their finite index subgroups isomorphic to $\bs(1, m)$ for various values of $m$. However, the following modification is still open.

\begin{conjecture}[Mel'nikov]
\label{melnikov}
If $G$ is a residually finite group in which all finite index subgroups are one-relator, then $G$ is either free, isomorphic to $\bs(1, n)$ for some $n$ or the fundamental group of a closed surface.
\end{conjecture}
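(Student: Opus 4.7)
The plan is to decompose $G$ into structural cases via the rational Euler characteristic, reduce to the torsion-free setting, and then handle the low-rank and high-rank regimes with different tools. Taking $H = G$ shows $G$ itself must be one-relator, so write $G = F_r/\normal{w^n}$ with $w$ not a proper power. The invariant $\chi_{\mathbb{Q}}(G) = 1 - r + 1/n$ scales with index, so for every finite index subgroup $H \leqslant G$ of index $m$ which is one-relator with parameters $(r',n')$ one has the diophantine identity $1 - r' + 1/n' = m(1 - r + 1/n)$. Combined with residual finiteness and coherence of one-relator groups (\cref{coherence}), this identity heavily restricts which $G$ can satisfy the hypothesis.

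First, I would reduce to the torsion-free case. If $n \geqslant 2$, then by \cref{v_unim} $G$ has a torsion-free finite index subgroup $H_0$, which is one-relator by hypothesis and itself satisfies the hypothesis (its finite index subgroups being finite index in $G$). Since each of the target groups---free groups, $\mathrm{BS}(1,n)$, and closed surface groups---is torsion-free, I would then argue that any one-relator torsion extension of such an $H_0$ must itself already be on the list. Here the classification of torsion elements in one-relator groups (\cref{finite_order}) combined with the deficiency bound (a finite extension $G$ of a torsion-free $H_0$ in the list has deficiency forced by $H_0$) rules out sporadic overgroups of surface groups by NEC-type groups, which typically require more than one relator.

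In the torsion-free case, the Euler characteristic $\chi(G) = 2 - r$ gives three subcases. For $r = 1$, $G \cong \mathbb{Z}$ is free. For $r = 2$, every finite index subgroup has $\chi = 0$, hence rank 2 if one-relator. I would then combine Moldavanski\u{\i}'s theorem on abelian subgroups (\cref{Thm:Moldavanskii}), residual finiteness via Meskin's criterion (\cref{Thm:BS-residual-finiteness-characterisation}), and Brown's criterion for algebraic fibration (\cref{brown}) to force $G \in \{\mathbb{Z}^2, \mathrm{BS}(1,-1), \mathrm{BS}(1,n)\}$: the hypothesis that every $[G\colon H] = m$ yields again a 2-generator one-relator group rules out generic hyperbolic 2-generator one-relator groups (whose FI subgroups have many relations by the Schreier-index formula applied after fibering), and residual finiteness discards the remaining $\mathrm{BS}(m,k)$ with $|m|,|k| \neq 1$.

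The hard case, and the main obstacle, is $r \geqslant 3$. Here $\chi(G) < 0$ and the target is to conclude that $G$ is a closed surface group. My plan is: (a) use the formula $r' = 2 + m(r-2)$ for the rank of the unique one-relator presentation of an index-$m$ subgroup to extract strong rigidity on the lattice of FI subgroups of $G$; (b) use Lyndon's identity theorem and the $H_2$ computation it yields to force $w \in [F_r,F_r]$, so $H_2(G,\mathbb{Z}) = \mathbb{Z}$, a necessary condition for being a surface group; (c) use the coherence theorem (\cref{coherence}) together with Collins' malnormality of Magnus subgroups and the action on the Bass-Serre tree (\cref{acylindrical}) to produce a finite index subgroup which acts properly cocompactly on a 2-complex with the homological type of a closed surface; (d) invoke rigidity of closed surface groups among their one-relator overgroups to conclude. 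The serious obstacle is step (c): showing that the hypothesis on \emph{all} finite index subgroups---not just cohomologically but presentation-theoretically---forces a surface-like geometric structure. One possible path, guided by \cref{v_fbc} and \cref{v_fbc2}, is to split on whether $G$ is hyperbolic: in the hyperbolic case use Wise's cubulation results (\cref{sec:geometry}) to embed a finite index subgroup of $G$ into a right-angled Artin group and then use the one-relator constraint on all further covers to collapse the cube complex to a surface; in the non-hyperbolic case, $G$ must contain $\mathbb{Z}^2$, which together with coherence and residual finiteness restricts $G$ to a graph of surface pieces, which a one-relator ambient then forces to be a single surface group.
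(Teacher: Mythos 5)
This is not a theorem in the paper but an \emph{open conjecture}: the surrounding text explicitly says ``the following modification is still open,'' and then reports only partial progress (Curran for surface-like presentations, Ciobanu--Fine--Rosenberger for cyclically and conjugacy pinched one-relator groups, Gardam--Kielak--Logan for the two-generator case, and Jaikin-Zapirain--Morales showing that Mel'nikov groups generated by at least three elements are 2-free). There is therefore no proof in the paper for your proposal to be compared against, and a complete argument of the sort you are attempting would constitute a solution to an open problem.

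Taken on its own terms, your proposal is a strategy sketch rather than a proof, and the gaps are at exactly the points where the difficulty lives. The torsion reduction is not a reduction: all groups in the target list are torsion-free, so one must rule out torsion in $G$ outright, and your appeal to ``NEC-type groups, which typically require more than one relator'' is not an argument. The $r = 2$ case you propose to settle with Moldavanski\u{\i}, Meskin, and Brown is precisely the Gardam--Kielak--Logan theorem cited in the paper; your phrase ``rules out generic hyperbolic 2-generator one-relator groups'' elides the actual content of that result. Most seriously, you yourself flag step (c) for $r \geqslant 3$ as ``the serious obstacle,'' and indeed nothing in the sketch bridges the gap from homological constraints (such as $H_2(G,\mathbb{Z}) \cong \mathbb{Z}$, which is necessary but very far from sufficient) to the conclusion that $G$ is a surface group. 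The Euler-characteristic bookkeeping $1 - r' + 1/n' = m(1 - r + 1/n)$ and $r' = 2 + m(r-2)$ are correct but carry essentially no information beyond what multiplicativity of $\chi_{\mathbb{Q}}$ already gives; they do not constrain which one-relator presentations appear, only their generator counts. The known partial progress in the $r \geqslant 3$ regime is the Jaikin-Zapirain--Morales 2-freeness result, which is a much weaker conclusion than yours and was itself a substantial piece of work.

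If you want to contribute here, the realistic targets suggested by the paper are: strengthening 2-freeness to $k$-freeness or to (uniform) negative immersions in the $r \geqslant 3$ case; or trying to close the conjecture under additional hypotheses such as hyperbolicity or special cubulation, where the machinery of \cref{v_special} and \cref{strong_coherence} is available. Presenting an outline of the full conjecture as if it were provable with the tools at hand is not the right framing.
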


A group satisfying the assumptions of Conjecture \ref{melnikov} is known as a \emph{Mel'nikov group}. Some progress on Conjecture \ref{melnikov} has been made. For instance, Curran studied finite index subgroups of one-relator groups with surface-like presentations in \cite{Cu89}, characterising when they are actually surface groups. Ciobanu--Fine--Rosenberger confirm Conjecture \ref{melnikov} for all cyclically and conjugacy pinched one-relator groups in \cite{CFR13}. Gardam--Kielak--Logan showed in \cite{GKL23} that 2-generator Mel'nikov groups satisfy the conclusion of Conjecture \ref{melnikov}. Jaikin-Zapirain--Morales showed in \cite{JZM23} that Mel'nikov groups generated by at least three elements are 2-free, so in particular they have (uniform) negative immersions by \cref{k-free}. Finally, we mention that the pro-$p$ version of Mel'nikov's original conjecture has been known to hold for a long time by work of Dummit--Labute \cite{DL83}, where the pro-$p$ analogue of surface groups are known as Demu\v{s}hkin groups.

Several variations of Mel'nikov's conjecture have also been posed. For instance, Fine--Rosenberger--Wienke asked \cite[Problem 20.22]{Ko18}  whether a one-relator group in which every infinite index subgroup is free is either free or the fundamental group of a closed surface. This was also confirmed for two-generator one-relator groups in \cite{GKL23}. Wilton later confirmed this variation for all one-relator groups, see \cite[Theorem D]{Wi24_2}.

\begin{theorem}
\label{infinite_index}
If $G$ is an infinite one-relator group that is not free or a surface group, then $G$ contains an infinite index non-free subgroup.
\end{theorem}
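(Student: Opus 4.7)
The plan is to prove the contrapositive: if $G = F/\normal{w}$ is an infinite one-relator group in which every infinite index subgroup is free, then $G$ is free or a surface group. We eliminate torsion first: if $G$ has torsion, then by \cref{finite_order}, $G$ contains a nontrivial finite cyclic subgroup, which is non-free and of infinite index in the infinite group $G$, contradicting the hypothesis. So we may assume $G$ is torsion-free, and hence $\pi(w) \geqslant 2$ by \cref{k-free}.

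Next, I would handle the case $\rk(F) \geqslant 3$. If $\pi(w) = 2$, then by \cref{pi=2} there is a unique rank-$2$ $w$-subgroup $H \leqslant F$, and \cref{w-subgroup} gives an embedding of the $2$-generator one-relator group $H/\normal{w}_H$ into $G$; this image is non-free by \cref{Thm:Whitehead-theorem}, since $w$ is imprimitive in $H$. If this subgroup has infinite index in $G$, we are done; otherwise, it has finite index and $G$ is virtually isomorphic to a $2$-generator non-free one-relator group, which reduces us to the $2$-generator case. If instead $\pi(w) \geqslant 3$, then $G$ is $2$-free and has uniform negative immersions by \cref{unim}. Here I would invoke a surface subgroup theorem for one-relator groups (coming from the structure of $w$-subgroups and Louder--Wilton's analysis in the style of \cref{sec:unpi,sec:primitivity_rank}) to produce a non-free surface subgroup $\Sigma\leqslant G$. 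Its index in $G$ must be infinite unless $G$ itself is a closed surface group, which can be ruled out under our hypothesis by a direct Euler characteristic/cohomological comparison (multiplicativity of $\chi$ under finite covers versus $\chi$ of the one-relator presentation complex).

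The main obstacle is the $2$-generator case: $G = \langle a, b \mid w\rangle$ torsion-free, non-free, non-surface, since here the $w$-subgroup is all of $F$ and the previous argument yields no proper subgroup. I would apply \cref{one-relator_splitting} iteratively along the Magnus--Moldavanski\u{\i} hierarchy $G = G_0 \supset G_1 \supset \dots \supset G_N$, with $G_{i-1} = G_i *_{\psi_i}$. At each step $\normal{G_i} \triangleleft G_{i-1}$ has infinite index (the quotient is $\Z$), so if any $G_i$ is itself non-free, we obtain the required infinite index non-free subgroup of $G$. The difficulty reduces to the base case in which every $G_i$ down the hierarchy is free, so $G$ is realized as an HNN-extension of a free group over free Magnus subgroups. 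To distinguish surface groups from other such HNN-extensions, one exploits more refined tools: the presence of Baumslag--Solitar-like relations forces a $\Z^2$-subgroup (necessarily of infinite index and non-free); Brown's criterion (\cref{brown}) together with the Friedl--Tillmann polytope controls the algebraic fibres and their complements; and the character sphere analysis of \cref{sec:character_sphere} permits construction of surface subgroups directly from suitable quotients $Z_\phi$ of the trace of $w$. The combinatorial classification of surface relators (only $[a,b]$ and $a^2b^2$ among $2$-generator one-relator presentations yield closed surfaces) then forces $G$ to be a surface group whenever no such non-free infinite index subgroup can be produced, completing the contrapositive.

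The hardest step will be the final base case of the $2$-generator hierarchy, where $G$ is an HNN-extension of a free group over proper free subgroups: one must rule out that \emph{every} subgroup not containing the vertex group is free without contradicting the non-surface hypothesis, which requires a delicate structural argument on the identifying map $\psi$ between Magnus subgraphs, going beyond the classical hierarchy machinery discussed in \cref{sec:one-relator_splittings}.
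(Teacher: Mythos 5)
The central gap is the surface subgroup theorem you invoke in the $\pi(w)\geqslant 3$, $\rk(F)\geqslant 3$ branch. No such theorem exists: as the survey states immediately after \cref{surface_subgroup}, the surface subgroup conjecture for general one-relator groups is ``wide open'' --- the known results cover only cyclically pinched/conjugated one-relator groups (Wilton) and random ones (Calegari--Walker). The Louder--Wilton negative immersion machinery does not produce surface subgroups, so this step has no proof to cite. Separately, the $\pi(w)=\rk(F)$ case --- which includes every torsion-free non-free two-generator one-relator group --- is an acknowledged hole in your argument: the hierarchy sketch names plausible tools (Brown's criterion, polytopes, the character sphere) but never assembles them into a proof, and the ``otherwise finite index, reduce to the two-generator case'' fallback in your $\pi(w)=2$, $\rk(F)\geqslant 3$ branch is problematic since the two-generator case is precisely what you have not handled.

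In fact the $\pi(w)<\rk(F)$ cases close cleanly without surfaces and without any case split: once torsion is excluded, $w$ is not a proper power in $F$, so the $w$-subgroup image $P\leqslant G$ is a torsion-free one-relator group with Euler characteristic $2-\pi(w)$, while $\chi(G)=2-\rk(F)$; multiplicativity of $\chi$ over finite-index subgroups rules out $[G:P]<\infty$ whenever $\pi(w)<\rk(F)$, and $P$ is non-free since $w$ is imprimitive in the $w$-subgroup $H$. So your ``otherwise'' branch never fires, but that observation does not help with $\pi(w)=\rk(F)$, which remains the hard base case. The survey itself does not prove \cref{infinite_index} --- it is cited from Wilton \cite{Wi24_2}, whose argument produces a finitely generated non-free subgroup of infinite index via the rational curvature invariants of \cite{Wi24}, a route quite different from the surface-subgroup approach you sketch.
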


When $G$ is a one-relator group generated by at least three elements, Wilton actually shows that there is a finitely generated non-free subgroup of infinite index. When $G$ is also hyperbolic and cubulated (for instance when $G$ is 2-free, see \cref{sec:qch}), this subgroup is also quasi-convex. Whether this stronger property holds also for two-generator one-relator groups is an open question of Wilton's \cite[Question 5.4]{Wi24_2}.

\begin{qstn}[Wilton]
If $G$ is a two-generator one-relator group which is not free and is not isomorphic to $\bs(1, n)$ for any $n$, does $G$ contain a finitely generated non-free subgroup of infinite index?
\end{qstn}

A related and well-known conjecture is Gromov's surface subgroup conjecture which predicts that every one-ended hyperbolic group contains a surface subgroup. Wilton made a significant contribution to this conjecture in \cite{Wi18} when he confirmed the surface subgroup conjecture for graphs of free groups with cyclic edge groups. Many special cases were established previously by several authors, the reader is directed to consult Wilton's article for details. The following is a special case of Wilton's theorem.

\begin{theorem}
\label{surface_subgroup}
If $G$ is a non-free cyclically pinched or cyclically conjugated one-relator group, then either $G$ contains a $\bs(1, n)$ subgroup for some $n$, or $G$ is hyperbolic and contains a quasi-convex surface subgroup.
\end{theorem}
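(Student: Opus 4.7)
The plan is to realize $G$ as the fundamental group of a graph of spaces $X$ with free vertex group(s) and a single cyclic edge group, exploiting the cyclically pinched/conjugated structure. First I would verify that $G$ is one-ended. Since cyclic subgroups of free groups are torsion-free, $G$ is torsion-free, so by Stallings any splitting of $G$ would be a free splitting. But a non-trivial free splitting of $F(A)*_{\langle u\rangle=\langle w\rangle}F(B)$ (or its HNN-analogue) forces $u$ or $w$ to be primitive in its ambient free group, in which case Tietze transformations show $G$ is free, contradicting the hypothesis. Hence $G$ is one-ended. Next, under the assumption that $G$ contains no $\bs(1,n)$ subgroup, I would deduce hyperbolicity from the Bestvina-Feighn combination theorem: the vertex groups are hyperbolic (free), the edge groups are quasi-convex (cyclic subgroups of free groups are always quasi-convex), and the only obstruction to the hyperbolicity of the amalgam is the annuli-flare failure, which in this setting corresponds precisely to the existence of a solvable Baumslag-Solitar subgroup.

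The main work is producing a $\pi_1$-injective, quasi-convex map from a closed surface group. I would follow Wilton's strategy in \cite{Wi18}. Consider \emph{trial surfaces}, that is, maps $(\Sigma,\partial\Sigma)\to(X,C)$ from compact surfaces with boundary whose boundary components map to the edge circle $C$ and realize a homologically trivial pattern there. The space of (homotopy classes of) such trial surfaces, ordered by $-\chi$, carries a polyhedral structure: using a rationality theorem in the spirit of Calegari's rationality of stable commutator length for graphs of free groups with cyclic edges, one obtains extremal trial surfaces with controlled boundary data. I would then try to glue pairs of boundary components of an extremal trial surface along $C$ to obtain a closed surface mapping to $X$. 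The obstruction to such gluing is exactly the appearance of a Baumslag-Solitar relation between boundary components (matching the same power of the edge generator with the wrong sign forces a $\bs(1,n)$-pattern in $G$); since we are assuming no such subgroup exists, the gluing can always be carried out.

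To ensure $\pi_1$-injectivity of the resulting map $\Sigma\to X$, I would invoke the non-positive immersions machinery from \cref{sec:npi}: cyclically pinched and cyclically conjugated one-relator complexes have non-positive immersions (indeed, the presentation 2-complex does, by \cref{npi}), and the extremal surfaces produced by the rationality argument arise as immersions into $X$ whose attaching structure to $C$ reflects the malnormality and controlled intersection pattern of Magnus subgroups (\cref{Collins_intersections_2}); these ingredients rule out essential self-identifications and force $\pi_1$-injectivity. Finally, quasi-convexity of the surface subgroup $\pi_1(\Sigma)$ in the hyperbolic group $G$ follows because the map $\Sigma\to X$ is constructed as a bounded-complexity combinatorial object inside the graph of spaces: the lift to the Bass-Serre tree has uniformly bounded edge stabiliser lengths, so $\pi_1(\Sigma)$ acts cocompactly on a quasi-isometrically embedded subcomplex of the universal cover, hence embeds quasi-convexly into $G$.

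The principal obstacle is the gluing/closing-up step together with the verification of $\pi_1$-injectivity. The rationality input that organises trial surfaces is standard but the combinatorics of matching boundary components while avoiding spurious Baumslag-Solitar relations is subtle; it is here that the no-$\bs(1,n)$ hypothesis is consumed most heavily. Once a closed $\pi_1$-injective surface is in hand, both hyperbolicity and quasi-convexity are comparatively routine consequences of the graph-of-groups structure and standard hyperbolic geometry.
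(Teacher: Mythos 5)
The paper derives this result simply as a special case of Wilton's theorem \cite{Wi18} on graphs of free groups with cyclic edge groups, without providing any independent proof. Your proposal correctly identifies Wilton's strategy as the relevant one, and your preliminary steps are sound: one-endedness follows from Shenitzer's theorem (and Swarup's HNN analogue) plus Stallings' ends theorem, and hyperbolicity from the Bestvina--Feighn combination theorem, with the no-$\bs(1,n)$ hypothesis eliminating the annuli-flare obstruction. The trial-surfaces/rationality/gluing framework is indeed the backbone of Wilton's argument.

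However, the ingredients you invoke for $\pi_1$-injectivity do not track Wilton's actual proof. Wilton does not use non-positive immersions (\cref{npi}) or Collins' intersection theorems (\cref{Collins_intersections_2}) to establish injectivity of the closed surface; instead, he shows directly that extremal surfaces for his rationality functional are \emph{essential} via a combinatorial/topological argument intrinsic to the graph-of-free-groups setting (a surface being ``simple'' in his terminology rules out the failures of injectivity). Your claim that the no-$\bs(1,n)$ hypothesis is ``consumed most heavily'' in the gluing step also misplaces the logic: that hypothesis is used to certify hyperbolicity, and Wilton's theorem is then applied to the one-ended hyperbolic graph of free groups with cyclic edge groups directly -- the gluing and essentiality machinery does not need to re-exclude Baumslag--Solitar relations at that stage, since they are already absent. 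So while your sketch captures the outline of Wilton's strategy at a high level, the key injectivity step as you describe it would not go through; it should be replaced by Wilton's essentiality argument. The quasi-convexity conclusion is roughly as you say (a consequence of the bounded-complexity combinatorial structure of the extremal surface inside the Bass--Serre tree), and matches Wilton's proof.
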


Whether an analogous statement holds for all one-relator groups is wide open, although Calegari--Walker showed that random one-relator groups do contain surface subgroups \cite[Theorem 5.2.4]{CW15}.

\subsection{SQ-universality, virtual Betti numbers and largeness}

A group $G$ is \emph{SQ-universal} if every countable group can be embedded in a quotient of $G$. Peter Neumann made the following conjecture in \cite{Ne73}.

\begin{conjecture}[P. Neumann]
A one-relator group is either solvable or SQ-universal.
\end{conjecture}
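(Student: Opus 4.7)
The plan is to reduce the conjecture to showing acylindrical hyperbolicity in a narrow remaining case, using the theorem of Dahmani--Guirardel--Osin that every acylindrically hyperbolic group is SQ-universal. First I would dispose of the solvable classification: by \cref{Thm:F2-subgroup-theorem} the one-relator groups not containing a free group of rank two are exactly $\Z$ and $\BS(1,k)$ for $k\in\Z$, and by Moldavanski{\u\i}'s theorem (\cref{Thm:Moldavanskii}) together with Cheboter's extension \cite{Ch71} these are precisely the solvable one-relator groups. Thus the conjecture is equivalent to: every one-relator group containing $F_2$ is SQ-universal.

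Next I would invoke the known cases. Stöhr's theorem handles all one-relator groups with torsion (which are hyperbolic by the B.\ B.\ Newman Spelling Theorem, hence SQ-universal), and the results of Sacerdote--Schupp and B.\ Baumslag--Pride handle the case of three or more generators. Combining, the problem reduces to $G=\langle a,b\mid w\rangle$ with $w$ cyclically reduced, not a proper power, containing $F_2$, and not isomorphic to $\BS(1,k)$. Here I would dichotomize on the primitivity rank $\pi(w)$ from \cref{sec:primitivity_rank}. If $\pi(w)\geqslant 3$, then $G$ is $2$-free, and \cref{acylindrical} says the action on the Bass--Serre tree of any one-relator splitting (which exists because $G$ surjects $\Z$) is acylindrical; since $G$ is not solvable this action is non-elementary, and acylindrical hyperbolicity follows, completing this subcase.

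The remaining subcase is $\pi(w)=2$, where by \cref{pi=2} there is a \emph{unique} $w$-subgroup $H\leqslant F$, and by the discussion after \cref{w-subgroup} its image $H/\normal{w}$ is a malnormal two-generator one-relator subgroup of $G$. Since $\rk(H)=2$ and $w$ is imprimitive in $H$ but not a proper power in $F$, the subgroup $H/\normal{w}$ is (up to finite ambiguity) $\Z$, $\Z^2$, a Klein-bottle group, or a Baumslag--Solitar group $\BS(m,n)$. For each of these, I would attempt to realize $G$ as the fundamental group of a graph of groups with a single vertex group $H/\normal{w}$ and cyclic edge groups, and then invoke the criteria of Minasyan--Osin \cite{MO15} for acylindrical hyperbolicity of groups acting on trees: it suffices that some hyperbolic element be weakly contracting and that $G$ not stabilize an end. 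Malnormality of $H$ (and the Collins-type control on intersections of Magnus subgroups in \cref{Collins_intersections_1,Collins_intersections_2}) should give the required weak proper discontinuity at infinity.

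The hard part will be the subcase where $H/\normal{w}$ is a non-residually-finite Baumslag--Solitar group $\BS(m,n)$ with $|m|,|n|\geqslant 2$ and $|m|\neq|n|$: such vertex groups have distorted cyclic subgroups and so edge-stabilizer conjugates can intersect in arbitrarily large cyclic subgroups, threatening acylindricity. Here I would fall back on a quantitative version of \cref{w-subgroup} and a direct small-cancellation attack à la Ol'shanskii: build infinitely many independent non-trivial quotients of $G$ by imposing long relations supported outside $H/\normal{w}$, using the Louder--Wilton stacking and good-tower technology to verify the required $C'(1/6)$-like condition relative to $H/\normal{w}$. Verifying that such relative small cancellation can be arranged in the presence of the BS-subgroup is, I expect, the main technical obstacle, and may require a new relative small cancellation theory over Baumslag--Solitar vertex groups generalizing the one developed for hyperbolic subgroups.
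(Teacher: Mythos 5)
This statement is an \emph{open conjecture} of Peter Neumann (1973); the paper states this explicitly in both chapters (``which remains an open problem'' in the first chapter; it is labelled a conjecture in \cref{sec:subgroups}). Since no proof exists in the paper, there is nothing to compare against, and the task is to assess whether your proposal actually closes the conjecture. It does not, and you more or less concede this yourself: your final paragraph ends by saying the remaining subcase ``may require a new relative small cancellation theory over Baumslag--Solitar vertex groups,'' which is precisely an admission that the proof is incomplete. A proposal that terminates in ``and here I would need to invent new technology that does not yet exist'' is not a proof.

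The reductions you do carry out are mostly sound: solvable one-relator groups are exactly $\Z$, the finite cyclic groups, and $\BS(1,k)$ by Chebotar's extension of \cref{Thm:F2-subgroup-theorem}; torsion cases are covered by St\"ohr; $\geq 3$ generators by Sacerdote--Schupp; and for $\pi(w)\geq 3$ the group is hyperbolic (or acts acylindrically and non-elementarily on its Bass--Serre tree), hence SQ-universal. So you correctly isolate $\pi(w)=2$ as the residual difficulty. But your treatment of that case contains a genuine error: you assert that the unique $w$-subgroup $H/\normal{w}$ ``is (up to finite ambiguity) $\Z$, $\Z^2$, a Klein-bottle group, or a Baumslag--Solitar group.'' That classification is false. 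When $\pi(w)=2$, the $w$-subgroup $H$ is a rank-$2$ subgroup of $F$ in which $w$ is imprimitive, so $H/\normal{w}$ is an arbitrary non-free two-generator torsion-free one-relator group --- it need not belong to any such short list. Consequently your subsequent case analysis over ``each of these'' vertex groups is not exhaustive, and your appeal to the Minasyan--Osin tree criterion cannot be run. You should also be aware that the state of the art on this conjecture, recorded in the paper as \cref{SQ-universal} (Button--Kropholler), takes a genuinely different route: rather than a primitivity-rank dichotomy, it shows that a one-relator group is either SQ-universal or residually finite with all finite-index subgroups of first Betti number one, thereby reducing Neumann's conjecture to a statement about residually finite one-relator groups with $\vb_1(G)=1$ --- a formulation your proposal never engages with.
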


Note that a solvable one-relator group is either cyclic or isomorphic to $\bs(1, n)$ for some $n\neq 0$, see work of \v{C}ebotar$'$ \cite[Theorem 3]{Ch71}.

Sacerdote--Schupp \cite{SS74} established a criterion for an HNN-extension to be SQ-universal (which is almost identical to the Minasyan--Osin criterion for acylindrical hyperbolicity from \cite{MO15}) and used it to show that any one-relator group generated by at least three elements is SQ-universal. Currently, the most general result regarding SQ-universality of one-relator groups is due to Button--Kropholler \cite{BK16}. One of their statements is conditional on a conjecture which was resolved by Mutanguha in \cite{Mu21}, so below we state the unconditional statement.

\begin{theorem}
\label{SQ-universal}
If $G$ is a one-relator group, then either $G$ is SQ-universal or $G$ is residually finite and $b_1(H) = 1$ for all finite index subgroups $H\leqslant G$.
\end{theorem}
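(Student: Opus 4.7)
The plan is to combine the Magnus--Moldavanski\u{\i} hierarchy with acylindrical-hyperbolicity criteria for HNN-extensions, handling the exceptional case via Mutanguha's work on ascending HNN-extensions of free groups. First I would dispose of the easy case: by Sacerdote--Schupp \cite{SS74}, any one-relator group on at least three generators is already SQ-universal, since such a group admits a one-relator splitting whose associated Magnus subgroups have infinite index in the base on both sides, and the Minasyan--Osin criterion \cite{MO15} then applies directly. Thus we may assume $G = \langle a, b \mid w \rangle$ is a two-generator one-relator group. If $G$ is cyclic or isomorphic to $\mathrm{BS}(1,n)$ we are done by inspection (these groups are residually finite in the relevant range and the finite-index condition on $b_1$ follows from a direct computation of abelianisations of finite-index subgroups of solvable Baumslag--Solitar groups), so we may further assume $G$ is neither.

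Next I would invoke \cref{topological_hierarchy} to produce a one-relator splitting $G \cong H *_{\psi}$, where $\psi \colon A \to B$ identifies two Magnus subgroups of a one-relator group $H$, and then apply the dichotomy afforded by \cref{acylindrical}: in the torsion or $2$-free case the action of $G$ on the Bass--Serre tree is acylindrical. The action is automatically non-elementary --- and hence $G$ is acylindrically hyperbolic, and so SQ-universal --- unless at least one of $A, B$ equals all of $H$, i.e.\ the splitting is \emph{ascending} or \emph{descending}. In the non-$2$-free torsion-free case one uses the $w$-subgroup analysis of \cref{w-subgroup} to reduce to a proper $w$-subgroup where the induced splitting is again a one-relator splitting, iterating the argument up the hierarchy.

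The substance of the theorem therefore lies in the ascending case. Here $G$ splits as an ascending HNN-extension $H *_{\psi}$ in which $\psi$ is defined on all of $H$; combined with Brown's criterion (\cref{brown}) applied to the map $G \to \Z$ killing $H$, one deduces that $H$ is itself free, so that $G$ is an ascending HNN-extension of a finitely generated free group. Chong--Wise \cite{CW24} then embed $G$ into an ascending HNN-extension of a \emph{finitely generated} free group, and Borisov--Sapir \cite{BS05} supply residual finiteness. For the abelianisation condition, the standard Mayer--Vietoris / cofiber sequence for the HNN-extension computes $H_1(G;\Q) = \operatorname{coker}(\psi_{\mathrm{ab}} - \mathrm{id}) \oplus \Q$, which has rank exactly one unless $\psi_{\mathrm{ab}}$ has $1$ as an eigenvalue. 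Mutanguha's theorem \cite{Mu21}, which rules out precisely this pathology in all finite covers, then upgrades $b_1(G) = 1$ to $b_1(K) = 1$ for every finite-index subgroup $K \leqslant G$, since every such $K$ inherits an ascending HNN-decomposition over a free group with monodromy a suitable power/restriction of $\psi$.

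The main obstacle I expect is the last step: propagating the $b_1 = 1$ condition to \emph{all} finite-index subgroups, rather than just to some finite cover. The naive computation of $H_1$ via the HNN-decomposition controls only the commutator quotient of $G$ itself; a finite-index subgroup $K$ need not inherit a visibly ascending HNN-decomposition, and one must exhibit such a structure (over a possibly different free group) while controlling the eigenvalues of the induced monodromy on abelianisation. This is precisely the technical content of Mutanguha's result on the spectrum of endomorphisms of free groups under passage to powers and restrictions, and it is this input that makes the originally conditional Button--Kropholler theorem unconditional. Everything else --- the Sacerdote--Schupp base case, the hierarchy, and the acylindricity dichotomy --- is by now standard one-relator technology.
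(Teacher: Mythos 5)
The paper itself does not prove this theorem; it is cited to Button--Kropholler \cite{BK16}, with the conditional hypothesis removed via Mutanguha \cite{Mu21}. So the question is whether your reconstruction is sound, and I think it has two genuine gaps. First, the acylindrical-hyperbolicity dichotomy you appeal to is only established in the paper (\cref{acylindrical}) for one-relator groups with torsion or that are $2$-free. Your proposed workaround for the remaining case $\pi(w)=2$ --- ``reduce to the $w$-subgroup and iterate up the hierarchy'' --- does not go anywhere: for a two-generator torsion-free one-relator group $G=F(a,b)/\normal{w}$ with $\pi(w)=2$, the $w$-subgroup can be all of $G$, and even when it is proper it is again a two-generator one-relator group with primitivity rank $2$, so there is no descent and no base case. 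More fundamentally, even when the one-relator splitting $G\cong H*_{\psi}$ has both associated Magnus subgroups proper, you still need to verify the Minasyan--Osin hypothesis (e.g.\ weak malnormality of an edge stabiliser); non-elementarity of the action alone does not give acylindrical hyperbolicity, and the paper's control of intersections $H\cap H^g$ does not directly supply control over $A\cap A^g$ for the edge group $A$ outside the torsion/$2$-free cases.

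Second, the propagation of the $b_1=1$ condition to every finite-index subgroup is not justified by the argument you give. A finite-index subgroup $K$ of an ascending HNN-extension $F*_{\psi}$ of a finitely generated free group need not itself split visibly as an ascending HNN-extension of a free group, and even if one produces such a structure, Mutanguha's theorem is a statement about word-hyperbolicity versus the presence of Baumslag--Solitar subgroups in mapping tori of free group endomorphisms --- it does not directly assert anything about the spectrum of the induced monodromy on $H_1$ of finite covers. You yourself flag this step as the main obstacle, which is the right instinct, but the proof as written does not clear it. Finally, your dismissal of the cyclic and $\mathrm{BS}(1,n)$ cases ``by inspection'' is not correct as stated: $\mathrm{BS}(1,1)\cong\Z^2$ has $b_1=2$, and $\mathrm{BS}(1,-1)$ (the Klein bottle group) contains $\Z^2$ with index two, so both violate the conclusion ``$b_1(H)=1$ for all finite-index $H$''. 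These solvable groups must be handled explicitly as exceptions (indeed the theorem as stated in the survey appears to implicitly exclude them), and noticing this should have been part of the ``by inspection'' step.
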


The \emph{virtual $n^{\text{th}}$ Betti number} of a group $G$ is
\[
\vb_n(G) = \sup{\{b_n(H) \mid H \text{ is a finite index subgroup of $G$}\}}.
\]
By \cref{SQ-universal}, Neumann's conjecture reduces to the case of residually finite one-relator groups $G$ with $\vb_1(G) = 1$.

\begin{conjecture}
A residually finite one-relator group with $\vb_1(G) = 1$ is either infinite cyclic or $\bs(1, n)$ for some $n\neq 0$.
\end{conjecture}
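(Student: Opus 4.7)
The plan is to combine Euler characteristic reductions, Kielak's virtual algebraic fibring criterion, and the structure theory of virtually \{finitely generated free\}-by-cyclic one-relator groups, with the main difficulty lying in handling the non-RFRS case.

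First I would perform standard reductions to the two-generator torsion-free case. Writing $G = F/\normal{w}$ with $w$ cyclically reduced, abelianisation gives $b_1(G) \in \{\rk(F)-1, \rk(F)\}$; since $1 = \vb_1(G) \geqslant b_1(G)$, we obtain $\rk(F) \leqslant 2$. The rank one case yields $G \isom \Z$ (finite cyclic groups satisfy $\vb_1 = 0$). If $\rk(F) = 2$ and $G$ has torsion, say $w = v^n$ with $n \geqslant 2$ and $v$ not a proper power, Fischer--Karrass--Solitar (\cref{Thm:Fischer-Karrass-Solitar}) provides a torsion-free finite-index subgroup $H \leqslant G$ with $\cd_{\Z}(H) \leqslant 2$. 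The orbifold Euler characteristic gives $\chi(H) = [G:H](-1 + 1/n) < 0$, while $b_1(H) = 1$ forces $\chi(H) = b_2(H) \geqslant 0$, a contradiction. Hence $G$ is a two-generator torsion-free one-relator group.

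Second, I would attack the core case via virtual algebraic fibring. By Dicks--Linnell (\cref{L2_betti}), $b_1^{(2)}(G) = 0$. If $G$ is virtually RFRS, Kielak's theorem \cite{Ki20} produces a virtual algebraic fibration of $G$, and Fisher's theorem (\cref{v_fbc2}) combined with coherence (\cref{coherence}) yields a finite-index subgroup $K \leqslant G$ of the form $F_s \rtimes_{\phi} \Z$. The hypothesis $\vb_1(K) = 1$ forces the abelianised monodromy $\phi^{\ab} \in \GL_s(\Z)$ to have no root-of-unity eigenvalue; otherwise, pulling back an appropriate quotient $K \to \Z \to \Z/k\Z$ produces a finite-index subgroup of $K$ with $b_1 \geqslant 2$. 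For $s = 1$, the only candidates are $K \isom \BS(1, \pm 1)$, i.e.\ $\Z^2$ or the Klein bottle group, both of which have $\vb_1 \geqslant 2$ via the obvious double cover, yielding a contradiction. For $s \geqslant 2$, I would use the assumption that $G$ is a \emph{one-relator} group --- in particular, that its Alexander invariant is controlled via Lyndon's identity theorem (\cref{lyndon_id}) and must divide the characteristic polynomial of $\phi^{\ab}$ up to units --- to force the Alexander polynomial $\Delta_G(t)$ to be linear, yielding $G \isom \BS(1,n)$ with $|n| \geqslant 2$ after invoking Meskin's characterisation (\cref{Thm:BS-residual-finiteness-characterisation}).

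The main obstacle is the virtual RFRS hypothesis in the preceding paragraph. The solvable Baumslag--Solitar groups $\BS(1,n)$ with $|n| \geqslant 2$ are \emph{not} virtually RFRS --- they admit no algebraic fibration, since the kernel of the natural map to $\Z$ is the non-finitely generated group $\Z[1/n]$ --- so a reduction to the RFRS case cannot hold universally. A complete proof thus requires a direct dichotomy: every residually finite torsion-free two-generator one-relator group $G$ with $\vb_1(G) = 1$ is either virtually RFRS or isomorphic to some $\BS(1,n)$ with $|n| \geqslant 2$. This is closely tied to Gersten's conjecture and to Wise's conjecture that hyperbolic one-relator groups are virtually special (\cref{sec:qch}). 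A plausible route is to inspect the Magnus--Moldavanski\u{\i} hierarchy of $G$ (\cref{topological_hierarchy}): either the hierarchy terminates without producing exotic identifications, forcing hyperbolicity and hence (conjecturally) virtual specialness; or one of the splittings $\pi_1(X_{i-1}) \isom \pi_1(X_i) *_{\psi_i}$ is ascending in the sense of \cref{ascending}, in which case residual finiteness together with $\vb_1 = 1$ should force the base $\pi_1(X_i)$ to be infinite cyclic, identifying $G$ with $\BS(1,n)$. Carrying out this dichotomy, or even proving a sharper version under the $\vb_1 = 1$ hypothesis alone, is the heart of the difficulty.
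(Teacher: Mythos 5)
This statement appears in the paper as an open \emph{conjecture} (a reformulation, via \cref{SQ-universal}, of P.\ Neumann's conjecture that one-relator groups are solvable or SQ-universal), so there is no paper proof to compare against. Your proposal therefore cannot succeed as written, and to your credit you honestly flag the key gap in the final paragraph: the dichotomy ``virtually RFRS, or else $\BS(1,n)$'' is itself unproven and sits at the level of Gersten's and Wise's conjectures. That is indeed the heart of the problem, and no known technique bridges it.

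However, there are also problems \emph{within} the RFRS branch that you treat as though it were essentially done. For $s\geqslant 2$ you gesture at Lyndon's identity theorem constraining the Alexander polynomial of $G$ to be linear and thereby identifying $G$ with $\BS(1,n)$, but this chain of implications is not substantiated: the Alexander polynomial of a two-generator torsion-free one-relator group is by no means automatically linear (consider $\langle a,b\mid a^2=b^3\rangle$), and a linear Alexander polynomial does not by itself characterise the solvable Baumslag--Solitar groups. Moreover the intended conclusion is in tension with itself, since $\BS(1,n)$ with $|n|\geqslant 2$ does not admit a finite-index subgroup of the form $F_s\rtimes\Z$ with $\phi$ an automorphism at all. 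The more natural use of RFRS here would be the opposite: if $K$ is virtually RFRS and contains $F_s\rtimes\Z$ with $s\geqslant 2$ as a finite-index subgroup, one should try to show $\vb_1=\infty$ (for instance via largeness, as in the discussion preceding the conjecture), producing a \emph{contradiction} with the hypothesis $\vb_1 = 1$ rather than a derivation of $\BS(1,n)$. Finally, a complete proposal ought to engage with what is actually known: Button--Kropholler \cite{BK16} (cited just before the conjecture) show that a counterexample must be a strict ascending HNN-extension of a finitely generated free group and must contain some $\BS(1,n)$ with $n\geqslant 2$; any serious attack on the conjecture should start from that structural constraint rather than rediscover a weaker version of it.
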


In \cite{BK16} the authors show that any such group that is not SQ-universal must split as a strict ascending HNN-extensions of a finitely generated free group and must also contain some $\bs(1, n)$ for $n\geqslant 2$.

\begin{problem}
Characterise one-relator groups $G$ with $\vb_i(G)<\infty$ for $i = 1, 2$.
\end{problem}

Fruchter--Morales characterise when a cyclically pinched and cyclically conjugated one-relator group $G$ has $\vb_2(G)<\infty$ in \cite{FM24}: either $G$ is free or the fundamental group of a closed surface.

A property related to SQ-universality is that of largeness. A group $G$ is \emph{large} if it admits a finite index subgroup which surjects onto the free group of rank two. Edjvet--Pride pose the problem of determining which torsion-free two-generator one-relator groups are large \cite[Problem 9]{EP84}. Baumslag--Pride had shown previously in \cite{BP78,BP79} that all non-elementary one-relator groups with torsion and all one-relator groups generated by at least three elements are large. In fact, something much stronger is true for a one-relator group $F/\normal{w}$ when $\pi(w)\neq 2$: all finitely generated non-elementary subgroups are large by the results in \cref{sec:coherence_uni}. 

A large group is SQ-universal, but the converse is not necessarily true. For example, $\bs(2, 3)$ is an SQ-universal group that is not large (see \cite{EP84}). It is not clear whether there is a natural conjectural picture for which one-relator groups are large as the property appears far more subtle than SQ-universality. We direct the reader to the work of Jack Button for many partial results, see for instance \cite{Bu10,Bu11}.

\section{Geometric properties}
\label{sec:geometry}

Geometric group theory is the study of groups via their actions by isometries on metric spaces. $\cat(0)$ cube complexes and hyperbolic spaces are two rich classes of spaces that have taken centre stage in geometric group theory since Gromov's famous essay \cite{Gr87}. If a one-relator group $G$ acts in some non-trivial way on one of these spaces, then a wealth of new tools become available to study $G$ and often this leads to significant constraints on its structure. In this section we shall go into depth on what is known about geometric actions of one-relator groups.

Metric spaces will always be assumed to be proper and geodesic and our actions will be geometric, unless stated otherwise. Recall that a \emph{geometric action} is one that is by isometries and is properly discontinuous and cocompact.

\subsection{Hyperbolic one-relator groups}

A group $G$ is \emph{hyperbolic} if it acts geometrically on a hyperbolic metric space. Introduced by Gromov in \cite{Gr87}, their structure is now well-understood enough that their automorphism groups have been completely computed and their isomorphism problem has been solved, two things which are currently far from true for one-relator groups. A simple obstruction for a group to be hyperbolic is the presence of Baumslag--Solitar subgroups $\bs(1, n)$. A famous conjecture of Gersten's predicts that this is the only obstruction for one-relator groups.

\begin{conjecture}[Gersten's conjecture]
\label{gersten_conjecture}
If $G = F/\normal{w}$ is a one-relator group containing no Baumslag--Solitar subgroups $\bs(1, n)$ for any $n\neq 0$, then $G$ is hyperbolic.
\end{conjecture}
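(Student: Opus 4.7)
The plan is to prove Gersten's conjecture by induction on the Magnus--Moldavanski\u{\i} hierarchy, combining the topological hierarchy of \cref{topological_hierarchy} with the Bestvina--Feighn combination theorem. Given a one-relator complex $X$ with $\pi_1(X)$ containing no $\bs(1,n)$, \cref{one-relator_splitting} provides a one-relator splitting $\pi_1(X) \isom \pi_1(Z) *_{\psi}$ where $Z$ has strictly smaller complexity and $\psi$ identifies two Magnus subgraphs $Z_0, Z_1\subset Z$. The base case $c(X) = (0,0)$ is trivial since $\pi_1(X)$ is then a finite cyclic group. To run the induction, one needs three parallel statements: (i) $\pi_1(Z)$ also contains no $\bs(1,n)$; (ii) $\pi_1(Z)$ is hyperbolic; (iii) the Magnus subgroups $\pi_1(Z_0), \pi_1(Z_1)$ are quasi-convex in $\pi_1(Z)$.

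First I would strengthen the inductive hypothesis to include quasi-convexity of all Magnus subgroups. The intersection results of Collins (\cref{Collins_intersections_1} and \cref{Collins_intersections_2}) and the strong inertness of Magnus subgroups (\cref{inert}) give strong control on how Magnus subgroups meet their conjugates, and together with the free-product-like decomposition of $Z$ from its own hierarchy, should allow one to argue quasi-convexity inductively from the bottom up. Next, having $\pi_1(Z)$ hyperbolic with quasi-convex Magnus subgroups $\pi_1(Z_0), \pi_1(Z_1)$, the Bestvina--Feighn combination theorem yields hyperbolicity of $\pi_1(X) \isom \pi_1(Z) *_{\psi}$ provided the associated Bass--Serre action satisfies the \emph{annuli flare} condition, which in the presence of quasi-convex edge groups is known to follow from acylindricity of the action. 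By \cref{acylindrical}, this acylindricity is already established when $\pi_1(X)$ is 2-free or contains torsion; the plan is to extend it to the general $\bs(1,n)$-free case by showing that any non-acylindrical behaviour -- an infinite cyclic subgroup fixing arbitrarily long Bass--Serre segments -- forces a conjugation relation $t^{-1} c^m t = c^n$ among non-trivial elements, and hence a $\bs(m,n)$ subgroup which, through the Magnus breakdown and the non-existence of $\bs(1,n)$, can be shown to collapse to $\Z^2$ or be ruled out entirely.

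To close the induction one must verify (i), namely that $\pi_1(Z)$ inherits freedom from $\bs(1,n)$ subgroups. This is the point where the approach must invoke the reduction to \emph{primitive extension groups} alluded to in the discussion following Conjecture~\ref{gersten_conjecture}: one restricts the hierarchy so that at each stage the splitting is over a primitive Magnus subgroup, for which it is straightforward to promote a hypothetical $\bs(1,n) \leqslant \pi_1(Z)$ to a $\bs(1,n) \leqslant \pi_1(X)$ via the standard HNN normal form. Finally, the virtual compact specialness established in \cref{v_fbc} for hyperbolic one-relator groups provides a consistency check, as the virtually free-by-cyclic conclusion would then extend unconditionally across the $\bs(1,n)$-free class.

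The principal obstacle is step (i) combined with the acylindricity upgrade in the general case: propagating both ``no $\bs(1,n)$'' and quasi-convexity of Magnus subgroups through the hierarchy simultaneously is delicate because one-relator splittings over Magnus subgroups are not in general acylindrical without additional input, and the $\bs(1,n)$-freeness condition is not obviously stable under passage to the base of a one-relator splitting. Overcoming this likely requires a finer analysis of how $\bs(1,n)$ subgroups interact with the Bass--Serre tree of a one-relator splitting, perhaps via the Friedl--Tillmann polytope (\cref{splitting_complexity}) to control associated subgroup ranks, or via an algebraic fibring argument in the spirit of \cref{v_fbc,v_fbc2} to reduce to a setting where hyperbolicity can be detected homologically.
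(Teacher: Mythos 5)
This statement is a \emph{conjecture}, not a theorem: the paper has no proof of it, and Gersten's conjecture remains open to this day. What you have written is therefore not a proof at all, but an outline of a reduction strategy that closely tracks the partial results the paper does prove, namely \cref{conditional} (hyperbolicity of $G$ follows from hyperbolicity of its two-generator one-relator subgroups) and \cref{main_hyperbolic} (the conjecture reduces to primitive extension groups). Your approach mirrors theirs — induct on the Magnus--Moldavanski\u{\i} hierarchy, invoke the Bestvina--Feighn combination theorem, and try to verify annuli flare via acylindricity — but at no point does it actually close the gap that keeps this a conjecture.

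The concrete gaps in your outline are exactly the points at which the known strategy stalls. First, the assertion that a non-acylindrical segment in the Bass--Serre tree ``forces a conjugation relation $t^{-1}c^m t = c^n$ and hence a $\bs(m,n)$ subgroup'' is not correct as stated: a relation of this form among elements of a group does not, on its own, produce an embedded copy of $\bs(m,n)$, and producing or excluding such a subgroup from a long chain of commensurated conjugates of Magnus subgroups is precisely the hard unsolved problem. \cref{acylindrical} covers the $2$-free and torsion cases, but the paper is explicit that beyond those cases ``flaring annuli turns out to be a tricky condition to check,'' and no general mechanism for converting ``no $\bs(1,n)$ subgroup'' into acylindricity is known. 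Second, your step (i) — that $\bs(1,n)$-freeness descends to the base $\pi_1(Z)$ of a one-relator splitting — is stated without argument and is exactly the kind of propagation that the primitive extension group reduction is designed to sidestep; even after that reduction, as the paper says, the conjecture is still open for the two families $E_{p/q}$ and $F_{p/q}$. Third, invoking \cref{v_fbc} as a ``consistency check'' is circular: that theorem takes hyperbolicity and virtual specialness as hypotheses, so it can say nothing about whether a given one-relator group is hyperbolic. Finally, \cref{quasi-convex} already gives quasi-convexity of Magnus subgroups once the ambient one-relator group is hyperbolic, so there is no need to strengthen the inductive hypothesis as you propose — but this observation does nothing to help with the combination step, which is where the genuine obstruction lies.

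In short: you have re-derived the shape of the known reduction, correctly identified where it gets stuck, and then waved at that stuck point with the words ``likely requires a finer analysis.'' That is an honest assessment of the state of the art, but it is not a proof, and the paper itself presents this as an open problem.
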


In this section we cover recent progress on Conjecture \ref{gersten_conjecture}, mostly based on work from \cite{Lin22,Lin24}.

\begin{remark}
A more general version of Gersten's conjecture was open until recently, when Italiano--Martelli--Migliorini showed in \cite{IMM23} that there exist groups $G$ of type $F$, containing no Baumslag--Solitar subgroups, but that are not hyperbolic. Their examples have cohomological dimension four and arise from a fibration of a hyperbolic 5-manifold over $S^1$. 
\end{remark}

It is still open as to whether there exist finitely presented groups of cohomological dimension two without Baumslag--Solitar subgroups that are not hyperbolic. Groups of cohomological dimension two are quite special: Gersten proved that finitely presented subgroups of finitely presented hyperbolic groups of cohomological dimension two are themselves hyperbolic \cite{Ge96}. Combining \cite[Corollary 7.8]{Ge96} with the coherence of one-relator groups, we have the following.

\begin{theorem}
\label{gersten_subgroup}
If $G$ is a hyperbolic one-relator group, then every finitely generated subgroup of $G$ is hyperbolic.
\end{theorem}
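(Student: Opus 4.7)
The plan is to combine two ingredients that have both been established earlier in the survey: the coherence of one-relator groups, and the fact that one-relator groups have rational cohomological dimension at most two. Together these will let us apply Gersten's theorem on finitely presented subgroups of hyperbolic groups of low cohomological dimension.

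First I would fix a finitely generated subgroup $H \leq G$. By the coherence theorem for one-relator groups (Theorem \ref{coherence}), $H$ is finitely presented. This is the one non-trivial step that cannot be avoided, since Gersten's criterion requires finite presentability as a hypothesis; without coherence we would be stuck at the level of type $\FP_2$ and would have no way to conclude hyperbolicity of $H$.

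Next I would verify that the ambient group $G$ has $\cd_{\Q}(G) \leqslant 2$. Writing $G = F/\normal{w^n}$ with $w \in F$ not a proper power and $n \geqslant 1$, Lyndon's identity theorem (see Section \ref{sec:homology}) provides a length-two free resolution of $\Q$ over $\Q G$: in the torsion-free case $n=1$ this is already a length-two free $\Z G$-resolution, and in the torsion case $n > 1$ one tensors Lyndon's resolution with $\Q$, using that multiplication by $w-1$ becomes zero after inverting $n$ so that the tail of the resolution collapses. Either way, $\cd_{\Q}(G) \leqslant 2$.

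With these two facts in hand, the hyperbolicity of $H$ follows directly from Gersten's theorem \cite[Corollary 7.8]{Ge96}: a finitely presented subgroup of a hyperbolic group of rational cohomological dimension at most two is itself hyperbolic. The main obstacle in the argument is the coherence step — it is the only genuinely hard input, and it is what restricts the argument from being routine. Once coherence and $\cd_{\Q} \leqslant 2$ are both available, the remainder is essentially a citation of Gersten's black box.
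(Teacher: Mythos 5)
Your proof takes essentially the same route as the paper, which deduces this theorem in a single sentence by combining coherence of one-relator groups (\cref{coherence}) with Gersten's result \cite[Corollary~7.8]{Ge96}. You have simply unpacked what that sentence means: coherence upgrades "finitely generated" to "finitely presented," Lyndon's identity theorem gives $\cd_{\Q}(G) \leqslant 2$ (which is the right ring to work over, since hyperbolic one-relator groups may have torsion and hence have infinite integral cohomological dimension), and Gersten's corollary then applies. No gap; this is the intended argument.
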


An early geometric result for one-relator groups is known as Newman's spelling theorem \cite{Ne68}, see \cref{Subsec:BBNewman}. A corollary of Newman's result is that a one-relator group with torsion $G = F/\normal{w^n}$ is hyperbolic. Below is a slight strengthening of Newman's result, and a generalisation of the Freiheitssatz, which was obtained by Schupp in \cite{Sc76}. Here when we say a word $u$ mentions a letter $s$, we mean that $s$ or $s^{-1}$ appears in $u$.

\begin{theorem}
\label{Newman}
Let $n\geqslant 1$ be a positive integer, let $F(S)$ be a free group, $w\in F(S)$ a cyclically reduced word that is not a proper power and consider the one-relator group $G = F(S)/\normal{w^n}$. Let $A\subset S$ be a subset of the generators, $v\in F(S)$ a freely reduced word and suppose that $w$ and $v$ mention a letter in $S - A$.

If $v$, as an element of $G$, is contained in the Magnus subgroup $\langle A\rangle\leqslant G$, then $v$ contains a subword $u^{n-1}u_0$ where $u$ is a cyclic conjugate of $w$ or $w^{-1}$ and $u_0$ is a prefix of $u$ which mentions each letter from $S - A$ that appears in $w$.
\end{theorem}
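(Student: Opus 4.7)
The plan is to prove this by induction on the length of the Magnus--Moldavanski\u{\i} hierarchy of $G$, which we may measure by $|w|$ (with ties broken by the cardinality of $S$). The base case is when no further reduction is possible: then $w$ is (after a Nielsen change) a primitive element of $F(S)$, so $G$ is a free product of a free group with $\Z/n\Z$, and the statement can be checked directly using the normal form for free products. In the induction step, I will pass to an HNN-splitting of $G$ and deduce the spelling statement by a careful application of Britton's Lemma combined with the inductive hypothesis applied to the base group.

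For the inductive step, first reduce to the case in which some generator $t$ has $\sigma_t(w)=0$. If no letter of $S$ has exponent sum zero in $w$, apply the Magnus trick to embed $G$ into a larger one-relator group $G' = \langle S \cup \{y\} \mid w'^n\rangle$ in which the new generator $y$ has exponent sum zero in $w'$; the embedding sends each $s\in S$ to $s'y^{k_s}$ for suitable integers $k_s$, so a freely reduced $v\in F(S)$ lying in $\langle A\rangle_G$ also lies in $\langle A'\rangle_{G'}$, and a spelling conclusion pulled back from $G'$ yields the desired one in $G$. I will choose $t\in S\setminus A$ whenever possible, so that $A$ survives intact inside the base group of the resulting HNN-splitting; if the only generator with exponent sum zero lies in $A$, a small modification of the Magnus trick applied only to $A$-letters fixes this. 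Now by \cref{one-relator_splitting} the group $G$ splits as an HNN-extension
\[
G \;\cong\; G_1 *_{\psi},\qquad G_1 \;=\; \bigl\langle S_1 \,\big|\, w_1^{\,n}\bigr\rangle,
\]
where $t$ is the stable letter, $S_1$ consists of the $t$-conjugates $s_i = t^i s t^{-i}$ of the remaining generators for indices $i$ in a bounded range, $|w_1|<|w|$, and the associated subgroups $\psi\colon A_1\to B_1$ are Magnus subgroups of $G_1$.

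Now rewrite the freely reduced word $v\in F(S)$ in the alternating form $v \equiv v_0 t^{\varepsilon_1} v_1 t^{\varepsilon_2}\cdots t^{\varepsilon_k} v_k$ with $v_j\in F(S_1)$ and $\varepsilon_j=\pm 1$. Since $t\notin A$ the subgroup $\langle A\rangle_G$ is contained in the Magnus subgroup $\langle A\rangle_{G_1}$ of the base, so $v\cdot a^{-1}=1$ in $G$ for some $a\in F(A)$, and the $t$-length of the displayed form is $k$. By Britton's Lemma (\cref{Lem:BrittonsLemma}) the word $v$ must contain a pinch: some index $j$ with consecutive $t$'s of opposite sign $\varepsilon_j = -\varepsilon_{j+1}$ and with $v_j$ representing an element of the associated Magnus subgroup of $G_1$. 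Moreover $v_j$ must mention some letter of $S_1\setminus A_1$ that appears in $w_1$, since otherwise $v_j$ would already lie in the Magnus subgroup generated by the letters of $S_1$ occurring in $A$, contradicting the fact that $w$ and $v$ mention a letter of $S\setminus A$ (this is where the hypothesis on $v$ is genuinely used, via the Freiheitssatz to rule out the degenerate case). Apply the inductive hypothesis to $v_j$ inside $G_1$: it contains a subword $u_1^{\,n-1}u_{1,0}$ with $u_1$ a cyclic conjugate of $w_1^{\pm 1}$ and $u_{1,0}$ mentioning every letter of $S_1\setminus A_1$ occurring in $w_1$.

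The main obstacle, and the bulk of the remaining work, will be to lift $u_1^{n-1}u_{1,0}\subseteq v_j$ back to a subword of $v$ of the required shape. The key point is that $w_1$ is obtained from a cyclic conjugate of $w^{\pm 1}$ by the Reidemeister--Schreier rewriting with respect to $t$: replacing each $s_i$ back by $t^i s t^{-i}$ and using that the $v_j$ block in $v$ sits between $t^{\varepsilon_j}$ and $t^{\varepsilon_{j+1}}$, the subword $u_1^{n-1}u_{1,0}$ reassembles, after cancellation of adjacent $t^{\pm 1}$ pairs, into a subword $u^{n-1}u_0$ of $v$ with $u$ a cyclic conjugate of $w^{\pm 1}$ and $u_0$ mentioning every letter of $S\setminus A$ appearing in $w$. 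The fact that no internal cancellation destroys the block follows because $v$ is freely reduced and the rewriting only introduces $t$'s at boundaries controlled by the $\varepsilon_j$'s. This bookkeeping, tracking how letters of $S\setminus A$ versus $S_1\setminus A_1$ correspond under the hierarchy step, is where the careful Schupp-style analysis will be needed and where most of the technical care must be taken; modulo this, the induction closes.
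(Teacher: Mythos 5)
Your general strategy---induction on the Magnus--Moldavanski\u{\i} hierarchy, Britton's Lemma, then a lift---is indeed the route Schupp takes (the paper only cites \cite{Sc76} and does not print a proof), but the inductive step as you have written it has a genuine gap at the lifting stage. The pinch block $v_j$ is, by construction, a maximal $t$-free subword of the original freely reduced $v$; when read into the base group $G_1$ it is a word over $\{s_0 : s \in S\setminus\{t\}\}$, i.e.\ over generators all at a \emph{single} $t$-level. But since $t$ genuinely appears in the cyclically reduced relator $w$ with exponent sum zero, the rewritten relator $w_1$ involves $s_i$ over a range $[m,M]$ with $m<M$, so no cyclic conjugate $u_1$ of $w_1^{\pm1}$ lies over a single level; hence $u_1^{\,n-1}u_{1,0}$ can never be a literal subword of $v_j$, and there is nothing for the ``reassembly'' to reassemble. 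What the inductive hypothesis applied to a $\{s_0\}$-word in an associated subgroup actually yields is an impossibility (a Collins-type triviality of the intersection), not a subword to lift. Your intermediate assertion that ``$v_j$ must mention some letter of $S_1\setminus A_1$ that appears in $w_1$, since otherwise $v_j$ would already lie in the Magnus subgroup generated by the letters of $S_1$ occurring in $A$'' is also a non-sequitur: whether $v_j$ avoids $S_1\setminus A_1^{\mathrm{gen}}$ is controlled by where $0$ sits in $[m,M]$, not by $A\subset S$.

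The missing idea is to exploit that $\sigma_t(v)=0$ (which does hold, since $v\in\langle A\rangle$ and $t\notin A$) and pass to the full Reidemeister--Schreier rewriting $\hat v$ of $v$ into the infinite amalgam $\ker(G\to\Z)=\cdots *_{B_1}G_1*_{A_1^{\,t}}G_1^{\,t}*\cdots$, where letters of $\hat v$ do range over all the $s_i$'s, set up a faithful correspondence between subwords of $\hat v$ and subwords of $v$ (with the $t$'s reinserted in the correct places), and then derive the spelling of $\hat v$ from the amalgam structure, not from a single copy of $G_1$. Two smaller issues: the base case of the hierarchy is $c(\lambda)=0$, i.e.\ $w$ is already a power of a single generator in the current basis, and invoking a Nielsen change to make $w$ primitive is not harmless because it need not preserve $\langle A\rangle$; and the Magnus-trick embedding $G\injects G'$ changes the literal spelling of $v$, so ``pulling back'' a spelling conclusion from $G'$ requires an argument you have not supplied.
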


Setting $A = \emptyset$, Newman's theorem shows that the presentation of a one-relator group with torsion is actually a Dehn presentation (after symmetrising) and so the uniform word problem and conjugacy problem can both be solved very quickly in practice. More generally, Newman's result shows that we can also solve membership in $\langle A\rangle$ using Dehn's algorithm. In particular, this implies that Magnus subgroups are actually convex in the sense that the shortest words over the generating set $S$ that represent elements in a Magnus subgroup $\langle A\rangle$, must actually be words over $A$. Schupp's strengthening yields the same consequences when applied to torsion-free one-relator groups whose relator contains appearances of letters that are sufficiently far apart. For instance, it applies to surface relators, relators of the form $a^nb^nc^nd^n$ and many more.

Although in general hyperbolic one-relator group might not have a presentation with the properties from \cref{Newman}, it is known that Magnus subgroups are always at least \emph{quasi-convex}. That is, geodesics in a hyperbolic one-relator group between two elements in a Magnus subgroup always remain uniformly close to elements in the Magnus subgroup. This is proved in \cite{Lin24}, building on work from \cite{Lin22}. Note that for hyperbolic groups, quasi-convexity is equivalent to being undistorted.

\begin{theorem}
\label{quasi-convex}
Magnus subgroups of hyperbolic one-relator groups are quasi-convex.
\end{theorem}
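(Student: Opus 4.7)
The plan is to induct on the length of the Magnus--Moldavanski\u{\i} hierarchy provided by \cref{one-relator_splitting}. Given a hyperbolic one-relator group $G \cong \pi_1(X)$, write $G \cong H *_\psi$ as a one-relator HNN-splitting with $H = \pi_1(Z)$ for a one-relator complex $Z$ of strictly smaller complexity and $\psi$ identifying Magnus subgraphs with vertex-level subgroups $A, B \leq H$. The base of the induction (free products of a free group with a finite cyclic group) is immediate since Magnus subgroups are free factors. The inductive step will need to prove simultaneously that (i) the vertex group $H$ is again a hyperbolic one-relator group with its Magnus subgroups quasi-convex in $H$, and (ii) any Magnus subgroup $M$ of $G$ is quasi-convex in $G$.

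For (i), I would first show that $G$ acts acylindrically on the Bass--Serre tree $T$ of the splitting. Collins' intersection theorem \cref{Collins_intersections_2} controls the possible intersections $\pi_1(A)^g \cap \pi_1(B)$: they are either conjugates of $A \cap B$ or cyclic. Hyperbolicity of $G$ rules out infinite chains of such intersections of rank $\geq 2$, since these would accumulate into abelian subgroups of rank two in $G$; this extends the acylindricity conclusion of \cref{acylindrical} beyond the torsion/$2$-free cases treated in \cite{Lin22}. Once acylindricity is in hand, an acylindrical combination theorem (of Bestvina--Feighn/Bowditch type) implies that $H$ is hyperbolic and quasi-convexly embedded in $G$, using that the edge subgroups $A, B$ are Magnus subgroups of $H$ and hence quasi-convex in $H$ by the inductive hypothesis applied at the previous stage.

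For (ii), I would analyze how a Magnus subgroup $M \leq G$ interacts with the splitting. Either $M$ fixes a vertex of $T$, in which case $M$ is conjugate into $H$ and is itself a Magnus subgroup of $H$ (using that Magnus subgraphs of $X$ lift into the one-relator $\Z$-domain $Z$); quasi-convexity of $M$ in $G$ then follows by transitivity of quasi-convexity through the quasi-convex inclusion $H \hookrightarrow G$. Otherwise, $M$ acts non-trivially on $T$ and inherits its own graph-of-groups decomposition whose vertex and edge groups are themselves Magnus subgroups of (conjugates of) $H$, and one invokes a combination theorem for quasi-convex subgroups in trees of hyperbolic spaces (e.g. Mj--Sardar) to assemble the pieces. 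Collins' control of non-exceptional intersections via \cref{Collins_intersections_1} is what forces the vertex and edge subgroups of this induced splitting to again be Magnus subgroups.

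The main obstacle will be establishing acylindricity and the combination step in the generality needed. The delicate point is that exceptional Magnus intersections from \cref{Collins_intersections_1} are exactly the obstructions to acylindrical action, and one must show that such intersections, combined with the absence of Baumslag--Solitar subgroups and $\mathbb{Z}^2$ subgroups in the hyperbolic $G$, cannot propagate along arcs of $T$. I expect the bulk of the technical work to consist in a careful geometric analysis of how a long segment stabilizer in $T$ would force commuting elements in $G$, thereby contradicting hyperbolicity and closing the induction.
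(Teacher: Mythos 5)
Your overall plan --- induct on the one-relator hierarchy, reduce to quasi-convexity of the vertex group, and push a Magnus subgroup down into the vertex group --- matches the strategy the paper points to (a careful analysis of the Bass--Serre action plus a quasi-convexity criterion for graphs of hyperbolic groups). But there is a genuine gap at the centre of your argument. You claim acylindricity of the Bass--Serre action for \emph{all} hyperbolic one-relator groups on the grounds that non-acylindricity would ``accumulate into abelian subgroups of rank two.'' That only rules out arc stabilizers of rank at least two; it says nothing about long arcs with infinite cyclic stabilizer, and \cref{Collins_intersections_2} explicitly permits cyclic intersections $\pi_1(A)^g\cap \pi_1(B)$. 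Correspondingly, \cref{acylindrical} is stated in the paper only for the torsion and $2$-free cases, and you have not supplied an argument that closes the remaining case $\pi(w)=2$. Without that, your appeal to an ``acylindrical combination theorem'' does not get off the ground.

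More importantly, the detour through acylindricity is unnecessary and points the logical arrows the wrong way. You do not need a combination theorem to conclude that $H$ is hyperbolic: that is immediate from \cref{gersten_subgroup} since $H$ is a finitely generated subgroup of the hyperbolic one-relator group $G$. And you do not need acylindricity to get flaring annuli: $G$ is hyperbolic by hypothesis, so the hallways-flare condition holds by Gersten's converse to the combination theorem. What actually has to be proved is that $H$ (and hence, by induction and transitivity, the Magnus subgroups of $H$) is quasi-convex in $G$; for this the paper invokes Kapovich's criterion \cite{Ka01} for quasi-convexity of vertex groups in graphs of hyperbolic groups with quasi-convex edge groups, which applies once one knows the edge groups are quasi-convex (inductive hypothesis) and annuli flare (hyperbolicity of $G$). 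Finally, your case split in (ii) on whether $M$ fixes a vertex of $T$ should be avoided: by \cref{Z-domain_one-relator} you may choose the $\Z$-cover and the one-relator $\Z$-domain so that $\rho^{-1}(\Gamma)\subset Z$ for the Magnus subgraph $\Gamma$ defining $M$, which forces $M$ to be elliptic; the second branch, with its appeal to a tree-of-spaces combination theorem, is not needed.
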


A hyperbolic one-relator group may have many distorted (free) subgroups, see \cite{Ka99} for an explicit example.

The Bestvina--Feighn combination theorem \cite{BF92} is an important tool that can be used to show when a HNN-extension of a hyperbolic group is again hyperbolic. The combination theorem states that if $G = H*_{\psi}$ is a HNN-extension with $H$ hyperbolic, both associated subgroups quasi-convex in $H$ and such that the HNN-extension has a property called \emph{flaring annuli}, then $G$ is hyperbolic. Conversely, Gersten showed in \cite[Theorem 6.4 \& Corollary 6.7]{Ge98} that if $H$ is hyperbolic, both associated subgroups are quasi-convex in $H$ and $G$ is hyperbolic, then the HNN-extension has flaring annuli.

By the above discussion, if $G\isom H*_{\psi}$ is a one-relator splitting, either $H$ is not hyperbolic in which case $G$ is not hyperbolic by \cref{gersten_subgroup}, or $H$ is hyperbolic, both associated subgroups are quasi-convex by \cref{quasi-convex} and so $G$ is hyperbolic if and only if annuli flare in $H*_{\psi}$. As such, if one can understand annuli in one-relator splittings, then in principal one understands when a one-relator group is hyperbolic or not. Unfortunately, flaring annuli turns out to be a tricky condition to check. See the article of Kapovich \cite{Ka99} for an explicit use of the combination theorem to show that a specific one-relator group is hyperbolic.

On the other hand, if $G$ acts acylindrically on its Bass--Serre tree, then $G$ automatically has no flaring annuli. Hence, combining this with \cref{acylindrical} and an inductive argument on one-relator hierarchies, in \cite{Lin22} it was shown that 2-free one-relator groups are hyperbolic. In \cite{Lin24}, a slightly more general statement was established. 

\begin{theorem}
\label{conditional}
Let $G = F/\normal{w}$ be a one-relator group. If two-generator one-relator subgroups of $G$ are hyperbolic, then $G$ is hyperbolic.
\end{theorem}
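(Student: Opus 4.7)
The plan is to proceed by induction on the topological hierarchy of $G$ furnished by Theorem \ref{topological_hierarchy}, measuring complexity by $c(X)$ for an associated one-relator complex $X$. In the base case $c(X)$ is minimal, so $\pi_1(X)$ is a free product of a free group with a finite cyclic group, hence virtually free and hyperbolic. For the inductive step, apply Theorem \ref{one-relator_splitting} to write $G \cong \pi_1(Z) *_{\psi}$ where $\pi_1(Z)$ is a one-relator group of strictly smaller complexity and $\psi$ identifies two Magnus subgroups $A, B$ of $\pi_1(Z)$. Since $\pi_1(Z)$ embeds in $G$, every two-generator one-relator subgroup of $\pi_1(Z)$ is automatically a two-generator one-relator subgroup of $G$; the standing hypothesis is therefore inherited by $\pi_1(Z)$, and induction gives that $\pi_1(Z)$ is hyperbolic. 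Theorem \ref{quasi-convex} then yields that $A$ and $B$ are quasi-convex in $\pi_1(Z)$.

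To conclude hyperbolicity of $G$, I would invoke the Bestvina--Feighn combination theorem for HNN-extensions. The hyperbolicity of the vertex group and the quasi-convexity of the edge groups are now in place, so the only remaining condition is the flaring annuli condition for the splitting $\pi_1(Z) *_{\psi}$. My plan is to establish this by contradiction: assuming flaring fails and applying Gersten's converse to the combination theorem for hyperbolic groups with quasi-convex edge subgroups, one extracts an element $g \in \pi_1(Z)$ of infinite order together with a conjugator $h \in G$ such that $h g^k h^{-1}$ is conjugate to $g^m$ in $G$ with $|k| \neq |m|$, producing a Baumslag--Solitar subgroup of $G$.

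The main obstacle will be to guarantee that such a subgroup can be arranged to lie inside a two-generator one-relator subgroup. Here the fine structure of the splitting enters essentially: the edge groups are free Magnus subgroups, and Collins' intersection theorems (Theorems \ref{Collins_intersections_1} and \ref{Collins_intersections_2}) sharply restrict how a stable letter can conjugate cyclic subgroups of the vertex into one another. Combined with the analysis of the Bass--Serre tree action underlying Theorem \ref{acylindrical}, these constraints should force the pair $\{g, h\}$ (or a suitable modification thereof) to generate a subgroup isomorphic to $\mathrm{BS}(1, n)$ for some $n \geq 2$, after reducing to the solvable case $|k|=1$. Since $\mathrm{BS}(1, n)$ is a two-generator one-relator group that is not hyperbolic, this contradicts the hypothesis on $G$ and completes the induction. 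The most delicate point in this last step is to rule out the possibility that the candidate Baumslag--Solitar subgroup is merely a subgroup of a larger one-relator subgroup of $G$ without itself being realizable on two generators; this is where the explicit description of one-relator splittings from \cref{sec:character_sphere} and the Collins-type control on intersections are indispensable.
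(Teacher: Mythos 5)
The overall architecture you chose — induction on the hierarchy via \cref{topological_hierarchy} and \cref{one-relator_splitting}, inheritance of the hypothesis by the vertex group $\pi_1(Z)$, quasi-convexity of the edge (Magnus) subgroups via \cref{quasi-convex}, and then the Bestvina--Feighn combination theorem — is the right one and matches the approach sketched around \cref{conditional} in the text. Your base case and the observation that two-generator one-relator subgroups of $\pi_1(Z)$ are automatically two-generator one-relator subgroups of $G$ are both fine.

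The gap is in the key step where you claim that failure of flaring produces an element $g$ of infinite order and a conjugator $h$ with $hg^kh^{-1}$ conjugate to $g^m$, $|k|\neq|m|$, and hence a Baumslag--Solitar subgroup of $G$. Two problems. First, ``Gersten's converse'' (hyperbolic $\Rightarrow$ flaring) is a purely implicational statement; its contrapositive tells you $G$ is not hyperbolic, but it does not hand you a pair of elements realising an algebraic commensuration relation. Extracting such a relation from a family of non-flaring annuli is exactly the kind of limiting argument that fails in general: non-flaring gives approximate coincidences between powers, and there is no a priori reason these converge to a genuine equality $hg^kh^{-1}=g^m$, especially with free (rather than cyclic) edge groups. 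Second, and more fundamentally, if failure of flaring for a one-relator splitting with hyperbolic vertex and quasi-convex edges always produced a $\bs(1,n)$ subgroup, then you would have proved Gersten's conjecture (\cref{gersten_conjecture}) outright, which the surrounding text repeatedly identifies as open. The correct content of the inductive step — this is precisely what the refinement \cref{main_hyperbolic} encodes — is that a failure of the combination theorem localises to a \emph{primitive extension subgroup}, a two-generator one-relator subgroup with a specific graph-of-groups structure built from Christoffel words and the Collins-type overlap data, which need not be a Baumslag--Solitar group at all. The hypothesis of \cref{conditional} then kills that subgroup's non-hyperbolicity, which is strictly weaker than asking for a $\bs(1,n)$.

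Finally, your closing worry is misplaced: there is no issue about whether a Baumslag--Solitar subgroup is ``realizable on two generators''. The group $\bs(1,n)$ is itself, abstractly, a two-generator one-relator group, so if you had one inside $G$ the hypothesis would already be contradicted without any further care. The genuine difficulty — which your sketch does not resolve — is that the localisation step does not give you $\bs(1,n)$; it gives you a primitive extension group, and identifying these (rather than $\bs(1,n)$) as the witnesses to non-flaring is the technical heart of the argument in \cite{Lin24}.
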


A more refined statement than \cref{conditional} reducing Gersten's conjecture to two specific families of two-generator one-relator groups was proven in \cite{Lin24}. We describe these families in the next section. Combining Newman's theorem and \cref{conditional} with \cref{k-free}, we have a clean condition for hyperbolicity. 

\begin{corollary}
If $G = F/\normal{w}$ with $\pi(w)\neq 2$, then $G$ is hyperbolic.
\end{corollary}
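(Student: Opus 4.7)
The plan is to split according to the value of $\pi(w)$, treating the trivial, torsion, and torsion-free cases separately, all of which are excluded from the case $\pi(w)=2$ by hypothesis.

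First, I would dispatch the degenerate cases. If $\pi(w)=0$ then $w=1$, so $G=F$ is free and hence hyperbolic. If $\pi(w)=+\infty$ then $w$ is primitive in $F$, and Whitehead's theorem (Theorem~\ref{Thm:Whitehead-theorem}) gives that $G$ is free, hence hyperbolic. These are essentially book-keeping steps.

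Next I would handle $\pi(w)=1$. By definition, there is a rank-one subgroup $H\leq F$ containing $w$ in which $w$ is imprimitive; since $H\cong \Z$, imprimitivity means $w=v^n$ for some $v\in F$ and $|n|\geq 2$. Choosing $v$ not a proper power and reducing $w$ to be cyclically reduced, Theorem~\ref{finite_order} shows $G$ has torsion. Newman's spelling theorem in the strengthened form of Theorem~\ref{Newman} (applied with $A=\varnothing$) gives a Dehn presentation for $G$, which is equivalent to $G$ being hyperbolic. So this case reduces to an invocation of known results.

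The main case, and the step that carries the content, is $\pi(w)\geq 3$. Here I would invoke Theorem~\ref{k-free}: since $G=F/\normal{w}$ is $k$-free for every $k<\pi(w)$, and $\pi(w)\geq 3$, $G$ is in particular $2$-free. Consequently every two-generator subgroup of $G$ is free, and so any two-generator one-relator subgroup of $G$ is free (with trivial relator). Free groups are hyperbolic, so the hypothesis of Theorem~\ref{conditional} is satisfied, and we conclude that $G$ is hyperbolic.

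The only conceptual obstacle is ensuring the hypothesis of Theorem~\ref{conditional} is correctly checked: one must verify that a ``two-generator one-relator subgroup'' of a $2$-free group is forced to be free (rather than merely that its abstract two-generator presentations collapse), but this is immediate because every subgroup generated by two elements of $G$ is free by $2$-freeness, regardless of any relations those elements might satisfy as a one-relator presentation. Combining the four cases covers every $w$ with $\pi(w)\neq 2$, completing the corollary.
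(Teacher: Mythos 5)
Your proof is correct and follows essentially the same route the paper indicates in its one-line justification ("Combining Newman's theorem and \cref{conditional} with \cref{k-free}"). You have simply unpacked that sentence into explicit cases: $\pi(w)=1$ gives torsion (via \cref{k-free}) and hence hyperbolicity by Newman's spelling theorem, while $\pi(w)\geqslant 3$ gives $2$-freeness (again \cref{k-free}) so that every two-generator one-relator subgroup is free, hence hyperbolic, and \cref{conditional} finishes. The $\pi(w)\in\{0,+\infty\}$ cases are harmless book-keeping (and in fact $\pi(w)=+\infty$ is already covered by the $\pi(w)\geqslant 3$ branch, since $G$ is then $k$-free for all $k$), and your final remark correctly dismisses the only potential subtlety about what "two-generator one-relator subgroup" means in a $2$-free group.
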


If $G = F/\normal{w}$ with $\pi(w) = 2$, recall that by \cref{w-subgroup} there is a two-generator subgroup $H\leqslant F$ containing $w$ such that every non-free two-generator subgroup of $G$ is conjugate into $P = H/\normal{w}\leqslant G$, the $w$-subgroup. Thus, in order to apply \cref{conditional} to such a one-relator group, one only needs to check that $P$ is hyperbolic. Louder--Wilton conjectured that in fact $G$ should be hyperbolic relative to $P$ in \cite[Conjecture 1.9]{LW22}, this remains open.

\begin{conjecture}[Louder--Wilton]
If $G = F/\normal{w}$ with $\pi(w) = 2$, then $G$ is hyperbolic relative to its $w$-subgroup $P\leqslant G$.
\end{conjecture}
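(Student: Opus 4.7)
The plan is to proceed by induction on the length of the Magnus--Moldavanski\u{\i} hierarchy, using a suitable combination theorem for relatively hyperbolic groups (e.g.\ Dahmani's amalgam/HNN combination theorem, or Mj--Reeves in the acylindrical setting) to lift relative hyperbolicity across the HNN-splittings produced by \cref{one-relator_splitting}. At each stage the peripheral structure will be carried by (conjugates of) the $w$-subgroup $P = H/\normal{w}$, with the base case being a free group, in which the statement is vacuous. The key preparatory step is to show that the $w$-subgroup is almost malnormal in $G$: this uses both Louder--Wilton's theorem that $w$-subgroups are malnormal in the free group $F$ together with the Cohen--Lyndon property, and reduces to controlling double cosets $P g P$ with non-trivial intersection. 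With Collins's intersection theorems (\cref{Collins_intersections_1,Collins_intersections_2}) in hand for edge groups that are Magnus subgroups, one then has the candidate peripheral structure.

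First I would track the $w$-subgroup through the hierarchy. Given a one-relator splitting $G \isom G' *_\psi$ with $G' = \pi_1(Z)$ at the next level of the hierarchy and $\psi$ identifying Magnus subgraphs $Z_0, Z_1 \subset Z$, one considers the relator $w'$ of $G'$. The crucial step is to show that if $\pi(w) = 2$, then either (i) $w'$ has primitivity rank $\geqslant 3$ in the free group underlying $G'$, in which case $G'$ is hyperbolic by \cref{k-free} and the already-proved hyperbolicity theorem for 2-free one-relator groups (\cref{conditional} applied inductively); or (ii) $\pi(w') = 2$ and the unique $w'$-subgroup $P' \leq G'$ is conjugate into a translate of $P$ in the Bass--Serre tree. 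This is the main technical input and is where the combinatorics of traces $\Lambda_w$ in the abelian cover (\S\ref{sec:character_sphere}), together with the fact that $w$-subgroups detect the locus of imprimitivity, must be exploited. Concretely, one should be able to realise $P'$ as a Magnus-like subcomplex sitting inside the $\Z$-domain $Z$.

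Next I would verify the hypotheses of the combination theorem. The edge groups of the splitting $G' *_\psi$ are Magnus subgroups of $G'$, and these are quasi-convex in $G'$ relative to $P'$ (this follows from \cref{quasi-convex} in the hyperbolic case, and must be upgraded to a relative statement using the strong inertness of Magnus subgroups, \cref{inert}, and Collins's intersection theorem \cref{Collins_intersections_2}). One also needs that the conjugation isomorphism $\psi$ is type-preserving with respect to the peripheral structure, which reduces to showing that $\psi^{\pm 1}$ sends $P' \cap Z_0$ into $P' \cap Z_1$ up to conjugacy; this should follow from the uniqueness of the $w'$-subgroup (\cref{pi=2}) combined with the action on the Bass--Serre tree. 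Finally, an acylindricity statement along the Bass--Serre tree relative to $P'$ is needed, which should follow from a relative version of \cref{acylindrical} by observing that any $BS(1,n)$ subgroup of $G$ fixes an end of the tree and hence lies in a vertex stabiliser.

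The main obstacle, and the heart of the proof, will be establishing case (ii) above: the assertion that the $w'$-subgroup of $G'$ is conjugate into $P$ when $\pi(w') = 2$. This amounts to a hereditary statement about primitivity rank along the hierarchy, and one expects counterexamples in general if $\pi(w')$ drops strictly below $\pi(w)$ for reasons unrelated to $P$ --- precisely what must be ruled out. A promising approach is to show that any $BS(1,n)$ or $\Z^2$ subgroup of $G'$ lifts to such a subgroup of $G$, which must then lie in $P$ (as the only obstruction to hyperbolicity), and use Louder--Wilton's characterisation of $w$-subgroups as the unique maximal imprimitive envelopes; the 2-complex framework of \cref{topological_hierarchy} and the fact that $w$-subgroups are represented by one-relator subcomplexes immersing into $X$ should let one transport this information between adjacent levels of the hierarchy. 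Once case (ii) is in place, applying Dahmani's combination theorem completes the inductive step.
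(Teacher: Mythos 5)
This statement is not a theorem in the paper; it is an \emph{open conjecture}. The text immediately preceding the conjecture reads: ``Louder--Wilton conjectured that in fact $G$ should be hyperbolic relative to $P$ in \cite[Conjecture 1.9]{LW22}, this remains open.'' So there is no proof in the paper to compare your proposal against, and you should not treat this as a statement you were expected to prove.

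That said, a few remarks on what you have written. Your overall strategy --- induct on the one-relator hierarchy, carry the $w$-subgroup $P$ along as the peripheral structure, and use a combination theorem for relatively hyperbolic groups across the HNN-splittings from \cref{one-relator_splitting} --- is a reasonable first attempt and is broadly the shape of argument that the known special cases suggest. You are also honest in locating the heart of the matter in your ``case (ii)'': you do not prove it, you only say it ``should follow,'' offer a ``promising approach,'' and acknowledge it is the ``main obstacle.'' That gap is not a small one; it is essentially the content of the conjecture. The difficulty of tracking primitivity rank and $w$-subgroups down the hierarchy is precisely what the paper's reduction in \cref{main_hyperbolic} to primitive extension groups is designed to grapple with, and the paper deliberately stops short of the relative version. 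Minor factual correction: since $\pi(w) = 2$ implies $G$ is torsion-free and $G' \leqslant G$, you cannot have $\pi(w') = 1$; a one-relator subgroup $G'$ of a torsion-free one-relator group is itself torsion-free, so your dichotomy should be between $\pi(w') = 2$ and $\pi(w') \geqslant 3$, not involve a torsion case. None of this rescues the argument: with case (ii) unresolved, what you have is a research program, not a proof, which is consistent with the statement's status in the paper.
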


A well-known problem asks whether hyperbolic groups are residually finite. This is not known for hyperbolic one-relator groups.

\begin{qstn}
\label{qstn:hyperbolic_rf}
Are hyperbolic one-relator groups residually finite?
\end{qstn}

We close this section with some remarks on small cancellation conditions for one-relator groups. Pride showed that one-relator groups $F/\normal{w^n}$ with torsion satisfy the small cancellation condition $C(2n)$ \cite{Pr83}. However, it is possible that they do not satisfy $C(2n+1)$ and may not even satisfy $T(4)$ when $n = 2$. More refined small cancellation conditions were developed by Ivanov--Schupp and applied to characterise hyperbolicity of certain families of one-relator groups, not necessarily with torsion, in \cite{IS98}. Blufstein--Minian introduce a small cancellation condition $T'$ in \cite{BM22} and use it to prove even more one-relator groups are hyperbolic. Cashen--Hoffmann \cite{CH23} implemented several hyperbolicity tests for one-relator groups, including the small cancellation conditions mentioned above, and produced a database of one-relator groups of relator length up to 17, verifying hyperbolicity when possible. They found many one-relator groups whose hyperbolicity could not be confirmed with any known small cancellation conditions. Finally, Ol'shanski\u{i} proved in \cite{Ol92} that one-relator groups are (exponentially) generically $C'(1/6)$ and hence hyperbolic.

\subsubsection{Primitive extension groups}

In order to describe our two families of one-relator groups, we shall need to introduce Christoffel words, so called as they were first studied by Christoffel in \cite{Ch73}.

Let $F = F(a, b)$ be the free group of rank two and let $p/q\in \Q_{\geqslant 0}\cup \{+\infty\}$. Let $\Gamma\subset \R^2$ be the Cayley graph for $\Z^2$ with generating set $\{\overline{a}, \overline{b}\}$, the images of $a, b$ under the abelianisation map $F\to \Z^2$. The generator $\overline{a}$ is the vertex $(1, 0)$ while the generator $\overline{b}$ is the vertex $(0, 1)$. Now let $L\subset \R^2$ be the line passing through the origin with slope $p/q$. Now let $I\to \Gamma$ be the path connecting $(0, 0)$ with the first integral point on $L$, not going above $L$ and such that the region enclosed by $L$ and the image of this path contains no integral points in its interior. Then the \emph{$p/q$-Christoffel word}, which we denote by
\[
\pr_{p/q}(a, b)
\]
is the word over $\{a, b\}$ traced out by this path. See \cref{fig:Christoffel} for an illustration.

\begin{figure}
\centering
\includegraphics[scale = 0.5]{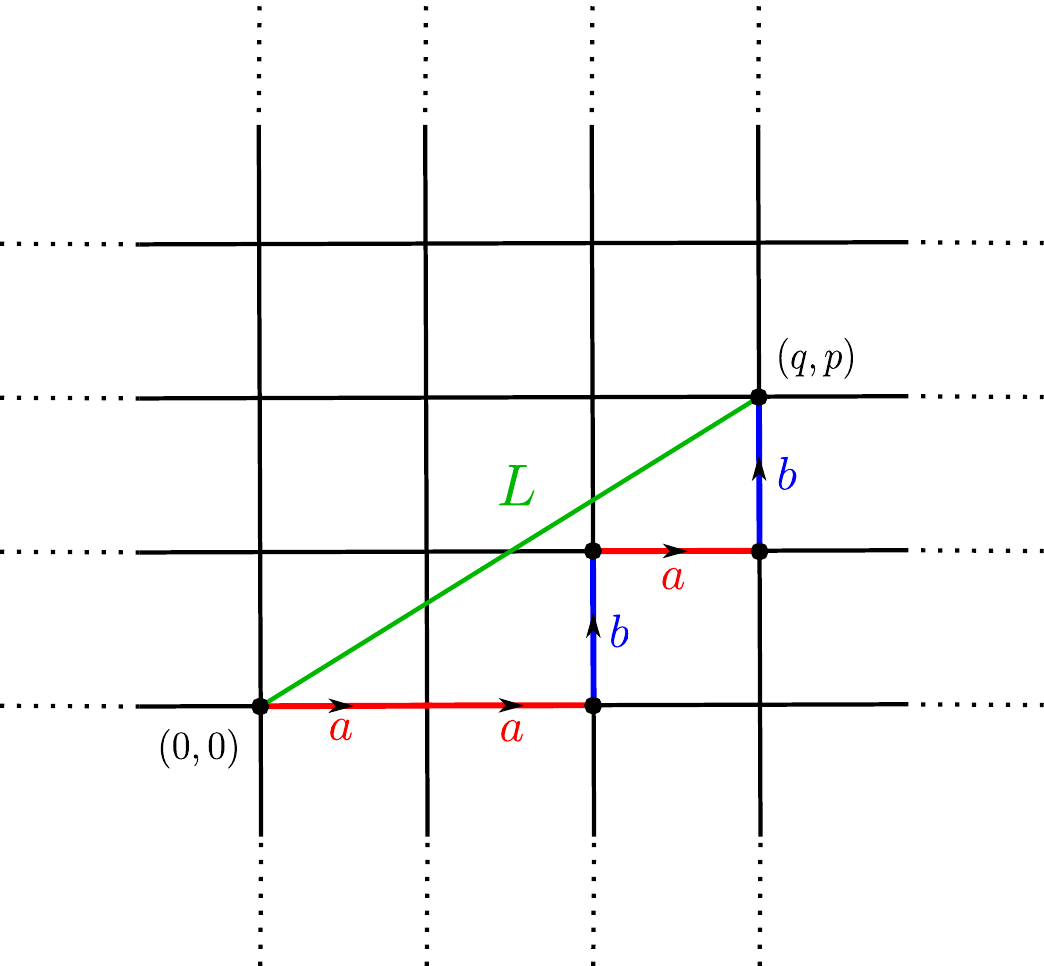}
\caption{The $2/3$-Christoffel word is $a^2bab$.}
\label{fig:Christoffel}
\end{figure}

The notation for the Christoffel words comes from the fact that they are all primitive elements of $F(a, b)$. Nielsen showed that the abelianisation map $F\to \Z^2$ induces a bijection between conjugacy classes of primitive elements in $F$ and primitive elements in $\Z^2$ \cite{Ni17}. Osborne--Zieschang showed in \cite{OZ81} that the Christoffel words and their inverses provide a complete set of conjugacy class representatives for the primitive elements of $F$. This means that if $w\in F$ is a primitive element, then $w$ is conjugate to some $\pr_{p/q}(a, b)^{\pm1}$. Moreover, $p/q$ is precisely the exponent sum of $b$ in $w$ divided by the exponent sum of $a$ in $w$.

The two types of \emph{primitive extension groups} are the two-generator one-relator groups with presentations:
\begin{align}
\label{first_type} E_{p/q}(x, y) &= \langle a, b \mid \pr_{p/q}(x, y)\rangle\\
\label{second_type} F_{p/q}(x, y, z) &= \langle a, b \mid \pr_{p/q}(xy, z)\rangle
\end{align}
where
\begin{align*}
A_{i, j} &= \langle b^{-i}ab^i, b^{-i-1}ab^{i+1}, \ldots, b^{-j}ab^j\rangle \quad \text{ where $i<j\in \N$}\\
x &\in A_{0, k-1} - A_{1, k-1}\\
y &\in A_{1, k} - A_{1, k-1}\\
z &\in A_{1, k-1} - 1
\end{align*}
and $\{\langle x\rangle, \langle y\rangle\}$ is a malnormal family in case (\ref{first_type}) and $\langle z\rangle$ is malnormal in case (\ref{second_type}). There are actually a couple of extra conditions on the words $x, y, z$ in \cite{Lin24}, but these are just to ensure that there is no other way to express the relator as a primitive word over words which violate the malnormality conditions. The upshot of these extra conditions is that when we go down one step in the hierarchy for $E_{p/q}$ or $F_{p/q}$ we reach a 2-free one-relator group, see \cite[Theorem 3.7]{Lin24}.

Note that since $\pr_{p/q}(-, -)$ is a primitive word, this implies that in $E_{p/q}(x, y)$ we have $\langle x, y\rangle\isom \Z$ and in $F_{p/q}(xy, z)$ we have $\langle xy, z\rangle \isom \Z$. 

The motivation behind introducing these groups is the main theorem from \cite{Lin24} which reduces the problem of determining which one-relator groups are hyperbolic to determining which primitive extension groups are hyperbolic.

\begin{theorem}
\label{main_hyperbolic}
A one-relator group $G = F/\normal{w}$ is hyperbolic if and only if all its primitive extension subgroups are hyperbolic. In particular, Gersten's conjecture is true if it is true for primitive extension groups.
\end{theorem}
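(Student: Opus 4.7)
The forward direction is immediate: if $G$ is hyperbolic then by \cref{gersten_subgroup} every finitely generated subgroup of $G$ is hyperbolic, and primitive extension subgroups are finitely generated by definition. The substance of the theorem is the converse, and the plan is to deduce it by induction along a one-relator hierarchy (\cref{topological_hierarchy}) combined with the reduction provided by \cref{conditional}.

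First I would argue that, by \cref{conditional}, it suffices to show that every two-generator one-relator subgroup $H = \langle x,y\mid u\rangle\leqslant G$ is hyperbolic. The analysis then splits on the primitivity rank $\pi(u)$ of $u$ in the free group $F(x,y)$, which necessarily lies in $\{1,2,\infty\}$ since $F(x,y)$ itself has rank two. When $\pi(u)=\infty$, $u$ is primitive so $H$ is free. When $\pi(u)=1$, $u$ is a proper power and $H$ has torsion, hence $H$ is hyperbolic by Newman's spelling theorem (\cref{Newman}). The remaining case $\pi(u)=2$ is where primitive extension groups enter: the aim is to show that every such subgroup $H\leqslant G$ either sits inside a primitive extension subgroup of $G$ of the form $E_{p/q}(x,y)$ or $F_{p/q}(x,y,z)$, or is already hyperbolic for inductive reasons.

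The key step here is to use the classification of primitive elements in $F_2$ via Christoffel words (Osborne--Zieschang): since $\pi(u)=2$, there is a rank-two subgroup $K\leqslant F(x,y)$ in which $u$ is imprimitive but not a proper power, and after a change of basis $u$ is conjugate to $\pr_{p/q}(\xi,\eta)^{\pm 1}$ for the Christoffel word associated to some slope $p/q$, where $\{\xi,\eta\}$ is a basis of $K$. Pushing this into the ambient one-relator group $G=F/\normal{w}$, one uses the $w$-subgroup structure (\cref{w-subgroup}) to check that the pair $(\xi,\eta)$, or in the second case a triple $(\xi,\eta,\zeta)$ obtained after a further splitting, satisfies the malnormality hypotheses required by the definitions of $E_{p/q}$ and $F_{p/q}$; here the malnormality of Magnus subgroups of $G$ and the fact (from the definition of primitive extension groups) that one step down in the Magnus--Moldavanskii hierarchy reaches a $2$-free one-relator complex, will be essential. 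Thus $H$ is contained in a canonical primitive extension subgroup $P\leqslant G$, which is hyperbolic by hypothesis, so $H$ itself is hyperbolic by \cref{gersten_subgroup} applied to $P$.

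To close the induction one applies the hierarchy to $G$: write $G\isom G_1*_{\psi}$ as a one-relator HNN-splitting with shorter complexity. By induction $G_1$ is hyperbolic, Magnus subgroups of $G_1$ are quasiconvex (\cref{quasi-convex}), and the action of $G$ on its Bass--Serre tree is acylindrical either automatically (when $G$ is $2$-free or has torsion, \cref{acylindrical}) or after isolating the failure inside a primitive extension subgroup; then the Bestvina--Feighn combination theorem yields hyperbolicity of $G$. The hard part of the argument will be the $\pi(u)=2$ case: one must show that no two-generator one-relator subgroup of $G$ with $\pi(u)=2$ escapes being captured by a primitive extension subgroup in the precise sense above, including verifying the malnormality conditions on the tuples $(x,y)$ or $(x,y,z)$ at the appropriate hierarchy level. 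This is the delicate structural point where the Christoffel parametrization of primitive elements of $F_2$ and the malnormality of Magnus subgroups must be combined carefully, and it is what forces the two distinct families $E_{p/q}$ and $F_{p/q}$ in the definition.
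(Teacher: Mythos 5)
Your high-level skeleton — forward direction via \cref{gersten_subgroup}, converse by reducing through \cref{conditional} to two-generator one-relator subgroups, split on $\pi(u)\in\{1,2,\infty\}$ — is aligned with the paper, but the crucial step in the $\pi(u)=2$ case is wrong, and as a result the argument does not close.

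The error is the sentence asserting that since $\pi(u)=2$, the $w$-witness $K\leqslant F$ is a rank-two subgroup in which $u$ is imprimitive, \emph{and therefore} (after change of basis) $u$ is conjugate to $\pr_{p/q}(\xi,\eta)^{\pm1}$ for some basis $\{\xi,\eta\}$ of $K$. This is precisely backwards. By Osborne--Zieschang, the Christoffel words are a set of conjugacy representatives for \emph{primitive} elements of $F_2$. If $u$ were conjugate in $K$ to a Christoffel word in a basis of $K$, then $u$ would be primitive in $K$, contradicting the fact that $K$ is a witness to $\pi(u)=2$ (i.e.\ $u$ is imprimitive in $K$). So the $w$-subgroup can never be the rank-two subgroup that furnishes the Christoffel structure, and the intended identification of $H$ with a subgroup of a primitive extension group does not follow. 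In the actual definition of $E_{p/q}(x,y)$ and $F_{p/q}(x,y,z)$, the rank-two (or rank-three) subgroup $\langle x,y\rangle$ (resp.\ $\langle xy,z\rangle$) in which the relator \emph{is} primitive is a different subgroup, pinned down by the $A_{i,j}$-conditions of the Magnus--Moldavanski\u{\i} splitting (the relator has exponent sum zero in $b$, and $x,y,z$ live in the level sets $A_{i,j}$ of the $\Z$-cover). This is a datum of the HNN decomposition, not of the $w$-subgroup, and the malnormality hypotheses are exactly what guarantee that one more step in the hierarchy lands in a $2$-free one-relator group. Your write-up collapses these two genuinely different rank-two subgroups into one.

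A second, related gap is the induction. If $G$ is already two-generator and $\pi(w)=2$, then $G$ is its own $w$-subgroup and \cref{conditional} by itself tells you nothing new (the hard two-generator subgroup to certify hyperbolic is $G$ itself). The surrounding text makes clear that what does the work for such $G$ is a careful analysis of the Magnus--Moldavanski\u{\i} HNN splitting: one shows either that annuli flare (combination theorem applies, using \cref{quasi-convex,acylindrical}), or that the failure of flaring is witnessed by a subgroup carrying one of the two primitive-extension structures, with the Christoffel word $\pr_{p/q}$ recording the ``slope'' of the periodic behaviour that prevents flaring. That is where the two families $E_{p/q}$ and $F_{p/q}$ come from, and it is the content of \cite[Theorem 3.7]{Lin24} that you need to cite and cannot replace with the Christoffel-word-in-the-$w$-subgroup argument. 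Your final paragraph waves at this but without fixing the misidentification it does not give a proof.
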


Let us mention some examples of primitive extension groups.

\begin{itemize}
\item If $p, q>0$ are coprime integers, then we have
\[
\bs(p, \pm q) \isom E_{p/q}(a, b^{-1}a^{\pm1}b).
\]
We can see this as follows. We have
\[
E_{p/q}(a, b^{-1}a^{\pm1}b) \isom \langle a_0, a_1, b \mid b^{-1}a_0b = a_1, \pr_{p/q}(a_0, a_1^{\pm1})\rangle.
\]
We have $F(a_0, a_1)/\normal{\pr_{p/q}(a_0, a_1^{\pm1})} \isom \Z$ and so $E_{p/q}(a, b^{-1}a^{\pm1}b)$ splits as a HNN-extension over $\Z$ where the associated isomorphism is given by $p\Z\to \pm q\Z$.
\item Any one-relator group $G \isom F/\normal{w}$ with non-empty BNS-invariant is a primitive extension group. This is explained more in detail in \cite[Example 5.5]{Lin24}. More precisely, if $G$ has non-empty BNS-invariant, then $G\isom E_{p/1}(a, y)$ for some $p, y$.

Note that such groups have been shown by Mutanguha \cite{Mu21} to be hyperbolic if and only if they contain no Baumslag--Solitar subgroups.
\end{itemize}

The presentations (\ref{first_type}) and (\ref{second_type}) may seem a little mysterious, but there are very explicit graphs of groups decompositions which express their structure better.

The primitive extension groups $E_{p/q}(x, y)$ have the following graph of groups decomposition:
\[
\begin{tikzcd}[sep=2cm]
{A_{0, k-1}} \arrow[rr, "{A_{1, k-1}*\langle x\rangle = A_{1, k-1}*\langle t^p\rangle}"', no head] &  & {A_{1, k-1}*\langle t\rangle} \arrow[rr, "{A_{1, k-1}*\langle t^q\rangle = A_{1, k-1}*\langle y\rangle}"', no head] &  & {A_{1, k}} \arrow[llll, "{A_{0,k-1} = A_{1, k}}"', no head, bend right]
\end{tikzcd}
\]
where the top isomorphism is given by shifting $b^{-i}ab^i\to b^{-i-1}ab^{i+1}$ for each $0\leqslant i<k$ and the lower isomorphisms are given by the identity on $A_{1, k-1}$ and by $x\to t^p$ and $y\to t^q$.

The primitive extension groups $F_{p/q}(xy, z)$ have the following graph of groups decomposition:
\[
\begin{tikzcd}[sep=2cm]
{A_{0, k-1}} \arrow[rr, "{A_{1, k-1}*\langle x\rangle = A_{1, k-1}*\langle x\rangle}"', no head] &  & H \arrow[rr, "{A_{1, k-1}*\langle y\rangle = A_{1, k-1}*\langle y\rangle}"', no head] &  & {A_{1, k}} \arrow[llll, "{A_{0,k-1} = A_{1, k}}"', no head, bend right]
\end{tikzcd}
\]
where $H$ splits as:
\[
\begin{tikzcd}[sep=2cm]
{A_{1, k-1}} \arrow[r, "\langle z\rangle = \langle t^p\rangle", no head] & \langle t\rangle \arrow[r, "\langle t^q\rangle = \langle xy\rangle", no head] & {\langle x, y\rangle}
\end{tikzcd}
\]
Note that we have $x^{-1}zx = yzy^{-1}$ in $H$. Moreover, since $xy$ is primitive in the free group $\langle x, y\rangle$, we see that we have
\[
H \isom \left(A_{1, k-1}\underset{\langle z\rangle = \langle t^p\rangle}{\ast}\langle t\rangle\right)*\Z
\]
In light of \cref{main_hyperbolic} and its proof, understanding the geometry of primitive extension groups appears to be an important goal in order to understand the geometry of all one-relator groups.

\begin{problem}
Characterise non flaring annuli in the sense of \cite{BF92} of primitive extension groups with respect to the above graph of groups structures.
\end{problem}

\subsection{$\cat(0)$ and cubulable one-relator groups}
\label{sec:cat(0)}

A group $G$ is a \emph{$\cat(0)$ group} if it acts geometrically on a $\cat(0)$ space. Roughly speaking, a metric space is $\cat(0)$ if its geodesic triangles are not fatter than Euclidean triangles with the same edge lengths. We refer the reader to \cite{BH99} for a more formal definition and an in depth treatment of the theory. A group is a \emph{$\cat(0)$ group} if it acts geometrically on a $\cat(0)$ space. 

A class of $\cat(0)$ spaces which are easy to construct and have particularly nice combinatorial properties are $\cat(0)$ cube complexes. A \emph{cube complex} is a combinatorial complex built out of Euclidean cubes which are glued to each other isometrically along faces. A group is \emph{cubulable} if it acts properly and cocompactly on a $\cat(0)$ cube complex. A cube complex is \emph{non-positively curved} if the link of every vertex, which is a simplicial complex, is flag. In other words, if the link of each vertex has no missing faces. The universal cover of a non-positively curved cube complex is $\cat(0)$ by Gromov's well-known link condition. The reader is directed to \cite{Wi21} for details.

The question of which one-relator groups are $\cat(0)$ appears to be more subtle than the hyperbolicity question. A $\cat(0)$ group cannot contain \emph{unbalanced Baumslag--Solitar} subgroups $\bs(m, n)$ where $m \neq \pm n$. But a $\cat(0)$ group can contain balanced $\bs(m, \pm m)$ subgroups. Gardam--Woodhouse considered the following family of one-relator groups in \cite{GW19}:
\[
R_{p, q} = \langle x, t \mid t^{-1}x^{-2q}tx^{2p-1}t^{-1}x^{-2q}tx^{2p+1}\rangle
\]
They showed that $R_{1, 1}$ contains as an index two subgroup the \{fg free\}-by-cyclic group that Gersten showed was not $\cat(0)$ in \cite{Ge94}. In particular, $R_{1, 1}$ does not contain any unbalanced Baumslag--Solitar subgroups, but is not $\cat(0)$. The groups $R_{p, q}$ have some further interesting properties we believe are worth noting.

\begin{theorem}
If $p, q>0$ are positive integers, the groups $R_{p, q}$ have the following properties:
\begin{enumerate}
\item $R_{p, q}$ does not contain any unbalanced Baumslag--Solitar subgroups.
\item $R_{p, q}$ is residually finite if and only if $q\mid 2p$.
\item If $p<q$, then $R_{p, q}$ is $\cat(0)$.
\item If $p = q$, then $R_{p, q}$ is virtually \{fg free\}-by-cyclic and is not $\cat(0)$.
\item If $p>q$, then $R_{p, q}$ has Dehn function $\simeq n^{2\log_2(2p/q)}$ and does not act freely on any $\cat(0)$ cube complex.
\end{enumerate}
\end{theorem}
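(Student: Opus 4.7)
The plan is to first rewrite the defining relator in a form that exposes a natural HNN decomposition over a Klein bottle group. Setting $y = t^{-1}x^{-2q}t$, the relator is $yx^{2p-1}yx^{2p+1}$; after introducing $u = yx^{2p}$, the relator collapses to $u\cdot u^x = 1$, i.e.\ $u^x = u^{-1}$. Together with the defining identity $t^{-1}x^{2q}t = x^{2p}u^{-1}$, this gives the equivalent presentation
\[
R_{p,q}\;\cong\;\bigl\langle x, u, t \,\bigm|\, u^x=u^{-1},\; t^{-1}x^{2q}t=x^{2p}u^{-1}\bigr\rangle,
\]
which exhibits $R_{p,q}$ as an HNN extension of the Klein bottle group $K=\langle x,u\mid u^x=u^{-1}\rangle$ with stable letter $t$ identifying the cyclic subgroups $\langle x^{2q}\rangle$ and $\langle x^{2p}u^{-1}\rangle$. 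Since $K$ is virtually $\Z^2$ and acts geometrically on the Euclidean plane, this is the structural backbone I would use for all five parts.

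For (1), I would analyze the Bass--Serre tree of this splitting. A $\bs(m,n)$ subgroup with $|m|\neq|n|$ requires (up to conjugation) a relation of the form $h(x^{2q})^m h^{-1}=(x^{2p}u^{-1})^n$ in $K$ for some $h \in K$. A direct computation in the Klein bottle group, using that $\langle x^2,u\rangle\cong\Z^2$ is an index-$2$ abelian subgroup, reduces this equation to $m=n=0$, because $(x^{2q})^m=x^{2qm}$ is central in $K$ while $(x^{2p}u^{-1})^n=x^{2pn}u^{-n}$ has nontrivial $u$-component unless $n=0$. For (2), I would observe that killing $u$ produces the quotient $\bs(2q,2p)$, which fails to be residually finite precisely when $q\nmid 2p$ (and $p\nmid 2q$); when $q\mid 2p$, one lifts compatible congruence quotients of $K$ through the HNN construction to build a separating family of finite quotients of $R_{p,q}$, while when $q\nmid 2p$ one produces an explicit nontrivial element of the finite residual by iterating the HNN identification to construct an obstruction adapted to the Klein bottle structure of the vertex group.

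For (3)--(5) the analysis is geometric. The elements $x^{2q}$ and $x^{2p}u^{-1}$ act on the Euclidean plane $\R^2$ (the universal cover of $K$) as translations whose lengths can be prescribed by choice of flat metric. For (3), with $p<q$, I would construct a non-positively curved square complex by gluing an annulus onto a flat torus (the double cover of the Klein bottle) along curves representing $x^{2q}$ and $x^{2p}u^{-1}$; the inequality $p<q$ is precisely what allows the annulus to carry a flat Euclidean structure with matching boundary lengths, so the link condition can be verified at every vertex and the resulting space is $\cat(0)$. For (4), with $p=q$, an index-two torsion-free subgroup of $R_{p,p}$ can be identified, up to Nielsen equivalence of the monodromy, with Gersten's free-by-cyclic group, which is known not to act geometrically on any $\cat(0)$ space; containment of this non-$\cat(0)$ finite-index subgroup rules out $\cat(0)$ for $R_{p,p}$ while preserving virtual free-by-cyclicness. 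For (5), with $p>q$, conjugating $x^{2q}$ by successive powers of $t$ multiplies the translation length by $2p/q>2$, forcing exponential distortion of $\langle x^2,u\rangle$. Building van Kampen diagrams that iterate this distortion $\log_2 n$ times with polynomial-cost layers coming from the Klein bottle relation gives an upper bound of $n^{2\log_2(2p/q)}$ on the Dehn function, and a matching lower bound follows from a corridor-counting argument on diagrams realizing the distortion; the absence of a free $\cat(0)$ cube action is then immediate, since such an action would force every infinite cyclic subgroup to be undistorted.

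The principal obstacles are the sharp two-sided Dehn-function estimate in (5), which requires balancing a recursive van Kampen diagram construction against a combinatorial Gauss--Bonnet or corridor-counting lower bound, and the explicit verification of non-positive curvature in the square complex of (3), whose link condition must be checked vertex-by-vertex and depends delicately on the geometric inequality $p<q$. A further subtle point is the identification of Gersten's free-by-cyclic group inside $R_{p,p}$ for part (4): this requires selecting the correct index-two subgroup and recognizing its monodromy, obtained by restricting the $t$-action on an appropriate free kernel, as Nielsen-equivalent to Gersten's specific automorphism of $F_3$, a calculation that is conceptually clear but combinatorially delicate.
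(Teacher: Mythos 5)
The paper itself does not prove this theorem directly: it attributes (1) and (3) to Gardam, (2) to Button, and (5) to Gardam--Woodhouse (who prove it by exhibiting an index-two Brady--Bridson snowflake subgroup), and states (4) as a ``slight generalisation of Gardam--Woodhouse's argument for $R_{1,1}$'' using the index-$2p$ kernel of $R_{p,p}\to \Z/2p\Z$, $x\mapsto 1$, $t\mapsto 0$. Your opening rewriting, exhibiting $R_{p,q}$ as an HNN extension of the Klein bottle group $K=\langle x,u\mid u^x=u^{-1}\rangle$ with edge groups $\langle x^{2q}\rangle$ and $\langle x^{2p}u^{-1}\rangle$, is correct and genuinely useful; it is close to the structure the cited papers exploit. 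The individual sketches, however, each contain a gap.

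For (1), an unbalanced $\bs(m,n)$ subgroup need not be generated by the edge-group generators nor by elements conjugating one edge-group generator to a power of the other, so the reduction to the single equation $h(x^{2q})^m h^{-1}=(x^{2p}u^{-1})^n$ in $K$ is unjustified. One needs an actual Bass--Serre analysis of how a $\bs(m,n)$ can act on the tree, covering both the case where it is conjugate into $K$ (where virtual abelianness of $K$ forces $|m|=|n|$) and the case where it acts minimally on a subtree, which is where the real work lies.

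For (2), the stated criterion for residual finiteness of $\bs(2q,2p)$ is wrong: since $2q,2p\geq 2$, Meskin's theorem (quoted in the first chapter) gives that $\bs(2q,2p)$ is residually finite iff $p=q$, not iff $q\mid 2p$ and $p\mid 2q$. More importantly, the logical direction is inverted: a quotient of a residually finite group need not be residually finite, so knowing $R_{p,q}/\normal{u}\isom\bs(2q,2p)$ is not residually finite tells you nothing about $R_{p,q}$. To show non-residual-finiteness in the direction $q\nmid 2p$, you must produce a nontrivial element of the finite residual of $R_{p,q}$ itself, which is the content of Button's argument and is not ``vague but workable'' in the way your sketch suggests.

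For (4), the claim that an index-two subgroup of $R_{p,p}$ is Nielsen-equivalent to Gersten's $F_3\rtimes\Z$ is correct only for $p=1$; for $p>1$ the paper points to the index-$2p$ kernel of $R_{p,p}\to\Z/2p\Z$, and recognising that subgroup's monodromy is exactly the ``slight generalisation'' of Gardam--Woodhouse's computation that needs to be carried out. An index-two subgroup will not in general be a suitable comparison group.

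For (5), the final inference --- that a free action on a $\cat(0)$ cube complex forces every cyclic subgroup to be undistorted in $G$ --- is not a theorem. Haglund's combinatorial axis theorem gives undistortion of $\langle g\rangle$ in the cube complex, but without cocompactness the orbit map need not be a quasi-isometric embedding, so nothing follows about distortion in $G$ (indeed $\bs(1,2)$ acts freely on a $\cat(0)$ cube complex while having a distorted cyclic subgroup). The Gardam--Woodhouse obstruction goes through the snowflake structure of the index-two subgroup, and the sharp two-sided Dehn function estimate is their Theorem~A and is nontrivial; your corridor-counting sketch is plausible in spirit but does not constitute a proof.
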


The first and the third fact are due to Gardam who introduced these groups in \cite[Theorem G]{Ga17}. The second fact is due to Button \cite[Corollary 7]{Bu19}. The fourth fact follows from a slight generalisation of Gardam--Woodhouses argument for $R_{1, 1}$. The claimed finite index subgroup is the kernel of the map $R_{p, p} \to \Z/2p\Z$ given by $x\to 1$ and $t\to 0$. Then the statement for when $p>q$ is \cite[Theorem A]{GW19}. Gardam--Wodhouse deduce this last fact by showing that $R_{p, q}$ contains as an index two subgroup a snowflake group, see work of Brady--Bridson \cite{BB00}. Note that admitting a free action on a $\cat(0)$ space is much weaker than admitting a geometric action.

Other examples of $\cat(0)$ one-relator groups with interesting properties are:
\begin{itemize}
\item Hsu--Wise showed in \cite{HW10} that graphs of free groups with cyclic edge groups are $\cat(0)$ (cubulable in fact) if and only if they do not contain unbalanced Baumslag--Solitar subgroups. In particular, all cyclically pinched one-relator groups are $\cat(0)$ and a cyclically conjugated one-relator group $\langle A, t \mid t^{-1}ut = v\rangle$ is $\cat(0)$ if and only if there is no $w\in F(A)$ such that $u$ and $v$  are conjugate to $w^m$, $w^n$ respectively with $m\neq \pm n$.
\item Infinite one-relator groups with non-trivial centre are $\cat(0)$. Murasugi showed in \cite{Mu64} that they are all \{fg free\}-by-cyclic. Pietrowski showed \cite{Pi74} that they all split as graphs of infinite cyclic groups. Since \{fg free\}-by-cyclic groups cannot contain unbalanced Baumslag--Solitar subgroups, Hsu--Wise's result \cite{HW10} implies the claim.
\item Lauer--Wise showed in \cite{LW13} that any finitely generated one-relator group $F/\normal{w^n}$ acts properly on a $\cat(0)$ cube complex if $n\geqslant 2$ and acts cocompactly if $n\geqslant 4$.
\item Hydra groups 
\[
H_k = \langle a, t \mid [[\ldots[a, \underbrace{t], \ldots], t]}_{k}\rangle
\]
introduced by Dison--Riley in \cite{DR13} are $\cat(0)$, biautomatic and \{fg free\}-by-cyclic groups which contain free subgroups with distortion function $\simeq A_k$, where $A_k$ is the $k^{\text{th}}$ Ackermann function, defined recursively by $A_1(n) = 2n, A_{k+1}(n) = A_{k}^{(n)}(1)$.
\item Brady--Crisp \cite{BC07} proved that the one-relator group $\langle a, b \mid aba^2 = b^2 \rangle\isom F_3\rtimes\Z$ is $\cat(-1)$ (and hence is hyperbolic), but its $\cat(0)$ dimension is two, whereas its $\cat(-1)$ dimension is three. 
\end{itemize}

In the next section we shall see that any one-relator group $F/\normal{w}$ with $\pi(w)\geqslant 3$ is virtually cocompactly cubulated. The following conjecture should provide more cubulated one-relator groups.

\begin{conjecture}
Let $G = F/\normal{w}$ be a one-relator group with $\pi(w) = 2$ and such that the $w$-subgroup $P\leqslant G$ is virtually (cocompactly) cubulated. Then $G$ is virtually (cocompactly) cubulated.
\end{conjecture}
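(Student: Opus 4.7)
The plan is to lift the virtual cubulation of the unique $w$-subgroup $P$ to a virtual cubulation of $G$ by combining the topological hierarchy of Section~\ref{sec:hierarchy} with Sageev--Bergeron--Wise style cubulation machinery. By Lemma~\ref{pi=2} and Theorem~\ref{w-subgroup}, when $\pi(w)=2$ there is (up to conjugacy) a unique $w$-subgroup $P\leqslant G$, it is malnormal, and every non-free two-generator subgroup of $G$ is conjugate into $P$. Morally, $P$ is the only obstruction to $G$ being 2-free, so one should think of $G$ as behaving like a $2$-free one-relator group relative to $P$.

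First I would apply Theorem~\ref{topological_hierarchy} to obtain a tower $X_N\immerses\cdots\immerses X_0 = X$ of one-relator complexes with $\pi_1(X_{i-1})\isom \pi_1(X_i)*_{\psi_i}$ along Magnus subgraphs, and following the refinement in \cref{sec:character_sphere} arrange the hierarchy so that for each $i$ the subcomplex representing $P$ sits inside $X_i$ (this is possible since the $w$-subgroup factors through all one-relator splittings via the existence of $Z_{\max}$). Each Magnus subgroup is free (Freiheitssatz) and, assuming the relevant ambient groups are hyperbolic, quasi-convex by \cref{quasi-convex}. The base case where the hierarchy passes below $P$ gives a 2-free one-relator group, which by \cref{v_fibring,v_fbc} combined with the cubulation program for 2-free one-relator groups is virtually special.

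Next I would assemble the cubulation. For each level $i$, I would construct a wall space on a suitable model of $\pi_1(X_i)$ from three sources of walls: (a) walls pulled back from the $P$-equivariant walls of the cube complex on which $P$ acts, extended across the Bass--Serre tree of the hierarchy splitting using malnormality of $P$ and Collins' intersection results (\cref{Collins_intersections_1,Collins_intersections_2}) to control wall intersections, (b) walls coming from the Magnus edge groups via the HNN-splitting, and (c) the walls inherited inductively from $\pi_1(X_{i+1})$. Sageev's construction then provides a $\pi_1(X_i)$-action on a CAT(0) cube complex. To upgrade this to virtual cocompactness and properness, I would apply Hsu--Wise's combination theorem for cubulations of malnormal graphs of groups, using inertness of Magnus subgroups (\cref{inert}) to bound orbits of wall stabilizers, and Wise's quasi-convex hierarchy machinery to conclude that $G$ is virtually special hence virtually cocompactly cubulated whenever $P$ is.

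The hard part will be producing enough codimension-one quasi-convex subgroups of $G$ to cut all pairs of points. Equivalently, the main obstacle is that the natural framework for this strategy is Bergeron--Wise cubulation of relatively hyperbolic groups, which would require knowing that $G$ is hyperbolic relative to $P$---this is precisely Louder--Wilton's open Conjecture stated after \cref{conditional}. Without that relative hyperbolicity, one must construct walls by hand and verify the finiteness and separation axioms directly, using the malnormality of $P$ to guarantee that $P$-walls assemble coherently across the Bass--Serre tree of the hierarchy. A plausible route around the relative hyperbolicity obstacle is to prove instead that $G$ splits as a graph of groups with one vertex stabilizer commensurable with $P$ and all other vertex and edge groups free and quasi-convex, at which point Wise's Malnormal Special Quotient Theorem applies directly to give virtual specialness, and hence virtual cubulation, of $G$.
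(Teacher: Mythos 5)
The statement you have set out to prove is actually stated in the paper as an \emph{open conjecture} at the end of \cref{sec:cat(0)}; the paper offers no proof, so there is no internal argument to compare your proposal against. What you have written is therefore a strategy sketch for an open problem, and it should be evaluated as such.

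There are two genuine gaps in the sketch. First, the ``base case'' is misidentified. You write that the levels of the hierarchy lying below $P$ are $2$-free one-relator groups and hence virtually special. But by the theorem in \cref{sec:primitivity_rank} the $w$-subgroup $P$ sits at level $X_N$ of a one-relator tower, and the levels \emph{below} it are one-relator subgroups of $P$ itself. Since $\pi(w)=2$, $P=H/\normal{w}$ is a two-generator non-free one-relator group whose own relator need not have primitivity rank $\geqslant 3$, and neither need the relators one step further down; there is no reason for these pieces to be $2$-free. In fact $P$ is itself the prototypical non-$2$-free obstruction, so the recursion does not bottom out in the tame regime you are hoping for. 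Second, and more fundamentally, every cubulation mechanism you invoke (Bergeron--Wise, Hsu--Wise combination, the Malnormal Special Quotient Theorem) presupposes that $G$ is hyperbolic or hyperbolic relative to $P$. You candidly flag that relative hyperbolicity is exactly Louder--Wilton's open conjecture, but the issue is worse: the conjecture as stated does not restrict $P$ to be hyperbolic. For instance $P$ could be virtually $F_n\times\Z$ (cocompactly cubulated but containing $\Z^2$), in which case $G$ is definitely not hyperbolic relative to $P$ in any useful sense and none of the hyperbolic cubulation machinery applies. A correct approach would have to handle cubulation of graphs of groups whose distinguished vertex group is merely cubulated, perhaps via the relatively geometric cubulation framework or via a direct wall-space construction that does not route through (relative) hyperbolicity, and your sketch does not yet address how the wall finiteness and cocompactness axioms would be verified in that non-hyperbolic setting. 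The structural input you do correctly identify---uniqueness and malnormality of $P$ (\cref{pi=2}, \cref{w-subgroup}), Collins' intersection control (\cref{Collins_intersections_1}, \cref{Collins_intersections_2}), inertness of Magnus subgroups (\cref{inert}), and the tower terminating at $P$---is exactly the right raw material, and the MSQT route is plausibly what one would try in the hyperbolic special case, but the sketch as written does not constitute a proof even modulo the relative hyperbolicity conjecture.
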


\subsection{Virtually compact special one-relator groups}
\label{sec:qch}

A class of non-positively curved cube complexes known as \emph{special cube complexes} have played a particularly important role in several significant developments in geometric group theory. Special cube complexes were introduced by Haglund--Wise in \cite{HW08}. If $X$ is a compact special cube complex, Haglund--Wise showed that $\pi_1(X)$ embeds into a right angled Artin group, $\pi_1(X)$ is residually finite and linear and, if hyperbolic, all its quasi-convex subgroups are separable.

In his monograph, Dani Wise \cite{Wi21} developed criteria for when a group is virtually special in terms of hierarchies.

\begin{definition}
Define the class $\mathcal{VQH}$ of groups with a \emph{virtual quasi-convex hierarchy} as the smallest class of groups containing the trivial group and such that the following hold:
\begin{enumerate}
\item If $A, B\in \mathcal{VQH}$, then $G = A*_CB\in \mathcal{VQH}$ if $C$ is quasi-isometrically embedded in $G$.
\item If $A\in \mathcal{VQH}$, then $G = A*_C\in \mathcal{VQH}$ if $C$ is quasi-isometrically embedded in $G$.
\item If $A\leqslant \mathcal{VQH}$ and $A\leqslant G$ with $[G:A]<\infty$, then $G\in \mathcal{VQH}$.
\end{enumerate}
\end{definition}

One of the main results in \cite{Wi21} determines when a hyperbolic group is virtually compact special.

\begin{theorem}[Wise]
A hyperbolic group $G$ lies in $\mathcal{VQH}$ if and only if it is virtually compact special.
\end{theorem}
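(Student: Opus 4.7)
The plan is to prove each direction separately, with the forward implication (virtually compact special $\Rightarrow$ virtual quasi-convex hierarchy) being relatively straightforward and the converse forming the core of Wise's monograph.

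For the forward direction, suppose $G$ contains a finite-index subgroup $H$ that acts geometrically on a compact special cube complex $X$. I would induct on $-\chi(X)$ (or more precisely, on the total number of hyperplanes in $X$). Since $X$ is special, each immersed hyperplane $Y \subset X$ is two-sided, embedded, and has an embedded carrier; moreover, hyperplane subgroups in a compact special cube complex whose fundamental group is hyperbolic are quasi-convex by work of Haglund. Cutting $X$ along a well-chosen hyperplane $Y$ produces either an amalgamated product $\pi_1(X) \cong A *_{\pi_1(Y)} B$ or an HNN-extension $A *_{\pi_1(Y)}$, where the pieces are fundamental groups of compact special cube complexes of strictly smaller complexity. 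By induction these pieces lie in $\mathcal{VQH}$, so $H \in \mathcal{VQH}$, and closure under finite extensions gives $G \in \mathcal{VQH}$.

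For the converse, I would induct on the length of the virtual quasi-convex hierarchy witnessing $G \in \mathcal{VQH}$. The base case (trivial group) is immediate. For the inductive step, after passing to a finite-index subgroup we may assume $G \cong A *_C B$ or $G \cong A *_C$ with $A, B$ hyperbolic and virtually compact special and $C$ quasi-isometrically embedded. Since $G$ is hyperbolic and $C$ is quasi-convex, $C$ is itself hyperbolic, and the edge inclusions into $A$ and $B$ are quasi-convex. The first task is to cubulate $G$: invoke Sageev's construction applied to a rich enough collection of codimension-1 subgroups obtained by promoting walls from the virtually special factors across the splitting, so that the resulting $\cat(0)$ cube complex carries a proper cocompact action of a finite-index subgroup of $G$. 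Properness and cocompactness use hyperbolicity together with the quasi-convex embedding of $C$.

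The main obstacle, and the heart of Wise's program, is upgrading this cubulation to one that is \emph{virtually special}. Here the essential tool is the Malnormal Special Quotient Theorem (MSQT): given a hyperbolic virtually compact special group with an almost malnormal collection of quasi-convex subgroups, sufficiently deep Dehn fillings remain hyperbolic virtually compact special. To deploy the MSQT, I would first pass to finite-index covers of $A$ and $B$ in which the image of $C$ becomes malnormal (using separability of quasi-convex subgroups in virtually special hyperbolic groups, the so-called canonical completion/retraction machinery). Then fill the peripheral structure sufficiently deeply to produce virtually special quotients $\overline{A}$ and $\overline{B}$ in which $C$ has bounded, controlled image; this reduces the gluing to one along finite or hyperbolic virtually abelian subgroups, where Bergeron--Wise's combination theorem for special cube complexes applies. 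A final application of the MSQT, combined with Haglund--Wise's specialization criterion based on separability of the hyperplane stabilizers, promotes the cubulation to a virtually compact special one. Threading the induction through all three closure properties (amalgams, HNN extensions, and finite extensions) while maintaining both hyperbolicity and enough separability to reapply the MSQT at each step is the main technical difficulty, and occupies the bulk of \cite{Wi21}.
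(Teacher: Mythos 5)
The paper does not prove this theorem: it states it as one of the main results of Wise's monograph \cite{Wi21} and simply cites it, so there is no ``paper's proof'' to compare against. Your proposal is therefore really a summary of Wise's own argument, and as such it is a serviceable high-level outline. The easy direction (compact special implies a quasi-convex hierarchy by cutting along a hyperplane and inducting on the number of hyperplanes, using quasi-convexity of hyperplane subgroups in the hyperbolic setting) is standard and correct. For the hard direction you identify the right cast of tools: cubulating a hyperbolic group with a quasi-convex hierarchy via Sageev-type constructions and a boundary criterion, the Malnormal Special Quotient Theorem, canonical completion/retraction to manufacture malnormality after passing to finite covers, Dehn filling to simplify the peripheral structure, and a combination theorem for special cube complexes together with the Haglund--Wise separability criterion.

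Two points to tighten. First, after a sufficiently deep Dehn filling in Wise's argument the peripheral images become \emph{finite}, not merely ``hyperbolic virtually abelian''; the entire point of filling via the MSQT is to reduce the gluing to one over finite subgroups, for which combination of special cube complexes is manageable. Second, your sketch leaves implicit the crucial step of transferring virtual specialness from the Dehn-filled quotient back up to the original group; this ``lifting'' step, carried out via cubical small cancellation theory and separability of quasi-convex subgroups in the filled quotient, is where the bulk of the technical difficulty in \cite{Wi21} actually lives. You wave at it with ``A final application of the MSQT, combined with Haglund--Wise's specialization criterion\ldots'', but that phrase hides the majority of the monograph. As a conceptual map of the proof the proposal is sound; as a proof it is, unavoidably, only a table of contents.
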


Wise used his quasi-convex hierarchy machinery in \cite{Wi21}, combined with Newman's spelling theorem \cite{Ne68}, to prove that one-relator groups with torsion are virtually compact special. The first author then proved in \cite{Lin22} that 2-free one-relator groups are hyperbolic and that all their one-relator hierarchies are quasi-convex. The proof involved a careful analysis of the action of a one-relator group on its Bass--Serre tree, leading to \cref{acylindrical}, proving that certain Magnus subgroups are quasi-convex and using a criterion for quasi-convexity in graphs of hyperbolic groups due to Kapovich \cite{Ka01}.

\begin{theorem}
\label{v_special}
If $G = F/\normal{w}$ is a one-relator group with $\pi(w)\neq 2$, then every Magnus hierarchy for $G$ is a quasi-convex hierarchy. Hence, $G$ has a finite index subgroup that is the fundamental group of a compact special cube complex.
\end{theorem}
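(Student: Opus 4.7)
The plan is to treat the two cases $\pi(w)=1$ and $\pi(w)\geqslant 3$ separately, though both rely on the same inductive backbone. By Theorem \ref{k-free}, $\pi(w)=1$ is equivalent to $G$ having torsion, while $\pi(w)\geqslant 3$ is equivalent to $G$ being $2$-free. The torsion case was already handled by Wise in \cite{Wi21} using Newman's spelling theorem to directly certify that every Magnus hierarchy is quasi-convex, so the novelty is in handling the $2$-free case; I will focus on this.

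I would fix a Magnus hierarchy $X_N \immerses \cdots \immerses X_0 = X$ as provided by Theorem \ref{topological_hierarchy}, giving one-relator splittings $\pi_1(X_i) \isom \pi_1(X_{i+1}) *_{\psi_{i+1}}$ whose edge groups are Magnus subgroups of $\pi_1(X_{i+1})$. Since each $\pi_1(X_{i+1})$ sits as a vertex group, and hence a subgroup, of $\pi_1(X_i)$, iteration shows $\pi_1(X_i) \leqslant G$ for all $i$. In particular, $2$-freeness is a subgroup-closed property and so is inherited by every $\pi_1(X_i)$ from $\pi_1(X_0)=G$.

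Next I would argue by downward induction on $i$ from $N$ to $0$, establishing simultaneously that (a) $\pi_1(X_i)$ is hyperbolic, and (b) the HNN-splitting at level $i$ is quasi-convex, meaning that both the vertex group $\pi_1(X_{i+1})$ and the Magnus edge groups are quasi-convex in $\pi_1(X_i)$. The base case $\pi_1(X_N) \isom \Z/n\Z$ is immediate. For the inductive step, (a) at level $i+1$ combined with Theorem \ref{quasi-convex} gives quasi-convexity of the Magnus edge groups inside $\pi_1(X_{i+1})$. The preserved $2$-freeness of $\pi_1(X_i)$ together with Theorem \ref{acylindrical} ensures that $\pi_1(X_i)$ acts acylindrically on the Bass--Serre tree of its splitting, which in turn rules out flaring annuli; the Bestvina--Feighn combination theorem then promotes the splitting to hyperbolicity of $\pi_1(X_i)$, establishing (a). Kapovich's quasi-convexity criterion for acylindrical graphs of hyperbolic groups with quasi-convex edge groups then gives quasi-convexity of $\pi_1(X_{i+1})$ in $\pi_1(X_i)$, establishing (b). Having shown that the whole Magnus hierarchy is a quasi-convex hierarchy of a hyperbolic group, $G$ belongs to Wise's class $\mathcal{VQH}$, and Wise's theorem produces a finite-index subgroup of $G$ which is the fundamental group of a compact special cube complex.

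The main obstacle is sustaining hyperbolicity through the entire hierarchy. The delicate point is that Theorem \ref{acylindrical} requires the ambient one-relator group at each level to be $2$-free (or have torsion), and this is precisely what the preservation of $2$-freeness under passage to subgroups secures. Combining acylindricity with the quasi-convexity of Magnus subgroups from Theorem \ref{quasi-convex} is what allows Bestvina--Feighn and Kapovich's criterion to be invoked without any direct analysis of annuli, which would otherwise be a serious obstacle in verifying the combination theorem's hypotheses.
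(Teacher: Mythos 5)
Your proposal is essentially the same as the paper's proof, which the text attributes to the first author's work in \cite{Lin22}: induct down the one-relator hierarchy, using \cref{acylindrical} to get acylindricity of the Bass--Serre action, the combination theorem of Bestvina--Feighn to sustain hyperbolicity through each level, quasi-convexity of Magnus subgroups, and Kapovich's criterion to certify that the vertex group embeds quasi-convexly; the torsion case is handed off to Wise. The one small anachronism is that you invoke \cref{quasi-convex} in its full generality (Magnus subgroups of all hyperbolic one-relator groups are quasi-convex), which the paper credits to the later article \cite{Lin24}; the original \cite{Lin22} argument only needed (and proved) quasi-convexity for the specific Magnus subgroups arising as edge groups along the hierarchy. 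This is harmless provided \cref{quasi-convex} does not itself rely on \cref{v_special}, and indeed the paper states that \cref{quasi-convex} builds on \cref{acylindrical} rather than on the result you are proving.

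One minor terminological slip mirrors a phrasing in the paper: you say acylindricity "rules out flaring annuli," but what you need (and what acylindricity gives) is that the annuli \emph{flare} condition of Bestvina--Feighn is satisfied, because stabilisers of long segments in the tree are trivial. The mathematical content of your argument is unaffected.
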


\cref{v_special} provides an answer to Question \ref{qstn:hyperbolic_rf} for a large class of one-relator groups.

\begin{corollary}
If $G = F/\normal{w}$ is a one-relator group with $\pi(w)\neq 2$, then $G$ is residually finite, linear and quasi-convex subgroups are separable.
\end{corollary}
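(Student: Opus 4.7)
The plan is to deduce the three stated properties from the structural result Theorem \ref{v_special}, which already provides a finite index subgroup $H \leqslant G$ that is the fundamental group of a compact special cube complex, together with the hyperbolicity of $G$ that follows from the hypothesis $\pi(w) \neq 2$. The first step is to observe that the hypothesis $\pi(w) \neq 2$ splits into two cases. If $\pi(w) = 1$, then $w$ is a proper power, so $G$ has torsion and is hyperbolic by Newman's spelling theorem (Theorem \ref{Newman}). If $\pi(w) \geqslant 3$, then by Theorem \ref{k-free}, $G$ is $2$-free, and hence hyperbolic by the first author's result (Theorem \ref{conditional}). Either way, $G$ is hyperbolic, and by Theorem \ref{v_special}, $G$ contains a finite index subgroup $H$ which is the fundamental group of a compact special cube complex.

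Next, I would invoke the foundational Haglund--Wise theorem: if $X$ is a compact special cube complex, then $\pi_1(X)$ is residually finite, embeds as a subgroup of a right-angled Artin group, and (when hyperbolic) all its quasi-convex subgroups are separable. Applied to $H = \pi_1(X)$, we conclude that $H$ is residually finite, linear (since right-angled Artin groups are linear, e.g.\ by the Humphries/Davis representation), and, being quasi-isometric to the hyperbolic group $G$, itself hyperbolic with separable quasi-convex subgroups.

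The third step is to promote these properties from $H$ to $G$. Residual finiteness is closed under finite extensions by a standard argument (intersect finite-index subgroups of $H$ with their $G$-conjugates to obtain finite-index normal subgroups of $G$). Linearity passes from $H$ to $G$ by inducing a faithful finite-dimensional representation of $H$ over $G$, yielding a faithful representation of $G$ of dimension $[G:H]$ times that of $H$. For the last property, given a quasi-convex subgroup $K \leqslant G$, I would note that $K \cap H$ is of finite index in $K$, hence quasi-convex in $H$ (quasi-convexity is preserved under passing to finite-index overgroups or subgroups in hyperbolic groups), and therefore separable in $H$ by Haglund--Wise. A standard argument then shows that a subgroup of a finitely generated group is separable whenever it has a finite-index subgroup that is separable in some finite-index overgroup, so $K$ is separable in $G$.

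The only genuine subtlety I anticipate is the last step concerning separability of quasi-convex subgroups under a finite-index extension: one must check carefully that the intersection $K \cap H$ is quasi-convex in $H$ (using that $H$ is quasi-isometrically embedded in $G$), and then lift separability in $H$ back to separability in $G$ via a conjugate-intersection argument. Everything else is a direct invocation of Haglund--Wise plus elementary closure properties under finite extensions.
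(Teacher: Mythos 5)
Your proof is correct and tracks exactly what the paper intends: apply \cref{v_special} to obtain a finite-index special subgroup $H$, invoke Haglund--Wise for the three properties of $H$ (residual finiteness, linearity via embedding in a RAAG, and separability of quasi-convex subgroups using the hyperbolicity inherited from $G$), and then pass everything up to $G$ by standard finite-index closure arguments. The paper gives this corollary without proof, having already stated the Haglund--Wise properties of special cube complex groups earlier in \cref{sec:qch}, so your reconstruction matches the intended argument; your preliminary case split to re-establish hyperbolicity is a slight redundancy (it is already implicit in \cref{v_special}) but not an error.
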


We conclude with a conjecture. This should be compared with a conjecture of Wise \cite[Conjecture 14.2]{Wi04} and a conjecture of Wilton \cite[Conjecture 12.9]{Wi24}.

\begin{conjecture}
If $G = F/\normal{w}$ is a cubulated one-relator group, then every finitely generated subgroup of $G$ is undistorted if and only if $\pi(w) \neq 2$ or $\pi(w) = 2$ and the $w$-subgroup $P\leqslant G$ is virtually $F_n\times \Z$ for some $n\geqslant 1$.
\end{conjecture}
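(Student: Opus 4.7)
The plan is to establish the biconditional by reducing both directions to structural statements about the $w$-subgroup $P\leqslant G$, and then to argue that a two-generator non-free one-relator group which is cubulated and locally quasi-convex must be virtually $F_n\times\Z$. The main ingredients will be the malnormality of $w$-subgroups, the classification of two-generator one-relator groups with non-trivial centre due to Murasugi and Pietrowski, Wise's virtual quasi-convex hierarchy machinery, and the conjectural relative hyperbolicity of $G$ with respect to $P$ due to Louder--Wilton in the case $\pi(w)=2$.

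For the forward direction, assume $G$ is cubulated with every finitely generated subgroup undistorted, and suppose $\pi(w)=2$. The $w$-subgroup $P$ is a two-generator non-free one-relator subgroup, and by the undistortion hypothesis it is undistorted in $G$; combined with the cubulation of $G$ and Haglund's results on quasi-convex subgroups of cubulated groups, $P$ is itself cubulated and locally quasi-convex. The heart of this direction is then the sub-claim that any non-free two-generator one-relator group that is cubulated and locally quasi-convex must be virtually $F_n\times\Z$. I would prove this as follows: local quasi-convexity rules out distorted free subgroups, which in turn rules out strictly ascending HNN-extensions over finitely generated free subgroups and precludes Hydra-like distortion; since $b_1(P)\geqslant 1$, every algebraic fibration of $P$ must therefore have monodromy of finite order after passing to a finite-index subgroup. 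This forces $P$ to have non-trivial centre, and Pietrowski's structure theorem together with the absence of unbalanced Baumslag--Solitar subgroups (ruled out by cubulation via Hsu--Wise \cite{HW10}) yields that $P$ is virtually $F_n\times\Z$.

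For the backward direction I would split into cases on $\pi(w)$. When $\pi(w)\neq 2$, the quasi-convex-hierarchy theorem for one-relator groups with $\pi(w)\geqslant 3$ combined with Wise's work on the torsion case $\pi(w)=1$ gives that $G$ is virtually compact special and hyperbolic. Local quasi-convexity of all finitely generated subgroups then follows from a conjectural strengthening of Louder--Wilton's strong coherence theorem: one combines uniform negative immersions with a Kapovich--Weidmann style quasi-convexity criterion for hyperbolic graphs of groups to bound quasi-convexity constants uniformly in $b_1$ of the subgroup, upgrading undistortion to uniform quasi-convexity. In the remaining case $\pi(w)=2$ with $P$ virtually $F_n\times\Z$, I would invoke the Louder--Wilton conjecture that $G$ is hyperbolic relative to $P$; since $F_n\times\Z$ is itself locally quasi-convex, a Dahmani-style combination theorem for relatively hyperbolic groups with locally quasi-convex peripherals then gives the desired conclusion for $G$.

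The hard part will be the backward direction in the $\pi(w)=2$ case, which presumes the Louder--Wilton relative hyperbolicity conjecture and for which no unconditional approach is presently available. A secondary obstacle is the $\pi(w)\geqslant 3$ subcase of the backward direction, where local quasi-convexity of $G$ is essentially Wilton's Conjecture 12.9 from \cite{Wi24}. The most delicate genuinely new step is the sub-claim that undistorted non-free two-generator one-relator subgroups of cubulated locally quasi-convex ambient groups have non-trivial centre; this is where the interaction between the cubulation, the malnormality of $P$, and the restrictive structural theory of two-generator one-relator groups must be harnessed, and I expect most of the technical work to concentrate here.
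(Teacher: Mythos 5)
The statement you were asked to prove is posed as a \emph{conjecture} in the paper, not a theorem: it closes the subsection on virtually compact special one-relator groups, is introduced with ``We conclude with a conjecture,'' and the authors merely compare it to related conjectures of Wise and of Wilton. The paper contains no proof of it, so there is no ``paper's own proof'' to measure your proposal against, and none should be expected in the current literature.

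Your write-up is, by your own framing, a conditional research programme rather than a proof. The backward direction for $\pi(w)\geqslant 3$ is offloaded to Wilton's conjecture that uniform negative immersions imply locally quasi-convex hyperbolic fundamental group, and the $\pi(w)=2$ case is offloaded to the Louder--Wilton conjecture that $G$ is hyperbolic relative to its $w$-subgroup; both remain open, as you acknowledge. Beyond those acknowledged dependencies there are two unflagged gaps in the forward direction. First, you pass from ``$P$ is undistorted in the cubulated group $G$'' to ``$P$ is cubulated and locally quasi-convex'' via Haglund; but Haglund's convex-core machinery applies to convex-cocompact subgroups, and undistortion alone neither produces a convex core nor transfers local quasi-convexity to $P$ --- the hypothesis says only that finitely generated subgroups of $G$ are undistorted \emph{in $G$}, which is a statement about $G$, not a local quasi-convexity statement about $P$. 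Second, the chain ``local quasi-convexity rules out strictly ascending HNN-extensions, hence every algebraic fibration of $P$ has virtually finite-order monodromy, hence $P$ has non-trivial centre'' contains two implications that are asserted rather than proved: neither the passage from absence of distorted free subgroups to periodic monodromy, nor the passage from periodic monodromy to non-trivial centre, is formal. This is exactly the step you flag as ``the most delicate genuinely new step''; as written it is a gap, not an argument. None of this impugns the broad strategy, which is consonant with the comparisons the authors make to the Wise and Wilton conjectures, but the proposal as it stands does not constitute a proof, conditional or otherwise.
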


\section{Residual properties}
\label{sec:residual}

\subsection{Residually solvable and rationally solvable one-relator groups}

Recall that $G^{(i)} = [G^{(i-1)}, G^{(i-1)}]$ denotes the $i^{\text{th}}$ term of the derived series of $G$ and that the $i^{\text{th}}$ term of the rational derived series is the subgroup
\[
G^{(i)}_{\Q} := \left\{g \bigm\vert g^k\in \left[G_{\Q}^{(i-1)}, G^{(i-1)}_{\Q}\right], k\neq 0\right\}\trianglelefteq G.
\]
A group $G$ is residually solvable if $\bigcap_{i\in \N}G^{(i)} = 1$ and it is residually rationally solvable if $\bigcap_{i\in \N}G^{(i)}_{\Q} = 1$. The transcendental (rational) derived series is the extension of the (rational) derived series to arbitrary ordinals.

A one-relator group is positive if its relator is a positive word over the generators. A classical result due to Baumslag \cite{Ba71} states that a positive one-relator group is residually solvable. In \cite{Lin24_1}, residually rationally solvable one-relator groups were characterised in terms of their relator, generalising Baumslag's result. Additionally, if $G$ is a one-relator group and $H^2(G) = 0$, it was shown that the rational derived series and the usual derived series of $G$ coincide using work of Strebel \cite{Str74}, yielding a characterisation of residual solvability amongst such one-relator groups. We now explain this characterisation and its consequences. 

The usual example of a one-relator group which is not residually (rationally) solvable is the Baumslag--Gersten group
\[
\bg(1, n) = \langle a, t \mid [a^t, a]  = a^{n-1}\rangle.
\]
Since the relator expresses a power of $a$ as a commutator of $a$ and a conjugate of $a$, we see that $\normal{a}$ is rationally perfect in $\bg(1, n)$ for $n\geqslant 2$. In other words, $H_1(\normal{a}, \Q) = 0$. When $n = 2$, we also have $H_1(\normal{a}, \Z) = 0$. Hence, $G_{\Q}^{(1)} = G_{\Q}^{(2)} = \normal{a}$ for each $n\geqslant 2$ and $G^{(1)} = G^{(2)} = \normal{a}$ for $n = 2$. The main theorem in \cite{Lin24_1} shows that this phenomenon is not far off from the general case.

\begin{theorem}
\label{residually_solvable}
If $G = F/\normal{w}$ is a one-relator group, then there is some $r\in F$ and some $n\geqslant 1$ such that $w\in r^n[\normal{r}, \normal{r}]$ and
\[
G_{\Q}^{(\omega+1)} = G_{\Q}^{(\omega)} = \normal{r}_G.
\]
Furthermore, if $H^2(G) = 0$, then $n = 1$ and $G^{(\omega+1)} = G^{(\omega)} = \normal{r}_G$.
\end{theorem}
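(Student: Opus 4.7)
The plan is to construct the element $r \in F$ iteratively by descending through the rational derived series of $G$, maintaining at each stage a cyclic-module structure on the accumulated relation via Lyndon's Identity Theorem. Concretely, I would build a sequence of elements $r_0 = w, r_1, r_2, \ldots \in F$ and positive integers $n_1, n_2, \ldots$ such that $\normal{w} \leqslant \normal{r_{i+1}} \leqslant \normal{r_i}$ in $F$, with a telescoping identity $r_i \equiv r_{i+1}^{n_{i+1}} \pmod{[\normal{r_{i+1}}, \normal{r_{i+1}}]}$, and such that the image of $\normal{r_i}_G$ in $G$ captures the pullback of $G_{\Q}^{(i)}$. The sought-after $r$ will be the stable value of this sequence at the ordinal $\omega$, and $n = \prod n_i$.

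The core inductive step uses Lyndon's Identity Theorem (\cref{lyndon_id}). Given $r_i$, set $H_i = F/\normal{r_i}$. If $(H_i)_{\ab}$ is torsion-free the process terminates; otherwise let $T_i$ denote the torsion subgroup of $(H_i)_{\ab}$. The relation module of $H_i$ is cyclic as a $\Z H_i$-module, generated by the image of $r_i$ modulo its own commutator subgroup. Using this cyclicity, I would lift a generator of $T_i$ to an element $r_{i+1} \in \normal{r_i}$ such that $F/\normal{r_{i+1}}$ has abelianization $(H_i)_{\ab}/T_i$, together with the prescribed telescoping relation. Stabilization by the ordinal $\omega$ then follows from a Noetherianity argument: the rational relation modules of successive $H_i$ are cyclic and thus finitely generated over the group rings of the corresponding torsion-free abelian quotients, and a descending chain of torsion submodules of such a module is forced to terminate.

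With $r$ so produced, the containment $w \in r^n[\normal{r}, \normal{r}]$ follows by unwinding the telescoping identities (with the accumulated commutators all lying, by construction, in the successively smaller normal closures' commutator subgroups), and the equality $\normal{r}_G = G_{\Q}^{(\omega)}$ is built into the construction. For $G_{\Q}^{(\omega+1)} = G_{\Q}^{(\omega)}$, note that in $G$ we have $r^n = c^{-1} \in [\normal{r}_G, \normal{r}_G]$ because $w = 1$ in $G$, and by equivariance every $G$-conjugate of $r$ satisfies the same relation; since these conjugates generate $\normal{r}_G$ as a group, the rational abelianization $(\normal{r}_G)_{\ab}\otimes\Q$ vanishes, which is precisely the statement $G_{\Q}^{(\omega+1)} = G_{\Q}^{(\omega)}$. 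For the $H^2(G) = 0$ refinement I would invoke Strebel's theorem from \cite{Str74}, which identifies the rational and ordinary derived series under this cohomological hypothesis, upgrading the conclusion to $G^{(\omega+1)} = G^{(\omega)} = \normal{r}_G$. The sharpening $n = 1$ would then come from the observation that $H^2(G) = 0$ forces $w$ to be not a proper power with $\ab(w)$ primitive; any $n_i > 1$ in the inductive construction would introduce $\Z/n_i\Z$-torsion into some intermediate abelianization and thereby contribute non-trivially to $H^2(G)$ via the universal coefficient theorem, contradicting the hypothesis.

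The main obstacle I foresee is guaranteeing the single-normal-generation property of the stabilized subgroup together with stabilization occurring precisely at the ordinal $\omega$ rather than at some larger countable ordinal. The transition from finite-stage cyclicity (supplied by Lyndon at each $H_i$) to a single limit generator requires a coherence argument across all stages, and the correct framework is likely the behaviour of the Alexander-type modules $\normal{r_i}/[\normal{r_i}, \normal{r_i}]$ considered as modules over $\Z[H_i]$; ensuring compatibility of the chosen generators under successive module quotients and controlling their convergence to a single element $r \in F$ is the technical heart of the proof.
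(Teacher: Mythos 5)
There is a genuine and fundamental gap in your proposal: the iterative process you describe does not compute the rational derived series of $G$, and the Baumslag--Gersten group $\bg(1,2)$ is an explicit counterexample. Recall $\bg(1,2) = F(a,t)/\normal{w}$ with $w = [a^t,a]a^{-1}$, which is precisely the group the paper uses to motivate this theorem. Here $\ab(w) = (-1,0)$ is primitive, so $G_{\ab} \cong \Z$ is already torsion-free, and your process terminates at stage $0$ with $r = w$, giving $\normal{r}_G = 1$. But the theorem's $r$ is (a conjugate of) $a^{-1}$, with $G^{(\omega)}_{\Q} = \normal{a}_G = G^{(1)} \neq 1$, since $\normal{a}_G$ is rationally perfect (the defining relation exhibits $a$ itself as a commutator of conjugates of $a$). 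The conceptual error is conflating two unrelated things: the rational derived series descends through \emph{subgroups} of $G$, whereas your recursion inspects the abelianizations of one-relator \emph{quotients} $H_i = F/\normal{r_i}$. Killing torsion in $(H_i)_{\ab}$ has no bearing on whether $G^{(1)}_{\Q}$, $G^{(2)}_{\Q}$, etc.\ are collapsing. Indeed, after a single successful step your $\ab(r_1)$ is primitive, so $(H_1)_{\ab}$ is torsion-free and the recursion halts --- there is no transfinite process here at all, and no sensible way to ``stabilize at $\omega$'' a sequence that is finite by construction.

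There are secondary problems worth flagging. The stated inclusions $\normal{w} \leqslant \normal{r_{i+1}} \leqslant \normal{r_i}$ with $r_0 = w$ force $\normal{r_i} = \normal{w}$ for all $i$; the telescoping identity $r_i \in r_{i+1}^{n_{i+1}}[\normal{r_{i+1}}, \normal{r_{i+1}}]$ actually implies the \emph{opposite} inclusion $\normal{r_i} \leqslant \normal{r_{i+1}}$, so the chain should be ascending in $F$. More importantly, even granting the existence of an $r$ with $w \in r^n[\normal{r},\normal{r}]$, your argument only establishes $\normal{r}_G \subseteq G^{(\omega)}_{\Q}$: rational perfectness of $\normal{r}_G$ (which you correctly deduce, since every conjugate $r^g$ has $(r^g)^n \in [\normal{r}_G,\normal{r}_G]$) propagates through the transfinite series, giving $\normal{r}_G \subseteq G^{(\alpha)}_{\Q}$ for all $\alpha$. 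The reverse containment $G^{(\omega)}_{\Q} \subseteq \normal{r}_G$ --- i.e.\ that $G/\normal{r}_G$ is residually rationally solvable --- is the heart of the theorem and is not addressed. This direction also embeds the hard normal-generation statement: the preimage of $G^{(\omega)}_{\Q}$ in $F$ must be shown to be $\normal{r}$ for a \emph{single} $r$, which cannot come from Lyndon's Identity Theorem applied to the quotients $H_i$; note that the paper's own route uses right-orderability of residually rationally solvable groups together with the single-normal-generator lemma for one-relator groups with orderable quotient (cf.\ the discussion around \cref{no_relation_gap}). Your invocation of Strebel's theorem for the $H^2(G) = 0$ refinement is the one step that does align with the paper, but it rests on the flawed construction preceding it.
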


With the notation of \cref{residually_solvable}, $G$ is residually (rationally) solvable if and only if $r =_G 1$ and so $r$ is conjugate to $w$ or $w^{-1}$ by \cref{Magnus_property}.

\begin{corollary}
\label{residually_solvable_corollary}
If $G = F/\normal{w}$ is a one-relator group, then $G/G_{\Q}^{(\omega)}$ is a residually rationally solvable one-relator group.
\end{corollary}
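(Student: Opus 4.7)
The plan is to derive the corollary as a direct formal consequence of Theorem \ref{residually_solvable}. Let $r \in F$ and $n \geq 1$ be the element and integer supplied by that theorem, so $w \in r^n[\normal{r}_F, \normal{r}_F]$ and $K := G_{\Q}^{(\omega)} = \normal{r}_G$. The first step is to identify the quotient $G/K$ as a one-relator group. Since $w \in \normal{r}_F$, the preimage in $F$ of $\normal{r}_G \leqslant G = F/\normal{w}_F$ is precisely $\normal{r}_F$, so the third isomorphism theorem yields
\[
G/G_{\Q}^{(\omega)} \;=\; G/\normal{r}_G \;\cong\; F/\normal{r}_F,
\]
which is a one-relator group with defining relation $r$. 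Call this quotient $\overline{G}$.

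The second step is to verify that $\overline{G}$ is residually rationally solvable, i.e.\ $\overline{G}_{\Q}^{(\omega)} = 1$. I would prove by induction on $i \in \N$ that whenever $K \triangleleft G$ satisfies $K \subseteq G_{\Q}^{(\omega)}$, one has
\[
(G/K)_{\Q}^{(i)} \;=\; G_{\Q}^{(i)}/K.
\]
The base case $i=0$ is trivial. For the inductive step, the containment $G_{\Q}^{(i+1)}/K \subseteq (G/K)_{\Q}^{(i+1)}$ is immediate from the definition. Conversely, if $gK \in (G/K)_{\Q}^{(i+1)}$, then some non-zero power $g^k$ lies in $[G_{\Q}^{(i)}, G_{\Q}^{(i)}] \cdot K$; since both $[G_{\Q}^{(i)}, G_{\Q}^{(i)}]$ and $K$ lie inside the normal subgroup $G_{\Q}^{(i+1)}$ (using $K \subseteq G_{\Q}^{(\omega)} \subseteq G_{\Q}^{(i+1)}$), we get $g^k \in G_{\Q}^{(i+1)}$, and by definition of the rational derived series as an isolator some further power $g^{km}$ lies in $[G_{\Q}^{(i)}, G_{\Q}^{(i)}]$, whence $g \in G_{\Q}^{(i+1)}$. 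Taking the intersection over $i$ and using that $K \subseteq G_{\Q}^{(i)}$ for every $i$, I would conclude
\[
(G/K)_{\Q}^{(\omega)} \;=\; \bigcap_{i \in \N} G_{\Q}^{(i)}/K \;=\; G_{\Q}^{(\omega)}/K \;=\; K/K \;=\; 1.
\]

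Applying this with $K = G_{\Q}^{(\omega)}$ then gives $\overline{G}_{\Q}^{(\omega)} = 1$, so $\overline{G}$ is residually rationally solvable, completing the argument. The only subtle point is the inductive identification of $(G/K)_{\Q}^{(i)}$ with $G_{\Q}^{(i)}/K$; this uses in an essential way both the fact that each $G_{\Q}^{(i)}$ is a subgroup (which is why passing to the \emph{rational} derived series, where one isolates through an abelian quotient, is important) and that $K$ sits inside every $G_{\Q}^{(i)}$. Given Theorem \ref{residually_solvable}, no deeper one-relator input is required, so this step, rather than being an obstacle, is essentially bookkeeping with isolators.
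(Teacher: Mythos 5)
Your proof is correct and is essentially the argument the paper leaves implicit. The two ingredients you extract from Theorem \ref{residually_solvable}---that $w\in\normal{r}_F$ (so $G/G_{\Q}^{(\omega)}=G/\normal{r}_G\cong F/\normal{r}_F$ is a one-relator group) and that $G_{\Q}^{(\omega)}=\normal{r}_G$ (so your inductive identity $(G/K)_{\Q}^{(i)}=G_{\Q}^{(i)}/K$ with $K=G_{\Q}^{(\omega)}$ forces $(G/K)_{\Q}^{(\omega)}=1$)---are exactly what the corollary needs, and your verification of the inductive step, using that $K\subseteq G_{\Q}^{(\omega)}\subseteq G_{\Q}^{(i+1)}$ and the isolator definition to promote $g^k\in G_{\Q}^{(i+1)}$ to $g\in G_{\Q}^{(i+1)}$, is the right piece of bookkeeping.
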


In particular, \cref{residually_solvable} characterises precisely when the only rationally solvable quotients of a one-relator group $F/\normal{w}$ have a certain derived length:
\begin{itemize}
\item If all rationally solvable quotients of $F/\normal{w}$ have rational derived length 1, then $F/\normal{r}$ would have to be abelian. This means that either $\rk(F) = 1$ or $\rk(F) = 2$ and $r$ is primitive.
\item If all rationally solvable quotients of $F/\normal{w}$ have rational derived length at most 2 (but not all 1), then $F/\normal{r}$ would have to be non-abelian and metabelian. Since the Baumslag--Solitar groups $\bs(1, n)$ with $n\neq 0, 1$ are the only non-abelian metabelian one-relator groups, this means that $F/\normal{r}\isom \bs(1, n)$ for some $n\neq 0, 1$. Moreover, by a result of Brunner \cite[Theorem 2.4]{Br74}, $r$ must lie in the same $\Aut(F)$-orbit of $(b^{-1}aba^{-n})^{\pm1}$.
\item If $F/\normal{w}$ admits rationally solvable quotients of rational derived length 3 but not 2, then $F/\normal{w}$ admits rationally solvable quotients of arbitrary rational derived length.
\end{itemize}

Another corollary obtained in \cite{Lin24_1} is that it is algorithmically decidable whether a one-relator group is residually rationally solvable; one can compute the element $r$ from \cref{residually_solvable} and decide whether it is trivial in $G$.

It is not known whether the derived series of a one-relator group in general has length at most $\omega$, but in \cite{Lin24_1} it is conjectured that this is the case.

\begin{conjecture}
Let $G = F/\normal{w}$ be a one-relator group. There is a word $r\in F$ such that $w\in r[\normal{r}, \normal{r}]$ and
    \[
    G^{(\omega+1)} = G^{(\omega)}  = \normal{r}_G.
    \]
In particular, the maximal residually solvable quotient of $G$ is the one-relator group $F/\normal{r}$.
\end{conjecture}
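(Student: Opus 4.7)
The plan is to bootstrap from \cref{residually_solvable}: apply it to produce $r \in F$ and $n \geqslant 1$ with $w \in r^n[\normal{r}, \normal{r}]$ and $G_{\Q}^{(\omega)} = \normal{r}_G$. One inclusion comes essentially for free: since $H^{(i)} \subseteq H_{\Q}^{(i)}$ for every group $H$ and every ordinal $i$, and since $F/\normal{r}$ is residually rationally solvable by construction, $F/\normal{r}$ is in fact residually solvable, which immediately gives $G^{(\omega)} \subseteq \normal{r}_G$. The content of the conjecture therefore lies in producing a word $r' \in F$ (a priori different from $r$) simultaneously satisfying (i) $w \in r'[\normal{r'}, \normal{r'}]$ and (ii) $F/\normal{r'}$ residually solvable.

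Once such an $r'$ is in hand, the proof closes cleanly. Condition (i) says $w = r' \cdot c$ with $c \in [\normal{r'}, \normal{r'}]$, and killing $w$ in $G$ gives $r' = c^{-1} \in [\normal{r'}_G, \normal{r'}_G]$. Because $[\normal{r'}_G, \normal{r'}_G]$ is characteristic in $\normal{r'}_G$ and therefore normal in $G$, the fact that it contains $r'$ forces it to contain every $G$-conjugate of $r'$, giving $\normal{r'}_G = [\normal{r'}_G, \normal{r'}_G]$. Hence $\normal{r'}_G$ is perfect, and a routine induction then shows $\normal{r'}_G \subseteq G^{(i)}$ for every $i$, so $\normal{r'}_G \subseteq G^{(\omega)}$. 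Combined with condition (ii) this gives equality $G^{(\omega)} = \normal{r'}_G$, and the identity $G^{(\omega+1)} = [\normal{r'}_G, \normal{r'}_G] = \normal{r'}_G = G^{(\omega)}$ finishes the argument.

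The remaining task is to construct $r'$. My approach is to argue by induction along the Magnus--Moldavanski\u{\i}--Masters hierarchy (\cref{algebraic_hierarchy}). The base case, where the hierarchy terminates at a cyclic group, is trivial: $r' = w$ suffices because cyclic groups are abelian and hence residually solvable. For the inductive step, write $G_0 = G \isom G_1 \ast_{\psi_1}$ with $G_1$ a one-relator group of strictly smaller complexity, and assume $r'_1 \in F_1$ satisfying (i) and (ii) for $G_1$ has been constructed. One then builds $r' \in F$ from $r'_1$ together with a judicious modification incorporating the HNN stable letter, using Lyndon's Identity Theorem (\cref{lyndon_id}) to control the relation module of $F/\normal{r'}$, the no-relation-gap property (\cref{no_relation_gap}) to verify single normal generation of the relevant subgroups, and the Magnus property (\cref{Magnus_property}) to pin down the flexibility in the choice.

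The principal obstacle will be this inductive step. HNN-extensions are notoriously delicate from the point of view of derived series, and one must simultaneously guarantee both conditions (i) and (ii) for $r'$ under the extension; in particular, controlling the integer $n' $ associated with $r'$ (and arranging $n' = 1$) requires a careful Bezout-type argument in the cyclic relation module of $F/\normal{r'}$, relating $w$ and $r'$ modulo the commutator of the normal closure. A secondary subtlety arises in the torsion case $\deg(\lambda) > 1$, where the hierarchy terminates at a finite cyclic group and the base case argument no longer applies verbatim to intermediate HNN-decompositions; here, the rigid structure of torsion in one-relator groups (\cref{finite_order}), which forces every torsion element to be conjugate to a power of the defining relator, should nonetheless let one carry the induction through with modest additional bookkeeping.
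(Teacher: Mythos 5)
The statement you were asked to address is stated in the paper as a \emph{conjecture} explicitly attributed to \cite{Lin24_1} and the paper offers no proof of it; indeed the paper says ``It is not known whether the derived series of a one-relator group in general has length at most $\omega$.'' So there is no in-paper argument to compare against, and your proposal should be judged on its own.

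Your reduction is correct and clean: you observe that if one could produce $r'\in F$ with (i) $w\in r'[\normal{r'},\normal{r'}]$ (that is, the integer $n$ from \cref{residually_solvable} is $1$) and (ii) $F/\normal{r'}$ residually solvable, then the conjecture follows. In that case $r'=_G c^{-1}\in [\normal{r'}_G,\normal{r'}_G]$, whence $\normal{r'}_G$ is perfect, so $\normal{r'}_G\subseteq G^{(\omega)}$; conversely, (ii) gives $G^{(\omega)}\subseteq\normal{r'}_G$; the stabilisation $G^{(\omega+1)}=G^{(\omega)}$ then falls out of perfection. Your auxiliary observation that $H^{(i)}\subseteq H^{(i)}_\Q$ for all ordinals $i$, and hence that residual rational solvability implies residual solvability, is also correct, and correctly yields the ``free'' inclusion $G^{(\omega)}\subseteq\normal{r}_G$ from \cref{residually_solvable}.

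The genuine gap is that the construction of $r'$ is never carried out, and this construction \emph{is} the conjecture. You propose an induction along the Magnus--Moldavanski\u{\i}--Masters hierarchy (\cref{algebraic_hierarchy}) and then write that the inductive step is ``the principal obstacle''; the sketched ingredients (a ``judicious modification incorporating the HNN stable letter,'' a ``careful Bezout-type argument in the cyclic relation module'') are not arguments but names for things an argument would need to do. The core difficulty is precisely what \cref{residually_solvable} leaves unresolved: when $H^2(G)\neq 0$ (equivalently, when the abelianisation map kills $w$ with $n>1$), one only controls the rational derived series, and one does not know how to kill the torsion in $\normal{r}_G^{\ab}$ to upgrade ``rationally perfect'' to ``perfect.'' Passing through the HNN splitting $G_0\isom G_1*_{\psi_1}$ does not obviously help: a rationally perfect normal subgroup of an edge group can fail to have a perfect analogue upstairs, and the relation between the $r$ produced at level $i$ and the $r$ produced at level $i+1$ along the hierarchy is not addressed. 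Nothing you cite (\cref{lyndon_id}, \cref{no_relation_gap}, \cref{Magnus_property}) is brought to bear in a concrete way. In short, you have correctly isolated where the problem lives and correctly shown that solving it would finish the proof, but you have not solved it, and the paper itself records that no one has.
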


A related class of groups is that of amenable groups. Like solvable groups, amenable groups also cannot contain non-abelian free subgroups. Unlike solvable groups, amenable groups do not form a variety and so finding a non-residually amenable group appears to be quite difficult. Arzhantseva has asked the following general question, see \cite[Problem 18.6]{Ko18}.

\begin{qstn}[Arzhantseva]
\label{qstn:Arzhantseva}
Is every one-relator group residually amenable?
\end{qstn}

It is not known whether the non residually solvable Baumslag--Gersten group $\bg(1, 2)$ is residually amenable. If the answer to Question \ref{qstn:Arzhantseva} is negative, then it is likely to be negative for $\bg(1, 2)$.

\subsection{Residually nilpotent and parafree one-relator groups}

Recall that $G_i = [G_{i-1}, G]$ denotes the $i^{\text{th}}$ term of the lower central series of $G$ and that $G$ is residually nilpotent if $\bigcap_{i\in \N}G_i = 1$. In contrast with the (rational) derived series, it is known that the lower central series of a one-relator group can have length $\omega^2$ by work of Mikhailov \cite{Mi16}. Thus, a solution to the following problem is likely to be more subtle than the case of residually solvable one-relator groups.

\begin{problem}
\label{nilp_q}
Characterise when a one-relator group is residually nilpotent.
\end{problem}

Azarov proved in \cite[Theorem 1]{Az98} that a cyclically pinched one-relator group 
\[
G = F_1*_{\langle u\rangle = \langle v\rangle} F_2
\]
is residually nilpotent if and only if either $u$ or $v$ is not a proper power in the free group $F_1$ or $F_2$ respectively. Outside of this result, only sporadic families are known to be residually nilpotent.

A group $G$ is \emph{parafree} if it is residually nilpotent and if there is a free group $F$ such that
\[
G/G_i \isom F/F_i \quad\quad \text{for all $i\in \N$}
\]
Baumslag provided the first non-free examples of such groups in \cite{Ba64}. The example he provided was the one-relator group $\langle a, b, c \mid a^5b^3c^2\rangle$. Baumslag then initiated the systematic study of parafree groups in a series of two papers \cite{Ba67b,Ba69b}. In \cite{Ba69b}, Baumslag proved that a parafree group must be 2-free. The reader is directed to Baumslag's survey \cite{Ba05} for further background on parafree groups and many one-relator examples. Using \cref{v_special}, the first author showed in \cite{Lin22} that parafree one-relator groups are hyperbolic and virtually special.

A major open problem in group theory is Remeslennikov's conjecture: if $G$ is a residually finite group with the same finite quotients as a free group, is $G$ necessarily free? A positive answer is known within several classes of groups, but no systematic approach has been taken for the class of one-relator groups. Recently, Jaikin-Zapirain \cite{JZ23} has shown that a residually finite group with the same finite quotients as a free group must be parafree. In light of this, understanding when a one-relator group is parafree becomes very interesting. 

Magnus showed in \cite{Ma39} that an $n$-generator group with the same lower central series quotients as a free group of rank $n$ is actually a free group of rank $n$. Combining this with a result of Stallings \cite{St65}, we see that a one-relator group $G$ that is residually nilpotent is parafree if and only if $H^2(G) = 0$. Hence, solving \cref{nilp_q} would also yield a characterisation of parafree one-relator groups.

In particular, combining Azarov's result with these remarks, a cyclically pinched one-relator group $G = F_1*_{u = v}F_2$ is parafree if and only if $u$ or $v$ is not a proper power and $H_1(G, \Z) = \Z^{b_1(F_1) + b_1(F_2) - 1}$. This was greatly generalised by Jaikin-Zapirain--Morales in \cite{JZM24} who characterised when a graph of parafree groups with cyclic edge groups is also parafree.

\subsection{Residual finiteness and profinite completions}

If $G$ is a group, the \emph{profinite completion} of $G$, denoted by $\hat{G}$, is the inverse limit of the inverse system of finite quotients $G/N$ of $G$. The group $G$ is residually finite precisely when the canonical map $G\to \hat{G}$ is injective. A classical result of Dixon--Formanek--Poland--Ribes \cite{DFPR82} states that the profinite completions of two finitely generated groups $G$ and $H$ are isomorphic (as topological groups) precisely when the set of finite quotients of $G$ and $H$ coincide. 

A group $G$ is \emph{profinitely rigid} (within a class of groups) if for any residually finite group $H$ (within that class of groups), an isomorphism $\hat{H}\isom \hat{G}$ implies that $H\isom G$. The profinite genus of a group $G$ is the set of residually finite groups $H$ such that $\hat{H}\isom \hat{G}$. With this language, Remeslenikov's conjecture can be stated as predicting that free groups are profinitely rigid.

The study of profinite invariants and profinite rigidity of groups is an active area of research. However, within the class of one-relator groups, very little is known. Moldavanski\u{\i}--Sibyakova \cite{MS95} showed that the Baumslag--Solitar groups $\bs(1, n)$ are profinitely rigid amongst one-relator groups and Jaikin-Zapirain--Morales showed in \cite{JZM23} that closed surface groups are profinitely rigid amongst one-relator groups. Experimental work was also carried out in \cite{LL94,BCH04} where it was shown that certain families of parafree one-relator groups had different sets of finite quotients.

\begin{problem}
\label{profinite_problem}
Find more examples of residually finite one-relator groups that are profinitely rigid amongst one-relator groups.
\end{problem}

Baumslag posed the question of distinguishing cyclically pinched one-relator groups \cite[Problem 1]{Ba74} by their finite quotients after having shown that they are all residually finite in \cite{Ba63}. This appears to still be open and is likely to be a source of examples for Problem \ref{profinite_problem}.

\begin{qstn}
\label{residf_q}
Are there residually finite one-relator groups with the same finite quotients that are not isomorphic? Are there finitely many residually finite one-relator groups with the same finite quotients?
\end{qstn}

Baumslag showed in \cite{Ba74_2} that even within the class of virtually cyclic groups, there exist (two-relator) groups which are not profinitely rigid so one cannot be too optimistic regarding Question \ref{residf_q}.

By \cref{v_special}, a one-relator group $G = F/\normal{w}$ is residually finite if $\pi(w)\neq 2$. This would be a natural family of one-relator groups to attempt to answer Question \ref{residf_q} given how much extra structure is known to hold in these cases. In particular, since parafree one-relator groups $F/\normal{w}$ are 2-free and hence have $\pi(w)\geqslant 3$, it suffices to look at these one-relator groups for Remeslenikov's conjecture. Wilton showed in \cite{Wi18} that any cyclically pinched or cyclically conjugated one-relator group $F/\normal{w}$ with $\pi(w)\geqslant 3$ does not have the same finite quotients as a free group.

The first examples of pairs of finitely presented residually finite groups $H, G$ and a homomorphism $H\to G$ that is not an isomorphism, but induces an isomorphism on profinite completions $\hat{H}\to \hat{G}$ were discovered by Bridson--Grunewald in \cite{BG04}, after finitely generated ones had been discovered by Platonov--Tavgen' \cite{PT90}. Such examples should not exist amongst one-relator groups.

\begin{problem}
Prove that if $G$ is a residually finite one-relator group and $H\leqslant G$ is a finitely generated subgroup, then the induced map $\hat{H}\to \hat{G}$ is an isomorphism only if $H = G$.
\end{problem}

One could also wonder whether it is worth studying profinite completions of non-residually finite groups. However, Baumslag--Miller--Troeger showed in \cite{BMT07} that, given any one-relator group $G$, one can construct another one-relator group $H$ that is not residually finite but has profinite completion isomorphic to $G$. In fact, the isomorphism they construct is induced by a surjection $H\to G$. The first example of such a surjection was the abelianisation map $\bg(1, 2)\to \Z$, proven by Baumslag in \cite{Ba69}.

\section{Algorithmic properties}
\label{sec:algorithms}

\subsection{The word problem}

Although the word problem for all one-relator groups was solved almost a century ago, the complexity of the word problem has not seen much improvement from  Magnus' original solution. One way of measuring the complexity of the word problem of a group $G$ is via its Dehn function $\delta_G$. An upper bound on the Dehn function of one-relator groups was obtained by Bernasconi in \cite{Be94}.

\begin{theorem}
\label{global_bound}
If $G = F/\normal{w}$ is a one-relator group, then
\[
\delta_G(n) \precsim A_{2|w|}(n),
\]
where $A_k$ denotes the $k^{\text{th}}$ Ackermann function, defined recursively by $A_1(n) = 2n, A_{k+1}(n) = A_{k}^{(n)}(1)$.
\end{theorem}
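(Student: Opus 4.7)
My plan is to induct on the length of the Magnus--Moldavanski\u{\i} hierarchy, controlling at each descent step the Dehn function of a one-relator group in terms of that of the next one-relator group down. Let $G = F/\normal{w}$ with $|w| = m$. Recall from \cref{topological_hierarchy} and \cref{one-relator_splitting} that, possibly after embedding $G$ in a slightly larger one-relator group via the Magnus trick, we may decompose $G$ as an HNN-extension $G \isom H *_\psi$, where $H$ is a one-relator group of strictly smaller complexity and $\psi$ identifies two Magnus subgraphs (hence free Magnus subgroups) $A, B \leq H$. The entire hierarchy has length at most $2m$, where the factor of $2$ accounts for the Magnus trick steps, in which the generating set is enlarged in order to produce a generator of zero exponent sum.

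The key technical ingredient will be a Dehn-function estimate for HNN-extensions whose associated subgroups are Magnus subgroups. If $G \isom H *_\psi$ as above, and if the distortion of $A$ and $B$ in $H$ is bounded above by a super-additive function $d$, then an iterated application of Britton's Lemma (see \cref{Lem:BrittonsLemma}) to a null-homotopic word in $G$ of length $n$ yields a bound of the form
\[
\delta_G(n) \precsim \delta_H\!\left(d^{(n)}(1)\right) + n \cdot d^{(n)}(1).
\]
The intuition is that reducing such a word to one lying in $H$ requires at most $n$ rounds of stable-letter cancellation; each round rewrites an $H$-word through a Magnus subgroup at a cost controlled by the distortion, and the nested iteration produces the $n$-fold iterate $d^{(n)}(1)$. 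The remaining pure $H$-word is then filled using $\delta_H$.

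To close the induction I will prove in parallel that the distortion of a Magnus subgroup in a one-relator group is itself controlled at the same Ackermann level as the Dehn function of the ambient one-relator group. Granted this, if $\delta_H \precsim A_k$ then the operation $f \mapsto f^{(n)}(1)$ advances by exactly one level in the Ackermann hierarchy, so that $\delta_G \precsim A_{k+1}$. The base case of a free group (with linear Dehn function $\precsim A_1$) together with the hierarchy of length at most $2|w|$ yields $\delta_G \precsim A_{2|w|}$.

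The hard part will be the parallel inductive statement on Magnus subgroup distortion: one must show that rewriting an arbitrary element of a Magnus subgroup from its expression in the generators of the ambient one-relator group into its expression in the generators of the subgroup is itself bounded at the right Ackermann level, and this must be maintained uniformly through every layer of the hierarchy. This mirrors Magnus' original strategy for the word problem (see \cref{Subsec:Wordproblem-Magnus}), where decidability had to be strengthened to decidability of the membership problem in Magnus subgroups in order for the induction to close; here one needs the stronger quantitative version. A secondary technical point is to ensure that embedding $G$ into a one-relator group via the Magnus trick does not distort $G$ by more than one Ackermann level, which is precisely why the exponent is $2|w|$ rather than $|w|$.
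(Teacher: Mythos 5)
The paper does not actually contain a proof of \cref{global_bound}; it is attributed to Bernasconi \cite{Be94} and stated without proof, so there is no ``paper's proof'' to compare line by line. That said, your high-level strategy --- inducting along the Magnus--Moldavanski\u{\i} hierarchy and strengthening the induction to a quantitative version of Magnus' generalized word problem so that the recursion closes --- is the correct framework and is essentially the structure of Bernasconi's argument. The key observation that the operation $f \mapsto f^{(n)}(1)$ is precisely one step of the Ackermann recursion, so that each HNN-layer costs at most one Ackermann level, is exactly the right thing to isolate.

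That said, there are genuine gaps. First, the entire content of the theorem sits inside your sentence ``I will prove in parallel that the distortion of a Magnus subgroup \dots is itself controlled at the same Ackermann level as the Dehn function''; you have named the crux without arguing it. In particular, when the Magnus subgroup of $G$ you are trying to control \emph{contains} the stable letter $t$ of the splitting $G\isom H*_\psi$, it is not a Magnus subgroup of $H$, so the desired distortion bound is not inherited from the level below in any straightforward way --- this case needs a separate argument and your sketch does not touch it. Second, the HNN inequality $\delta_G(n)\precsim \delta_H\left(d^{(n)}(1)\right) + n\cdot d^{(n)}(1)$ undercounts: unwinding each $t$-corridor requires a van Kampen diagram \emph{in $H$} witnessing that the corridor's side word equals a word in Magnus generators, so the per-corridor cost is an $H$-area term, and the second summand should read roughly $n\cdot\delta_H\left(d^{(n)}(n)\right)$. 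This does not change the Ackermann level (since $A_k(A_{k+1}(n)) = A_{k+1}(n+1)$), but the estimate as written is incorrect and would not survive a careful van Kampen diagram analysis. Finally, the bound of $2|w|$ on the hierarchy depth is plausible but is asserted, not justified; it requires tracking how the Magnus-trick embedding interacts with the decrement of the rewritten relator length, which you have not done.
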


Note that $A_2(n) = 2^n$ and $A_3(n) = \exp_2^{(n)}(1)$ already grows faster than any finite tower of exponentials. Gersten states in \cite{Ge93} that it is an open question as to whether for each $k$, there exists a one-relator group $G$ with $\delta_G(n)\succsim A_k(n)$. We conjecture that no such one-relator groups exist for all $k$ and they perhaps do not even exist for $k = 3$.

A candidate one-relator group for having the worst possible Dehn function is the Baumslag--Gersten group 
\[
\bg(1, 2) = \left\langle a, t \mid a^{a^t} = a^2\right\rangle.
\]
which splits as a HNN-extension of the Baumslag--Solitar group $\bs(1, 2)$ where one generator is conjugated to the other. Platonov determined the precise Dehn function of the Baumslag--Gersten group \cite{Pl04} after Gersten established lower bounds in \cite{Ge92} and Bernasconi established upper bounds in \cite{Be94}.

\begin{theorem}
If $G = \bg(1, 2)$ is the Baumslag--Gersten group, then
\[
\delta_{G}(n) \sim \exp^{(\log_2(n))}(1).
\]
\end{theorem}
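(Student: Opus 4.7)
The plan is to prove matching upper and lower bounds, each equivalent to the tower $T_k := \exp_2^{(k)}(1)$ evaluated at $k = \log_2 n$. First I would make the key structural observation: setting $b := t^{-1}at$, the defining relator $a^{a^t}a^{-2}$ becomes $b^{-1}abab^{-1}a^{-2} = 1$, i.e.\ $b^{-1}ab = a^2$. This exhibits $BG(1,2)$ as an HNN-extension with stable letter $t$ of the Baumslag--Solitar group $BS(1,2) = \langle a, b \mid b^{-1}ab = a^2 \rangle$, identifying $\langle a \rangle$ with $\langle b \rangle$. The iteration of the identities $b^{-k}ab^k = a^{2^k}$ (in $BS(1,2)$) and $b^k = t^{-1}a^k t$ (from the HNN-relation) is what drives the tower behavior.

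For the lower bound I would produce an explicit family of short null-homotopic words of enormous area. Define inductively $W_0 := a$ and $W_{n+1} := (t^{-1}W_n^{-1}t)\, a\, (t^{-1}W_n t)$, so that $|W_n| = \Theta(2^n)$. A straightforward induction, using the two identities above, shows that $W_n$ represents $a^{T_n}$ in $BG(1,2)$. Consequently, the commutator $v_n := W_n\, a\, W_n^{-1}\, a^{-1}$ is null-homotopic of length $\Theta(2^n)$. To show $\operatorname{Area}(v_n) \succsim T_n$ I would invoke the asphericity of a torsion-free one-relator presentation (\cref{aspherical}): any van Kampen diagram for $v_n$ lifts to a disc in the universal cover, whose 2-cells can be counted by unfolding pinches along the Bass--Serre tree of the HNN-splitting over $BS(1,2)$ and then along the HNN-splitting of $BS(1,2)$ over $\Z$; each level of nesting forces at least one exponential blow-up in the count of relators. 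Setting $N := |v_n| \asymp 2^n$ and hence $n \asymp \log_2 N$ yields $\delta_G(N) \succsim T_{\log_2 N} = \exp^{(\log_2 N)}(1)$.

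For the upper bound I would adapt Bernasconi's hierarchy-based argument. Given a null-homotopic word $w$ of length $n$, apply Britton's lemma for the HNN-splitting of $BG(1,2)$ over $BS(1,2)$ to locate a pinch $t^{\epsilon} u t^{-\epsilon}$ with $u$ evaluating in the associated subgroup $\langle a \rangle$ of $BS(1,2)$; reducing the pinch produces a shorter word in $t$-length but possibly longer in $a, b$. The inductive estimate tracks two parameters in parallel: the depth of nested pinches and the length of each intermediate word. The distortion of $\langle a \rangle$ inside $BS(1,2)$ is exactly exponential, so each pinch reduction can at worst multiply area by a single exponential. The crucial combinatorial claim is that, for a word of length $n$, the depth of nested pinches is bounded by $O(\log_2 n)$. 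Compounding $O(\log_2 n)$ single-exponential steps yields $\delta_G(n) \precsim \exp^{(\log_2 n)}(1)$, matching the lower bound.

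The main obstacle is proving the logarithmic bound on pinching depth in the upper-bound argument: one must show that the $BS(1,2)$-subword arising from a deeply nested pinch cannot be shorter than predicted, i.e.\ that the natural exponential unfolding in each pinch really does consume length at the predicted rate. This requires a careful accounting of which cancellations can occur between Britton pinches and the surrounding word, and is essentially an invariant-style statement that encodes the fact that $a$ and $b$ are not conjugate in $BS(1,2)$ while $a$ is the $t$-conjugate of $b$ in $BG(1,2)$. Once this combinatorial invariant is in place, the induction runs smoothly and the two bounds match exactly.
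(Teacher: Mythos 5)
The paper does not prove this theorem; it cites it to Platonov \cite{Pl04}, after Gersten \cite{Ge92} (lower bound) and Bernasconi \cite{Be94} (weaker upper bound), so there is no in-text proof to compare to. Evaluated on its own, your proposal has the correct high-level architecture --- the HNN-splitting of $\bg(1,2)$ over $\bs(1,2)$, explicit words $W_n$ of length $\Theta(2^n)$ representing $a^{T_n}$ for the lower bound, Britton-pinch reduction for the upper bound --- but it has a genuine gap at the step that carries the entire force of the theorem.

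That gap is precisely what you flag as ``the crucial combinatorial claim'': that the depth of the nested-pinch tree for a word of length $n$ is $O(\log_2 n)$. Without it, the pinch-reduction argument you outline recovers only Bernasconi's upper bound $\delta_G(n) \precsim \exp_2^{(n)}(1)$, a tower of height $n$ (since a null-homotopic word of length $n$ has up to $n/2$ pinches and each reduction can exponentiate the $a,b$-length), which is vastly weaker than the theorem's tower of height $\log_2 n$. The content of Platonov's contribution is exactly the sharper depth/distortion bound --- equivalently, that a word of length $n$ over $\{a,t\}^{\pm1}$ can represent $a^m$ only for $|m| \leq \exp_2^{(O(\log n))}(1)$, the same compression phenomenon that underlies the polynomial-time word problem of \cref{polynomial_time}. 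Acknowledging this as ``the main obstacle'' is honest, but the obstacle \emph{is} the theorem, and your proposal does not prove it. Two subsidiary notes: your lower-bound appeal to ``unfolding pinches along the Bass--Serre tree'' is too vague to count as a cell-counting argument (Gersten's proof exhibits and analyses explicit van Kampen diagrams for the commutators $v_n$); and the quantity that controls the area is the \emph{depth} of the pinch tree rather than the total number of pinches, since exponential blow-ups compound along root-to-leaf chains but act in parallel across siblings --- your write-up relies on this implicitly but should make it explicit, as it is what ties an $O(\log n)$ depth bound to a tower-of-height-$\log_2 n$ area estimate rather than something larger.
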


One can attempt to find one-relator groups with worse Dehn functions by performing iterated HNN-extensions to $\bg(1, 2)$, conjugating one generator to the other, and hoping this yields a growth in Dehn function comparable to that of $\bg(1, 2)$ from $\bs(1, 2)$. Interestingly, Bernasconi also showed that $A_3(n)$ is an upper bound for the Dehn function of such groups. Other one-relator groups with interesting Dehn functions were already discussed in \cref{sec:cat(0)}.

Another way of measuring the complexity of the word problem is via the time function of an algorithm to solve it. The Dehn function of a group provides an upper bound for the time function of a non-deterministic Turing machine solving its word problem. This roughly means that we can verify that an input word is trivial in time bounded above by the Dehn function. In general it is much more difficult to prove lower bounds on the time complexity of the word problem than on the Dehn function. Indeed, the following question, which appears as \cite[Question (OR3)]{BMS02}, is open.

\begin{qstn}[A. Myasnikov]
\label{polynomial_question}
Is the word problem in every one-relator group decidable in polynomial time?
\end{qstn}

Myasnikov--Ushakov--Won \cite{MUW11} proved the following upper bounds for the complexity of the word problem of the Baumslag--Gersten group, lending credence to a positive answer to Question \ref{polynomial_question}. Their proof used new integer compression data structures known as power circuits.

\begin{theorem}
\label{polynomial_time}
There is an algorithm which solves the word problem for the Baumslag--Gersten group $\bg(1, 2)$ in polynomial time.
\end{theorem}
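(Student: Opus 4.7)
The plan is to give a polynomial-time algorithm by simulating Britton's lemma in a compressed representation of group elements that uses the power circuit data structure to store integers of tower-exponential size in polynomial space.

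First I would decompose $\bg(1,2)$ hierarchically. As noted above, $\bg(1,2)$ is an HNN-extension of $\bs(1,2) = \langle a, s \mid s a s^{-1} = a^2 \rangle$ with stable letter $b$ identifying $a$ with $s = bab^{-1}$, and $\bs(1,2)$ is itself an HNN-extension of $\Z = \langle a\rangle$ with stable letter $s$. By Britton's lemma (\cref{Lem:BrittonsLemma}), deciding whether a word $w$ in $\{a,b\}^{\pm 1}$ represents the identity of $\bg(1,2)$ amounts to iteratively reducing $b$-pinches $b^{\varepsilon} g b^{-\varepsilon}$ where $g$ lies in the associated Magnus subgroup of $\bs(1,2)$; membership in that Magnus subgroup and the corresponding reduction, in turn, reduce to a divisibility-by-$2$ and halving operation on an integer exponent, together with Britton reductions of $s$-pinches inside $\bs(1,2)$. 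The upshot is a purely combinatorial algorithm whose only nontrivial primitives are: storing, comparing, adding, doubling, and (when possible) halving integers that appear as exponents of $a$ inside nested normal forms.

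Second I would introduce the power circuit data structure. A power circuit is a finite DAG with $\pm 1$-labelled edges whose vertices represent integers built recursively by sums of signed powers of $2$; in a circuit on $n$ vertices one can represent integers of magnitude up to $\exp^{(n)}(1)$. The core technical input is a suite of polynomial-time algorithms on power circuits, due in essence to Myasnikov--Ushakov--Won, supporting (i)~comparison $m \leq n$, (ii)~addition and subtraction, (iii)~multiplication by $2$, and (iv)~detection of parity together with halving of even numbers, each in time polynomial in the total circuit size. Assuming these primitives, I would then define a compressed normal form for elements of $\bg(1,2)$ as a sequence of $b$-syllables separated by compressed $\bs(1,2)$-normal forms, each of the latter being a sequence of $s$-syllables separated by integer exponents stored as designated vertices of a single shared power circuit.

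Third I would run the following reduction procedure on an input word $w$ of length $N$. Starting from the trivial compression of $w$, repeatedly scan for an innermost Britton pinch. At the $\bs(1,2)$ level a pinch $s^{\varepsilon} a^k s^{-\varepsilon}$ is reducible precisely when the exponent $k$ is even (for $\varepsilon = 1$) or arbitrary (for $\varepsilon = -1$), which is a constant number of circuit queries and updates. At the $\bg(1,2)$ level a pinch $b^{\varepsilon} u b^{-\varepsilon}$ is reducible precisely when the $\bs(1,2)$-element $u$ lies in the associated subgroup; this is decided by fully reducing $u$ inside $\bs(1,2)$ and checking a syntactic condition on its compressed normal form. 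Each successful reduction strictly decreases a lexicographic complexity built from the number of $b$-syllables and then the number of $s$-syllables, so at most polynomially many in $N$ reductions occur. Combined with polynomial bounds on the circuit-update primitives, this yields a polynomial total runtime.

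The main obstacle, and the real content of the theorem, is controlling the growth of the power circuit throughout the simulation: although each elementary operation is individually polynomial in the ambient circuit size, iterated Britton reductions could in principle cause a super-polynomial blow-up in the number of vertices. The decisive step will therefore be an amortized analysis, exploiting the very specific HNN structure of $\bg(1,2)$ and the fact that the integers produced are always obtained from the original exponents by a controlled sequence of halvings, doublings and additions, to show that only polynomially many new vertices ever need to be introduced. Establishing this, together with the correctness of the parity/halving primitive on power circuits, is where the bulk of the technical work of \cite{MUW11} lies.
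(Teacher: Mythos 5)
Your proposal correctly sketches the approach of Myasnikov--Ushakov--Won \cite{MUW11}, which is exactly the source the paper cites for this theorem: the hierarchical HNN decomposition of $\bg(1,2)$ over $\bs(1,2)$, Britton reductions executed on power-circuit-compressed exponents, and the amortized analysis controlling circuit growth are all the genuine ingredients. Since the paper itself offers no proof beyond the citation and the remark that power circuits are the key data structure, your sketch is faithful to the intended argument.
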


Diekert--Laun--Ushakov \cite{DLU12} refined the complexity estimates to obtain a cubic time solution to the word problem in $\bg(1, 2)$ and Laun extended this to apply to any $\bg(1, n)$ for $n\geqslant 2$ \cite[Theorem 3.1]{La14}. For $\bg(2, 3)$ the situation is more complicated and no time bounds are known (other than those offered by Magnus' algorithm), but would be interesting (see \cite[Problem 1.8]{MUW11}).

One obstacle to understanding the complexity of the word problem in one-relator groups is that there is no general method to obtain a simple finite set of rewriting rules over the generators to solve the word problem. In \cite{MUW11}, the authors find a particularly nice rewriting system solving the word problem for $\bg(1, 2)$ using the Magnus--Moldavanski\u{\i} hierarchy for $\bg(1, 2)$ which expresses it as a double HNN-extension of $\Z$. In general, the Magnus hierarchy is more intricate. It would be of considerable interest to develop rewriting systems with nice properties for all one-relator groups, see \cref{Prob:FCRS-for-OR}.

\subsection{The conjugacy, membership and normal root problems}

Outside of a few special cases, the conjugacy problem for one-relator groups has remained largely untouched. Excluding hyperbolic one-relator groups, it is known to be decidable in cyclically pinched and conjugated one-relator groups (which includes Baumslag--Solitar groups) by Lipschutz \cite{Lipschutz1966} and Larsen \cite{Lar77} respectively, the Baumslag--Gersten group by Beese \cite{Be12} and for one-relator groups $G$ with non-empty BNS-invariant $\Sigma^1(G)$ by work of Logan \cite{Lo23}.

One particular special case of the conjugacy problem which should be more approachable in general is to determine when an element is conjugate into a Magnus subgroup.

\begin{problem}
If $G$ is a one-relator and $A\leqslant G$ is a Magnus subgroup, find an algorithm which decides whether an element $g\in G$ is conjugate into $A$.
\end{problem}

Another problem whose solution should be useful for understanding the conjugacy problem better would be an effective version of Collins' \cref{Collins_intersections_2} on intersections of conjugates of Magnus subgroups. As remarked previously, \cref{Collins_intersections_1} already has an effective version proved by Howie.

\begin{problem}
If $G = F(S)/\normal{w}$ is a one-relator group and $A, B\leqslant G$ are Magnus subgroups generated by subsets of $S$, determine the double cosets $AgB\neq AB$ such that $A^g\cap B\neq 1$.
\end{problem}

The membership problem for finitely generated subgroups of one-relator groups is another problem which is wide open.

\begin{qstn}
Is the subgroup membership problem decidable for one-relator groups?
\end{qstn}

It was shown by Gray \cite[Theorem B]{Gra20} that there are one-relator groups with undecidable submonoid membership problem. As such, one cannot hope to go too far with regards to membership problems.

\begin{problem}[Lyndon]
\label{Lyndon_Q}
If $G = F(S)/\normal{w}$ is a one-relator group and $A, B\leqslant G$ are Magnus subgroups generated by subsets of $S$, find an algorithm which decides membership in a double coset $AgB$.
\end{problem}

Lyndon posed this problem for $g =1$ \cite[Problem 3.6]{Ly62}, which Newman solved for one-relator groups with torsion in \cite[Corollary 3]{Ne68}. The extended word problem for a group $G$ with presentation $\langle S \mid R\rangle$ is the problem of determining membership in subgroups generated by subsets of the generators $S$. Lyndon's motivation for this problem came from trying to solve the extended membership problem in groups with staggered presentations, following Magnus' ideas for the extended word problem for one-relator groups.

The \emph{normal root problem} was first studied by Magnus in \cite{Ma30}. If $F$ is a free group and $w\in F$, then an element $r\in F$ is a \emph{normal root} of $w$ if $w\in \normal{r}$.

\begin{problem}[Normal Root Problem]
Let $F$ be a free group and let $w\in F$. Determine the normal roots of $w$.
\end{problem}

In \cite{Ma32}, Magnus determined the normal roots of the commutator $[a, b]$. Some further work on the root problem was carried out by Steinberg in \cite{Ste86} who characterised normal roots of $a^kb^l$ when $k, l$ are primes. Remeslennikov conjectured that if $r$ is a (cyclically reduced) normal root of $w$ and $|r|>|w|$, then $w$ is conjugate to $[r, f]$ or $[r^{-1}, f]$ for some $f\in F$, see \cite[Problem F16]{BMS02}. However, this was then disproved by McCool in \cite{Mc01}. One can also ask a more specific version of the Normal Root Problem.

\begin{qstn}
If $F$ is a free group, does there exist an infinite sequence of words $w_0, w_1, \ldots\in F$ such that $\normal{w_i}\lneq \normal{w_{i+1}}$ for all $i\in \N$?
\end{qstn}

This question is of particular interest because an answer determines whether or not one-relator groups are closed in the space of marked groups.

\subsection{The isomorphism problem, Nielsen generating sets and $T$-systems}

Despite almost a century of progress since Magnus solved the word problem for one-relator groups, the conjugacy and isomorphism problem remain very far from being solved in general.

Currently the most general result in this direction is that the isomorphism problem for one-relator groups $F/\normal{w}$ with $\pi(w)\neq 2$ is decidable amongst one-relator groups. This is because they are hyperbolic and one can appeal to deep work of Sela \cite{Se95} (in the torsion-free case) and Dahmani--Groves \cite{DG11} (in the torsion case).

\begin{theorem}
If $G = F/\normal{w}$ is a one-relator group with $\pi(w)\neq 2$, then the problem of deciding whether another one-relator group is isomorphic to $G$ is decidable.
\end{theorem}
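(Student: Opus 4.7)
The plan is to reduce to the isomorphism problem for hyperbolic groups, which is solved by Sela \cite{Se95} and Dahmani--Groves \cite{DG11}, using the primitivity rank $\pi$ as both an isomorphism invariant and a computable one. First I would observe that $\pi(w)$ depends only on the isomorphism type of $G = F/\normal{w}$: by \cref{k-free}, $G$ is $k$-free if and only if $k < \pi(w)$, and $k$-freeness is intrinsic to $G$, so $\pi(w)$ is determined as the smallest $k$ for which $G$ admits a non-free $k$-generator subgroup (with $\pi(w) = \infty$ when $G$ is free). Puder's algorithm, sketched after \cref{def:prim_rank}, computes $\pi(w)$ from $w$ by enumerating core graph immersions through which the loop labelled by $w$ factors and applying Whitehead's algorithm to test primitivity.

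Next I would verify that the hypothesis $\pi(w) \neq 2$ forces $G$ to be hyperbolic, via a trichotomy. If $\pi(w) = \infty$ then $w$ is primitive and $G$ is free; if $\pi(w) = 1$ then $w$ is a proper power, $G$ has torsion, and $G$ is hyperbolic by Newman's spelling theorem (\cref{Newman}); if $\pi(w) \geq 3$ then $G$ is $2$-free, and \cref{v_special} (combined with the inductive combination argument behind \cref{conditional}) provides a quasi-convex one-relator hierarchy showing $G$ is hyperbolic. In every case of interest, $G$ lies in the class of hyperbolic groups.

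Given these two ingredients, the algorithm on input $H = F'/\normal{w'}$ proceeds as follows: compute $\pi(w')$; if $\pi(w') \neq \pi(w)$, output ``not isomorphic'', justified by the isomorphism invariance of $\pi$; otherwise $\pi(w') = \pi(w) \neq 2$, so by the same trichotomy $H$ is also hyperbolic. Then feed the two finite presentations into the isomorphism algorithm of Sela \cite{Se95} in the torsion-free case $\pi(w) \geq 2$, or of Dahmani--Groves \cite{DG11} in the torsion case $\pi(w) = 1$, and return its verdict. Correctness and termination are guaranteed because both inputs are known to present hyperbolic groups.

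The main obstacle is not the reduction, which is essentially immediate once the primitivity rank is available, but the two external inputs it leans on: the deep solution of the isomorphism problem for hyperbolic groups by Sela and by Dahmani--Groves, and the hyperbolicity of $2$-free one-relator groups coming from the quasi-convex hierarchy of \cref{v_special}. Both are used as black boxes; the content of the present theorem is the observation that $\pi(w)$ mediates cleanly between an arbitrary one-relator presentation of a potential isomorphic copy of $G$ and the hyperbolic world in which the isomorphism problem is known to be decidable.
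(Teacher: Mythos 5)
Your proposal is correct and takes the same route as the paper: reduce to the hyperbolic-group isomorphism problem via the trichotomy on $\pi(w)$ and invoke Sela and Dahmani--Groves. You make explicit a step the paper's terse sketch leaves implicit — that $\pi$ is a computable isomorphism invariant (via \cref{k-free} and Puder's algorithm), so an input one-relator group with $\pi = 2$ can be filtered out before appealing to the hyperbolic-group algorithms — and this detail is genuinely needed for the reduction to be sound.
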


One way to understand the isomorphisms between one-relator groups is by understanding their generating sets.

\begin{definition}
Two sets $X_1,X_2\subset G = F/N$ are \emph{Nielsen equivalent} if there is an automorphism $\psi\in \Aut(F)$ such that $\psi(X_1) = X_2$. They are in the same \emph{$T$-system} if there is an automorphism $\phi\in \Aut(G)$ such that $\phi(X_1)$ is Nielsen equivalent to $X_2$.
\end{definition}

Let $F$ be a free group with free generating set $S\subset F$. Given two normal subgroups $N_1, N_2\triangleleft F$ and an isomorphism $\theta\colon F/N_1\to F/N_2$, if the two generating sets $S_2$ and $\theta(S_1)$ lie in the same $T$-system, where here $S_i$ denotes the image of $S$ in $F/N_i$, then there is an automorphism $\psi\in \Aut(F)$ such that $\psi(N_1) = N_2$. 

When $F/N_1$ and $F/N_2$ are one-relator presentations, we may say something much stronger. If $N_1 = \normal{w}$ and $N_2 = \normal{r}$, the Magnus property for free groups (see \cref{Magnus_property}) implies that if $S_2$ and $\theta(S_1)$ lie in the $T$-system, then there is an automorphism $\psi\in \Aut(F)$ such that $\psi(w) = r^{\pm1}$. Hence, one-relator groups $G = F/\normal{w}$ with a single $T$-system of (one-relator) generating set admit a particularly simple solution to the isomorphism problem amongst one-relator groups: given another one-relator presentation $H = F/\normal{r}$, to determine whether $G\isom H$, we only need to check whether $w$ and $r^{\pm1}$ lie in the same $\Aut(F)$-orbit which can be done in polynomial time using Whitehead's algorithm \cite{RVW07} (see \cite{Wh36} for Whitehead's original algorithm).

The isomorphism problem for two-generator one-relator groups with torsion was solved first by Pride \cite{Pr77} who showed that if $G = F/\normal{w^n}$ is a two-generator one-relator group with $n\geqslant 2$, then $G$ has only one $T$-system of generating pair, see \cref{Thm:Pride-isomorphism-theorem}. Note that $G = \langle a, b \mid a^5\rangle$ has more than one Nielsen class of generating set since $\{a^2, b\}$ also generates $G$. However, the automorphism given by $b\to b$ and $a\to a^2$ shows that $\{a, b\}$ and $\{a^2, b\}$ lie in the same $T$-system.

Although now it is known that the isomorphism problem is decidable for all one-relator groups with torsion, one still might wonder whether Pride's $T$-system result can be extended to obtain a more intrinsic solution to the word problem. This question was explicitly raised by Pride in \cite{Pr75}.

\begin{qstn}[Pride]
\label{Nielsen_class_torsion}
If $G = F/\normal{w^n}$ is a one-relator group $n\geqslant 2$, is it true that $G$ has one $T$-system of generating set of minimal cardinality?
\end{qstn}

Making explicit the connection between the work of Louder--Wilton on two-complexes $X$ with uniform negative immersions and Nielsen equivalence classes of generating sets for $\pi_1(X)$ would be a worthwhile task. In particular, their work should imply at least that one-relator complexes $X$ with negative immersions have fundamental group with finitely many Nielsen classes of generating sets of any given finite cardinality. Furthermore, we suspect that Pride's Question \ref{Nielsen_class_torsion} can be answered positively also for this setting.

\begin{qstn}
\label{Nielsen_class}
If $X$ be a one-relator complex with negative immersion, is it true that $G = \pi_1(X)$ has one $T$-system of generating set of minimal cardinality?
\end{qstn}

\cref{Nielsen_class} is known to have a positive answer for closed surfaces: if $G$ is the fundamental group of a closed surface, Lars Louder showed in \cite{Lo15} that $G$ has precisely one Nielsen class of generating set of any finite cardinality, improving on previous work of Zieschang \cite{Zi70}.

See also \cite[Problem (OR13)]{BMS02} for a more general phrasing of Question \ref{Nielsen_class}. 

During the 70's, a significant number of contributions were made towards understanding the generating sets of one-relator groups. We summarise below some of the main results in this direction. The reader is directed to \cref{Subsec:iso-problem} for more details.
\begin{itemize}
\item The first known examples of one-relator groups with more than one Nielsen class of one-relator generating sets were the torus knot groups $\langle a, b\mid a^m = b^n\rangle$. This was proved by McCool--Pietrowski \cite{MP73}.
\item Delzant \cite{De91} showed that torsion-free two-generator hyperbolic groups have finitely many Nielsen classes of generating pairs (a result announced by Gromov \cite{Gr87}), although even $C'(1/6)$ two-generator one-relator groups can gave more than one Nielsen generating pair by work of Pride \cite{Pr76}.
\item If $G\isom \bs(1, n)$ for any $n\neq 0$, Brunner showed that $G$ has only one $T$-system of generating pair \cite[Theorem 2.4]{Br74} and thus every two-generator presentation of $G$ is a one-relator presentation. In the same article, Brunner also showed that $\bs(2, 3)$ has infinitely many $T$-systems of generating pairs which are not associated with one-relator presentations and at least two $T$-systems of generating pairs which are associated with one-relator presentations.
\item Brunner provided an example in \cite{Br76} of a two-generator one-relator group with infinitely many Nielsen equivalence classes of generating pairs associated with one-relator presentations. The group was the Baumslag--Gersten group and the one-relator presentations associated with the generating pairs were:
\[
\bg(1, 2) \isom \left\langle a, t \mid t^{-1}a^{-2^n}tat^{-1}a^{2^n}t = a^2\right\rangle
\]
where $n$ is any natural number. It can be checked that each relator has minimal length in its $\Aut(F(a, t))$-orbit and so the associated generating sets 
\[
\{t^{-1}a^{-n}tat^{-1}a^{n}t, t\}\subset F(a, t)/\normal{a^{a^t}a^{-2}}
\]
are also in distinct $T$-systems.
\item Non-cyclic one-relator groups with centre may have many $T$-systems of generating pairs (they are all generated by two elements). However, Pietrowski \cite{Pi74} showed that their isomorphism problem is decidable amongst all one-relator groups. Indeed, Baumslag--Taylor proved that it is decidable whether a given one-relator group has centre in \cite{BT68}. Pietrowski then showed that any non-cyclic one-relator group with centre splits as a graph of infinite cyclic groups in an essentially unique way \cite{Pi74} which then yields a solution to the isomorphism problem using Tietze transformations.
\item Rosenberger solved the isomorphism problem for cyclically pinched one-relator groups in \cite{Ro94} by an in depth analysis of the Nielsen classes of generating sets. Some partial results were also obtained for cyclically conjugated one-relator groups by Fine--R\"ohl--Rosenberger \cite{FRR93}.
\item A generic one-relator group has precisely one Nielsen class of (minimal size) generating set by work of Kapovich--Schupp \cite{KS05}.
\end{itemize}
Although it appears that the isomorphism problem might be easily solvable amongst one-relator groups $G = F/\normal{w}$ with $\pi(w) \neq 2$, the above results for two-generator one-relator groups show that the general case presents several additional difficulties we do not yet know how to overcome.

\bibliographystyle{amsalpha}
\bibliography{joint_bibliography}

\end{document}